\documentclass[reqno]{amsproc}
\usepackage{mathtools}
\mathtoolsset{showonlyrefs=true}

\usepackage{amsmath,amsfonts,amssymb,amsthm}
\usepackage[abbrev,lite,nobysame]{amsrefs}
\usepackage{mathrsfs}
\usepackage{graphics,graphicx}
\usepackage[usenames,dvipsnames]{color}
\usepackage{times}
\usepackage{bbm}
\usepackage[margin=1in]{geometry}
\usepackage[colorlinks=true, pdfstartview=FitV, linkcolor=blue, citecolor=blue, urlcolor=blue]{hyperref}
\usepackage{subcaption}
\usepackage{enumitem}
\makeatletter

\renewcommand\subsection{\@startsection{subsection}{2}%
\normalparindent{.5\linespacing\@plus.7\linespacing}{-.5em}
{\normalfont\bfseries}}

\renewcommand\subsubsection{\@startsection{subsubsection}{3}%
\normalparindent{.5\linespacing\@plus.7\linespacing}{-.5em}
{\normalfont\bfseries}}

\newcommand{\diampar}[1]{\vspace{.5em}\noindent $\diamond$ \normalfont {\itshape #1.}}

\newcommand{\noindpar}[1]{\vspace{.5em}\noindent \normalfont {\itshape #1.}}

\def\@tocline#1#2#3#4#5#6#7{\relax
  \ifnum #1>\c@tocdepth % then omit
  \else
    \par \addpenalty\@secpenalty\addvspace{#2}%
    \begingroup \hyphenpenalty\@M
    \@ifempty{#4}{%
      \@tempdima\csname r@tocindent\number#1\endcsname\relax
    }{%
      \@tempdima#4\relax
    }%
    \parindent\z@ \leftskip#3\relax \advance\leftskip\@tempdima\relax
    \rightskip\@pnumwidth plus4em \parfillskip-\@pnumwidth
    #5\leavevmode\hskip-\@tempdima
      \ifcase #1
       \or\or \hskip 1em \or \hskip 2em \else \hskip 3em \fi%
      #6\nobreak\relax
    \dotfill\hbox to\@pnumwidth{\@tocpagenum{#7}}\par
    \nobreak
    \endgroup
  \fi}
\makeatother

\newtheorem{theorem}{Theorem}[section]
\newtheorem{proposition}[theorem]{Proposition}
\newtheorem{lemma}[theorem]{Lemma}
\newtheorem{corollary}[theorem]{Corollary}

\theoremstyle{definition}
\newtheorem{definition}[theorem]{Definition}

\newtheorem{remark}[theorem]{Remark}

\numberwithin{equation}{section}

\newcommand{\norm}[1]{\left\lVert #1 \right\rVert}

\newcommand{\eps}{{\varepsilon}}
\newcommand\dd{{\rm d}}

\newcommand{\Id}{{\mathbb{I}}}

\newcommand{\Lie}{{\text{Lie}}}

\newcommand{\supp}{{\text{supp}}}

\newcommand{\Lip}{\mathrm{Lip}}
\newcommand{\Leb}{\mathrm{Leb}}
\newcommand{\rank}{\mathrm{rank}}

\newcommand{\cE}{{\mathcal{E}}}

\newcommand{\cV}{{\mathcal{V}}}

\newcommand\cR{{\mathcal R}}

\newcommand{\bB}{{\mathbf{B}}}

\newcommand{\N}{{\mathbb N}}
\newcommand{\EE}{{\mathbb E}}
\newcommand{\R}{{\mathbb R}}
\newcommand{\Z}{{\mathbb Z}}
\newcommand{\Q}{{\mathbb Q}}
\newcommand{\D}{{\Delta}}
\newcommand{\T}{{\mathbb T}}
\newcommand{\cN}{{\mathcal N}}
\newcommand{\rM}{{\mathrm M}}
\newcommand{\cS}{{\mathcal S}}
\newcommand{\cU}{{\mathcal U}}

\newcommand{\Sph}{{{\mathbb S}^2}}

\renewcommand{\d}{\mathrm{d}}

\newcommand{\ep}{\varepsilon}
\newcommand\e{{\rm e}}

\newcommand{\bv}{{\boldsymbol{v}}}

\renewcommand{\a}{{\boldsymbol{a}}}
\renewcommand{\b}{{\boldsymbol{b}}}
\renewcommand{\c}{{\boldsymbol{c}}}
\newcommand{\bfe}{{\mathbf{e}}}

\newcommand{\x}{{\mathbf{x}}}
\newcommand{\z}{{\mathbf{z}}}
\newcommand{\sfD}{{\mathsf{D}}}
\newcommand{\sfx}{{\mathsf{x}}}
\newcommand{\sfy}{{\mathsf{y}}}
\newcommand{\y}{{\mathbf{y}}}
\newcommand{\sfz}{{\mathsf{z}}}

\renewcommand{\l}{\left}

\renewcommand{\r}{\right}

\newcommand{\NN}{\mathbb{N}}

\newcommand\TT {{\mathbb T}}
\newcommand\RR {{\mathbb R}}
\newcommand\PP {{\mathbb P}}

\newenvironment{indentblock}[1][1]
  {%
    \begin{list}{}%
      {%
        \setlength{\leftmargin}{#1\parindent}%
        \setlength{\rightmargin}{0pt}%
      }%
    \item\relax
  }
  {%
    \end{list}
  }
  
\renewcommand{\parallel}{\mathrel{/\mkern-5mu/}}
\makeatletter
\newcommand{\notparallel}{%
  \mathrel{\mathpalette\not@parallel\relax}%
}
\newcommand{\not@parallel}[2]{%
  \ooalign{\reflectbox{$\m@th#1\smallsetminus$}\cr\hfil$\m@th#1\parallel$\cr}%
}
\makeatother

\begin{document}

\title[Exponential mixing and enhanced dissipation]{Exponential mixing and enhanced dissipation\\
on the unit sphere with Rossby-Haurwitz flows}

\author[A. Del Zotto]{Augusto Del Zotto}

\author[M. Nualart]{Marc Nualart}

\begin{abstract}
    We exhibit a family of smooth incompressible velocity fields on the two-dimensional unit sphere such that the time evolution of any mean-free initial data passively advected by any of them is mixed exponentially fast.
    In the presence of molecular diffusivity, we show that the solution to the associated advection-diffusion equation experiences enhanced dissipation with optimal decay rates.
    Each member of this family is an alternating combination of two Rossby-Haurwitz flows with random amplitudes and constitutes a spherical analogue to the sine shear-alternating example of Pierrehumbert.
\end{abstract}

\maketitle
\tableofcontents

\section{Introduction}\label{sec:intro}

The evolution of a passive scalar $\rho:[0,\infty)\times\rM \rightarrow\R$ on a compact boundary-less surface $\rM$, advected by a smooth divergence-free velocity field $u\in \mathfrak{X}(\rM)$, and undergoing molecular diffusion with diffusivity coefficient $\kappa\geq 0$, is governed by
\begin{equation}\label{eq:IntroAdvDiff}
\begin{cases}
    \partial_t \rho + u\cdot\nabla\rho = \kappa\D_\rM\rho & \x\in \rM, \quad t >0, \\
    \rho(0,\x) = \rho_0(\x), & \x\in\rM.
\end{cases}
\end{equation}
Here, $\Delta_\rM$ denotes the Laplace-Beltrami operator on $\rM$.
In this manuscript, we are interested in describing the long-time dynamics of arbitrary mean-free initial data $\rho_0$ in terms of the incompressible vector field $u$. 

The transport nature of \eqref{eq:IntroAdvDiff} for $\kappa=0$ ensures that all $L^p$ norms of the solution are preserved, for all $p\in[1,\infty]$, where no property of $u$ is used other than its incompressibility.
However, we do observe in everyday phenomena that some stirring velocity fields are more effective than others in mixing initial passive scalars, one of the most instructive examples being the mixing of milk in the morning's cup of coffee.
This stretching and folding of $\rho$ at time $t$ can be measured in terms of homogeneous negative Sobolev norms,
\begin{align}\label{eq:mixingnegSolbolev}
    \Vert \rho \Vert_{\dot{H}^{-s}}^2 = \sum_{k>0}|\lambda_k|^{-2s}| \rho_k(t)|^2, 
\end{align}
where $(\lambda_k)_{k\geq 0}$ are the eigenvalues of $(-\Delta_{\rM})$ and $\rho_k$ denote the Fourier coefficients of $\rho$ in the basis given by the eigenfunctions of $(-\Delta_{\rM})$.
By duality and due to the conservation of $L^2$ norm, any decay of \eqref{eq:mixingnegSolbolev} yields the transfer of $L^2$ mass of $\rho(t)$ to higher and higher frequencies, a testament of mixing of the passive scalar.
In this work the exponential mixing is understood as the exponential decay in time of the negative Sobolev norm \eqref{eq:mixingnegSolbolev}.

In parallel, when $\kappa>0$, combining the classical energy equality
\begin{align}\label{eq:energyeq}
    \frac{1}{2}\frac{\d}{\d t}\int_\rM |\rho(t,\x)|^2 \d \x = -\kappa\int_{\rM} |\nabla \rho(t,\x)|^2 \d \x
\end{align}
together with the Poincaré inequality shows that
\begin{align}\label{eq:introheatdiff}
    \Vert \rho(t, \cdot) - \overline{\rho} \Vert_{L^2} \leq e^{-\lambda_1 \kappa t}\Vert \rho_0 - \overline{\rho} \Vert_{L^2},
\end{align}
namely the solution to \eqref{eq:IntroAdvDiff} converges exponentially fast to the average $\overline{\rho} = \int_\rM \rho_0(\x) \d \x$ of its initial datum\footnote{We remark that $\overline{\rho}$ is a stationary solution of \eqref{eq:IntroAdvDiff}.}, with exponent $-\lambda_1\kappa$ where $\lambda_1>0$ is the first non-zero eigenvalue of $-\Delta_{\mathrm{M}}$.

However, this simple bound does not take into account any possible mixing of the solution $\rho(t)$: the creation of large gradients of $\rho$ due to the stretching and folding produced by $u$ should transfer most of the energy of $\rho$ to large frequencies, where the diffusive operator $\kappa\Delta_\mathrm{M}$ is most effective and significantly improves the rate of convergence of the solution to its average.
To quantify this improvement, we say that $u$ enhances dissipation if there exists $\mathsf{d}:[0,1)\rightarrow[0,\infty)$ with
\begin{align}
    \lim_{\kappa\rightarrow 0}\frac{\kappa}{\mathsf{d}(\kappa)}=0
\end{align}
and a constant $C>0$ such that the unique solution $\rho(t,x)$ to \eqref{eq:IntroAdvDiff} with initial data $\rho(0,\x)=\rho_0(\x)$ satisfies
\begin{align}
    \Vert \rho(t, \cdot) - \overline{\rho} \Vert_{L^2} \leq Ce^{-\mathsf{d}(\kappa) t}\Vert \rho_0 - \overline{\rho} \Vert_{L^2},
\end{align}
for all $t>0$.

In this work we consider $\mathrm{M}=\Sph$, the two-dimensional unit sphere, and we exhibit an incompressible velocity field on $\Sph$ that is a uniform-in-$\kappa$ exponential mixer and enhances dissipation.
To the best of our knowledge, this is the first explicit construction of such a velocity field on the two-dimensional unit sphere. 
Inspired by the example of Pierrehumbert \cites{pierrehumbert1994tracer} on $\T^2$, we define a vector field on $\Sph$ that consists of alternating zonal Rossby-Haurwitz waves of degree 2 with randomized strengths.
These zonal flows are the spherical analogue of the sine shearing profile on $\T^2$ (see Figure~\ref{fig:sphere_vfield} and Figure~\ref{fig:sphere_triptych}), and their chaotic transport properties were already investigated by Pierrehumbert in the $\beta$-plane approximation \cites{pierrehumbert1991chaotic, pierrehumbert1994tracer}, see also \cites{joseph1996chaotic}.

We show that for almost all noise realizations, such a time-dependent vector field exponentially mixes and enhances dissipation of any passive tracer $\rho$ solving \eqref{eq:IntroAdvDiff} for small enough diffusivities $\kappa\geq 0$.
See Figures~\ref{fig:mixing_snaps} and \ref{fig:enhanced_dissip_snaps} for numerical simulations that illustrate the two phenomena analysed in this paper.

\subsection{Lagrangian dynamics and construction of vector field}
The solution to the advection-diffusion equation \eqref{eq:IntroAdvDiff} is studied in terms of the Lagrangian dynamics generated by the convective velocity field $u\in \mathfrak{X}(\Sph)$ and the Laplace-Beltrami operator $\Delta_\Sph$.
Let $\phi_t^\kappa(\x)$ denote the flow map associated to the infinitesimal generator of $u\cdot \nabla - \kappa\Delta_\Sph$ on $\Sph$, which satisfies the stochastic differential equation
\begin{align}\label{eq:introSDEStrat}
    \begin{cases}
        \d \phi_t^{\kappa}(\x) = u(t,\phi_t^{\kappa}(\x))\d t + \sqrt{2\kappa}\Pi_{\phi_t^{\kappa}(\x)}\circ\d \bB_t, & \\
        \phi_0^{\kappa}(\x) = \x\in \Sph,
    \end{cases} 
\end{align}
in the Stratonovich sense, where $\d \bB_t= ( \d B_t^1,  \d B_t^2,  \d B_t^3)$ denotes a standard Brownian motion on $\R^3$ and $\Pi_\x = I - \x \otimes \x$ stands for the usual orthogonal projection from $\R^3$ to $T_\x\Sph$, for all $\x\in\Sph$.
The associated It\^{o} formulation reads
\begin{align}\label{eq:introSDEITO}
    \begin{cases}
        \d \phi_t^{\kappa}(\x) = v_\kappa(t,\phi_t^{\kappa}(\x))\d t + \sqrt{2\kappa}\Pi_{\phi_t^{\kappa}(\x)}\d \bB_t, & \\
        \phi_0^{\kappa}(\x) = \x\in \Sph,
    \end{cases} 
\end{align}
where $v_\kappa(t,\x) = u(t,\x) - 2\kappa \x$ encodes the It\^{o} correction.
Then, for any $\rho_0:\Sph \rightarrow \R$, the solution $\rho(t,\x)$ to the advection-diffusion equation \eqref{eq:IntroAdvDiff} is given in terms of the Feynman-Kac formula by
\begin{align}
    \rho(t,\x) := \EE_\bB \left[ (\phi_t^{\kappa})_\# \rho_0(\x) \right],
\end{align}
where $(\phi_t^{\kappa})_\#g(\x) = g\circ(\phi_t^\kappa)^{-1}(\x)$, for all measurable functions {$g$}. 

Let $\a\in \Sph$ denote a unit direction axis on $\Sph$ and $\omega\in \R$. We define $f_\omega^{\a}(\x)$, for all $\x\in \Sph$, as the time-$1$ flow map of \eqref{eq:introSDEITO} for $\kappa=0$ given by the vector field
\begin{equation}\label{eq:u_a}
    u_\omega^\a(\x) := \omega u^\a(\x), \quad u^{\a}(\x):=(\a\cdot \x)(\a\times \x).  
\end{equation}

\begin{figure}[!h]
    \centering
    \includegraphics[width=0.25\linewidth]{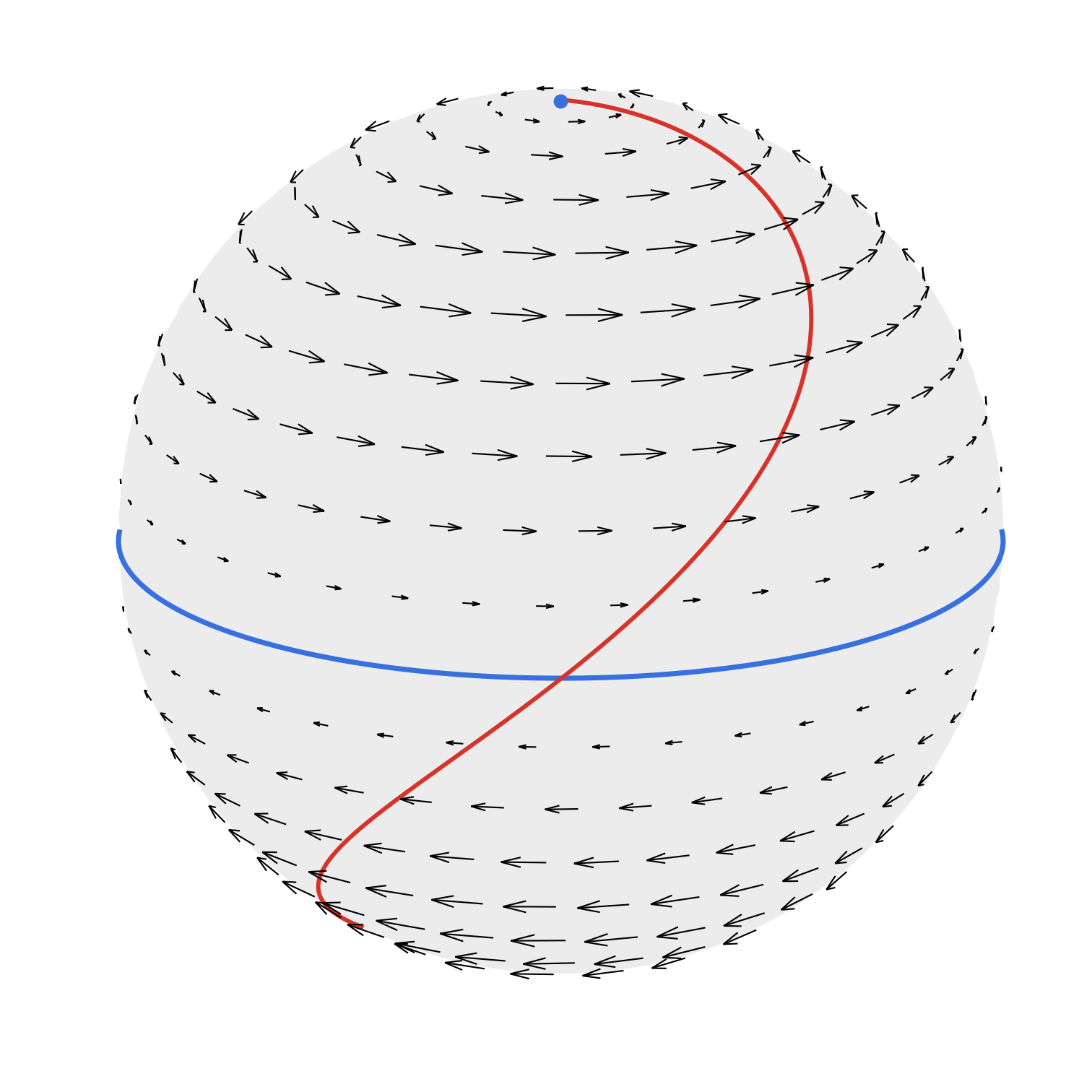}
    \includegraphics[angle=90,width=0.25\linewidth]{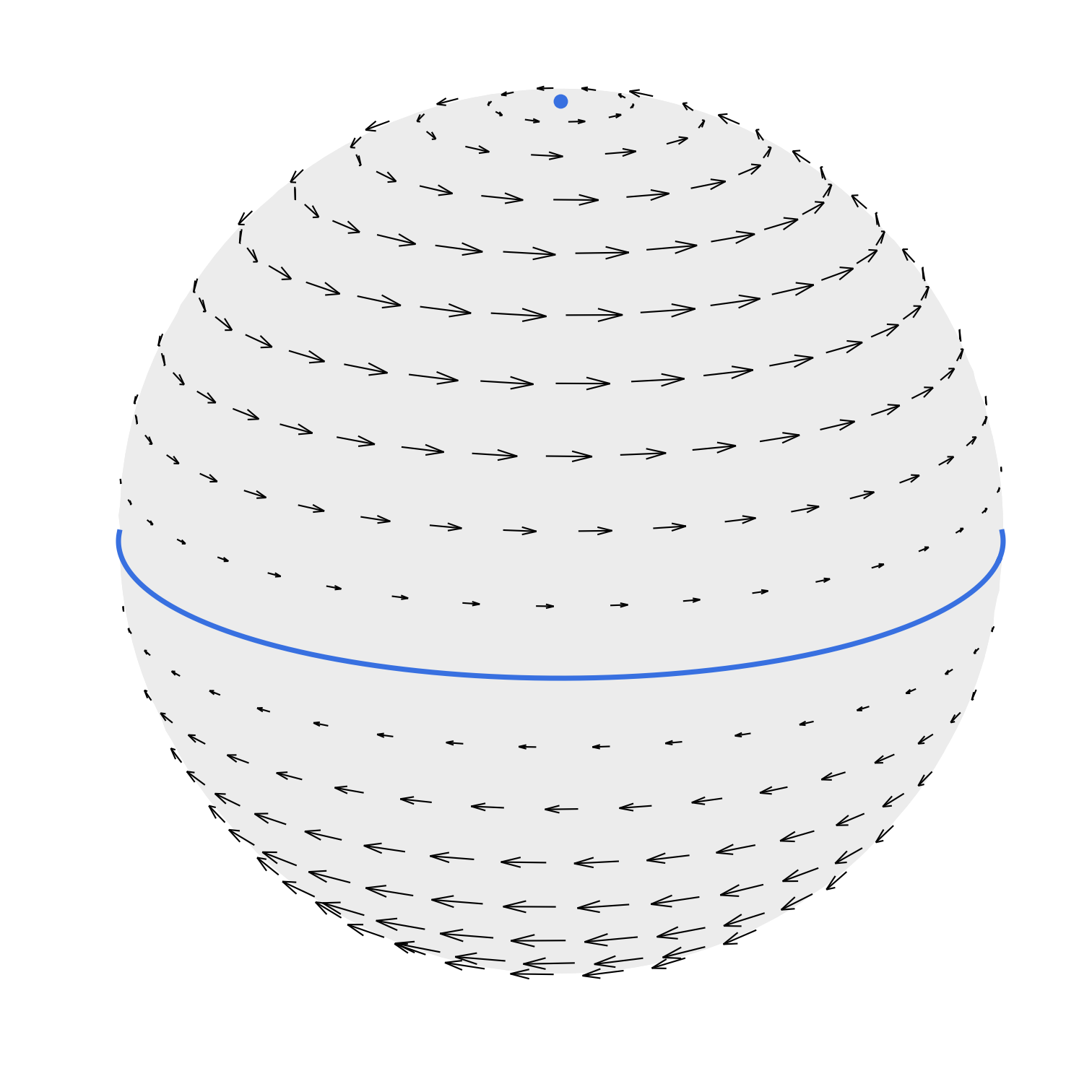}
    \caption{ \small Visual representation of $u^{e_3}$ (on the left) and $u^{e_2}$ (on the right). Black arrows encode direction and strength of vector fields, blue continuous line and dots (equator and poles) indicate zero-level sets, and the red oblique line represents the velocity profile (omitted on the right).}
    \label{fig:sphere_vfield}
\end{figure}

The vector field $u^\a$ is, up to a constant, the velocity field associated with the Rossby-Haurwitz wave $Y_2^0(\a\cdot \x)$, where $Y_2^0$ denotes the zonal spherical harmonic of degree 2.
As such, $u^{e_3}$ is the most direct analogue on the sphere of the sine shear flow $(\sin(y),0)$ on $\T^2$.
The velocity field $u_\omega^\a$ is the infinitesimal generator on $\Sph$ of the rotation of angle $\omega(\a\cdot\x)$ around the $\a$ axis, see Figure \ref{fig:sphere_triptych}.

\begin{figure}[!h]
    \centering
    \includegraphics[width=.7\linewidth]{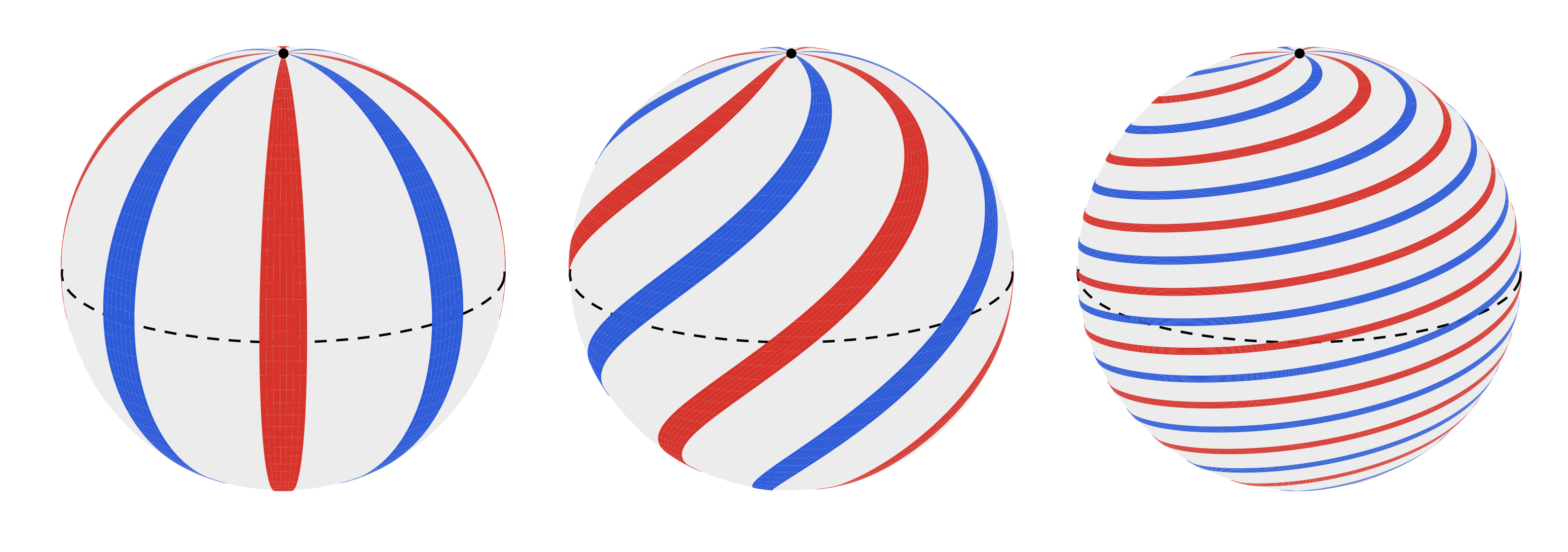}
    \caption{ \small Visual representation of the shearing effect of the flow map $f^{e_3}_\omega$ on red and blue strips for amplitudes $\omega=0,1,6$ respectively.}
    \label{fig:sphere_triptych}
\end{figure}

To obtain exponential mixing, in this manuscript we consider time-dependent velocity fields that consist of alternating between two distinct autonomous fields with randomly chosen amplitudes.
More precisely, for a fixed sufficiently large real number $N>1$, we consider two independent and identically distributed random variables $\omega_1,\omega_2\sim \text{Unif}([-N,N])$. 
The probability space where the random variables take values is defined to be
\begin{align}
    \Omega_0 = [-N,N]^2, \quad \mathcal{F}_0 = \mathcal{B}([-N,N]^2), \quad \PP_0= \text{Unif}([-N,N]^2).
\end{align}
Then, for two fixed non-parallel rotation axes $\a,\b\in \Sph$, we consider the composition flow map
\begin{align}
    f_\omega(\x) := f_{\omega_2}^{\a}\circ f_{\omega_1}^{\b}(\x)
\end{align}
for $\omega=(\omega_1,\omega_2)\in [-N,N]^2$.
At the $j$-th iteration, for $\underline{\omega}_j =(\underline{\omega}_{j,1}, \underline{\omega}_{j,2})\in \Omega_0$ we obtain the composition 
\begin{equation}
f_{\underline{\omega}_j}(\x) = f_{{\underline{\omega}_{j,2}}}^{\a}\circ f_{{\underline{\omega}_{j,1}}}^{\b}\circ f_{\underline{\omega}_{j-1}}(\x).
\end{equation} 
The corresponding velocity field $u$ depends on the noise path $\underline{\omega}=(\underline{\omega}_1, \underline{\omega}_2, ...)\in \underline{\Omega} = \Omega_0^\N$ and is given by
\begin{align}\label{eq:DefAlternating_u}
    u(t,\x,\underline{\omega};\a,\b) := \sum_{i\geq 0}u^\b_{\underline{\omega}_{i,1}}(\x) \mathbf{1}_{[2i,2i+1)}(t) + \sum_{i\geq 0}u^\a_{\underline{\omega}_{i,2}}(\x) \mathbf{1}_{[2i+1,2i+2)}(t).
\end{align}
In particular, $u(t,\x,\underline{\omega};\a,\b)$ is a time-$2$ vector field alternating between the two Rossby-Haurwitz velocity fields $u^\a$ and $u^\b$ with random amplitudes $\underline{\omega}$ and the solution $\phi_t:=\phi_t^0$ to \eqref{eq:introSDEITO} for $\kappa=0$ is such that $\phi_t(\x) = f_{\underline{\omega}}^n(\x)$ for all $t=2n$ with $n\in \N$. 

\begin{figure}[!h]
    \centering
    \includegraphics[width=1\linewidth]{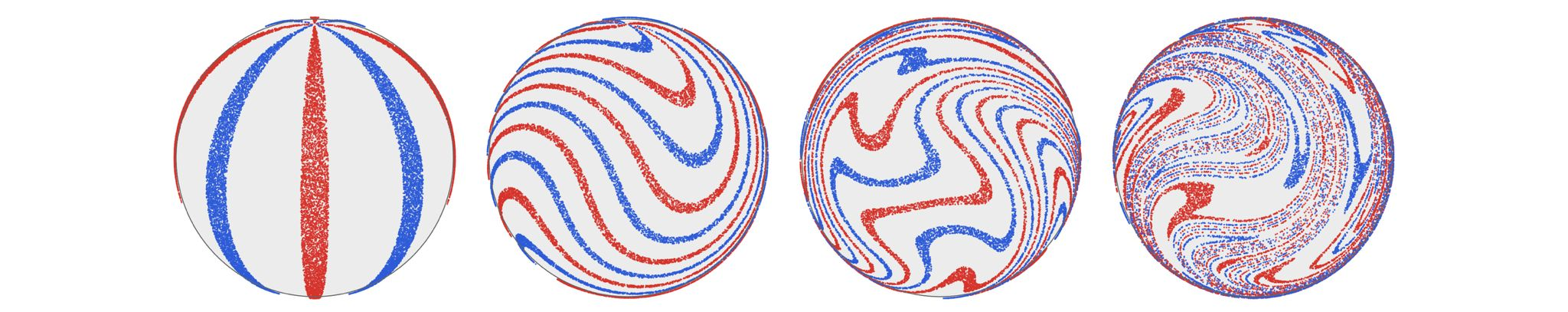}
    \caption{ \small Inviscid evolution for the alternating Rossby-Haurwitz flow on $\mathbb S^2$ with $\a=e_2$, $\b=e_3$, $\kappa=0$, $N=4$, and $40000$ particles. Snapshots are at times $t=0,2,4,8$ from left to right. The sampled phase pairs by period are $(\omega_1,\omega_2)=(+3.292,-1.490)$, $(+1.204,+2.502)$, $(-3.164,+0.079)$, and $(-0.361,-3.023)$.}
    \label{fig:mixing_snaps}
\end{figure}

When $\kappa>0$, to define the process $\phi_t^\kappa$ we introduce a probability space $(\Omega_\bB, \mathcal{F}_\bB, \PP_\bB)$ and we let $\bB_t=(B_t^1,B_t^2,B_t^3)$ be an $\R^3$-valued Brownian motion on this space.
The probability space for the process $\phi_t^\kappa$ solving \eqref{eq:introSDEITO} with vector field $u(t,\cdot,\underline{\omega};\a,\b)$ is taken to be $(\Omega, \mathcal{F}, \PP)$, where $\Omega$ denotes the product space $\Omega=\Omega_\bB\times \underline{\Omega}$ with $\sigma$-algebra $\mathcal{F} = \mathcal{F}_\bB \otimes\mathcal{F}_0^\N$ and probability measure $\PP := \PP_\bB\otimes \PP_0^\N$.
We shall treat random variables on each coordinate space $\Omega_0^\N$, $\Omega_\bB$ as random variables on the product space $\Omega$, and we may treat {$\underline{\omega}$} and $\bB_t$ as independent processes on $\Omega$.
Just as for the deterministic setting, for $\kappa>0$ and for $\a\in \Sph$ and $\omega\in \R$, we define $f^{\a,\kappa}_\omega(\x)$ as the time-1 flow map of \eqref{eq:introSDEITO} given by the vector field $u^\a_\omega$.
We then consider the stochastic composition
\begin{align}
    f_\omega^{\kappa}(\x) := f_{\omega_2}^{\a,\kappa}\circ f_{\omega_1}^{\b,\kappa}(\x)
\end{align}
for $\omega=(\omega_1,\omega_2)\in [-N,N]^2$ and $\a,\b\in \Sph$. 
Similarly, at the $j$-th iteration, for $\underline{\omega}_j=(\underline{\omega}_{j,1}, \underline{\omega}_{j,2})\in \Omega_0$ we have 
\begin{equation}
f_{\underline{\omega}_j}^{\kappa}(\x) = f_{{\underline{\omega}_{j,2}}}^{\a,\kappa}\circ f_{{\underline{\omega}_{j,1}}}^{\b,\kappa}\circ f_{\underline{\omega}_{j-1}}^\kappa(\x).
\end{equation}

\begin{figure}[!h]
    \centering
    \includegraphics[width=1\linewidth]{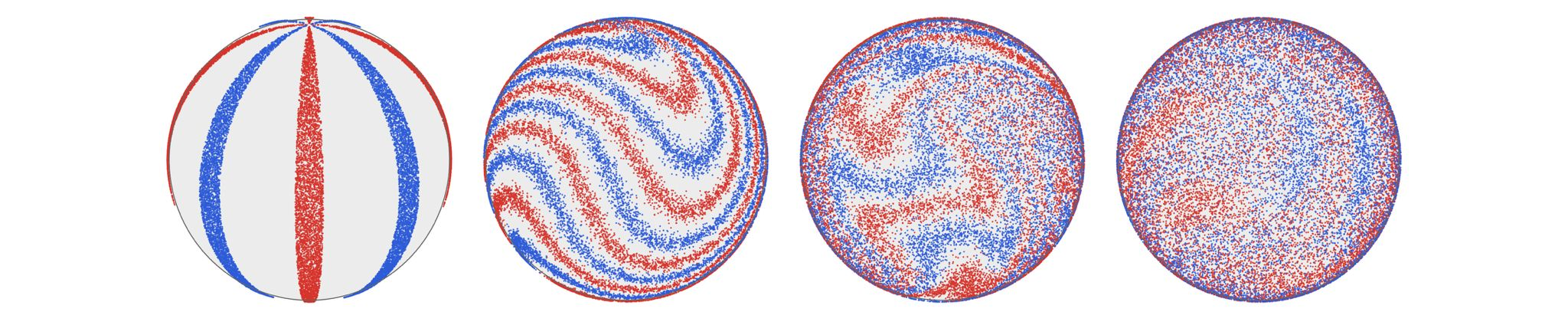}
    \caption{ \small Viscous evolution for the alternating Rossby-Haurwitz flow on $\mathbb S^2$ with $\a=e_2$, $\b=e_3$, $\kappa=0.0005$, $N=4$, and $40000$ particles. Snapshots are at times $t=0,2,4,8$ from left to right. The same sampled phase pairs as in the inviscid figure are used: $(\omega_1,\omega_2)=(+3.292,-1.490)$, $(+1.204,+2.502)$, $(-3.164,+0.079)$, and $(-0.361,-3.023)$.}
    \label{fig:enhanced_dissip_snaps}
\end{figure}

\subsection{Main results}

The primary result of this work is an almost-sure exponential decay of correlations for the stochastic Lagrangian dynamics generated by the amplitude-randomized alternating Rossby-Haurwitz flows. 
More precisely, for every pair of non-parallel unit axes $\a,\b\in\Sph$, we show that the flow associated with $u(t,\x,\underline{\omega};\a,\b)$ exhibits exponential decay of correlations between arbitrary regular mean-free observables. 
The estimate is uniform for all sufficiently small diffusivities $\kappa>0$, and the random prefactor has moments bounded independently of $\kappa$. 
The precise statement is as follows.

\begin{theorem}\label{thm:mainexpdecaycorr}
Let $\a,\b\in \Sph$ be two non-parallel unit axes. Let $\phi_t^\kappa$ be the flow defined by the It\^{o} SDE \eqref{eq:introSDEITO}, with $\kappa\geq 0$ and $u(t,\cdot,\underline{\omega};\a,\b)$ the Pierrehumbert amplitude-randomized alternating vector field given by \eqref{eq:DefAlternating_u}. 
For any $q, s > 0$, there exists $\kappa_0>0$, a random constant $\mathrm{C}_{\underline{\omega},\kappa}\geq 1$ independent of $\bB$ and a deterministic $\kappa$-independent $\lambda_s > 0$ such that for all mean-free functions $\varphi, \psi \in H^s(\Sph)$, we have almost surely 
\begin{equation}\label{eq:mainthm_decaycorr}
\left| \int_{\Sph} \varphi(\x) \psi( \phi_t^\kappa(\x)) \dd \x \right|
\leq \mathrm{C}_{\underline{\omega},\kappa} e^{-\lambda_s t} \|\varphi\|_{H^s} \|\psi\|_{H^s},
\end{equation}
for all $t >0$ and all $\kappa\in [0,\kappa_0)$. 
Moreover,  there exists a $\kappa$-independent constant $\overline{\mathrm{C}}_q>0$ such that $\mathbb{E}_\omega|\mathrm{C}_{\underline{\omega},\kappa}|^q \leq \overline{\mathrm{C}}_q$. 
\end{theorem}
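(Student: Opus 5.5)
The plan is to follow the Blumenthal--Coti Zelati--Gvalani framework for exponential mixing of random dynamical systems, as developed for the Pierrehumbert example on $\T^2$. We verify a uniform-in-$\kappa$ geometric ergodicity (Harris theorem) for the two-point Markov chain, and then deduce decay of correlations via a Borel--Cantelli-type argument. Since the velocity field is time-periodic with period $2$, it is enough to study the discrete-time random dynamical system $\x\mapsto f^\kappa_{\underline\omega_j}(\x)$, i.i.d. in $j$. Between integer times we only need $\phi_t^\kappa$ to be bounded in $H^s$ on $[0,2]$, which is immediate from smoothness of $u^\a,u^\b$. We also reduce to $s$ small, say $s\in(0,1)$, since larger $s$ only helps by interpolation.

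\textbf{Step 1 (one-point chain).} We show the one-point chain on $\Sph$ is uniformly geometrically ergodic with respect to normalized surface measure. Each $f^\a_\omega$ rotates the circle of latitude $\a\cdot\x=c$ by angle $\omega c$. With $\omega$ uniform on $[-N,N]$ and $N$ large, one step spreads mass along latitude circles with a density, except near the equator $c=0$. Composing the two shears along non-parallel axes then yields a transition kernel with a density bounded below on an open set reachable from every point. This is a Doeblin minorization, and it is stable under small $\kappa$.

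\textbf{Step 2 (projective and two-point chains; the main obstacle).} We need a positive top Lyapunov exponent, which we get via Furstenberg's criterion. This requires ruling out invariant families of measures (or of finite unions of lines) on the projective bundle. The idea is that the derivative of $f^\a_\omega$ is a shear whose strength $\omega\,\partial(\a\cdot\x)$ varies continuously with $\omega$, so the shears cannot all preserve a common line field. The degenerate sets are the poles and equator of each axis, and we handle them using $\a\notparallel\b$. Next, with Lyapunov exponent $\lambda_1>0$ established, we build a Lyapunov function on the two-point chain off the diagonal. Concretely we take $V(\x,\y)=|\x-\y|^{-p}\psi_p(\x,(\x-\y)/|\x-\y|)$, with $\psi_p$ the eigenfunction of the twisted projective operator. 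This requires (i) controllability/minorization of the two-point chain away from the diagonal, which we get by checking, as in Step 1, that the pair process reaches any pair configuration; and (ii) continuity in $\kappa$ of the spectral data of the projective operator, which is what gives uniformity for $\kappa<\kappa_0$. Verifying the nondegeneracy conditions concretely for the Rossby--Haurwitz shears on the sphere, rather than the flat torus, is the delicate point. I would first try a direct computation of the derivative $Df^\a_\omega$ in the rotating frame, checking that $\omega\mapsto Df^\a_\omega v$ is a non-constant affine family of lines.

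\textbf{Step 3 (conclusion).} Harris's theorem applied to the two-point chain with weight $V$ gives, for observables of the form $\varphi\otimes\varphi$,
\begin{equation}
\EE\Big|\int_{\Sph}\varphi(\x)\psi(\phi^\kappa_{2n}(\x))\,\dd\x\Big|^2\lesssim e^{-2\gamma n}\|\varphi\|_{C^1}^2\|\psi\|_{C^1}^2.
\end{equation}
Here we write the square as a double integral $\int\!\!\int\varphi(\x)\varphi(\y)\,\EE[\psi\otimes\psi(\phi(\x),\phi(\y))]$ and use $\int V\,\dd\x\,\dd\y<\infty$ for $p<2$. Averaging over the Brownian motion and applying Jensen transfers this to $\rho=\EE_\bB$, so the random constant depends only on $\underline\omega$. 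We extend to $H^s$ via a Littlewood--Paley decomposition on spherical harmonics: truncate at degree $L\sim e^{\epsilon n}$, take a countable basis, and apply Chebyshev with Borel--Cantelli summed over $n$ and the frequency blocks. This gives the almost-sure bound with $\lambda_s\sim s\gamma$ and a random $\mathrm C_{\underline\omega,\kappa}$ whose $q$-th moments are bounded uniformly in $\kappa$, by uniformity of the Harris constants.
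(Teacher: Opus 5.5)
\textbf{Gap 1: Step 1 is false because of common fixed points.} The fields $u^\a$ and $u^\b$ vanish simultaneously where $\a\cdot\x=\b\cdot\x=0$, i.e.\ at $\pm\c$ with $\c=\a\times\b/|\a\times\b|$, and also at $\pm\a,\pm\b$ when $\a\perp\b$. These points are fixed by every $f_\omega$, so for $\kappa=0$ the Dirac masses there are stationary. There is therefore no Doeblin minorization ``from every point.'' Each $f_\omega$ is uniformly bi-Lipschitz and fixes $F$, so a point at distance $r$ from $F$ stays within $C_N^n r$ of $F$ after $n$ steps. Hence no $n$-step lower bound on a fixed open set can hold uniformly near $F$. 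For $\kappa>0$ the same argument, with an extra $O(\sqrt\kappa)$ displacement per step, shows the minorization constant degenerates as $\kappa\to0$. So stability of Doeblin under small $\kappa$ is exactly what fails.

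What is needed is to pose the chain on $\mathsf X=\Sph\setminus F$ with an unbounded Lyapunov--Foster function, e.g.\ $r_\sigma^{-\alpha}$ near each fixed point. Its drift must be proved by a local analysis of the alternating shears, which differs near $\pm\c$ and near points where one field rotates and the other shears. Uniformity in $\kappa$ requires the truncated version $\max\{r_\sigma,\sqrt\kappa\}^{-\alpha}$. The layer $r_\sigma\lesssim\sqrt\kappa$, where advection and diffusion are comparable, must be treated separately; the paper rescales by $\sqrt\kappa$ and compares with a limiting Gaussian shear process. This drift is also what gives geometric ergodicity of the one-point and projective chains, which the two-point drift construction near the diagonal requires.

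\textbf{Gap 2: Step 2 misses the symmetry sets of the two-point chain.} The rotation $\mathcal R_\c$ by $\pi$ about $\c$ (and $\mathcal R_\a,\mathcal R_\b$ when $\a\perp\b$) satisfies $u^\a\circ\mathcal R=\mathcal R\,u^\a$ and $u^\b\circ\mathcal R=\mathcal R\,u^\b$. Hence it commutes with every inviscid $f_\omega$, and the graphs $\{(\x,\mathcal R\x)\}$ are almost surely invariant for the two-point chain, in addition to $\Delta$. Your claim (i), that the pair process reaches any configuration off the diagonal, is therefore false. A weight $|\x-\y|^{-p}\psi_p$ singular only on $\Delta$ cannot satisfy a Foster--Lyapunov inequality with compact exceptional set on $\mathsf X^2\setminus\Delta$. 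Harris would then yield a unique stationary measure, contradicting the measures carried by these graphs. For small $\kappa>0$ the graphs are only approximately invariant, and the same obstruction destroys $\kappa$-uniform constants.

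The fix is to remove $\cS$ from the phase space and add near each graph the transported weight $W(\x,\mathcal R\y)$. This contracts because $\mathcal R$ commutes with the dynamics and $\lambda_1>0$. You also need a term $\nu\max\{\mathrm V(\x),\mathrm V(\y)\}$ for pairs with one coordinate near $F$, where $W$ alone gives no drift. Gluing these pieces, and showing the result is still a drift for the stochastic chain with $\kappa$-independent constants, is the bulk of the work (the paper's $\widetilde{\mathrm W}_\kappa$). Continuity in $\kappa$ of the projective spectral data does not replace this on a non-compact phase space.

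Separately, your Furstenberg sketch is too thin as written. Every $Df^\a_\omega$ preserves the line field tangent to the $\a$-latitude circles, so varying $\omega$ alone rules nothing out. The paper instead verifies the submersion criterion of Proposition~\ref{prop:blackbox_lyapexponent} via a bracket computation on the $SL(2,\mathbb R)$ frame bundle.
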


The first consequence of Theorem~\ref{thm:mainexpdecaycorr} is the corresponding quantitative mixing estimate for solutions of the passive scalar equation, which follows by the standard duality between decay of correlations and decay of homogeneous negative Sobolev norms.

\begin{corollary}\label{cor:unifexpmixing}
Let $\a,\b\in \Sph$ be two non-parallel unit axes. Let $\rho_0\in H^s(\Sph)$ be mean-free and let $\rho(t,x)$ be the solution of \eqref{eq:IntroAdvDiff}.
Then, there exists $\kappa_0>0$ such that for all $\kappa\in[0,\kappa_0)$ and all $s>0$ there holds
    \begin{align}\label{eq:cor_exp_mixing}
        \Vert \rho(t,\cdot)  \Vert_{\dot{H}^{-s}} \leq \mathrm{C}_{\underline{\omega},\kappa}e^{-\lambda_s t}\Vert \rho_0 \Vert_{\dot{H}^s}
    \end{align}
    almost surely for all $t\geq 0$.
\end{corollary}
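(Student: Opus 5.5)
The corollary follows from Theorem~\ref{thm:mainexpdecaycorr} by duality. I would first express $\rho(t)$ through the Lagrangian representation and then pair it against a test function. By the Feynman--Kac formula, $\rho(t,\x)=\EE_\bB[\rho_0\circ(\phi_t^\kappa)^{-1}(\x)]$. Fix a mean-free test function $\psi\in H^s(\Sph)$. Incompressibility of $u$ makes $\phi_t^\kappa$ volume preserving for each fixed Brownian path: the Stratonovich noise $\Pi_\x\circ\d\bB_t$ is generated by the divergence-free rotation fields $\x\mapsto e_i\times\x$ up to sign, so the stochastic flow preserves the area measure on $\Sph$. A change of variables therefore gives
\begin{equation}
\int_{\Sph}\rho(t,\x)\psi(\x)\,\dd\x=\EE_\bB\int_{\Sph}\rho_0(\x)\,\psi(\phi_t^\kappa(\x))\,\dd\x .
\end{equation}

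Next I would apply Theorem~\ref{thm:mainexpdecaycorr} with $\varphi=\rho_0$ inside the expectation. The constant $\mathrm{C}_{\underline{\omega},\kappa}$ is independent of $\bB$, and the bound holds almost surely, so it survives averaging over $\bB$. This yields
\begin{equation}
\Big|\int_{\Sph}\rho(t)\psi\,\dd\x\Big|\le \mathrm{C}_{\underline{\omega},\kappa}e^{-\lambda_s t}\|\rho_0\|_{H^s}\|\psi\|_{H^s}.
\end{equation}
Because $\rho_0$ and $\psi$ are mean-free, the eigenvalue $\lambda_0=0$ mode is absent. Since $\lambda_k\ge\lambda_1>0$ for $k>0$, the inhomogeneous and homogeneous norms are then equivalent, $\|\cdot\|_{H^s}\simeq\|\cdot\|_{\dot H^s}$, with constants depending only on $s$; these constants can be absorbed into $\mathrm{C}_{\underline{\omega},\kappa}$. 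The solution $\rho(t)$ also stays mean-free, because the equation conserves the integral.

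To conclude, I would use the duality $\|\rho(t)\|_{\dot H^{-s}}=\sup\{\langle\rho(t),\psi\rangle:\ \psi \text{ mean-free},\ \|\psi\|_{\dot H^s}\le 1\}$, which follows from Parseval in the spherical harmonic basis. Taking the supremum in the previous bound gives \eqref{eq:cor_exp_mixing} for $t>0$; at $t=0$ the estimate holds trivially with $\mathrm{C}\ge 1$.

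The main technical points are the following.
\begin{itemize}
\item The measure preservation of $\phi_t^\kappa$ for $\kappa>0$ has to be justified. A cleaner alternative is to argue via the adjoint equation: $\int\rho(t)\psi$ equals $\int\rho_0\,\tilde\psi(t)$, where $\tilde\psi(t,\x)=\EE_\bB\psi(\phi_t^\kappa(\x))$ solves the backward Kolmogorov equation. The operator $u\cdot\nabla-\kappa\D_\Sph$ has adjoint $-u\cdot\nabla-\kappa\D_\Sph$ because $\nabla\cdot u=0$.
\item The quantifiers need care. The theorem fixes $s$ before choosing $\kappa_0$, so for the corollary $\kappa_0$, and likewise $\lambda_s$ and the constant $\mathrm{C}_{\underline{\omega},\kappa}$, should be taken as the ones the theorem supplies for the given $s$.
\end{itemize}
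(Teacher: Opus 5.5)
Your overall route is the one the paper intends: the paper only says the corollary follows from Theorem~\ref{thm:mainexpdecaycorr} by a standard duality argument. However, the justification you lead with for the key identity $\int_{\Sph}\rho(t)\psi\,\d\x=\EE_\bB\int_{\Sph}\rho_0\,\psi\circ\phi_t^\kappa\,\d\x$ is wrong.

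The noise fields in \eqref{eq:introSDEStrat} are $\Pi_\x e_i=e_i-(e_i\cdot\x)\x$. These are the spherical gradients of the coordinate functions $x_i$, not $\pm e_i\times\x$: at $\x=e_3$ one has $\Pi_\x e_1=e_1$ but $e_1\times e_3=-e_2$. Their surface divergence is $\Delta_{\Sph}x_i=-2x_i\neq 0$. By Liouville's formula, the area Jacobian of $\phi_t^\kappa$ therefore solves $\d J_t=-2\sqrt{2\kappa}\,J_t\,\phi_t^\kappa\cdot\circ\,\d\bB_t$. It has mean one, but it is not identically $1$.

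The rotational noise $\x\times\d\bB$ does have the same diffusion matrix $\Pi_\x$ and the same It\^o correction $-2\kappa\x$, hence the same one-point generator. It nevertheless generates a different stochastic flow, and Theorem~\ref{thm:mainexpdecaycorr} concerns the flow built from $\Pi_\x\circ\d\bB$. Consequently the pathwise change of variables gives $\EE_\bB\int_{\Sph}\rho_0\,(\psi\circ\phi_t^\kappa)\,J_t\,\d\x$. You cannot drop $J_t$ inside the expectation, because it is correlated with $\psi\circ\phi_t^\kappa$.

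The adjoint argument you list as an alternative is the correct justification and should replace the pathwise one. Let $v$ solve $\partial_s v+u(s)\cdot\nabla v+\kappa\Delta_{\Sph}v=0$ on $[0,t]$ with $v(t)=\psi$. Then $\frac{\d}{\d s}\int_{\Sph}\rho(s)v(s)\,\d\x=-\int_{\Sph}u\cdot\nabla(\rho v)\,\d\x=0$, because $\nabla\cdot u=0$. Moreover $v(0,\x)=\EE_\bB\,\psi(\phi_t^\kappa(\x))$ by the backward Kolmogorov equation, which involves only the one-point law. Fubini then produces exactly the correlation bounded in Theorem~\ref{thm:mainexpdecaycorr}. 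The rest of your argument goes through: averaging the $\bB$-independent bound, using equivalence of the $H^s$ and $\dot H^s$ norms on mean-free functions, and applying duality.

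One small point in the last step: the almost sure bound is obtained for each fixed $\psi$. Before taking the supremum, pass to a countable $H^s$-dense family of mean-free test functions and use that both sides are continuous in $\psi$, so that a single null set serves all $\psi$.
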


The second consequence concerns the viscous problem. 
When $\kappa>0$, the uniform-in-$\kappa$ mixing estimate of Corollary~\ref{cor:unifexpmixing} yields an enhanced dissipation estimate for the advection-diffusion equation.
The mechanism is recalled in Remark~\ref{rem:main_results_comments} below.

\begin{corollary}\label{cor:optimalenhanceddiss}
   Let $\a,\b\in \Sph$ be two non-parallel unit axes. Let $\rho_0\in H^1(\Sph)$ be mean-free and $\rho(t,\x)$ solves \eqref{eq:IntroAdvDiff}. 
   Then, there exists $\lambda>0$ and $\kappa_0>0$ such that for all $\kappa\in(0,\kappa_0)$ there exists a random constant $\mathrm{C}_{\underline{\omega},\kappa}'\geq 1$ independent of $\bB$ and an almost surely finite random constant $\mu(\kappa)>0$ such that $\EE_{\omega}[|\mathrm{C}_{\underline{\omega},\kappa}'|^2]\leq \mathrm{C}'_2$ for some $\mathrm{C}_2'\geq 1$ uniformly in $\kappa$ and  there holds
    \begin{align}
        \Vert \rho(t,\cdot)  \Vert_{L^2} \leq \min \left\lbrace \frac{\mathrm{C}'_{\underline{\omega},\kappa}}{\sqrt{\kappa}}e^{-\lambda t}, e^{-\mu(\kappa)t} \right\rbrace \Vert \rho_0 \Vert_{L^2}
    \end{align}
    almost surely for all $t\geq 0$. Moreover, 
    \begin{align}
        \lim_{\kappa\rightarrow 0} \mu(\kappa) \log\left(   \frac{1}{\kappa} \right) \in \left(\frac{2\lambda}{1+M}, 2\lambda \right)
    \end{align}
    with probability at least $1-e^{2}\kappa_0^M {\mathrm{C}}'_2$, for all $M \geq 0$.
\end{corollary}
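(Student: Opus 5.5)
The plan is to derive Corollary~\ref{cor:optimalenhanceddiss} from the uniform-in-$\kappa$ mixing estimate of Corollary~\ref{cor:unifexpmixing}, taken with $s=1$, combined with the energy equality \eqref{eq:energyeq}. This is the standard ``mixing implies enhanced dissipation'' argument. The two terms inside the minimum are obtained separately.

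\emph{First bound.} Integrating \eqref{eq:energyeq} in time gives $\|\rho(t)\|_{L^2}\le\|\rho_0\|_{L^2}$ and
\begin{align}
2\kappa\int_0^t\|\nabla\rho(\tau)\|_{L^2}^2\,\d\tau\le\|\rho_0\|_{L^2}^2 .
\end{align}
Hence for each $T>0$ there is a time $t_*\in[0,T]$ with $\|\rho(t_*)\|_{\dot H^1}^2\le \|\rho_0\|_{L^2}^2/(2\kappa T)$. Take $T=1$ to get $\|\rho(t_*)\|_{\dot H^1}\lesssim \kappa^{-1/2}\|\rho_0\|_{L^2}$.

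Next, restart the evolution at time $t_*$. The velocity field is piecewise constant on unit intervals with i.i.d. amplitudes, so I will restart at an integer time, choosing $t_*$ among integers in $[0,2]$ with a slightly worse constant. The restarted dynamics is then driven by a shifted noise path, and Corollary~\ref{cor:unifexpmixing} applies to it with a constant $\mathrm{C}_{\theta\underline\omega,\kappa}$. This constant has the same law as the original one and the same $q$-moment bound.

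Interpolation gives $\|\rho\|_{L^2}^2\le\|\rho\|_{\dot H^{-1}}\|\rho\|_{\dot H^1}$. The $\dot H^1$ norm is nonincreasing: differentiating $\|\nabla\rho\|^2$ in time, the transport term is not sign-definite, but $\|\nabla\rho(t)\|$ grows at most like $e^{C\|\nabla u\|_\infty t}$. Because of this growth I will instead use the simpler interpolation
\begin{align}
\|\rho(t)\|_{L^2}^2\le \|\rho(t)\|_{\dot H^{-1}}\|\rho(t)\|_{\dot H^{1}},
\end{align}
evaluated at a time $s$ chosen via the time average. The cleaner route is the following. For $t\ge t_*$, mixing gives $\|\rho(t)\|_{\dot H^{-1}}\le \mathrm{C}e^{-\lambda_1(t-t_*)}\kappa^{-1/2}\|\rho_0\|_{L^2}$. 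Over any window $[t,t+1]$, the energy equality yields a time with $\|\rho\|_{\dot H^1}^2\le\|\rho(t)\|_{L^2}^2/(2\kappa)$. Combining this with interpolation and monotonicity of $\|\rho\|_{L^2}$ gives $\|\rho(t+1)\|_{L^2}\le \mathrm{C}\kappa^{-1/2}e^{-\lambda_1 t}\|\rho_0\|_{L^2}$, possibly with the exponent halved. Set $\lambda=\lambda_1/2$ and let $\mathrm{C}'$ collect the random constants, taking a maximum over the finitely many shifts. The bound $\EE|\mathrm{C}'|^2\le \mathrm{C}_2'$ then follows from the moment bounds in Theorem~\ref{thm:mainexpdecaycorr} with $q$ large.

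\emph{Second bound and the rate $\mu(\kappa)$.} The first bound becomes smaller than $1/2$ once $t\ge t_\kappa:=\lambda^{-1}\log(2\mathrm{C}'/\sqrt\kappa)$. By monotonicity of the $L^2$ norm and the semigroup property, iterating over consecutive windows of length $t_\kappa$ gives $\|\rho(t)\|\le 2^{-\lfloor t/t_\kappa\rfloor}\|\rho_0\|$. Each window uses a fresh shifted constant, so I will fix the window length using $\sup_k$ of the shifted constants along the path or a Borel--Cantelli choice. This yields $e^{-\mu t}$ with
\begin{align}
\mu(\kappa)\approx\frac{\lambda\log 2}{\log(\mathrm{C}'/\sqrt\kappa)} .
\end{align}
Therefore $\mu(\kappa)\log(1/\kappa)\to 2\lambda\log2\cdot(1+2\log\mathrm{C}'/\log(1/\kappa))^{-1}$. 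The constant $\log 2$ can be absorbed by using the decay factor $e^{-1}$ instead of $1/2$, after renaming constants.

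\emph{Probability estimate.} On the event $\{\mathrm{C}'\le \kappa_0^{-M/2}\}$, the limit lies in $(2\lambda/(1+M),2\lambda)$. By Chebyshev, the complement has probability at most $\kappa_0^{M}\EE|\mathrm{C}'|^2$, and this gives the stated $1-e^2\kappa_0^M\mathrm{C}_2'$ after the renormalisation above.

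The main obstacle is handling the random constant under time restarts. Corollary~\ref{cor:unifexpmixing} is stated from time $0$, so applying it on later windows requires the shift-invariance of $\PP_0^{\N}$ together with a control on the shifted constants that is uniform along the path. My first attempt will be to freeze a single window length and control only the constant at the required restart times via their common law.
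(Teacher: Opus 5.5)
Your derivation of the first bound is sound, and it is more explicit than the paper, which simply imports that bound from \cites{navarro2025exponential}. The steps are: take a good time $t_g\in[0,1]$ from \eqref{eq:energyeq}; transport it to $t=2$ by the Gr\"onwall bound $\|\nabla\rho(2)\|_{L^2}\le e^{CN}\|\nabla\rho(t_g)\|_{L^2}$ (the transport term contributes at most $\|\nabla u\|_{L^\infty}\|\nabla\rho\|_{L^2}^2$); apply Corollary~\ref{cor:unifexpmixing} once to the shifted path $\theta\underline{\omega}$ (restarting at $t=2$ keeps the field of the form \eqref{eq:DefAlternating_u}); then take a good time in each window $[t,t+1]$ and interpolate. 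This gives a rate $\lambda$ equal to half your mixing rate and $\mathrm{C}'\lesssim (e^{CN}\mathrm{C}_{\theta\underline{\omega},\kappa})^{1/2}$. By shift invariance, $\EE|\mathrm{C}'|^2\lesssim 1+\EE\,\mathrm{C}_{\underline{\omega},\kappa}$ uniformly in $\kappa$, and only one restart is needed.

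The second half contains a step that would fail: iterating over windows of length $t_\kappa$ with a fresh constant per window cannot be closed as you propose.
\begin{itemize}
\item The shift $\theta$ is ergodic, so $\sup_k\mathrm{C}_{\theta^k\underline{\omega},\kappa}=\infty$ almost surely unless $\mathrm{C}_{\underline{\omega},\kappa}$ is essentially bounded, and nothing provides that.
\item Borel--Cantelli only gives $\mathrm{C}_{\theta^k\underline{\omega},\kappa}\le k^{2/q}$ eventually. This is incompatible with a frozen window length, and windows sized by that bound grow like $\log k$, yielding only $e^{-ct/\log t}$.
\item Restarts at the non-integer times $kt_\kappa$ are not covered by Corollary~\ref{cor:unifexpmixing} at all.
\end{itemize}
No restart is needed. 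The first bound holds for all $t\ge0$ with one constant $\mathrm{C}'$, so with $t_*=\lambda^{-1}(1+\log(\mathrm{C}'/\sqrt{\kappa}))$ and $t\ge nt_*$, $n\ge1$,
\begin{equation*}
\frac{\mathrm{C}'}{\sqrt{\kappa}}e^{-\lambda t}\le e^{-n}\Bigl(\frac{\sqrt{\kappa}}{\mathrm{C}'}\Bigr)^{n-1}\le e^{-n},
\end{equation*}
that is, $\|\rho(t)\|_{L^2}\le e^{-\lfloor t/t_*\rfloor}\|\rho_0\|_{L^2}$. This is all the paper's ``iterating'' requires.

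Two further points, both shared with the paper's proof. First, $e^{-\lfloor t/t_*\rfloor}\ge e^{-t/t_*}$, so what actually follows is $\|\rho(t)\|_{L^2}\le e^{1-\mu(\kappa)t}\|\rho_0\|_{L^2}$ with $\mu(\kappa)=1/t_*$. The paper's ``$e^{-n}\le e^{-\mu(\kappa)t}$'' on $[nt_*,(n+1)t_*)$ is reversed. The factor $e$ cannot be dropped. For $\rho_0$ in the first eigenspace of $-\Delta_{\Sph}$, \eqref{eq:energyeq} gives $\|\rho(t)\|_{L^2}=(1-2\kappa t+o(t))\|\rho_0\|_{L^2}$ whatever $u$ is. Meanwhile $\mu(\kappa)>2\kappa$ unless $\mathrm{C}'\ge\sqrt{\kappa}\,e^{\lambda/(2\kappa)-1}$, an event of probability at most $e^{2}\mathrm{C}_2'\kappa^{-1}e^{-\lambda/\kappa}$. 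So the second entry of the minimum must carry a prefactor $e$.

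Second, for the probability estimate use the event $\{|\mathrm{C}'|^2\ge e^{-2}\kappa^{-M}\}$. It coincides exactly with $\{\mu(\kappa)\log(1/\kappa)\le 2\lambda/(1+M)\}$, and Chebyshev bounds its probability by $e^2\kappa^M\mathrm{C}_2'$. Your event $\{\mathrm{C}'\le\kappa_0^{-M/2}\}$ does not exclude it when $\kappa\ge e^{-2/M}\kappa_0$. Note also that this is a statement for each fixed $\kappa$, not about $\lim_{\kappa\to0}$: if $\mathrm{C}'$ did not depend on $\kappa$, that limit would equal $2\lambda$, which lies outside the open interval.
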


Theorem~\ref{thm:mainexpdecaycorr} and Corollaries~\ref{cor:unifexpmixing} and \ref{cor:optimalenhanceddiss} give, to the best of our knowledge, the first example of a smooth universal exponentially mixing and enhanced-dissipating velocity field on a two-dimensional compact surface without boundary different from $\T^2$. 
The qualification universal refers to the fact that the same velocity field mixes every mean-free initial datum in the stated class.

\begin{remark}\label{rem:main_results_comments}
We record a few comments clarifying the role of the random amplitudes and the scope of the main statements for the passive scalar equation.
\begin{enumerate}[label=(\roman*)]
    \item The estimate in Theorem~\ref{thm:mainexpdecaycorr} is pointwise with respect to the random amplitudes. The random prefactor $\mathrm C_{\underline\omega,\kappa}$ may depend on the realization of the amplitude sequence and on $\kappa$, but it is independent of the Brownian motion $\bB$, and its moments with respect to $\underline{\omega}$ are bounded uniformly as $\kappa\to 0$. 
    The exponential rate $\lambda_s$ is deterministic and independent of $\kappa$. The proof follows the random dynamical systems strategy of \cites{BCZG2023} in the inviscid case and its uniform-in-diffusivity extension in \cites{CIS2024}. In Section~\ref{sec:asbtractRDS}, we recall the abstract framework and explain how it applies to the present spherical model.

    \item As in \cites{BCZG2023}, the piecewise-in-time vector field $u(t,\cdot,\underline{\omega};\a,\b)$ can be made smooth in time by a suitable time reparametrization, for instance by using the construction of \cite{yao2017mixing}*{page 1914}. This smoothing does not make the flow time-periodic: the use of independent random amplitudes at each step is essential to the random dynamical systems framework used below. 

    \item For the inviscid dynamics underlying Theorem~\ref{thm:mainexpdecaycorr}, multiplying $u^\a$ and $u^\b$ by the random amplitudes is equivalent to alternating between the two deterministic Rossby-Haurwitz vector fields for random signed times (with the convention that negative times correspond to flowing with $-u^\a$ or $-u^\b$ forwards in time). Thus the randomness enters through the signed duration of each elementary shearing motion.

    \item Theorem~\ref{thm:mainexpdecaycorr} holds for every pair of non-parallel axes $\a,\b\in\Sph$. In particular, the two families of streamlines generated by the elementary flows $u^\a$ and $u^\b$ need not be orthogonal. Whether deterministically alternating between two non-parallel \textit{and non-orthogonal} smooth shearing flows generically produces exponential mixing remains an outstanding open problem.

    \item The mixing estimate in Corollary~\ref{cor:unifexpmixing} has the optimal exponential time decay among velocity fields with uniformly bounded Lipschitz norm, see \cites{crippa2008estimates,iyer2014lower,seis2013maximal}. Such optimal decay rates cannot be achieved by smooth two-dimensional autonomous divergence-free velocity fields, since they mix only at algebraic rates, see \cites{brue2024enhanced}.
    
    \item Corollary~\ref{cor:optimalenhanceddiss} follows from the uniform-in-$\kappa$ stochastic decay of correlations, rather than from a deterministic inviscid mixing estimate alone. More precisely, Proposition~\ref{prop:unifdecaycorr} gives the pointwise estimate \eqref{eq:stochdecaycorr} with a random prefactor whose moments are bounded independently of $\kappa$. This is the key ingredient used, following \cites{CIS2024}, to obtain the enhanced dissipation estimate for the advection-diffusion equation. The underlying intuition is that mixing transfers information/energy to high frequencies where the diffusive term $\kappa\Delta_{\Sph}$ is most effective.

    \item The enhanced dissipation estimate in Corollary~\ref{cor:optimalenhanceddiss} has the optimal logarithmic scale for uniformly-in-time Lipschitz velocity fields; see \cites{seis2023bounds}. More precisely, the global-in-time enhanced dissipation rate is of order $|\log\kappa|^{-1}$ as $\kappa\to 0$, while the first term in the estimate of Corollary~\ref{cor:optimalenhanceddiss} shows that, after a logarithmic transient time, the decay rate becomes independent of $\kappa$, see \cites{navarro2025exponential} for more details.
\end{enumerate}
\end{remark}

\subsection{Context and contributions}\label{subsec:context}

The study of mixing properties of divergence-free vector fields on solutions to the advection-diffusion equation \eqref{eq:IntroAdvDiff} has seen a huge development over the last two decades. Quantifying mixing in terms of negative Sobolev norms (also called functional mixing) as in \eqref{eq:mixingnegSolbolev} was first introduced in \cites{mathew2005multiscale}, and later revisited in \cites{lin2011optimal}. While there are other, more geometrical, notions of mixing, see \cites{thiffeault2012using}, that are equivalent to the decay in time of homogeneous negative Sobolev norms in special cases, in this manuscript we consider functional mixing. 

It is by now a classical result that for sufficiently regular vector fields mixing cannot be faster than exponential, see \cites{crippa2008estimates}, and also \cites{iyer2014lower, seis2013maximal}. This exponential decay rate was first proved to be sharp in \cites{yao2017mixing, alberti2019exponential}, where exponentially mixing velocity fields $u$ in $\T^2$ specifically tailored to the initial data $\rho_0$ were exhibited. The question of universal mixers (a velocity field that mixes all mean-free initial data) was subsequently first resolved in \cites{elgindi2019universal}, see also \cites{elgindi2025optimal} where deterministic alternating Lipschitz vector fields on $\T^2$ are shown to mix exponentially fast. Introducing stochasticity in the system may enhance the chaotic dynamics generated by $u$. Indeed, universal exponential mixers can also be obtained from almost every solution to the forced stochastic Navier-Stokes equations, see \cites{bedrossian2022almost, cooperman2026exponential}.
It has also been recently shown in \cites{navarro2025exponential} that a cellular flow in $\T^2$ with a randomly moving centre is a further instance of an exponentially mixing vector field.

The examples above differ in several aspects: while some are tailored to a prescribed initial datum, some are not smooth in space, some are not uniformly bounded in time, and others rely on stochastic forcing. To address these caveats, in \cites{BCZG2023} the authors established the existence of space-time smooth and uniformly bounded universal exponential mixers on $\T^2$ by developing a robust random dynamical systems framework.
It was shown there that the Pierrehumbert model \cites{pierrehumbert1994tracer}, which consists of alternating periodic sine shear flows with randomized phases, is an almost-surely exponential mixer.
This approach was later applied in \cites{coti2026three} to show that the ABC flows with randomized phases and amplitudes in $\T^3$ are exponentially mixing. It was also used in \cites{C2023} to show that the Pierrehumbert example in $\T^2$ with randomized time-intervals is exponentially mixing as well. 

The present work reproduces this Pierrehumbert-type mechanism on the sphere. 
The elementary building blocks are zonal Rossby-Haurwitz vector fields. They are natural spherical analogues of shear flows that arise in large-scale atmospheric dynamics, including stratospheric flows (see, for instance, \cites{constantin2022stratospheric, constantin2025onset, dowling1995dynamics}), and, in this sense, the amplitude-randomized alternating vector field $u(t,\x,\underline{\omega};\a,\b)$ defined in \eqref{eq:DefAlternating_u} should be viewed as the spherical counterpart of the random-time alternating sine-shear models studied in \cites{pierrehumbert1994tracer,BCZG2023,C2023}. 

In the presence of molecular diffusivity $\kappa>0$, enhanced dissipation for time-independent velocity fields $u$ takes place if $u\cdot\nabla$ has no eigenfunction in $H^1$, see \cites{constantin2008diffusion, zlatovs2010diffusion, kiselev2008relaxation}. We refer the interested reader to \cites{coti2024mixing, coti2020relation, coti2020stable} and references therein for the relation between mixing and enhanced dissipation. While a heuristic argument suggests that exponentially mixing flows $u$ should produce a rate $\mathsf{d}(\kappa)\geq C|\log(\kappa)|^{-1}$, it has only been rigorously shown in \cites{coti2020relation, feng2019dissipation} that exponentially mixing flows $u$ produce at least a rate $\mathsf{d}(\kappa)\geq C|\log(\kappa)|^{-2}$. The optimal rate $\mathsf{d}(\kappa)\geq C|\log(\kappa)|^{-1}$ is recovered if one further assumes that $u$ is Lipschitz in space uniformly in time, see \cites{seis2023bounds}. The flows constructed in \cites{bedrossian2021almost, coti2026three, elgindi2025optimal ,CIS2024} are uniform-in-$\kappa$ exponentially mixing and obtain the optimal enhanced dissipation rate $O(|\log \kappa|^{-1})$ in view of \cites{seis2023bounds}.  Another instance of stochastic vector fields that mix exponentially fast uniformly in diffusivity and enhance dissipation is provided by the Kraichnan model, see \cites{gess2025stabilization, luo2024elementary, coti2024gaussian}.

We now comment on the specific difficulties that arise in the spherical setting and are not present in the previous works \cites{pierrehumbert1994tracer,BCZG2023,C2023}. 
From a technical point of view, proving exponential mixing and enhanced dissipation on $\Sph$ requires new arguments tailored both to the geometry of the sphere and to the specific structure of the Rossby-Haurwitz vector fields $u^\a$ and $u^\b$. 
Indeed, the passage from $\T^2$ to $\Sph$ is not merely geometric: the presence and nature of stagnation points, global symmetries, and the non-parallelizability of the tangent bundle changes the structure of the one-point, projective, and two-point dynamics. 
Accordingly, the proof cannot be obtained by directly adapting the torus arguments. 
The main new technical points are the following:
\begin{enumerate}[label=(\roman*)]
    \item The existence of a set of fixed points $F$ on $\Sph$ for almost every realization of $u(t,\cdot,\underline{\omega};\a,\b)$. The one-point process is thus not uniformly geometrically ergodic as there is no unique stationary measure on $\Sph$. Hence, a Lyapunov-Foster drift condition must be satisfied for a Lyapunov function specifically designed to capture the dynamics of $u(t,\cdot, \underline{\omega};\a,\b)$ near the fixed points. These dynamics are, in some cases, not uniformly hyperbolic, and they must be stable enough to be persistent in the presence of diffusivity $\kappa>0$. 
    \item The existence of other invariant sets for the two-point process other than the diagonal due to symmetries of the sphere. Just as for the one-point process, these sets have to be determined to construct a Lyapunov-Foster drift function for the two-point process.
    \item The open small set condition for non-orthogonal axes. The open small set condition for the one-point, two-point and projective processes is ensured once these processes are shown to be submersions for specific choices of noise parameters, see Lemma \ref{lemma:blackboxsmallset} below. In previous constructions \cites{BCZG2023, coti2026three, C2023} the authors exhibited specific sequences of noise parameters for which the relevant processes were submersions. In the spherical setting we consider, and especially when the axes $\a$ and $\b$ are non-orthogonal, it is remarkably more difficult to find specific noise parameters with the submersion property due to the complexity of the processes arising from $u(t,\cdot,\underline{\omega};\a,\b)$.
    Instead, we propose a distinct approach based on the Lie Algebra Rank Condition. In short, we derive a framework that ensures the submersion property of the relevant processes if the associated vector fields and their nested commutators have full rank in the associated tangent spaces. Such framework is fundamental when proving the positivity of the top Lyapunov exponent. See Section \ref{sec:LARC} for more details.
    \item The presence of diffusivity breaks the Lagrangian deterministic dynamics. To show the uniform-in-$\kappa$ decay of correlations in Theorem \ref{thm:mainexpdecaycorr} by means of a quantitative Harris theorem, we need to devise a drift function for the stochastic two-point process that satisfies a Lyapunov-Foster condition with $\kappa$-independent constants. Such function must take into account the dynamics of the stochastic one-point process near the fixed points, where there is a competition between the Brownian noise and the inviscid dynamics. We put forward a careful analysis near the fixed points and distinguish several regimes according to whether the processes are driven by either the deterministic dynamics or by the Brownian motion. We refer the reader to Section~\ref{sec:driftstochTPP} for more details.
\end{enumerate}

\subsection*{Organization of the paper}
We begin in Section \ref{sec:asbtractRDS} by introducing the one-point, projective, and two-point Markov transition kernels that are the primary object of study in this manuscript. We also provide there the main theoretical background on Random Dynamical Systems for establishing geometric ergodicity of the three Markov chains and the positivity of the top Lyapunov exponent by means of an abstract Harris Theorem and Furstenberg's criterion. These abstract results require the respective Markov chains to satisfy specific conditions closely related to the chaotic behaviour of the associated dynamical systems. These conditions may be verified by means of intermediate results that are presented in Section \ref{sec:asbtractRDS} as well.

These intermediate results need a precise understanding of the differential geometric structures of the two-dimensional sphere, which is carried out in Section \ref{sec:DiffGeom}. In Section \ref{sec:LARC} we set up a novel scheme to verify the existence of an open small set (one of the conditions for the abstract Harris Theorem) and the positivity of the top Lyapunov exponent. Next, Sections \ref{sec:OPP}, \ref{sec:PJP} and \ref{sec:TPP} show that the one-point, projective, and two-point processes satisfy the conditions of the abstract Harris Theorem and are hence geometrically ergodic. For the geometric ergodicity of the two-point process, the positivity of the top Lyapunov exponent is also necessary, and it is achieved in Section \ref{sec:toplyapexp}. 

The two-point process with non-zero Brownian noise is treated in Section \ref{sec:stochTPP}, where we set the main stochastic stability estimates to prove the uniform-in-$\kappa$ decay of correlations. Among other things, the scheme needs the stochastic two-point process to admit a Lyapunov-Foster drift function, which is established in Sections \ref{sec:driftstochOPP} and \ref{sec:driftstochTPP}. We finish with Section \ref{sec:unifkappa}, where we verify that the stochastic process $f_\omega^\kappa$ satisfies the required conditions to be uniformly geometrically ergodic.

\section{Random Dynamical Systems and a Harris Theorem}\label{sec:asbtractRDS}
In this section we introduce the random dynamical system framework that we use to prove Theorem \ref{thm:mainexpdecaycorr}. First, we observe that $u^\a$ and $u^\b$ share, at least, two fixed points (see Proposition \ref{prop:fixed_points} below), which constitute an invariant set for the flow map $f_\omega$. Let $F$ denote the set of such fixed points of the random-amplitude Pierrehumbert velocity field $u(t,\cdot,\underline{\omega};\a,\b)$, and consider the phase space $\mathsf{X}:=\Sph\setminus F$. 
We define on $\mathsf{X}$ the three main Markov transition kernels that describe the dynamics for the inviscid ($\kappa=0$) flow map $f_\omega$ arising from  $u(t,\cdot,\underline{\omega};\a,\b)$ on $\Sph$.

\begin{definition}\label{def:OPPPJPTPP}
Let $\a,\b\in\Sph$ be non-parallel axes and let $f_\omega(\x):=f^{\a}_{\omega_2}\circ f^{\b}_{\omega_1}(\x)$ be the associated time-1 flow map, for $\omega=(\omega_1,\omega_2)\sim \text{Unif}([-N,N]^2)$, for some $N>1$ fixed.
\begin{enumerate}
    \item The one-point chain $P$ in $\mathsf{X}$ associated to the one-point process $f_\omega$ is given by 
\begin{equation}\label{eq:defOPP}
    P(\x,U):=\PP_0(f_\omega(\x)\in U) 
\end{equation}
for any Borel set $U$ in $\mathsf{X}$.
\item The projective chain associated to the projective process $\hat f_\omega(\x,v)$ on the unit tangent bundle 
\begin{equation}
    \mathsf{Y}:=\lbrace  (\x,v): \x\in \mathsf{X}, v\in T_\x\Sph, |v|=1 \rbrace  
\end{equation}
is defined via $\hat f_\omega(\x,v):=(f_\omega(\x),\frac{D_\x f_\omega(\x)v}{| D_\x f_\omega(\x)v |})$ as 
\begin{equation}\label{eq:defPJP}
    \hat P((\x,v),\hat{U})=\PP_0(\hat f_\omega(\x,v)\in \hat{U})
\end{equation}
    for any Borel set $\hat{U}$ in $\mathsf{Y}$.
\item The two-point chain $P^{(2)}$ associated to the two-point process $f^{(2)}_\omega(\x,\y)=(f_\omega(\x),f_\omega(\y))$ in the phase space $\mathsf{X}^{(2)}:=\mathsf{X}\times \mathsf{X}\setminus \cS$ is given by
\begin{equation}\label{eq:defTPP}
    P^{(2)}((\x_1,\x_2),U^{(2)})=\PP_0(f_\omega^{(2)}(\x_1,\x_2)\in U^{(2)})
\end{equation}
    for any Borel set $U^{(2)}$ in $\mathsf{X}^{(2)}$. Here, $\cS$ denotes the set of symmetries on the product space $\mathsf{X}\times \mathsf{X}$ for which $f^{(2)}_\omega$ is invariant, see Proposition \ref{prop:symmetries}, and includes the diagonal $\D = \lbrace (\x,\y)\in \mathsf{X} \times \mathsf{X} : \x = \y \rbrace$.
\end{enumerate}
\end{definition}

The analysis leading to Theorem \ref{thm:mainexpdecaycorr} relies on showing ergodic properties for the Markov chains introduced above. This section is devoted to presenting a primer of the abstract general setting and results on Markov chains and random dynamical systems developed in \cites{BCZG2023,CIS2024} that are applicable to the one-point, projective and two-point chains.

\subsection{Markov transition kernels and Harris Theorem}\label{sec:kernel andHarris}
Let $\mathrm{X}$ be a complete metric space (not necessarily compact), let $\mathcal{B}(\mathrm{X})$ denote the set of Borel measurable sets on $\mathrm{X}$ and let $\mathcal{P}(\mathrm{X})$ be the set of Borel probability measures on $\mathrm{X}$. We say that $\mathrm{P}$ is a Markov transition kernel on $\mathrm{X}$ if for each $\mathrm{x}\in \mathrm{X}$ we have that ${\mathrm{P}}(\mathrm{x},\cdot)$ is a Borel probability on $\mathrm{X}$. Iterates of the Markov transition kernel ${\mathrm{P}}$ are defined through Chapman-Kolmogorov by
\begin{align}
    {\mathrm{P}}^{n+1}(\mathrm{x},A)=\int_\mathrm{X} {\mathrm{P}}^n(\mathrm{z},A) {\mathrm{P}}(\mathrm{x}, \d \mathrm{z}),
\end{align}
for all $n\in \NN$ and all Borel measurable sets $A\in \mathcal{B}(\mathrm{X})$. The transition kernel ${\mathrm{P}}$ acts on bounded Borel measurable functions $\varphi:\mathrm{X}\rightarrow\R$ via
\begin{align}
    \mathrm{P}\varphi(\mathrm{x}) = \int_\mathrm{X} \varphi(\mathrm{z}) {\mathrm{P}}(\mathrm{x}, \d \mathrm{z}),
\end{align}
and if $\mathrm{P}g$ is a continuous and bounded function on $\mathrm{X}$ for all continuous and bounded functions $g$, we say that ${\mathrm{P}}$ has the \textit{Feller} property. Finally, ${\mathrm{P}}$ also acts on probability measures $\nu\in \mathcal{M}(\mathrm{X})$ on $\mathrm{X}$ through
\begin{align}
    {\mathrm{P}}\nu(A) = \int_\mathrm{X} {\mathrm{P}}(\mathrm{x},A) \d \nu(\mathrm{x}),
\end{align}
for any Borel measurable $A\in \mathcal{B}(\mathrm{X})$.
Hence, we say that a Borel probability measure $\pi\in \mathcal{M}(\mathrm{X})$ is \textit{stationary} for ${\mathrm{P}}$ if it is a fixed point of ${\mathrm{P}}$, that is,
\begin{align}
    \pi(A) = \int_\mathrm{X} {\mathrm{P}}(\mathrm{x},A) \d \pi(\mathrm{x}),
\end{align}
for all $A\in \mathcal{B}(\mathrm{X})$. A stationary measure $\pi$ is called \textit{ergodic} if all $({\mathrm{P}},\pi)$ invariant sets have $\pi$-measure either zero or one.
In this setting, it is a standard fact that if $\pi$ is the unique stationary measure for a transition kernel ${\mathrm{P}}$ then it is ergodic, see \cites{kifer1986ergodic}.

Given any function $\mathrm{V}:\mathrm{X}\rightarrow[1,\infty)$, we say that $\varphi\in L^\infty_\mathrm{V}(\mathrm{X})$ if $\varphi$ is measurable and 
\begin{align}
    \Vert \varphi \Vert_{L^\infty_\mathrm{V}(\mathrm{X})} := \sup_{\mathrm{x}\in \mathrm{X}}\frac{|\varphi(\mathrm{x})|}{\mathrm{V}(\mathrm{x})} < \infty.
\end{align}
In this paper, we are interested not only in the existence of an ergodic stationary measure for a Markov transition kernel ${\mathrm{P}}$ but also in deriving a geometric rate of convergence in $L^\infty_\mathrm{V}(\mathrm{X})$ towards it. More precisely, we say that ${\mathrm{P}}$ is $\mathrm{V}$-\textit{uniformly geometrically ergodic} if ${\mathrm{P}}$ admits a unique stationary measure $\pi$ and there exists $C>0$ and $\gamma\in(0,1)$ such that
\begin{align}
    \left| {\mathrm{P}}^n\varphi (\mathrm{x}) - \int_\mathrm{X} \varphi(\mathrm{x}) \d\pi(\mathrm{x}) \right| \leq C \mathrm{V}(\mathrm{x}) \Vert \varphi \Vert_{L^\infty_\mathrm{V}(\mathrm{X})}\gamma^n,
\end{align}
for all $\mathrm{x}\in \mathrm{X}$, all $\varphi\in L^\infty_\mathrm{V}$ and all $n\in \N$. The main tool to show that ${\mathrm{P}}$ is $\mathrm{V}$-{uniformly geometrically ergodic} is Harris Theorem.

\begin{theorem}[Harris Theorem]\label{thm:abstractHarris}
Let ${\mathrm{P}}$ be a Markov transition kernel with the Feller property. Assume that
\begin{enumerate}[label=(\arabic*)]
    \item ${\mathrm{P}}$ admits an open small set: there exists a set $A\subset \mathrm{X}$, a positive measure $\nu$ on $\mathrm{X}$ and a natural number $n=n(A)\in\NN$ such that, for all $\x\in A$, we have 
    \begin{equation}
        {\mathrm{P}}^n(\x,B)\geq \nu(B),
    \end{equation}
    for all measurable subsets $B\subset \mathrm{X}$. Such set $A$ is said to be small.
    \item ${\mathrm{P}}$ is topologically irreducible: for every point $\x\in \mathrm{X}$ and every open non-empty set $A\subset \mathrm{X}$, there exists a natural number $n=n(\x,A)\in\NN$ such that 
    \begin{equation}
        {\mathrm{P}}^n(\x,A)>0.
    \end{equation}
    \item ${\mathrm{P}}$ is strongly aperiodic: there exists some $\x\in \mathrm{X}$ such that for all open neighbourhoods $A$ of $\x$, we have 
    \begin{equation}
        {\mathrm{P}}(\x,A)>0.
    \end{equation}
    \item There exists a function $\mathrm{V}:\mathrm{X}\rightarrow [1,\infty)$ that meets the Lyapunov-Foster drift condition: there exist some constants $\gamma\in(0,1)$, $\beta>0$, and a compact set $K\subset \mathrm{X}$ such that
    \begin{equation}
        \mathrm{P}\mathrm{V}\leq \gamma \mathrm{V}+\beta \boldsymbol{1}_K.
    \end{equation}
\end{enumerate}
Then, ${\mathrm{P}}$ is $\mathrm{V}$-uniformly geometrically ergodic.
\end{theorem}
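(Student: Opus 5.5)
The plan is to reduce the statement to the classical $\mathrm{V}$-uniform ergodic theorem of Meyn--Tweedie (Theorem 15.0.1 in \emph{Markov Chains and Stochastic Stability}). That theorem asserts $\mathrm{V}$-uniform geometric ergodicity for any $\psi$-irreducible, aperiodic chain which satisfies the drift inequality $\mathrm{P}\mathrm{V}\leq \gamma\mathrm{V}+\beta\boldsymbol{1}_K$ with $K$ a petite set. Alternatively, one can use the weighted-norm contraction argument of Hairer--Mattingly, which requires a drift condition for some iterate $\mathrm{P}^m$ together with a uniform minorization on a sublevel set of $\mathrm{V}$. Either way, the real content is to upgrade hypotheses (1)--(3) to $\psi$-irreducibility, aperiodicity, and smallness of the compact set $K$ appearing in (4). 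Hypothesis (4) is then used as is.

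The first step is lower semicontinuity. By the Feller property, $\mathrm{P}^n g$ is continuous for bounded continuous $g$ and every $n$. Approximating $\boldsymbol{1}_U$ from below by continuous functions (Portmanteau), the map $\x\mapsto \mathrm{P}^n(\x,U)$ is lower semicontinuous for every open $U$. Next, take $\psi:=\nu$ from (1). By (2), every $\x$ reaches the open small set $A$ with positive probability at some time $n(\x,A)$. By Chapman--Kolmogorov, $\mathrm{P}^{n(\x,A)+n(A)}(\x,\cdot)\geq \mathrm{P}^{n(\x,A)}(\x,A)\,\nu(\cdot)$, so the chain is $\nu$-irreducible. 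For aperiodicity, let $\x_*$ be the point from (3) and set $W:=\{\y:\mathrm{P}(\y,B)>0\}$ for a small open ball $B$ around $\x_*$. Then $W$ is open by lower semicontinuity and contains $\x_*$, so shrinking $B$ we may assume $B\subset W$. Every point of $B$ therefore stays in $B$ for one step with positive probability. Combining this with irreducibility (to reach $B$ from $A$'s support and $A$ from $B$) produces return times to $A$ of consecutive lengths $\ell$ and $\ell+1$, so the period is $1$.

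The second step is to show that $K$ is small (in particular petite), which I expect to be the main obstacle. For each $\y\in K$, irreducibility gives $n_\y$ with $\mathrm{P}^{n_\y}(\y,A)>0$. By lower semicontinuity there is an open neighbourhood $O_\y$ and $\delta_\y>0$ such that $\mathrm{P}^{n_\y}(\z,A)\geq\delta_\y$ for all $\z\in O_\y$, and hence $\mathrm{P}^{n_\y+n(A)}(\z,\cdot)\geq \delta_\y\nu$ on $O_\y$. Compactness gives a finite subcover $O_{\y_1},\dots,O_{\y_k}$. The difficulty is that the times $n_{\y_i}+n(A)$ differ. I would handle this either by declaring $K$ petite with the sampling distribution $\frac1k\sum_i\delta_{n_{\y_i}+n(A)}$, which is all Meyn--Tweedie requires, or by using the aperiodicity loop through $B$ to pad every route to a common time $m$, giving a genuine uniform minorization $\mathrm{P}^m(\z,\cdot)\geq\delta\tilde\nu$ on $K$. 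Since $K$ is petite and the chain is aperiodic and $\psi$-irreducible, Meyn--Tweedie also gives that $K$ is small.

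The last step is to conclude. With $K$ petite and the drift condition (4), Meyn--Tweedie applies directly and yields a unique invariant $\pi$ with $\pi(\mathrm{V})<\infty$ and the bound $|\mathrm{P}^n\varphi(\x)-\pi(\varphi)|\leq C\mathrm{V}(\x)\|\varphi\|_{L^\infty_\mathrm{V}}\gamma'^n$. If instead one prefers the Hairer--Mattingly route, one iterates (4) to $\mathrm{P}^m\mathrm{V}\leq \gamma^m\mathrm{V}+\beta'$. One then notes that outside $K$ the drift is strict, so a large sublevel set $\{\mathrm{V}\leq R\}$ reaches $K$ in bounded time with probability bounded below, which transfers the minorization from $K$ to that sublevel set. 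The contraction in the norm $\sum|\cdot|(1+\epsilon\mathrm{V})$ then follows verbatim. Uniqueness of $\pi$ gives ergodicity, as recalled above.
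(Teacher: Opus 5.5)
Your overall route is the right one. It is the same reduction to Meyn--Tweedie that underlies the proof the paper cites from \cite{BCZG2023}, and the paper itself uses it in Section~\ref{sec:unifkappa} via Theorems 6.2.5 and 5.5.7 of \cite{meyn2012markov}. The steps that work are these: (1)+(2) give $\nu$-irreducibility; lower semicontinuity of $\x\mapsto\mathrm{P}^n(\x,U)$ for open $U$, together with compactness, makes $K$ petite; petite sets are small for aperiodic chains; and Theorem~15.0.1 or 16.0.1 of \cite{meyn2012markov} concludes.

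The step that fails as written is in the aperiodicity argument. The set $W=\{y:\mathrm{P}(y,B)>0\}$ depends on $B$ and shrinks with it, so ``shrinking $B$ we may assume $B\subset W$'' is not available. In general no neighbourhood $B$ of $\x_*$ with $\mathrm{P}(y,B)>0$ for all $y\in B$ exists. For example, on $\R/\Z$ take $\mathrm{P}(y,\cdot)=\tfrac12\delta_{2y \bmod 1}+\tfrac12\,\mathrm{Unif}[\tfrac14,\tfrac34]$ and $\x_*=0$. This kernel is Feller, every set is small, $\mathrm{P}^2(y,\cdot)\geq\tfrac14\,\mathrm{Leb}$, (3) holds at $0$, and (4) holds with $\mathrm{V}\equiv 1$ and $K=\R/\Z$. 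Yet for $B=(-r,r)$ with $r<\tfrac18$, the point $y=\tfrac{3r}{4}$ has $\mathrm{P}(y,B)=0$.

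The one-step loop near $\x_*$ only needs two nested neighbourhoods with uniform bounds:
\begin{itemize}
\item Pick $m$ with $\mathrm{P}^m(\x_*,A)>0$. By lower semicontinuity, $\mathrm{P}^m(\cdot,A)\geq\delta$ on some neighbourhood $U$ of $\x_*$.
\item Since $\mathrm{P}(\x_*,U)>0$, lower semicontinuity again gives $\mathrm{P}(\cdot,U)\geq\delta'$ on some neighbourhood $U'\subset U$ of $\x_*$.
\item Chapman--Kolmogorov then gives, for every $y\in U'$, both $\mathrm{P}^{m+n(A)}(y,\cdot)\geq\delta\,\nu$ and $\mathrm{P}^{m+1+n(A)}(y,\cdot)\geq\delta\delta'\,\nu$.
\item Choose $k$ with $\nu\mathrm{P}^k(U')>0$. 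This is possible by (2): $\sum_j 2^{-j}\mathrm{P}^j(z,U')>0$ for every $z$, and $\nu$ is nontrivial.
\end{itemize}
Then $U'$ is small at the consecutive times $M=m+n(A)+k$ and $M+1$, with minorizing measures proportional to $\mu:=\nu\mathrm{P}^k$ and $\mu(U')>0$. Theorem~5.4.4 in \cite{meyn2012markov} then gives period $1$. Your ``consecutive return times'' argument needs exactly these uniform lower bounds, not pointwise positivity of return probabilities. With this repair the proposal is complete.
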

The proof of this version of Harris Theorem is found in \cites{BCZG2023}. See also \cites{hairer2011yet, meyn2012markov} for more general versions of the theorem. 

\subsection{Random dynamical systems and sufficient conditions for Harris Theorem}
Most of the effort in this paper is dedicated to showing that the one-point, two-point and projective processes are Markov transition kernels that satisfy all the hypotheses of Harris theorem. We do so following \cites{BCZG2023}, where the authors state a set of conditions for a random dynamical system with absolutely continuous noise that ensure its associated Markov transition kernel admit an open small set, is topologically irreducible, and is strongly aperiodic.

Let $(\Omega_0, \mathcal{F}_0, \mathbb{P}_0)$ be a fixed probability space. A continuous Random Dynamical System (RDS) with independent increments on $\mathrm{X}$ is an assignment to each $\omega\in \Omega_0$ of a continuous mapping $\mathrm{f}_\omega:\mathrm{X}\rightarrow\mathrm{X}$  such that the set $\lbrace   \omega\in \Omega_0 : \mathrm{f}_\omega(\mathrm{x}) \in A \rbrace  $ is $\mathcal{F}_0$-measurable, for all $A\in \mathcal{B}(\mathrm{X})$ and all $\mathrm{x}\in \mathrm{X}$. Setting $(\Omega, \mathcal{F}, \mathbb{P}) = (\Omega_0, \mathcal{F}_0, \mathbb{P}_0)^\N$ and writing $\underline{\omega}=(\omega_1,\omega_2,...)$ for $\underline{\omega}\in \Omega$, we next consider the random composition of functions
\begin{align}
    \mathrm{f}_{\underline{\omega}}^n = \mathrm{f}_{\omega_n}\circ \dots \circ \mathrm{f}_{\omega_1}
\end{align}
for $n\in \NN$ and with the convention that $\mathrm{f}_{\underline{\omega}}^0$ is the identity map in $\mathrm{X}$. Let $\theta:\Omega \rightarrow\Omega$ denote the leftward shift on $\Omega$, that is, $\theta\underline{\omega} = (\omega_2,\omega_3,...)$ for $\underline{\omega}=(\omega_1,\omega_2,...)$. Note that $\theta$ is a measure-preserving transformation on $(\Omega, \mathcal{F}, \mathbb{P})$ and that $\mathrm{f}_{\underline{\omega}}^n$ satisfies the cocycle property
\begin{align}
    \mathrm{f}_{\underline{\omega}}^{n+m} = \mathrm{f}_{\theta^m\underline{\omega}}^n \circ \mathrm{f}^m_{\underline{\omega}},
\end{align}
for all $m,n\geq 0$. In this framework, continuous RDS define Markov chains with transition kernels
\begin{align}
    {\mathrm{P}}(\mathrm{x}, A) := \mathbb{P}_0 (\mathrm{f}_\omega(\mathrm{x}) \in A),
\end{align}
and continuity of $\mathrm{f}_\omega$ directly yields the Feller property for the transition kernel ${\mathrm{P}}$. Further specializing to the Markov chains derived from the one-point, two-point and projective processes, we assume the following:
\begin{itemize}
    \item[(A1)] $\Omega_0$ is a smooth complete Riemannian manifold, the law $\mathbb{P}_0$ on $\Omega_0$ admits a density $\varrho_0$ with respect to the Lebesgue measure $\d \omega$ on $\Omega_0$ and the mapping $\Omega_0\times \mathrm{X}\rightarrow \mathrm{X}$ given by $(\omega,\mathrm{x})\mapsto \mathrm{f}_\omega(\mathrm{x})$ is $C^2$.
    \item[(A2)] There exists a constant $C_0>0$ such that $\mathbb{P}_0$-a.s.,
    \begin{align}
        \Vert (D_\x \mathrm{f}_\omega)^{-1} \Vert + \Vert D_\x \mathrm{f}_\omega \Vert + \Vert \mathrm{f}_\omega \Vert_{C^2} \leq C_0.
    \end{align}
    \item[(A3)] The RDS $\mathrm{f}_\omega$ preserves, $\mathbb{P}_0$-a.s., the Lebesgue measure on $\mathrm{X}$.
\end{itemize}
 Given finitely many coordinates $\omega_i$ of the random sequence $\underline{\omega} = \lbrace   \omega_i \rbrace_{i\geq 1}$, we adopt the notation $\underline{\omega}^n = (\omega_1,...,\omega_n)\in \Omega_0^n$, the Cartesian product of $n$ copies of $\Omega_0$. Similarly, we abuse notation to denote $\varrho_0$ the density of the product law $\mathbb{P}_0^n$ on $\Omega_0^n$ given by
\begin{align}
    \varrho_0(\underline{\omega}^n) = \varrho_0(\omega_1)\dots \varrho_0(\omega_n).
\end{align}
We begin by giving a simple condition for a Markov kernel ${\mathrm{P}}$ to be strongly aperiodic in the sense of Theorem \ref{thm:abstractHarris}. Its proof is given in \cite{BCZG2023}*{Lemma 3.2}.
\begin{lemma}\label{lemma:blackboxaperiodic}
    Assume there exists $\omega_*\in \text{Supp}(\mathbb{P}_0)$ and $\mathrm{x}_*\in \mathrm{X}$ such that $\mathrm{f}_{\omega_*}(\mathrm{x}_*)=\mathrm{x}_*$. Then, ${\mathrm{P}}$ is strongly aperiodic.
\end{lemma}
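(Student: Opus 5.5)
We need to show: if $\mathrm{f}_{\omega_*}(\mathrm{x}_*)=\mathrm{x}_*$ with $\omega_*\in\text{Supp}(\mathbb{P}_0)$, then for every open neighbourhood $A$ of $\mathrm{x}_*$ we have $\mathrm{P}(\mathrm{x}_*,A)>0$. The plan is a direct continuity argument: we use joint continuity of $(\omega,\mathrm{x})\mapsto\mathrm{f}_\omega(\mathrm{x})$ from (A1), together with the fact that every neighbourhood of a point in the support has positive measure.

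\textbf{Step 1 (an open set of noise parameters).} Fix an open set $A\ni \mathrm{x}_*$. By (A1) the map $\omega\mapsto \mathrm{f}_\omega(\mathrm{x}_*)$ is continuous on $\Omega_0$; indeed it is $C^2$. Hence the preimage
\begin{equation}
    W:=\lbrace \omega\in\Omega_0 : \mathrm{f}_\omega(\mathrm{x}_*)\in A\rbrace
\end{equation}
is open in $\Omega_0$. It contains $\omega_*$, because $\mathrm{f}_{\omega_*}(\mathrm{x}_*)=\mathrm{x}_*\in A$.

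\textbf{Step 2 (positivity).} By definition of the support, every open set containing a point of $\text{Supp}(\mathbb{P}_0)$ has positive $\mathbb{P}_0$-measure; otherwise the point would lie in the complement of the support, which is the largest open null set. Hence
\begin{equation}
    \mathrm{P}(\mathrm{x}_*,A)=\mathbb{P}_0(\mathrm{f}_\omega(\mathrm{x}_*)\in A)=\mathbb{P}_0(W)>0 .
\end{equation}
Since $A$ was an arbitrary open neighbourhood of $\mathrm{x}_*$, this is exactly condition (3) of Theorem~\ref{thm:abstractHarris}.

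There is no substantial obstacle. The only point needing care is the measurability of $W$, and this holds because $W$ is open. The argument is also well posed on a general manifold $\Omega_0$: $\text{Supp}(\mathbb{P}_0)$ is well defined there, since $\Omega_0$ is a complete separable metric space. For the concrete model, $\Omega_0=[-N,N]^2$ with the uniform law, so the support is all of $\Omega_0$. Any $\omega_*$, for instance $\omega_*=(0,0)$ where $\mathrm{f}_{\omega_*}$ is the identity, therefore gives the needed fixed point.
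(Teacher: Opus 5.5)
Your proof is correct and is essentially the standard argument. The paper gives no proof of its own for this lemma; it defers to \cite{BCZG2023}*{Lemma 3.2}, which rests on the same two ingredients as yours: continuity of $\omega\mapsto \mathrm{f}_\omega(\mathrm{x}_*)$ makes $\{\omega:\mathrm{f}_\omega(\mathrm{x}_*)\in A\}$ an open neighbourhood of $\omega_*$, and that neighbourhood has positive $\mathbb{P}_0$-mass because $\omega_*$ lies in the support. Continuity in $\omega$ is not part of the lemma's stated hypotheses but comes from the standing assumption (A1), which you correctly invoke.
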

Next, to derive a sufficient condition for the existence of open small sets, fix $\mathrm{x}\in \mathrm{X}$ and define $\Psi_\mathrm{x}:\Omega_0^n \rightarrow \mathrm{X}$ by
\begin{align}\label{eq:defPsi}
    \Psi_\mathrm{x}(\underline{\omega}^n) := \mathrm{f}_{\omega_n}\circ \dots \circ \mathrm{f}_{\omega_1}(\mathrm{x}), \quad \underline{\omega}^n = (\omega_1, ..., \omega_n).
\end{align}
The following Lemma is drawn from \cite{BCZG2023}*{Proposition 3.1}.
\begin{lemma}\label{lemma:blackboxsmallset}
    Suppose the RDS $\mathrm{f}_\omega$ satisfies (A1) and there exists $n\geq 1$ and $(\underline{\omega}_*^n,\mathrm{x}_*)\in \text{Supp}(\mathbb{P}_0^n) \times \text{Supp}(\pi)$ such that
    \begin{enumerate}
        \item There exist some $c>0$ and $\ep>0$ such that $\varrho_0(\underline{\omega}^n)\geq c>0$ if $|\underline{\omega}^n - \underline{\omega}_*^n|< \ep$.
        \item The map $\Psi_{\mathrm{x}_*}$ is a submersion at $\underline{\omega}^n = \underline{\omega}_n^*$.
\end{enumerate}
Then, ${\mathrm{P}}$ admits an open, ${\mathrm{P}}^n$-small set with corresponding measure $\nu_n \ll \Leb_\mathrm{X}$.
\end{lemma}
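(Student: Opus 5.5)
The plan is the standard constant-rank/implicit-function argument: the submersion property at $\underline{\omega}_*^n$ lets us locally straighten the map $\Psi_{\mathrm{x}}$, and then push the uniform lower bound on the density forward. Throughout, $d=\dim \mathrm{X}$ and $m=\dim\Omega_0^n\ge d$.

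\textbf{Step 1: local straightening, uniformly in the base point.} Since $\Psi_{\mathrm{x}_*}$ is a submersion at $\underline{\omega}_*^n$, the differential $D_{\underline{\omega}}\Psi_{\mathrm{x}_*}(\underline{\omega}_*^n)$ has rank $d$. By (A1), the map $(\underline{\omega}^n,\mathrm{x})\mapsto\Psi_{\mathrm{x}}(\underline{\omega}^n)$ is $C^2$, since it is a composition of $C^2$ maps. Hence there are neighbourhoods $U\ni\mathrm{x}_*$ and $W\ni\underline{\omega}_*^n$ with $W\subset B_\ep(\underline{\omega}_*^n)$ such that $\Psi_{\mathrm{x}}$ is a submersion on $W$ for every $\mathrm{x}\in U$.

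Choose coordinates $\underline{\omega}^n=(\alpha,\beta)\in\R^d\times\R^{m-d}$ near $\underline{\omega}_*^n$ in which $\partial_\alpha\Psi_{\mathrm{x}_*}$ is invertible. Apply the implicit function theorem to
\[
G(\mathrm{x},\alpha,\beta,\mathrm{y})=\Psi_{\mathrm{x}}(\alpha,\beta)-\mathrm{y}.
\]
This gives, for $\mathrm{x}$ near $\mathrm{x}_*$, $\beta$ near $\beta_*$ and $\mathrm{y}$ near $\mathrm{y}_*:=\Psi_{\mathrm{x}_*}(\underline{\omega}_*^n)$, a $C^1$ solution $\alpha=\alpha(\mathrm{x},\beta,\mathrm{y})$. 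Equivalently, the map $\Phi_{\mathrm{x}}:(\alpha,\beta)\mapsto(\Psi_{\mathrm{x}}(\alpha,\beta),\beta)$ is a $C^1$ diffeomorphism from a fixed box onto its image. Its inverse Jacobian $|\det\partial_\alpha\Psi_{\mathrm{x}}|^{-1}$ is bounded above and below by constants independent of $\mathrm{x}\in U$, after shrinking $U$. The image contains a fixed product $V\times B$, with $V\ni\mathrm{y}_*$ open and $B$ a ball in $\beta$, for all $\mathrm{x}\in U$.

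\textbf{Step 2: lower bound on the transition kernel.} For $\mathrm{x}\in U$ and Borel $A\subset\mathrm{X}$, condition (1) gives $\varrho_0\ge c$ on $W$, so
\[
\mathrm{P}^n(\mathrm{x},A)=\int\mathbf 1_A(\Psi_{\mathrm{x}}(\underline{\omega}^n))\varrho_0(\underline{\omega}^n)\,\d\underline{\omega}^n\ge c\int_{W}\mathbf 1_A(\Psi_{\mathrm{x}}(\underline{\omega}^n))\,\d\underline{\omega}^n .
\]
Change variables via $\Phi_{\mathrm{x}}$ and restrict to $V\times B$. This bounds the right-hand side below by
\[
c\,c'\,|B|\,\Leb_{\mathrm{X}}(A\cap V),
\]
where $c'$ is the lower bound on the inverse Jacobian (with the Riemannian volume comparable to the coordinate Lebesgue measure on $V$). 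Setting $\nu_n:=c\,c'|B|\,\Leb_{\mathrm{X}}(\cdot\cap V)$ gives a nonzero measure with $\nu_n\ll\Leb_{\mathrm{X}}$, and $U$ is an open $\mathrm{P}^n$-small set. The hypothesis $\mathrm{x}_*\in\operatorname{Supp}(\pi)$ is not needed for smallness itself; it only ensures the small set is charged by the stationary measure, as Harris's theorem later requires.

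\textbf{Main obstacle.} The only delicate point is uniformity in $\mathrm{x}$. The neighbourhoods $V$ and $B$ and the Jacobian bounds must be chosen independently of $\mathrm{x}\in U$, so that a single measure $\nu_n$ works for all base points. This needs the joint $C^1$ dependence on $(\mathrm{x},\underline{\omega}^n)$ from (A1) together with a quantitative form of the inverse function theorem: the radius of invertibility should be controlled by a lower bound on $|\det\partial_\alpha\Psi|$ and a $C^1$ modulus, both of which are uniform on a compact neighbourhood.
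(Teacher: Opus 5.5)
Your proposal is correct and follows the standard argument behind \cite{BCZG2023}*{Proposition 3.1}, which the paper cites without giving a proof of its own. You straighten $\Psi_{\mathrm{x}}$ near $\underline{\omega}_*^n$ by the implicit function theorem jointly in the base point, change variables in the $d$ transverse directions, and integrate out the kernel directions against the density lower bound; this is the same mechanism the paper uses in Lemma~\ref{lemma:quantpreimagefromlocalsubmersion}. Also, the uniformity you flag as the main obstacle is already settled by your Step~1: the implicit function theorem with $(\mathrm{x},\beta,\mathrm{y})$ as parameters gives a product neighbourhood $U\times B\times V$ on which $\alpha(\mathrm{x},\beta,\mathrm{y})$ exists and is unique, so no separate quantitative inverse function theorem is needed.
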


Lemma 10 in \cites{C2023} gives a sufficient condition for ${\mathrm{P}}$ to be topologically irreducible. It reads:
\begin{lemma}\label{lemma:blackboxirreducibility}
Suppose the RDS $\mathrm{f}_\omega$ satisfies (A1). Assume that for all $\mathrm{x}\in\mathrm{X}$ and all open sets $\mathrm{U}\subset\mathrm{X}$ there exists $n\in \N$ and ${\underline{\omega}}_*^n\in \text{Supp}(\mathbb{P}_0^n)\subset\Omega_0^n$ such that 
\begin{enumerate}
    \item There exist some $c>0$ and $\ep>0$ such that $\varrho_0(\underline{\omega}^n)\geq c>0$ if $|\underline{\omega}^n - \underline{\omega}_*^n|< \ep$.
    \item $\Psi_\mathrm{x}({\underline{\omega}}_*^n)\in \mathrm{U}$.
\end{enumerate}
Then, ${\mathrm{P}}$ is topologically irreducible.
\end{lemma}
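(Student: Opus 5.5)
The plan is to prove topological irreducibility directly from the definition in Theorem \ref{thm:abstractHarris}(2), using the density assumption in (A1) and continuity of the map $\Psi_{\mathrm{x}}$ defined in \eqref{eq:defPsi}. Fix $\mathrm{x}\in\mathrm{X}$ and an open nonempty $\mathrm{U}\subset\mathrm{X}$. Take $n$ and $\underline{\omega}_*^n$ as in the hypothesis. The target is the positivity bound
\begin{equation}
    {\mathrm{P}}^n(\mathrm{x},\mathrm{U}) = \mathbb{P}_0^n\bigl(\underline{\omega}^n\in\Omega_0^n : \Psi_\mathrm{x}(\underline{\omega}^n)\in\mathrm{U}\bigr) > 0,
\end{equation}
which is exactly what irreducibility requires.

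First I will justify the identity above. Iterating the Chapman--Kolmogorov relation and using that the $\omega_i$ are independent with law $\mathbb{P}_0$ gives ${\mathrm{P}}^n(\mathrm{x},A)=\mathbb{P}_0^n(\mathrm{f}_{\omega_n}\circ\dots\circ \mathrm{f}_{\omega_1}(\mathrm{x})\in A)$. This follows by a routine induction on $n$ with Fubini, since the RDS has independent increments.

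Second, I will use that $\Psi_\mathrm{x}$ is continuous in $\underline{\omega}^n$. This holds because by (A1) the map $(\omega,\mathrm{x})\mapsto \mathrm{f}_\omega(\mathrm{x})$ is $C^2$, hence jointly continuous, and compositions of jointly continuous maps are continuous. Since $\Psi_\mathrm{x}(\underline{\omega}_*^n)\in\mathrm{U}$ and $\mathrm{U}$ is open, the preimage $\Psi_\mathrm{x}^{-1}(\mathrm{U})$ is an open subset of $\Omega_0^n$ containing $\underline{\omega}_*^n$. It therefore contains a ball $B_\delta(\underline{\omega}_*^n)$ for some $0<\delta\le\ep$.

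Third, I will bound the probability from below using the density:
\begin{equation}
    \mathbb{P}_0^n\bigl(\Psi_\mathrm{x}^{-1}(\mathrm{U})\bigr)\ \ge\ \int_{B_\delta(\underline{\omega}_*^n)}\varrho_0(\underline{\omega}^n)\,\d\underline{\omega}^n\ \ge\ c\,\Leb\bigl(B_\delta(\underline{\omega}_*^n)\bigr)>0.
\end{equation}
The middle inequality uses hypothesis (1). The last inequality holds because a nonempty open ball in the Riemannian manifold $\Omega_0^n$ has positive Riemannian volume. Taking $n=n(\mathrm{x},\mathrm{U})$ then gives irreducibility.

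The only step needing care is the first one, the identification of ${\mathrm{P}}^n$ with the law of the $n$-fold random composition, which requires measurability of $\omega\mapsto \mathrm{f}_\omega(\mathrm{x})$. That measurability is part of the RDS definition, so I do not expect a genuine obstacle. The real difficulty in applying this lemma to the spherical model lies elsewhere: it is in exhibiting, for each $\mathrm{x}$ and $\mathrm{U}$, a control sequence $\underline{\omega}_*^n$ steering $\mathrm{x}$ into $\mathrm{U}$. That is a controllability question for $\mathrm{f}^{\a}_{\omega_2}\circ \mathrm{f}^{\b}_{\omega_1}$ away from the fixed-point set $F$, and the lemma itself does not address it.
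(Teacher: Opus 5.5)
Your proof is correct: you identify $\mathrm{P}^n(\mathrm{x},\cdot)$ with the law of $\Psi_\mathrm{x}(\underline{\omega}^n)$, use the continuity of $\Psi_\mathrm{x}$ coming from (A1) to find a ball around $\underline{\omega}_*^n$ that is mapped into $\mathrm{U}$, and bound the probability of that ball below by $c$ times its volume. The paper does not prove this lemma itself and only cites Lemma 10 of \cite{C2023}; your continuity-plus-density argument is the standard proof of that result, so nothing is missing.
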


If $\mathrm{X}$ is a compact phase space, irreducibility, small sets and aperiodicity are sufficient conditions for exponential ergodicity of the Markov chain. However, the one-point, projective and two-point processes given in Definition~\ref{def:OPPPJPTPP} are defined in non-compact phase spaces (because of the fixed point of the dynamics in the one-point and projective processes and because of the diagonal in the two-point process). Hence, we must find a Lyapunov-drift function for each of these processes. While a Lyapunov-drift function for the one-point process can be constructed after understanding the behaviour of the flow maps $f_\omega^\a$ and $f_\omega^\b$ near the fixed points of $f_{\underline{\omega}}$, a Lyapunov-drift function for the two-point process must address, among others, the dynamics near the diagonal $\Delta$. This is achieved by introducing the \textit{top Lyapunov exponent}, which is defined by
\begin{align}
    \lambda_1 := \lim_{n\rightarrow\infty}\frac{1}{n} \log |D_{\mathrm{x}}\mathrm{f}_{\underline{\omega}^n}|
\end{align}
and measures the maximum rate of separation of two arbitrarily close points by the RDS $\mathrm{f}_\omega$. If the Markov transition kernel ${\mathrm{P}}$ has a stationary ergodic measure and the map $\mathrm{f}_\omega$ is almost surely bounded in $C^1$, then the Multiplicative Ergodic Theorem, see \cite{kifer1986ergodic}*{Part II}, ensures that $\lambda_1$ is well-defined and is almost surely a deterministic constant. In \cite{BCZG2023}*{Proposition 4.5} it is shown that the positivity of the top Lyapunov exponent is enough to construct a Lyapunov-drift function for the two-point process:

\begin{proposition}\label{prop:blackboxdriftTPP}
    Assume that the one-point and projective chains are geometrically ergodic. If the top Lyapunov exponent is positive, then there exists a function $\mathrm{V}^{(2)}\in L^1(\mathrm{X}^2\setminus\Delta)$ of the form
    \begin{align}
        \mathrm{V}^{(2)}(\mathrm{x}, \mathrm{y}) = d_X(\mathrm{x},\mathrm{y})^{-\xi}\psi_\xi(\mathrm{x}, \hat{\mathrm{w}}(\mathrm{x}, \mathrm{y})), \quad \hat{\mathrm{w}}(\mathrm{x}, \mathrm{y}) = \frac{\mathrm{exp}_\mathrm{x}^{-1}(\mathrm{y})}{|\mathrm{exp}_\mathrm{x}^{-1}(\mathrm{y})|}
    \end{align}
    for some $\xi\in(0,1)$ and some continuous and strictly positive $\psi_\xi$ such that $\mathrm{V}^{(2)}\geq 1$, and there exists $\gamma < 1$ and $s_*>0$ with
    \begin{align}
        {\mathrm{P}}^{(2)}\mathrm{V}^{(2)}(\mathrm{x},\mathrm{y}) < \gamma \mathrm{V}^{(2)}(\mathrm{x},\mathrm{y}),
    \end{align}
    for all $(\mathrm{x},\mathrm{y})\in \Delta(s_*)$, where $\Delta(s) = \lbrace   (\mathrm{x},\mathrm{y})\in \mathrm{X}^2 :  0 < d_X(\mathrm{x}, \mathrm{y}) < s \rbrace  $.
\end{proposition}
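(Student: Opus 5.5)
The argument follows \cite{BCZG2023}*{Proposition 4.5}. The drift function is built from the twisted projective transfer operator near the diagonal. For $p\in\R$, consider the operator on continuous functions on the unit tangent bundle $\mathsf{Y}$ given by
\begin{align}
    \hat P_p\psi(\x,v) := \EE_0\Big[\,|D_\x f_\omega v|^{-p}\,\psi\big(\hat f_\omega(\x,v)\big)\Big].
\end{align}
When $p=0$ this is the projective kernel $\hat P$, which by hypothesis is geometrically ergodic with stationary measure $\hat\pi$. Its leading eigenvalue is $1$ and is simple, and there is a spectral gap in a weighted space $C_{\hat V}$, where $\hat V$ is the projective drift function. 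Conditions (A1)--(A2) give $\|D_\x f_\omega\|,\|D_\x f_\omega^{-1}\|\le C_0$, so the weight $|D_\x f_\omega v|^{-p}$ is bounded. Consequently $p\mapsto \hat P_p$ is an analytic perturbation of $\hat P_0$ in operator norm. Standard Kato perturbation theory then gives, for $|p|$ small, a simple leading eigenvalue $e^{-\Lambda(p)}$ with a strictly positive continuous eigenfunction $\psi_p$ (positivity follows from closeness to the constant $1$), and $\Lambda$ is analytic with $\Lambda(0)=0$.

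The next step is to differentiate at $p=0$. Using the cocycle identity and the ergodic theorem for $\hat\pi$, one gets $\Lambda'(0)=\int \EE\log|D_\x f_\omega v|\,\d\hat\pi=\lambda_1$, by Furstenberg--Khasminskii. Since $\lambda_1>0$, we can choose a small $\xi\in(0,1)$ with $\Lambda(\xi)>0$. Then $\hat P_\xi\psi_\xi=e^{-\Lambda(\xi)}\psi_\xi$, and $e^{-\Lambda(\xi)}<1$.

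We then transfer this linear estimate to the two-point chain. Set $\mathrm V^{(2)}(\x,\y)=d(\x,\y)^{-\xi}\psi_\xi(\x,\hat{\mathrm w}(\x,\y))$, rescaled so that $\mathrm V^{(2)}\ge1$ on $\Delta(s_*)$. For $\y=\exp_\x(r w)$ with $r$ small, the $C^2$ bound in (A2) gives
\begin{align}
    f_\omega(\y)=\exp_{f_\omega(\x)}\big(r\,D_\x f_\omega w+O(r^2)\big).
\end{align}
From this, $d(f_\omega\x,f_\omega\y)=r|D_\x f_\omega w|(1+O(r))$ and $\hat{\mathrm w}(f_\omega\x,f_\omega\y)=\hat f_\omega(\x,w)+O(r)$, uniformly in $\omega$, because $|D f_\omega w|\ge C_0^{-1}$. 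Since $\psi_\xi$ is uniformly continuous and bounded below, it follows that
\begin{align}
    P^{(2)}\mathrm V^{(2)}(\x,\y)\le (1+o_{r\to0}(1))\,r^{-\xi}\,\hat P_\xi\psi_\xi(\x,w)=(1+o(1))e^{-\Lambda(\xi)}\mathrm V^{(2)}(\x,\y).
\end{align}
Choosing $s_*$ small enough gives the bound with $\gamma$ strictly between $e^{-\Lambda(\xi)}$ and $1$. Integrability follows from $\xi<1<2=\dim$.

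The main obstacle is that $\mathsf{X}$ is not compact, because of the fixed points $F$. This causes two problems. First, the perturbative spectral argument must be run in the weighted space $C_{\hat V}$, where $\hat V$ blows up near $F$, and both the eigenfunction $\psi_\xi$ and the $O(r)$ errors are then only controlled relative to $\hat V$. Second, the constant $s_*$ must be uniform; this is not automatic near $F$, where the exponential map and the Taylor estimates are fine (the sphere is compact) but $\psi_\xi$ may fail to be bounded. To handle this, I would first try to obtain uniform continuity and two-sided bounds on $\psi_\xi$ by using compactness of $\Sph$ and the fact that $f_\omega$ extends smoothly to all of $\Sph$. If that fails, I would fall back on proving the inequality on $\Delta(s_*)$ localized away from $F$, and combine it with the one-point drift function near $F$.
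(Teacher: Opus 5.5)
Your first three paragraphs reproduce the argument of \cite{BCZG2023}*{Proposition 4.5}: twisted projective operator, Kato perturbation off the spectral gap, $\Lambda'(0)=\lambda_1$ via Furstenberg--Khasminskii, and first-order transfer to $P^{(2)}$. That citation is all the paper gives for this statement. In the setting of that reference the projective chain lives on a compact space and is uniformly geometrically ergodic, and there your argument is correct. Perturbation in $C^0$ gives $\|\psi_\xi-1\|_{C^0}\lesssim\xi$, hence $\tfrac12\le\psi_\xi\le 2$. Uniform continuity of $\psi_\xi$ is automatic. So both the normalisation $\mathrm V^{(2)}\ge 1$ and the $(1+o(1))$ step on $\Delta(s_*)$ go through.

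The gap is where you locate it, but your repairs do not close it. With $\mathsf X=\Sph\setminus F$ and only $\widehat{\mathrm V}$-weighted ergodicity, perturbation in $C_{\widehat{\mathrm V}}$ gives only $|\psi_\xi-1|\lesssim \xi\,\widehat{\mathrm V}$. So the sentence ``positivity follows from closeness to the constant $1$'' is false near $F$. Pointwise positivity can be recovered: $\psi_\xi$ is a positive multiple of $\lim_n e^{n\Lambda(\xi)}\hat P_\xi^n\mathbf 1\ge 0$, and topological irreducibility then gives strict positivity. What you cannot recover is a positive lower bound, or a uniform relative modulus of continuity in the fibre variable. Your rescaling to $\mathrm V^{(2)}\ge1$ and your uniform $(1+o(1))$ estimate on $\Delta(s_*)$ both use exactly these.

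Your first repair, working on all of $\Sph$, cannot supply them. On the compact bundle $UT\Sph$, every fibre $UT_p\Sph$ with $p\in F$ is invariant under $\hat f_\omega$. Each such fibre supports stationary measures of the projectivised cocycle $D_pf_\omega$, so the extended kernel has no spectral gap with a simple eigenvalue $1$. Heuristically the picture is worse: while the chain spends roughly $\log(1/d(\x,F))$ steps near $F$, the identity $\psi_\xi=e^{n\Lambda(\xi)}\hat P_\xi^n\psi_\xi$ picks up a factor of order $e^{n(\Lambda(\xi)-\Lambda_F(\xi))}$. Here $\Lambda_F$ is the moment exponent of $D_pf_\omega$. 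So $\psi_\xi$ should tend to $0$ or $\infty$ like a small power of $d(\x,F)$, unless these two exponents happen to coincide.

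Your fallback, localising away from $F$ and combining with the one-point drift, also fails. Consider pairs with $d(\x,\y)\ll d(\x,F)\ll 1$. There $d(\x,\y)^{-\xi}$ dominates any one-point weight, so combining with $\mathrm V(\x),\mathrm V(\y)$, by maximum or by sum, still requires the two-point contraction near $F$ itself.

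What one can realistically prove in the weighted setting is an additive drift for $d^{-\xi}\psi_\xi+C\,\mathrm V$. This needs $\psi_\xi\ge 0$ together with a weighted fibrewise H\"older bound $|\psi_\xi(\x,v)-\psi_\xi(\x,v')|\lesssim \mathrm V(\x)|v-v'|^{\theta}$ for some $\theta>\xi$. Then all transfer errors are $O(d^{\theta-\xi}\mathrm V(\x))$ and the one-point drift absorbs them. Getting that bound needs an extra regularity argument, for example a spectral gap in a weighted H\"older space rather than only in $C_{\widehat{\mathrm V}}$. Even then, the result is a drift inequality with an additive constant for a modified function. It is not the pure product form with strict contraction on all of $\Delta(s_*)$ that the statement claims.

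The paper does not resolve this either. After the citation it uses $\min\psi$, $\max\psi$ and uniform continuity of $\psi$ in Propositions~\ref{prop:stochdriftlocalTPP} and~\ref{prop:globaldriftTPP}. Those are the compact-case conclusions.
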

It then remains to check whether the top Lyapunov exponent is positive. For this, let $d$ be the dimension of $\mathrm{X}$ and consider the fiber bundle $SL(\mathrm{X})$ over $\mathrm{X}$ with fiber
\begin{align}
    SL_\mathrm{p}(\mathrm{X}) = \lbrace   A: \R^d\rightarrow T_{\mathrm{p}}\mathrm{X} \text{ linear map such that det}(A) = 1 \rbrace  
\end{align}
understood as a principal bundle over $\mathrm{X}$ with structure group $SL_d(\R)$. For a fixed $\mathrm{x}\in \mathrm{X}$ and a determinant 1 isomorphism $E:\R^d \rightarrow T_{\mathrm{x}}\mathrm{X}$, the fiber $SL_\mathrm{p}(\mathrm{X})$ over $\mathrm{p}\in \mathrm{X}$ can now be viewed as the space of determinant 1 mappings $A:T_\mathrm{x}\mathrm{X}\rightarrow T_{\mathrm{p}}\mathrm{X}$ and we define $\widehat\Psi_\mathrm{x}:\Omega_0^n\rightarrow SL(\mathrm{X})$ given by
\begin{align}
    \widehat\Psi_\mathrm{x}(\underline{\omega}^n) := \left( \Psi_\mathrm{x}(\underline{\omega}^n), D_\mathrm{x} \Psi_\mathrm{x}(\underline{\omega}^n) \right).
\end{align}
The following result corresponds to \cite{BCZG2023}*{Proposition 3.3} and gives conditions on $\Psi_\mathrm{x}$ and $\widehat \Psi_\mathrm{x}$ for which the top Lyapunov exponent associated to $\Psi_\mathrm{x}$ is positive.
\begin{proposition}\label{prop:blackbox_lyapexponent}
Assume that the RDS $\mathrm{f}_\omega$ on $\mathrm{X}$ is such that
\begin{enumerate}
    \item the law $\PP_0$ admits a density $\rho_0$ with respect to the Lebesgue measure $\dd\omega$ on $\Omega_0$ (smooth complete Riemannian manifold) and the mapping $(\omega,\mathrm{x})\mapsto \mathrm{f}_\omega(\mathrm{x})$ is $C^2$;
    \item for a.e. $\omega\in \Omega_0$, $\mathrm{f}_\omega$ is a local diffeomorphism and admits an ergodic stationary measure $\pi$ on $\mathrm{X}$ with the property
    \begin{equation}
        \int (\log^+|D_{\mathrm{x}}\mathrm{f}_\omega|+\log^+|(D_{\mathrm{x}}\mathrm{f}_\omega)^{-1}|)\dd \pi(\mathrm{x})\dd\PP_0(\omega)<\infty.
    \end{equation}
\end{enumerate}
Moreover, assume that the associated Markov kernel ${\mathrm{P}}$ is $\mathrm{V}$-uniformly geometrically ergodic with stationary measure $\pi$. 
If there exist $n\geq 1$ and a point $(\omega_*^{(n)},\x_*)\in \supp(\rho_0)\times \supp(\pi)$ which satisfies 
\begin{itemize}
    \item[(i)] There exists $c,\eps>0$ such that $\varrho(\underline{\omega}^{n})\geq c>0$ if $|\underline{\omega}^{n}-\underline{\omega}_*^{n}|<\eps$;
    \item[(ii)] The mapping $\Psi_{\mathrm{x}_*}$ is a submersion at $\underline{\omega}^n = \underline{\omega}_*^n$.
    \item[(iii)] Let $\mathrm{K}_{\mathrm{x},\underline{\omega}_*^n}:= \text{ker}D_{\underline{\omega}^n}\Psi_\mathrm{x}\subset T_{\underline{\omega}_*^n}\Omega_0^n$. The restriction of $D_{\underline{\omega}_*^n}\widehat\Psi_{\mathrm{x}_*}$ to $\mathrm{K}_{\mathrm{x},\underline{\omega}_*^n}$ is surjective as a linear operator onto $T_{\widehat\Psi_{\mathrm{x}_*}(\underline{\omega}_*^n)} SL(\mathrm{X})$.
\end{itemize}
Then $\lambda_1>0$.
\end{proposition}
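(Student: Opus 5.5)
The argument is by contradiction: I would assume $\lambda_1\le 0$ and derive from it a rigidity that conflicts with the nondegeneracy hypotheses (i)--(iii), in the spirit of Furstenberg's criterion.

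\textbf{Step 1: reduction to $\lambda_1=\lambda_2$.} I work in the determinant-one normalisation built into $SL(\mathrm{X})$ (the maps preserve Lebesgue measure, cf.\ (A3)). The integrability condition in (2) and the ergodicity of $\pi$ make the Multiplicative Ergodic Theorem applicable, so the Lyapunov exponents $\lambda_1\ge\lambda_2$ exist and are deterministic. Since $\log|\det D_\mathrm{x}\mathrm{f}_\omega|=0$, they satisfy $\lambda_1+\lambda_2=0$. Hence $\lambda_1\le0$ forces $\lambda_1=\lambda_2=0$.

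\textbf{Step 2: invariant structure.} In the equal-exponent case I would invoke Ledrappier's invariance principle (equivalently, the Furstenberg--Baxendale criterion) for the cocycle $D\mathrm{f}_{\underline\omega}$ over the ergodic skew product with independent increments. It gives, for $\pi$-a.e.\ $\mathrm{x}$, a deterministic measurable assignment $\mathrm{x}\mapsto \mathcal{S}_\mathrm{x}$ of one of two kinds: an inner product on $T_\mathrm{x}\mathrm{X}$, or a finite union of lines in $T_\mathrm{x}\mathrm{X}$. This assignment is invariant, in the sense that
\begin{equation}
(D_\mathrm{x}\mathrm{f}_\omega)_*\mathcal{S}_\mathrm{x}=\mathcal{S}_{\mathrm{f}_\omega(\mathrm{x})}\quad\text{for }\PP_0\text{-a.e. }\omega .
\end{equation}
Independence of increments is what makes $\mathcal{S}$ independent of the future noise. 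Iterating $n$ times, for $\pi$-a.e.\ $\mathrm{x}$ and $\PP_0^n$-a.e.\ $\underline\omega^n$ the frame $\widehat\Psi_\mathrm{x}(\underline\omega^n)$ lies in
\begin{equation}
\Sigma_{\mathrm{x}}:=\lbrace (\mathrm{p},A)\in SL(\mathrm{X}) : A_*\mathcal{S}_\mathrm{x}=\mathcal{S}_\mathrm{p}\rbrace .
\end{equation}
Each fiber of $\Sigma_\mathrm{x}$ is a coset of a conjugate of $SO(2)$, or of the (finite extension of the) stabiliser of finitely many lines. Either way it is a submanifold of dimension at most $2<3=\dim SL_2(\R)$. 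So $\Sigma_\mathrm{x}$ is Lebesgue-null in $SL(\mathrm{X})$, where I use the Riemannian volume on the total space, which has dimension $2+3$.

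\textbf{Step 3: contradiction via submersion.} Hypotheses (ii) and (iii) together say that $\widehat\Psi_{\mathrm{x}_*}$ is a submersion at $\underline\omega_*^n$. Indeed, (ii) surjects onto the base directions, and (iii) surjects onto the full tangent space using only the kernel directions of $D\Psi$, so $D\widehat\Psi_{\mathrm{x}_*}$ is onto $T SL(\mathrm{X})$. Submersion is an open condition in $(\mathrm{x},\underline\omega^n)$ because $(\omega,\mathrm{x})\mapsto\mathrm{f}_\omega(\mathrm{x})$ is $C^2$. It therefore persists for $\mathrm{x}$ in a neighbourhood $U$ of $\mathrm{x}_*$ and for $\underline\omega^n$ in a ball $B$ around $\underline\omega^n_*$, and by (i) the density $\varrho_0\ge c$ on $B$. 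Since $\mathrm{x}_*\in\supp\pi$, we have $\pi(U)>0$, so I may pick a generic $\mathrm{x}\in U$ for which Step 2 holds. By the submersion theorem (or the coarea formula), the pushforward of $\varrho_0\,\dd\underline\omega^n|_B$ under $\widehat\Psi_\mathrm{x}$ has a density bounded below on an open subset of $SL(\mathrm{X})$, so it charges no null set only partially—in particular it cannot be carried by the null set $\Sigma_\mathrm{x}$. This contradicts the a.e.\ inclusion $\widehat\Psi_\mathrm{x}(\underline\omega^n)\in\Sigma_\mathrm{x}$, and therefore $\lambda_1>0$.

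\textbf{Main obstacle.} I expect the delicate point to be Step 2. One must obtain the invariant family $\mathcal{S}$ with enough measurability, and make its invariance hold simultaneously for almost every $\underline\omega^n$ in the specific ball $B$, rather than merely $\PP$-almost surely on the path space. Here I would follow the standard route: Lusin's theorem combined with Fubini over $\pi\otimes\PP_0^n$. Then only a full-measure set of pairs $(\mathrm{x},\underline\omega^n)$ is needed, and Step 3 goes through because the pushforward argument tolerates null exceptional sets.
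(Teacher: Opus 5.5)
Your proposal is correct and follows essentially the route of the argument the paper imports from \cite{BCZG2023}*{Proposition 3.3}. If $\lambda_1=0$, Furstenberg's criterion gives a measurable invariant inner product or finite union of lines, and the joint submersion of $\widehat\Psi_{\mathrm{x}}$ for $(\mathrm{x},\underline\omega^n)$ near $(\mathrm{x}_*,\underline\omega^n_*)$, together with $\varrho_0\geq c$ there and $\pi(U)>0$, rules out $\widehat\Psi_{\mathrm{x}}(\underline\omega^n)$ lying a.s.\ in a fibrewise lower-dimensional, hence null, set. (Your write-up is specialised to $d=2$, but the general case is the same dimension count with proper subspaces and $\dim SO(d)<d^2-1$; the end of Step 3 is cleanest as: preimages of null sets under a submersion are null, so $B$ itself would be Lebesgue-null.)
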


It was observed in \cites{C2023} that while \cite{BCZG2023}*{Proposition 3.3} is stated for a compact Riemannian manifold $\mathrm{X}$, the result holds under the weaker assumption of $\mathrm{X}$ being $\sigma$-compact, which is our case here, as $\mathrm{X}$ is compact up to a finite number of points.

\subsection{Almost-sure exponential mixing}
Assume next that 
\begin{itemize}
    \item [(S)] The continuous RDS $\mathrm{f}_\omega$ admits a stationary measure $\pi$ which is almost-surely invariant, that is, $(\mathrm{f}_\omega)_*\pi = \pi$ for $\mathbb{P}_0$-almost every $\omega\in \Omega_0$.
\end{itemize}
By a duality argument, almost-sure mixing follows from decay of the correlations
\begin{align}
    \text{Cor}_n(g, h) := \left| \int_{\mathrm{X}} g(\mathrm{x}) h \circ \mathrm{f}_{\underline{\omega}}^n(\mathrm{x}) \d \pi(\mathrm{x}) \right|
\end{align}
with probability 1 as $n\rightarrow \infty$, for sufficiently smooth observables $g,h$ with $\pi$-mean zero. As $\text{Cor}_n(g, h)$ is a random variable on $\underline{\omega}$, Chebyshev's inequality shows that
\begin{align}
    \mathbb{P}_0 \left( \lbrace   \text{Cor}_n(g, h) > \ep \rbrace \right)  \leq \ep^{-2} \mathbb{E}_{\mathbb{P}_0} \left| \int_{\mathrm{X}} g(\mathrm{x}) h \circ \mathrm{f}_{\underline{\omega}}^n(\mathrm{x}) \d \pi(\mathrm{x}) \right|^2.
\end{align}
In turn, we can write
\begin{align}
    \mathbb{E}_{\mathbb{P}_0}\left| \int_{\mathrm{X}} g(\mathrm{x}) h \circ \mathrm{f}_{\underline{\omega}}^n(\mathrm{x}) \d \pi(\mathrm{x}) \right|^2 &=  \mathbb{E}_{\mathbb{P}_0} \int_{\mathrm{X}\times\mathrm{X}}g(\mathrm{x})g(\mathrm{y}) h \circ \mathrm{f}_{\underline{\omega}}^n(\mathrm{x})h \circ \mathrm{f}_{\underline{\omega}}^n(\mathrm{y}) \d \pi(\mathrm{x}) \d \pi(\mathrm{y}) \\
    &= \int_{\mathrm{X}^2} g^{(2)}(\mathrm{x},\mathrm{y}) ({\mathrm{P}}^{(2)})^n h^{(2)}(\mathrm{x}, \mathrm{y}) \d \pi^{(2)}(\mathrm{x}, \mathrm{y}),
\end{align}
where $g^{(2)}(\mathrm{x},\mathrm{y}) = g(\mathrm{x})g(\mathrm{y})$, $h^{(2)}$ is defined analogously, $\d \pi^{(2)}(\mathrm{x}, \mathrm{y}) = \d \pi(\mathrm{x}) \d \pi(\mathrm{y})$ and ${\mathrm{P}}^{(2)}$ denotes the Markov semigroup for the two-point process $(\mathrm{x}_n,\mathrm{y}_n) = (\mathrm{f}_{\underline{\omega}}^n(\mathrm{x}),\mathrm{f}_{\underline{\omega}}^n(\mathrm{y}))$. Hence, geometric ergodicity of the two-point process implies almost-surely exponential mixing for $\mathrm{f}_\omega$. This principle and the argument presented above are well known in the field of random dynamical systems, see \cites{bedrossian2022almost, dolgopyat2004sample}. However, ${\mathrm{P}}^{(2)}$ cannot be uniformly geometrically ergodic with respect to $\pi^{(2)}$ because the diagonal set
\begin{align}
    \Delta := \lbrace   (\mathrm{x}, \mathrm{x}): \mathrm{x}\in \mathrm{X} \rbrace   \subset \mathrm{X}\times \mathrm{X}
\end{align}
is almost-surely invariant for the two-point process and thus it supports at least one additional stationary measure for ${\mathrm{P}}^{(2)}$ distinct from $\pi^{(2)}$. For $\mathrm{X}^{(2)}=\mathrm{X}\times \mathrm{X}\setminus \Delta$, the above principle can be made precise if there exists a Lyapunov-drift function for ${\mathrm{P}}^{(2)}$ on $\mathrm{X}^{(2)}$, since it is now a non-compact phase space, that ensures positive recurrence away from $\Delta$. The next result can be found in \cite{BCZG2023}*{Proposition 4.6}.

\begin{proposition}\label{prop:decaycorr}
    Let $\mathrm{f}_\omega$ be a continuous RDS on a compact, orientable Riemannian manifold $\mathrm{X}$ without boundary satisfying condition $(S)$. Assume that the two-point process with kernel ${\mathrm{P}}^{(2)}$ on $\mathrm{X}^{(2)}$ is $\mathrm{V}^{(2)}$-geometrically ergodic, for some $\mathrm{V}^{(2)}:\mathrm{X}^{(2)}\rightarrow[1,\infty)$ that is integrable with respect to $\pi^{(2)}$. Then, for all $q, s>0$ there exist a function $\mathrm{C}=\mathrm{C}_{q,s}:\Omega_0\rightarrow[1,\infty)$ and a constant $\lambda = \lambda_{q,s}>0$ such that 
\begin{align}
\left| \int_{\mathrm{X}} g(\mathrm{x}) h \circ \mathrm{f}_{\underline{\omega}}^n(\mathrm{x}) \d \pi(\mathrm{x}) \right| \leq \mathrm{C}(\underline{\omega}) e^{-\lambda n}\Vert g \Vert_{H^s}\Vert h \Vert_{H^s},
\end{align}
for all $\pi$-mean free functions $g,h\in H^s=H^s(\mathrm{X})$ with $\mathbb{E}_{\mathbb{P}}[\mathrm{C}^q] < \infty$. 
\end{proposition}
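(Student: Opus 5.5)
The argument is the standard duality and Borel--Cantelli scheme of \cite{BCZG2023}*{Proposition 4.6}. The input is the $\mathrm{V}^{(2)}$-uniform geometric ergodicity of $\mathrm{P}^{(2)}$ on $\mathrm{X}^{(2)}$, together with condition (S), which gives $\pi^{(2)}=\pi\otimes\pi$ as the relevant stationary measure off $\Delta$.

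\textbf{Step 1: second moment for a basis pair.} Fix a real orthonormal eigenbasis $\{e_k\}$ of $-\Delta$ on $\mathrm{X}$, orthogonal to constants, with $\|e_k\|_{L^\infty}\lesssim \lambda_k^{\alpha}$ and $\|e_k\|_{\mathrm{Lip}}\lesssim\lambda_k^{\alpha'}$ (Sobolev embedding / H\"ormander bounds). For mean-free $g,h$ bounded by $\mathrm{V}^{(2)}$ up to polynomial factors in $\lambda_k$, the computation preceding the statement gives
\begin{align}
\mathbb{E}\Bigl|\int g\,h\circ \mathrm{f}^n_{\underline{\omega}}\,\d\pi\Bigr|^2=\int g^{(2)}\,(\mathrm{P}^{(2)})^n h^{(2)}\,\d\pi^{(2)}.
\end{align}
Since $\Delta$ is $\pi^{(2)}$-null, one may integrate over $\mathrm{X}^{(2)}$. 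The product $h^{(2)}$ is bounded, so it lies in $L^\infty_{\mathrm{V}^{(2)}}$.

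Its $\pi^{(2)}$-average is $(\int h\,\d\pi)^2=0$. Ergodicity therefore yields
\begin{align}
|(\mathrm{P}^{(2)})^n h^{(2)}|\le C\,\mathrm{V}^{(2)}\gamma^n\|h\|_\infty^2.
\end{align}
Integrating against $|g^{(2)}|$ and using $\mathrm{V}^{(2)}\in L^1(\pi^{(2)})$ gives
\begin{align}
\mathbb{E}\,\mathrm{Cor}_n(e_j,e_k)^2\lesssim \gamma^n\lambda_j^{2\alpha}\lambda_k^{2\alpha}.
\end{align}

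\textbf{Step 2: almost-sure bound with summable weights.} Define
\begin{align}
\mathrm{C}(\underline{\omega}):=\sup_{n,j,k}\, e^{\lambda n}\,\lambda_j^{-\beta}\lambda_k^{-\beta}\,\mathrm{Cor}_n(e_j,e_k),
\end{align}
with $\lambda<\frac12|\log\gamma|$ and $\beta$ large. Bound $\mathbb{E}\mathrm{C}^2$ by the sum over $n,j,k$ of the second moments from Step 1. This sum converges by Weyl's law, $\#\{\lambda_k\le L\}\sim L$ in dimension two, provided $\beta$ is large. For general $q$, interpolate: $\mathrm{Cor}_n\le\|e_j\|_\infty\|e_k\|_\infty$ trivially, so higher moments cost only extra powers of $\lambda_j,\lambda_k$, which a larger $\beta$ absorbs. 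Alternatively, take $\mathrm{C}^q$ with $q\le2$ and use Jensen.

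\textbf{Step 3: general $H^s$ observables.} Expand $g=\sum g_j e_j$ and $h=\sum h_k e_k$. Then
\begin{align}
\mathrm{Cor}_n(g,h)\le \mathrm{C}e^{-\lambda n}\sum_{j,k}|g_j||h_k|\lambda_j^\beta\lambda_k^\beta.
\end{align}
If $s$ is large relative to $\beta$, Cauchy--Schwarz with $\sum\lambda_j^{2\beta-2s}<\infty$ gives the $H^s$ bound. \textbf{This is the main obstacle}, since $s>0$ is arbitrary.

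To reach small $s$, I would interpolate. Split $g=g_{\le L}+g_{>L}$. The high part is bounded trivially in $L^2$ by $L^{-s}\|g\|_{H^s}$ (using $\|h\circ \mathrm{f}^n\|_{L^2}=\|h\|_{L^2}$ by invariance). The low part carries the factor $L^{C}e^{-\lambda n}$. Choosing $L=e^{\epsilon n}$ gives a smaller rate $\lambda_{s}>0$ that depends on $s$. Do the same for $h$. This yields the claim with deterministic $\lambda=\lambda_{q,s}$ and $\mathrm{C}_{q,s}$ having finite $q$-th moment.
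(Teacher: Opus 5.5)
Your route is the standard argument behind \cite{BCZG2023}*{Proposition 4.6}, which the paper quotes without a proof of its own. Its steps are: the second moment of $\mathrm{Cor}_n$ computed through $\mathrm{P}^{(2)}$, (S) and $\mathrm{V}^{(2)}\in L^1(\pi^{(2)})$; a summable supremum over a Laplace eigenbasis; and passage to arbitrary $s>0$. Your cutoff $L=e^{\epsilon n}$ is a hands-on version of interpolating the $H^{s_0}\to H^{-s_0}$ bound against the $L^2$ isometry. The argument goes through, with $\pi$ the normalized volume so that the $e_k$ are $\pi$-mean free and $\pi^{(2)}(\Delta)=0$.

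One step is wrong as written: for $q>2$, enlarging $\beta$ does not suffice. The two-point chain only controls second moments. Invariance gives $|\mathrm{Cor}_n(e_j,e_k)|\le\|e_j\|_{L^2}\|e_k\|_{L^2}=1$, hence $\mathbb{E}|\mathrm{Cor}_n|^q\le\mathbb{E}|\mathrm{Cor}_n|^2\lesssim\gamma^n(\lambda_j\lambda_k)^{2\alpha}$, which is not of order $\gamma^{qn/2}$. So bounding $\mathbb{E}\,\mathrm{C}^q$ by the sum requires $\sum_n e^{q\lambda n}\gamma^n<\infty$, i.e.\ $\lambda<|\log\gamma|/q$. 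The weights $\lambda_j^{-\beta}\lambda_k^{-\beta}$ play no role in this $n$-sum. Since the statement allows $\lambda=\lambda_{q,s}$, the fix is to take $\lambda<|\log\gamma|/q$ in Step 2, and the cutoff step then lowers the rate further depending on $s$.

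Also, the proposition is stated for a general $\mathrm{X}$, so Weyl's law should be used as $\#\{\lambda_k\le L\}\sim L^{\dim\mathrm{X}/2}$ rather than the two-dimensional count. This only changes how large $\beta$ and how small $\epsilon$ must be.
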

An inspection of the proof shows that the result remains true if $\mathrm{X}$ admits a finite cover by smooth charts.

\subsection{Summary of sufficient conditions for exponential mixing}
Theorem \ref{thm:mainexpdecaycorr}, and also Corollary \ref{cor:unifexpmixing} after a standard duality argument, is ensured for $\kappa=0$ by Proposition \ref{prop:decaycorr} if the one-point process $f_\omega$ given by \eqref{eq:defOPP} satisfies (S) and its associated two-point process $f_\omega^{(2)}$ is $\mathrm{V}^{(2)}$-uniformly geometrically ergodic. Throughout the manuscript, we define
\begin{align}
    \pi:=\Leb_\mathsf{X}, \quad \hat\pi:=\Leb_\mathsf{Y}, \quad \pi^{(2)}:=\pi\otimes \pi. 
\end{align}
Since $u(t,\cdot,{\omega};\a,\b)$ is divergence-free, the measure $\pi$ is invariant under $f_\omega$ and thus stationary. Hence, $\hat\pi$ is stationary for $\hat{f}_\omega$ on $\mathsf{Y}$ and $\pi^{(2)}$ is stationary for $f_\omega^{(2)}$ on $\mathsf{X}^{(2)}$ as well so that (S) is satisfied for all three processes.

The geometric ergodicity of $f_\omega^{(2)}$ is, in turn, a consequence of Theorem \ref{thm:abstractHarris} if $f_\omega^{(2)}$ admits an open small set, is topologically irreducible, aperiodic and admits a Lyapunov-drift function $\mathrm{V}^{(2)}$. The existence of such $\mathrm{V}^{(2)}$ is a consequence of Proposition \ref{prop:blackboxdriftTPP}, which requires the one-point process $f_\omega$ and the projective process $\hat{f}_\omega$ to be themselves geometrically ergodic, and for the top Lyapunov exponent of $f_\omega$ to be positive. 

The geometric ergodicity of $f_\omega$ and $\hat{f}_\omega$ is also obtained by means of the Harris Theorem \ref{thm:abstractHarris} after showing these two processes are also topologically irreducible, aperiodic and admit open small sets and Lyapunov drift functions $\mathrm{V}$ and $\hat{\mathrm{V}}$. For these three processes, the existence of open small sets is obtained via Lemma \ref{lemma:blackboxsmallset}; their aperiodicity is automatic from Lemma \ref{lemma:blackboxaperiodic} as they are the identity for $\omega=0$ and they are continuous maps, while the topological irreducibility is a consequence of Lemma \ref{lemma:blackboxirreducibility}. Lastly, the positivity of the top Lyapunov exponent is deduced by the combination of Propositions \ref{prop:blackbox_lyapexponent} and \ref{prop:joint-surj}.

In Section \ref{sec:OPP}, \ref{sec:PJP} and Section \ref{sec:TPP} we show that the Markov transition kernels ${P}$, $\hat{P}$ and ${P}^{(2)}$ are topologically irreducible, admit an open small set (via Lemma \ref{lemma:blackboxsmallset}) and admit Lyapunov-drift function ${\mathrm{V}}$, $\hat{\mathrm{V}}$ and ${\mathrm{V}}^{(2)}$, respectively. In Section \ref{sec:toplyapexp} we investigate the positivity of the top Lyapunov exponent by means of Proposition \ref{prop:joint-surj} and Proposition \ref{prop:blackbox_lyapexponent}.

\subsection{Uniform-in-$\kappa$ exponential mixing and enhanced dissipation}
In the presence of molecular diffusivity the composition flow map $f_\omega^\kappa$ is no longer deterministic but instead satisfies the stochastic differential equation \eqref{eq:introSDEITO} with convective vector field $v_\kappa(t,\x) = u(t,\x, \underline{\omega};\a,\b) - 2\kappa\x$ and the deterministic RDS framework of \cites{BCZG2023} does not apply directly any more. However, the recent contribution \cites{CIS2024} shows that the exponential decay of correlations of Proposition \ref{prop:decaycorr} remains true for $f_\omega^\kappa$ with uniform-in-$\kappa$ constants if the inviscid dynamics are robust enough. To obtain a uniform-in-$\kappa$ version of Proposition \ref{prop:decaycorr}, we follow the framework derived in \cites{CIS2024} and we introduce the stochastic two-point Markov transition kernel $P^{(2)}_\kappa$ defined by
\begin{align}
    P^{(2)}_\kappa((\x,\y),U^{(2)}) = \PP(f_\omega^{(2),\kappa}(\x,\y)\in U^{(2)})
\end{align}
where $f_\omega^{(2),\kappa}(\x,\y) = (f_\omega^\kappa(\x), f_\omega^\kappa(\y))$ denotes the stochastic two-point process for $(\x,\y)\in \mathsf{X}^{(2)}$. We consider the following assumption $P^{(2)}_\kappa$.

\begin{itemize}
    \item [($H_\kappa$)] Assume there exists $\ell\in \N$, $\gamma\in(0,1)$, $\beta>0$, $R> \frac{2\beta}{1-\gamma}$, $\upsilon>0$ and a probability measure $\nu_0$ such that for all $\kappa>0$ sufficiently small there holds 
\begin{align}\label{eq:Hkappadrift}
    P^{(2), \ell}_\kappa \mathrm{V}^{(2)}_\kappa \leq \gamma \mathrm{V}^{(2)}_\kappa + \beta,
\end{align}
and
\begin{align}\label{eq:Hkappasmallset}
    \inf_{(p,q)\in \lbrace \mathrm{V}^{(2)}_\kappa\leq R \rbrace} P^{(2),\ell}_\kappa((p,q), \cdot) \geq \upsilon\nu_0(\cdot),
\end{align}
for some $\mathrm{V}^{(2)}_\kappa:\mathsf{X}^{(2)} \rightarrow [1,\infty)$.
\end{itemize}

If $(H_\kappa)$ is satisfied, a quantitative version of Harris Theorem, see \cites{CIS2024, hairer2011yet, meyn2012markov}, shows that $P^{(2)}_\kappa$ is $\mathrm{V}_\kappa^{(2)}$-uniformly geometrically ergodic with geometric constant $\lambda>0$ independent of $\kappa\in (0,\kappa_0)$ for some $\kappa_0>0$ small. As a result, almost sure uniform-in-$\kappa$ exponential decay of correlations follows.

\begin{proposition}[Lemma 3.3 in \cites{CIS2024}]\label{prop:unifdecaycorr}
    Assume that $P^{(2)}_\kappa$ is $\mathrm{V}_\kappa^{(2)}$-geometrically ergodic. Then, for all $s>0$ and all $0<q<\infty$ there exists a random variable $\mathrm{C}_\kappa\geq 1$ and a deterministic $\lambda>0$, both independent of $\bB$, such that for every pair of mean-zero test functions $g,h\in \dot{H}^s(\Sph)$ and every $n\in \N$, there holds
    \begin{align}\label{eq:stochdecaycorr}
        \left| \int_\Sph g(\x) h(f_{\underline{\omega}_n}^\kappa(\x)) \d \x \right| \leq \mathrm{C}_\kappa e^{-\lambda n} \Vert g \Vert_{H^s}\Vert h \Vert_{H^s},
    \end{align}
    almost surely. Moreover, there exists $\overline{\mathrm{C}}_q>0$ independent of $\kappa$ such that $\EE |\mathrm{C}_\kappa|^q \leq \overline{\mathrm{C}}_q$, for all $\kappa\in (0,\kappa_0)$, for some $\kappa_0>0$ small enough.
\end{proposition}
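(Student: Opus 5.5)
The plan is to run the standard duality argument of \cites{BCZG2023,CIS2024}, combining the two-point Markov chain $P^{(2)}_\kappa$ with a Sobolev embedding of $H^s$ test functions into a weighted space that matches the drift function $\mathrm{V}^{(2)}_\kappa$. By Chebyshev and Borel--Cantelli, almost sure exponential decay with a random prefactor follows once we have decay of the second moment. So the first step is to establish
\begin{align}
\EE\left|\int_\Sph g(\x)h(f^\kappa_{\underline{\omega}_n}(\x))\,\d\x\right|^2\lesssim e^{-2\lambda' n}\Vert g\Vert_{H^s}^2\Vert h\Vert_{H^s}^2,
\end{align}
uniformly in $\kappa\in(0,\kappa_0)$. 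To get this, we expand the square as in the inviscid computation above, with $\EE$ now over both $\underline{\omega}$ and $\bB$. This turns it into $\int g^{(2)}\,(P^{(2)}_\kappa)^n h^{(2)}\,\d\pi^{(2)}$. Since $h$ is mean free, $\int h^{(2)}\d\pi^{(2)}=0$. Here we use that Lebesgue measure $\pi^{(2)}$ is stationary for $P^{(2)}_\kappa$, because the generator $u\cdot\nabla-\kappa\Delta$ is self-adjoint-compatible and divergence free; the diagonal and the symmetry set $\cS$ are Lebesgue-null.

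The delicate point is that the second moment mixes the Brownian paths of the two points. Because $\mathrm C_\kappa$ must be independent of $\bB$, I would work instead with the conditional expectation over $\bB$. Note that $\int g\,\EE_\bB[h(f^\kappa(\x))]\d\x$ is exactly the quantity needed. Its square over $\underline\omega$ is controlled by the two-point chain in which each point is driven by an independent Brownian motion. I would therefore take $P^{(2)}_\kappa$ to be this independent-noise two-point kernel, as in \cites{CIS2024}, and check that the drift and small-set statements of $(H_\kappa)$ refer to it.

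Next, I would apply $\mathrm{V}^{(2)}_\kappa$-geometric ergodicity with $\kappa$-uniform constants (given by the quantitative Harris theorem under $(H_\kappa)$). This yields
\begin{align}
|(P^{(2)}_\kappa)^n h^{(2)}(\x,\y)|\le C\gamma^n \mathrm{V}^{(2)}_\kappa(\x,\y)\Vert h^{(2)}\Vert_{L^\infty_{\mathrm V}}.
\end{align}
This is only useful if $h^{(2)}$ is bounded, while $H^s$ with small $s$ is not in $L^\infty$. So the main obstacle is regularity. I would first prove the bound for $g,h\in C^\infty$, or in $H^{\sigma}$ with $\sigma>1$ so that $H^\sigma\subset L^\infty$. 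I would then also use $\int \mathrm V^{(2)}_\kappa\,\d\pi^{(2)}\le C$ uniformly in $\kappa$, which should follow from the form $d(\x,\y)^{-\xi}$ with $\xi<2$ together with controlled behaviour near $F$.

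For general $s>0$, I would interpolate. Split $g=P_{\le L}g+P_{>L}g$ by spherical harmonic degree, and similarly for $h$. The low modes are handled by the $H^\sigma$ estimate, at the cost of a factor $L^{2(\sigma-s)}$. The high modes contribute at most $L^{-s}\Vert g\Vert_{H^s}\Vert h\Vert_{L^2}$, using the trivial bound from $L^2$ contractivity of $\EE_\bB$ and measure preservation. Choosing $L=e^{\epsilon n}$ gives exponential decay at a rate $\lambda_s$ depending only on $s$.

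Finally, I would pass from second moments to an almost sure bound. I would apply the estimate to a countable basis, namely normalized spherical harmonics $Y_{\ell}, Y_{\ell'}$, with a weight in $\ell,\ell'$. Define
\begin{align}
\mathrm C_\kappa^2:=\sum_{n,\ell,\ell'}e^{2\lambda n}\langle \ell\rangle^{-2s}\langle\ell'\rangle^{-2s}\left|\int Y_\ell\, \EE_\bB Y_{\ell'}\circ f^\kappa_n\right|^2 .
\end{align}
Its expectation is finite and uniform in $\kappa$, which gives the moment bound for $q=2$. Higher $q$ follow by applying the same argument to powers, or by reducing $\lambda$ and using the same construction with Chebyshev in $q$-th moments. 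Cauchy--Schwarz in the harmonic expansions then yields \eqref{eq:stochdecaycorr} for all $n$.
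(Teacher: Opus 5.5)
Your outline is the standard argument behind the cited Lemma~3.3 of \cites{CIS2024}, which the paper imports without proof. Its ingredients are quenched second moments through a two-point chain, $\kappa$-uniform Harris constants with uniform integrability of $\mathrm{V}^{(2)}_\kappa$, a countable basis to build the random constant, and frequency splitting to reach small $s$. Putting $\EE_\bB$ inside the correlation is also the right reading, since that is what Corollary~\ref{cor:optimalenhanceddiss} uses.

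\textbf{The gap.} The step that fails as written is the definition of $\mathrm{C}_\kappa$ with weight $\langle\ell\rangle^{-2s}\langle\ell'\rangle^{-2s}$ at the \emph{target} index $s$. There are $(2\ell+1)(2\ell'+1)$ basis pairs in degrees $(\ell,\ell')$. Your per-pair bounds are $1$ and $e^{-2\lambda' n}\Vert Y_{\ell m}\Vert_{L^\infty}^2\Vert Y_{\ell' m'}\Vert_{L^\infty}^2\lesssim e^{-2\lambda' n}\ell\ell'$, and neither decays in $\ell,\ell'$. Hence $\EE\,\mathrm{C}_\kappa^2<\infty$ only follows for $s$ large (e.g.\ $s>3/2$), not for every $s>0$. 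If $n=0$ is included, the diagonal terms alone give $\sum_\ell(2\ell+1)\langle\ell\rangle^{-4s}=\infty$ for $s\le 1/2$. Interpolating on second moments beforehand does not rescue this: the per-pair bound then grows like $(\ell\ell')^{2s'}$, and the series still needs $s>1+s'$.

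\textbf{The fix.} Reverse the order. Construct $\mathrm{C}_\kappa$ once at a fixed $s_0>3/2$ (with $\lambda<\lambda'$), obtain the almost-sure $H^{s_0}$ bound by Cauchy--Schwarz, and only then split frequencies pathwise. With $Q_nh:=\EE_\bB[h\circ f^\kappa_{\underline{\omega}_n}]$, which is an $L^2$-contraction, this gives
\begin{align}
\left|\int_\Sph g\,Q_nh\,\d\x\right|\lesssim\left(\mathrm{C}_\kappa e^{-\lambda n}L^{2(s_0-s)}+2L^{-s}\right)\Vert g\Vert_{H^s}\Vert h\Vert_{H^s}.
\end{align}
Taking $L=e^{\epsilon n}$ yields a rate $\lambda_s>0$ with random constant $\max\{\mathrm{C}_\kappa,2\}$, whose moments are unchanged. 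Your $q>2$ remark becomes precise the same way: normalized correlations are bounded by $1$, so $\EE|\cdot|^q\leq\EE|\cdot|^2$, at the price of a $q$-dependent rate.

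\textbf{Two simplifications.} First, you need not replace the paper's kernel by an independent-noise one. By Jensen,
\begin{align}
\EE_{\underline{\omega}}\Bigl|\int g\,Q_nh\Bigr|^2\leq\EE_{\underline{\omega},\bB}\Bigl|\int g\,h\circ f^\kappa_{\underline{\omega}_n}\Bigr|^2=\int g^{(2)}(P^{(2)}_\kappa)^nh^{(2)}\,\d\pi^{(2)},
\end{align}
with the same-noise $P^{(2)}_\kappa$ as defined in the paper. So the hypothesis as stated suffices. Second, you do not need $\pi^{(2)}$ to be stationary. Since $\int g^{(2)}\,\d\pi^{(2)}=(\int g)^2=0$, you can subtract the limit $\int h^{(2)}\,\d\mu$ for whatever the unique stationary measure $\mu$ is. This matters for the same-noise kernel, for which Lebesgue measure on $\Sph\times\Sph$ is in fact not stationary. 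The noise fields $\Pi_{\x}e_i$ are not divergence-free on $\Sph$, so the common-noise cross terms of the two-point generator do not preserve $\pi^{(2)}$.
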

Once Proposition \ref{prop:unifdecaycorr} is established, the continuous-in-time statement of Theorem \ref{thm:mainexpdecaycorr} follows as in \cites{coti2026three}. A standard duality argument completes Corollary \ref{cor:unifexpmixing}, while Corollary \ref{cor:optimalenhanceddiss} is a consequence of \eqref{eq:stochdecaycorr} and an inspection of the proof of \cite{navarro2025exponential}*{Proposition 2.2}, see Section \ref{sec:unifkappa}.

To obtain Proposition \ref{prop:unifdecaycorr} we shall see that the two-point chain $P_\kappa^{(2)}$ satisfies $(H_\kappa)$. 
This is achieved in Proposition \ref{prop:kappasmallset} in Section \ref{sec:unifkappa} which requires, among other things, the construction of some $\widetilde{\mathrm{W}}_\kappa$ that satisfies a Lyapunov-Foster drift-condition for the stochastic two-point process $f_\omega^{(2),\kappa}$ with constants independent of $\kappa\in (0,\kappa_0)$, for some $\kappa_0>0$ small enough.
In Section \ref{sec:driftstochTPP}, we define such $\widetilde{\mathrm{W}}_\kappa$ in terms of suitable modifications $\mathrm{V}_\kappa$ and $\mathrm{W}_\kappa$ of the Lyapunov-drift functions $\mathrm{V}$ and $\mathrm{W}$ for the inviscid one-point and two-point processes respectively and we show the Lyapunov-Foster drift estimate in Proposition \ref{prop:tildeWkappadrift}.

\section{Differential geometry on the sphere}\label{sec:DiffGeom}
To understand the projective process and the positivity of the top Lyapunov exponent a precise analysis of the differentials $D_\x f_\omega^\a$ and $D_\x f_\omega^\b$ of the two flow maps and their composition is required. 
Compared to the $\TT^2$ setting considered in \cites{BCZG2023,C2023} and the $\T^3$ setting of \cites{coti2026three}, one of the main difficulties when considering flows on the sphere is that $\Sph$ is not parallelizable, that is, the tangent bundle $T\Sph$ cannot be identified with the product space $\Sph\times\RR^2$.  Consequently, the differential of the flow map cannot be interpreted as a linear endomorphism,  since in general it does not map the tangent space to a point to itself. This obstruction can be circumvented by defining a local reference frame and a map that encodes how the reference frame on $T_\x\Sph$ changes whenever $\x\in\Sph$ moves with $f_\omega^\a$ and $f_\omega^\b$.

In this section we present the main geometric tools used throughout the paper.
We equip $\Sph$ with the standard metric induced by $\RR^3$ in view of the canonical embedding $\Sph\subset \RR^3$. For all $\x\in\Sph$, the tangent space $T_\x\Sph$ is given by
\begin{align}
    T_\x\Sph = \lbrace   v\in \R^3 : \x\cdot v = 0 \rbrace  ,
\end{align}
and the orthogonal projection of a vector $v\in \R^3$ to $T_\x\Sph$ is defined by
\begin{equation}
 \Pi_\x v:=v-(v\cdot \x)\x, 
\end{equation}
Then, for a smooth vector field $w$ in $\RR^3$ such that $w|_\Sph\in T\Sph$, the Levi-Civita covariant derivative with respect to $v\in T_\x\Sph$ on $\Sph$ is given by
\begin{equation}\label{eq:covariantderivative}
    \nabla_v w(\x)=\Pi_\x(D_\x w(\x)v),\quad \x\in\Sph,
\end{equation}
where $D_\x w$ denotes the ambient space $3\times 3$ Jacobian matrix of ${w}$. The commutator of two vector fields $v_1,v_2\in T\Sph$, using smooth extensions to $\R^3$ denoted by the same symbols, is the vector field
\begin{align}
    [v_1,v_2](\x) = \nabla_{v_1}v_2(\x) - \nabla_{v_2}v_1(\x) = [v_1,v_2]_{\R^3}(\x),
\end{align}
where $[\cdot,\cdot]_{\R^3}$ denotes the commutator of vector fields on $\R^3$. The equality holds because the commutator of vector fields tangent to the sphere is itself tangent to the sphere.
 \subsection{Frames on $\Sph$}
Let $\cU\subset\Sph$ be an open set of $\Sph$. A smooth local frame $E$ on $\cU$ is an ordered pair of smooth vector fields $(e_1, e_2)$ defined on $\cU$ that are linearly independent and span the tangent bundle, namely the pair $(e_1(\x), e_2(\x))$ forms a basis of $T_\x\Sph$ for all $\x\in \cU$.  We say that a smooth frame $E$ is orthonormal if $(e_1, e_2)$ are smooth orthonormal vector fields with respect to the ambient $\R^3$ scalar product. If we identify the space of frames on $\cU$ with the space of real $3\times 2$ matrices, which we denote by $M^{3\times 2}(\R)$, a left inverse for $E(\x)$ is given\footnote{In general, if $E(\x)$ is a $3\times 2$ matrix of rank 2, there exists a (not necessarily unique) left-inverse $E(\x)^{-1}$. If the column vectors of $E(\x)$ are orthonormal, one can choose $E(\x)^{-1} = E(\x)^T$.} by $E(\x)^T\in M^{2\times3}(\R)$, i.e., $E(\x)^TE(\x) = I\in M^{2\times2}(\R)$. In particular, for any $\x\in\cU$ and any vector $v\in T_\x\Sph$, the vector $w=E(\x)^T v\in \R^2$ denotes the local coordinates of $v\in T_\x\Sph$ in the frame $E(\x)$. With this identification, we have 
\begin{align}
    T\Sph|_\cU \cong \cU\times \R^2.
\end{align}
If $E$ is a smooth orthonormal frame on $\cU$, for all $\x\in \cU$ the projection of a vector $v\in \R^3$ to $T_\x\Sph$ can now be written as
\begin{equation}\label{eq:projframe}
    \Pi_\x v =\sum_{i=1}^2(e_i(\x)\cdot v)e_i(\x)= E(\x)E(\x)^T v
\end{equation}
and we can find the local coordinates of the covariant derivative $\nabla_{v}{w}(\x)$ in the orthonormal frame $E$.
\begin{lemma}\label{lemma:framecovariantD}
Let $E(\x)$ be a local smooth orthonormal frame on $\cU\subset \Sph$. Let $\x\in \cU$ and $v_1\in T_\x\Sph$. Then,
\begin{align}
    E(\x)^T \nabla_{v_1}v_2(\x) = E(\x)^T D_\x v_2(\x) v_1(\x)
\end{align}
for all vector fields $v_2$ defined on open subsets of $\RR^3$ that contain $\x\in \Sph$ and such that $v_2|_\Sph$ is tangent to $\Sph$.
\end{lemma}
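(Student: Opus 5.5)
The plan is to unwind the definition \eqref{eq:covariantderivative} of the Levi-Civita derivative and use the algebraic identity $E(\x)^T E(\x) = I$ together with the projection formula \eqref{eq:projframe}. By definition, for $\x\in\cU$ and $v_1\in T_\x\Sph$,
\begin{align}
    \nabla_{v_1}v_2(\x) = \Pi_\x\big(D_\x v_2(\x)v_1\big),
\end{align}
where $v_2$ is any smooth extension to an open neighbourhood of $\x$ in $\R^3$. Since $E$ is orthonormal on $\cU$, \eqref{eq:projframe} gives $\Pi_\x = E(\x)E(\x)^T$ as a linear map $\R^3\to T_\x\Sph$.

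Multiplying on the left by $E(\x)^T$ yields
\begin{align}
    E(\x)^T\nabla_{v_1}v_2(\x) = E(\x)^T E(\x) E(\x)^T D_\x v_2(\x) v_1 = E(\x)^T D_\x v_2(\x) v_1,
\end{align}
because the columns of $E(\x)$ are orthonormal, so $E(\x)^TE(\x)=I\in M^{2\times 2}(\R)$. An equivalent way to see this is that $E(\x)^T$ annihilates the normal direction: $E(\x)^T\x = 0$ since $e_i(\x)\perp \x$. Hence $E(\x)^T\Pi_\x v = E(\x)^T(v - (v\cdot\x)\x) = E(\x)^T v$ for every $v\in\R^3$.

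The only point needing care is well-posedness: the right-hand side involves the ambient Jacobian $D_\x v_2$, which depends on the chosen extension of $v_2$ off the sphere. However, $v_1$ is tangent to $\Sph$ at $\x$, so $D_\x v_2(\x)v_1$ is a directional derivative along a curve in $\Sph$ and depends only on $v_2|_\Sph$. Both sides are therefore independent of the extension, and this is exactly where the hypothesis that $v_2|_\Sph$ is tangent (and defined near $\x$) is used. There is no genuine obstacle; the statement is a one-line consequence of \eqref{eq:projframe}.
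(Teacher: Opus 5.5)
Your proof is correct and is essentially the paper's argument: both insert $\Pi_\x = E(\x)E(\x)^T$ from \eqref{eq:projframe} into \eqref{eq:covariantderivative} and cancel using $E(\x)^TE(\x)=I$. Your extra remark on independence of the extension is true but not needed for the identity. Also, that independence comes from $v_1$ being tangent, not from the tangency of $v_2|_\Sph$.
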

\begin{proof}
    The proof follows directly from \eqref{eq:covariantderivative} and \eqref{eq:projframe}, namely
    \begin{equation}
        E(\x)^T \nabla_{v}{w}(\x)=E(\x)^T \Pi_\x(D_\x {w}(\x) v(\x))=(E(\x)^TE(\x))E(\x)^TD_\x {w}(\x) v(\x)=E(\x)^T D_\x {w}(\x) v(\x).
    \end{equation}
\end{proof}
We further note that if $E$ is a smooth orthonormal frame on $\cU$, then for all $\x\in \cU$ and all $v\in UT_\x(\Sph)$, namely $|v|^2 := v\cdot v =1$, there exists a unique $w\in \mathbb{S}^1= \lbrace   (w_1,w_2)\in \R^2: w_1^2 + w_2^2 =1 \rbrace  $ such that $v = E(\x)w$. Such $w\in \mathbb{S}^1$ is indeed given by $w=E(\x)^Tv$ and we thus see that
\begin{align}
        UT\Sph|_\cU \cong \cU\times \mathbb{S}^1.
\end{align}
We say that a smooth frame $E=(e_1,e_2)$ on $\cU$ has unit area if $\mu_\x(e_1(\x),e_2(\x))=1$, for all $\x\in \cU$. Here,
\begin{align}
    \mu_\x(e_1(\x),e_2(\x)):= \x\cdot (e_1(\x) \times e_2(\x))
\end{align}
denotes the standard area form on $\Sph$ inherited from $\R^3$. Fixing now a smooth frame $E_0$ on $\cU$ with unit area we observe that any other unit area and smooth frame $E$ is given by $E(\x) = E_0(\x)\mathrm{g}(\x)$, for some $g(\x)\in SL(2,\R)$. With this identification, if $\mathcal{F}^{SL}(\Sph)$ denotes the principal $SL(2,\R)$-bundle of unit area frames, we have
\begin{align}
    \mathcal{F}^{SL}(\Sph)|_\cU \cong \cU\times SL(2,\R)
\end{align}
and $g(\x)\in SL(2,\R)$ is given by $E_0(\x)^TE(\x)$, if $E_0$ is an orthonormal frame. 

For all $\x\in\cU$ and $u\in T_\x\Sph$, the Levi-Civita covariant derivative of $E_0(\x)$ in the direction $u$ is given by
\begin{equation}
    (\nabla_u E_0)(\x) := (\nabla_u e_1(\x), \nabla_u e_2(\x)),
\end{equation}
and represents the rate of change of the reference frame $E_0(\x)$ in the direction $u\in T_\x\Sph$. To compute the covariant derivative, we radially extend $e_1(\x)$ and $e_2(\x)$ from $\Sph$ to $\R^3$ and still denote them by $e_1(\x)$ and $e_2(\x)$ to simplify notation. Since $\lbrace   e_1(\x), e_2(\x) \rbrace  $ is an orthonormal basis of $T_\x\Sph$ for all $\x\in \cU$, there exists a one-form $\gamma_\x$ such that
\begin{align}\label{eq:covariantvectorframe}
    \nabla_u e_1(\x) = \gamma_\x(u)e_2(\x), \quad \nabla_u e_2(\x) = -\gamma_\x(u)e_1(\x),
\end{align}
for all  $u \in T_\x\Sph$, so that for $u=u^1e_1(\x) + u^2e_2(\x)$, we have 
\begin{equation}\label{eq:covariantformframe}
    \gamma_\x(u) = u^1\gamma_\x(e_1(\x)) + u^2 \gamma_\x(e_2(\x)).
\end{equation}

\subsection{A convenient reference frame}
We now fix a local smooth orthonormal and unit-area reference frame which will be employed in Section~\ref{sec:PJP}.
Let $\mathbf{e}_3=(0,0,1)\in \Sph$ and set
\begin{align}
    \cU_3 := \lbrace   \x\in \Sph : \Pi_\x \mathbf{e}_3 \neq 0 \rbrace   = \Sph\setminus \lbrace   \pm \mathbf{e}_3 \rbrace  .
\end{align}
For all $\x\in \cU_3$, we define the orthonormal tangent vector fields
\begin{align}\label{eq:referenceframeE}
   e_1(\x) := \frac{\Pi_\x\mathbf{e}_3}{\Vert \Pi_\x\mathbf{e}_3 \Vert}\in T_\x\Sph, \quad e_2(\x):=\x\times e_1(\x)\in T_\x\Sph.
\end{align}
Then, $E_0(\x):= ( e_1(\x), e_2(\x))\in(T_\x\Sph)^2$ is a smooth orthonormal local frame over $\cU_3$. Moreover, $E_0(\x)$ has also unit area, since for all $\x\in \cU_3$ there holds
\begin{align}
    \mu_\x(e_1(\x),e_2(\x))
    =\x\cdot(e_1(\x)\times (\x\times e_1(\x)))
    =\x\cdot ((e_1(\x)\cdot e_1(\x))\x-(e_1(\x)\cdot \x)e_1(\x))
    =\x\cdot\x =1.
\end{align}
In coordinates, $E_0(\x)$ is given by
\begin{align}\label{eq:defframeA}
    e_1(\x) = \frac{1}{r}(-zx,-zy,r^2), \quad e_2(\x) = \frac{1}{r}(y,-x,0),
\end{align}
for $r=\sqrt{x^2+y^2}$ and $\x=(x,y,z)\in \cU_3$. 

\begin{lemma}\label{lemma:covariantframe}
    Let $E_0(\x) = (e_1(\x), e_2(\x))$ be given by \eqref{eq:defframeA}. For $\x_0=\left(\frac1{\sqrt2},\frac1{\sqrt2},0\right)$ we have
    \begin{align}\label{eq:framex0}
        e_1(\x_0)=(0,0,1), \quad  e_2(\x_0) =  \left(\frac1{\sqrt2},-\frac1{\sqrt2},0\right)
    \end{align}
    and there holds $(\nabla_u E_0)(\x_0) = 0$ for all $u\in T_{\x_0}\Sph$.
\end{lemma}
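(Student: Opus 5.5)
The plan is to evaluate the frame at $\x_0$ and then show that both covariant derivatives vanish there, which is equivalent to the connection one-form $\gamma_{\x_0}$ in \eqref{eq:covariantvectorframe} vanishing.

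\textbf{Step 1: the frame at $\x_0$.} At $\x_0=\left(\tfrac1{\sqrt2},\tfrac1{\sqrt2},0\right)$ we have $z=0$ and $r=1$. Substituting into \eqref{eq:defframeA} gives $e_1(\x_0)=(0,0,1)$ and $e_2(\x_0)=(y,-x,0)=\left(\tfrac1{\sqrt2},-\tfrac1{\sqrt2},0\right)$, which is \eqref{eq:framex0}.

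\textbf{Step 2: reduce to one component of each derivative.} By \eqref{eq:covariantvectorframe} it suffices to show $\gamma_{\x_0}(u)=0$, i.e.\ $e_2(\x_0)\cdot \nabla_u e_1(\x_0)=0$. Since $\nabla_u e_2=-\gamma(u)e_1$, this single identity also gives $\nabla_u e_2(\x_0)=0$. By Lemma~\ref{lemma:framecovariantD}, and because $e_2(\x_0)$ is tangent so the projection $\Pi_{\x_0}$ can be dropped,
\[
\gamma_{\x_0}(u)=e_2(\x_0)\cdot D_\x e_1(\x_0)\,u,
\]
using any smooth extension of $e_1$ to a neighbourhood in $\R^3$; the formula \eqref{eq:defframeA} itself serves as one. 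Since $\gamma_{\x_0}$ is linear, I only need to check the two basis directions $u=e_1(\x_0)$ and $u=e_2(\x_0)$.

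\textbf{Step 3: the two derivatives.} I will compute them along curves on the sphere through $\x_0$.
\begin{itemize}
\item Direction $e_1(\x_0)=(0,0,1)$: take the meridian $\x(t)=(\cos t/\sqrt2,\cos t/\sqrt2,\sin t)$. Along it $e_1=(-\sin t/\sqrt2,\,-\sin t/\sqrt2,\,\cos t)$. Its derivative at $t=0$ is $(-1/\sqrt2,-1/\sqrt2,0)=-\x_0$, which is normal to the sphere, so its $e_2$-component is zero.
\item Direction $e_2(\x_0)$: take the equator $\x(t)=(\cos\theta,\sin\theta,0)$ with $\theta=\pi/4-t$. Along it $e_1=(0,0,1)$ is constant, so the derivative is $0$.
\end{itemize}
Hence $\gamma_{\x_0}=0$, and therefore $(\nabla_u E_0)(\x_0)=0$ for all $u\in T_{\x_0}\Sph$.

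\textbf{Where the care is needed.} The argument is a routine computation. The one delicate point is the extension used in the ambient Jacobian $D_\x e_1$. Lemma~\ref{lemma:framecovariantD} makes the result independent of the extension once $E(\x)^T$ is applied, and differentiating along curves that stay on the sphere uses only the values of $e_1$ on $\Sph$.
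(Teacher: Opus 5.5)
Your proof is correct and follows essentially the same route as the paper: both use \eqref{eq:covariantvectorframe} and \eqref{eq:covariantformframe} to reduce the claim to $\gamma_{\x_0}(e_1(\x_0))=\gamma_{\x_0}(e_2(\x_0))=0$. The only difference is that the paper states general formulas for $\gamma_\x(e_1(\x))$ and $\gamma_\x(e_2(\x))$ on $\cU_3$ and then sets $z=0$, whereas you evaluate directly at $\x_0$ by differentiating along the meridian and the equator, which makes the paper's ``simple computation'' explicit.
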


\begin{proof}
A simple computation shows that for all $\x=(x,y,z)\in \cU_3$,
\begin{equation}\label{eq:oneformonbase}
    \gamma_\x(e_1(\x)) = 0, \quad \gamma_\x(e_2(\x)) = \frac{z}{\sqrt{x^2 + y^2}}.
\end{equation}
Hence, at $\x_0=\left(\frac1{\sqrt2},\frac1{\sqrt2},0\right)$ we have
\begin{align}
     \gamma_{\x_0}(e_1(\x_0)) = 0, \quad \gamma_{\x_0}(e_2(\x_0)) = 0
\end{align}
and thus  $(\nabla_u E_0)(\x_0) = 0$ for all $u\in T_{\x_0}\Sph$ due to \eqref{eq:covariantvectorframe} and \eqref{eq:covariantformframe}.
\end{proof}

\section{The Lie algebra rank condition}\label{sec:LARC}

In this section we present the new approach based on the Lie algebra rank condition (LARC), introducing the necessary definitions and showing how it connects to the deterministic Harris Theorem framework developed in \cites{BCZG2023}.
More precisely, we show that global LARC plays a central role when proving topological irreducibility for the one-point process while local LARC enters the picture when discussing the existence of open small sets for the processes defined in Section \ref{sec:kernel andHarris}, and more importantly, when proving the positivity of the top Lyapunov exponent. The main results of this section are Theorem~\ref{thm:RashevskyChow} and Corollary~\ref{coroll:LARC-to-submersion} which connect the LARC  with topological irreducibility and a local submersion property.

\subsection{The Lie algebra rank condition on manifolds}

We introduce the discussion on a smooth $d$-dimensional manifold $\mathrm{M}$, since processes arising from the Rossby-Haurwitz model take place on open subsets of $\Sph$, unit tangent bundles and in product spaces. 
Nevertheless, because all statements are local, a reduction to coordinate charts reduces the discussion to the standard Euclidean definition of vector fields on open subsets of $\RR^d$.

To begin with, let $\mathrm{M}$ be a smooth connected $d$-dimensional manifold together with its Lie algebra of smooth vector fields $\mathfrak{X}(\mathrm{M})$. 
Consider a finite family  of smooth vector fields $\mathcal{V}=\{V_1,\dots,V_m\}\subset \mathfrak{X}(\mathrm{M})$.

\begin{definition}
    The Lie algebra $\Lie(\cV)$ generated by $\cV$ is the smallest Lie subalgebra of $\mathfrak{X}(\mathrm{M})$ that contains $V_1,\dots,V_m$. 
    Equivalently, $\Lie(\cV)$ is the linear span of all iterated Lie brackets, which are defined via induction as
    \begin{equation}\label{eq:iterated_lie_brackets}
    V_{(i)}:=V_i,\qquad V_{(i_1,\dots,i_k)}:=[V_{i_1},V_{(i_2,\dots,i_k)}],
    \end{equation}
    for all multi-indices $(i_1,\dots,i_k)\in\{1,\dots,m\}^k$ and all $k\in\NN$.
For a point $x\in \mathrm{M}$, the pointwise Lie algebra is the pointwise evaluation
\begin{equation}
    \Lie_x(\mathcal{V}):= \lbrace   V(x)\,:\, V\in\Lie(\mathcal{V})\rbrace  \subset T_x\mathrm{M}.
\end{equation}
\end{definition}
\noindent
We now present the main tool introduced in this section.

\begin{definition}\label{def:LARC}
    We say that the family $\mathcal{V}$ satisfies the Lie algebra rank condition (LARC) at $x\in \mathrm{M}$ if 
    \begin{equation}
    \Lie_x(\mathcal{V})=T_x\mathrm{M}.
    \end{equation}
    In addition, if the Lie algebra rank condition is satisfied for all $x\in \mathrm{M}$, we say that the family $\mathcal{V}$ is bracket-generating on $\mathrm{M}$.
\end{definition}

For each $i\in\{1,\dots,m\}$, the vector field $V_i$ generates a local smooth flow $\Phi_t^i$ that satisfies 
\begin{equation}
    \frac{\dd}{\dd t}\Phi_t^i(x) = V_i(\Phi_t^i(x)), \qquad \Phi_0^i(x)=x.
\end{equation}
For a choice of parameters $\omega=(\omega_1,\dots\omega_m)\in \RR^m$ we define the composed flow
\begin{equation}\label{eq:elementary-flow-block}
    \Phi_\omega := \Phi_{\omega_m}^m \circ \cdots \circ \Phi_{\omega_1}^1,
\end{equation}
where the order in \eqref{eq:elementary-flow-block} is fixed as follows: the flow of $V_1$ acts first, then the flow of $V_2$, and so on.  
We denote by a word of length $q$ the tuple
\begin{equation}
    \varpi=(\omega^1,\dots,\omega^q)\in(\RR^m)^q, \qquad \omega^j=(\omega^j_1,\dots,\omega^j_m),
\end{equation}
and, respectively, the flow $\Phi_\varpi$ associated with $\varpi$ and the endpoint map $\Psi_x^q$ for the point $x$ as
\begin{equation}\label{eq:flow-associated-word-and-endpoint-map}
    \Phi_\varpi(x) := \Phi_{\omega^q} \circ \cdots \circ \Phi_{\omega^1}(x), \qquad \Psi_x^q(\varpi):=\Phi_\varpi(x).
\end{equation}

\begin{remark}\label{rem:LARC-local-conventions}
We shall use the following elementary facts throughout the section.
\begin{enumerate}[label=\textup{(\roman*)}]
    \item If $(\mathcal U,\chi)$ is a chart on $\mathrm{M}$ and $\widetilde V=D\chi\circ V\circ \chi^{-1}$ denotes $V$ in local coordinates, then coordinate push-forward preserves Lie brackets. Hence the condition $\Lie_x(\cV)=T_x\mathrm M$ is equivalent to the corresponding Euclidean rank condition in $\RR^d$.

    \item If $\mathrm M$ is embedded in some $\R^n$, the rank condition is imposed in the intrinsic tangent space $T_x\mathrm M$, not in the ambient space.  For example, on $\Sph$ the brackets must span $T_x\Sph$, equivalently $\RR^2$ after choosing a local chart, not $\RR^3$.

    \item All flow constructions are local in time and the parameters are chosen sufficiently small so that the relevant compositions are well-defined. 
\end{enumerate}
\end{remark}

\subsection{Topological irreducibility}\label{ssec:top-irred-LARC}
The first consequence of the bracket-generating property is topological irreducibility. To translate the LARC condition into a checkable condition for topological irreducibility we use the Rashevsky-Chow theorem in the form stated by Agrachev and Sachkov~\cite{AScontroltheory}*{Chapter~5, Theorem~5.9}.
\begin{theorem}[Rashevsky--Chow]\label{thm:RashevskyChow}
Let $\mathrm M$ be a smooth connected manifold and let $\cV=\{V_1,\dots,V_m\}\subset\mathfrak X(\mathrm M)$ be bracket-generating on $\mathrm M$.
Then, for every $x\in \mathrm M$, the orbit generated by $\cV$ is the whole manifold:
\begin{equation}\label{eq:orbit}
    \mathcal O_x(\cV):=\left\{ \Phi_{t_n}^{i_n}\circ\cdots\circ \Phi_{t_1}^{i_1}(x): n\in\NN,\; i_j\in\{1,\dots,m\},\; t_j\in\RR\right\}=\mathrm M.
\end{equation}
\end{theorem}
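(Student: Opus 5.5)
The plan is the classical Chow--Rashevsky argument, built around the orbit $\mathcal O_x(\cV)$ and the fact that orbits partition $\mathrm M$ into connected pieces. First, I will show that every orbit is open. Fix $x\in\mathrm M$ and $y\in\mathcal O_x(\cV)$. Since $\cV$ is bracket-generating, I can pick iterated brackets $V_{(I_1)},\dots,V_{(I_d)}$ in the sense of \eqref{eq:iterated_lie_brackets} whose values at $y$ form a basis of $T_y\mathrm M$. By continuity, they stay linearly independent in a neighbourhood of $y$.

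Next, each bracket direction has to be realised by compositions of the elementary flows $\Phi^i_t$. For a single bracket, the commutator curve
\[
c(t)=\Phi^j_{-t}\circ\Phi^i_{-t}\circ\Phi^j_{t}\circ\Phi^i_{t}(z)
\]
satisfies $c(t)=z+t^2[V_i,V_j](z)+O(t^3)$ in a chart; I will use Remark~\ref{rem:LARC-local-conventions}(i) to work in coordinates. Nesting this construction gives, for a bracket of length $k$, a composition of elementary flows whose displacement is $t^k V_{(I)}(z)+o(t^k)$. Reparametrising with $s=t^k$ (and using the reversed composition for $s<0$) produces maps $s\mapsto\Gamma^I_s(z)$ that are continuous in $s$, differentiable at $s=0$ with derivative $V_{(I)}(z)$, and consist only of elementary flows.

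I then consider
\[
F(s_1,\dots,s_d)=\Gamma^{I_d}_{s_d}\circ\cdots\circ\Gamma^{I_1}_{s_1}(y).
\]
Its image lies in $\mathcal O_x(\cV)$, because the orbit is invariant under every $\Phi^i_t$. The map $F$ is differentiable at $0$ with invertible differential $(V_{(I_1)}(y),\dots,V_{(I_d)}(y))$. A version of the inverse function theorem for maps that are continuous and merely differentiable at a point then shows that $F$ covers a neighbourhood of $y$; the version I intend is the topological one, proved via Brouwer degree. Hence $\mathcal O_x(\cV)$ is open.

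To conclude, note that orbits either coincide or are disjoint, since the relation ``$y\in\mathcal O_x(\cV)$'' is an equivalence: the flows are invertible and can be composed. So $\mathrm M$ is a disjoint union of open orbits, and each orbit's complement, being a union of other orbits, is also open. By connectedness of $\mathrm M$, $\mathcal O_x(\cV)=\mathrm M$.

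The main obstacle is the step showing that the nested commutators realise bracket directions to leading order, together with the openness argument for maps whose $s^{1/k}$ reparametrisation is not $C^1$. To avoid this, I would first try the cleaner Sussmann--Nagano route. That route shows the orbit is an immersed submanifold whose tangent space contains $\Lie_y(\cV)$: it uses the fact that pushforwards $(\Phi^i_t)_*V_j$ are tangent to the orbit and differentiates in $t$ to generate brackets. Full rank then immediately gives that the orbit is open, and the rest of the argument is unchanged.
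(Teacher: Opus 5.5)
Your proof is correct, but it does not follow the route behind the paper's statement. The paper gives no proof: it quotes the result from Agrachev--Sachkov, where it is deduced from the Orbit Theorem. That is essentially your fallback Sussmann--Nagano route, in which orbits are immersed submanifolds with $\Lie_y(\cV)\subset T_y\mathcal O_x(\cV)$, so full rank makes every orbit open.

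Your primary argument is Chow's original one, and it goes through as written.
\begin{itemize}
\item The nested expansion you single out as the main obstacle is exactly Lemma~\ref{lem:commutator-loop-taylor}, applied to the loops \eqref{eq:loops-induction}. Those loops are ordered slightly differently from your $c(t)$, which at most flips signs and is harmless for spanning.
\item Appendix~\ref{app:taylor-flows} proves that lemma with remainders bounded uniformly on compacts. This uniformity is what makes $F$ differentiable at $0$ with differential $(V_{(I_1)}(y),\dots,V_{(I_d)}(y))$.
\item For your $s<0$ branch, Lemma~\ref{lem:inverse} gives $(\Gamma^I_t)^{-1}(z)=z-t^kV_{(I)}(z)+O(t^{k+1})$.
\item The lack of $C^1$ regularity is handled by Brouwer. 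In a chart, set $G:=DF(0)^{-1}(F-y)$, so $G(s)=s+o(|s|)$, and pick $r$ with $|G(s)-s|\le r/2$ on $\overline{B}_r$. For $|w|\le r/2$ the continuous map $s\mapsto s-G(s)+w$ sends $\overline{B}_r$ into itself, and its fixed point solves $G(s)=w$.
\end{itemize}

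If you would rather avoid that step, use Corollary~\ref{coroll:LARC-to-submersion}. It gives a word $\varpi_*$ at which $\varpi\mapsto\Phi_\varpi(y)$ is a genuine smooth submersion, so the orbit contains a neighbourhood of $\Phi_{\varpi_*}(y)$. Pulling that neighbourhood back by $\Phi_{\varpi_*}^{-1}$, which is again a composition of flows and hence orbit-preserving, gives a neighbourhood of $y$.

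As for what each route buys: yours is elementary and uses only tools the paper already develops in Section~\ref{sec:LARC} and the appendices. It also proves slightly more than the statement, namely that words with arbitrarily small times already fill a neighbourhood of every point. That local controllability property is in the spirit of Remark~\ref{rem:negative-small-times}. The Orbit-Theorem route is shorter once that theorem is available, and it gives structural information even when the rank condition fails: orbits are immersed submanifolds whose tangent spaces contain $\Lie_y(\cV)$. The price is that it imports a substantial black box.
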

Theorem~\ref{thm:RashevskyChow} gives an equivalent condition to verify the irreducibility criterion from Lemma~\ref{lemma:blackboxirreducibility}.
Indeed, any composition appearing in \eqref{eq:orbit} can be written as a composition of elementary blocks $\Phi_\omega$ by choosing vectors $\omega$ with only one non-zero coordinate.  
Consequently, Theorem~\ref{thm:RashevskyChow} implies that, whenever $\cV$ is bracket-generating on $\mathrm M$, for every $x\in\mathrm M$ and every non-empty open set $\mathcal O\subset\mathrm M$ there exists a length $q\in\NN$ and a parameter $\varpi\in(\RR^m)^q$ such that $\Psi_x^q(\varpi)\in\mathcal O$.

\subsection{Commutator loops and small sets}\label{ssec:small-sets-LARC}
In contrast to topological irreducibility, the existence of open small sets is a local non-degeneracy property.
By Lemma~\ref{lemma:blackboxsmallset}, it is enough to exhibit a point $x_*$ and a word $\varpi_*$ such that the density of the law is bounded from below in a neighbourhood of $\varpi_*$ and the endpoint map $\Psi^q_{x_*}$ is a submersion at $\varpi_*$. 
In the present setting, the density condition follows from the fact that the random amplitudes have a uniform law on a box, therefore, the substantive point is to construct a word with the required submersion property, and this is precisely what the LARC provides.

We now specialize to $\mathrm{M}=\R^d$. It is immediate to note that, whenever the vector fields $V_1(x),\dots,V_m(x)$ already span $T_x\mathrm M$, the map  $\Psi_x^q(\varpi)$ is a submersion at the trivial word $\varpi=0$. Indeed, if a single small parameter turns on the flow of $V_i$, then the first-order variation of the endpoint-map along $\varpi_i$ at $\varpi=0$ is in the direction $V_i$.
However, if they do not generate $T_x\mathrm{M}$, the remaining directions are generally spanned by Lie brackets, which appear as the first non-trivial leading order contribution of loops of flows. 
We develop this idea to understand suitably chosen compositions of flow maps and how their leading order in time expansion connects with the Lie algebra generated by $\cV$ at $x$. 

To that purpose, we introduce commutator loops of flow maps associated to the iterated Lie brackets defined in \eqref{eq:iterated_lie_brackets} by induction.
For simplicity, write $I:=(i_1,\dots,i_k)$ and $I':=(i_2,\dots,i_k)$ with all $i_j\in\lbrace 1,\dots,m\rbrace$, and define the associated loops inductively as
\begin{equation}\label{eq:loops-induction}
    \Gamma_t^{(i)}:=\Phi_t^i,\qquad \Gamma_t^I:=\Phi_{-t}^{i_1}\circ(\Gamma_t^{I'})^{-1}\circ\Phi_t^{i_1}\circ\Gamma_t^{I'}.
\end{equation}
The following lemma connects flow map loops \eqref{eq:loops-induction} with their associated iterated Lie brackets \eqref{eq:iterated_lie_brackets} showing that the non-trivial leading order of the Taylor expansion in time is exactly the iterated bracket.  

\begin{lemma}\label{lem:commutator-loop-taylor}
Let $I$ be a multi-index of length $k$ and let $K\Subset \mathrm M$ be contained in a coordinate chart.
Then there exist $t_0>0$ and a smooth map $\cE_I:(-t_0,t_0)\times K\to\RR^d$ in local coordinates such that, for $|t|<t_0$ and $x\in K$,
\begin{equation}\label{eq:Gamma-expansion}
    \Gamma_t^I(x) = x+t^k V_I(x)+t^{k+1}\cE_I(t,x),
\end{equation}
in the chosen coordinate chart.  Moreover, the expansion holds in $C^1(K)$.
\end{lemma}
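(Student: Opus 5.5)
We argue by induction on the length $k$ of the multi-index $I$, working throughout in a fixed coordinate chart containing a compact neighbourhood $K'\Supset K$. All maps involved are then smooth maps between open subsets of $\RR^d$. The base case $k=1$ is Taylor's theorem applied to the flow: $\Phi_t^i(x)=x+tV_i(x)+t^2\int_0^1(1-s)\,(DV_i\,V_i)(\Phi^i_{st}(x))\,\dd s$. The remainder is smooth in $(t,x)$, so the expansion holds in $C^1(K)$ after shrinking $t_0$ so that all flows stay in $K'$.

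For the inductive step, write $A_t:=\Gamma_t^{I'}$ and $B_t:=\Phi^{i_1}_t$. By hypothesis $A_t=\mathrm{id}+t^{k-1}V_{I'}+O(t^k)$ in $C^1$, and likewise $A_t^{-1}=\mathrm{id}-t^{k-1}V_{I'}+O(t^k)$. The inverse expansion follows by writing $A_t^{-1}(y)=y-t^{k-1}V_{I'}(A_t^{-1}y)+O(t^k)$ and noting $A_t^{-1}y=y+O(t^{k-1})$.

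The key structural fact we will use is that $\Gamma^I_t-\mathrm{id}$ vanishes identically when either $A_t$ or $B_t$ is the identity. Concretely, consider the smooth two-parameter family $G(s,r):=B_{-s}\circ A_r'^{-1}\circ B_s\circ A'_r$, where $A'_r$ is a smooth family with $A'_0=\mathrm{id}$. Then $G(0,r)=G(s,0)=\mathrm{id}$. By Hadamard's lemma applied twice, $G(s,r)(x)-x=s\,r\,H(s,r,x)$ with $H$ smooth, and $H(0,0,x)=\partial_s\partial_r G|_{0}(x)$.

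A standard computation gives this mixed derivative. Differentiating $B_{-s}\circ W\circ B_s$ at $s=0$, with $W=A_r'^{-1}\circ A_r'$-type conjugation, produces the bracket of $V_{i_1}$ with the $r$-derivative of $A'$. This yields $[V_{i_1},\partial_rA'_r|_0]$, with sign conventions fixed to match \eqref{eq:iterated_lie_brackets}.

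The obstacle is that $A_t$ depends on $t$ in a degenerate way, not as a smooth family in a free parameter $r$ with $\partial_r A'|_0=V_{I'}$. To handle this, I would introduce the reparametrized family
\[
A'_{t,r}:=\mathrm{id}+r\bigl(V_{I'}+t\,\cE_{I'}(t,\cdot)\bigr).
\]
Then $A_t=A'_{t,t^{k-1}}$, and for small $r$ this is a diffeomorphism onto its image. Applying the Hadamard argument with $t$ as an additional smooth parameter gives
\[
\Gamma^I_t(x)-x=t\cdot t^{k-1}\,H(t,t,t^{k-1},x),
\]
where $H(0,0,0,x)=[V_{i_1},V_{I'}](x)=V_I(x)$. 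Expanding $H$ to first order in its first three arguments then gives $\Gamma_t^I(x)=x+t^kV_I(x)+t^{k+1}\cE_I(t,x)$, with $\cE_I$ smooth. Here we use $k-1\ge1$, and for $k=1$ the base case already covers the claim.

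Since $H$ is smooth jointly in all variables on a compact set, every derivative in $x$ of the remainder is bounded uniformly for $|t|<t_0$. This gives the $C^1(K)$ statement directly.

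The main point to double-check is the sign and ordering in the mixed derivative. We need to verify that the composition order in \eqref{eq:loops-induction} produces $+[V_{i_1},V_{I'}]$ rather than its negative. I would settle this by testing on linear vector fields in $\RR^2$, where the flows are matrix exponentials and the commutator is computed explicitly.
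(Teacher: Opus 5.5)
Your proof is correct, and it takes a genuinely different route from the paper's. The paper (Appendix~\ref{app:taylor-flows}) works by explicit Taylor expansion. It proves an inverse expansion for near-identity maps (Lemma~\ref{lem:inverse}) and a first-order push-forward formula (Lemma~\ref{lem:conjugation}). It then computes the case $|I|=2$ by composing four flow expansions by hand, and runs the induction only from $n\ge 2$, because it needs the quadratic term $O(t^{2n})$ of the conjugation to be $O(t^{n+2})$.

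You instead use the group-commutator structure. The map $G(s,r)=B_{-s}\circ (A'_r)^{-1}\circ B_s\circ A'_r$ is the identity on both axes $\{s=0\}$ and $\{r=0\}$, so Hadamard's lemma factors $G-\mathrm{id}=srH$ with $H$ jointly smooth. The reparametrization $A_t=A'_{t,t^{k-1}}$ then transfers this to the degenerate family at no cost.

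Your route buys two things. First, all $k\ge 2$ are treated uniformly, with no special case $k=2$. Your expansion of $A_t^{-1}$ is in fact never used, since $(A'_{t,r})^{-1}$ is handled by the implicit function theorem with parameters. Second, your remainder $\mathcal{E}_I$ is jointly smooth in $(t,x)$, so the $C^1(K)$ claim, and indeed any $C^m(K)$ bound, is immediate; the paper's appendix only tracks boundedness of the remainder. The price is some domain bookkeeping:
\begin{itemize}
\item carry the inductive hypothesis on a compact neighbourhood of $K$;
\item cut off $V_{I'}+t\,\mathcal{E}_{I'}(t,\cdot)$ so that $A'_{t,r}$ is a global diffeomorphism for small $|r|$;
\item check that its inverse agrees with $(\Gamma^{I'}_t)^{-1}$ at the points $B_t(A_t(x))$, $x\in K$, for small $t$.
\end{itemize}

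On the sign you flagged, the mixed derivative is easy to compute directly. One has $\partial_r G(s,r)|_{r=0}=W-(B_{-s})_*W$ with $W=\partial_rA'_r|_{r=0}$, hence
\[
\partial_s\partial_rG\big|_{(0,0)}=-\mathcal{L}_{V_{i_1}}W=DV_{i_1}\,W-DW\,V_{i_1}.
\]
In the standard convention $[X,Y]=DY\,X-DX\,Y$, which the main text uses (e.g.\ for $w_4$ in Proposition~\ref{prop:LARCTPP}), this equals $-[V_{i_1},W]$. The loops \eqref{eq:loops-induction} therefore produce $(-1)^{k-1}V_I$ at leading order. The statement holds verbatim under the opposite convention $[X,Y]=DX\,Y-DY\,X$, which the appendix implicitly adopts in Lemma~\ref{lem:conjugation}. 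Either way the sign is irrelevant for Proposition~\ref{prop:word-construction}, which only uses linear independence.
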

The lemma follows from an induction argument on $|I|$ and is presented in Appendix \ref{app:taylor-flows} together with some preliminary standard results on flow maps in $\RR^d$.

We focus now on showing how $n$ linearly independent vector fields of the Lie Algebra generated by $\cV = \lbrace   V_1, ..., V_k \rbrace  $ for some $k\geq 2$ give rise to flow maps obtained as suitable combinations of $\Phi_t^{j}$ whose time-differentials have rank $n$. 
For $i=1,...,k$, we define $h_i:\R\rightarrow \R^k$ by $h_i(a)=a\mathbf{e}_i$, where $\mathbf{e}_i$ is the usual Cartesian basis of $\R^k$, so that we have $\Phi_{h_i(a)}=\Phi_a^{i}$.

The next proposition shows how to construct words $\varpi$ that recover information on the Lie algebra generated by $\cV$.

\begin{proposition}\label{prop:word-construction}
   Let $\cV=\{V_1,\dots,V_m\}$ be a family of smooth vector fields on a smooth $d$-dimensional manifold $\mathrm{M}$ and let $x_0\in\mathrm M$.
   Assume there exist multi-indices $I_1,\dots, I_r$ such that $V_{I_1}(x_0),\dots,V_{I_r}(x_0)$ are linearly independent in $T_{x_0}\mathrm{M}$. Then there exist $q\in \NN$, $\varepsilon_0>0$, and a word $\varpi_*\in ([-\varepsilon_0,\varepsilon_0]^m)^q\setminus \{0\}$ such that 
   \begin{equation}
       \mathrm{rank }D_\varpi\Phi_{\varpi_*}(x_0)\geq r.
   \end{equation}
\end{proposition}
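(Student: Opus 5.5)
We argue by induction on $r$, building the word one block at a time so that each bracket $V_{I_j}(x_0)$ contributes one new direction to the differential of the endpoint map. By Remark~\ref{rem:LARC-local-conventions}(i) we may work in a coordinate chart around $x_0$, so that $\mathrm M=\RR^d$ locally. The basic building block is the commutator loop $\Gamma^{I}_t$ of \eqref{eq:loops-induction}. By construction it is a finite composition of elementary flows $\Phi^{i}_{\pm t}$, hence of the form $\Phi_{\varpi_I(t)}$ for a word $\varpi_I(t)$ whose entries are $0$ or $\pm t$ in prescribed slots. Lemma~\ref{lem:commutator-loop-taylor} gives
\[
\Gamma^I_t(x)=x+t^{k}V_I(x)+t^{k+1}\cE_I(t,x),\qquad k=|I|,
\]
in $C^1$ of a compact neighbourhood of $x_0$.

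The obstacle is that the parameter derivative of $\varpi\mapsto\Phi_\varpi(x_0)$ sees only first-order variations, while $V_I$ sits at order $t^k$. To fix this, we do not differentiate at $t=0$. Instead we introduce an auxiliary scalar $s_j$ per bracket and reparametrize: set $t=s^{1/k}$ for $s>0$ (and use $\Gamma$ with the roles of the first two flows swapped, or $t=-|s|^{1/k}$ when $k$ is odd, for negative directions). Then $\gamma_j(s):=\Gamma^{I_j}_{s^{1/k_j}}$ satisfies $\gamma_j(s)(x)=x+sV_{I_j}(x)+o(s)$, uniformly in $C^1$ near $x_0$.

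Fix small $s_1,\dots,s_r>0$ and consider the concatenated word $\varpi_*$ realizing $\gamma_r(s_r)\circ\cdots\circ\gamma_1(s_1)$. We do not differentiate in $s$, since $s$ is not a coordinate of $\varpi$. Instead we perturb individual entries of the word. Each loop $\gamma_j$ contains at its outermost level the elementary flow $\Phi^{i_1}_{\pm t}$ inside a conjugation. Varying independently the last time entry $t$ of the block $\gamma_j$ by $\delta$ yields a map $\delta\mapsto\gamma_j$ with one entry changed. Differentiating in $\delta$ gives a vector that is a fixed combination of the $V_{i}$ transported by the flows, and these are not obviously brackets.

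A cleaner route is the standard accessibility argument (as in the proof of the orbit theorem / Krener). Let $F(\tau_1,\dots,\tau_r)=\gamma_r(\tau_r)\circ\cdots\circ\gamma_1(\tau_1)(x_0)$. This is $C^1$ in $\tau$ for $\tau_j>0$ and extends continuously to $\tau=0$. By the expansion, $\partial_{\tau_j}F$ at $\tau$ near $0^+$ equals $V_{I_j}(x_0)+o(1)$. Hence for small positive $\tau_*$, the vectors $\partial_{\tau_j}F(\tau_*)$ are linearly independent, because linear independence is an open condition. Now $F=\Phi_{\varpi(\tau)}(x_0)$ with $\tau\mapsto\varpi(\tau)$ smooth near $\tau_*$ (all $\tau_j>0$, so $\tau^{1/k}$ is smooth). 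The chain rule gives $DF(\tau_*)=D_\varpi\Phi_{\varpi(\tau_*)}(x_0)\,D\varpi(\tau_*)$, so $\operatorname{rank}D_\varpi\Phi_{\varpi_*}(x_0)\ge \operatorname{rank}DF(\tau_*)\ge r$ with $\varpi_*=\varpi(\tau_*)\neq0$. Choosing $\tau_*$ small also places the entries in $[-\varepsilon_0,\varepsilon_0]$, and $q$ is the total number of elementary blocks used.

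The main step to check carefully is the claim $\partial_{\tau}\gamma_j(\tau)(x)=V_{I_j}(x)+o(1)$ uniformly for $x$ near $x_0$. This is needed because the derivative of the composition in $\tau_j$ also involves transport by the other $\gamma_i$. The transport is handled by the $C^1$ part of Lemma~\ref{lem:commutator-loop-taylor}: it gives $D_x\gamma_i(\tau_i)=\mathrm{Id}+O(\tau_i)$, so pushing forward through the later loops changes $V_{I_j}(x_0)$ only by $o(1)$. The derivative in $\tau$ itself needs more than the $C^1$-in-$x$ expansion. Write $\gamma_j(\tau)(x)=x+\tau V_{I_j}(x)+\tau^{1+1/k}\cE(\tau^{1/k},x)$ with $\cE$ smooth in its first argument. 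Then differentiating gives $V_{I_j}(x)+O(\tau^{1/k})$ provided $\partial_t\cE$ is bounded. This bound follows since $\cE_I$ is smooth on $(-t_0,t_0)\times K$.
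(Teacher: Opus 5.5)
Your argument is correct and is essentially the paper's proof. Reparametrizing each loop by $t=\tau_j^{1/k_j}$, so that $\partial_{\tau_j}F\to V_{I_j}(x_0)$, is equivalent to the paper's step of factoring $\ell_j a_j^{\ell_j-1}$ out of $\partial_{a_j}F$ at a point where all $a_j\neq 0$. After that, both arguments use openness of linear independence and the chain rule $DF=D_\varpi\Phi_{\varpi_*}(x_0)\,D\varpi$ to get rank at least $r$. (Two points the paper's expansion uses only implicitly are made precise in your version: the transport through the other loops via $D_x\gamma_i=\mathrm{Id}+O(\tau_i)$, and the requirement that $\partial_t\cE_{I}$ be bounded. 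The exploratory detour about perturbing single word entries can simply be dropped.)
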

\begin{proof}
    Let $d=\dim\mathrm M$ and let $\lbrace V_{I_j}:j=1,\dots,r\rbrace$ be the set of vector fields generated by repeated nested commutators that are linearly independent at $x=x_0$. We work locally around $x_0\in\mathrm{M}$ and we give a recursive procedure that determines the words $\varpi_{I_j}$ associated to each $V_{I_j}$. We already know that $\varpi_i=h_i(a)$ is the word associated to $V_i$ by means of $\Phi_{\varpi_i}=\Phi_{a}^{i}$. Next, let $V_{i_1},V_{i_2}\in \cV$ and note that the flow map
    \begin{align}
        \Gamma_a^{[V_{i_1},V_{i_2}]} = \Phi_{a}^{i_1}\circ \Phi_a^{i_2}\circ (\Phi_{a}^{i_1})^{-1} \circ (\Phi_a^{i_2})^{-1}
    \end{align}
    gives rise to the commutator $[V_{i_1},V_{i_2}]$ for any two distinct $i_1,i_2\in \{1,\dots,m\}$. Since $\Phi_a^{i}$ is the flow map generated by the autonomous vector field $V_i$, we have that $(\Phi_a^{i})^{-1}=\Phi_{-a}^{i}$ for all $a\in \R$. In particular, we observe that
    \begin{align}
        \Gamma_a^{[V_{i_1},V_{i_2}]}  = \Phi_{\varpi_{i_1,i_2}(a)}, \quad \varpi_{i_1,i_2}(a) := (-h_{i_2}(a),-h_{i_1}(a),h_{i_2}(a), h_{i_1}(a))\in (\R^m)^4
    \end{align}
    and
    \begin{align}
        \left( \Gamma_a^{[V_{i_1},V_{i_2}]} \right)^{-1} = \Phi_a^{i_2}\circ \Phi_a^{i_1}\circ \Phi_{-a}^{i_2}\circ \Phi_{-a}^{i_1} = \Phi_{\varpi^{-1}_{i_1,i_2}(a)}, \hspace{0.8em} \varpi^{-1}_{i_1,i_2}(a) := (h_{i_1}(a), h_{i_2}(a), -h_{i_1}(a), - h_{i_2}(a)) \in (\R^m)^4.
    \end{align}
    By induction, we are able to find words $\varpi_{I_j}$ that give rise to the commutators $V_{I_j}$ as follows: Assume that $V_{I_j}$ is a vector field generated by $\ell_j>1$ commutators, that is, $I_j=(i_1,\dots,i_{\ell_j})$ with $i_s\in \lbrace 1,\dots,m\rbrace$ for all $s=1,\dots,\ell_j$ and $V_{I_j} = [V_{i_1},[V_{i_2}, \dots, [V_{i_{\ell_j-1}},V_{i_{\ell_j}}]\dots]] = [V_{i_1}, V_{\tilde{I}_j}]$ for ${\tilde{I}_j} = (i_2,\dots,i_{\ell_j})$. Let $\varpi_{\tilde{I}_j}(a)$ and $\varpi_{\tilde{I}_j}^{-1}(a)$ be the words for $\Gamma_a^{V_{\tilde{I}_j}}$ and $\left(\Gamma_a^{V_{\tilde{I}_j}}\right)^{-1}$. Then,
    \begin{align}
        \varpi_{I_j}(a) := (\varpi^{-1}_{\tilde{I}_j}(a), -h_{i_1}(a), \varpi_{\tilde{I}_j}(a), h_{i_1}(a)), \quad \varpi_{I_j}^{-1}(a) := (-h_{i_1}(a), \varpi_{\tilde{I}_j}^{-1}(a), h_{i_1}(a), \varpi_{\tilde{I}_j}(a))
    \end{align}
    are such that 
    \begin{align}
        \Gamma_a^{V_{{I}_j}} = \Phi_{\varpi_{{I}_j}(a)}, \quad \left(\Gamma_a^{V_{{I}_j}}\right)^{-1} = \Phi_{\varpi^{-1}_{{I}_j}(a)},
    \end{align}
    respectively, for all $a\in \R$. Once we have determined the words $\varpi_{I_j}$, for $a_1,\dots,a_r\in \R$, we now define 
    \begin{align}
        h(a_1,\dots,a_r) := (\varpi_{I_1}(a_1), \dots ,\varpi_{I_r}(a_r))\in (\R^m)^q,
    \end{align}
    for some $q\in \N$ and consider the composition map $F:\R^r\times \mathrm M\rightarrow\mathrm M$ given by $F(a_1,\dots,a_r;x) := \Phi_{h(a_1,\dots,a_r)}(x)$. Then, 
    \begin{align}\label{eq:DFDphiDh}
        D_{a_1,\dots,a_r}F(a_1^*,\dots,a_r^*;x) = D_{\varpi}\Phi_{\varpi_*}(x) D_{a_1,\dots,a_r}h(a_1^*, \dots, a_r^*),
    \end{align}
    where $\varpi_* = h(a_1^*,\dots,a_r^*) = (\varpi_{I_1}(a_1^*), \dots ,\varpi_{I_r}(a_r^*))\in (\R^m)^q$. It is immediate to see that $D_{a_1,\dots,a_r}h(a_1^*, \dots, a_r^*)$ has full rank. Since the $\lbrace V_{I_j}(x_0) \rbrace_{j=1}^r$ are linearly independent, we now show that $D_{a_1,\dots,a_r}F(a_1^*,\dots,a_r^*;x_0)$ has rank $r$, for some $a_*:=(a_1^*,\dots,a_r^*)$ with $\Vert a_* \Vert$ sufficiently small. To do this, we first note that
    \begin{align}
        F(a_1,\dots,a_r;x) = \Phi_{\varpi_{I_r}(a_r)}\circ \dots \circ \Phi_{\varpi_{I_1}(a_1)}(x),
    \end{align}
    with further
    \begin{align}
        \Phi_{\varpi_{I_j}(a)}(x)=x + a^{\ell_j}V_{I_j}(x) + a^{\ell_j+1}\mathcal{E}_{I_j}(a,x)
    \end{align}
    for all $j=1,\dots,r$, and all $a\in (-\ep,\ep)$ and $x\in U$ due to Lemma~\ref{lem:commutator-loop-taylor}, where we recall that $\ell_j\geq 1$ is the length of the multi-index $I_j$, with the convention that $\ell_j=1$ whenever $V_{I_j}=V_{i_j}$, for some $i_j\in \lbrace 1,\dots,m\rbrace$. Then, 
    \begin{align}
        F(a_1,\dots,a_r;x) &= \left( Id + a_r^{\ell_r}V_{I_r}(\cdot) + a_r^{\ell_r+1}\mathcal{E}_{I_r}(a_r,\cdot) \right) \circ \dots \circ \left( Id + a_1^{\ell_1}V_{I_1}(\cdot) + a_1^{\ell_1+1}\mathcal{E}_{I_1}(a_1,\cdot) \right)(x) \\
        &= x + \sum_{j=1}^r \left( a_{j}^{\ell_j}V_{I_j}(x) + a_j^{\ell_j+1}\mathcal{E}_{I_j}(a_j,x) \right) + \sum_{1\leq j_1<j_2\leq r}a_{j_1}^{\ell_{j_1}}a_{j_2}^{\ell_{j_2}} \mathcal{E}_{j_1,j_2}(a_1,\dots,a_r;x)
    \end{align}
    for some smooth error functions $\mathcal{E}_{j_1,j_2}(a_1,\dots,a_r;x)$ such that $\Vert \partial_{j_i}\mathcal{E}_{j_1,j_2}(a_1,\dots,a_r;x)\Vert_{L^\infty}\lesssim 1$. In particular,
    \begin{align}
        \partial_{a_j}F(a_1,\dots,a_r;x) &= \ell_j a_j^{\ell_j-1}V_{I_j}(x) + a_j^{\ell_j} \widetilde{\mathcal{E}}_j(a_1,\dots,a_r;x) \\
        &\quad +\ell_j a_j^{\ell_j-1}\left( \sum_{j<\ell\leq r} a_\ell^{\ell_\ell}\mathcal{E}_{j,\ell}(a_1,\dots,a_r;x) + \sum_{1\leq\ell < j} a_\ell^{\ell_\ell}\mathcal{E}_{\ell,j}(a_1,\dots,a_r;x) \right) \\
        &= \ell_j a_j^{\ell_j-1} \left( V_{I_j}(x) + \sum_{1\leq \ell \leq r} a_\ell \mathcal{E}'_{j,\ell}(a_1,\dots,a_r;x) \right)
    \end{align}
    with $\Vert {\mathcal{E}}'_{j, \ell}(a_1,\dots,a_r;x) \Vert_{L^\infty}\lesssim 1$ for $\Vert a \Vert\leq 1$. Since $\lbrace V_{I_j}(x_0) \rbrace_{j=1}^r$ are linearly independent, we have that
    \begin{align}\label{eq:linindeplowerbound}
        \left\Vert \sum_{j=1}^r c_j V_{I_j}(x_0) \right\Vert \gtrsim \sum_{j=1}^r |c_j| > 0,
    \end{align}
    for all $(c_1,\dots,c_r)\neq 0$.
    Let $\widetilde{c_j}=\prod_{\ell\neq j}\ell_\ell a_\ell^{\ell_\ell-1}$, as $\lbrace \partial_{a_j}F(a_1,\dots,a_r;x) \rbrace_{j=1}^r$ are linearly independent if and only if $\lbrace \widetilde{c_j}\partial_{a_j}F(a_1,\dots,a_r;x) \rbrace_{j=1}^r$ are, we check the linear independence of 
    \begin{equation}
        V_{I_j}(x_0) + \sum_{1\leq \ell \leq r} a_\ell \mathcal{E}'_{j,\ell}(a_1,\dots,a_r;x_0),\qquad j=1,\dots, r.
    \end{equation}
    These are indeed linearly independent in view of \eqref{eq:linindeplowerbound} if $\sum_{j=1}^r|a_j|$ is small enough.
    Consequently, we see that $\lbrace \partial_{a_j}F(a_1,\dots,a_r;x_0) \rbrace_{j=1}^r$ are linearly independent and thus $D_{a_1,\dots,a_r}F(a_1^*,\dots,a_r^*;x_0)$ has rank $r$.
    In view of \eqref{eq:DFDphiDh}, we conclude that $D_\varpi\Phi_{\varpi_*}(x_0)$ has rank at least $r$.
\end{proof}

\begin{remark}\label{rem:negative-small-times}
The use of both positive and negative times in commutator loops is compatible with the random dynamical systems studied in this manuscript because the deterministic flow map is generated by smooth vector fields whose amplitudes are supported in a neighbourhood of the origin.
Since the parameter $\varpi_*$ in Proposition~\ref{prop:word-construction} can be chosen arbitrarily close to the origin, all the positive and negative times used in the construction belong to the admissible support of the random amplitudes after choosing $\varepsilon>0$ sufficiently small.
\end{remark}

To conclude, we state as a general fact the following corollary connecting the LARC condition with the submersion property used in Lemma~\ref{lemma:blackboxsmallset}.
However, in the subsequent sections, we tailor the analysis to the process under consideration and use directly Proposition~\ref{prop:word-construction}.
\begin{corollary}\label{coroll:LARC-to-submersion}
    Under the assumptions of Proposition~\ref{prop:word-construction}, if the LARC condition holds for $\cV$ at $x_0\in\mathrm{M}$ then there exists $q\in \NN$ and a word $\varpi_*\in (\RR^m)^q$ such that $\Psi_{x_0}^q$ is a submersion at $\varpi_*$.
\end{corollary}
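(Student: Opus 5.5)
The plan is to read Corollary~\ref{coroll:LARC-to-submersion} off Proposition~\ref{prop:word-construction}, using the LARC to supply $d$ independent brackets. Since $\Lie_{x_0}(\cV)=T_{x_0}\mathrm M$ and $\Lie(\cV)$ is the linear span of the iterated brackets $V_I$ from \eqref{eq:iterated_lie_brackets}, the evaluations $\{V_I(x_0)\}_I$ span the $d$-dimensional space $T_{x_0}\mathrm M$. Hence we can extract multi-indices $I_1,\dots,I_d$ such that $V_{I_1}(x_0),\dots,V_{I_d}(x_0)$ form a basis of $T_{x_0}\mathrm M$. This step needs only finite-dimensional linear algebra: a spanning family of vectors in a $d$-dimensional space contains a basis. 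We can even take all $|I_j|$ minimal, although this is not needed.

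Next, I would apply Proposition~\ref{prop:word-construction} with $r=d$ to these multi-indices. It yields $q\in\NN$, $\varepsilon_0>0$ and a word $\varpi_*\in([-\varepsilon_0,\varepsilon_0]^m)^q\setminus\{0\}$ with $\operatorname{rank} D_\varpi\Phi_{\varpi_*}(x_0)\ge d$. By \eqref{eq:flow-associated-word-and-endpoint-map}, $D_\varpi\Phi_{\varpi_*}(x_0)$ is exactly the differential of the endpoint map $\Psi^q_{x_0}$ at $\varpi_*$. This is a linear map into $T_{\Psi^q_{x_0}(\varpi_*)}\mathrm M$, whose dimension is $d$, so its rank is at most $d$. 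Combining the two bounds gives rank exactly $d$, so the differential is surjective and $\Psi^q_{x_0}$ is a submersion at $\varpi_*$.

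The only point needing care is well-definedness. All flows must be defined on the relevant compositions, and the local-coordinate expansions of Lemma~\ref{lem:commutator-loop-taylor} must be valid near $x_0$. Both are guaranteed by taking $\|\varpi_*\|$ small, as in Remark~\ref{rem:LARC-local-conventions}(iii) and the construction in Proposition~\ref{prop:word-construction}. Since submersion is a pointwise property of the differential, nothing beyond this local validity is required. I expect no real obstacle: the substantive work, namely the Taylor expansion of the commutator loops and the rank estimate, is already contained in Proposition~\ref{prop:word-construction}.
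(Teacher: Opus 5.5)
Your argument is correct and is essentially the paper's proof: extract $d$ iterated brackets forming a basis of $T_{x_0}\mathrm M$ from the LARC, then apply Proposition~\ref{prop:word-construction} with $r=d$. Your remark that the rank is also bounded above by $\dim T_{\Psi^q_{x_0}(\varpi_*)}\mathrm M=d$ makes explicit a step the paper leaves implicit when it asserts that the rank equals $d$.
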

\begin{proof}
If the LARC holds at $x_0$, then there exist bracket words $I_1,\dots,I_d$ such that $V_{I_1}(x_0),\dots,V_{I_d}(x_0) $ form a basis of $T_{x_0}\mathrm M$.
Applying Proposition~\ref{prop:word-construction} with $r=d=\dim\mathrm M$ gives the existence of some $\varpi_*\in (\R^m)^q$ such that $\operatorname{rank}D_\varpi\Psi_{x_0}^q(\varpi_*)=d$, which is precisely the submersion property.
\end{proof}

\section{The one-point process}\label{sec:OPP}
Let $\a,\b\in \Sph$ and $f_{\omega} = f_{\omega_2}^\a \circ f_{\omega_1}^\b$ for $\omega=(\omega_1,\omega_2)\in [-N,N]^2$ denote the one-point process associated to the vector field $u(t,\cdot,\omega;\a,\b)$. In this section we use Theorem \ref{thm:abstractHarris} to show that $f_{\omega}$ is $\mathrm{V}$-uniformly geometrically ergodic. To do so, we first describe the motion induced by the vector field $u(t,\cdot,\omega;\a,\b)$ and we record several formulas that will prove useful for the remainder of the manuscript. We show in Proposition \ref{prop:smallsetOPP} that the process is topologically irreducible and admits an open small set. Since the phase space $\mathsf{X}=\Sph\setminus F$ is not compact due to the presence of the fixed points $F$, in Proposition \ref{prop:lyapdriftopp} we construct a function $\mathrm{V}:\mathsf{X} \rightarrow[1,\infty)$ that satisfies a Lyapunov-Foster drift condition.

\subsection{Fixed points and relevant identities}
We begin by characterizing the set of fixed points $F$ for $f_\omega$.

\begin{proposition}\label{prop:fixed_points}
For non-parallel and non-orthogonal axes $\a,\b\in \Sph$, the set of fixed points for the random dynamics is  $F:=\lbrace  \pm\c\rbrace  $, where $\c:=\frac{\a\times\b}{|\a\times\b|}$.
If $\a$ and $\b$ are mutually orthogonal, we further have $F:=\lbrace  \pm\c,\pm\a,\pm\b\rbrace  $.
\end{proposition}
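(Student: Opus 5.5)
We first identify the zeros of each building block. Since $u^\a(\x)=(\a\cdot\x)(\a\times\x)$, we have $u^\a(\x)=0$ exactly when $\a\cdot\x=0$ (the great circle orthogonal to $\a$) or when $\a\times\x=0$, i.e. $\x=\pm\a$. The flow $f^\a_\omega$ is the rotation about $\a$ by the angle $\omega(\a\cdot\x)$, and $\a\cdot\x$ is conserved along the flow. Hence $f^\a_\omega(\x)=\x$ for all $\omega\in[-N,N]$ if and only if $\x\in\{\a\cdot\x=0\}\cup\{\pm\a\}$; otherwise the rotation angle $\omega(\a\cdot\x)$ is nonzero for a.e. $\omega$. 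The same description holds for $\b$.

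We interpret ``fixed points for the random dynamics'' as the points fixed by $f_\omega$ for every (equivalently almost every) $\omega\in\Omega_0$. Taking $\omega_2=0$ shows that such a point must be fixed by $f^\b_{\omega_1}$ for all $\omega_1$, and taking $\omega_1=0$ shows it must be fixed by $f^\a_{\omega_2}$ for all $\omega_2$. Conversely, a common fixed point of both families is fixed by every composition. The set $F$ is therefore
\begin{equation}
F=\big(\{\a\cdot\x=0\}\cup\{\pm\a\}\big)\cap\big(\{\b\cdot\x=0\}\cup\{\pm\b\}\big),
\end{equation}
which we compute by cases.

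The two great circles intersect exactly at $\x\perp\a,\b$, that is, at $\x=\pm\c$. The point $\pm\a$ lies in the $\b$-set if and only if $\a\cdot\b=0$ (since $\a\neq\pm\b$ by non-parallelism). Symmetrically, $\pm\b$ lies in the $\a$-set if and only if $\a\cdot\b=0$. The intersection $\{\pm\a\}\cap\{\pm\b\}$ is empty. Thus $F=\{\pm\c\}$ in the non-orthogonal case and $F=\{\pm\c,\pm\a,\pm\b\}$ in the orthogonal case.

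The only point requiring care is the meaning of ``fixed point of the random dynamics''. It could alternatively mean points fixed by $f_{\underline\omega}^n$ almost surely, or invariant under the flow $u(t,\cdot,\underline\omega)$ for all $t$. We will check that every reasonable reading gives the same set. A point fixed for all $t$ must be a zero of both $u^\a_{\omega}$ and $u^\b_{\omega}$ for generic amplitudes, which reduces to the intersection above. A point fixed by $f_\omega$ almost surely must, by continuity in $\omega$, be fixed for every $\omega$, and then the reduction via $\omega_1=0$ or $\omega_2=0$ applies. This reduction is essentially the whole argument, and everything else is elementary linear algebra.
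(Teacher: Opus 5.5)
Your proof is correct and follows the same route as the paper: $F$ is the intersection of the zero sets $\{\a\cdot\x=0\}\cup\{\pm\a\}$ and $\{\b\cdot\x=0\}\cup\{\pm\b\}$, computed by cases according to whether $\a\cdot\b=0$. Your reduction via $\omega_1=0$ or $\omega_2=0$, together with continuity in $\omega$, makes explicit a step the paper takes for granted, namely that fixed points of the random composition are exactly the common zeros of $u^\a$ and $u^\b$. The only slip is that, off these sets, the rotation angle $\omega(\a\cdot\x)$ should be said to avoid $2\pi\Z$ rather than merely be nonzero; this still holds for a.e.\ $\omega$.
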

\begin{proof}
We look for the points where both vector fields vanish simultaneously, deducing that either $\a\cdot\x=0$ or $\x=\pm \a$ and analogously for $\b$. 
If $\a$ and $\b$ are not orthogonal to each other, the only points invariant are the intersection of the two great circles $\a\cdot\x=0$ and $\b\cdot\x=0$. 
Solving for $\x$ gives the first part of the statement.
However, if $\a\perp\b$ the four poles $\pm\a, \pm\b$ lie on the great circle of the other vector field, implying that they are fixed points for the dynamics.
\end{proof}

The next lemma describes the motion generated by the vector field $u^\a$.

\begin{lemma}
    The time-1 flow map $f_\omega^{\a}(\x_0)$ associated to $u_\omega^{\a}$ is a rigid rotation of angle $\theta(\x_0):=\omega\a\cdot\x_0$ around the axis $\a$. 
\end{lemma}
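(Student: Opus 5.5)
The key observation is that the scalar $\a\cdot\x$ is a conserved quantity of the flow generated by $u^\a_\omega$. Once this is known, the vector field restricted to each level set is a constant-speed rotation, so I will solve the ODE explicitly and match it with Rodrigues' rotation formula.

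First, let $\x(t)$ solve $\dot\x = \omega(\a\cdot\x)(\a\times\x)$ with $\x(0)=\x_0$. Then
\begin{align}
\frac{\dd}{\dd t}(\a\cdot\x(t)) = \omega(\a\cdot\x)\,\a\cdot(\a\times\x)=0 ,
\end{align}
so $\a\cdot\x(t)=\a\cdot\x_0$ for all $t$. Likewise $\frac{\dd}{\dd t}|\x|^2 = 2\omega(\a\cdot\x)\,\x\cdot(\a\times\x)=0$, which confirms that the trajectory stays on $\Sph$. Consequently, along the trajectory the equation becomes the \emph{linear} autonomous ODE
\begin{align}
\dot\x = \theta_0\,\a\times\x, \qquad \theta_0:=\omega\,\a\cdot\x_0 .
\end{align}

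Second, I recall that the linear ODE $\dot\x=\theta_0\,\a\times\x$ with $|\a|=1$ is solved by $\x(t)=R_{\a}(\theta_0 t)\x_0$. Here $R_\a(\theta)$ is the rotation by angle $\theta$ about $\a$, given by Rodrigues' formula
\begin{align}
R_\a(\theta)\x=\cos\theta\,\x+\sin\theta\,(\a\times\x)+(1-\cos\theta)(\a\cdot\x)\a .
\end{align}
This can be checked either by differentiating the formula directly, or by noting that $\exp(\theta_0 t[\a]_\times)$ is exactly the rotation about $\a$. By uniqueness of solutions of the smooth ODE, the candidate curve built from this formula coincides with the true trajectory, because it also preserves $\a\cdot\x$. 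Evaluating at $t=1$ gives $f^\a_\omega(\x_0)=R_\a(\omega\,\a\cdot\x_0)\x_0$, which is the claimed rigid rotation of angle $\theta(\x_0)=\omega\,\a\cdot\x_0$.

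There is no real obstacle in this argument. The only point needing care is the logical order: the conservation law must be established first, so that replacing the nonlinear field by a linear one with a frozen coefficient is justified, and uniqueness then closes the argument.
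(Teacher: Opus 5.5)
Your proof is correct and follows essentially the same route as the paper. Both arguments use the invariants $|\x|$ and $\a\cdot\x$ to freeze the coefficient, reduce to the linear ODE $\dot\x=\omega(\a\cdot\x_0)\,\a\times\x$, and identify its solution $e^{t\omega(\a\cdot\x_0)A}\x_0$ as a rotation about $\a$; your Rodrigues formula and uniqueness step just make that final identification more explicit.
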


\begin{proof}
    We solve the ODE 
    \begin{equation}
        \dot \x(t)=u_\omega^{\a}(\x(t)), \quad \x(0)=\x_0\in \Sph.
    \end{equation}
    This equation has two invariants, the first one being the norm $|\x(t)|=|\x_0|$ for all $t$, and the second one the projection onto the rotation axis $\a\cdot\x(t)=\a\cdot\x_0$.
    From these and the matrix representation $\a\times \x= A\x$, we easily reduce the ODE to 
    \begin{equation}
        \dot \x(t)=\omega(\a\cdot\x) A\x(t),\quad \x(0)=\x_0,
    \end{equation}
    for which the solution is $\x(t)=\e^{t\omega(\a\cdot\x_0) A}\x_0$, which is a rotation of angle $\theta(\x_0)=\omega(\a\cdot\x_0)$ around the axis $\a$.
\end{proof}

Up to a fixed initial rotation of the coordinates, it is always possible to reduce the study of the dynamics to the computationally simpler case where $\b=(0,0,1)$ and $\a=(0,a_2,a_3)$, with $a_2\neq 0$. This simplification will be adopted from now on throughout the paper and the proofs of the results will be carried out in whichever setting is the most demanding among $\b=(0,0,1)$ and $\a=(0,a_2,a_3)$ or the orthogonal case $\b=(0,0,1)$ and $\a=(0,1,0)$. 

For $\x=(x,y,z)\in \Sph$ and $\omega\in [-N,N]$, we have
\begin{align}\label{eq:flowmapfa}
    f_\omega^\a(\x) = \begin{pmatrix}
        x\cos(\vartheta\omega) + (a_2z - a_3)y\sin(\vartheta\omega) \\
        a_3 x\sin(\vartheta\omega) + y \cos(\vartheta\omega) + a_2\vartheta (1-\cos(\vartheta\omega)) \\
        -a_2 x \sin(\vartheta\omega) + z \cos(\vartheta\omega) + a_3 \vartheta (1-\cos(\vartheta\omega))
    \end{pmatrix}, \quad \vartheta = a_2y + a_3z.
\end{align}
In the special case where $\a = (0,1,0)$ we denote $f^2_\omega= f^{\a}_\omega$ and $f^3_\omega= f^{\b}_\omega$ and we have
\begin{equation}\label{eq:flowmapfE}
     f_{\omega}^2 (\x) = \begin{pmatrix}
        x\cos \left( \omega y\right) + z\sin \left( \omega y\right) \\
        y \\
        -x\sin \left( \omega y\right) + z\cos \left( \omega y\right) \\
        \end{pmatrix}, \quad f_{\omega}^3(\x) = \begin{pmatrix}
        x\cos \left( \omega z\right) -y\sin  \left( \omega z\right)  \\
        x\sin \left( \omega z\right) +y \cos \left( \omega z\right) \\
        z \\
    \end{pmatrix}, 
\end{equation}
see Figure \ref{fig:sphere_vfield}.

\subsection{Open small set and topological irreducibility}
The first step towards Proposition \ref{prop:smallsetOPP} is proving that the family $\lbrace   u^\a(\x), u^\b(\x) \rbrace  $ is bracket-generating for almost all $\x\in \Sph$.

\begin{lemma}\label{lemma:onepointbracket}
    Let $\a, \b$ be two non-parallel unit axes and denote $\c=\a\times\b$. Let $\x_a^\pm=\pm \frac{\a-(\a\cdot\b) \b}{\sqrt{1-(\a\cdot\b)^2}}$, $\x_\b^\pm=\pm \frac{\b-(\a\cdot\b) \a}{\sqrt{1-(\a\cdot\b)^2}}$, $\x_\c^\pm = \pm\frac{\c}{|\c|}$ and $\x\in\Sph \setminus \lbrace   \x_a^\pm, \x_\b^\pm, \x_\c^\pm \rbrace  $. Then the family $\lbrace   u^\a(\x), u^\b(\x) \rbrace  $ is bracket-generating.
\end{lemma}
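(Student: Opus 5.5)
The plan is to verify the Lie algebra rank condition directly, pointwise, using the ambient formula $[v_1,v_2]=[v_1,v_2]_{\R^3}$ from Section~\ref{sec:DiffGeom}. Since $T_\x\Sph$ is two-dimensional, it suffices at each admissible $\x$ to exhibit two elements of $\Lie(\{u^\a,u^\b\})$ that are linearly independent at $\x$. The natural candidates are $u^\a$, $u^\b$, and the first bracket $[u^\a,u^\b]$, possibly supplemented by $[u^\a,[u^\a,u^\b]]$ and $[u^\b,[u^\a,u^\b]]$.

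\textbf{Step 1: first bracket.} Write $u^\a(\x)=(\a\cdot\x)A\x$ and $u^\b(\x)=(\b\cdot\x)B\x$, where $A\x=\a\times\x$ and $B\x=\b\times\x$. Then
\[
D u^\a(\x)=A\x\,\a^T+(\a\cdot\x)A .
\]
Using $\a\cdot(\b\times\x)=\x\cdot\c$ and $[A,B]\x=(\a\times\b)\times\x=\c\times\x$, the bracket $[u^\a,u^\b]=Du^\b\,u^\a-Du^\a\,u^\b$ should come out in closed form, roughly
\[
[u^\a,u^\b](\x)=(\a\cdot\x)(\x\cdot\c)\,\b\times\x+(\b\cdot\x)(\x\cdot\c)\,\a\times\x+(\a\cdot\x)(\b\cdot\x)\,\c\times\x ,
\]
up to signs that I will fix carefully.

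\textbf{Step 2: the generic set.} The vectors $\a\times\x$ and $\b\times\x$ are linearly independent in $T_\x\Sph$ exactly when $\x\cdot\c\neq 0$: their cross product is $(\x\cdot(\a\times\b))\,\x$. So on the open set where $(\a\cdot\x)(\b\cdot\x)(\c\cdot\x)\neq0$, the fields $u^\a,u^\b$ already span and LARC holds.

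\textbf{Step 3: the degenerate great circles.} Next I treat the three great circles one at a time.
\begin{itemize}
\item On $\{\c\cdot\x=0\}$, the vectors $\a\times\x$ and $\b\times\x$ are parallel. Both are then normal to the circle, while $\c\times\x$ is tangent to the circle at $\x$, so it is independent of them. By the formula in Step 1, $[u^\a,u^\b]=(\a\cdot\x)(\b\cdot\x)\,\c\times\x$ there. Together with $u^\a$ (or $u^\b$) this spans $T_\x\Sph$ as long as $\a\cdot\x\neq0$ and $\b\cdot\x\neq0$. The points excluded on this circle are precisely where it meets $\a^\perp$ or $\b^\perp$, namely $\x_\b^\pm$ and $\x_\a^\pm$. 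Note that $\x_\a^\pm\perp\b$ and $\x_\b^\pm\perp\a$, and both lie in the plane of $\a,\b$.
\item On $\{\a\cdot\x=0\}$ with $\x\neq\pm\x_\c$, we have $u^\a=0$, while $u^\b\neq0$ unless $\b\cdot\x=0$. Here $[u^\a,u^\b]=(\b\cdot\x)(\x\cdot\c)\,\a\times\x$. Since $\c\cdot\x\neq0$, the vector $\a\times\x$ is independent of $\b\times\x$. Hence the span is full provided $\b\cdot\x\neq0$, i.e.\ away from $\pm\x_\c$.
\item The circle $\{\b\cdot\x=0\}$ is symmetric to the previous one.
\end{itemize}
In the orthogonal case, $\pm\a$ and $\pm\b$ need a check. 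But $\pm\a$ lies on $\b^\perp\cap\{\c\cdot\x=0\}$ and coincides with $\x_\a^\pm$, so it is already excluded.

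\textbf{Main obstacle.} The hard part is getting the bracket computation in Step 1 exactly right, including the signs and the coefficient of the $\c\times\x$ term. Everything downstream relies on the vanishing pattern of the three coefficients. I would double-check it in the normalized coordinates $\b=e_3$, $\a=(0,a_2,a_3)$, using \eqref{eq:flowmapfa}-type explicit polynomials.

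If the first bracket happened to vanish somewhere unexpectedly, I would fall back on the second-order brackets. These are again polynomial, and I expect they suffice away from the six excluded points.
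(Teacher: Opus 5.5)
Your proposal is correct and follows essentially the paper's argument: the same closed form for $[u^\a,u^\b]$ (right up to an overall sign, which depends on the bracket convention and is irrelevant for spans), the identity $(\a\times\x)\times(\b\times\x)=(\c\cdot\x)\x$ on the generic set, and the same case split over the three great circles. On $\{\c\cdot\x=0\}$ you argue normal-versus-tangent where the paper takes cross products with $\x$, and the second-order fallback is not needed.

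One point to make explicit: on $\{\c\cdot\x=0\}$ the field $u^\a$ vanishes at $\pm\a$, which is not excluded when $\a\cdot\b\neq0$. There you must pair $[u^\a,u^\b]$ with $u^\b$, and symmetrically with $u^\a$ at $\pm\b$. This is exactly what your ``(or $u^\b$)'' has to mean, and it matches the paper's ``either \dots\ or'' clause.
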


\begin{proof}
    The two vector fields $u^\a(\x)$ and $u^\b(\x)$ span $T_\x\Sph$ if they are linearly independent. Their  cross product reads
     \begin{equation}
        u_{\a}(\x)\times u_{\b}(\x)=(\a\cdot\x)(\b\cdot\x)[(\a\times\x)\times(\b\times\x)]=(\a\cdot\x)(\b\cdot\x)(\c\cdot \x)\x,
    \end{equation}
    which is non-zero provided $\a\cdot\x\neq 0$, $\b\cdot\x\neq0$ and $\c\cdot\x\neq 0$. We shall now assume at least one of them is zero, otherwise $\lbrace   u^\a(\x), u^\b(\x) \rbrace  $ is already bracket-generating. We next record
    \begin{align}
        [u^\a,u^\b](\x) = (\b\cdot\x)(\c\cdot\x)(\a\times\x) + (\c\cdot\x)(\a\cdot\x)(\b\times\x) + (\a\cdot\x)(\b\cdot\x)(\c\times\x)
    \end{align}
    and we argue by cases.
    \begin{itemize}
    \item[--] If $\a\cdot\x = 0$, then $[u^\a,u^\b](\x)=(\b\cdot\x)(\c\cdot\x)(\a\times\x)$ and 
    \begin{align}
        u^\b(\x) \times [u^\a, u^\b](\x) = -(\b\cdot\x)^2(\c\cdot\x)^2 \x.
    \end{align}
    If $\b\cdot\x=0$, then $\x=\x_\c^\pm$, a contradiction. Likewise, if $\c\cdot\x=0$ and $\x\in \Sph$, since $\a\cdot\c = \b\cdot\c=0$ we have that $\x=\x_\b^\pm$, a contradiction as well.
    \item[--] If $\b\cdot\x=0$ then $[u^\a,u^\b](\x)=(a\cdot\x)(\c\cdot\x)(\b\times\x)$ and $u^\a(\x) \times [u^\a, u^\b](\x) = (\a\cdot\x)^2(\c\cdot\x)^2 \x$. As before, $\a\cdot\x=0$ leads to $\x=\x_\c^\pm$ and $\c\cdot\x=0$ leads to $\x=\x_\a^\pm$, a contradiction as well.
    \item[--] If $\a\cdot\x\neq 0$, $\b\cdot\x\neq0$ and $\c\cdot\x=0$, we note that $[u^\a,u^\b](\x)=(\a\cdot\x)(\b\cdot\x)(\c\times\x)$ and 
    \begin{align}
        u^\a(\x)\times [u^\a,u^\b](\x) = (\a\cdot\x)^2(\b\cdot\x)( (\b\cdot\x) - (\a\cdot\b)(\a\cdot\x)) \x,
    \end{align}
    which is never-vanishing for all $\x\neq \pm \a$, since $0\leq|\a\cdot\b|<1$. Moreover,
    \begin{align}
        u^\b(\x)\times [u^\a,u^\b](\x) = (\b\cdot\x)^2(\a\cdot\x)( (\a\cdot\x) - (\a\cdot\b)(\b\cdot\x)) \x,
    \end{align}
    which is again never-vanishing for all $\x\neq \pm \b$. Hence, either $\lbrace   u^\a(\x), [u^\a,u^\b](\x)\rbrace   $ or $\lbrace   u^\b(\x), [u^\a,u^\b](\x)\rbrace   $ have rank 2.
    \end{itemize}
    Therefore, $\lbrace   u^\a(\x), u^\b(\x) \rbrace   $ is bracket-generating away from $\x_\a^\pm$, $\x_\b^\pm$ and $\x_\c^\pm$.
\end{proof}

\begin{remark}
    We note here that when $\a\cdot\b=0$, the axes of rotation are orthogonal, then $\x_\a^\pm = \pm \a$ and $\x_\b^\pm = \pm \b$, we recover the extra fixed points of the dynamical system.
\end{remark}

\begin{proposition}\label{prop:smallsetOPP}
The one-point process is strongly aperiodic, topologically irreducible and admits an open small set.
\end{proposition}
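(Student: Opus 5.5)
We verify the three properties separately, working on $\mathsf{X}=\Sph\setminus F$ with $f_\omega=f^\a_{\omega_2}\circ f^\b_{\omega_1}$ and $\omega\sim\text{Unif}([-N,N]^2)$. Note that (A1) holds: $(\omega,\x)\mapsto f_\omega(\x)$ is smooth by the explicit formula \eqref{eq:flowmapfa}, and the density is constant $(2N)^{-2}$ on the box.

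\textbf{Strong aperiodicity.} Since $f_0=\mathrm{Id}$ and $0$ lies in the interior of $\text{Supp}(\PP_0)$, any $\x_*\in\mathsf{X}$ satisfies $f_0(\x_*)=\x_*$. Lemma~\ref{lemma:blackboxaperiodic} therefore applies directly.

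\textbf{Topological irreducibility.} We apply Lemma~\ref{lemma:blackboxirreducibility} through Theorem~\ref{thm:RashevskyChow}. By Lemma~\ref{lemma:onepointbracket}, the family $\{u^\a,u^\b\}$ is bracket-generating on the connected manifold $\mathrm{M}=\Sph\setminus\{\x_\a^\pm,\x_\b^\pm,\x_\c^\pm\}$. The set removed from $\Sph$ consists of finitely many points, so $\mathrm{M}$ is connected. The exceptional points need separate handling.
\begin{itemize}
\item The points $\pm\c$ belong to $F$ and are excluded from $\mathsf{X}$.
\item When $\a\perp\b$, the points $\x_\a^\pm=\pm\a$ and $\x_\b^\pm=\pm\b$ are also fixed and lie in $F$.
\item In the non-orthogonal case, $\x_\a^\pm$ and $\x_\b^\pm$ are not fixed. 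At $\x_\a^\pm$ we have $\b\cdot\x=0$ but $\a\cdot\x\ne0$, so a single application of $f^\a_{\omega_2}$ with small $\omega_2\neq0$ moves the point into $\mathrm{M}$. The point $\x_\b^\pm$ is handled symmetrically, using $f^\b$.
\end{itemize}
Consequently, every $\x\in\mathsf{X}$ is mapped into $\mathrm{M}$ after at most one step. The Rashevsky–Chow orbit \eqref{eq:orbit} then reaches any open $U\subset\mathsf{X}$, since $U\cap\mathrm{M}\neq\emptyset$.

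One technical point is that the orbit uses arbitrary times $t_j\in\R$, whereas the amplitudes are constrained to $[-N,N]$. Because each $f^\a_\omega$ is a rotation by angle $\omega(\a\cdot\x)$, a time $t$ can be split into finitely many pieces of size less than $N$. Moreover, the flow of $u^\a$ at time $t$ coincides with $f^\a_t$, and consecutive $\a$-steps can be interleaved with $f^\b_0=\mathrm{Id}$. This yields a word $\underline\omega_*^n$ with $\Psi_\x(\underline\omega^n_*)\in U$. Perturbing the word slightly into the interior of $[-N,N]^{2n}$, which is allowed because $U$ is open and $\Psi_\x$ is continuous, ensures that the density is bounded below near $\underline\omega_*^n$.

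\textbf{Open small set.} We pick $\x_*\in\mathsf{X}$ generic, with $(\a\cdot\x_*)(\b\cdot\x_*)(\c\cdot\x_*)\ne0$. Then $u^\a(\x_*)$ and $u^\b(\x_*)$ span $T_{\x_*}\Sph$, as computed in the proof of Lemma~\ref{lemma:onepointbracket}. The map $\Psi_{\x_*}(\omega_1,\omega_2)=f^\a_{\omega_2}\circ f^\b_{\omega_1}(\x_*)$ has derivative at $\omega=0$ with columns $u^\b(\x_*)$ and $u^\a(\x_*)$, so it is a submersion at $0$. Equivalently, this is Corollary~\ref{coroll:LARC-to-submersion} at the trivial word. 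Since $0$ is interior to the support and $\x_*\in\text{Supp}(\pi)=\mathsf{X}$, Lemma~\ref{lemma:blackboxsmallset} gives an open $P$-small set.

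The only delicate step is the irreducibility argument. It requires care in two places: first, escaping the non-fixed exceptional points $\x_{\a}^\pm,\x_\b^\pm$ in the non-orthogonal case; second, realizing the Rashevsky–Chow orbit with amplitudes confined to $[-N,N]$ and near-interior parameters.
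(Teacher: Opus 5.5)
Your proof is correct and follows the same route as the paper. Aperiodicity comes from $f_{(0,0)}=\mathrm{Id}$, and the open small set from the submersion of $\Psi_{\x_*}$ at the trivial word for a generic $\x_*$ with $(\a\cdot\x_*)(\b\cdot\x_*)(\c\cdot\x_*)\neq 0$. Irreducibility comes from Rashevsky--Chow on $\mathrm{M}$, together with a one-step escape from $\x_\a^\pm,\x_\b^\pm$ in the non-orthogonal case. Your remarks on realizing arbitrary flow times by words with amplitudes strictly inside $(-N,N)$, interleaved with $f^\b_0=\mathrm{Id}$, make explicit a step the paper leaves implicit.
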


\begin{proof}
    The strong aperiodicity is a consequence of noting that $f_\omega(p)=p$ for $\omega=(0,0)$ and all $p\in \mathsf{X}$ and the continuity of $f_\omega$ with respect to $\omega\in [-N,N]^2$. The existence of open small sets is a consequence of Lemma \ref{lemma:onepointbracket} after choosing $\omega=(0,0)$ for some $\x\in\Sph \setminus \lbrace   \x_a^\pm, \x_\b^\pm, \x_\c^\pm \rbrace  $ with $\a\cdot\x\neq0$, $\b\cdot\x\neq 0$ and $\c\cdot\x \neq 0$. If $\a\cdot \b =0$, we have that $\mathrm{M}=\mathsf{X}$ so that from Lemma \ref{lemma:onepointbracket} and Theorem \ref{thm:RashevskyChow} we conclude the topological irreducibility of the random dynamical system. If $\a\cdot\b\neq 0$, the dynamical system is topologically irreducible in $\mathrm{M}=\Sph \setminus \lbrace   \x_a^\pm, \x_\b^\pm, \x_\c^\pm \rbrace  $. To show the topological irreducibility on $\mathsf{X}$, we note that $u^\a(\x_\a^\pm)\neq 0$ and $u^\b(\x_\b^\pm)\neq 0$, and thus there exists $\omega_\a, \omega_\b\in [-N,N]$ such that $f_{(\omega_\b,0)}(\x_\b^\pm)\in \mathrm{M}$ and $f_{(0,\omega_\a)}(\x_\a^\pm)\in \mathrm{M}$.
\end{proof}

\subsection{The Lyapunov-drift function}
We now construct the Lyapunov-drift function for the one-point process associated to the rotation axis $\a=(0,1,0)$ and $\b=(0,0,1)$. We comment on the case of general axes of rotation later. Let $p=(x,y,z)$, $p_1 = f_{\omega_1}^3(p)$ and $p_2=f_{\omega_2}^2(p_1)$. Due to \eqref{eq:flowmapfE} we recall that 
\begin{align}
    \begin{pmatrix}
        x_1 \\ y_1 \\ z_1
    \end{pmatrix} = \begin{pmatrix}
        x\cos(\omega_1 z) - y \sin(\omega_1 z) \\
        x\sin(\omega_1 z) + y \cos(\omega_1 z) \\
        z
    \end{pmatrix}, \quad \begin{pmatrix}
        x_2 \\ y_2 \\ z_2 
    \end{pmatrix} = \begin{pmatrix}
        x_1\cos(\omega_2 y_1) + z_1 \sin(\omega_2 y_1) \\
        y_1 \\
        -x_1\sin(\omega_2 y_1) + z_1 \cos(\omega_2 y_1) \\
    \end{pmatrix}.
\end{align}
We shall obtain a drift function as a combination of local drifts tailored for each family of fixed points 
\begin{align}
    F_\sfx = \lbrace   \pm(1,0,0) \rbrace,\quad F_\mathsf{y}= \lbrace   \pm(0,1,0) \rbrace  ,\quad F_\sfz = \lbrace   \pm (0,0,1) \rbrace  , 
\end{align}
since the dynamics near each  fixed point is qualitatively different.

\subsubsection{The $F_\sfx$ fixed points} We begin with finding a drift function for the dynamics near the fixed points $(1,0,0)$ and $(-1,0,0)$, which are hyperbolic stagnation points of $f_\omega$. For $p=(x,y,z)\in \Sph$ we define $r_\sfx(p)=\max \lbrace |y|, |z| \rbrace$ and for $R>0$ we set
\begin{align}
    \mathsf{Q}_\sfx(R)= \lbrace   p=(x,y,z)\in \Sph : 0 < r_\sfx(p) < R \rbrace 
\end{align}
and, for all $p\in \mathsf{Q}_\sfx(R)$, we define $V_\mathsf{x}(p) := r_\sfx(p)^{-\alpha}$, for some $\alpha\in (0,1)$ to be determined. Before showing that $V_\mathsf{x}$ constitutes a valid drift function in a neighbourhood of $F_\sfx$, we first prove a technical intermediate result.

\begin{lemma}\label{lemma:prob_bound}
    Let $\ep>0$ and $N>0$. Let $p_1=(x_1,y_1,z_1)= f_{\omega_1}^3(p)$, for $p=(x,y,z)\in \mathsf{Q}_\sfx(R)$, for some $R>0$ small. Then,
    \begin{equation}
        \PP(\lbrace  \omega_1\in[-N,N] : |y_1|\leq \varepsilon\rbrace  )\leq \frac{2\varepsilon}{N|z|}.
    \end{equation}
\end{lemma}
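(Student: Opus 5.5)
We rewrite $y_1$ in polar form and reduce the event $\{|y_1|\le\varepsilon\}$ to the event that a uniformly distributed angle lands in a short window. From \eqref{eq:flowmapfE}, $y_1 = x\sin(\omega_1 z)+y\cos(\omega_1 z)$. Set $\rho:=\sqrt{x^2+y^2}$ and choose $\phi$ with $x=\rho\cos\phi$, $y=\rho\sin\phi$. Then
\begin{align}
y_1=\rho\sin(\omega_1 z+\phi).
\end{align}
Since $p\in\mathsf{Q}_\sfx(R)$ with $R$ small, we have $|z|<R$. Hence $\rho^2=1-z^2\ge 1-R^2$, so $\rho\ge 1/2$ for $R\le 1/2$. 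Consequently
\begin{align}
\{|y_1|\le\varepsilon\}\subset\{|\sin(\omega_1z+\phi)|\le 2\varepsilon\}.
\end{align}

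If $z=0$ the bound is trivial, since its right-hand side is $+\infty$. So we assume $z\neq0$ and make the change of variables $\theta=\omega_1 z+\phi$. As $\omega_1$ ranges uniformly over $[-N,N]$, $\theta$ ranges uniformly over an interval $J$ of length $2N|z|$, with density $1/(2N|z|)$. The probability is therefore $|\{\theta\in J:|\sin\theta|\le2\varepsilon\}|/(2N|z|)$.

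It remains to bound the Lebesgue measure of $\{\theta\in J:|\sin\theta|\le 2\varepsilon\}$. This set is a union of windows around the points $k\pi$. Using $|\sin\theta|\ge \tfrac{2}{\pi}\operatorname{dist}(\theta,\pi\Z)$, each window has length at most $2\pi\varepsilon$. The number of windows meeting $J$ is at most $2N|z|/\pi+1$.

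This count is the one delicate step. The additive "$+1$" would leave a term of size $\varepsilon/(N|z|)$, which has the same order as the target bound. We also have to make sure the constants do not exceed $2$. The estimate we obtain is
\begin{align}
\frac{(2N|z|/\pi+1)\,2\pi\varepsilon}{2N|z|}=2\varepsilon+\frac{\pi\varepsilon}{N|z|}.
\end{align}
Here the first term is dominated by the second, because $N|z|\le NR$ is small once $R$ is small; if that is not the case, we simply enlarge the constant. We expect the authors' constant $2$ to come from this kind of bookkeeping, for instance from bounding the window length more carefully via $\arcsin(2\varepsilon)$. If the exact constant matters, we would sharpen the inequality $\rho\ge1/2$ to $\rho\ge\sqrt{1-R^2}\approx1$ and use that each window has length $2\arcsin(\varepsilon/\rho)\approx 2\varepsilon$.

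When $J$ is short, namely $2N|z|<\pi$, at most one window is met (at most two if $J$ straddles a boundary). In that regime the bound is directly $\lesssim \varepsilon/(N|z|)$, which gives the claimed estimate up to this constant bookkeeping.
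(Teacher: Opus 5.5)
Your route is the paper's: write $y_1=\rho\sin(\omega_1 z+\phi)$, pass to the phase $\theta=\omega_1 z+\phi$, which is uniform on an interval $J$ of length $2N|z|$, and bound the measure of $J\cap\{|\sin\theta|\le\delta\}$ with $\delta=\varepsilon/\rho$. As written, though, you only get the bound up to a constant. Your fallback ``if that is not the case, we simply enlarge the constant'' is also false. The smallness of $N|z|$ is a genuine hypothesis: as $N|z|\to\infty$ the probability tends to $\frac{2}{\pi}\arcsin(\varepsilon/\rho)$, and no bound of the form $C\varepsilon/(N|z|)$ can dominate that. So ``$R$ small'' has to mean small relative to $1/N$. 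The paper assumes $NR\le\pi/2$, consistent with $Ns_0\le c_1$ where the lemma is applied. Then $J$ has length at most $\pi$ and the phase stays in $[-\pi/2,3\pi/2]$.

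Your proposed sharpening also does not reach the constant $2$. Counting two windows of length $2\arcsin(\varepsilon/\rho)$ each gives $2\arcsin(\varepsilon/\rho)/(N|z|)$. This is strictly larger than $2\varepsilon/(N|z|)$, because $\arcsin t>t$.

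The missing observation is that an interval of length at most $\pi$ meets $\{|\sin\theta|\le\delta\}$ in total measure at most $2\arcsin\delta$. Indeed, if it meets two adjacent windows, it must contain the whole gap of length $\pi-2\arcsin\delta$ between them. The same observation is what justifies your general window count. As stated, that count fails, since an interval slightly shorter than $\pi$ can meet two windows. The measure bound $(L/\pi+1)\cdot 2w$ still survives, by splitting $J$ into full periods plus a remainder.

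With this observation, the $\omega_1$-measure of the event is at most $\frac{2}{|z|}\arcsin\delta$, which is exactly the paper's bound. If $\delta\le 1$, use $\arcsin\delta\le\frac{\pi}{2}\delta\le 2\varepsilon$, valid once $\rho\ge\sqrt{1-R^2}\ge\pi/4$. Dividing by $2N$ then gives exactly $\frac{2\varepsilon}{N|z|}$. If $\delta>1$ the claim is trivial: with $N|z|\le\pi/2$ and $\rho\ge\pi/4$, the right-hand side exceeds $1$.
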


\begin{proof}
    We start by noticing that the probability is equal to the length $l$ of the interval of values of $\omega$'s for which the inequality holds true, hence at first we can bound the probability with $l/2N$.
    We proceed now to give an upper bound for $l$.
    \begin{align}
        |y_1|&=|x\sin(\omega_1 z) + y \cos(\omega_1 z)|\\
        &=\sqrt{x^2+y^2}\l|\sin\l(\omega_1 z+\arctan\l(\frac{y}{x}\r)\r)\r| 
    \end{align}
From this, we can deduce 
\begin{align}
    |y_1|\leq \varepsilon &\iff \l|\sin\l(\omega_1 z+\arctan\l(\frac{y}{x}\r)\r)\r|\leq \frac{\varepsilon}{\sqrt{x^2+y^2}}.
\end{align}
Now, for $y\geq 0$ we have $\arctan\left( \frac{y}{x} \right)\in [0,\pi]$ and for $NR\leq \frac{\pi}{2}$ we see that
\begin{align}
    -\frac{\pi}{2}\leq \omega_1 z + \arctan\left( \frac{y}{x} \right) \leq \frac{3\pi}{2}.
\end{align}
Moreover, $\frac{\ep}{\sqrt{x^2 + y^2}} \leq 2\ep$ for $R>0$ small enough. Hence, $|y_1|\leq \ep$ if and only if either
\begin{align}
    \left| \omega_1 z+\arctan\l(\frac{y}{x}\r) \right| \leq \arcsin\left( \frac{\varepsilon}{\sqrt{x^2+y^2}} \right) 
\end{align}
or
\begin{align}
    \left| \omega_1 z+\arctan\l(\frac{y}{x}\r)-\pi \right| \leq \arcsin\left( \frac{\varepsilon}{\sqrt{x^2+y^2}} \right). 
\end{align}
As a result,
\begin{align}
    \left| \left\lbrace   \omega_1\in[-N,N] : \l|\sin\l(\omega_1 z+\arctan\l(\frac{y}{x}\r)\r)\r|\leq \frac{\varepsilon}{\sqrt{x^2+y^2}} \right\rbrace   \right| &\leq \frac{2}{|z|}\arcsin\l(\frac{\varepsilon}{\sqrt{x^2+y^2}}\r) \leq \frac{4\ep}{|z|}
\end{align}
Here we have used that the size of the interval is independent of shifts, and that, for small $\varepsilon$ it holds $\arcsin(\varepsilon)\leq 2\varepsilon$.
\end{proof}

We are now in position to prove a local drift for $V_\sfx$ following the ideas of \cite{C2023}*{Lemma 13}.

\begin{lemma}\label{lemma:localxlyap}
There exists $s_0>0$ small and $N>1$ large such that for all $p\in \mathsf{Q}_\sfx (s_0)$ and $\omega=(\omega_1,\omega_2)\in [-N,N]^2$ there holds
\begin{align}
    \mathbb{E}\left[ V_\sfx (f_\omega(p)) \right] \leq \gamma V_\sfx (p),
\end{align}
for $\gamma=\frac{95}{100}<1$.
\end{lemma}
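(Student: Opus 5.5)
The plan is to exploit the two consecutive shears directly. Write $p_1=f^3_{\omega_1}(p)$ and $p_2=f^2_{\omega_2}(p_1)$. Since $V_\sfx(p_2)=\max\{|y_2|,|z_2|\}^{-\alpha}\le |z_2|^{-\alpha}$, it suffices to bound $\EE|z_2|^{-\alpha}$ by averaging first over $\omega_2$ and then over $\omega_1$. Independence of $\omega_1$ and $\omega_2$ lets me condition on $p_1$. Throughout I fix $\alpha\in(0,1)$, say $\alpha=\tfrac12$. I will choose $N$ large first, and then $s_0$ small depending on $N$; concretely I require $N^2 s_0\ll 1$. This guarantees that all rotation angles $\omega_1 z$ and $\omega_2 y_1$ stay within the range $|\cdot|\le \pi/2$ required in the proof of Lemma~\ref{lemma:prob_bound}, and that $|x|,|x_1|\ge 1/2$.

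\emph{Step 1: a tail-to-moment conversion.} If a random variable $Y$ satisfies $\PP(|Y|\le\ep)\le \min\{1,2\ep/(N\eta)\}$ for some $\eta>0$, then
\begin{align}
\EE|Y|^{-\alpha}=\int_0^\infty \PP(|Y|^{-\alpha}>t)\,\dd t\le C_\alpha (N\eta)^{-\alpha},
\end{align}
with $C_\alpha$ depending only on $\alpha$. This follows by splitting the integral at $t=(N\eta/2)^{-\alpha}$ and using $\alpha<1$.

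\emph{Step 2: the second shear.} I prove the analogue of Lemma~\ref{lemma:prob_bound} for the second shear. Since $z_2=-x_1\sin(\omega_2y_1)+z_1\cos(\omega_2 y_1)$, the same phase-shift argument, with the roles of $(y,z)$ and $(z,y)$ exchanged and randomness in $\omega_2$, gives
\begin{align}
\PP_{\omega_2}(|z_2|\le\ep\mid p_1)\le \frac{2\ep}{N|y_1|}.
\end{align}
This uses $|y_1|\le |y|+N|z|\lesssim Ns_0$, so that $N|y_1|\le\pi/2$. By Step 1, $\EE[|z_2|^{-\alpha}\mid p_1]\le C_\alpha N^{-\alpha}|y_1|^{-\alpha}$.

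\emph{Step 3: average over $\omega_1$.} By Step 2,
\begin{align}
\EE V_\sfx(p_2)\le C_\alpha N^{-\alpha}\,\EE|y_1|^{-\alpha}.
\end{align}
It remains to show $\EE|y_1|^{-\alpha}\le C' r_\sfx(p)^{-\alpha}$ with $C'$ independent of $N$ and $p$. I split into cases.
\begin{itemize}
\item[(a)] $N|z|\ge |y|/4$. Lemma~\ref{lemma:prob_bound} together with Step 1 gives $\EE|y_1|^{-\alpha}\le C_\alpha (N|z|)^{-\alpha}$. This is at most $C_\alpha 4^\alpha |y|^{-\alpha}$, and it is also at most $C_\alpha |z|^{-\alpha}$ since $N>1$. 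Hence it is at most $C r_\sfx(p)^{-\alpha}$.
\item[(b)] $N|z|<|y|/4$. Then $r_\sfx(p)=|y|$. Deterministically,
\begin{align}
|y_1|\ge |y|\cos(\omega_1 z)-|x||\sin(\omega_1z)|\ge |y|\cos(Ns_0)-N|z|\ge |y|/2,
\end{align}
so $|y_1|^{-\alpha}\le 2^\alpha r_\sfx(p)^{-\alpha}$.
\end{itemize}
Combining the cases gives $\EE V_\sfx(f_\omega(p))\le C N^{-\alpha} V_\sfx(p)$. Choosing $N$ large, so that $CN^{-\alpha}\le 95/100$, and then $s_0$ small, concludes the proof.

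The main obstacle is the regime $|z|\ll|y|$. There Lemma~\ref{lemma:prob_bound} alone is useless, since its bound blows up as $z\to0$, so the gain must come from the second shear. The point is to pass from $|y_1|$ back to $|y|$ without losing the $N^{-\alpha}$ factor; the case split in Step 3 handles exactly this. I must also keep track of the compatibility constraints $N|y_1|\le\pi/2$ and $N s_0\ll1$, which force the order ``$N$ first, then $s_0$''.
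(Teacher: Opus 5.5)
Your proof is correct, and it takes a genuinely different route from the paper's.

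\emph{The paper's route.} It splits into the cases $|y|\le|z|$ and $|z|\le|y|$ and partitions the amplitudes into three or four explicit events $E_1,\dots,E_4$. On each event it records a deterministic lower bound for $r_\sfx(p_2)$, with losses as bad as $r_\sfx(p)/(8N^2)$, together with a probability bound from Lemma~\ref{lemma:prob_bound}. The contraction comes only from the good event where $r_\sfx(p_2)\ge 2r_\sfx(p)$. The final factor is therefore $2^{-\alpha}$ plus error terms such as $N^{-(1/2-2\alpha)}$, which is why the paper needs a small exponent and fixes $\alpha=1/8$ to land below $95/100$.

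\emph{Your route.} You discard $y_2$, bound $V_\sfx(p_2)\le|z_2|^{-\alpha}$, and integrate the small-ball estimates completely by the layer-cake formula, conditioning on $\omega_1$. The second shear gives $\EE[|z_2|^{-\alpha}\mid\omega_1]\lesssim(N|y_1|)^{-\alpha}$. The first shear, or the deterministic bound $|y_1|\ge|y|/2$ when $N|z|<|y|/4$, gives $\EE|y_1|^{-\alpha}\lesssim r_\sfx(p)^{-\alpha}$. This yields the stronger estimate $\EE V_\sfx(f_\omega(p))\le C_\alpha N^{-\alpha}V_\sfx(p)$ for every $\alpha\in(0,1)$. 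So $\gamma$ can be made as small as desired, and the case-by-case bookkeeping collapses. It is the exact-shear, inviscid counterpart of the averaging in Lemma~\ref{lemma:EEmaxshearX}, which the paper only uses later for the rescaled stochastic dynamics.

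\emph{What each buys.} The paper's event decomposition gives pointwise lower bounds on $r_\sfx(p_2)$ on explicit events. It reuses these in Proposition~\ref{prop:driftanuularx} to absorb the perturbation $|\phi_2^\kappa-\phi_2|\le\eta r$. Your expectation-only bound would need an extra step there, for instance restricting to $\{r_\sfx(p_2)\ge 2\eta r\}$ and controlling the complement with the same small-ball estimates.

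One small point to make explicit: your Step~1 needs the tail bounds for every $\ep>0$, not only for small $\ep$. They do hold with a slightly larger numerical constant, since $\arcsin u\le\frac{\pi}{2}u$ on $[0,1]$ and the bound is trivial once it exceeds $1$ (using $N|z|,N|y_1|\le\pi/2$).
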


\begin{proof}
Let $N>1$ and $s_0>0$ to be determined along the proof. First of all, for all $\delta_1>0$ small there exists $c_1>0$ sufficiently small such that for $p\in \mathsf{Q}_\sfx (s_0)$ and $\omega=(\omega_1,\omega_2)\in [-N,N]^2$ we have that $p_1=f_{\omega_1}^3(p)$ satisfies
\begin{align}\label{eq:smallp1}
    (1-|x_1|) + |y_1| + |z_1| \leq \delta_1
\end{align}
provided that $Ns_0\leq c_1$. Similarly, for all $\delta_2>0$ there exists $c_2>0$ sufficiently small such that 
for all $p\in \mathsf{Q}_\sfx (s_0)$ and all $\omega=(\omega_1,\omega_2)\in [-N,N]^2$ we have that $p_2=f_{\omega_2}^2(p_1)$  satisfies
\begin{align}\label{eq:smallp2}
    (1-|x_2|) + |y_2| + |z_2| \leq \delta_2
\end{align}
if $N^2 s_0\leq c_2$. In what follows, we shall assume that \eqref{eq:smallp1} and \eqref{eq:smallp2} hold for $\delta_1,\delta_2>0$ sufficiently small and that $c_1,c_2>0$ are sufficiently small as well. We observe that $y_2=y_1$ and we distinguish two cases according to the value of $V_\sfx(p)$ 

\diampar{Case $0\leq |y|\leq |z|$} Hence, $V_\sfx (p) = |z|^{-\alpha}$. 
\begin{enumerate}
\item Let $E_1 = \lbrace   \omega_1\in [-N,N] : |y_1| \leq \frac{|z|}{4N} \rbrace  $. Since
\begin{align}
        y_1 = x\sin(\omega_1 z) + y \cos(\omega_1 z)
\end{align}
we easily see that $\mathbb{P}(E_1) \leq \frac{1}{2N^2}$. Moreover, for $(\omega_1,\omega_2)\in E_1\times [-N,N]$ we now have $|\sin (\omega_2 y_1)|\leq \frac{|z|}{4}$ so that 
\begin{align}
    |z_2| \geq |z||\cos(\omega_2 y_1)| - |x_1||\sin(\omega_2 y_1)| \geq \frac{|z|}{2}    
\end{align}
and thus $V_\sfx (p_2) \leq 2^\alpha V_\sfx (p)$. Here we used that $|\cos(\omega_2 y_1)|\geq \sqrt{1-(\tfrac{|z|}{4})^2} \geq \frac34$ for $|z|\leq 1$.

\item Let $E_2= \lbrace   \omega_1\in [-N,N] : \frac{|z|}{4N} < |y_1| \leq 2|z| \rbrace   $. Now, we have $\mathbb{P}(E_2) \leq \frac{4}{N}$ and we further observe that $V_\sfx (p_2)\leq (4N)^\alpha V_\sfx (p)$ for all $(\omega_1,\omega_2)\in E_2\times [-N,N]$.

\item Let $E_3 = \lbrace   \omega_1\in [-N,N] : 2|z| < |y_1| \rbrace  $. We note that for all $(\omega_1,\omega_2)\in E_3\times [-N,N]$ we have $V_\sfx (p_2) \leq 2^{-\alpha} V_\sfx (p)$.
\end{enumerate}
Therefore, we have that
\begin{equation}\label{eq:firstEbound}
\begin{split}
    \mathbb E \left[ V_\sfx (f_{\omega}(p))\right] &= \mathbb E \left[ V_\sfx (f_{\omega}(p)) | E_1 \right]\mathbb{P}(E_1) + \mathbb E \left[ V_\sfx (f_{\omega}(p)) | E_2 \right]\mathbb{P}(E_2) + \mathbb E \left[ V_\sfx (f_{\omega}(p)) | E_3 \right]\mathbb{P}(E_3) \\
    &\leq \left( \frac{2^{\alpha-1}}{N^2} + \frac{4^{1+\alpha}}{N^{1-\alpha}} + 2^{-\alpha} \right) V_\sfx (p).
\end{split}
\end{equation}

\diampar{Case $0\leq |z| \leq |y|$} We now have $V_\sfx (p) = |y|^{-\alpha}$ and we consider the next four settings.
\begin{enumerate}
    \item Let $E_1 = \lbrace   \omega_1\in [-N,N] : |y_1| \leq \frac{|z|}{4N} \rbrace  $. As before, we have $\mathbb{P}(E_1) \leq \frac{1}{2N^2}$ and $|z_2|\geq \frac{|z|}{2}$ as well. Additionally, we note that for $E_1$ to be non-empty we should have
    \begin{align}
        -\frac{|z|}{4N} -x\sin(\omega_1 z) \leq y \cos (\omega_1 z) \leq  \frac{|z|}{4N} -x\sin(\omega_1 z).
    \end{align}
    In particular, we observe that $|y\cos(\omega_1 z)| \leq \frac{|z|}{4N} +|\sin(\omega_1 z)|\leq 2N|z|$ and since $|\cos(\omega_1 z)|\geq \frac34$ because $|z|\leq s_0$ and $Ns_0\leq c_1$ is small enough, we further deduce that $\frac{3|y|}{16N}\leq \frac{|z|}{2} \leq |z_2|$. Therefore, $V_\sfx (p_2)\leq (\tfrac{16N}{3})^{\alpha} V_\sfx (p)$ for $(\omega_1,\omega_2)\in E_1 \times [-N,N]$.
    \item We now define $E_2= \lbrace   \omega_1\in [-N,N] : \frac{|z|}{4N} < |y_1| \leq \frac{|y|}{N^\frac12} \rbrace   $, so that $\mathbb{P}(E_2)\leq \frac{2}{N^\frac32}\frac{|y|}{|z|}$. For $E_2$ to be non-empty, we now need
    \begin{align}
        -\frac{|y|}{N^\frac12} -x\sin(\omega_1 z) \leq y \cos (\omega_1 z) \leq  \frac{|y|}{N^\frac12} -x\sin(\omega_1 z)
    \end{align}
    which gives $|y\cos(\omega_1 z)|\leq N|z| + \frac{|y|}{N^\frac12}$. Therefore, we have that $\left( \frac{3}{4}-N^{-\frac12}\right) |y| \leq N|z|$ and thus $|y| \leq 2N |z|$ for $N$ large enough. Hence, $\mathbb{P}(E_2)\leq \frac{4}{N^\frac12}$, $|y_2| = |y_1| \geq \frac{|z|}{4N} \geq \frac{|y|}{8N^2}$ and thus $V_\sfx (p_2) \leq (8N^2)^{\alpha}V_\sfx (p)$, for all $(\omega_1,\omega_2)\in E_2\times [-N,N]$.
    \item We next set $E_3 = \lbrace   \omega_1,\omega_2\in [-N,N] : |y_1| > N^{-\frac12}|y| \text{ and }  |\sin(\omega_2N^{-\frac12}|y|)|\leq 4|y| \rbrace  $. Then, we must have $|\omega_2|\leq 8N^\frac12$ so that $\mathbb{P}(E_3) \leq \frac{8}{N^{\frac12}}$ and $V_\sfx (p_2) \leq N^{\alpha/2}V_\sfx (p)$.
    \item Finally, for $E_4 = (E_1 \cup E_2 \cup E_3)^c$ we have that $|\sin(\omega_2 N^{-\frac12}|y|)| > 4|y|$ and $|y_1| > N^{-\frac12}|y|$. In particular, we deduce that 
    \begin{align}
        |z_2| \geq |x_1| |\sin(\omega_2 y_1)| - |z| \geq \frac34 |\sin(\omega_2N^{-\frac12}|y|)| - |y| \geq 2|y|
    \end{align} because $|x_1|\geq \frac34$, $|\sin(z)|$ is increasing in $z\in(0,\pi/2)$ and $|z|\leq |y|$. Thus, $V_\sfx (p_2) \leq 2^{-\alpha} V_\sfx(p)$ in $E_4$.
\end{enumerate}
Now, there holds
\begin{equation}\label{eq:secondEbound}
\begin{split}
    \mathbb E \left[ V_\sfx(f_{\omega}(p))\right] &= \mathbb E \left[ V_\sfx(f_{\omega}(p)) | E_1 \right]\mathbb{P}(E_1) + \mathbb E \left[ V_\sfx(f_{\omega}(p)) | E_2 \right]\mathbb{P}(E_2) \\
    &\quad+ \mathbb E \left[ V_\sfx(f_{\omega}(p)) | E_3 \right]\mathbb{P}(E_3) + \mathbb E \left[ V_\sfx(f_{\omega}(p)) | E_4 \right]\mathbb{P}(E_4)\\
    &\leq \left( \frac{16^\alpha}{2\cdot 3^\alpha}\frac{1}{N^{2-\alpha}} + \frac{4\cdot 8^\alpha}{N^{\frac12-2\alpha}} + \frac{8}{N^{\frac12-\alpha/2}} + 2^{-\alpha} \right) V_\sfx(p)
\end{split}
\end{equation}
For $\alpha=\frac18$, $N>1$ sufficiently large and $s_0 = \frac{c_0}{2N_0^2}$ with $c_0=\min(c_1,c_2)$ small enough we see from \eqref{eq:firstEbound} and \eqref{eq:secondEbound} that $\mathbb E \left[ V_\sfx(f_{\omega}(p))\right] \leq \frac{95}{100}V_\sfx(p)$.
\end{proof}

\subsubsection{The $F_\sfz$ fixed points}
Set here $r_\sfz(p)=\sqrt{x^2 + y^2}$ and
\begin{align}
    \mathsf{Q}_\sfz(R)= \lbrace   p\in \Sph : 0 < r_\sfz(p) < R \rbrace  .
\end{align}
for $R>0$. For $p\in \mathsf{Q}_\sfz(R)$ we define $V_\sfz(p)= r_\sfz^{-\alpha}(p)$. We observe that $V_\sfz(f_{\omega_1}^3(p))=V_\sfz(p)$, for all $p\in \Sph$ and $\omega_1\in [-N,N]$, since $f_{\omega_1}^3$ is a rotation along the $z$-axis and it preserves the distance to the $z$-axis.

\begin{lemma}\label{lemma:localzlyap}
There exists $s_0>0$ small and $N>1$ large such that for all $p\in \mathsf{Q}_{\sfz}(s_0)$ and $\omega=(\omega_1,\omega_2)\in [-N,N]^2$ there holds
\begin{align}
    \mathbb{E}\left[ V_\sfz(f_\omega(p)) \right] \leq \gamma V_\sfz(p),
\end{align}
for $\gamma=\frac{9}{10}<1$.
\end{lemma}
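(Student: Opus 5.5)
The plan is to compute the one-step ratio $V_\sfz(f_\omega(p))/V_\sfz(p)$ explicitly in polar coordinates around the $z$-axis, and to average it first over $\omega_1$ and then over $\omega_2$. The key simplification is the observation above: $V_\sfz(p_1)=V_\sfz(p)$, so only the second map $f^2_{\omega_2}$ changes $r_\sfz$. The role of $\omega_1$ is to randomize the angular position of $p_1$ around the pole.

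\emph{Step 1 (coordinates).} Write $r=r_\sfz(p)$ and $p_1=(r\cos\theta, r\sin\theta, z)$ with $\theta=\theta_0+\omega_1 z$, where $\theta_0$ is the angle of $(x,y)$. By symmetry we may take $z>0$, so $z\geq \sqrt{1-s_0^2}\geq \tfrac12$. From \eqref{eq:flowmapfE}, $y_2=y_1=r\sin\theta$ and $x_2=r\cos\theta\cos(\omega_2 y_1)+z\sin(\omega_2 y_1)$. Since $|\omega_2 y_1|\leq Ns_0$, choosing $Ns_0\leq c$ small lets us replace $\sin(\omega_2y_1)$ by $\omega_2 y_1$, $\cos(\omega_2 y_1)$ by $1$, and $z$ by $1$, with relative errors $O(c)$. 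This gives
\begin{align}
    r_\sfz(p_2)^2 = r^2\Big( (\cos\theta + \tilde\omega_2\sin\theta)^2 + \sin^2\theta\Big),\qquad \tilde\omega_2=\omega_2 z\,\tfrac{\sin(\omega_2y_1)}{\omega_2 y_1},
\end{align}
where $\tilde\omega_2$ differs from $\omega_2$ by a factor in $[\tfrac12,1]$ (up to the small cosine correction absorbed into $\cos\theta$). Hence
\begin{align}
    \frac{V_\sfz(p_2)}{V_\sfz(p)} = G(\theta,\omega_2)^{-\alpha/2},\qquad G(\theta,\omega_2)\geq \max\Big\{\sin^2\theta,\ \big(|\tilde\omega_2\sin\theta|-1\big)_+^2\Big\}.
\end{align}

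\emph{Step 2 (distribution of $\theta$).} Since $\omega_1\sim\mathrm{Unif}[-N,N]$ and $z\geq\tfrac12$, the variable $\omega_1 z$ is uniform on an interval of length at least $N$. Therefore $\theta \bmod 2\pi$ has density bounded by $\tfrac{1}{2\pi}(1+\tfrac{4\pi}{N})\leq 1$, uniformly in $p$. It follows that $\PP(|\sin\theta|\leq\delta)\lesssim\delta$ and
\begin{align}
    \mathbb E\big[|\sin\theta|^{-\alpha}\mathbf 1_{|\sin\theta|\leq\delta}\big]\lesssim \delta^{1-\alpha}.
\end{align}

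\emph{Step 3 (splitting).} Take $\alpha\in(0,1)$ small (for instance $\alpha=\tfrac18$, as for $V_\sfx$), and split into three events.
\begin{enumerate}[label=(\roman*)]
    \item On $A_1=\{|\sin\theta|\leq N^{-1/4}\}$, bound the ratio by $|\sin\theta|^{-\alpha}$. Its contribution is $\lesssim N^{-(1-\alpha)/4}$.
    \item On $A_2=\{|\sin\theta|>N^{-1/4},\ |\omega_2|\leq N^{3/4}\}$, bound the ratio by $|\sin\theta|^{-\alpha}\leq N^{\alpha/4}$. This event has probability $\leq N^{-1/4}$, so its contribution is $\lesssim N^{\alpha/4-1/4}$.
    \item On the remaining event, $|\tilde\omega_2\sin\theta|\geq \tfrac12 N^{1/2}$, so the ratio is $\lesssim N^{-\alpha/4}$.
\end{enumerate}
Summing the three contributions gives $\mathbb E[V_\sfz(f_\omega(p))]\leq C(N^{-(1-\alpha)/4}+N^{-\alpha/4})V_\sfz(p)\leq \tfrac{9}{10}V_\sfz(p)$ once $N$ is large. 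Then fix $s_0$ with $Ns_0\leq c$.

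\emph{Main obstacle.} I expect the main difficulty to be making the linearization in Step 1 rigorous uniformly over all $\omega_2\in[-N,N]$ and all $p\in\mathsf Q_\sfz(s_0)$. In particular, the lower bound $G\geq\sin^2\theta$ must survive exactly rather than only up to small error, since it is what keeps the $A_1$ contribution integrable. I would handle this by using the exact identity $r_\sfz(p_2)^2=x_2^2+y_1^2\geq y_1^2=r^2\sin^2\theta$, which holds without any approximation. The small-angle approximation is then used only on the event in (iii), where a constant-factor loss is harmless.
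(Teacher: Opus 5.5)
Your proposal is correct and uses the same mechanism as the paper's proof: $f^3_{\omega_1}$ preserves $r_\sfz$ and randomizes the angle of $p_1$, and the shear $f^2_{\omega_2}$ then stretches $r_\sfz$ except on the low-probability events where $p_1$ lies near the $x$-axis or $|\omega_2|$ is small. The difference is only in the bookkeeping. You treat the whole near-axis sector at once, using the exact bound $r_\sfz(p_2)\ge |y_1|=r|\sin\theta|$ and the integrability of $|\sin\theta|^{-\alpha}$ against the bounded density of $\theta$, which gives a contraction factor tending to $0$ as $N\to\infty$. The paper instead splits this region into the sectors $E_2$ and $E_3$ (using survival of the $x$-component on $E_3$) and tunes an auxiliary parameter $M$ on the stretching event $E_{1,1}$.
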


\begin{proof}
To ease notation, let $r=r_\sfz(p)$. We assume that $y_1 > 0$ and $s_0< \frac14$ small enough such that $z\geq 1-s_0$ for all $p\in \mathsf{Q}_\sfz(s_0)$. Let $N>1$ and $M>1$, to be determined along the proof. Near the north and south poles $(0,0,1)$ and $(0,0,-1)$, the first motion $f^3_{\omega_1}$ rotates the initial point $p$ along the $z$-axis, thus preserving $V_{\sfz}$. The second motion $f^2_{\omega_2}$ is shearing in the $x$-direction while preserving the $y$-component. We argue near the north pole $(0,0,1)$ and according to the position of the midpoint $p_1=(x_1,y_1,z_1) = f_{\omega_1}^3(p)$.

\begin{enumerate}
    \item For the set $E_1=\lbrace   \omega_1\in[-N,N] : 0\leq N^{-\frac12} |x_1| \leq y_1 \rbrace  $ we define the following events.
    \begin{enumerate}[label=(\theenumi.\arabic*)]
        \item Let $E_{1,1}=\lbrace   \omega_2\in[-N,N] : |\omega_2| > 2\frac{M+1}{1-\delta_0}N^\frac12\rbrace  $. For $\omega_2\in E_{1,1}$, we have that $|x_2| \geq \frac12(1-\delta_0)|\omega_2| y_1 - |x_1| \geq \frac12(1-\delta_0)|\omega_2| y_1 - N^\frac12 y_1 \geq  M N^\frac12 y_1$, from which we see that
        \begin{align}
            r_2 \geq y_1\sqrt{1+M^2N} \geq \sqrt{\frac{1+M^2N}{1+N}}r_1
        \end{align}
        and thus $V_\sfz(f_\omega(p)) \leq \left( \frac{1+N}{1+M^2N}\right)^{\frac{\alpha}{2}} V_\sfz(p)$. 
        \item Set next $E_{1,2}= (E_{1,1})^c$. We observe that $\mathbb{P}(E_{1,2}) = 2\frac{M+1}{1-\delta_0} N^{-\frac12}$ and for $\omega_2\in E_{1,2}$, we further observe that $r_2\geq y_1 \geq \frac{r_1}{\sqrt{1+N}}$ so that now $V_\sfz(f_\omega(p)) \leq (1+N)^{\frac{\alpha}{2}}V_\sfz(p)$.
    \end{enumerate}
    \item For the set $E_2 = \lbrace   \omega_1\in [-N,N] : \frac{|x_1|}{2N} \leq y_1 \leq \frac{|x_1|}{N^\frac12} \rbrace  $, we have that $r_2\geq y_1\geq \frac{r_1}{\sqrt{1+4N^2}}$ and thus $V_\sfz(f_\omega(p)) \leq (1+4N^2)^{\frac{\alpha}{2}}V_\sfz(p)$. Moreover, $\mathbb{P}(E_2) \lesssim N^{-\frac12}$ since $E_2$ is a subset of  $\lbrace   \omega_1\in[-N,N] : 0\leq \beta_1 \leq N^{-\frac12} \rbrace   $, where $\beta_1 = \arctan(\frac{y_1}{|x_1|})$. In turn, $\beta_1 = \beta + \omega_1 z$ and since $\mathbb{P}(\lbrace   \omega_1\in [0,2\pi] : 0< \beta + \omega_1 z < N^{-\frac12} \rbrace   ) \lesssim N^{-\frac12}$ we observe that we may increase the amplitude interval from $[0, 2\pi]$ to $[-2\pi k, 2\pi k]$ and yet, due to periodicity and the uniform law, $\mathbb{P}(E_2)\leq C_1 N^{-\frac12}$, for some uniformly bounded $C_1>0$. We next set
    \begin{enumerate}[label=(\theenumi.\arabic*)]
        \item $E_{2,1} = \lbrace   \omega_2\in [-N,N] : |x_2| \leq \frac{y_1}{2} \rbrace  $. Arguing as in Lemma \ref{lemma:prob_bound} we can show that $\mathbb{P}(E_{2,1}) \leq N^{-1}$. Moreover, $V_\sfz (f_\omega(p)) \leq  (1+4N^2)^{\frac{\alpha}{2}}V_\sfz(p)$.
        \item $E_{2,2} = \lbrace   \omega_2\in [-N,N] : |x_2| \geq \frac{y_1}{2} \rbrace  $. Here, we immediately have $V_\sfz(f_\omega(p)) \leq 4^\alpha V_\sfz(p)$.
    \end{enumerate}
    
    \item For the set $E_3 = \lbrace   \omega_1\in [-N,N] : 0 < y_1 < \frac{|x_1|}{2N} \rbrace   $ we have
    \begin{align}
        |x_2|\geq \frac{3}{4}|x_1| - |\omega_2| y_1 \geq \frac{3}{4}|x_1| - \frac{|\omega_2|}{2N}|x_1|\geq \frac{|x_1|}{4}
    \end{align}
    from which we deduce that $r_2 \geq \frac{r_1}{4}$ and thus $V_\sfz(f_\omega(p)) \leq 4^\alpha V_\sfz(p)$. Furthermore, we have $\mathbb{P}(E_3) \leq C_2 N^{-1}$ for some $C_2>0$ uniformly bounded.
\end{enumerate}

Arguing identically for $y_1<0$ using the symmetry under $\pi$-rotation along the $z$-axis of the flow maps, we find that
\begin{align}
    \mathbb E \left[ V_\sfz(f_{\omega}(p))\right] &= 2\mathbb E \left[ V_\sfz(f_{\omega}(p)) | E_{1,1} \right]\mathbb{P}(E_{1,1}) + 2\mathbb E \left[ V_\sfz(f_{\omega}(p)) | E_{1,2} \right]\mathbb{P}(E_{1,2}) \\
   &\quad + 2 \mathbb E \left[ V_\sfz(f_{\omega}(p)) | E_{2,1} \right]\mathbb{P}(E_{2,1}) + 2\mathbb E \left[ V_\sfz(f_{\omega}(p)) | E_{2,2} \right]\mathbb{P}(E_{2,2}) \\
   &\quad + 2 \mathbb E \left[ V_\sfz(f_{\omega}(p)) | E_3 \right]\mathbb{P}(E_3)
\end{align}
and gathering all the estimates above, we reach 
\begin{align}
     \frac{\mathbb E \left[ V(f_{\omega}(p))\right]}{2V(p)} &\leq  \left( \frac{1+N}{1+M^2N} \right)^\frac{\alpha}{2} + 2\frac{M+1}{1-\delta_0}\frac{(1+N)^\frac{\alpha}{2}}{N^\frac12} + C_1\frac{(1+4N^2)^\frac{\alpha}{2}}{N^\frac12} + C_2 \frac{4^\alpha}{N^\frac12} + C_3 \frac{4^\alpha}{N}
\end{align}
For $\alpha = \frac18\in (0,\frac12)$ and $N\geq 1$ we see that $\frac{1+N}{1+M^2N}\leq \frac{2}{M^2}$ and thus
\begin{align}
    2 \left( \frac{1+N}{1+M^2N} \right)^\frac{\alpha}{2} \leq 2 \frac{2^\frac{1}{16}}{M^\frac18} = \frac45
\end{align}
for $M= (5/2)^8 \sqrt{2}$. Next, we choose $N>1$ large enough such that
\begin{align}
    2\frac{M+1}{1-\delta_0}\frac{(1+N)^\frac{\alpha}{2}}{N^\frac12} + C_1\frac{(1+4N^2)^\frac{\alpha}{2}}{N^\frac12} + C_2 \frac{4^\alpha}{N^\frac12} + C_3 \frac{4^\alpha}{N} \leq \frac{1}{20}.
\end{align}
Therefore, we have
\begin{align}
\mathbb E \left[ V_\sfz(f_{\omega}(p))\right] \leq \frac{9}{10} V_\sfz(p).
\end{align}
\end{proof}

\begin{figure}[!h]
\centering
\includegraphics[width=0.7\linewidth]{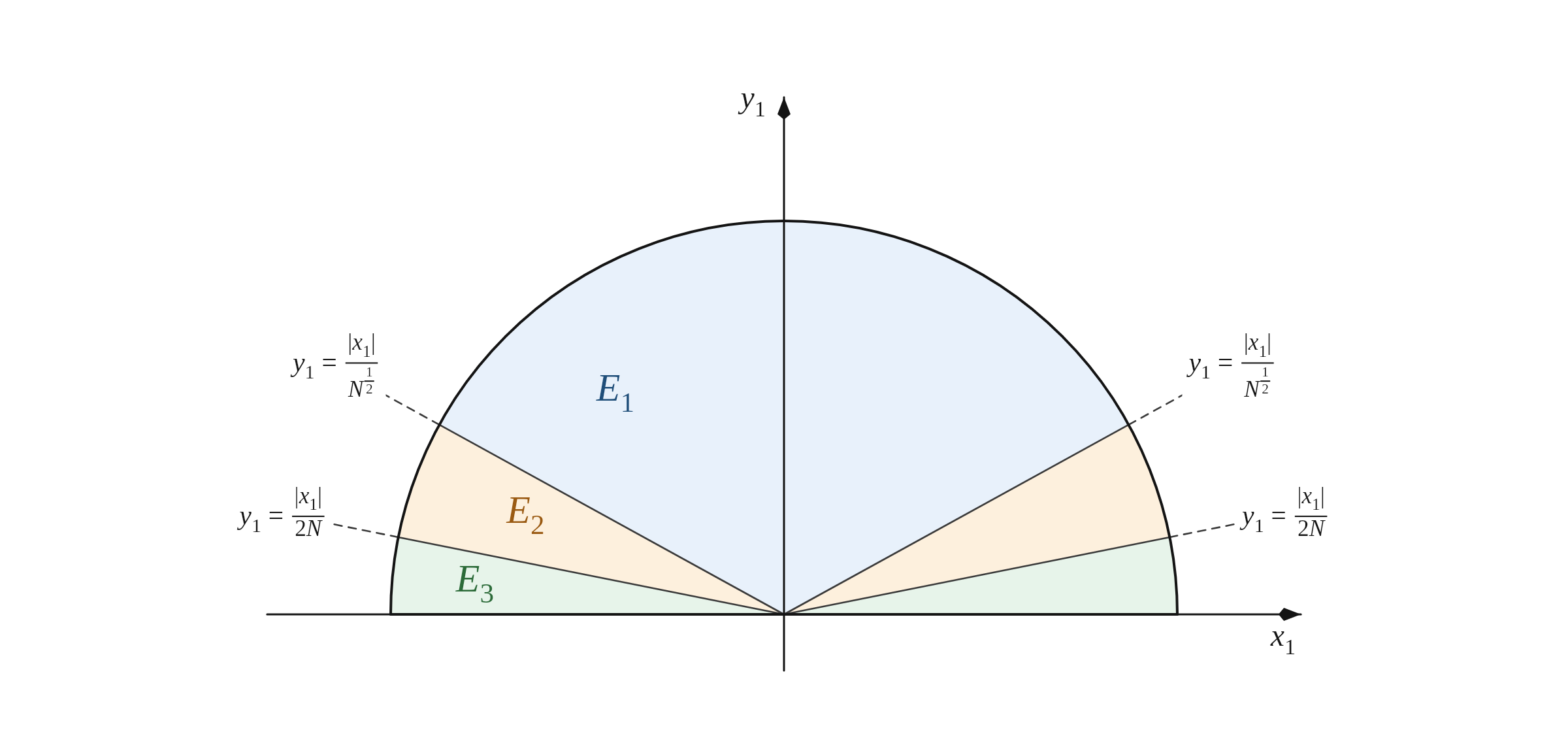}
\caption{ \small Division of the $(x_1,y_1)$ upper half-plane in three regions $E_1$, $E_2$, and $E_3$ for the $F_\sfz$ fixed points.}
\label{fig:sectors_Fz}
\end{figure}

\subsubsection{The $F_\mathsf{y}$ fixed points}

For $p=(x,y,z)\in \Sph$ we now define $r_\mathsf{y}(p) = \sqrt{x^2 +z^2}$ and 
\begin{align}
    \mathsf{Q}_\mathsf{y}(R) = \lbrace   p\in \Sph : 0< r_\mathsf{y}(p) < R \rbrace 
\end{align}
for $R>0$. For $p\in \mathsf{Q}_\mathsf{y}(R)$ we similarly set $\widetilde{V}_\mathsf{y}(p)=r_\mathsf{y}^{-\alpha}(p)$ and for $N>0$ we further set $ {V}_\mathsf{y}(p) = \widetilde{V}_\mathsf{y}(f_N^3(p))$. 

\begin{lemma}\label{lemma:localylyap}
There exists $s_0>0$ small and $N>1$ large such that for all $p\in \mathsf{Q}_{\sfy}(s_0)$ and $\omega=(\omega_1,\omega_2)\in [-N,N]^2$ there holds
\begin{align}
    \mathbb{E}\left[  {V}_\mathsf{y}(f_\omega(p)) \right] \leq \gamma  {V}_\mathsf{y}(p),
\end{align}
for $\gamma=\frac{9}{10}<1$.
\end{lemma}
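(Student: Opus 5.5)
The plan is to linearize the dynamics near the pole $(0,1,0)$ (the pole $(0,-1,0)$ is symmetric), working in the coordinates $(x,z)$ with $r_\sfy=|(x,z)|$. Near $y=1$ the map $f^3_{\omega_1}$ is a rotation about the $z$-axis by the small angle $\omega_1 z$. To leading order it acts as the parabolic shear $(x,z)\mapsto (x-\omega_1 z, z)$. The map $f^2_{\omega_2}$ is an exact rotation in the $(x,z)$-plane by the angle $\theta=\omega_2 y_1$, so it preserves $r_\sfy$. Since $y_1=1+O(s_0^2)$, the angle $\theta$ is essentially uniform modulo $2\pi$ as $\omega_2$ ranges over $[-N,N]$. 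On its own, $\widetilde V_\sfy$ only feels the shear, so its drift is not directly contractive. This is why $V_\sfy$ is defined with the extra fixed shear $f^3_N$. Writing $w=f^3_N(p)$, we have
\begin{align}
V_\sfy(f_\omega(p)) = \widetilde V_\sfy\bigl(f^3_N\circ f^2_{\omega_2}\circ f^3_{\omega_1}(p)\bigr),
\end{align}
so the random rotation is always followed by a strong shear of size $N$. All nonlinear corrections (the factors $\cos$, $\sin$ and $y_1\neq 1$) will be absorbed by choosing $s_0$ small with $N$ fixed, exactly as with \eqref{eq:smallp1}--\eqref{eq:smallp2} in Lemma~\ref{lemma:localxlyap}.

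\emph{Step 1 (averaging over $\omega_2$).} Let $p_1=f^3_{\omega_1}(p)$ and let $\varphi$ be the polar angle of $(x_1,z_1)$. After the rotation, $z'=r_1\sin(\theta+\varphi)$. The final shear then gives $r_\sfy(f^3_N(\cdot))\ge |z'|$, and also $\ge |x'-Nz'|$. Hence $r_\sfy$ of the final point is bounded below by $c\,N r_1|\sin(\theta+\varphi)|$ whenever $|\sin(\theta+\varphi)|\gtrsim N^{-1}$, and by $c\, r_1$ otherwise. Since $\theta$ is nearly uniform over at least $\sim N/\pi$ full turns, and $\alpha=\tfrac18<1$, we get $\mathbb E_{\omega_2}|\sin(\theta+\varphi)|^{-\alpha}\le C_\alpha$ uniformly in $\varphi$. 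This yields
\begin{align}
\mathbb E_{\omega_2}\bigl[V_\sfy(f_\omega(p))\bigr]\le C\,N^{-\alpha}\, r_1^{-\alpha}.
\end{align}

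\emph{Step 2 (averaging over $\omega_1$ against $V_\sfy(p)=|(x-Nz,z)|^{-\alpha}$).} There are two cases. If $|x|\ge 2N|z|$, then $r_1\ge |x|/2$ for every $\omega_1$, and also $r_\sfy(w)\le 2|x|$; hence $r_1^{-\alpha}\le 4^\alpha V_\sfy(p)$. If $|x|<2N|z|$, then $r_\sfy(w)\le 4N|z|$, and $r_1\ge |x-\omega_1 z|=|z|\,|x/z-\omega_1|$ with $\omega_1$ uniform on $[-N,N]$. This gives $\mathbb E_{\omega_1}|x/z-\omega_1|^{-\alpha}\le C N^{-\alpha}$, hence $\mathbb E r_1^{-\alpha}\le C N^{-\alpha}|z|^{-\alpha}\le C' V_\sfy(p)$. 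In both cases $\mathbb E[V_\sfy(f_\omega(p))]\le C'' N^{-\alpha}V_\sfy(p)$ with $C''$ independent of $N$. Choosing $N$ large gives $\gamma=\tfrac{9}{10}$, and then $s_0$ is chosen small depending on $N$.

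The main obstacle is making Step 1 rigorous with constants uniform in $N$. Two points need care. First, the near-uniformity of $\theta=\omega_2 y_1$ modulo $2\pi$ must be quantified, with boundary effects from the incomplete last period. Second, the error between the true maps \eqref{eq:flowmapfE} and their linearizations is relative to $r_\sfy$, but it involves products such as $N\cdot\omega_1 z$, so $s_0\ll N^{-2}$ is needed. I would handle the first point by integrating $|\sin|^{-\alpha}$ over full periods and bounding the remainder by $O(1/N)$. For the second, I would keep exact formulas where possible: $f^2$ is an exact rotation, and $f^3$ is an exact rotation about $z$ whose effect on $(x,z)$ is $x\mapsto x\cos(\omega z)-y\sin(\omega z)$. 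Only the sine and cosine are linearized, within the regime $Ns_0\le c$.
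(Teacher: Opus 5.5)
Your proposal is correct and follows essentially the paper's route. Conditionally on $\omega_1$, the random rotation $f^2_{\omega_2}$ followed by the fixed shear $f^3_N$ gains a small factor relative to $r_\sfy(f^3_{\omega_1}(p))^{-\alpha}$. Averaging over $\omega_1$ then costs only a bounded constant against $V_\sfy(p)$. The paper obtains that constant, $4^{\alpha+1}$, by reflecting with $\mathsf{R}_\sfz$ and reusing the sector decomposition of Lemma~\ref{lemma:localzlyap}; you instead use the integrability of $|\sin|^{-\alpha}$ and the two cases $|x|\gtrless 2N|z|$, which is cleaner.

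One slip to fix. In Step~1 the lower bound ``$c\,r_1$ otherwise'' fails near the direction $x'\approx Nz'$. There the shear cancels, and only $r_\sfy\geq r_1/(N+1)$ holds, which is the smallest singular value of the shear. However, this set of angles has $\omega_2$-probability $O(N^{-1})$, so its contribution is $O(N^{\alpha-1})\,r_1^{-\alpha}\leq C N^{-\alpha}r_1^{-\alpha}$ for $\alpha=\tfrac18$, and your conclusion stands.
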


\begin{proof}
    Let $p\in \Sph$, $\overline{p} = f_N^3(p)$ and $q_{\omega_1}=f_{\omega_1}^3(p)$. Hence, $p= f_{-N}^3(\overline{p})$ and $q_{\omega_1}= f_{\omega_1-N}^3(\overline{p})$. Moreover, $ {V}_\mathsf{y}(p) = \widetilde{V}_\mathsf{y}(\overline{p})$ and $ {V}_\mathsf{y}(f_\omega(p)) = \widetilde{V}_\mathsf{y}(f_N^3\circ f_{\omega_2}^2(q_{\omega_1}))$. Arguing as in the proof of Lemma \ref{lemma:localzlyap}, we see that
    \begin{align}
        \mathbb{E}_{\omega_2}\left[  {V}_\mathsf{y}(f_{\omega}(p)) \right] &=  \mathbb{E}_{\omega_2}\left[  \widetilde{V}_\mathsf{y}(f_N^3\circ f_{\omega_2}^2(q_{\omega_1})) \right] \\
        &\leq 2\left( \left( \frac{1+N}{1+M^2N} \right)^\frac{\alpha}{2} + C_1\frac{(1+4N^2)^\frac{\alpha}{2}}{N^\frac12} + C_2 \frac{4^\alpha}{N}  \right) \widetilde{V}_\mathsf{y}(q_{\omega_1}).
    \end{align}
    Next, for $\widetilde\omega_1 = N-\omega_1\in[0,2N]$ we observe that $q_{\omega_1} = \mathsf{R}_\sfz( f_{\widetilde\omega_1}^3(\mathsf{R}_\sfz \overline{p}))$, where $\mathsf{R}_\sfz(p) = (x,y,-z)$, for $p=(x,y,z)$. Since $\widetilde{V}_\mathsf{y}(\mathsf{R}_\sfz(p)) = \widetilde V_\mathsf{y}(p)$ for all $p\in \Sph$, we deduce from the arguments of Lemma \ref{lemma:localzlyap} concerning the shear motion that
    \begin{align}
        \mathbb{E}_{\omega_1}\left[ \widetilde{V}_\mathsf{y}(q_{\omega_1}) \right] &= \mathbb{E}_{\widetilde\omega_1}\left[ \widetilde{V}_\mathsf{y} \circ \mathsf{R}_\sfz \circ f_{\widetilde\omega_1}^3\circ \mathsf{R}_\sfz (\overline{p}) \right] \\
        &= \mathbb{E}_{\widetilde\omega_1}\left[ \widetilde{V}_\mathsf{y} \circ f_{\widetilde\omega_1}^3\circ \mathsf{R}_\sfz (\overline{p}) \right] \\
        &\leq 2\left( \left(\frac{1+N}{1+M^2N} \right)^{\frac{\alpha}{2}} + 2\frac{M+1}{1-\delta_0}\frac{(1+N)^{\frac{\alpha}{2}}}{N^{\frac12}} + \frac{(1+4N^2)^{\frac{\alpha}{2}}}{N} + 4^\alpha \right) \widetilde{V}_\mathsf{y}(\mathsf{R}_\sfz(\overline{p})) \\
        &\leq 4^{\alpha+1}\widetilde{V}_\mathsf{y}(\overline{p}) \\
        &= 4^{\alpha+1}  V_\mathsf{y}(p),
    \end{align}
    for $M$ and $N$ sufficiently large and $\alpha\in(0,1)$. Note that here we cannot appeal to the probability of being at $E_1$, $E_2$ or $E_3$ since $\mathsf{R}_\sfz (\overline{p})$ is given and does not experience rotating motion, it only undergoes shear motion. Therefore,
    \begin{align}
        \mathbb{E}_{\omega_1,\omega_2}\left[  V_\mathsf{y}(f_{\omega}(p)) \right] &\leq 2 \left(  \left( \frac{1+N}{1+M^2N} \right)^\frac{\alpha}{2} + C_1\frac{(1+4N^2)^\frac{\alpha}{2}}{N^\frac12} + C_2 \frac{4^\alpha}{N}  \right) 4^{\alpha+1} V_\mathsf{y}(p) \\
        &\leq \frac{9}{10} V_\mathsf{y}(p),
    \end{align}
    for $\alpha=1/8$, $M> 4\sqrt{2}10^8$ and $N>1$ large enough. 
\end{proof}

\subsubsection{Construction of the drift}
We next assemble the partial drifts we defined for $F_\sfx$, $F_\sfz$ and $F_\mathsf{y}$ and construct the Lyapunov-drift function for the one-point process. Let $s_0>0$ as in Lemmas \ref{lemma:localxlyap} - \ref{lemma:localylyap} and  $\chi:[0,\infty)\rightarrow[0,1]$ be a smooth non-negative bump function such that
\begin{align}
    \chi(r) = \begin{cases}
        1, & r\leq s_0, \\
        0, & r \geq 2s_0.
    \end{cases}
\end{align}
For $p=(x,y,z)\in \Sph$, recall $r_\sfx(p) = \max \lbrace |y|, |z| \rbrace$, $r_\mathsf{y}(p) = \sqrt{x^2 + z^2}$ and $r_\sfz(p) =\sqrt{x^2 + y^2}$ and also 
\begin{align}
    \mathsf{Q}_\sigma(s) = \lbrace   p\in \Sph : r_\sigma(p) < s \rbrace  , 
\end{align}
with the sets $\sigma\in \lbrace   \sfx, \mathsf{y}, \sfz \rbrace  =\Sigma$ denote the spherical caps centred at the six fixed points. For $s>0$ small enough, the $\mathsf{Q}_\sigma(s)$ are mutually disjoint. We then define
\begin{equation}\label{eq:defdriftOPPV}
\begin{split}
\mathrm{V}(p) &:= \chi(r_\sfx)(1-\chi(r_\mathsf{y}))(1-\chi(r_\sfz)) V_\sfx(p) + \chi(r_\mathsf{y})(1-\chi(r_\sfz)) {V}_\mathsf{y}(p) + \chi(r_\sfz)V_\sfz(p)  \\
&\quad+ \hat c (1-\chi(r_\sfx)) (1-\chi(r_\mathsf{y}))  (1-\chi(r_\sfz)) 
\end{split}
\end{equation}
where we choose $\hat c >0$ so that $\mathrm{V} \geq 1$ and $V_\sfx$, $V_\sfz$ and ${V}_\mathsf{y}$ are given as in Lemma \ref{lemma:localxlyap}, \ref{lemma:localzlyap} and \ref{lemma:localylyap}, respectively. We further observe that for all $\sigma\in \lbrace   \sfx, \mathsf{y}, \sfz \rbrace  $ there exists $0<r_0 < s_0$ independent of $\sigma$ such that $f_\omega(p)\in \mathsf{Q}_\sigma(s_0)$ for all $p\in \mathsf{Q}_\sigma(r_0)$ and all $\omega=(\omega_1,\omega_2)\in[-N,N]^2$.

\begin{proposition}\label{prop:lyapdriftopp}
Let $K(r_0)= \cap_{\sigma\in \lbrace   \sfx, \mathsf{y}, \sfz \rbrace  } \mathsf{Q}_\sigma(r_0)^c$. There exists $\beta>0$ such that 
\begin{align}
    \mathbb{E}\left[\mathrm{V}(f_\omega(p)) \right] \leq \frac{95}{100} \mathrm{V}(p) + \beta \mathbf{1}_{K(r_0)}(p).
\end{align}
for all $p\in X$.
\end{proposition}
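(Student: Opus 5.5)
The plan is to split $\mathsf{X}$ into the region near the fixed points and the compact region $K(r_0)$, and treat each separately. The key observation, recorded just before the statement, is that for every $\sigma\in\Sigma$ and $p\in\mathsf{Q}_\sigma(r_0)$, the image $f_\omega(p)$ stays in $\mathsf{Q}_\sigma(s_0)$ for all $\omega\in[-N,N]^2$. We also take $s_0$ small enough that the caps $\mathsf{Q}_\sigma(2s_0)$ are mutually disjoint. On $\mathsf{Q}_\sigma(s_0)$ the cutoffs then satisfy $\chi(r_\sigma)=1$ and $\chi(r_{\sigma'})=0$ for $\sigma'\neq\sigma$. Hence every product of cutoffs in \eqref{eq:defdriftOPPV} vanishes except the one attached to $V_\sigma$, and there $\mathrm{V}=V_\sigma$ exactly. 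The reduction needs this identity both at $p$ and at $f_\omega(p)$, and the invariance of $\mathsf{Q}_\sigma(s_0)$ under one step supplies it at $f_\omega(p)$.

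Near the fixed points I argue as follows. For $p\in\mathsf{Q}_\sigma(r_0)\subset\mathsf{Q}_\sigma(s_0)$ we get $\mathbb{E}[\mathrm{V}(f_\omega(p))]=\mathbb{E}[V_\sigma(f_\omega(p))]$. I then invoke Lemma \ref{lemma:localxlyap}, Lemma \ref{lemma:localzlyap} or Lemma \ref{lemma:localylyap} according to $\sigma$. These give $\mathbb{E}[V_\sigma(f_\omega(p))]\le\gamma_\sigma V_\sigma(p)$ with $\gamma_\sigma\in\{95/100,9/10\}$, so the bound is at most $\frac{95}{100}\mathrm{V}(p)$. All three lemmas must hold with a common pair $(s_0,N)$: take $N$ larger than the three thresholds and $s_0$ smaller than all three, noting that each proof only requires $N$ large followed by $s_0$ small depending on $N$.

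One subtlety is $\sigma=\mathsf{y}$. There $V_\mathsf{y}(p)=\widetilde V_\mathsf{y}(f^3_N(p))$ involves a rotation about the $z$-axis. That rotation moves points near $\pm(0,1,0)$ by $O(Ns_0)$ along the equator, so $r_\mathsf{y}(f_N^3 p)$ is not comparable to $r_\mathsf{y}(p)$ in general. I will accept $V_\mathsf{y}$ as defined, since Lemma \ref{lemma:localylyap} is stated directly for it. I will only check that $V_\mathsf{y}$ is finite and bounded on $\{r_\mathsf{y}\in[s_0,2s_0]\}$, which holds because $f^3_N$ is a smooth diffeomorphism fixing $F$.

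On $K(r_0)$ the argument is compactness. There $r_\sigma(p)\ge r_0$ for all $\sigma$. Each $V_\sigma$ is bounded by $r_0^{-\alpha}$ (or, for $V_\mathsf{y}$, by a comparable constant) on the support of its cutoff intersected with $K(r_0)$, so $\mathrm{V}$ is bounded on $K(r_0)$. Since $\mathrm{V}\ge1$, the drift inequality there reduces to $\mathbb{E}[\mathrm{V}(f_\omega(p))]\le\beta$.

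Bounding $\sup_{p\in K(r_0)}\mathbb{E}[\mathrm{V}(f_\omega(p))]$ is the main obstacle, because $f_\omega(p)$ can land arbitrarily close to $F$, where $\mathrm{V}$ is unbounded. I would use that $f_\omega$ is a diffeomorphism fixing $F$, uniformly Lipschitz in $\omega\in[-N,N]^2$ together with its inverse. This gives $r_\sigma(f_\omega(p))\ge c\, r_\sigma(p)\ge c\,r_0$ (for $\sigma=\mathsf{y}$, after composing with $f^3_N$), so $\mathrm{V}(f_\omega(p))\le C(r_0,N)$ deterministically. Then $\beta:=C(r_0,N)$ suffices, the $\frac{95}{100}\mathrm{V}(p)$ term being nonnegative. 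For general non-orthogonal axes the same structure applies with $F=\{\pm\c\}$, where only the hyperbolic $F_\sfx$-type analysis is needed.
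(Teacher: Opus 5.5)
Your proposal is correct and follows the paper's proof. On each cap $\mathsf{Q}_\sigma(r_0)$, the one-step invariance of $\mathsf{Q}_\sigma(s_0)$ and the cutoff structure of \eqref{eq:defdriftOPPV} give $\mathrm{V}=V_\sigma$ at both $p$ and $f_\omega(p)$, so Lemmas \ref{lemma:localxlyap}--\ref{lemma:localylyap} apply. On $K(r_0)$ one only needs $\sup_{p,\omega}\mathrm{V}(f_\omega(p))<\infty$, which holds because $f_\omega$ is injective and fixes $F$.

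The paper obtains that last bound from continuity of $(p,\omega)\mapsto\mathrm{V}(f_\omega(p))$ on the compact set $K(r_0)\times[-N,N]^2$, rather than from your explicit bi-Lipschitz estimate. Your aside that $r_\mathsf{y}(f^3_N p)$ is not comparable to $r_\mathsf{y}(p)$ is overstated: the two are comparable with $N$-dependent constants, which is exactly what you use later. Neither point affects the argument.
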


\begin{proof}
If $p\in \mathsf{Q}_\sigma(r_0)\subset \mathsf{Q}_\sigma(s_0)$, for some $\sigma\in \lbrace   \sfx, \mathsf{y}, \sfz \rbrace  $ we then have that $f_{\omega}(p)\in \mathsf{Q}_\sigma(s_0)$ for all $\omega=(\omega_1,\omega_2)\in [-N,N]^2$ so that $\chi(r_{\sigma'})=0$ for all $\sigma'\neq \sigma$ and from Lemma \ref{lemma:localxlyap}, \ref{lemma:localzlyap} and \ref{lemma:localylyap} we have that
\begin{align}
\mathbb E \left[ \mathrm{V}(f_\omega(p)) \right ] = \mathbb E \left[ V_\sigma(f_{\omega}(p)) \right ] \leq \gamma V_\sigma(p) = \gamma \mathrm{V}(p).    
\end{align}  
On the other hand,  for all $p\in K$ a compact set, we observe that $f_\omega(p)\not \in F$, for all values $\omega=(\omega_1,\omega_2)\in [-N,N]^2$ lying in a compact set as well. Hence, we set $\hat\beta = \max_{p\in K, \omega\in[-N,N]^2} \mathrm{V}(f_\omega(p))$ and we conclude that $\mathbb E \left[ \mathrm{V}(f_\omega(p)) \right] \leq \hat\beta+\hat c$. For $\beta:=\hat\beta+\hat c$ the proposition is proved.
\end{proof}

\begin{remark}
    In case the two axes of rotation are non-orthogonal, namely $\a\cdot \b \neq 0$, there are only two fixed points for the one-point process, namely $\x=\frac{\a\times\b}{|\a\times\b|}$ and $\x=-\frac{\a\times\b}{|\a\times\b|}$. These correspond to the $F_{\sfx}$ points already considered, since both $u^\a$ and $u^\b$ generate a (non-orthogonal yet also non-parallel) shearing motion around them, and simple adjustments to the arguments presented in Lemma \ref{lemma:localxlyap} provide the existence of local drift functions around these points and then of a global drift function gluing the local contributions appropriately. We omit the details.
\end{remark}

\section{The projective process}\label{sec:PJP}
To use the LARC arguments introduced in Section \ref{sec:LARC}, we lift the pointwise projective process to the unit tangent bundle $UT(\mathsf{X})$. For an axis $\a\in\Sph$ and its respective velocity field $u^{\a}(\x)=(\a\cdot\x)(\a\times\x)$ we define
\begin{equation}
    \hat{u}_{\a}(\x,v)=\frac{\d}{\d \omega} \hat f^{\a}_\omega(\x,v)|_{\omega=0}=(u^{\a}(\x),V_{\a}(\x,v)),
\end{equation}
where the tangential velocity field is given by 
\begin{equation}\label{eq:vectorprojectiv}
    V_{\a}(\x,v)=D_\x u^{\a}(\x)v-(D_\x u^{\a}(\x)v\cdot v)v
\end{equation}
and the differential is given by
\begin{equation}\label{eq:Du_a}
    D_\x u^{\a}(\x)v=(\a\cdot v)(\a\times\x)+(\a\cdot\x)(\a\times v).
\end{equation}
One can check that $\hat{u}_{\a}$ generates a flow in $UT(\mathsf{X})$ by showing $|v(t)|^2=|v(0)|^2=1$ and that $\x(t)\cdot v(t)=\x(0)\cdot v(0)=0$ for all times. This flow is the one associated to $\hat f^{\a}_\omega(\x,v)$.

To show that there exists an open small set for the projective process, we appeal to the Lie algebra rank condition framework. First, we show there exists $\x_0\in \mathsf{X}$ and $v_0\in UT_{\x_0}\mathsf{X}$ for which the family $\lbrace  \hat u_\a,\hat u_\b\rbrace  $ is bracket generating at $(\x_0,\bv_0)$ and next we define the appropriate flow maps for which we can use Propositions \ref{prop:word-construction} and \ref{prop:LARCPJP}. 

To use the abstract LARC framework, let $u\in \mathfrak{X}(\Sph)$ be divergence-free and let $\phi_t^u$ be the unique volume-preserving flow such that
\begin{align}
\left\lbrace
\begin{aligned}
        \frac{\d}{\d t}\phi_t^u(\x) &= u(\phi_t^u(\x)),  \\
        \phi_0^u(\x) &= \x. 
        \end{aligned}
        \right.
\end{align}
The associated projective process is then given by
\begin{align}
    \hat{\phi}_t^u(\x,v) = \left(\phi_t^u(\x), \frac{D_\x\phi_t^u(\x)v}{ |D_\x\phi_t^u(\x)v|} \right),
\end{align}
for all $\x\in\mathsf{X}$ and all $v\in UT_{\x}\mathsf{X}$. Since both $v\in UT_{\x}\mathsf{X}$ and $\frac{D_\x\phi_t^u(\x)v}{ |D_\x\phi_t^u(\x)v|}\in UT_{\phi_t^u(\x)}\mathsf{X}$ live in different spaces that depend on $x,\phi_t^u(\x)\in \mathsf{X}$, we use a local orthonormal reference frame $E_0$, see Section \ref{sec:DiffGeom}, to fix the domain and codomain of the map $\hat\phi_t^u$. More precisely, let $\x\in U_3$ and let $\cU\subset U_3$ be an open set on $\mathsf{X}$ with $\x\in \cU$. Since $UT(\cU)\cong \cU\times \mathbb{S}^1$, we describe $\hat\phi_t^u$ by its action on  $\cU\times \mathbb{S}^1$. To do so, let $\x\in\mathsf{X}$, let $w\in \mathbb{S}^1$ and $v=E_0(\x)w\in T_{\x}\mathsf{X}$. Since $\frac{D_\x\phi_t^u(\x)v}{ |D_\x\phi_t^u(\x)v|}\in UT_{\phi_t^u(\x)}\mathsf{X}$ and $\phi_t^u(\x)\in \cU$ for $|t|$ small enough, there exists $w_t^u(\x,w)\in \mathbb{S}^1$ such that $\frac{D_\x\phi_t^u(\x)E_0(\x)w}{ |D_\x\phi_t^u(\x)E_0(\x)w|} = E_0(\phi_t^u(\x))w_t^u(\x,w)$, that is,
\begin{align}
    w_t^u(\x,w) = E_0(\phi_t^u(\x))^T \frac{D_\x\phi_t^u(\x)E_0(\x)w}{ |D_\x\phi_t^u(\x)E_0(\x)w|}\in \mathbb{S}^1.
\end{align}
Define the projective flow map  $\widetilde{\hat\phi_t^u}:\cU\times \mathbb{S}^1\rightarrow \cU\times \mathbb{S}^1$ by
\begin{align}
    \widetilde{\hat\phi_t^u}(\x,w) := \left( \phi_t^u(\x), E_0(\phi_t^u(\x))^T \frac{D_\x\phi_t^u(\x)E_0(\x)w}{ |D_\x\phi_t^u(\x)E_0(\x)w|} \right)
\end{align}
together with its associated projective velocity field
\begin{align}
    \widetilde{\hat u}(\x,w):= \frac{\d}{\d t}\Big|_{t=0}  \widetilde{\hat\phi_t^u}(\x,w) = \left( u(\x), \mathsf{w}_{u}(\x,w) \right), 
\end{align}
where, for $\x\in \mathsf{X}$ and $w\in \mathbb{S}^1\subset\R^2$, we define $\mathsf{w}_u(\x,w)\in T_{w}\mathbb{S}^1\subset\R^2$ by
\begin{align}
    \mathsf{w}_u(\x,w)= E_0(\x)^T V_{u}(\x,E_0(\x)w) - E_0(\x)^T (\nabla_u E_0(\x))w,
\end{align}
with
\begin{align}
    V_u(\x,v) := \nabla_v u(\x) - (v\cdot \nabla_v u (\x)) v
\end{align}
for $v=E_0(\x)w$. Since $v\in T\mathsf{X}$, we have that $v\cdot \nabla_v u(\x) = v(\x) \cdot D_\x u(\x) v(\x)$. Together with Lemma \ref{lemma:framecovariantD} and $E_0(\x)^TE_0(\x) = I_2$, it follows that
\begin{align}\label{eq:formulawu}
    \mathsf{w}_u(\x,w) = E_0(\x)^T D_\x u(\x) E_0(\x)w -  \left(v(\x) \cdot D_\x u(\x) v(\x) \right) w - E_0(\x)^T(\nabla_u E_0(\x))w.
\end{align}
The next result shows that there exists $\x_0\in \mathsf{X}$ and $v_0\in UT_{\x_0}\mathsf{X}$ (equivalently $w_0\in \mathbb{S}^1$) for which the family $\lbrace  \widetilde{\hat u}_\a,\widetilde{\hat u}_\b\rbrace  $ is bracket generating at $(\x_0,v_0)$.

\begin{proposition}\label{prop:LARCPJP}
Assume $\a=(0,a_2,a_3)\in \Sph$ with $a_2\neq 0$ and $\b=(0,0,1)$. Let $\x_0=(\frac{1}{\sqrt{2}},\frac{1}{\sqrt{2}},0)\in \mathsf{X}$ and  $v_0=(0,0,1)$.  Then the family $\lbrace  \widetilde{\hat u}_\a,\widetilde{\hat u}_\b\rbrace  $ is bracket generating at $(\x_0,v_0)$.
More precisely, 
\begin{align}
    \hat X_1:=\widetilde{\hat u}_\a,\quad 
\hat X_2:=\widetilde{\hat u}_\b,\quad
\hat X_3:=[\hat X_2, \hat X_1]
\end{align}
are linearly independent vectors at $(\x_0,v_0)$.
\end{proposition}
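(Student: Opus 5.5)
The plan is a direct computation in the trivialization $UT(\cU)\cong \cU\times\mathbb{S}^1$ given by the frame $E_0$ of \eqref{eq:defframeA}. This is a $3$-dimensional manifold, so it suffices to show that the $3\times 3$ matrix whose columns are the coordinates of $\hat X_1,\hat X_2,\hat X_3$ at $(\x_0,w_0)$ has non-zero determinant. We use the coordinates $(e_1\text{-component},\,e_2\text{-component},\,\text{fiber})$.

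By Lemma~\ref{lemma:covariantframe}, $e_1(\x_0)=v_0$, so $w_0=(1,0)$. Moreover $\nabla_u E_0(\x_0)=0$, so the last term in \eqref{eq:formulawu} vanishes at $\x_0$. The key structural observation is that the base projection $\cU\times\mathbb{S}^1\to\cU$ intertwines $\widetilde{\hat u}$ with $u$. Hence the base component of $[\hat X_2,\hat X_1]$ equals $[u^\b,u^\a](\x_0)$, and we can read it off from the bracket formula in the proof of Lemma~\ref{lemma:onepointbracket}. This lets us avoid computing any fiber derivatives except one entry, which turns out not to matter.

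The steps are as follows.

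\textbf{Step 1.} Since $\b\cdot\x_0=0$, we get $u^\b(\x_0)=0$. Thus $\hat X_2(\x_0,w_0)$ is purely vertical. By \eqref{eq:Du_a},
\begin{align}
D_\x u^\b(\x_0)v_0=(\b\cdot v_0)(\b\times\x_0)=\left(-\tfrac1{\sqrt2},\tfrac1{\sqrt2},0\right)=-e_2(\x_0).
\end{align}
Also $v_0\cdot D_\x u^\b(\x_0) v_0=0$. So \eqref{eq:formulawu} gives $\mathsf w_{u^\b}(\x_0,w_0)=(0,-1)$, a non-zero tangent vector to $\mathbb{S}^1$ at $w_0$. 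Therefore $\hat X_2=(0,0,-1)$ in our coordinates.

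\textbf{Step 2.} Compute the base part of $\hat X_1$. We have $\a\cdot\x_0=a_2/\sqrt2$ and $\a\times\x_0=\tfrac{1}{\sqrt2}(-a_3,a_3,-a_2)$. Hence
\begin{align}
u^\a(\x_0)=-\tfrac{a_2^2}{2}\,e_1(\x_0)-\tfrac{a_2a_3}{\sqrt2}\,e_2(\x_0).
\end{align}
Its $e_1$-component $-a_2^2/2$ is non-zero because $a_2\neq0$. The fiber component of $\hat X_1$ is not needed.

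\textbf{Step 3.} Compute the base part of $\hat X_3$. Apply the bracket formula of Lemma~\ref{lemma:onepointbracket} with the unnormalized $\c=\a\times\b=(a_2,0,0)$, which is what that formula uses. Since $\b\cdot\x_0=0$,
\begin{align}
[u^\a,u^\b](\x_0)=(\c\cdot\x_0)(\a\cdot\x_0)(\b\times\x_0)=-\tfrac{a_2^2}{2}\,e_2(\x_0).
\end{align}
So the base part of $\hat X_3$ is $+\tfrac{a_2^2}{2}e_2(\x_0)$, which has zero $e_1$-component and non-zero $e_2$-component. The fiber entry of $\hat X_3$ will not matter.

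\textbf{Step 4.} Write the coordinates as
\begin{align}
\hat X_1=\left(-\tfrac{a_2^2}{2},\,*,\,*\right),\qquad \hat X_2=(0,0,-1),\qquad \hat X_3=\left(0,\,\tfrac{a_2^2}{2},\,*\right).
\end{align}
Expanding the determinant along the first coordinate gives
\begin{align}
\left(-\tfrac{a_2^2}{2}\right)\cdot\det\begin{pmatrix}0 & -1\\ \tfrac{a_2^2}{2} & *\end{pmatrix}=-\tfrac{a_2^4}{4}\neq 0.
\end{align}
Therefore $\hat X_1,\hat X_2,\hat X_3$ are linearly independent at $(\x_0,v_0)$, and the LARC holds there.

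The main obstacle I expect is justifying Step 3 rigorously in the trivialized coordinates. Two points need care. First, the bracket of the lifted fields on $\cU\times\mathbb{S}^1$ must project to the bracket of the base fields; this holds because the lifts are $\pi$-related to $u^\a$ and $u^\b$. Second, the ambient-$\R^3$ bracket formula from Lemma~\ref{lemma:onepointbracket} must agree with the intrinsic bracket, which was noted in Section~\ref{sec:DiffGeom}. Signs and the normalization of $\c$ must also be tracked, but the $\ast$-entries never enter the determinant, so no fiber derivatives need to be computed.
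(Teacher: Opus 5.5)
Your proposal is correct and follows essentially the paper's route: evaluate at $\x_0$ where $u^\b$ vanishes, use the non-zero $e_1$-component of $\hat X_1$, the purely vertical $\hat X_2=(0,-1)$, and the base part $[u^\b,u^\a](\x_0)$ of $\hat X_3$, and leave the fiber entries undetermined. You differ from the paper only in taking that bracket from the formula in Lemma~\ref{lemma:onepointbracket} rather than computing $-D_\x u^\b(\x_0)u^\a(\x_0)$ directly, and in spelling out the $\pi$-relatedness of the lifts. One caveat: that formula in Lemma~\ref{lemma:onepointbracket} carries the opposite sign to the convention $[v_1,v_2]=\nabla_{v_1}v_2-\nabla_{v_2}v_1$ fixed in Section~\ref{sec:DiffGeom}; the direct computation gives $[u^\b,u^\a](\x_0)=-\tfrac{a_2^2}{2}e_2(\x_0)$, not $+\tfrac{a_2^2}{2}e_2(\x_0)$. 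Since only the non-vanishing of this $e_2$-component enters your determinant, the conclusion is unaffected.
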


\begin{proof}
We recall that $E_0(\x_0) = (e_1, e_2)$, with $e_1=e_1(\x_0) = (0,0,1)$ and $e_2=e_2(\x_0) = \left( \frac{1}{\sqrt{2}}, - \frac{1}{\sqrt{2}}, 0 \right)$. Then, $v_0 = e_1= E_0(\x_0)w_0$, for $w_0=(1,0)\in \mathbb{S}^1\subset\R^2$ and $(\nabla_u E_0)(\x_0)=0$ for all $u\in T\mathsf{X}$ due to Lemma \ref{lemma:covariantframe}. Next, from \eqref{eq:formulawu} we note that 
\begin{align}
    u^{\a}(\x_0)=\frac{a_2}{2}(-a_3,a_3,-a_2) = -\frac{a_2^2}{2}e_1 - a_2a_3\frac{\sqrt{2}}{2} e_2 \in T_{\x_0}\Sph
\end{align}
and
\begin{align}
    \mathsf{w}_{u_\a}(\x_0,w_0) = (0,\frac{a_2^2-2a_3^2}{2})
    \in T_{w_0}\mathbb{S}^1 \subset \R^2
\end{align}
while
\begin{align}
u^\b(\x_0) = 0, \quad \mathsf{w}_{u_\b}(\x_0,w_0)=(0,-1)\in T_{w_0}\mathbb{S}^1\subset\R^2.
\end{align}
Identifying $T_{w_0}\mathbb{S}^1 = \lbrace   0 \rbrace  \times \R \cong \R$, we have
\begin{align}
    \hat X_1 = \left( -\frac{a_2^2}{2}e_1 - a_2a_3\frac{\sqrt{2}}{2}e_2, \frac{a_2^2-2a_3^2}{2} \right), \quad \hat X_2 = \left( 0, -1 \right)
\end{align}
Next, we compute $\hat X_3 =[\hat X_2,\hat X_1]$. In fact, for our purposes it is enough to compute $[u^\b, u^{\a}](\x_0)$. Since  $u^\b(\x_0)=0$, we have
        \begin{equation}
            [u^\b, u^{\a}](\x_0)=-D_\x u^\b(\x_0)u^{\a}(\x_0)=\frac{1}{2\sqrt{2}}(-a_2^2,a_2^2,0) = -\frac{a_2^2}{2} e_2
        \end{equation}
and we write $\hat X_3 = (-\frac{a_2^2}{2\sqrt{2}} e_2, w_*)$, for some $w_*\in \R$. To see that $\lbrace   \hat X_1, \hat X_2, \hat X_3 \rbrace  $ are linearly independent, assume there exist $c_1,c_2,c_3\in \R$ such that $c_1\hat X_1 + c_2\hat X_2 + c_3 \hat X_3 = 0$. Since $\hat X_1$ is the only vector with non-trivial $e_1$ component (recall $a_2\neq 0$), we deduce that $c_1=0$. Next, since $u^\b(\x_0) = 0$ we must have $c_3 = 0$ since $\hat X_3$ has non-trivial $e_2$ component. Finally, we obtain $c_2 = 0$ as $\hat X_2$ has non-zero tangential vector at $w_0\in \mathbb{S}^1$. Hence, $\lbrace  \widetilde{\hat u}_{\a}, \widetilde{\hat u}_{\b} \rbrace  $ is bracket generating at $(\x_0,v_0)$ and the proof is concluded. 
\end{proof}

Since $\cU\times \mathbb{S}^1\subset \Sph\times \mathbb{S}^1\subset \R^3\times \R^2$ once we identify $\mathbb{S}^1$ as the unit circle of $\R^2$ centred at the origin, for $U_0\subset\R^3$, with $\cU\subset U_0$ and $U_0\cap \lbrace   \lambda \mathbf{e}_3 : \lambda\in \R \rbrace = \emptyset$, we define the extension  $\hat v_u:U_0\times\R^2\subset \R^3\times \R^2 \rightarrow \R^3\times \R^2$ of $\widetilde{\hat u}$ by
\begin{align}
    \hat v_u(\y,w) = (u(\y), \mathsf{w}_u(\y,w))
\end{align}
for $\y\in U_0$ and $w\in \R^2$, where now $\Pi_\y  = Id - \frac{\y \y^T}{\Vert \y \Vert^2}$ and the covariant derivative $\nabla$ is extended accordingly. Clearly, this extension is such that $\hat v_u(\x,w) = \widetilde{\hat u}(\x,w)$, for all $\x\in \Sph$ and all $w\in \mathbb{S}^1$ and thus if $\hat\Phi_t^u:U_0\times \R^2 \rightarrow U_0\times \R^2$ denotes the flow map defined by $\hat v_u$, namely 
\begin{align}
    \begin{cases}
        \frac{\d}{\d t}\hat \Phi_t^u(\y,w) = \hat v_u(\hat \Phi_t^u(\y,w)), & \\
        \hat\Phi_0^u(\y,w) = (\y,w) &
    \end{cases}
\end{align}
then $\hat\Phi_t^u(\x,w)= \widetilde{\hat\phi_t^u}(\x,w)$, for all $\x\in \Sph$, $w\in\mathbb{S}^1$ and all $t\in \R$ for which $\phi_t^u(\x)\in \cU$. 

\begin{proposition}
    The projective process admits an open small set.
\end{proposition}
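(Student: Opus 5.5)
The proof combines Lemma~\ref{lemma:blackboxsmallset} with the Lie bracket computation of Proposition~\ref{prop:LARCPJP} and the word construction of Proposition~\ref{prop:word-construction}.

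\emph{Step 1: local coordinates.} I would work in the chart $\cU\times\mathbb{S}^1$ around $(\x_0,w_0)$, with $\x_0=(\tfrac1{\sqrt2},\tfrac1{\sqrt2},0)$ and $w_0=(1,0)$, in the reduced setting $\b=\mathbf{e}_3$, $\a=(0,a_2,a_3)$. This point lies in $\mathsf{Y}$, since $\x_0\neq\pm\c$ and, in the orthogonal case, also $\x_0\neq\pm\a,\pm\b$. Because $\hat\pi$ is Lebesgue measure on $\mathsf{Y}$, the point is in its support. Via the trivialization $E_0$ and the extension $\hat v_u$, the elementary projective maps $\hat f^{\a}_{\omega}$ and $\hat f^{\b}_{\omega}$ become, for small $|\omega|$, the time-$\omega$ flows of the smooth fields $\hat X_1=\widetilde{\hat u}_\a$ and $\hat X_2=\widetilde{\hat u}_\b$ on the three-dimensional manifold $\cU\times\mathbb{S}^1$. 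The elementary step $\hat f_\omega=\hat f^{\a}_{\omega_2}\circ\hat f^{\b}_{\omega_1}$ then has the block form $\Phi_\omega=\Phi^2_{\omega_2}\circ\Phi^1_{\omega_1}$ of \eqref{eq:elementary-flow-block}, with $V_1=\hat X_2$ acting first and $V_2=\hat X_1$ acting second. The word maps $\Psi^q$ coincide with the endpoint map $\Psi_{(\x_0,w_0)}$ of \eqref{eq:defPsi} for the projective RDS with $n=q$.

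\emph{Step 2: submersion.} By Proposition~\ref{prop:LARCPJP}, $\hat X_1$, $\hat X_2$ and $[\hat X_2,\hat X_1]$ are linearly independent at $(\x_0,w_0)$, so the LARC holds there with $r=d=3$. Corollary~\ref{coroll:LARC-to-submersion}, via Proposition~\ref{prop:word-construction}, then gives a word $\varpi_*$ with arbitrarily small entries such that $\Psi^q_{(\x_0,w_0)}$ is a submersion at $\varpi_*$. Concretely, I would use the single-flow words $h_i(a)$ for $\hat X_1$ and $\hat X_2$, and the four-step commutator loop $\varpi_{2,1}(a)$ for the bracket. Each entry of each block is either $0$ or $\pm a$, so every block lies in $[-N,N]^2$.

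\emph{Step 3: density condition.} By Remark~\ref{rem:negative-small-times}, all entries of $\varpi_*$ lie in $(-N,N)$, so $\varpi_*$ is an interior point of $\supp(\PP_0^q)=[-N,N]^{2q}$. The uniform density $(2N)^{-2q}$ is therefore bounded below on a neighbourhood of $\varpi_*$. I would also check assumption (A1): the map $(\omega,(\x,v))\mapsto\hat f_\omega(\x,v)$ is smooth, because the flows are explicit rotations by \eqref{eq:flowmapfa} and $D_\x f_\omega$ is invertible. Lemma~\ref{lemma:blackboxsmallset} then produces an open $\hat P^q$-small set.

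\emph{Main obstacle.} The main difficulty is justifying that the chart computation transfers to the actual process. Three points need care. First, the whole orbit along the word must stay inside the chart $\cU\subset\mathcal U_3$; this holds for small $a$ because the flows are continuous and $\x_0$ is away from $\pm\mathbf{e}_3$. Second, the extended flow $\hat\Phi^u_t$ must agree with $\widetilde{\hat\phi^u_t}$ on $\Sph\times\mathbb{S}^1$; this is stated just before the proposition. Third, rank must be measured intrinsically in the three-dimensional tangent space, as in Remark~\ref{rem:LARC-local-conventions}(ii), and not in $\R^5$. Since $\hat X_1$, $\hat X_2$ and the bracket are all tangent to $\Sph\times\mathbb{S}^1$, the rank-$3$ conclusion of Proposition~\ref{prop:word-construction} is a rank statement in $T(\cU\times\mathbb{S}^1)$, which is exactly the submersion property required.
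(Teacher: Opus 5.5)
Your proposal is correct and follows essentially the same route as the paper: the LARC at $(\x_0,w_0)$ from Proposition~\ref{prop:LARCPJP}, a word of rank $3$ from Proposition~\ref{prop:word-construction} (the paper applies it directly rather than going through Corollary~\ref{coroll:LARC-to-submersion}), the fact that for small parameters the orbit stays in $\cU\times\mathbb{S}^1$ so the rank is measured in the three-dimensional tangent space, and then Lemma~\ref{lemma:blackboxsmallset}. Your explicit checks of the density lower bound, of (A1), and that $(\x_0,w_0)\in\supp(\hat\pi)$ are details the paper leaves implicit.
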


\begin{proof}
Let $\cV = \lbrace   \hat{v}_{u_\a}, \hat v_{u_\b} \rbrace  $ and $\hat\Phi_t^{ u_\a}$ and $\hat\Phi_t^{u_\b}$ be their respective flow maps in $U_0\times\R^2$. The vector fields $\hat{v}_{u_\a}, \hat v_{u_\b}, [\hat{v}_{u_\a}, \hat v_{u_\b} ]  $ are linearly independent  at $\x_0=(\frac{1}{\sqrt{2}},\frac{1}{\sqrt{2}},0)$ and  $v_0=(0,0,1)$, corresponding to $w_0=(1,0)\in \mathbb{S}^1$, due to Proposition \ref{prop:LARCPJP}. Hence, thanks to Proposition \ref{prop:word-construction} there exist words $\varpi_i$, for $i=1,2,3$ associated to each $\hat{v}_{u_\a}, \hat v_{u_\b}$, and $[ \hat{v}_{u_\a}, \hat v_{u_\b} ]$, and there exists $\ep_0>0$ such that $D_\varpi \hat\Phi_{\varpi_*}(\x_0,w_0)$ has at least rank 3, for $\varpi_* = (\varpi_1(a_1^*), \varpi_2(a_2^*), \varpi_3(a_3^*))\in [-\ep_0,\ep_0]^{d_0}$, for some $d_0>1$ and all $a_*=(a_1^*,a_2^*,a_3^*)\in \R^3$ with $0< \Vert a_* \Vert < \ep_0$.

On the other hand, for fixed $\x_0=(\frac{1}{\sqrt{2}},\frac{1}{\sqrt{2}},0)$ and  $v_0=(0,0,1)$, namely $w_0=(1,0)\in \mathbb{S}^1$, we have that $\hat\Phi_t^{ u}(\x_0,w_0)\in \cU\times \mathbb{S}^1$ with $\cU\subset\Sph$, for $t\in \R$ small enough and thus  
\begin{align}
    D_\varpi \hat\Phi_{\varpi_*}^{u} (\x_0, w_0):\R^{d_0} \rightarrow T_{\hat\Phi_{\varpi_*}^{u} (\x_0, w_0)}(\cU\times \mathbb{S}^1)\cong \R^2\times\R
\end{align}
has at least rank 3 due to Proposition \ref{prop:word-construction}. Hence, $\hat\Phi_{\varpi}^{u} (\x_0, w_0)$ is a submersion at $\varpi = \varpi_*$ and with Lemma \ref{lemma:blackboxsmallset} we conclude that the projective process admits an open small set.
\end{proof}

To deduce the irreducibility of the projective process we inspect the behaviour of the projective flow maps, and in particular how the differential maps act on the tangent bundle. Reducing the dynamics to a suitable point on the sphere, an interesting ``windshield wiper'' behaviour arises: alternating the differentials of the flow maps for specific times moves vectors around the chosen base point and spans its whole tangent space. We carefully rely on this intrinsic behaviour and show that Lemma \ref{lemma:blackboxirreducibility} can be directly applied to prove topological irreducibility of the projective process. 
\begin{proposition}\label{prop:topirredPJP}
    The projective process is topologically irreducible.
\end{proposition}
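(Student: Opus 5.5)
To prove Proposition~\ref{prop:topirredPJP} we will use Lemma~\ref{lemma:blackboxirreducibility}. That is, for every $(\x,v)\in\mathsf{Y}$ and every open set $\hat U\subset\mathsf{Y}$ we exhibit a finite word $\underline{\omega}_*^n$ with $\hat f^n_{\underline{\omega}_*^n}(\x,v)\in\hat U$. The uniform density on $[-N,N]^2$ then gives condition (1) automatically, as long as the amplitudes stay in the interior of the box. We split the problem into a base part and a fiber part, and route both through the reference point $\x_0=(\frac1{\sqrt2},\frac1{\sqrt2},0)$ of Proposition~\ref{prop:LARCPJP}, working in the normalized configuration $\b=(0,0,1)$, $\a=(0,a_2,a_3)$.

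\emph{Step 1 (base).} By Proposition~\ref{prop:smallsetOPP}, the one-point process is topologically irreducible. Using Theorem~\ref{thm:RashevskyChow} with Lemma~\ref{lemma:onepointbracket}, we can steer any $\x\in\mathsf{X}$ exactly onto $\x_0$, not merely near it. This works because the orbit of $\x$ under $\{u^\a,u^\b\}$ is all of the connected bracket-generating set, after a preliminary move off the exceptional points $\x^\pm_\a,\x^\pm_\b$. The lift then carries $(\x,v)$ to $(\x_0,v')$ for some unit $v'\in T_{\x_0}\Sph$. Conversely, given a target $(\y,w)\in\hat U$, we pick a word taking $\x_0$ to $\y$. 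Its differential sends a small arc of directions at $\x_0$ diffeomorphically onto directions at $\y$, so it suffices to reach, from $(\x_0,v')$, a prescribed direction $v''$ at $\x_0$ up to small error. The target direction $v''$ is the pullback of $w$ by that word.

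\emph{Step 2 (fiber, the ``windshield wiper'').} At $\x_0$ we have $\b\cdot\x_0=0$, so $u^\b(\x_0)=0$. Hence $f^\b_{\omega}$ fixes $\x_0$, and its differential there acts on $T_{\x_0}\Sph$ as a shear. In the frame $E_0(\x_0)$ it is unipotent with fixed direction $e_2$ and shear rate proportional to $\omega$: vectors with $e_1$-component are pushed in $\pm e_2$. Since $\omega\in[-N,N]$ can be chosen freely, the induced projective map sweeps every direction except $\pm e_2$ to an arbitrary direction in the complementary open arcs. To handle the fixed direction $\pm e_2$, we first apply a short loop $f^\a_{t}$, then return to $\x_0$ through a short commutator-type word. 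This composes with a differential that is not a pure $e_2$-shear, for instance a rotation by a nonzero angle, which moves $e_2$ off itself. Concretely, we will show that the set of projective maps $\{\hat D_{\x_0}\Psi\}$ over loops $\Psi$ based at $\x_0$ contains two shears with distinct fixed directions. Two such unipotent maps generate a subgroup acting transitively on $\mathbb{P}^1$, and orientation is handled using $\omega\mapsto-\omega$ or a half-turn.

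\emph{Main obstacle.} The delicate point is the existence of the second loop with a different fixed direction, all of whose amplitudes lie in $[-N,N]$. Our first attempt will be to use the loop from Proposition~\ref{prop:LARCPJP}: the bracket $\hat X_3$ has nontrivial fiber component $w_*$ together with base displacement $-\frac{a_2^2}{2}e_2$. We will correct the base displacement using $\hat X_1$ and the base-surjectivity from Step 1. Alternatively, we will use the fact that $\mathsf{w}_{u_\a}(\x_0,w_0)\neq\mathsf{w}_{u_\b}(\x_0,w_0)$ for directions near $e_1$ to produce differential rotation. If $a_2^2=2a_3^2$ makes this degenerate, we will move to a nearby base point on the circle $\b\cdot\x=0$, which is also fixed by $f^\b$.
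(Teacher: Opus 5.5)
Your architecture matches the paper's: steer the base point exactly onto $\x_0$, then use loops based at $\x_0$, where $f^\b_\omega$ fixes $\x_0$ and acts on $T_{\x_0}\Sph$ by the shear $\mathrm{g}_\b(\omega)$ with fixed direction $e_2$. However, the decisive step is missing. You need a second loop at $\x_0$ whose differential does not fix $e_2$. You flag this as the main obstacle and list three tentative routes, but none is carried out, and route (a) does not give what you need. Proposition~\ref{prop:LARCPJP} gives the LARC only at the single point $(\x_0,e_1)$. Via the orbit theorem, this makes the orbit of $(\x_0,e_1)$ open, so it reaches directions near $e_1$, which $\mathrm{g}_\b$ already provides. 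It says nothing about whether $(\x_0,\pm e_2)$ or the opposite half-circle lie in that orbit. Moreover, "correcting the base displacement of the commutator loop with $\hat X_1$" adds an uncontrolled fiber contribution, so nothing follows about the resulting differential. The orientation issue belongs to the same gap. Since $\mathrm{g}_\b(\omega)(w_1,w_2)=(w_1,w_2-\omega w_1)$ preserves the sign of the $e_1$-component, $f^\b$ never moves a unit vector into the other open half-circle, and $\omega\mapsto-\omega$ does not change this. By contrast, the amplitude constraint is not a real obstacle: taking $\omega_1=0$ and iterating realizes $f^\a_t$ for every $t\in\R$ with amplitudes in the interior of $[-N,N]$.

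The missing idea, which the paper uses, is elementary. At $\x_0$, $f^\a_\omega$ is a rotation about $\a$ by the angle $\omega\,\a\cdot\x_0=\omega a_2/\sqrt2$. Hence for $\omega=\omega_\a(k)=2\sqrt2\pi k/a_2$ it is itself a loop at $\x_0$. Its differential there is $v\mapsto v+\omega(\a\cdot v)(\a\times\x_0)$. This is a shear whose fixed direction $\a\times\x_0$ has $e_1$-component $-a_2/\sqrt2\neq0$, so it is different from $e_2$; in the frame $E_0(\x_0)$ it is the matrix $\mathrm{g}_\a(\omega_\a(k))$.

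One then concludes as follows. If $w$ lies on $\R(\a\times\x_0)$, first apply $\mathrm{g}_\b(\omega_\b)$ with $\omega_\b\neq0$ to move it off that line. For $w$ off that line, the $e_1$-component of $\mathrm{g}_\a(\omega_\a(k))w$ is $w_1+\omega_\a(k)\,a_2\bigl(\tfrac{a_2}{2}w_2-\tfrac{a_3}{\sqrt2}w_1\bigr)$. This can be made positive by choosing the sign and size of $k$. Finally, $\mathrm{g}_\b\bigl(w^{(1)}_2/w^{(1)}_1\bigr)$ sends the vector exactly onto the ray of $e_1$.

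Alternatively, your LARC idea can be repaired by using Proposition~\ref{prop:LARCtoplyap} instead of Proposition~\ref{prop:LARCPJP}. The LARC at $(\x_0,\mathrm{Id})$ on the $5$-dimensional frame bundle makes the orbit of $(\x_0,\mathrm{Id})$ open. This yields loops at $\x_0$ whose differentials fill a neighbourhood of the identity in $SL(2,\R)$, for instance small rotations.
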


\begin{proof}
We shall show that the projective process is in fact exactly controllable, that is, for all $(\x,v_0), (\y,v_1)\in \mathsf{Y}=UT(\mathsf{X})$ there exist $n\geq 1$ and a sequence of random amplitudes $\underline{\omega}^n$ such that $f_{\underline{\omega}^n}(\x,v_0) = (\y, v_1)$, to then apply Lemma \ref{lemma:blackboxirreducibility}. Upon noting that $(\hat f_\omega^\a)^{-1} = \hat f_{-\omega}^\a$ and $(\hat f_\omega^\b)^{-1} = \hat f_{-\omega}^\b$ for all $\omega\in \R$, it is enough to map both $(\x,v_0)$ and $(\y,v_1)$ to some common point $(\x_0, v)\in \mathsf{Y}$. For convenience, we choose $\x_0=\frac{1}{\sqrt{2}}(1,1,0)$ and $v=(0,0,1)\in UT_{\x_0}\mathsf{X}$ and we assume without loss of generality that we already have $\x=\x_0$ and $v_0\in UT_{\x_0}\mathsf{X}$. The proof is concluded if we can find $\underline{\omega}$ such that $\hat f_{\underline{\omega}}(\x_0,v_0) = (\x_0,v)$. 

First, we shall only consider $\omega_\a$ and $\omega_\b$ such that $\x_0 = f_{\omega_\a}^\a(\x_0) = f_{\omega_\b}^\b(\x_0)$. Since $u^\b(\x_0)=0$ we note that $f_{\omega_\b}^\b(\x_0) = \x_0$, for all $\omega_\b\in \R$. Similarly, $f_{\omega_\a}^\a(\x_0) = \x_0$ if $\theta(\x_0)\omega_\a = 2\pi k$, for some $k\in \Z$, where we recall that $\theta(\x_0) = \a\cdot\x_0 = \frac{a_2}{\sqrt{2}}$. Hence, we have  $f_{\omega_\a}^\a(\x_0) = \x_0$ for all $\omega_\a(k)= \frac{2\sqrt{2}\pi}{a_2}k$, with $k\in \Z$. Restricted to such $\omega_\a$, in the orthonormal reference frame $E_0(\x_0)$ given by
    \begin{align}
        e_1(\x_0)=(0,0,1), \quad  e_2(\x_0) =  \left(\frac1{\sqrt2},-\frac1{\sqrt2},0\right)
    \end{align}
the differentials $D_\x f^\b_{\omega_\b}(\x_0)$ and $D_\x f^{\a}_{\omega_\a}(\x_0)$ are given by the $SL(2,\R)$ matrices
\begin{align}
    \mathrm{g}_\b(\omega_\b) = \begin{pmatrix}
        1 & 0 \\
        -\omega_\b & 1
    \end{pmatrix}, \quad \mathrm{g}_\a(\omega_\a) = \begin{pmatrix}
        1 - \frac{\omega_\a a_2 a_3}{\sqrt{2}} & \frac{a_2^2\omega_\a}{2} \\
        -\a_3^2\omega_\a & 1 + \frac{\omega_\a a_2 a_3}{\sqrt{2}}
    \end{pmatrix}.
\end{align}
By definition, they are such that 
\begin{align}
    D_\x f_{\omega_{\bfe}}^{\bfe}(\x_0) v = E_0(\x_0) \mathrm{g}_\bfe(\omega_\bfe) w
\end{align}
for $v=E_0(\x_0)w\in T_{\x_0}\mathsf{X}$ with $w\in \R^2$ and $\bfe\in \lbrace   \a, \b \rbrace  $. Now, in view of the orthonormal basis $E_0(\x_0)$, showing that for all non-zero $v\in T_{\x_0}\mathsf{X}$ there exists $\omega_\a(k),  \omega_\b\in \R$ such that $D_\x f_{\omega_\b}^\b(\x_0) D_\x f_{\omega_\a(k)}^\a(\x_0) v = \lambda e_1(\x_0)$ for some $\lambda>0$ is equivalent to showing that for all non-zero $w=(w_1,w_2)\in \R^2$ there exists $\omega_\a(k), \omega_\b\in \R$ such that $\mathrm{g}_\b(\omega_\b)\mathrm{g}_\a(\omega_\a(k))w = \lambda(1,0)$, for some $\lambda>0$. 

To achieve this controllability in $\R^2$, let $w\in \R^2\setminus\lbrace   0 \rbrace  $, $w^{(1)}= \mathrm{g}_\a(\omega_\a(k))w$ and observe that
\begin{align}
    w^{(1)} = \begin{pmatrix}
        \left( 1 - \frac{\omega_\a a_2 a_3}{\sqrt{2}} \right) w_1 +\frac{a_2^2}{{2}}\omega_\a(k)w_2 \\
        -w_1 a_3^2\omega_\a(k) + \left( 1 + \frac{\omega_\a a_2 a_3}{\sqrt{2}} \right) w_2
    \end{pmatrix}.
\end{align}
Note that if $\frac{a_2}{2}w_2 - \frac{a_3}{\sqrt{2}} w_1= 0$ then $w^{(1)}=w$ and additionally $w_1\neq 0$ since otherwise $w_2=0$ as $a_2\neq 0$. Moreover, for any $\omega_\b\neq 0$ we would have
\begin{align}
    w^{(2)} = \mathrm{g}_\b(\omega_\b) w^{(1)} =\mathrm{g}_\b(\omega_\b) w = \begin{pmatrix}
        w_1 \\
        w_2 - w_1\omega_\b
    \end{pmatrix}
\end{align}
and, in particular $\frac{a_2}{2}w_2^{(2)} - \frac{a_3}{\sqrt{2}} w_1^{(2)} = -\omega_\b\frac{a_2}{2}w_1 \neq 0$. As a result, we may assume without loss that $\frac{a_2}{2}w_2 - \frac{a_3}{\sqrt{2}} w_1\neq 0$. Then, since $a_2\neq0$, it is easy to see that for $|\omega_\a(k)|$ large enough, that is $|k|\in \N$ large enough,  we have $w^{(1)}_1 >0$. As a result,
\begin{align}
    \mathrm{g}_\b(\omega_\b) w^{(1)} = \begin{pmatrix}
        w^{(1)}_1 \\
        w^{(1)}_2 - \omega_\b w^{(1)}_1
    \end{pmatrix} = w^{(1)}_1 \begin{pmatrix}
        1 \\ 0
    \end{pmatrix}
\end{align}
for $\omega_\b = \frac{w^{(1)}_2}{w^{(1)}_1}$. Since we can send any non-zero $w\in \R^2$ to the ray generated by $(1,0)$, we deduce that the map $\mathrm{g}_\b(\omega_\b)\mathrm{g}_\a(\omega_\a(k))$ is exactly controllable in $\R^2\setminus \lbrace   0 \rbrace  $. Consequently, $\hat f_{\underline{\omega}}(\x_0,v)$ is also exactly controllable as a map from $\lbrace   \x_0 \rbrace  \times UT_{\x_0}(\mathsf{X})$ to itself for $\omega_\a=\omega_\a(k)$ as the normalizing factor in the projective process does not influence the direction of the final vector but only ensures it has unit norm.
\end{proof}
 
We finish this section recording that the projective process admits a functional satisfying the Lyapunov-drift condition.

\begin{proposition}
    The function $\widehat{\mathrm{V}}:UT(\mathsf{X})\rightarrow [1,\infty)$ given by $\widehat{\mathrm{V}}({p},v):=\mathrm{V}({p})$ as defined in \eqref{eq:defdriftOPPV} for all $({p},v)\in UT(\mathsf{X})$ satisfies the drift condition for the projective chain.
\end{proposition}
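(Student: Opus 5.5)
The drift condition for $\widehat{\mathrm{V}}$ follows from the one for $\mathrm{V}$, because the projective chain sits over the one-point chain. The first coordinate of $\hat f_\omega(p,v)$ is exactly $f_\omega(p)$, and $\widehat{\mathrm{V}}(p,v)=\mathrm{V}(p)$ does not depend on $v$. Hence for every $(p,v)\in UT(\mathsf{X})$ we have
\begin{align}
    \hat P\widehat{\mathrm{V}}(p,v)=\mathbb{E}\left[\widehat{\mathrm{V}}(\hat f_\omega(p,v))\right]=\mathbb{E}\left[\mathrm{V}(f_\omega(p))\right]=P\mathrm{V}(p).
\end{align}
Proposition~\ref{prop:lyapdriftopp} then gives
\begin{align}
    \hat P\widehat{\mathrm{V}}(p,v)\leq \tfrac{95}{100}\,\widehat{\mathrm{V}}(p,v)+\beta\,\mathbf{1}_{K(r_0)}(p)=\tfrac{95}{100}\,\widehat{\mathrm{V}}(p,v)+\beta\,\mathbf{1}_{\widehat K}(p,v),
\end{align}
where $\widehat K:=\{(p,v)\in UT(\mathsf{X}) : p\in K(r_0)\}$.

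The remaining points are routine checks.

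\textbf{Compactness of $\widehat K$.} The set $K(r_0)$ is a closed subset of $\Sph$ that avoids the open caps around the fixed points, so it is a compact subset of $\mathsf{X}$. The unit tangent bundle restricted to a compact base is compact, because its fibres $\mathbb{S}^1$ are compact and the bundle is locally trivial. Hence $\widehat K$ is a compact subset of $\mathsf{Y}$, as Theorem~\ref{thm:abstractHarris} requires.

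\textbf{Range and measurability.} Since $\mathrm{V}\geq1$ is continuous on $\mathsf{X}$, the function $\widehat{\mathrm{V}}$ maps into $[1,\infty)$ and is continuous, and in particular measurable.

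\textbf{Properness.} $\widehat{\mathrm{V}}$ blows up exactly as the base point approaches $F$, which is the only way $\mathsf{Y}$ fails to be compact. This is the property needed for the Harris argument, so no $v$-dependent weight is necessary.

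\textbf{Non-orthogonal axes.} For non-orthogonal $\a,\b$, the same argument applies verbatim with the one-point drift from the remark after Proposition~\ref{prop:lyapdriftopp}.

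I expect no real obstacle here. The only point needing care is confirming that $\widehat K$ is compact in $\mathsf{Y}$ rather than merely closed, which the fibre compactness settles.
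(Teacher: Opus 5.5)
Your proposal is correct and matches the argument the paper intends. The paper states this proposition without a written proof. The implicit reasoning is exactly yours: the base component of $\hat f_\omega(p,v)$ is $f_\omega(p)$, so $\hat P\widehat{\mathrm{V}}(p,v)=P\mathrm{V}(p)$, and Proposition~\ref{prop:lyapdriftopp} transfers directly with the compact set $\{(p,v): p\in K(r_0)\}$. Your additional checks on the compactness of the lifted exceptional set and on the non-orthogonal case are appropriate.
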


\section{Positivity of the top Lyapunov exponent}\label{sec:toplyapexp}
In this section we show that the top Lyapunov exponent $\lambda_1$ associated to the one-point process $f_\omega$ is positive. First, we prove the following equivalence statement.
\begin{proposition}\label{prop:joint-surj}
Let $V,U,W$ be (finite-dimensional) real vector spaces and let $L:V\to U, K:V\to W$ be linear maps.  Define the joint linear map $F:V\to U\times W, F(v):=(Lv,Kv).$ Then the following are equivalent:
\begin{enumerate}
\item[(i)] $F$ is surjective.
\item[(ii)] $L$ is surjective and the restricted map $K|_{\ker L}:\ker L\to W$ is surjective (equivalently, $K(\ker L)=W$).
\end{enumerate}
\end{proposition}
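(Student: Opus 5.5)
We prove the two implications separately by elementary linear algebra.

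For (i)$\Rightarrow$(ii), let $\pi_U:U\times W\to U$ and $\pi_W:U\times W\to W$ be the canonical projections.
\begin{itemize}
\item[--] Since $L=\pi_U\circ F$ and $\pi_U$ is surjective, $L$ is surjective whenever $F$ is.
\item[--] Fix $w\in W$. Surjectivity of $F$ gives some $v\in V$ with $F(v)=(0,w)$.
\item[--] This means $Lv=0$ and $Kv=w$. Hence $v\in\ker L$ and $w\in K(\ker L)$, so $K(\ker L)=W$.
\end{itemize}

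For (ii)$\Rightarrow$(i), fix an arbitrary target $(u,w)\in U\times W$.
\begin{itemize}
\item[--] Surjectivity of $L$ gives $v_1\in V$ with $Lv_1=u$.
\item[--] Since $K(\ker L)=W$, the vector $w-Kv_1\in W$ equals $Kv_2$ for some $v_2\in\ker L$.
\item[--] Set $v:=v_1+v_2$. Then
\begin{align}
Lv=Lv_1+Lv_2=u+0=u,\qquad Kv=Kv_1+Kv_2=Kv_1+(w-Kv_1)=w.
\end{align}
\item[--] Thus $F(v)=(u,w)$, and $F$ is surjective.
\end{itemize}

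A dimension count gives an equivalent formulation and a consistency check. We have $\dim F(V)=\dim V-\dim\ker F$, and $\ker F=\ker L\cap\ker K=\ker\bigl(K|_{\ker L}\bigr)$. Since $\dim\ker L\geq\dim V-\dim U$ with equality exactly when $L$ is surjective, this gives
\begin{align}
\dim F(V)=\dim V-\dim\ker L+\operatorname{rank}\bigl(K|_{\ker L}\bigr)\leq\dim U+\dim W.
\end{align}
The bound is attained exactly when both terms are maximal, which is condition (ii). This is not needed for the proof.

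No step is a genuine obstacle. The only point requiring care is which way round the roles go in the application. In Proposition~\ref{prop:blackbox_lyapexponent}, $L=D_{\underline{\omega}^n}\Psi_{\mathrm{x}_*}$ and $K$ is the fibre component of $D_{\underline{\omega}^n}\widehat\Psi_{\mathrm{x}_*}$. Condition (iii) there asks for surjectivity of the restriction of the full differential onto $T\,SL(\mathrm{X})$. Under (ii) the base component vanishes on $\ker L$, so this reduces to surjectivity of $K|_{\ker L}$ onto the fibre directions. The proposition therefore converts (ii)--(iii) into surjectivity of the single joint map $F$. That joint surjectivity can then be verified through the LARC machinery of Proposition~\ref{prop:word-construction}.
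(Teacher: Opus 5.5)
Your proof is correct and matches the paper's argument essentially step for step: for (i)$\Rightarrow$(ii) you use $L=\pi_U\circ F$ and a preimage of $(0,w)$, and for (ii)$\Rightarrow$(i) you lift $u$ through $L$ and then correct by an element of $\ker L$ that $K$ sends to $w-Kv_1$. Your additional dimension count is also correct, and, as you say, it is not needed for the proof.
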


\begin{proof}[Proof of Proposition \ref{prop:joint-surj}]
\emph{(ii)$\Rightarrow$(i).}
Assume that $L$ is surjective and that $K|_{\ker L}$ is surjective.
Let $(u,w)\in U\times W$ be arbitrary. Since $L$ is surjective, there exists $v_0\in V$ such that $Lv_0=u$.
Consider the element $w-Kv_0\in W$. By surjectivity of $K|_{\ker L}$, there exists $v_1\in\ker L$ such that $Kv_1=w-Kv_0$.
Then
\begin{equation}
F(v_0+v_1)=(L(v_0+v_1),K(v_0+v_1))=(Lv_0+Lv_1, Kv_0+Kv_1)=(u,w),
\end{equation}
since $Lv_1=0$ for $v_1\in\ker L$. Hence $F$ is surjective.

\emph{(i)$\Rightarrow$(ii).}
Assume that $F$ is surjective. Composing with the projection
$\pi_U:U\times W\to U$ yields $\pi_U\circ F=L$, hence $L$ is surjective.
Next, let $w\in W$ be arbitrary and consider the point $(0,w)\in U\times W$. By surjectivity of $F$, there exists $v\in V$ such that $F(v)=(0,w)$.
Thus $Lv=0$, so $v\in\ker L$, and simultaneously $Kv=w$. Therefore every $w\in W$ lies in $K(\ker L)$, i.e.\ $K|_{\ker L}$ is surjective.
\end{proof}

With Propositions \ref{prop:blackbox_lyapexponent} and \ref{prop:joint-surj}, to ensure the positivity of $\lambda_1$ it is enough to find $\x_0\in \mathsf{X}$, $n\geq 1$ and $\underline{\omega}_*^n$ for which $\widehat{\Psi}_{\x_0}:\Omega_0^n\rightarrow SL(\mathsf{X})$ is a submersion at $\underline{\omega}^n = \underline{\omega}_*^n$. As $\widehat{\Psi}_{\x}$ takes values in $SL(\mathsf{X})$, we first derive expressions for $\widehat{\Psi}_{\x}$ that will be suitable for computations. As mentioned in Section \ref{sec:DiffGeom}, the main difficulty in the spherical case compared to the flat-geometric settings of \cites{BCZG2023, C2023, coti2026three} is the non-parallelizability of $\Sph$, which we overcome by working with a local smooth orthonormal reference frame in a neighbourhood of a given $\x_0\in \mathsf{X}$. For convenience, we consider the orthonormal and unit-area reference frame $E_0(\x):= ( e_1(\x), e_2(\x))\in(T_\x\mathsf{X})^2$ over $\cU_3=\mathsf{X}\setminus \lbrace   \pm \mathbf{e}_3 \rbrace  $ and we recall that
\begin{equation}\label{eq:recallframeE}
    e_1(\x) = \frac{1}{r}(-zx,-zy,r^2), \quad e_2(\x) = \frac{1}{r}(y,-x,0),\quad r:=\sqrt{x^2+y^2}.
\end{equation}
Recall further that for any open subset $\cU\subset \cU_3$, we can identify the principal $SL(2,\R)$-bundle of oriented area-preserving frames $\mathcal{F}^{SL}(\mathsf{X})$ on $\cU$ by  
\begin{align}
    \mathcal{F}^{SL}(\Sph)\big|_\cU \cong \cU\times SL(2,\R).
\end{align}

Let $u\in \mathfrak{X}(\Sph)$ be divergence-free and let $\phi_t^u$ be the associated volume-preserving flow on $\Sph$. 
If $\x\in \cU_3$ then $\phi_t^u(\x)\in \cU_3$ for all $0\leq t \leq t_0$, for some small $t_0(\x)$. Since $D_\x\phi_t^u:T_{\x}\mathsf{X} \rightarrow T_{\phi_t^u}\mathsf{X}$ with $\text{det}(D_\x\phi_t^u)=1$ because $u\in \mathfrak{X}(\Sph)$ and hence we define $\widetilde{\widetilde{\phi_t^u}}:\mathcal{F}^{SL}(\mathsf{X})\to \mathcal{F}^{SL}(\mathsf{X})$ by 
\begin{align}
    \widetilde{\widetilde{\phi_t^u}}(\x,E) := (\phi_t^u(\x), D_\x\phi_t^u(\x)E)
\end{align}
where, for $E=(e_1,e_2)\in (T_\x\mathsf{X})^2$, 
\begin{equation}
    D_\x\phi_t^u(\x)E := (D_\x\phi_t^u(\x)e_1, D_\x\phi_t^u(\x)e_2)\in (T_{\phi_t^u(\x)}(\mathsf{X}))^2.
\end{equation}
While $\widetilde{\widetilde{\phi_t^u}}$ maps $\mathcal{F}^{SL}(\mathsf{X})$ into itself, the main obstacle in employing it is that for any unit-area frame $E$, the basis $E(\x)$ and $D_\x\phi_t^u E(\x)$ live in $T_\x\mathsf{X}$ and $T_{\phi_t^u(\x)}\mathsf{X}$, respectively, which are in general distinct tangent spaces. More crucially, $D_\x\phi_t^u E\in (T_{\phi_t^u(\x)}\mathsf{X})^2$, which depends on $\phi_t^u(\x)$. In order for the final target space to be independent of $\phi_t^u(\x)$, we use the reference frame $E_0$ as follows: since $E(\x)$ is a unit-area frame on $\cU$, for all $\x\in \cU_3$ there exists $\mathsf{g}_\x\in SL(2,\R)$ such that 
\begin{align}
    E(\x) = E_0(\x) \mathsf{g}_\x.
\end{align}
Similarly, for all $\phi_t^u(\x)\in \cU$, $D_\x\phi_t^u(\x)E(\x)$ is a unit-area basis on $T_{\phi_t^u(\x)}\mathsf{X}$ and thus there exists $\mathsf{h}\in SL(2,\R)$ with
\begin{align}
    D_\x\phi_t^u(\x)E(\x) = E_0(\phi_t^u(\x))\mathsf{h}_{\phi_t^u(\x)}.
\end{align}
As $E_0$ is an orthonormal frame, $E_0^{-1}:=E_0^T$ is a left inverse of $E_0$ when viewed as matrices and we see that 
\begin{align}
    \mathrm{h}_{\phi_t^u(\x)} = E_0(\phi_t^u(\x))^T D_\x\phi_t^u(\x)E= E_0(\phi_t^u(\x))^T D_\x\phi_t^u(\x)E_0(\x)\mathrm{g}_\x  \in SL(2,\R).
\end{align} 
Consequently, we define $\widetilde{\phi_t^u}:\cU\times SL(2,\R)\rightarrow \cU_3\times SL(2,\R)$ by
\begin{align}\label{eq:deftildephi}
        \widetilde{\phi_t^u}(\x, \mathrm{g}) = (\phi_t^u(\x), E_0(\phi_t^u(\x))^T D_\x\phi_t^u(\x)E_0(\x)\mathrm{g}).
\end{align}

To understand the vector field associated to the lifted process  $\widetilde{\phi_t^u}$, we recall that if $w$  denotes a smooth vector field in $\R^3$ such that $w|_{\Sph}\in \mathfrak{X}(\Sph)$ and $v\in T_x\Sph$, the Levi-Civita covariant derivative $\nabla_v w$ on $\Sph$ is then given by
\begin{align}
    \nabla_v w(\x) = \Pi_\x(D_\x w(\x) v(\x)).
\end{align}
for all $\x\in \Sph$. Then, 
\begin{align}\label{eq:deftildeu}
    \widetilde{u}(\x,\mathrm{g}) &:= \frac{\d}{\d t}\Big|_{t=0}\widetilde{\phi_t^u}(\x,\mathrm{g})  = (u(\x), \Omega_u(\x)\mathrm{g}),
\end{align}
with
\begin{align}
    \Omega_u(\x) &= E_0(\x)^T(\nabla_{E_0(\x)} u)(\x) - E_0(\x)^T(\nabla_u E_0)(\x).
\end{align}
Here, for $E_0=(e_1,e_2)$ given by \eqref{eq:recallframeE} we denote
\begin{align}
    (\nabla_{E_0}u)(\x) &:= (\nabla_{e_1} u(\x), \nabla_{e_2} u(\x))
\end{align}
and we recall that
\begin{align}
        \nabla_u E_0(\x) &:=(\nabla_u e_1(\x), \nabla_u e_2(\x)). 
\end{align}
We observe that $\Omega_u$ not only encodes the variation of $u$, but also of the fixed frame $E_0$ under the action of $u$. On a parallelizable manifold, one can choose $E_0(\x)\equiv E_0$ and thus $\nabla_u E_0 \equiv 0$. In view of Lemma \ref{lemma:framecovariantD}, 
\begin{align}
\Omega_u(\x)= E_0(\x)^T(D_\x u)(\x)E_0(\x) - E_0(\x)^T(D_\x E_0)(\x)u(\x),
\end{align}
with $D_\x E_0(\x)u(\x) = (D_\x e_1(\x)u(\x), D_\x e_2(\x)u(\x))$. Let $\widetilde{u}_\a=(u^\a, \Omega_{u_\a})$ and $\widetilde{u}_\b = (u^\b, \Omega_{u_\b})$ be the vector fields associated to $\widetilde{\phi_t^{u_\a}}$ and $\widetilde{\phi_t^{u^\b}}$ respectively. We now show that a set of certain commutators of the velocity fields $\widetilde{u}_\a$ and $\widetilde{u}_\b$  satisfy the LARC. For convenience, set $\x_0=\left(\frac1{\sqrt2},\frac1{\sqrt2},0\right)$ and define
\begin{equation}
\bfe_1:=e_1(\x_0)=(0,0,1), \quad \bfe_2:=e_2(\x_0) =  \left(\frac1{\sqrt2},-\frac1{\sqrt2},0\right)
\end{equation}
so that $E_0(\x_0)=(\bfe_1,\bfe_2)$ is a positively oriented unit-area orthonormal basis of $T_{\x_0}\Sph$. From Lemma \ref{lemma:covariantframe} we now have
\begin{align}
    \Omega_u(\x_0)= E_0(\x_0)^T(D_\x u)(\x_0)E_0(\x_0).
\end{align}
for all $u\in \mathfrak{X}(\Sph)$.

\begin{proposition}\label{prop:LARCtoplyap}
Assume $\a=(0,a_2,a_3)\in \Sph$ with $a_2\neq 0$ and $\b=(0,0,1)$. Let $\widetilde{u}_\a$ and $\widetilde{u}_\b$ be given by \eqref{eq:deftildeu}. Then the family $\lbrace  \widetilde u_\a,\widetilde u_\b\rbrace  $ satisfies the Lie algebra rank condition at $(\x_0,Id_2)$.
More precisely, 
\begin{align}
    X_1:=\widetilde u_\a,\quad  X_2:=\widetilde u_\b,\quad X_3:=[X_1,X_2],\quad X_4:=[X_1,X_3],\quad X_5:=[X_2,X_4],
\end{align}
are linearly independent at $(\x_0,Id_2)$.
\end{proposition}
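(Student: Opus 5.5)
The plan is to reduce the statement to an explicit finite computation in the local trivialization $\cU\times SL(2,\R)$ near $(\x_0,Id_2)$. There the tangent space is $T_{\x_0}\Sph\times\mathfrak{sl}(2,\R)\cong\R^2\times\R^3$, which is $5$-dimensional. Linear independence of $X_1,\dots,X_5$ at $(\x_0,Id_2)$ is then equivalent to the nonvanishing of a $5\times 5$ determinant. The coordinates we use are the $E_0(\x_0)$-coordinates of the base component together with three coordinates of the traceless matrix component.

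\textbf{Step 1: naturality of the lift.} The first step is to avoid differentiating $\widetilde u_\a,\widetilde u_\b$ directly. The map $u\mapsto\widetilde u$ of \eqref{eq:deftildeu} is the natural lift of a vector field to the frame bundle $\mathcal F^{SL}(\Sph)$, read in the trivialization given by $E_0$. Indeed, $\widetilde{\phi^u_t}$ is the flow induced by $\phi^u_t$ on frames, so it commutes with diffeomorphisms, and the lift is a Lie algebra homomorphism:
\begin{equation}
[\widetilde u,\widetilde v]=\widetilde{[u,v]},\qquad\text{i.e.}\qquad \Omega_{[u,v]}=u\cdot\nabla\Omega_v-v\cdot\nabla\Omega_u+[\Omega_v,\Omega_u].
\end{equation}
This identity can also be checked by hand from the formula for $\Omega_u$, using that vector fields $g\mapsto Ag$ satisfy $[Ag,Bg]=(BA-AB)g$. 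Consequently $X_3,X_4,X_5$ are the lifts of
\begin{equation}
w_3=[u^\a,u^\b],\qquad w_4=[u^\a,w_3],\qquad w_5=[u^\b,w_4].
\end{equation}
These are explicit polynomial vector fields on $\R^3$ that are tangent to $\Sph$. The formula for $[u^\a,u^\b]$ is already recorded in the proof of Lemma~\ref{lemma:onepointbracket}.

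\textbf{Step 2: evaluation at $(\x_0,Id_2)$.} By Lemma~\ref{lemma:covariantframe}, $\nabla E_0(\x_0)=0$. Hence for each $w\in\{u^\a,u^\b,w_3,w_4,w_5\}$ the value at $(\x_0,Id_2)$ is
\begin{equation}
\bigl(E_0(\x_0)^Tw(\x_0),\;E_0(\x_0)^TD_\x w(\x_0)E_0(\x_0)\bigr),
\end{equation}
where the $2\times2$ block is traceless because each $w$ is divergence-free. We then compute the following quantities, always with $\b=\bfe_3$, $\a=(0,a_2,a_3)$, $\x_0=\frac1{\sqrt2}(1,1,0)$.
\begin{itemize}
\item The vectors $w_j(\x_0)$, using the ambient identity $[v_1,v_2]=Dv_2\,v_1-Dv_1\,v_2$ and iterating.
\item The Jacobians $D w_j$ at $\x_0$. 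These need $Dw_3$ and the second derivatives entering $w_4,w_5$, so it is best to keep $w_3,w_4$ in closed form as functions of $\x$ before differentiating.
\end{itemize}

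\textbf{Step 3: the determinant.} The entries already available are $u^\b(\x_0)=0$, $\Omega_{u^\b}(\x_0)=\begin{pmatrix}0&0\\-1&0\end{pmatrix}$, and $u^\a(\x_0)=-\frac{a_2^2}{2}\bfe_1-\frac{a_2a_3}{\sqrt2}\bfe_2$ with its matrix $g_\a'$ from Proposition~\ref{prop:topirredPJP}. Also $[u^\b,u^\a](\x_0)=-\frac{a_2^2}{2}\bfe_2$ from Proposition~\ref{prop:LARCPJP}, so $w_3(\x_0)=\frac{a_2^2}{2}\bfe_2$. We assemble the $5\times5$ matrix and exploit its structure to reduce to a smaller determinant:
\begin{itemize}
\item $X_1$ is the only column with an $\bfe_1$ component among the first ones;
\item $X_2$ is purely vertical.
\end{itemize}
This reduces the problem to showing that the vertical parts of $X_4,X_5$, after subtracting their projections onto the span of $X_1,X_2,X_3$, together with $\Omega_{u^\b}$, span $\mathfrak{sl}(2,\R)$. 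The resulting determinant should be a polynomial in $a_2,a_3$ (with $a_2^2+a_3^2=1$) carrying a power of $a_2$ as a factor, and we show the remaining factor does not vanish.

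\textbf{Main obstacle.} The hard part is the bookkeeping of the second-order brackets $w_4,w_5$ and their Jacobians for general non-orthogonal $\a$. There is a real risk that the determinant has isolated zeros in $a_3$. I would first do the orthogonal case $a_3=0$, $a_2=1$, where everything simplifies, to confirm nondegeneracy. For general $\a$ I would then compute symbolically. If the determinant vanishes at some exceptional $\a$, the fallback is to replace $X_4$ or $X_5$ by another bracket, such as $[X_2,X_3]$. This fallback is harmless for the downstream use via Proposition~\ref{prop:word-construction}, since only linear independence of some five iterated brackets at $(\x_0,Id_2)$ is needed.
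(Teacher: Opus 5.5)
Your plan is essentially the paper's proof: Lemma~\ref{lemma:tildecommutator} gives $X_i=\widetilde{w_i}$, and Lemma~\ref{lemma:covariantframe} reduces the fiber parts at $(\x_0,Id_2)$ to $E_0(\x_0)^TD_\x w_i(\x_0)E_0(\x_0)$. With the explicit values of Appendix~\ref{app:diffgeom}, the $5\times5$ matrix is triangular in the column order $X_1,X_4,X_3,X_5,X_2$ against the rows ($\bfe_1$-component, $(1,2)$-entry, $\bfe_2$-component, $(1,1)$-entry, $(2,1)$-entry), with diagonal $-\tfrac{a_2^2}{2},-2a_2^4,\tfrac{a_2^2}{2},2a_2^4,-1$ and determinant $-a_2^{12}$, so no exceptional axes occur and your fallback (which would not prove the stated independence of these particular five fields anyway) is never needed. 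One correction: $\Omega_{u^\a}(\x_0)$ is not the $\omega$-derivative of $\mathrm{g}_\a$ from Proposition~\ref{prop:topirredPJP}, because that formula holds only at the amplitudes $\omega_\a(k)$ fixing $\x_0$ and misses the rigid rotation about $\a$, which contributes $\tfrac{a_2^2}{2}\bigl(\begin{smallmatrix}0&-1\\1&0\end{smallmatrix}\bigr)$; this error is harmless since $X_1$ is eliminated by its $\bfe_1$-component alone.
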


\begin{proof}
Set $w_1(\x)=u^\a(\x)$, $w_2(\x) = u^\b(\x)$ and define $w_3(\x):=[w_1,w_2](\x)$, $w_4(\x) =[w_1,w_3](\x)$ and $w_5(\x):=[w_2,w_4](\x)$. To compute $[v,w]=[v,w]_{\Sph}$ for $v,w\in \mathfrak{X}(\Sph)$, which denotes the commutator of vector fields on $\Sph$, we note that $[v,w]_\Sph(\x) = \Pi_\x[v,w]_{\R^3}(\x)$, for all $v_1,v_2\in \mathfrak{X}(\Sph)$ and all $\x\in \Sph$. However, since $v,w\in \mathfrak{X}(\Sph)$ are already tangent to the sphere, it follows that $[v,w]_{\R^3}\in\mathfrak{X}(\Sph)$ is also tangent to $\Sph$ and thus $[v,w]_\Sph =[v,w]_{\R^3}$.  Next, since Lemma \ref{lemma:tildecommutator} yields 
\begin{align}
    [\widetilde{v},\widetilde{w}] = \widetilde{[v,w]}
\end{align}
for all $v,w\in \mathfrak{X}(\Sph)$ we have that $X_i = \widetilde{w_i}$, for $i=1,...,5$. Hence, there holds
\begin{align}
    X_i(\x_0,Id) = (w_i(\x_0),\Omega_i(\x_0)), \quad \Omega_i(\x_0) = E_0^T(\x_0)(D_\x w_i)(\x_0)E_0(\x_0) 
\end{align}
for all $i=1,...,5$. We compute\footnote{See Appendix \ref{app:diffgeom} for the computations.}
\begin{align}
    w_1(\x_0) = - \frac{a_2^2}{2}\bfe_1-a_2a_3\frac{\sqrt{2}}{2}\bfe_2 , \quad w_2(\x_0) = 0, \quad w_3(\x_0) = \frac{a_2^2}{2}\bfe_2, \quad w_4(\x_0) = -\frac{3\sqrt{2}}{4}a_2^3a_3\bfe_2, \quad w_5(\x_0) = 0, 
\end{align}
and
\begin{align}
    \Omega_1(\x_0) = \begin{pmatrix}
        -\frac{a_2 a_3}{\sqrt{2}} &  0 \\ \frac{a_2^2}{2}- a_3^2 & \frac{a_2 a_3}{\sqrt{2}}
    \end{pmatrix}&, \quad \Omega_2(\x_0) = \begin{pmatrix}
        0 & 0 \\ -1 & 0
    \end{pmatrix}, \quad \Omega_3(\x_0) = \begin{pmatrix}
        0 & 0 \\ \sqrt{2}a_2 a_3 & 0
    \end{pmatrix}, \\
    \Omega_4(\x_0) &= a_2^2\begin{pmatrix}
        -\frac{a_2 a_3}{2\sqrt{2}} & -2a_2^2 \\  2a_2^2 - 3a_3^2 & \frac{a_2 a_3}{2\sqrt{2}}
    \end{pmatrix}, \quad \Omega_5(\x_0) = \begin{pmatrix}
2a_2^4 & 0\\
-\frac{a_2^3a_3}{\sqrt2} & -2a_2^4
\end{pmatrix}.
\end{align}

 Suppose now that $\sum_{i=1}^5 c_i {X}_i(\x_0,Id)=0$, for some $c_1,...,c_5$. Recall that $a_2\neq 0$ and $\lbrace   \bfe_1, \bfe_2 \rbrace  $ constitute an orthonormal basis of $T_{\x_0}\Sph$. Since $w_1(\x_0)$ is the only vector with non-trivial $\bfe_1$ component, we deduce that $c_1=0$. Next, we observe that $\Omega_4$ is the only element with non-trivial $\bfe_1 \otimes \bfe_2$ component, and thus we must have $c_4 =0$. In turn, since $w_2(\x_0)=w_5(\x_0) = 0$, we deduce that $c_3=0$. Finally, $c_5=0$ because $\Omega_5(\x_0)$ has non-zero $\bfe_2 \otimes \bfe_2$ and $\bfe_1 \otimes \bfe_1$ components, and this ultimately forces $c_2=0$ as well. Therefore, all $c_i=0$ and the set $\lbrace   {X}_i(\x_0,Id) \rbrace_{i=1,...,5}$ is linearly independent. 
\end{proof}

With Proposition \ref{prop:LARCtoplyap} at hand, we now extend $\widetilde{\phi_t^u}$ from $\cU\times SL(2,\R)$ to an ambient Euclidean space in order to use Proposition \ref{prop:word-construction} from the LARC framework. Note that $\cU\times SL(2,\R)\subset \Sph\times SL(2,\R)\subset \R^3\times \R^4$ once we identify $SL(2,\R)$ as a subset of $\R^4$ via the isomorphism $\iota:\R^4\rightarrow M^{2\times2}(\R)$ given by $\iota(h) = \mathrm{h}:=\begin{pmatrix}
    h_1 & h_2 \\
    h_3 & h_4
\end{pmatrix}$ for $h=(h_1,h_2,h_3,h_4)\in \R^4$. For $\cU_0\subset\R^3$ with $\cU\subset \cU_0$ and $\cU_0\cap \lbrace   \lambda \mathbf{e}_3 : \lambda\in \R \rbrace   = \emptyset$ we define the extension  $v_u:\cU_0\times\R^4\subset \R^3\times \R^4 \rightarrow \R^3\times \R^4$ of $\widetilde{u}$ by
\begin{align}
    v_u(\y,h) = (u(\y), \iota^{-1}\circ \Omega_u(\y) \iota(h))
\end{align}
for all $(\y,{h})\in \R^3\times\R^4$, where $\Pi_\y w = w - (w\cdot\y)\frac{\y}{\Vert \y \Vert^2}$ for $w\in \R^3$ and the covariant derivative $\nabla_w$ is extended accordingly. Moreover, we denote $\Phi_t^u:\cU_0\times \R^4 \rightarrow \cU_0\times \R^4$ the flow map defined by $v_u$, namely 
\begin{align}
    \begin{cases}
        \frac{\d}{\d t}\Phi_t^u(\y,{h}) = v_u(\Phi_t^u(\y,{h})), & \\
        \Phi_0^u(\y,{h}) = (\y,{h}) &
    \end{cases}
\end{align}
In particular, since $v_u(\x,{g}) = \widetilde{u}(\x,\mathrm{g})$, for all $\x\in \Sph$ and $g\in \R^4$ with $\iota(g)=\mathrm{g}\in SL(2,\R)$, there holds $\Phi_t^u(\x,{g})= \widetilde{\phi_t^u}(\x,\mathrm{g})$, for all $t\in\R$, $\x\in \Sph$ and $\iota(g)=\mathrm{g}\in SL(2,\R)$ for which $\phi_t^u(\x)\in \cU_3$.

\begin{proposition}\label{prop:positivetoplyap}
The top Lyapunov exponent is positive.
\end{proposition}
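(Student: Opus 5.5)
We apply Proposition~\ref{prop:blackbox_lyapexponent} to the one-point process $f_\omega$ on $\mathsf{X}$ with stationary measure $\pi=\Leb_\mathsf{X}$. Its standing hypotheses are immediate or already established. The map $(\omega,\x)\mapsto f_\omega(\x)$ is smooth, and $\PP_0$ has the uniform density on $[-N,N]^2$. Each $f_\omega$ is a volume-preserving diffeomorphism, and $\|D f_\omega\|$ and $\|(Df_\omega)^{-1}\|$ are uniformly bounded for $|\omega|\le N$, so the $\log^+$ integrability holds trivially. Ergodicity of $\pi$ and $\mathrm V$-uniform geometric ergodicity follow from Proposition~\ref{prop:smallsetOPP}, Proposition~\ref{prop:lyapdriftopp} and Theorem~\ref{thm:abstractHarris}; in particular $\pi$ is the unique stationary measure. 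It therefore remains to produce $n$ and $\underline{\omega}_*^n$ satisfying (i)--(iii) at $\x_*=\x_0=(\tfrac1{\sqrt2},\tfrac1{\sqrt2},0)\in\supp(\pi)$. By Proposition~\ref{prop:joint-surj}, applied with $L=D\Psi_{\x_0}$ and $K$ the $SL$-component of $D\widehat\Psi_{\x_0}$, conditions (ii) and (iii) together are equivalent to $\widehat\Psi_{\x_0}$ being a submersion onto the $5$-dimensional manifold $SL(\mathsf X)$ at $\underline{\omega}_*^n$.

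To obtain this submersion we work in the trivialization $\cU\times SL(2,\R)$ given by the frame $E_0$. There, $\widehat\Psi_{\x_0}(\underline\omega^n)$ corresponds to $\widetilde{\phi}$ applied to $(\x_0,Id_2)$, namely the composition of the lifted maps $\widetilde{\phi^{u_\a}_{\omega}}$ and $\widetilde{\phi^{u_\b}_{\omega}}$ from \eqref{eq:deftildephi}. The chain rule for $D_\x$ makes the lift \eqref{eq:deftildephi} a cocycle: $\widetilde{\phi^u_t}\circ\widetilde{\phi^{v}_s}$ equals the frame-trivialized pair (position, differential) of $\phi^u_t\circ\phi^v_s$. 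Hence words in the flows of $v_{u_\a}$ and $v_{u_\b}$ on $\cU_0\times\R^4$, started at $(\x_0,\iota^{-1}Id_2)$ and kept inside $\cU\times SL(2,\R)$, reproduce $\widehat\Psi_{\x_0}$ evaluated at the corresponding amplitude sequence. Padding by zero amplitudes places each elementary flow in the alternating order $\b,\a,\b,\a,\dots$ required by $f_\omega=f^\a_{\omega_2}\circ f^\b_{\omega_1}$.

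By Proposition~\ref{prop:LARCtoplyap}, the fields $X_1,\dots,X_5$, which are iterated brackets of $\widetilde u_\a$ and $\widetilde u_\b$, are linearly independent at $(\x_0,Id_2)$. Proposition~\ref{prop:word-construction} with $r=5$ then yields a word $\varpi_*$ with all entries in $[-\ep_0,\ep_0]$ such that $D_\varpi\Phi_{\varpi_*}(\x_0,Id_2)$ has rank at least $5$. This rank is computed in the ambient space $\R^3\times\R^4$, but the image of the word map lies in the $5$-dimensional invariant submanifold $\Sph\times SL(2,\R)$. Since $v_u$ is tangent to it (by construction $\Omega_u$ is traceless there, because $u$ is divergence-free), the rank is exactly $5$ in its tangent space, and so the map is a submersion onto $T(\cU\times SL(2,\R))$. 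Taking $\underline{\omega}_*^n$ to be the resulting amplitude sequence and shrinking $\ep_0$, as in Remark~\ref{rem:negative-small-times}, so that all amplitudes lie in the interior of $[-N,N]$, condition (i) holds with $c=(2N)^{-2n}$. Proposition~\ref{prop:blackbox_lyapexponent} then gives $\lambda_1>0$.

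The main obstacle is the identification in the second step: we must check that the bracket computations in Proposition~\ref{prop:LARCtoplyap} are genuinely brackets of the lifted fields. This is exactly the content of Lemma~\ref{lemma:tildecommutator}, $[\widetilde v,\widetilde w]=\widetilde{[v,w]}$, whose proof uses the $\nabla_uE_0$ correction term in $\Omega_u$. We must also ensure that the whole commutator loop stays inside the chart $\cU\subset\cU_3$ where the trivialization is valid. For $\ep_0$ small this follows from continuity, since $\x_0$ is away from $\pm\mathbf e_3$.
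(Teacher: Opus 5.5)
Your proposal is correct and follows the paper's proof. You use the LARC at $(\x_0,Id_2)$ from Proposition~\ref{prop:LARCtoplyap} and Proposition~\ref{prop:word-construction} with $r=5$. You then observe that the word map takes values in the $5$-dimensional $\cU_3\times SL(2,\R)$, so rank $5$ means a submersion, and conclude via Propositions~\ref{prop:blackbox_lyapexponent} and~\ref{prop:joint-surj}. The additional points you spell out are left implicit in the paper but are consistent with it: the cocycle identification of $\widehat\Psi_{\x_0}$ with the lifted word map, invariance of $\Sph\times SL(2,\R)$ via tracelessness of $\Omega_u$, the alternating order via zero amplitudes, and the standing hypotheses.
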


\begin{proof}
Let $\cV = \lbrace   v_{u^\a}, v_{u^\b} \rbrace  $ and $\Phi_t^{v_{u^\a}}$, $\Phi_t^{v_{u^\b}}$ their associated flow maps. Let $g_0=(1,0,0,1)$ such that $\iota(g_0) = Id\in SL(2,\R)$. Since $v_{u}(\x_0,g_0) = \widetilde u(\x_0,Id)$ and the set $\lbrace   X_i(\x_0,Id) \rbrace_{i=1}^5\subset \R^3\times\R^4$ is linearly independent due to Proposition \ref{prop:LARCtoplyap}, we obtain from Proposition \ref{prop:word-construction} that there exists $\ep_0>0$ and words $\varpi_i$ associated to each $X_i$  such that $D_{\varpi}\Phi_{\varpi_*}(\x_0,Id)$ has rank at least $5$, for $\varpi_*=(\varpi_1(a_1^*), ..., \varpi_5(a_5^*))\in [-\ep_0,\ep_0]^{d_0}$, for some $d_0\in \N$ and some $a_*=(a_1^*,...,a_5^*)\in \R^5$ with $0<\Vert a_*\Vert \leq \ep_0$. On the other hand, for fixed $(\x,\mathrm{g})\in \cU\times SL(2,\R)$ we have that $\Phi(\x,\iota^{-1}(\mathrm{g})):[-\ep_0,\ep_0]^{d_0}\rightarrow (\cU_3\subset\Sph)\times \iota^{-1}(SL(2,\R))$, after choosing $\ep_0$ smaller, if necessary. In particular,
\begin{align}
    D_\varpi\Phi_{\varpi_*}(\x_0,g_0):\R^{d_0}\rightarrow T_{\Phi_{\varpi_*}(\x_0,\iota^{-1}(Id))}(\cU_3\times \iota^{-1}(SL(2,\R)) )
\end{align}
with $\text{dim }T_{\Phi_{\varpi_*}(\x_0,\iota^{-1}(Id))}(\cU_3\times \iota^{-1}(SL(2,\R)) ) = 2+3=5$ as 
\begin{align}
    \iota^{-1}(SL(2,\R)) = \lbrace   h=(h_1,h_2,h_3,h_4)\in \R^4 : h_1h_4-h_2h_3 = 1 \rbrace  .
\end{align}
Since $D_\varpi\Phi_{\varpi_*}(\x_0,g_0)$ has rank at least 5, we conclude that $D_\varpi\Phi_{\varpi_*}(\x_0,g_0)$ is surjective and thus $\Phi_{\varpi}(\x_0,g_0)$ is a submersion at $\varpi=\varpi_*$. In view of the definition of $\Phi_\varpi$, we have from Propositions \ref{prop:blackbox_lyapexponent} and \ref{prop:joint-surj} that the top Lyapunov exponent is positive.
\end{proof}

\section{The two point process}\label{sec:TPP}

We start the discussion relative to the two-point process by introducing the two point vector field
\begin{equation}\label{eq:deftwopointvelfield}
    u^{(2)}_{\a}(\x,\y)=(u^{\a}(\x),u^{\a}(\y)),\quad \x,\y\in\Sph,
\end{equation}
which generates the two-point flow map
\begin{equation}
    f^{(2),\a}_t(\x,\y)=(f^{\a}_t(\x),f^{\a}_t(\y)), \quad t\in\RR.
\end{equation}

The goal of this section is to prove that the two-point process $f_\omega^{(2)}(\x,\y) = (f_\omega(\x), f_\omega(\y))$ is $\mathrm{W}$-geometrically ergodic. It is readily seen that on the product space $\mathsf{X}\times \mathsf{X}$, the diagonal set 
\begin{align}
    \Delta = \lbrace (\x,\y)\in \mathsf{X} \times \mathsf{X} : \x=\y \rbrace
\end{align}
is invariant under $f_\omega^{(2)}$ and thus there does not exist a unique ergodic stationary measure for the two-point process on $\mathsf{X}\times \mathsf{X}$. The following result describes the set of symmetries that are almost-surely invariant for the two-point process.

\begin{proposition}\label{prop:symmetries}
    Let $\a,\b\in \Sph$ be two non-parallel and non-orthogonal axes. The family $\cR\subset SO(3)$ of rotations of the sphere that are invariant under the two-point process $f_\omega^{(2)}$ is $\cR=\lbrace I,\mathcal{R}_{\c}\rbrace$, where $I$ is the identity and $\mathcal{R}_{\c}$ is the rotation of angle $\pi$ around the axis $\c=\frac{\a\times\b}{|\a\times\b|}$.
    Moreover, if $\a\cdot\b=0$, then $\cR=\lbrace   I, \mathcal{R}_{\c}, \mathcal{R}_{\a}, \mathcal{R}_{\b}\rbrace  $.
\end{proposition}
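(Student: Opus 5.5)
We need the rotations $R\in SO(3)$ that commute with the random dynamics, i.e. $R\circ f_\omega = f_\omega\circ R$ for all $\omega$. The set $\{(\x,R\x)\}$ is then almost surely invariant for $f^{(2)}_\omega$. Since $f_\omega = f^{\a}_{\omega_2}\circ f^{\b}_{\omega_1}$ and $\omega_1,\omega_2$ range over an interval containing $0$, this is equivalent, after differentiating at $\omega=0$, to $R$ commuting with each elementary flow. That is,
\[
R\,u^{\a}(\x)=u^{\a}(R\x),\qquad R\,u^{\b}(\x)=u^{\b}(R\x)\quad\text{for all }\x\in\Sph .
\]
The converse holds because autonomous flows of $R$-equivariant fields are $R$-equivariant. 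So the plan reduces to characterising the rotations preserving $u^\a$ and $u^\b$ under push-forward.

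For a single axis, I use $R(\a\times\x)=(R\a)\times(R\x)$ for $R\in SO(3)$ to write
\[
R\,u^{\a}(R^{-1}\y)=(R\a\cdot\y)\,(R\a\times\y)=u^{R\a}(\y).
\]
So the condition is $u^{R\a}=u^{\a}$ as vector fields on $\Sph$. I would next show that $u^{\mathbf{d}}=u^{\a}$ with $\mathbf{d}\in\Sph$ forces $\mathbf{d}=\pm\a$. For this I compare the zero sets: the zeros of $u^{\a}$ are the great circle $\a^\perp$ and the poles $\pm\a$, so the poles, being the isolated zeros, must match. Alternatively, taking the curl or using the quadratic form $\x\mapsto(\a\cdot\x)^2$ as stream function, I identify $(\a\cdot\x)^2=(\mathbf{d}\cdot\x)^2$ up to a constant, hence $\mathbf{d}\otimes\mathbf{d}=\a\otimes\a$. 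Both signs are genuine invariances, since $u^{-\a}=u^{\a}$. The condition thus becomes $R\a=\pm\a$ and $R\b=\pm\b$.

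The remaining step is elementary linear algebra in $SO(3)$. $R$ fixes the lines $\R\a$ and $\R\b$, and because it is orthogonal and orientation preserving it also fixes $\R\c$, with $R\c=(R\a\times R\b)/|\a\times\b|=\epsilon_\a\epsilon_\b\,\c$, where $R\a=\epsilon_\a\a$ and $R\b=\epsilon_\b\b$. In the basis $\{\a,\b,\c\}$, $R$ is therefore diagonal with entries $(\epsilon_\a,\epsilon_\b,\epsilon_\a\epsilon_\b)$. If $\a\cdot\b\neq0$, orthogonality forces $\a\cdot\b=R\a\cdot R\b=\epsilon_\a\epsilon_\b\,\a\cdot\b$, so $\epsilon_\a=\epsilon_\b$. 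This leaves the identity ($\epsilon=1$) and the map with $\a\mapsto-\a$, $\b\mapsto-\b$, $\c\mapsto\c$, which is exactly the rotation $\mathcal{R}_\c$ by $\pi$ about $\c$. If $\a\perp\b$, all four sign choices give orthogonal maps of determinant $1$, yielding $I,\mathcal{R}_\a,\mathcal{R}_\b,\mathcal{R}_\c$.

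I expect the main obstacle to be the rigidity step $u^{\mathbf{d}}=u^{\a}\Rightarrow\mathbf{d}=\pm\a$, and more precisely making it rigorous through the isolated-zero or stream-function argument. A second point to handle carefully is justifying the derivative at $\omega=0$ of the composed map. I would handle the latter by setting $\omega_2=0$ first, which isolates $f^{\b}_{\omega_1}$, and then using $\omega_1=0$ to isolate the other factor.
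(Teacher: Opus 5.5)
Your proposal is correct and follows essentially the same route as the paper. You reduce to the equivariance relations $R\,u^{\a}(\x)=u^{\a}(R\x)$ and $R\,u^{\b}(\x)=u^{\b}(R\x)$, use the zero sets (in particular the isolated zeros $\pm\a$, $\pm\b$) to force $R\a=\pm\a$ and $R\b=\pm\b$, and then sort through the four sign cases; your orthogonality identity $\a\cdot\b=\epsilon_\a\epsilon_\b\,\a\cdot\b$ gives a clean justification of the paper's bare assertion that the mixed-sign cases can occur only when $\a\perp\b$.
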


\begin{proof}
    Let $S\in SO(3)$ such that $f_\omega(S\x)=Sf_\omega(\x)$ almost surely in $\omega$ for all $\x\in\Sph$. In particular, due to the randomness of $\omega_1,\omega_2$, we require further that $u^{\a}(S\x)=Su^{\a}(\x)$ and $u^{\b}(S\x)=Su^{\b}(\x)$ for all $\x\in\Sph$.
    This forces $S$ to preserve the zero sets of both $u^\a$ and $u^\b$, hence $S$ must satisfy 
    \begin{equation}
        S\a=\eps_{\a}\a, \quad S\b=\eps_{\b}\b, \qquad \eps_{\a},\eps_{\b}\in\lbrace 1,-1\rbrace  .
    \end{equation}
    The four cases are the following:
    \begin{itemize}
        \item[--] Case $(\eps_{\a},\eps_{\b})=(1,1)$: the two points are fixed and linearly independent, so $S=I$.
        \item[--] Case $(\eps_{\a},\eps_{\b})=(1,-1)$: the point $\a$ is fixed while the point $\b$ is sent to $-\b$ by $S$. This is possible only if $\a\perp\b$ via a $\pi$ rotation around the $\a$ axis. Hence, only if $\a\perp\b$ we have $S=\cR_{\a}$.
        \item[--] Case $(\eps_{\a},\eps_{\b})=(-1,1)$: analogously to the previous case, only if $\a\perp\b$ then $S=\cR_{\b}$.
        \item[--] Case $(\eps_{\a},\eps_{\b})=(-1,-1)$: in this case we exchange both $\a$ and $\b$ to $-\a$ and $-\b$ respectively, in the plane spanned by the two axes. This is a $\pi$ rotation around the $\c=\frac{\a\times\b}{|\a\times\b|}$ axis, hence $S=\cR_{\c}$.
    \end{itemize}
\end{proof}

For the most demanding case $\a=(0,1,0)$ and $\b=(0,0,1)$ we define $\Sigma := \lbrace \sfx, \sfy, \sfz \rbrace $, $\Sigma_\mathsf{D}:= \Sigma\cup \lbrace \mathsf{D} \rbrace$ and the set
\begin{equation}
    \cS:=\bigcup_{\sigma\in \lbrace   \sfx, \mathsf{y}, \sfz, \mathsf{D} \rbrace  } \mathrm{R}_\sigma, \qquad \mathrm{R}_\sigma = \lbrace   (p,q)\in  \mathsf{X}\times \mathsf{X} :   q=\mathcal{R}_\sigma p \rbrace,
\end{equation}
with $\cR_\mathsf{D}$ the identity map. $\cS$ denotes the subset of $\mathsf{X}\times \mathsf{X}$ that is almost-surely invariant under the two-point process while $\mathrm{R}_\sigma$ and $\mathrm{R}_{\sigma'}$ are mutually disjoint, for all $\sigma,\sigma'\in \Sigma_\mathsf{D}$ with $\sigma\neq \sigma'$. 
We now show that the two-point process is $\mathrm{W}$-geometrically ergodic on the phase space $\mathsf{X}^{(2)}=\mathsf{X}\times \mathsf{X}\setminus \cS$ using the abstract Harris Theorem \ref{thm:abstractHarris}.
For this, we first prove that the two-point chain admits an open small set in Proposition \ref{prop:smallsetTPP}. Next, we show the topological irreducibility in Proposition \ref{prop:irredTPP}. Lastly, since the phase space $\mathsf{X}^{(2)}$ is not compact, we construct in Proposition \ref{prop:globaldriftTPP} a function $\widetilde{\mathrm{W}}$ that satisfies a Lyapunov-Foster drift condition for the two-point process.

\subsection{Open small set}
We rely on the LARC framework.
\begin{proposition}\label{prop:LARCTPP}
For non-parallel axes $\a,\b\in\Sph$, there exists a point $(\x_0,\y_0)\in \mathsf{X}^{(2)}$ such that \linebreak $\Lie_{(\x_0,\y_0)}(\lbrace   u^{(2)}_{\a},u^{(2)}_{\b}\rbrace )$ has dimension 4. More precisely, 
\begin{align}
    X^{(2)}_1(\x,\y) &:= u_\a^{(2)}(\x,\y), \quad X^{(2)}_2(\x,\y) := u_\b^{(2)}(\x,\y), \\ 
    X^{(2)}_3(\x,\y) &:= [X^{(2)}_1,X^{(2)}_2](\x,\y), \quad X^{(2)}_4(\x,\y) := [X^{(2)}_1,X^{(2)}_3](\x,\y), 
\end{align}
are linearly independent for some $(\x_0,\y_0)\in \mathsf{X}^{(2)}$.
\end{proposition}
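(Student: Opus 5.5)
Since linear independence of finitely many smooth vector fields at a point is an open condition, and $\mathsf{X}^{(2)}$ is open in $\Sph\times\Sph$, it suffices to exhibit one explicit point $(\x_0,\y_0)\in\mathsf{X}^{(2)}$ at which the four vectors $X^{(2)}_i(\x_0,\y_0)\in T_{\x_0}\Sph\times T_{\y_0}\Sph$ are independent. We use the normalization $\b=(0,0,1)$, $\a=(0,a_2,a_3)$ with $a_2\neq0$. The bracket of product vector fields splits componentwise, so
\begin{align}
X^{(2)}_3(\x,\y)=(w_3(\x),w_3(\y)),\qquad X^{(2)}_4(\x,\y)=(w_4(\x),w_4(\y)),
\end{align}
with $w_1=u^\a$, $w_2=u^\b$, $w_3=[w_1,w_2]$, $w_4=[w_1,w_3]$ as in Proposition \ref{prop:LARCtoplyap}. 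The closed form of $[u^\a,u^\b]$ from Lemma \ref{lemma:onepointbracket} gives $w_3$ explicitly, and one further ambient bracket $w_4=Dw_3\,w_1-Dw_1\,w_3$ gives $w_4$.

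The natural first choice is $\x_0=(\frac1{\sqrt2},\frac1{\sqrt2},0)$, where Proposition \ref{prop:LARCtoplyap} already lists
\begin{align}
w_1(\x_0)=-\tfrac{a_2^2}{2}\bfe_1-\tfrac{\sqrt2}{2}a_2a_3\bfe_2,\quad w_2(\x_0)=0,\quad w_3(\x_0)=\tfrac{a_2^2}{2}\bfe_2,\quad w_4(\x_0)=-\tfrac{3\sqrt2}{4}a_2^3a_3\bfe_2 .
\end{align}
Take $\y_0$ a generic point, off $F$ and off the great circles $\{\a\cdot\y=0\}$, $\{\b\cdot\y=0\}$, $\{\c\cdot\y=0\}$, and with $\y_0\neq\mathcal R_\sigma\x_0$. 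Then $\y_0\notin\cS$ and $w_1(\y_0),w_2(\y_0)$ already span $T_{\y_0}\Sph$ by the cross-product formula in Lemma \ref{lemma:onepointbracket}.

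Suppose $\sum c_iX^{(2)}_i(\x_0,\y_0)=0$. The $\bfe_1$-component at $\x_0$ forces $c_1=0$. The $\bfe_2$-component at $\x_0$ gives $\frac{a_2^2}{2}c_3-\frac{3\sqrt2}{4}a_2^3a_3c_4=0$. At $\y_0$ we get $c_2w_2(\y_0)+c_3w_3(\y_0)+c_4w_4(\y_0)=0$, so the problem reduces to a $4\times3$ linear system. It is enough to show that $w_2(\y_0)$ and the combination $w_3(\y_0)+\frac{\sqrt2}{4}\cdot 3a_2a_3\,w_4(\y_0)$ (after solving $c_3$ in terms of $c_4$) are linearly independent. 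This is equivalent to the non-vanishing of a single determinant, the area form $\mu_{\y_0}\big(w_2,\;3a_2a_3\tfrac{\sqrt2}{2}w_4+w_3\big)(\y_0)$.

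\textbf{Main obstacle.} In the orthogonal case $a_3=0$ the point $\x_0$ degenerates, since $w_4(\x_0)=0$. There the reduced condition asks that $w_2(\y_0),w_4(\y_0)$ span and $c_3=0$; the latter follows because only $X_3$ then has a nonzero $\x_0$-component besides $X_1$. The remaining task, common to both cases, is checking that this determinant is not identically zero as a function of $\y$. It is a real-analytic function on the connected set $\Sph\setminus F$, so it suffices to evaluate it at one convenient point, for instance $\y=(\frac1{\sqrt3},\frac1{\sqrt3},\frac1{\sqrt3})$ rotated suitably, or at a point on the circle $\b\cdot\y=0$, where $w_2$ and its derivatives simplify. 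Failing that, one can instead vary $\x_0$ along $\{\b\cdot\x=0\}$ and use $\y_0$ with $u^\b(\y_0)=0$ symmetrically, so that $X_2^{(2)}$ lives only in the first factor. The genericity of the final choice then avoids the finitely many invariant graphs $\mathrm R_\sigma$, completing the proof.
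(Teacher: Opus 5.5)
Your reduction at $\x_0=(\tfrac1{\sqrt2},\tfrac1{\sqrt2},0)$ is a workable strategy, but the proposal stops where the proposition's content begins. You never exhibit a $\y_0$ at which $w_2(\y_0)$ and the surviving combination are independent. Noting that the determinant is real-analytic on a connected set only turns one witness into a generic set of witnesses; it does not produce the witness.

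Your two concrete suggestions cannot work. On the circle $\{\b\cdot\y=0\}$ one has $w_2(\y)=u^\b(\y)=0$, so your determinant vanishes identically there. If both points lie on $\{\b\cdot\x=0\}$, then $X^{(2)}_2(\x_0,\y_0)=0$ outright.

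There is also an algebra slip. The $\bfe_2$-component at $\x_0$ gives $c_3=\tfrac{3\sqrt2}{2}a_2a_3\,c_4$. So the vector to test against $w_2(\y_0)$ is $\tfrac{3\sqrt2}{2}a_2a_3\,w_3(\y_0)+w_4(\y_0)$, with the factor on $w_3$, not on $w_4$. Your version reduces to $w_3(\y_0)$ when $a_3=0$, which contradicts your own orthogonal-case discussion. With the correct form, $a_3=0$ is not a degeneration: the test vector simply becomes $w_4(\y_0)$.

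The missing idea, which the paper uses, is to put the second point on the \emph{other} stagnation circle $\{\a\cdot\y=0\}$. There $w_1(\y_0)=0$ and $w_3(\y_0)$ is a multiple of $\a\times\y_0\perp\a$. Hence, up to sign, $w_4(\y_0)=Dw_1(\y_0)w_3(\y_0)=(\a\cdot w_3(\y_0))(\a\times\y_0)=0$. So $X^{(2)}_2$ lives only in the $\y$-factor, $X^{(2)}_1$ and $X^{(2)}_4$ live only in the $\x$-factor, and the check becomes triangular.

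Concretely, take $\y_0=(\tfrac{\sqrt3}{2},-\tfrac{a_3}{2},\tfrac{a_2}{2})$. Then $w_2(\y_0)=\tfrac{a_2}{4}(a_3,\sqrt3,0)$ and $w_3(\y_0)=\pm\tfrac{\sqrt3a_2^2}{4}(\tfrac12,\tfrac{\sqrt3a_3}{2},-\tfrac{\sqrt3a_2}{2})$, whose third component is nonzero, so $c_2=c_3=0$. Then $c_1=c_4=0$ follows because $w_4(\x_0)=-\tfrac{3\sqrt2}{4}a_2^3a_3\bfe_2\neq0$ for $a_3\neq0$. This is exactly the paper's non-orthogonal configuration with the roles of $\x_0$ and $\y_0$ swapped.

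For $a_3=0$ this witness fails, and a separate computation is needed. With $\a=(0,1,0)$, $\b=(0,0,1)$ and $\y=(x,y,z)$, one finds $\y\cdot\big(w_2(\y)\times w_4(\y)\big)=2xyz(1-2x^2)$. This is nonzero, e.g., at $\y_0=\tfrac1{\sqrt3}(1,1,1)$, and $(\x_0,\y_0)\in\mathsf{X}^{(2)}$. The paper instead uses the explicit pair $(\tfrac{\sqrt3}{2},0,\tfrac12)$, $(\tfrac34,\tfrac{\sqrt3}{4},\tfrac12)$. Without an explicit evaluation of this kind in each case, the proposal does not prove the statement.
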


\begin{proof}
Let $w_1:=u^\a$, $w_2:=u^\b$, $w_3:= [w_1,w_2]$ and $w_4=[w_1,w_3]$. First, for $\a=(0,1,0)$ and $\b=(0,0,1)$ we have
\begin{equation}
    w_3(\x)=(x(y^2-z^2),y(z^2-x^2),z(x^2-y^2)),
\end{equation}
and hence
\begin{equation}
    D_\x w_3(\x)=
    \begin{pmatrix}
        y^2-z^2 & 2xy & -2xz \\
        -2xy & z^2-x^2 & 2yz \\
        2xz & -2yz & x^2-y^2
    \end{pmatrix}.
\end{equation}
With this, we compute
\begin{equation}
    w_4(\x) =D_\x w_3(\x)u^\a(\x)-D_\x u^\a(\x)w_3(\x)=-(2yz(2z^2-1),4xy^2z,2xy(2x^2-1)).
\end{equation}
At $\x_0=(\frac{\sqrt{3}}{2},0,\frac12)$ and $\y_0=(\frac34,\frac{\sqrt{3}}{4},\frac12)$ we have $(\x_0,\y_0)\in \mathsf{X}^{(2)}$ from which
\begin{align}
    X^{(2)}_1(\x_0,\y_0)&= \left( 0,0,0, \frac{\sqrt{3}}{8},0,-\frac{3\sqrt{3}}{16} \right) ,\\
    X^{(2)}_2(\x_0,\y_0)&=\left(0,\frac{\sqrt{3}}{4},0 , -\frac{\sqrt{3}}{8},\frac38,0\right),\\
    X^{(2)}_3(\x_0,\y_0)&=\left(-\frac{\sqrt{3}}{8},0,\frac38 , -\frac{3}{64},-\frac{5\sqrt{3}}{64},\frac{3}{16}\right),\\
    X^{(2)}_4(\x_0,\y_0)&=\left( 0,0,0 , \frac{\sqrt{3}}{8},-\frac{9}{32},-\frac{3\sqrt{3}}{64}\right).\\
\end{align}
These vectors are linearly independent: if $\sum_{i=1}^4 c_i X^{(2)}_i = 0$, from the first three components we first observe that First, $c_2=c_3=0$. Then, as $X^{(2)}_4$ has non-zero fifth component we deduce that $c_4=0$ and finally $c_1 = 0$.

For the case of non-orthogonal axes, recall that we may assume without loss that $\a=(0,a_2,a_3)$ with $a_2,a_3 \neq 0$ and $a_2^2+a_3^2=1$ while $\b = e_3$. For $\x_0=(\frac{\sqrt{3}}{2},-\frac{a_3}{2},\frac{a_2}{2})$ we now set $\y_0=\left( \frac{1}{\sqrt{2}}, \frac{1}{\sqrt{2}}, 0 \right)$. There holds $(\x_0,\y_0)\in \mathsf{X}^{(2)}$ and we have
\begin{align}
    X^{(2)}_1(\x_0,\y_0)&= \left( 0,0,0, -\frac{a_2a_3}{2}, \frac{a_2a_3}{2}, -\frac{a_2^2}{2} \right) ,\\
    X^{(2)}_2(\x_0,\y_0)&=\left( \frac{a_2a_3}{4}, \frac{a_2\sqrt{3}}{4}, 0, 0 , 0, 0\right),\\
    X^{(2)}_3(\x_0,\y_0)&=\left( -\frac{a_2^2\sqrt{3}}{8}, -\frac{3a_2^2a_3}{8}, \frac{3a_2^3}{8}, a_2^2\frac{\sqrt{2}}{4}, -a_2^2\frac{\sqrt{2}}{4}, 0 \right),\\
    X^{(2)}_4(\x_0,\y_0)&=\left( 0,0,0 , -\frac{3a_2^3a_3}{4}, \frac{3a_2^3a_3}{4}, 0 \right).\\
\end{align}
These vectors are linearly independent as well: If $\sum_{i=1}^4 c_i X^{(2)}_i= 0$, since $X^{(2)}_3$ is the only vector with non-trivial third component we have that $c_3=0$. Next, we deduce that $c_2=0$ since $a_2\neq 0$ and both $X^{(2)}_1$ and $X^{(2)}_4$ have trivial projection onto the first three components. Similarly, $c_1=0$ since the sixth component of $X^{(2)}_1$ is non-zero and lastly $c_4=0$ because $X^{(2)}_4\neq 0$ as we assumed that $a_3\neq 0$.
\end{proof}

The existence of an open small set for the two-point process is obtained from the LARC.
For $u\in \mathfrak{X}(\Sph)$, let $\phi_t^u$ denote the flow map of $u$ on $\Sph$ and let $\overline{u}$ denote the smooth radial extension of $u$ in an open spherical shell $U_0\subset\R^3$ containing $\Sph$. For the sake of simplicity, we still denote it $u$. We next define the extension $v^{(2)}_u:U_0\times U_0\rightarrow \R^3\times\R^3$ by
\begin{align}
    v^{(2)}_u(\x,\y) = (u(\x),u(\y)),
\end{align}
for all $\x,\y\in U_0$. Let $\Phi^{(2),u}_t(\x,\y):U_0\times U_0\rightarrow U_0\times U_0$ be the flow map defined by $v^{(2)}_u$, that is
\begin{align}
    \begin{cases}
        \frac{\d}{\d t} \Phi^{(2),u}_t(\x,\y) =  v_u^{(2)}(\Phi^{(2),u}_t(\x,\y)), & \\
        \Phi^{(2),u}_0(\x,\y) = (\x,\y) &
    \end{cases}
\end{align}
and observe that $\Phi^{(2),u}_t(\x,\y)= (\phi_t^u(\x),\phi_t^u(\y))$ for all $\x,\y\in \Sph$ and all $t\in \R$.

\begin{proposition}\label{prop:smallsetTPP}
    The two-point process admits an open small set.
\end{proposition}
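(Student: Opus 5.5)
The plan mirrors the projective case: combine the bracket computation of Proposition~\ref{prop:LARCTPP} with the word construction of Proposition~\ref{prop:word-construction}, and then conclude with Lemma~\ref{lemma:blackboxsmallset}.

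First, take the point $(\x_0,\y_0)\in\mathsf{X}^{(2)}$ provided by Proposition~\ref{prop:LARCTPP}. The extended fields satisfy $v^{(2)}_{u}=u^{(2)}$ on $\Sph\times\Sph$, and they are tangent there. Brackets of fields tangent to the invariant submanifold $\Sph\times\Sph$ are again tangent and coincide with the intrinsic brackets, exactly as in the one-point discussion of Section~\ref{sec:DiffGeom}. Hence
\begin{equation}
X^{(2)}_1,\;X^{(2)}_2,\;X^{(2)}_3,\;X^{(2)}_4
\end{equation}
evaluated at $(\x_0,\y_0)$ are four linearly independent vectors in $T_{\x_0}\Sph\times T_{\y_0}\Sph$, which has dimension $4$. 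Working in the ambient chart $U_0\times U_0\subset\R^6$, I will apply Proposition~\ref{prop:word-construction} with $\cV=\{v^{(2)}_{u^\a},v^{(2)}_{u^\b}\}$, $r=4$ and the multi-indices $(1),(2),(1,2),(1,1,2)$. This yields, for every $\ep_0>0$, a word $\varpi_*\in([-\ep_0,\ep_0]^2)^q$ such that $D_\varpi\Phi^{(2)}_{\varpi_*}(\x_0,\y_0)$ has rank at least $4$.

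Next I need to recast the words in the form of the random dynamical system. A word is an arbitrary finite composition of the time-one maps $f^{\a}_{\omega}$ and $f^{\b}_{\omega}$. Since $f_\omega=f^\a_{\omega_2}\circ f^\b_{\omega_1}$ and zero amplitudes act as the identity, any word can be rewritten as $\Psi^{(2)}_{(\x_0,\y_0)}(\underline{\omega}^n)$ for the two-point chain. To do this I insert $\omega_i=0$ in the slots that are not used; consecutive flows of the same field merge additively. The map to the reduced amplitude vector is linear and surjective, so the rank is preserved. All amplitudes are at most $\ep_0<N$, so by Remark~\ref{rem:negative-small-times} they lie in the interior of $[-N,N]$. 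There the uniform density $\varrho_0=(2N)^{-2n}$ is bounded below in an $\ep$-neighbourhood, which gives condition (1) of Lemma~\ref{lemma:blackboxsmallset}.

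The flows preserve $\Sph\times\Sph$, so the image of the differential lies in $T_{\Psi(\varpi_*)}(\Sph\times\Sph)$, which has dimension $4$. Rank $4$ therefore means surjectivity, and $\Psi^{(2)}_{(\x_0,\y_0)}$ is a submersion at $\varpi_*$. It remains to check that $(\x_0,\y_0)$ lies in the support of $\pi^{(2)}=\Leb\otimes\Leb$ restricted to $\mathsf{X}^{(2)}$. This is immediate because $\mathsf{X}^{(2)}$ is open and the support is all of it. Lemma~\ref{lemma:blackboxsmallset} then gives an open $P^{(2),n}$-small set.

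The main obstacle is the bookkeeping that the iterates stay away from $\cS$ and from the fixed points, so that all compositions are defined in the phase space. I will handle it by continuity: $\cS$ and $F$ are closed, and $(\x_0,\y_0)$ is at positive distance from them. For $\ep_0$ small, all intermediate points stay in a compact neighbourhood of $(\x_0,\y_0)$ inside $\mathsf{X}^{(2)}$, and the Taylor expansion of Lemma~\ref{lem:commutator-loop-taylor} is valid uniformly on that compact set.
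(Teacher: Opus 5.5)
Your proposal is correct and follows the paper's proof. Both arguments take the LARC point from Proposition~\ref{prop:LARCTPP} and apply Proposition~\ref{prop:word-construction} to get rank $4$ into the $4$-dimensional tangent space of $\Sph\times\Sph$, hence a submersion, and then invoke Lemma~\ref{lemma:blackboxsmallset}. Your zero-padding step, which matches the word's $(\a,\b)$ ordering to $f_\omega=f^\a_{\omega_2}\circ f^\b_{\omega_1}$, and your check of the density and support conditions spell out details the paper leaves implicit; the paper relies on invariance of $\mathsf{X}^{(2)}$ under the elementary flows where you use continuity, and either works.
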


\begin{proof}
    We shall use Lemma \ref{lemma:blackboxsmallset}. For this, let $\cV = \lbrace   v_{u^\a}^{(2)}, v_{u^\b}^{(2)} \rbrace  $ and $\Phi^{(2),u^\a}_t$, $\Phi^{(2),u^\b}_t$, their associated flow maps. From Proposition \ref{prop:LARCTPP} we obtain that $\lbrace   X^{(2)}_i(\x,\y) \rbrace_{i=1}^4$ is a set of linearly independent vector fields at some $(\x_0,\y_0)$. Thus, from Proposition \ref{prop:word-construction} we derive the existence of a word $\lbrace   \varpi \rbrace_{i=1}^4$ and $\ep_0>0$ such that $D_\varpi \Phi^{(2)}_{\varpi_*}(\x_0,\y_0)$ has rank at least 4, for $\varpi_* = (\varpi_1(a_1^*),..., \varpi_4(a_4^*))\in [-\ep_0, \ep_0]^{d_0}$, for some $d_0>1$ and all $a_*=(a_1^*,...,a_4^*)$ with  $0 < \Vert a_* \Vert < \ep_0$. Additionally, since $\Phi_t^{(2),u}(\x_0,\y_0)\in  \mathsf{X}^{(2)}$ for all $t\in \R$, for $u\in \lbrace   u^\a, u^\b \rbrace  $, we have that
    \begin{align}
        D_\varpi \Phi^{(2)}_{\varpi_*}(\x_0,\y_0) :\R^{d_0}\rightarrow T_{\Phi^{(2)}_{\varpi_*}(\x_0,\y_0)}  \mathsf{X}^{(2)} 
    \end{align}
    has rank at least 4. Since $\text{dim }T_{\Phi^{(2)}_{\varpi_*}(\x_0,\y_0)} \mathsf{X}^{(2)} = 4$, we conclude that $\Phi^{(2)}_{\varpi}(\x_0,\y_0)$ is a submersion at $\varpi=\varpi_*$ and Lemma \ref{lemma:blackboxsmallset} yields the existence of an open small set.
\end{proof}

\subsection{Topological irreducibility}
Since the set $\cS$ is composed of mutually disjoint subsets that are each invariant under the two-point process, we prove the topological irreducibility of the two-point chain away from $\cS$. We argue for the orthogonal case $\a=(0,1,0)$ and $\b=(0,0,1)$.

\begin{proposition}\label{prop:irredTPP}
The two-point process is topologically irreducible in $\mathsf{X}^{(2)}$.
\end{proposition}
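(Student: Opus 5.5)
We verify the hypotheses of Lemma~\ref{lemma:blackboxirreducibility}. Given $(\x,\y)\in\mathsf{X}^{(2)}$ and a non-empty open $U^{(2)}\subset\mathsf{X}^{(2)}$, we need a finite word $\underline{\omega}^n$ in the interior of $\mathrm{Supp}(\PP_0^n)$ with $f^{(2)}_{\underline{\omega}^n}(\x,\y)\in U^{(2)}$. The density condition is automatic because the law is uniform on $[-N,N]^{2n}$ and we may take all amplitudes strictly inside the box. Inverses are available through $(f^{\bfe}_\omega)^{-1}=f^{\bfe}_{-\omega}$, and small times through Remark~\ref{rem:negative-small-times}. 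So it suffices to show that the orbit of $(\x,\y)$ under the two-point flows $\Phi^{(2),u^\a}_t$ and $\Phi^{(2),u^\b}_t$, $t\in\R$, is dense in $\mathsf{X}^{(2)}$. Once that holds, an openness argument gives a word with amplitudes in $(-N,N)$ landing in $U^{(2)}$. Large times are handled by splitting each time into pieces of size at most $N$.

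\textbf{Step 1 (generic bracket-generating set).} The plan is to apply the Rashevsky--Chow Theorem~\ref{thm:RashevskyChow} on a connected open set $\mathrm{M}^{(2)}\subset\mathsf{X}^{(2)}$ where $\{u^{(2)}_\a,u^{(2)}_\b\}$ is bracket-generating. The function
\begin{equation}
D(\x,\y):=\det\big(X^{(2)}_1,X^{(2)}_2,X^{(2)}_3,X^{(2)}_4\big)(\x,\y),
\end{equation}
computed in local charts of $\Sph\times\Sph$, is real-analytic. By Proposition~\ref{prop:LARCTPP} it is not identically zero. Its zero set $Z$ is therefore a proper analytic subvariety, of codimension at least one. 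Since $\mathsf{X}^{(2)}$ is connected (we remove from a connected 4-manifold a finite set of points and the 2-dimensional graphs $\mathrm{R}_\sigma$), the complement of $Z$ is open and dense. However, it need not be connected.

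So I will instead take $\mathrm{M}^{(2)}$ to be the set of points where $\Lie_{(\x,\y)}$ has full rank using brackets of arbitrary order. This set is open and contains $\mathsf{X}^{(2)}\setminus Z$. The key claim is that it equals all of $\mathsf{X}^{(2)}$. The alternative is that the Lie algebra rank drops on an invariant analytic set, but by Nagano's theorem (analytic vector fields) the orbits are then analytic leaves. Hence it suffices to show that the only proper invariant analytic subsets of dimension $\le 3$ are contained in $\cS\cup(F\times\Sph)\cup(\Sph\times F)$.

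\textbf{Step 2 (classifying low-dimensional orbits).} Suppose an orbit $\mathcal{O}$ in $\mathsf{X}^{(2)}$ has dimension at most $3$. It projects onto single one-point orbits in each factor, and these are open dense in $\mathsf{X}$ by Lemma~\ref{lemma:onepointbracket} and Proposition~\ref{prop:smallsetOPP}. So $\mathcal{O}$ is locally the graph of a correspondence $\y=\Theta(\x)$ of dimension $2$ or $3$. If $\dim\mathcal{O}=2$, then $\Theta$ is a local diffeomorphism commuting with both flows. Since the flows are rotations by angles $\omega(\a\cdot\x)$, $\omega(\b\cdot\x)$, comparing the invariant functions $\a\cdot\x$ and $\b\cdot\x$ forces $\Theta$ to preserve them up to sign, so $\Theta$ is one of the isometries of Proposition~\ref{prop:symmetries}, and $\mathcal{O}\subset\cS$.

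If $\dim\mathcal{O}=3$, the orbit is a hypersurface carrying an invariant function $H(\x,\y)$. Differentiating along $u^{(2)}_\a$ with amplitudes free in $\omega$ shows that $H$ is invariant under all brackets. At the explicit point of Proposition~\ref{prop:LARCTPP}, and by analyticity everywhere, this contradicts full rank. Away from $\cS$ the orbit is therefore open, and an open orbit of a connected manifold that is invariant in this way is everything.

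\textbf{Step 3 (conclusion).} Chow's theorem then gives that the orbit of every $(\x,\y)\in\mathsf{X}^{(2)}$ is $\mathsf{X}^{(2)}$. Continuity of the composition in the amplitudes lets us perturb to amplitudes in the open box, and Lemma~\ref{lemma:blackboxirreducibility} yields irreducibility.

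\textbf{Main obstacle.} The hardest part is Step 2, especially excluding 3-dimensional invariant hypersurfaces. If the general argument fails, the fallback is constructive, in the style of Proposition~\ref{prop:topirredPJP}. First move $\x$ to $\x_0=(\tfrac{\sqrt3}{2},0,\tfrac12)$. Then use rotations of $\b$ that fix $\x_0$, namely times multiple of $2\pi/(\b\cdot\x_0)$, together with similar returning loops for $\a$, to steer $\y$ while keeping $\x$ at $\x_0$. This shows that the stabiliser words act with dense orbits on $\mathsf{X}\setminus\{\mathcal{R}_\sigma\x_0\}$, using that the ratios of rotation angles at the two points are generically irrational.
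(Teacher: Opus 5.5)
Your reduction (splitting large times, inserting zero amplitudes, inverses via $-\omega$, the density lower bound for the uniform law) is fine. So is the idea of proving that every orbit of $\{u^{(2)}_\a,u^{(2)}_\b\}$ in $\mathsf{X}^{(2)}$ is open. The gap is in the step that carries all the weight: the $3$-dimensional case of Step~2.

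\textbf{The $3$-dimensional case.} There is no invariant function $H$ on an open set. A $3$-dimensional orbit $\mathcal{O}$ only has a local defining function with $XH=0$ \emph{on} $\{H=0\}$ for $X\in\Lie(\{u^{(2)}_\a,u^{(2)}_\b\})$, and that merely restates that $\mathcal{O}$ is an orbit. By Nagano's theorem, $\mathcal{O}$ lies inside the rank-drop set $Z_\infty=\{(\x,\y):\dim\Lie_{(\x,\y)}<4\}$. Proposition~\ref{prop:LARCTPP} plus analyticity only show that $Z_\infty$ is a nowhere dense analytic subset. Nothing in your argument prevents $Z_\infty$ from containing invariant $3$-dimensional pieces. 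For example, on $\R^2$ the analytic fields $\partial_y$ and $x\partial_x$ have full rank off $\{x=0\}$, yet $\{x=0\}$ is an orbit. So ``away from $\cS$ the orbit is therefore open'' is exactly the content of the proposition, not a consequence of the preceding lines.

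\textbf{The $2$-dimensional case.} This is also only asserted. Local conjugacies of $u^\a$ alone form an infinite-dimensional family. To force $\Theta$ to preserve $a=\a\cdot\x$ and $b=\b\cdot\x$ up to sign, you need an actual computation with both fields. In those coordinates, $u^\a=-a\gamma\,\partial_b$ and $u^\b=b\gamma\,\partial_a$ with $\gamma=\x\cdot(\a\times\b)$.

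\textbf{Your fallback.} This is essentially the paper's proof, but as sketched it misses the one non-degeneracy step where $(\x,\y)\notin\cS$ is really used. Take the orthogonal case and $\x_0=(\tfrac{\sqrt3}{2},0,\tfrac12)$. Since $\a\cdot\x_0=0$, the returning loops are every $f^2_t$ together with $f^3_{4\pi k}$; the latter rotates $\y$ about the $z$-axis by $4\pi k\,(\b\cdot\y)$. For $\y=(\pm\tfrac{\sqrt{15}}{4},0,\tfrac14)$ these loops generate a two-point orbit, even though $(\x_0,\y)\in\mathsf{X}^{(2)}$. So your claimed density on $\mathsf{X}\setminus\{\cR_\sigma\x_0\}$ is false for these words. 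You must first steer so that the second point avoids such exceptional configurations.

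\textbf{How the paper handles it.} The base points are $\tfrac{1}{\sqrt2}(\pm1,1,0)$, which are fixed by every $f^3_\omega$ and by $f^2_{2\pi\sqrt2}$. Step~3 shows that for one of the two amplitudes $\omega_S=\tfrac{\pi\sqrt2}{2}$, $\omega_N=\tfrac{3\pi\sqrt2}{2}$ that bring the first point to a base point, the second point has non-zero $z$-coordinate. Otherwise the pair would lie in $\Delta$ or in some $\mathrm{R}_\sigma$, or the second point would be fixed. The paper then proceeds as follows.
\begin{itemize}
\item An $f^3$-rotation makes the action of $f^2_{2\pi\sqrt2}$ on the second point an irrational rotation.
\item Iterates of $f^2_{2\pi\sqrt2}$ bring its $z$-coordinate near that of the target.
\item A further $f^3$-rotation adjusts $(x,y)$.
\item The word sending the base point to $q_1$, together with continuity, finishes the argument.
\end{itemize}
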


\begin{proof}
    Let $(q_1, q_2)\in \mathsf{X}^{(2)}$ and $(p_1,p_2)\in  \mathsf{X}^{(2)}$. For all $\varepsilon>0$ we shall find a sequence of times $\omega_n$ such that $(f_{\omega_n}(p_1), f_{\omega_n}(p_2))\in (B_{\ep}(q_1), B_{\ep}(q_2))$. We shall achieve this approximate controllability in six steps, taking advantage of the stagnation sets of each $u^\a$ and $u^\b$.

    \noindpar{Step 1} Since the one-point process is exactly controllable, we can find $\omega^1 =(\omega^1_{1},\omega^1_{2})$ such that $\mathsf{p}_1 := f_{\omega^1}(p_1) = \frac{1}{\sqrt{2}}(0,1,1)$. Given $\mathsf{p}_2:=f_{\omega^1}(p_2)$, we denote $\mathsf{p}_2=(x,y,z)$.

    \noindpar{Step 2} Let $\omega_S=\frac{\pi\sqrt{2}}{2}$ and let $\mathsf{p}_j^S= f_{\omega_S}^2(\mathsf{p}_j)$ for $j=1,2$. We have that
    \begin{align}
        \mathsf{p}_1^S = \frac{1}{\sqrt{2}}\begin{pmatrix}
            1 \\ 1 \\ 0
        \end{pmatrix}, \quad \mathsf{p}_2^S = \begin{pmatrix}
        {x}\cos(\omega_S{y}) + {z}\sin(\omega_S{y}) \\
        {y} \\
        -{x}\sin(\omega_S{y}) + {z}\cos(\omega_S{y}) 
        \end{pmatrix}
    \end{align}
    Similarly, let $\omega_N=\frac{3\pi\sqrt{2}}{2}$ and $\mathsf{p}_j^N= f_{\omega_N}^2(\mathsf{p}_j)$ for $j=1,2$. There now holds
    \begin{align}
        \mathsf{p}_1^N = \frac{1}{\sqrt{2}}\begin{pmatrix}
            -1 \\ 1 \\ 0
        \end{pmatrix}, \quad \mathsf{p}_2^N = \begin{pmatrix}
        {x}\cos(\omega_N{y}) + {z}\sin(\omega_N{y}) \\
        {y} \\
        -{x}\sin(\omega_N{y}) + {z}\cos(\omega_N{y}) 
        \end{pmatrix}.
    \end{align}
    
    \noindpar{Step 3} We now claim that since $(p_1,p_2)\in \mathsf{X}^{(2)}$, we have either $(\mathsf{p}_2^S)_z\neq 0$ or $(\mathsf{p}_2^N)_z \neq 0$. Indeed, assume towards a contradiction that both are zero. Then, for $\alpha = \omega_S {y}$ we have $\omega_N{y} = 3\alpha$ and
    \begin{align}
        \begin{pmatrix}
            -\sin(\alpha) & \cos(\alpha) \\
            -\sin(3\alpha) & \cos(3\alpha)
        \end{pmatrix} \begin{pmatrix}
            {x} \\ {z}
        \end{pmatrix} = 0.
    \end{align}
The determinant of the above linear system is $\sin(3\alpha)\cos(\alpha) - \sin(\alpha)\cos(3\alpha) =\sin(2\alpha)$.

\diampar{Non-zero determinant} We thus obtain the trivial solution ${x} = {z} = 0$, which forces ${y}=\pm 1$, namely $\mathsf{p}_2\not \in \mathsf{X}$. 

\diampar{Zero determinant} We thus have $2\alpha = \pi k$ for some $k\in \Z$, namely
\begin{align}
    {y} = \frac{k}{\sqrt{2}}
\end{align}
for some $k\in \Z$. Since $\mathsf{p}_2\in \mathsf{X}$, we have ${y}\in(-1,1)$ so that $|k|\leq 1$.
\begin{itemize}
    \item[--] If $k=0$, this gives ${y} = 0$, so that $\mathsf{p}_2 =\mathsf{p}_2^S = \mathsf{p}_2^N$. If ${z} =0$, we then see that $|{x}| = 1$ and thus $\mathsf{p}_2\not \in \mathsf{X}$.
    \item[--] If $k=\pm 1$, we then find that ${y} = \pm \frac{1}{\sqrt{2}}$ and thus $\sin(\omega_S{y})= -\sin(\omega_N{y}) = \pm 1$. In particular,
    \begin{align}
        (\mathsf{p}_2^S)_z = \mp {x}, \quad (\mathsf{p}_2^N)_z = \pm {x}
    \end{align}
    from which we deduce that ${x}=0$. Hence, we obtain that ${z} = \pm\frac{1}{\sqrt{2}}$ and thus
    \begin{align}
        \mathsf{p}_2^S,\mathsf{p}_2^N \in \left\lbrace   \frac{1}{\sqrt{2}}(1,1,0), \frac{1}{\sqrt{2}}(-1,1,0),  \frac{1}{\sqrt{2}}(1,-1,0),  \frac{1}{\sqrt{2}}(-1,-1,0)\right\rbrace  .
    \end{align}
    We shall argue for $\mathsf{p}_2^S$. Now, the first possibility gives $\mathsf{p}_2^S=\mathsf{p}_1^S$, a contradiction with $(\mathsf{p}_1^S,\mathsf{p}_2^S)\not \in \Delta$. The second option gives a contradiction with $(\mathsf{p}_1^S,\mathsf{p}_2^S)\not \in \mathrm{R}_\sfz$, the third possibility contradicts $(\mathsf{p}_1^S,\mathsf{p}_2^S)\not \in \mathrm{R}_\mathsf{y}$ and the last possibility contradicts $(\mathsf{p}_1^S,\mathsf{p}_2^S)\not \in \mathrm{R}_\sfx$.
\end{itemize}
Hence, we have that either $(\mathsf{p}_2^S)_z\neq 0$ or $(\mathsf{p}_2^N)_z \neq 0$. We shall assume from now on that $(\mathsf{p}_2^S)_z\neq 0$ and we set $\widetilde{\mathsf{p}}_1=\mathsf{p}_1^S$ and $\widetilde{\mathsf{p}}_2=\mathsf{p}_2^S$.

\noindpar{Step 4} Thanks to the exact controllability of the one-point process, there exists $\omega^0 =(\omega^0_{1},\omega^0_{2})$ such that $f_{\omega^0}(\widetilde{\mathsf{p}}_1)=q_1$. Let $\mathsf{q}_2 = f_{\omega^0}^{-1}(q_2)$ denote the pre-image of $q_2$ by the map $f_{\omega^0}$. By definition, it is such that $f_{\omega^0}(\mathsf{q}_2) = q_2$ and by continuity of $f_{\omega^0}$, for all $\varepsilon>0$ there exists $\delta>0$ such that $f_{\omega^0}(B_\delta(\mathsf{q}_2)) \subset B_{\varepsilon}(q_2)$. We remark here that $f^{-1}_{\omega_0} = (f_{\omega_1^0}^3)^{-1}\circ (f_{\omega_2^0}^2)^{-1}$ where for fixed $\omega_1^0$ and $\omega_2^0$ both $f_{\omega_1^0}^3$ and $f_{\omega_2^0}^2$ are invertible.

\noindpar{Step 5} We shall now approximate $\mathsf{q}_2$ from $\widetilde{\mathsf{p}}_2$. 
Since $(\widetilde{\mathsf{p}}_2)_z\neq 0$, using the rotation dynamics of $f^3$ around the $z$-axis, there exists $\omega_1^2\in \R$ such that for $\omega^2=(\omega_1^2,0)$ we have $2\pi\sqrt{2}(f_{\omega^2}(\widetilde{\mathsf{p}}_2))_y\not\in \Q$. Note that since $(\widetilde{\mathsf{p}}_1)_z =0$, we preserve  $f_{\omega^2}(\widetilde{\mathsf{p}}_1)=\mathsf{p}_1^2$.

\diampar{Case 1} Assume first that $(\mathsf{q}_2)_z\neq 0$ and assume further that $(\mathsf{q}_2)_z>0$. Since $2\pi\sqrt{2}(f_{\omega^2}(\widetilde{\mathsf{p}}_2))_y\not\in \Q$, for $\omega^3_2= 2\pi\sqrt{2}$ and $\omega^3 = (0,\omega_2^3)$ we have $f_{\omega^3}^n(\widetilde{\mathsf{p}}_1) = \widetilde{\mathsf{p}}_1$, for all $n\geq 1$, while
\begin{align}
    0<(\mathsf{q}_2)_z-\delta/100 < (f_{\omega^3}^m\circ f_{\omega^2}(\widetilde{\mathsf{p}}_2))_z < (\mathsf{q}_2)_z
\end{align}
for some $m\geq 1$ and $\delta/100< (\mathsf{q}_2)_z/2$ small enough. Let $\overline{p}_2 = f_{\omega^3}^m\circ f_{\omega^2}(\widetilde{\mathsf{p}}_2)$, then, for $\delta>0$ smaller if necessary, there exists $\omega^4=(\omega_1^4,0)$ such that 
\begin{align}
    (f_{\omega^4}(\overline{\mathsf{p}}_2))_y\in B_{\delta/25}((\mathsf{q}_2)_y), \quad (f_{\omega^4}(\overline{\mathsf{p}}_2))_x\in B_{\delta/25}((\mathsf{q}_2)_x)
\end{align}
Note that since $(\widetilde{\mathsf{p}}_1)_z =0$, we preserve  $f_{\omega^4}(\widetilde{\mathsf{p}}_1)=\mathsf{p}_1^2$. 

\diampar{Case 2} Assume next that $(\mathsf{q}_2)_z=0$. Then, because $2\pi\sqrt{2}(f_{\omega^2}(\widetilde{\mathsf{p}}_2))_y\not\in \Q$, for $\omega^3_2= 2\pi\sqrt{2}$ and $\omega^3 = (0,\omega_2^3)$ we have $f_{\omega^3}^n(\widetilde{\mathsf{p}}_1) = \widetilde{\mathsf{p}}_1$, for all $n\geq 1$ and
\begin{align}
    0<\delta/200 < |(f_{\omega^3}^m \circ f_{\omega^2}(\widetilde{\mathsf{p}}_2))_z| <  \delta/100
\end{align}
for some $m\geq 1$. Hence, for $\overline{p}_2 = f_{\omega^3}^m\circ f_{\omega^2}(\widetilde{\mathsf{p}}_2)$, there exists $\omega^4=(\omega_1^4,0)$ such that
\begin{align}
    (f_{\omega^4}(\overline{\mathsf{p}}_2))_y\in B_{\delta/25}((\mathsf{q}_2)_y), \quad (f_{\omega^4}(\overline{\mathsf{p}}_2))_x\in B_{\delta/25}((\mathsf{q}_2)_x)
\end{align}
and we still preserve  $f_{\omega^4}(\widetilde{\mathsf{p}}_1)=\mathsf{p}_1^2$ as $(\widetilde{\mathsf{p}}_1)_z =0$.

\noindpar{Step 6} As a result, $f_{\omega^4}(\overline{\mathsf{p}}_2)\in B_\delta (\mathsf{q}_2)$. We now apply $f_{\omega^0}$ to both $\widetilde{\mathsf{p}}_1$ and  $f_{\omega^4}(\overline{\mathsf{p}}_2)$, thus obtaining $f_{\omega_0}(\widetilde{\mathsf{p}}_1) = q_1$ and $f_{\omega^0}\circ  f_{\omega^4}(\overline{\mathsf{p}}_2) \in B_{\varepsilon}(q_2)$. With this, the proof of the proposition is complete.
\end{proof}

\subsection{Drift function for the two-point process}

We have seen in Proposition \ref{prop:symmetries} above that the symmetry set $\cS$ is almost-surely invariant under the two-point process and the measure supported on $\cS$ is invariant. Thus, $P^{(2)}$ cannot be uniformly geometrically ergodic in $\mathsf{X}\times \mathsf{X}$ with respect to $\pi^{(2)}$. Instead, we work in the non-compact phase space $\mathsf{X}^{(2)}$, which requires the introduction of a Lyapunov function $\widetilde{\mathrm{W}}$ that satisfies a Lyapunov-Foster drift condition in order to use the abstract Harris Theorem \ref{thm:abstractHarris}.

To construct this Lyapunov function, we note that both the one-point and projective processes are uniformly geometrically ergodic. Moreover, we have further shown in Proposition \ref{prop:positivetoplyap} that the top Lyapunov exponent is positive. Therefore, appealing to Proposition \ref{prop:blackboxdriftTPP}, there exists $\xi,\gamma \in(0,1)$, $s_*>0$ and $\psi: X\times X\setminus\Delta\rightarrow [1,\infty)$ such that
\begin{align}
    W(p,q) := d_\Sph(p,q)^{-\xi}\psi(p,q)
\end{align}
satisfies
\begin{align}
    \mathbb{E}\left[ W(f_\omega(p), f_\omega(q)) \right] \leq \gamma W(p,q),
\end{align}
for all $(p,q)\in \Delta(s_*)=\lbrace (p,q)\in \mathsf{X}\times \mathsf{X} :  0 < d_\Sph(p,q) < s_* \rbrace$. We next define
\begin{align}
    W_\sigma(p,q) := W(p,\mathcal{R}_\sigma q) 
\end{align}
for $\sigma\in \Sigma$, so that we likewise have
\begin{align}
    \mathbb{E}\left[ W_\sigma(f_\omega(p), f_\omega(q)) \right] \leq \gamma W_\sigma(p,q).
\end{align}
provided that $0 < d_\Sph(p,\mathcal{R}_\sigma q) < s_*$. For $p,q\in \mathsf{X}$, let 
\begin{align}
    \tilde r_\sigma(p,q) := d_{\Sph}(p,\mathcal{R}_\sigma q)
\end{align}
for $\sigma \in \Sigma_\mathsf{D}$ and further define the local Lyapunov function $\mathrm{W}$ for the two-point process by
\begin{align}
    \mathrm{W}(p,q) &:= \chi (\tilde r_\mathsf{D}(p,q))(1-\chi(\tilde r_\sfx(p,q))) (1-\chi(\tilde r_\mathsf{y}(p,q)))(1-\chi(\tilde r_\sfz(p,q)))W(p,q) \\
    &\quad + \chi (\tilde r_\sfx(p,q)) (1-\chi(\tilde r_\mathsf{y}(p,q)))(1-\chi(\tilde r_\sfz(p,q)))W_\sfx(p,q) \\
    &\quad + \chi (\tilde r_\mathsf{y}(p,q))(1-\chi(\tilde r_\sfz(p,q)))W_\mathsf{y}(p,q) \\
    &\quad + \chi(\tilde r_\sfz(p,q))W_\mathsf{z}(p,q),
\end{align}
where $\chi$ is the bump function introduced for the one-point drift function. To construct a function $\widetilde{\mathrm{W}}$ that satisfies the Lyapunov-Foster drift condition for the two-point chain, we note that any compact subset $\widetilde{K}$ of $\mathsf{X}^{(2)}$ avoids not only the invariant set $\cS$, but also avoids the fixed points in both coordinates, $F\times \mathsf{X} \cup \mathsf{X} \times F$. In particular, away from any such compact set $\widetilde{K}$ we shall prove a contraction of $\widetilde{\mathrm{W}}$ in expectation. 
A priori, no such behaviour for $\mathrm{W}$ is guaranteed for points close to  $F\times \mathsf{X} \cup \mathsf{X} \times F$, in contrast with the drift achieved by $\mathrm{W}$ near $\cS$ (combine its definition and Proposition~\ref{prop:blackboxdriftTPP}). To overcome this obstruction, we use the drift function $\mathrm{V}$ for the one-point process and we prove the following.

\begin{proposition}\label{prop:globaldriftTPP}
There exists $\nu >0$ small enough such that 
\begin{align}\label{eq:globaldriftTPP}
    \widetilde{\mathrm{W}}(p,q) := \max \lbrace \mathrm{W}(p,q), \nu \mathrm{V}(p), \nu \mathrm{V}(q) \rbrace
\end{align}
satisfies the Lyapunov-drift condition. More precisely, there exists $\gamma\in(0,1)$, $\beta>0$, $s_0>0$, $r_0 \leq s_0/2$ and $\ep>0$ such that for the compact set 
    \begin{align}
        \widetilde K = \lbrace   (p,q)\in \mathsf{X}^{(2)} : p, q \in K(\ep) \text{ and } \tilde r_\sigma(p,q) \geq r_0, \text{ for all }\sigma\in \Sigma_\sfD \rbrace  
    \end{align}
there holds
    \begin{align}
        \mathbb{E}\left[ \widetilde{\mathrm{W}}(f_\omega(p), f_\omega(q)) \right] \leq \gamma \widetilde{\mathrm{W}}(p,q) + \beta \mathbf{1}_{\widetilde{K}}(p,q)
    \end{align}
    for all $(p,q)\in \mathsf{X}^{(2)}$.
\end{proposition}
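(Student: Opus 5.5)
The plan is a case split according to where $(p,q)$ sits, using the elementary inequality $\mathbb{E}[\max\{A,B,C\}]\le \mathbb{E}A+\mathbb{E}B+\mathbb{E}C$ together with the one-step bounds already available for $\mathrm{W}$ and $\mathrm{V}$. Two preliminary facts will be recorded first. (a) By Proposition~\ref{prop:lyapdriftopp}, $\mathbb{E}\,\mathrm{V}(f_\omega(p))\le \tfrac{95}{100}\mathrm{V}(p)+\beta\mathbf 1_{K(r_0)}(p)$. (b) Since the maps $f_\omega$, $\omega\in[-N,N]^2$, are uniformly bi-Lipschitz (assumption (A2)) and commute with the rotations $\mathcal R_\sigma$, we have $\tilde r_\sigma(f_\omega p,f_\omega q)\asymp \tilde r_\sigma(p,q)$ with constants uniform in $\omega$. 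Hence $\mathrm{W}(f_\omega(p),f_\omega(q))\le C_*\,\mathrm{W}(p,q)$, and likewise $\mathrm{V}(f_\omega p)\le C_*\mathrm{V}(p)$, for a deterministic $C_*$. Because $\mathrm{V}$ is built from the radial functions $r_\sigma^{-\alpha}$, the same reasoning applies to it.

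\emph{Case 1: $(p,q)$ near $\cS$}, i.e.\ $\tilde r_\sigma(p,q)<r_0$ for some $\sigma\in\Sigma_\sfD$. We choose $r_0\le s_0/2$ so small that the images remain in the region where $\mathrm{W}=W_\sigma$ and the drift of Proposition~\ref{prop:blackboxdriftTPP} holds (transported by $\mathcal R_\sigma$). This gives $\mathbb{E}\,\mathrm{W}(f_\omega p,f_\omega q)\le\gamma_0\mathrm{W}(p,q)$. To treat the terms $\nu\mathrm{V}$, we use (a) when $p$ (or $q$) lies near $F$, and otherwise the bound $\nu\mathrm{V}\le\nu C$. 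Since $\mathrm{W}\gtrsim r_0^{-\xi}$ is large here, the max is dominated by $\mathrm{W}$ unless $\nu\mathrm{V}(p)$ is comparable. We take $\gamma=\max(\gamma_0,\tfrac{95}{100})$ plus a small slack, and choose the parameters in the order $s_0, r_0$ first, then $\nu$, then $\ep$.

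\emph{Case 2: one coordinate near $F$}, i.e.\ $p\notin K(\ep)$ (the case of $q$ is symmetric), while $(p,q)$ stays away from $\cS$. Then $\mathrm{W}$ is bounded by some $C(r_0)$, while $\nu\mathrm{V}(p)\ge\nu\ep^{-\alpha}$. We take $\ep$ small enough, depending on $\nu$, that $\nu\ep^{-\alpha}\gg C_*C(r_0)$, so that $\widetilde{\mathrm{W}}(p,q)=\nu\mathrm{V}(p)$ and after one step $\mathrm{W}$ is still negligible. Then $\mathbb{E}\,\widetilde{\mathrm W}\le \mathbb{E}\,\nu\mathrm{V}(f_\omega p)+\mathbb{E}\,\nu\mathrm{V}(f_\omega q)+C_*C(r_0)$. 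If $q$ is also near $F$, both $\mathrm{V}$ terms contract by (a), and their sum is at most $2\max$. This is the main obstacle: a plain sum of two contracting terms only yields $\tfrac{95}{100}\cdot 2$. To fix it I would first try replacing the max by a sum in the definition, i.e.\ working with $\mathrm{W}+\nu\mathrm{V}(p)+\nu\mathrm{V}(q)$, which is comparable to $\widetilde{\mathrm{W}}$ within a factor $3$. Alternatively, I would iterate to an $\ell$-step drift, for which $\gamma^\ell\cdot 3<1$, and note that Harris' theorem applies equally with $P^\ell$. If $q$ is not near $F$, then $\nu\mathrm{V}(f_\omega q)\le\nu C$ is absorbed into the slack because $\nu\mathrm{V}(p)\ge \nu\ep^{-\alpha}$ is huge.

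\emph{Case 3: $(p,q)\in\widetilde K$.} Here $f_\omega(p),f_\omega(q)$ range over a compact subset of $\mathsf{X}^{(2)}$, by continuity of $(\omega,p,q)\mapsto f^{(2)}_\omega(p,q)$ and invariance of $\cS$ and $F$. Hence $\widetilde{\mathrm W}\circ f^{(2)}_\omega$ is bounded by some $\beta$ there. Combining the three cases gives the claim, with the caveat from Case 2 that either the sum version of $\widetilde{\mathrm W}$ or an iterate $P^{(2),\ell}$ may be needed.
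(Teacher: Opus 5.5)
The obstacle you flag is real, and your proposal does not overcome it. The statement is a \emph{one-step} inequality for the \emph{maximum} $\widetilde{\mathrm W}$ with exceptional term $\beta\mathbf 1_{\widetilde K}$. From $\mathbb E\max\{A,B,C\}\le\mathbb EA+\mathbb EB+\mathbb EC$ and the individual drifts you only get $\gamma$ times the sum, i.e.\ up to $3\gamma\widetilde{\mathrm W}$. This problem arises in your Case~2, and also in Case~1 whenever $\nu\mathrm V(p)$ is comparable to $\mathrm W(p,q)$ (e.g.\ $p,q$ close to each other and to the same fixed point); there you leave it open.

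Both remedies you propose prove a different statement: a drift for the sum, or a drift for $P^{(2),\ell}$ with a constant in place of $\beta\mathbf 1_{\widetilde K}$. That might suffice downstream for Harris' theorem, but it is not the proposition. The paper does not supply the missing step either: its Case~1.1 simply asserts $\mathbb E\max\{\mathrm V(f_\omega p),\mathrm V(f_\omega q)\}\le\gamma\max\{\mathrm V(p),\mathrm V(q)\}$.

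For two one-point terms you can repair this by opening up the proofs of Lemmas~\ref{lemma:localxlyap}--\ref{lemma:localylyap}. Their case analyses give, for each local $V_\sigma$, an $\omega$-event of probability $1-O(N^{-1/2})$ on which $V_\sigma$ contracts by a \emph{deterministic} factor $\rho<1$, and off which it grows by at most a small power of $N$. Intersecting the events for $p$ and $q$ then gives $\mathbb E\max\le(\rho+o_N(1))\max$. Against $\mathrm W$, which Proposition~\ref{prop:blackboxdriftTPP} controls only in expectation, no such pathwise argument is available.

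Minor point: your bound $\mathrm W(f_\omega p,f_\omega q)\le C_*\mathrm W(p,q)$ in (b) is false where $\mathrm W(p,q)=0<\mathrm W(f_\omega p,f_\omega q)$. What you actually use is only $\mathrm W\circ f^{(2)}_\omega\lesssim r_0^{-\xi}$ when all $\tilde r_\sigma\ge r_0$, and that is true.

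The more serious gap is the step in Case~1, ``choose $r_0$ so small that the images remain in the region where $\mathrm W=W_\sigma$''. When $p,q$ are close to each other and to a point of $F_\sigma$, both $\tilde r_\sfD$ and $\tilde r_\sigma$ are small. In the definition of $\mathrm W$ the diagonal term $W$ carries the factor $1-\chi(\tilde r_\sigma)$, which vanishes whenever $\tilde r_\sigma\le s_0$. For near-diagonal pairs with $\tilde r_\sigma\in(s_0,2s_0)$, $\mathrm W$ is a genuine mixture of $W$ and $W_\sigma$, however small $r_0$ is.

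In fact, for $\mathrm W$ as defined the inequality is false. Fix $\sigma\in\Sigma$ and $p$ near $F_\sigma$ with $d_{\Sph}(p,\mathcal R_\sigma p)=s_0/4$. For all $q$ close to $p$ one has $\mathrm W(p,q)=W_\sigma(p,q)$, with $d_{\Sph}(p,\mathcal R_\sigma q)\ge s_0/8$, and $\mathrm V(p),\mathrm V(q)$ are bounded. Hence $\widetilde{\mathrm W}(p,q)\le B$, although $(p,q)\notin\widetilde K$. Let $\tau$ be the hitting time of $\widetilde K$. The drift inequality would make $\gamma^{-(n\wedge\tau)}\widetilde{\mathrm W}(X_{n\wedge\tau})$ a supermartingale, so $\mathbb E\gamma^{-\tau}\le B/\nu$ (using $\widetilde{\mathrm W}\ge\nu$). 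But reaching $\widetilde K$ requires $d_{\Sph}\ge r_0$. With $L>1$ a uniform Lipschitz constant of the $f_\omega$, this forces $\tau\ge\log(r_0/d_{\Sph}(p,q))/\log L\to\infty$ as $q\to p$, a contradiction.

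The same argument rules out your additive variant. So $\mathrm W$ itself must be redefined to keep $W$ active near $F$. The paper does this only in Section~\ref{sec:driftstochTPP}, via $W_{m,\sigma}=\max\{W_\sigma,W\}$; an additive $\sum_{\sigma\in\Sigma_\sfD}\chi(\tilde r_\sigma)W_\sigma$ would suit your sum approach better. The overlap regions must then be checked explicitly. The paper's own Case~3, which asserts $\mathrm W(f_\omega p,f_\omega q)=W_{\sigma'}(f_\omega p,f_\omega q)$ for a single $\sigma'$, has the same defect.
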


\begin{proof}
Fix $s_0>0$ sufficiently small, let $r_0<\frac{s_0}{2}$ small enough such that $f_\omega (p)\in \mathsf{Q}_\sigma(s_0/2)$ whenever $p\in \mathsf{Q}_\sigma(r_0)$ , for all $\sigma\in \Sigma$, and $\tilde r_\sigma(f_\omega(p),f_\omega(q)) < s_0$ whenever $\tilde r_\sigma(p,q) < r_0$, for all $\sigma\in \Sigma_\sfD$. Moreover, let $\eta\in (0,r_0)$ and $\ep\in (0,\eta)$. We first show that 
\begin{align}\label{eq:localdrifttildermW}
        \mathbb{E}\left[ \widetilde{\mathrm{W}}(f_\omega(p), f_\omega(q)) \right] \leq \gamma \widetilde{\mathrm{W}}(p,q)
    \end{align}
for all $(p,q)\not\in \widetilde{K}$, for $\gamma\in(0,1)$ given the maximum contraction constant of the local drift functions $W$ and $\mathrm{V}$.

\diampar{Case 1} Assume $p,q\in K^c(\eta)$, that is $p\in \mathsf{Q}_{\sigma_p}(\eta)$ and $q\in \mathsf{Q}_{\sigma_q}(\eta)$. 
    \begin{indentblock}[1]
    \diampar{Case 1.1} Assume further that $\sigma_p\neq \sigma_q$. Then, $\widetilde{r}_\mathsf{D}(p,q)\geq 1$ for $\eta$ small enough and, $\mathcal{R}_{\sigma'} p\in \mathsf{Q}_{\sigma_p}(\eta)$ for all $\sigma'\in \Sigma$. In particular, $\widetilde{r}_{\sigma'}(p,q)\geq 1$ for all $\sigma'\in \Sigma$, for $\eta$ small enough. Further choosing $\eta$ smaller in terms of $N$ if necessary, we also have $\widetilde{r}_{\sigma}(f_\omega(p), f_\omega(q))\geq \frac12$ for all $\sigma\in \Sigma_{\mathsf{D}}$. For $s_0$ small enough, we have that $\mathrm{W}(f_\omega(p),f_\omega(q))=\mathrm{W}(p,q)=0$ and thus
    \begin{align}
        \EE \left[ \widetilde{\mathrm{W}}(f_\omega(p),f_\omega(q)) \right] = \nu\EE \left[ \max\lbrace \mathrm{V}(f_\omega(p)), \mathrm{V}(f_\omega(q)) \rbrace \right] \leq \nu \gamma \max\lbrace \mathrm{V}(p), \mathrm{V}(q) \rbrace = \gamma \widetilde{\mathrm{W}}(p,q).
    \end{align}
    \end{indentblock}

    \begin{indentblock}[1]
    \diampar{Case 1.2} Suppose instead that $\sigma_p=\sigma_q$. For $\eta<r_0$ small enough we either have $\widetilde{r}_\mathsf{D}\leq r_0$ or $\widetilde{r}_\mathsf{D}\geq 1$.
    \end{indentblock}
    
    \begin{indentblock}[2]
    \diampar{Case 1.2.1} If $\tilde r_\mathsf{D}(p,q) \leq r_0$ then $p,q$ belong to the same connected component of $\mathsf{Q}_\sigma(\eta)$, they are close to the same fixed point $p_F\in F $ and $\tilde r_{\tilde\sigma} (p,q) \geq 1$ for $\tilde\sigma \neq \sigma, \mathsf{D}$. Hence, for $r_0$ small enough, we further have that $\tilde r_{\tilde\sigma} (f_\omega(p),f_\omega(q)) \geq \frac12$, for all $\omega\in[-N,N]^2$, for $\tilde\sigma \neq \sigma,  \mathsf{D}$. Moreover, 
    \begin{align}
        \tilde r_\sigma (f_\omega(p), f_\omega(q)) = d_\Sph(f_\omega(q), \mathcal{R}_\sigma f_\omega(p)) &\leq d_\Sph(f_\omega(q), p_F) + d_\Sph(p_F, \mathcal{R}_\sigma f_\omega(p)) \\
        &= d_\Sph(f_\omega(q), p_F) + d_\Sph(p_F,  f_\omega(p)) \\
        &\leq r_0
    \end{align}
    for $\eta$ small enough so that $f_{\omega}(p),f_{\omega}(q)\in \mathsf{Q}_\sigma(r_0/2)$ for all $\omega\in[-N,N]$ whenever $p,q\in \mathsf{Q}_\sigma(\eta)$. Therefore, 
    \begin{align}
        \mathrm{W}(f_\omega(p), f_\omega(q)) = W_\sigma(f_\omega(p),f_\omega(q))
    \end{align}
    for all $\omega\in[-N,N]^2$ and thus we obtain \eqref{eq:localdrifttildermW}.
    \end{indentblock}

    \begin{indentblock}[2]
    \diampar{Case 1.2.2} If $\tilde r_\mathsf{D}(p,q)\geq 1$ then $\tilde r_\mathsf{D}(f_\omega(p), f_\omega(q)) \geq \frac12$ for $\eta$ and $r_0$ small enough. In particular, $f_\omega(p)$ and $f_\omega(q)$ belong to different connected components of $\mathsf{Q}_\sigma(r_0)$, each close to $p_F\in F$ and $q_F\in F$, respectively, and thus $\tilde r_\sigma(f_\omega(p), f_\omega(q))\geq \frac14$ as well, for all $\omega\in[-N,N]^2$. Since $q_F= \mathcal{R}_{\tilde\sigma} p_F$ for $\tilde\sigma \neq \sigma$, we have
    \begin{align}
        \tilde r_{\tilde\sigma} (f_\omega(p), f_\omega(q)) \leq r_0,
    \end{align}
    for $\tilde\sigma\neq \sigma$, $\tilde\sigma\neq \mathsf{D}$, for all $\omega\in[-N,N]^2$ and thus ${\mathrm{W}}(f_\omega(p),f_\omega(q)) = W_{\tilde\sigma}(f_\omega(p),f_\omega(q))$, for all $\omega\in[-N,N]^2$, for some $\tilde\sigma\in \lbrace   \sfx, \mathsf{y}, \sfz \rbrace  $, with $\tilde\sigma\neq\sigma$. In particular, we conclude that \eqref{eq:localdrifttildermW} holds.
    \end{indentblock}
    \diampar{Case 2} Assume now that $p\in K^c(\eta)$ and $q\in K(\eta)$. Hence, $p\in \mathsf{Q}_{\sigma_p}(\eta)$  for some $\sigma_p\in \Sigma$ and $q\in \mathsf{Q}^c_{\sigma}(\eta)$, for all $\sigma \in \Sigma$. 

    \begin{indentblock}[1]
    \diampar{Case 2.1} Assume there exists $\tilde\sigma \in \Sigma_\sfD$ such that $\tilde r_{\tilde\sigma}(p,q) < r_0$. Then, $\tilde r_{\tilde\sigma}(f_\omega(p),f_\omega(q)) \leq s_0$ for all $\omega\in[-N,N]^2$ and there exists $\varpi\in \Sigma_D$ such that $\mathrm{W}(f_\omega(p),f_\omega(q)) = W_\varpi(f_\omega(p), f_\omega(q))$ for all $\omega\in[-N,N]^2$. Hence, $\mathbb{E}\left[ \mathrm{W}(f_\omega(p),f_\omega(q)) \right] \leq \gamma \mathrm{W}(p,q)$. Next, let
    \begin{align}
        \beta_0 = \max _{\sigma\in \Sigma} \lbrace   \mathrm{V}(f_\omega(q)) : q\in \mathsf{Q}^c_\sigma(\eta), \omega\in[-N,N]^2 \rbrace  \sim \eta^{-\alpha} 
    \end{align}
    and note that $\beta_0 \leq C_N\eta^{-\frac18}$, for $\eta$ small enough in view of Lemma \ref{lemma:phikappaLip}. Therefore,
\begin{align}
    \EE [\nu\mathrm{V}(f_\omega(q))]\leq \nu C_N \eta^{-\frac18} \leq \gamma W(p,q) 
\end{align}
once we set $\nu=\frac14 \frac{\gamma\eta^\frac18  \min \psi}{r_0^\xi C_N}$. Consequently, $\mathbb{E}\left[ \widetilde{\mathrm{W}}(f_\omega(p), f_\omega(q)) \right] \leq \gamma \widetilde{\mathrm{W}}(p,q)$ as well.
\end{indentblock}

\begin{indentblock}[1]
\diampar{Case 2.2} On the other hand, if $\tilde r_{\tilde\sigma}(p,q) \geq r_0$ for all $\widetilde{\sigma}\in \Sigma_\sfD$ and $(p,q)\not\in \widetilde{K}$ we must have $p\in \mathsf{Q}_{\sigma_p}(\ep)$, for some $\sigma_p\in \Sigma$ as $q\in K(\eta) \subset K(\ep)$. Hence, set
\begin{align}\label{eq:defrho0}
        \rho_0 := \min_{ \sigma\in \lbrace   \mathsf{D}, \sfx, \mathsf{y}, \sfz \rbrace  ,} \lbrace   \tilde r_\sigma(f_\omega(p),f_\omega(q)) : \omega\in [-N,N]^2, \tilde r_\sigma(p,q)\geq r_0 \rbrace  .
    \end{align}
and observe that $\mathrm{W}(f_\omega(p),f_\omega(q))\leq \rho_0^{-\xi}\max\psi$. Recall also that since $q\in \mathsf{Q}_\sigma(\eta)^c$ we have
\begin{equation}\nu\mathrm{V}(f_\omega(q))\leq \nu C_N\eta^{-\alpha}\leq \gamma r_0^{-\xi}\min\psi.
\end{equation}
In particular,
\begin{align}
    \max \lbrace \rho_0^{-\xi}\max\psi, \gamma r_0^{-\xi}\min\psi \rbrace \leq \gamma \nu V(p) \leq \gamma \widetilde{\mathrm{W}}(p,q)
\end{align}
provided that 
\begin{align}
    \ep \leq \left( \frac{\nu}{C_N\max \lbrace \rho_0^{-\xi}\max\psi, \gamma r_0^{-\xi}\min\psi \rbrace}\right) ^{\frac{1}{\alpha}}
\end{align}
as $\mathrm{V}(p)\geq C_N\ep^{-\alpha}$ for some universal $C_N>0$. Together with $\EE [\mathrm{V}(f_\omega(p))] \leq \gamma \mathrm{V}(p)$, we reach \eqref{eq:localdrifttildermW}.
\end{indentblock}

\diampar{Case 3} Assume finally that $p,q\in K(\eta) \subset K(\ep)$. Hence, $p,q\in \mathsf{Q}_\sigma^c(\eta)$ for all $\sigma\in \Sigma$ and we must then have $\tilde r_\sigma (p,q) < r_0$, for some $\sigma\in \Sigma_\mathsf{D}$. Therefore, $\tilde r_\sigma( f_\omega(p), f_\omega(q)) < s_0$ for all $\omega\in [-N,N]^2$ and there exists $\sigma'\in \Sigma_\mathsf{D}$ such that $\mathrm{W}(f_\omega(p),f_\omega(q)) = W_{\sigma'}(f_\omega(p),f_\omega(q))$, for all $\omega\in [-N,N]^2$. Thus, $\mathbb{E}\left[ {\mathrm{W}}(f_\omega(p),f_\omega(q)) \right] \leq \gamma \mathrm{W}(p,q)$. Moreover, recall that $\max \lbrace \nu\mathrm{V}(f_\omega(p)), \nu \mathrm{V}(f_\omega(q)) \rbrace \leq \nu C_N\eta^{-\alpha}\leq \gamma W(p,q)$. As a result, \eqref{eq:localdrifttildermW} holds.

\noindpar{End of proof} For $(p,q)\in \widetilde{K}$, we have that $\mathrm{W}(f_\omega(p),f_\omega(q)) \leq \rho^{-\xi}\max\psi$ and $\mathrm{V}(f_\omega(p)) \leq C_N \ep^{-\alpha}$ and similarly for $\mathrm{V}(f_\omega(q))$, for all $\omega\in[-N,N]^2$ in view of Lemma \ref{lemma:phikappaLip}. Hence, $\EE[\widetilde{\mathrm{W}}(f_\omega(p), f_\omega(q))]\leq \beta$, for some $\beta>0$.
\end{proof}

\section{The stochastic two-point process}\label{sec:stochTPP}
In this section we derive the main estimates for the stochastic flow map $f_\omega^\kappa= f_{\omega_2}^{\a,\kappa}\circ f_{\omega_1}^{\b,\kappa}$ that are later used to prove the existence of a $\kappa$-independent open small set and a drift function for the stochastic two-point process. Such function is defined in Section \ref{sec:driftstochTPP} and combines a local drift function $\mathrm{W}_m$ for the stochastic two-point process near the diagonal and the symmetry sets, together with a suitably modified drift function $\mathrm{V}_\kappa$ for the one-point process. In the remainder of the manuscript we assume without loss of generality that the two rotation axes are orthogonal and correspond as usual to $\a=(0,1,0)$ and $\b=(0,0,1)$.

The local drift function $\mathrm{W}_m$ for the stochastic two-point process reduces to the local drift function $\mathrm{W}$ for the deterministic two-point process, which satisfies a Lyapunov-drift condition for the deterministic $P^{(2)}$ Markov transition kernel. Using perturbative arguments and stochastic stability results, we show in Section~\ref{ssec:localstochTPP} that the local drift function $\mathrm{W}$ also satisfies a Lyapunov-drift condition for the stochastic $P^{(2)}_\kappa$ Markov transition kernel near the diagonal, for all $\kappa>0$ sufficiently small.

On the other hand, the drift function $\mathrm{V}_\kappa$ for the one-point process localises on each fixed point of the deterministic dynamics. Introducing a cutoff at scale $\sqrt{\kappa}$, which corresponds to the size of the effective dynamics coming from the Brownian motion, we show in Section \ref{sec:driftstochOPP} the existence of functions $V_{\sigma,\kappa}$, with $\sigma\in \lbrace \sfx, \sfy, \sfz \rbrace$ that satisfy a Lyapunov-drift condition for the stochastic $P_\kappa$ Markov transition kernel.

\subsection{Stochastic stability for SDEs}
Let $\kappa\geq 0$ and $u\in \mathfrak{X}(\Sph)$. We define $v_\kappa(\x):= u(\x) - 2\kappa\x$ and consider the stochastic flow $\phi_t^{\kappa}(\x)$ defined by the stochastic differential equation
\begin{align}
    \d \phi_t^\kappa(\x) &= v_\kappa( \phi_t^\kappa(\x) )\d t + \sqrt{2\kappa}\Pi_{\phi_t^\kappa(\x)}\d \bB_t, \\
    \phi_0^\kappa(\x) &= \x.
\end{align}

For simplicity, we denote $\phi_t=\phi_t^0$ the associated deterministic flow. In this section we obtain several stochastic stability estimates for $\phi_t^\kappa$. We begin by showing the standard fact that the deviation of $\phi_t^\kappa$ from the deterministic evolution $\phi_t$ is no larger, up to a constant, than $\sqrt{\kappa t}$ in expectation.

\begin{lemma}\label{lemma:EXPphikappaphi}
    There exists $C_u=C(\Vert u \Vert_{C^1})>0$ such that 
    \begin{align}
        \EE_\bB \left| \phi_t^{\kappa}(\x) - \phi_t^{0}(\x) \right|^2 \leq C_u{\kappa t},
    \end{align}
    for all $\kappa\leq 1$ and all $t\leq 1$.
\end{lemma}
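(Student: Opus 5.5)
We will write the two flows in integral form and compare them directly. Set $\delta_t := \phi_t^\kappa(\x) - \phi_t^0(\x)\in\R^3$. Since $\phi_t^0$ solves the deterministic ODE driven by $u$, and $\phi_t^\kappa$ solves the It\^{o} SDE with drift $v_\kappa = u - 2\kappa\,\x$, we have
\begin{align}
    \delta_t = \int_0^t \bigl( u(\phi_s^\kappa(\x)) - u(\phi_s^0(\x)) \bigr)\d s - 2\kappa\int_0^t \phi_s^\kappa(\x)\d s + \sqrt{2\kappa}\int_0^t \Pi_{\phi_s^\kappa(\x)}\d \bB_s .
\end{align}
Both processes stay on $\Sph$: the It\^{o} correction $-2\kappa\x$ is exactly what keeps $|\phi_t^\kappa|=1$. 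Hence $u$ only needs to be evaluated on the sphere, or on a radial extension with the same $C^1$ norm, and it is globally Lipschitz there with constant $\Vert u\Vert_{C^1}$.

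We then take squares and use $(a+b+c)^2\le 3(a^2+b^2+c^2)$. There are three terms.

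\emph{Drift term.} By Cauchy--Schwarz in time, its square is bounded by $t\,\Vert u\Vert_{C^1}^2\int_0^t |\delta_s|^2\d s$.

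\emph{Correction term.} Since $|\phi_s^\kappa|=1$, it is bounded deterministically by $(2\kappa t)^2 \le 4\kappa t$ for $\kappa,t\le 1$.

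\emph{Martingale term.} By the It\^{o} isometry, its second moment equals $2\kappa\,\EE_\bB\int_0^t \Vert \Pi_{\phi_s^\kappa}\Vert_{HS}^2\d s = 4\kappa t$, because the projection onto a plane has Hilbert--Schmidt norm squared equal to $2$.

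Taking $\EE_\bB$ and writing $g(t):=\EE_\bB|\delta_t|^2$ (finite, since $|\delta_t|\le 2$), we obtain
\begin{align}
    g(t) \le 3\Vert u\Vert_{C^1}^2\, t\int_0^t g(s)\d s + 24\kappa t \le 3\Vert u\Vert_{C^1}^2\int_0^t g(s)\d s + 24\kappa t
\end{align}
for $t\le 1$. Gr\"onwall's inequality, applied with the nondecreasing forcing term $24\kappa t$, then gives $g(t)\le 24\kappa t\, e^{3\Vert u\Vert_{C^1}^2 t}\le C_u\kappa t$ for $t\le 1$, with $C_u = 24 e^{3\Vert u\Vert_{C^1}^2}$.

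There is no serious obstacle. The only point needing care is the well-posedness and measurability setup: the Stratonovich-to-It\^{o} conversion must produce exactly the drift $-2\kappa\x$, so that the solution remains on $\Sph$ and the Lipschitz bound on $u$ applies globally. If one prefers not to rely on invariance of the sphere, one can instead use a compactly supported smooth extension of $u$ to $\R^3$ and run the same argument. For the time-dependent alternating field, which is piecewise constant in time, the estimate is applied on each unit interval separately, and the same computation holds verbatim.
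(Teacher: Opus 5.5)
Your argument is correct and is essentially the paper's proof: you write the difference in integral form, bound the drift by the Lipschitz constant of $u$, control the stochastic integral by the It\^{o} isometry, and close with Gr\"{o}nwall. The only cosmetic difference is that you bound $\sqrt{2\kappa}\int_0^t \Pi_{\phi_s^\kappa}\,\d\bB_s$ directly via $\Vert \Pi_{\phi_s^\kappa}\Vert_{HS}^2=2$ on $\Sph$. The paper instead adds and subtracts $\Pi_{\phi_s^0}$ and $v_\kappa(\phi_s^0)$, which puts an extra $2\kappa\Vert\Pi\Vert_{C^1}^2\int_0^t h(s)\,\d s$ term into the Gr\"{o}nwall inequality but changes nothing in the conclusion.
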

\begin{proof}
    We observe that
    \begin{align}
        \left| \phi_t^{\kappa}(\x) - \phi_t^{0}(\x) \right| &\leq \int_0^t \left| v_\kappa(\phi_s^{\kappa}(\x)) - v_\kappa(\phi_s^{0}(\x)) \right| \d s + \sqrt{2\kappa} \left| \int_0^t \left( \Pi_{\phi_s^{\kappa}(\x)} - \Pi_{\phi_s^{0}(\x)} \right) \d \bB_s \right|    \\
        &\quad + {2\kappa}\int_0^t \left|\phi_s^{0}(\x)\right| \d s + \sqrt{2\kappa} \left| \int_0^t\Pi_{\phi_s^{0}(\x)} \d \bB_s \right|. 
    \end{align}
    Since $\left| \phi_s^{0}(\x) \right| =1 $ for all $t\geq 0$ and $\Vert \Pi \Vert_{C^0}\lesssim 1$, for $h(t):= \EE_{\bB} \left| \phi_t^{\kappa}(\x) - \phi_t^{0}(\x) \right|^2$ we obtain
    \begin{align}
        h(t) \leq C\kappa t + \left( \Vert v_\kappa \Vert_{C^1}^2 + 2\kappa\Vert \Pi \Vert_{C^1}^2 \right) \int_0^t h(s) \d s.
    \end{align}
    Further noting that $\Vert \Pi \Vert_{C^1}\lesssim 1$ and $\Vert v_\kappa \Vert_{C^1}\lesssim 1$ for all $\kappa\leq 1$ we conclude with Gr\"{o}nwall's inequality that
    \begin{align}
        h(t) \leq C\kappa t\exp(Ct),
    \end{align}
    for some $C>1$. With this the lemma is proved.
\end{proof}

The next Lemma records the usual Lipschitz property of $\phi_t^\kappa$ in expectation and the bi-Lipschitz property of $\phi_t$. 

\begin{lemma}\label{lemma:phikappaLip}
There exists $C_0>0$ such that for all $\kappa\leq 1$ and $\x,\y\in \Sph$ we have
\begin{align}
    \EE_\bB \left|\phi_t^{\kappa}(\x) - \phi_t^{\kappa}(\y) \right|^2  \leq C_0 |\x - \y|^2,
\end{align}
for all $t\in[0,1]$. Moreover,
\begin{align}
     C_0^{-1} |\x - \y|^2\leq \left|\phi_t^{0}(\x) - \phi_t^{0}(\y) \right|^2  \leq C_0 |\x - \y|^2,
\end{align}
for all $t\in[0,1]$.
\end{lemma}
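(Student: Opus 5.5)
The plan is to prove both estimates by a Grönwall argument, in the same spirit as Lemma~\ref{lemma:EXPphikappaphi}. We work in the ambient space $\R^3$, using that $v_\kappa(\x)=u(\x)-2\kappa\x$ and $\Pi_\x$ extend smoothly to a neighbourhood of $\Sph$. Since $\phi^\kappa_t$ stays on $\Sph$ almost surely, all Lipschitz constants can be taken on a compact shell. Throughout, $u$ is one of the time-one pieces $u^\a_\omega$ or $u^\b_\omega$ with $|\omega|\le N$, so that $\Vert u\Vert_{C^1}\lesssim N$. The constant $C_0$ is therefore allowed to depend on $N$ but not on $\kappa\le 1$, $t\in[0,1]$, $\x$ or $\y$.

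For the deterministic bi-Lipschitz bound, set $\delta(t)=\phi_t^0(\x)-\phi_t^0(\y)$. Then $\frac{\d}{\d t}|\delta|^2=2\delta\cdot(u(\phi^0_t(\x))-u(\phi^0_t(\y)))$, so $\bigl|\frac{\d}{\d t}|\delta|^2\bigr|\le 2\Vert u\Vert_{C^1}|\delta|^2$. Integrating this differential inequality forwards gives the upper bound $|\delta(t)|^2\le e^{2\Vert u\Vert_{C^1}t}|\x-\y|^2$. The lower bound follows in the same way, either by integrating the inequality backwards or by applying the upper bound to the inverse flow $(\phi^0_t)^{-1}$, which is the flow of $-u$. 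In our explicit setting this is even more direct, since $f^\a_\omega$ is a rotation whose angle $\omega(\a\cdot\x)$ is Lipschitz with constant $N$. In either case one may take $C_0=e^{2\Vert u\Vert_{C^1}}$.

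For the stochastic bound, set $\delta_t=\phi^\kappa_t(\x)-\phi^\kappa_t(\y)$ and apply Itô's formula to $|\delta_t|^2$:
\begin{align}
\d|\delta_t|^2 = 2\delta_t\cdot\bigl(v_\kappa(\phi^\kappa_t(\x))-v_\kappa(\phi^\kappa_t(\y))\bigr)\d t + 2\kappa\,\bigl\Vert \Pi_{\phi^\kappa_t(\x)}-\Pi_{\phi^\kappa_t(\y)}\bigr\Vert_{HS}^2\d t + \d M_t .
\end{align}
Here $M_t=2\sqrt{2\kappa}\int_0^t\delta_s\cdot(\Pi_{\phi^\kappa_s(\x)}-\Pi_{\phi^\kappa_s(\y)})\d\bB_s$ is a true martingale, because its integrand is bounded on the sphere; hence $\EE_\bB M_t=0$. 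Two Lipschitz bounds then control the drift terms. First, $v_\kappa$ is Lipschitz with constant at most $\Vert u\Vert_{C^1}+2$ for $\kappa\le1$. Second, $\x\mapsto\Pi_\x$ is Lipschitz, since $|\x\otimes\x-\y\otimes\y|\le 2|\x-\y|$. Together these give
\begin{align}
\EE_\bB|\delta_t|^2\le|\x-\y|^2+C(\Vert u\Vert_{C^1}+\kappa)\int_0^t\EE_\bB|\delta_s|^2\d s ,
\end{align}
and Grönwall's inequality on $[0,1]$ yields the claim with $C_0=e^{C(\Vert u\Vert_{C^1}+1)}$, which is uniform in $\kappa\le 1$.

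The only delicate point is rigor for the stochastic part. The SDE is stated for a field defined on $\Sph$, so we must check that the process remains on the sphere, or work with the smooth extensions; this is what makes the bounded-coefficient Lipschitz estimates and the martingale property legitimate. Once that is in place, the computation itself is routine. Finally, the time-two composed map $f^\kappa_\omega=f^{\a,\kappa}_{\omega_2}\circ f^{\b,\kappa}_{\omega_1}$ inherits the same bounds with $C_0$ replaced by $C_0^2$. This uses the Markov property and the independence of the Brownian increments on consecutive unit time intervals, which is the form in which the lemma is later used (e.g.\ for $\beta_0$ in Proposition~\ref{prop:globaldriftTPP}).
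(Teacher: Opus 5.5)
Your proof is correct and follows essentially the paper's route: a Gr\"onwall bound on the second moment of the separation, using the Lipschitz continuity of $v_\kappa$ and of $\x\mapsto\Pi_\x$, plus a backward-in-time Gr\"onwall for the lower bi-Lipschitz bound (you apply It\^o's formula to $|\delta_t|^2$, whereas the paper squares the integral identity and uses the It\^o isometry, which amounts to the same estimate). If anything, your two-sided inequality $\bigl|\frac{\d}{\d t}|\delta|^2\bigr|\le 2\Vert u\Vert_{C^1}|\delta|^2$ gives the lower bound directly for every $t\in[0,1]$, whereas the paper runs its backward Gr\"onwall only from the endpoint $t=1$.
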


\begin{proof}
Let $v_\kappa(\x) =u(\x) -2\kappa\x$, we observe that
    \begin{align}
        \phi_t^{\kappa}(\x) = \x + \int_0^t  v_\kappa(\phi_s^{\kappa}(\x))\d s + \int_0^t \sqrt{2\kappa}\Pi_{\phi_s^{\kappa}(\x)}\d \bB_s,
    \end{align}
and similarly for $\phi_t^{\kappa}(\y)$. Hence, 
\begin{align}
    \left| \phi_t^{\kappa}(\x) - \phi_t^{\kappa}(\y) \right|^2 &\lesssim |\x-\y|^2 + \left| \int_0^t  \left( v_\kappa(\phi_s^{\kappa}(\x)) - v_\kappa(\phi_s^{\kappa}(\y))\right) \d s\right|^2 \\
    &\quad + 2\kappa\left| \int_0^t  \left( \Pi_{\phi_s^{\kappa}(\x)}-\Pi_{\phi_s^{\kappa}(\y)}\right)\d \bB_s\right|^2.
\end{align}
In particular, for $h(t) := \EE_\bB\left| \phi_t^{\kappa}(\x) - \phi_t^{\kappa}(\y) \right|^2$ we have 
\begin{align}
    h(t) \lesssim h(0) + \Vert v_\kappa \Vert_{C^1}^2 \int_0^t h(s) \d s + 2\kappa\Vert \Pi \Vert_{C^1}^2 \int_0^t h(s) \d s,
\end{align}
due to the It\^{o} isometry $\EE_\bB \left| \int_0^t \mathbf{G}(s) \d\bB_s \right|^2 = \int_0^t \Vert \mathbf{G}(s) \Vert^2 \d s$. Using Gr\"onwall's lemma, we reach
\begin{align}
    \EE_\bB\left| \phi_t^{\kappa}(\x) - \phi_t^{\kappa}(\y) \right|^2 \lesssim |\x-\y|^2,
\end{align}
for all $0\leq t \leq 1$, as $\Vert v_\kappa \Vert_{C^1} + \Vert \Pi \Vert_{C^1}\lesssim 1$ uniformly for all $0\leq \kappa\leq 1$. In particular, for $\kappa=0$ we obtain a uniform Lipschitz bound for $\phi_t^0$ for all $t\in[0,1]$. We finish the proof by showing that $\phi_t^0$ is in fact bi-Lipschitz. To that purpose, note that
\begin{align}
    \phi_{1-t}^{0}(\x) - \phi_{1-t}^{0}(\y) &= \phi_{1}^{0}(\x) - \phi_{1}^{0}(\y) - \int_{1-t}^1  \left( u(\phi_s^{0}(\x)) - u(\phi_s^{0}(\y)) \right)\d s 
\end{align}
so that for $g(t):=\left| \phi_{1-t}^{0}(\x) - \phi_{1-t}^{0}(\y) \right|^2$ we now have
\begin{align}
    g(t) \lesssim g(0) + \Vert u \Vert_{C^1}^2  \int_0^t g(s) \d s,
\end{align}
from which we deduce with Gr\"{o}nwall inequality that $g(t) \lesssim g(0)$, that is, $|\x-\y|^2 \lesssim |\phi_{1-t}^0(\x)-\phi_{1-t}^0(\y)|^2$ for all $t\in[0,1]$.
\end{proof}

The next result is key for proving drift-function estimates for the stochastic two-point process. A similar estimate was derived for the Kraichnan model in \cites{coti2024gaussian}, see also \cites{falkovich2013single, zel1984kinematic}.

\begin{lemma}\label{lemma:EXPphikappainv}
    There exists $C_0>0$ such that for all $\xi\in(0,1]$, $\kappa\leq 1$ and $\x,\y\in \Sph$ with $\x\neq\y$ we have
    \begin{align}
        \EE_\bB \left[ \frac{1}{|\phi_1^{\kappa}(\x) - \phi_1^{\kappa}(\y)|^\xi} \right] \leq \frac{C_0^\xi}{|\x-\y|^\xi}.
    \end{align}
\end{lemma}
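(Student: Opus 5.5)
The plan is to study the difference process $D_t := \phi_t^\kappa(\x)-\phi_t^\kappa(\y)\in\R^3$ and apply It\^o's formula to $g(D_t)$ with $g(z)=(|z|^2+\delta)^{-\xi/2}$, then let $\delta\downarrow 0$ by monotone convergence. Since $\x\neq\y$ and the SDE has smooth coefficients, the two trajectories never meet (the stochastic flow is a.s. a diffeomorphism), so $D_t\neq 0$ for all $t$ a.s. Hence the regularisation only serves to justify It\^o's formula, and the bounds below are uniform in $\delta$.

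Both points are driven by the same Brownian motion, so
\begin{align}
\d D_t = \big(v_\kappa(\phi_t^\kappa(\x))-v_\kappa(\phi_t^\kappa(\y))\big)\d t + \sqrt{2\kappa}\,\big(\Pi_{\phi_t^\kappa(\x)}-\Pi_{\phi_t^\kappa(\y)}\big)\d\bB_t .
\end{align}
The key structural points are the following:
\begin{itemize}
\item the drift difference is bounded by $\Vert v_\kappa\Vert_{C^1}|D_t|\lesssim |D_t|$;
\item the diffusion matrix difference $G_t:=\Pi_{\phi^\kappa_t(\x)}-\Pi_{\phi^\kappa_t(\y)}$ satisfies $\Vert G_t\Vert\le \Vert\Pi\Vert_{C^1}|D_t|\lesssim|D_t|$.
\end{itemize}
This is exactly the mechanism that prevents the noise from bringing points together faster than linearly, as in the Kraichnan computation of \cites{coti2024gaussian}.

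For $g(z)=|z|^{-\xi}$ (working with the regularised version) we have
\begin{align}
\nabla g(z)=-\xi|z|^{-\xi-2}z, \qquad |D^2g(z)|\le \xi(\xi+2)|z|^{-\xi-2}.
\end{align}
Plugging these in, the drift term is at most $\xi|D_t|^{-\xi-1}\cdot C|D_t| = C\xi|D_t|^{-\xi}$. The It\^o correction term is $\kappa\,\mathrm{tr}(G_t^TD^2g\,G_t)\le C\kappa\,\xi(\xi+2)|D_t|^{-\xi-2}|D_t|^2\le C\xi|D_t|^{-\xi}$, since $\kappa\le1$ and $\xi\le1$. To remove the martingale term, I would localise with stopping times $\tau_n=\inf\{t:|D_t|\le 1/n\}$ and take expectations. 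Setting $h(t)=\EE_\bB|D_{t\wedge\tau_n}|^{-\xi}$, this gives $h(t)\le |\x-\y|^{-\xi}+C\xi\int_0^t h(s)\d s$. Gr\"onwall then yields $h(1)\le e^{C\xi}|\x-\y|^{-\xi}$, and Fatou's lemma as $n\to\infty$ (using $\tau_n\to\infty$ a.s.) gives the claim with $C_0=e^{C}$, since $e^{C\xi}=(e^C)^\xi$.

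The point I expect to be most delicate is ensuring that all constants really are of the form $C^\xi$ uniformly in $\xi\in(0,1]$ and $\kappa\le1$. This holds because every term in the generator carries a factor $\xi$, which I would track carefully. A second delicate point is the non-collision and localisation justification. If the non-collision argument becomes awkward, I would instead use the regularised $g_\delta$ directly: its derivative bounds have the same form with $|z|^2+\delta\ge|z|^2$, since $|z|^2(|z|^2+\delta)^{-1}\le1$. The Gr\"onwall bound is then uniform in $\delta$, and monotone convergence as $\delta\to0$ concludes.
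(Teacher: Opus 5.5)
Your proposal is correct and follows essentially the paper's argument: It\^o's formula applied to a regularised negative power of $|\phi_t^\kappa(\x)-\phi_t^\kappa(\y)|$, the bounds on the drift and diffusion differences by $C|Y_t|$, Gr\"onwall, and a limiting argument (Fatou) to remove the regularisation. The only difference is that the paper carries this out for $\xi=1$ with $(|Y_t|^2+\ep^2)^{-1/2}$ and then obtains $\xi\in(0,1)$ from Jensen's inequality for the concave map $x\mapsto x^\xi$. That makes your careful tracking of the factor $\xi$ in the generator unnecessary, although your direct route, which gives $C_0=e^{C}$, is equally valid.
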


\begin{proof}
    For convenience, let $Y_t =  \phi_t^{\kappa}(\x) - \phi_t^{\kappa}(\y)$. It satisfies the stochastic differential equation
    \begin{align}
        d Y_t &= (v_\kappa(\phi_t^{\kappa}(\x)) - v_\kappa(\phi_t^{\kappa}(\y))) \d t + \sqrt{2\kappa}\left( \Pi_{\phi_t^{\kappa}(\x)} - \Pi_{\phi_t^{\kappa}(\y)} \right) \d \bB_t  \\
        &= b(t) \d t + G(t) \d \bB_t,
    \end{align}
    and we note that
    \begin{align}\label{eq:lipvkappa}
        \left| b^i(t) \right| + \left| G^{ij}(t) \right| \leq \left( \Vert v_\kappa \Vert_{C^1} + {\sqrt{2\kappa}}\Vert \Pi \Vert_{C^1} \right) \left| Y_t \right| \leq C\left| Y_t \right|,
    \end{align}
    for all $i,j=1,2,3$, for some $C>1$ uniformly for all $\kappa\leq 1$. We also define the process $r_t^\ep=(|Y_t|^2+\ep^2)^{-1/2}$ and the function $g_\ep(x) = (|x|^2+\ep^2)^{-1/2}$ for $x\in \R^3$ and $\ep>0$. Since
    \begin{align}
        \partial_i g_\ep(x) = -\frac{x_i}{(|x|^2+\ep^2)^\frac32}, \quad \partial_i\partial_j g_\ep(x) = 3\frac{x_ix_j}{(|x|^2 + \ep^2)^\frac52} - \frac{1}{(|x|^2 + \ep^2)^\frac32}\delta_{ij},
    \end{align}
    we obtain from It\^{o}'s Lemma that
    \begin{align}
        \d r_t^\ep = -\sum_{i=1}^3 \frac{Y_t^i}{(|Y_t|^2 + \ep^2)^\frac32}\d Y_t^i +\frac12\sum_{i,j=1}^3 \left(3\frac{Y_t^iY_t^j}{(|Y_t|^2 + \ep^2)^\frac52} - \frac{1}{(|Y_t|^2 + \ep^2)^\frac32}\delta_{ij}\right)\sum_{k=1}^3 G^{ik}(t)G^{jk}(t) \d t.
    \end{align}
    Hence, there holds
    \begin{align}
        \EE_\bB r_t^\ep &= r_0^\ep - \EE_\bB\int_0^t \sum_{i=1}^3 \frac{Y_s^i}{(|Y_s|^2 + \ep^2)^\frac32}b^i(s) \d s \\
        &\quad+\frac12 \EE_\bB\int_0^t \sum_{i,j=1}^3 \left(3\frac{Y_s^iY_s^j}{(|Y_s|^2 + \ep^2)^\frac52} - \frac{1}{(|Y_s|^2 + \ep^2)^\frac32}\delta_{ij}\right)\sum_{k=1}^3 G^{ik}(s)G^{jk}(s) \d s,
    \end{align}
    where we have used the martingale property. Since $r_t^\ep > 0$ for all $t\in[0,1]$, there holds $\EE_\bB [|r_t^\ep|] = \EE_\bB [r_t^\ep]$ and thus
    \begin{align}
        \EE_\bB [r_t^\ep] \leq r_0^\ep + C \int_0^t \EE_\bB [r_s^\ep] \d s
    \end{align}
    due to \eqref{eq:lipvkappa}. For $h_\ep(t) = \EE_\bB r_t^\ep$ from Gr\"{o}nwall inequality we deduce that
    \begin{align}
        h_\ep(t) \leq C h_\ep(0) \exp(Ct) \leq C_0|Y_0|^{-1},
    \end{align}
    and thus the lemma for $\xi=1$ follows after using Fatou's Lemma for $\ep\rightarrow 0$. Finally, for $\xi\in (0,1)$ the function $x\mapsto x^\xi$ is concave and thus Jensen's inequality yields
\begin{align}
    \EE_\bB \left[ \frac{1}{|\phi_1^{\kappa}(\x) - \phi_1^{\kappa}(\y)|^\xi} \right] \leq \left( \EE_\bB \left[ \frac{1}{|\phi_1^{\kappa}(\x) - \phi_1^{\kappa}(\y)|} \right] \right)^\xi\leq \frac{C_0^\xi}{|\x-\y|^\xi}.
\end{align}
\end{proof}

Using triangular inequality and the stochastic stability of Lemma \ref{lemma:EXPphikappaphi}, we easily obtain the following.

\begin{lemma}\label{lemma:EXPcrudevartheta}
Let $u\in \mathfrak{X}(\Sph)$. For all $\x,\y\in \Sph$ and $t\in [0,1]$ define
    \begin{align}
        \vartheta_t^{\kappa}(\x,\y) := \left| \phi_t^{\kappa}(\x) - \phi_t^{\kappa}(\y) - (\phi_t^{0}(\x) - \phi_t^{0}(\y) ) \right|.
    \end{align}
    Then, there exists $C>0$ such that
    \begin{align}
        \EE_\bB \vartheta_t^{\kappa} (\x,\y) \leq C \sqrt{\kappa},
    \end{align}
    for all $\kappa\leq 1$ and all $t\in[0,1]$.
\end{lemma}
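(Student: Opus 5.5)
The plan is to bound $\vartheta_t^\kappa(\x,\y)$ by splitting it with the triangle inequality into two one-point deviations and then applying Lemma~\ref{lemma:EXPphikappaphi} to each. Regrouping the terms gives
\begin{align}
    \vartheta_t^{\kappa}(\x,\y) \leq \left| \phi_t^{\kappa}(\x) - \phi_t^{0}(\x) \right| + \left| \phi_t^{\kappa}(\y) - \phi_t^{0}(\y) \right|,
\end{align}
and this holds pathwise for every realization of the Brownian motion $\bB$.

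Next I take expectations with respect to $\bB$ and pass from second to first moments with Jensen's (or Cauchy--Schwarz's) inequality:
\begin{align}
    \EE_\bB\left| \phi_t^{\kappa}(\mathbf{z}) - \phi_t^{0}(\mathbf{z}) \right| \leq \left(\EE_\bB\left| \phi_t^{\kappa}(\mathbf{z}) - \phi_t^{0}(\mathbf{z}) \right|^2\right)^{1/2}, \qquad \mathbf{z}\in\{\x,\y\}.
\end{align}
Lemma~\ref{lemma:EXPphikappaphi} bounds each right-hand side by $\sqrt{C_u\kappa t}\leq \sqrt{C_u}\sqrt{\kappa}$ for all $t\in[0,1]$ and $\kappa\leq 1$. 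Summing the two contributions gives the claim with $C=2\sqrt{C_u}$.

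This constant depends only on $\Vert u\Vert_{C^1}$, so it is uniform in $\x,\y\in\Sph$, in $t\in[0,1]$ and in $\kappa\leq 1$. The argument has no real obstacle: it only uses that $C_u$ in Lemma~\ref{lemma:EXPphikappaphi} does not depend on the initial point. That independence holds because the Gr\"onwall argument in that proof uses only global $C^1$ bounds on $v_\kappa$ and $\Pi$. For the amplitude-scaled fields $u^\a_\omega$ and $u^\b_\omega$ with $|\omega|\leq N$, the constant is additionally uniform in $\omega$, since $\Vert u_\omega\Vert_{C^1}\lesssim N$.
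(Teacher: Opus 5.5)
Your proposal is correct and matches the paper's argument, which also obtains this lemma directly from the triangle inequality combined with Lemma~\ref{lemma:EXPphikappaphi}. Your explicit Jensen step from the second-moment bound to the first moment, giving $C=2\sqrt{C_u}$ uniformly in $\x,\y$, just supplies the one-line detail the paper leaves implicit.
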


The next lemma improves the estimate on the deviation of the separation vector under the random flow, $X_t^\kappa:=\phi_t^{\kappa}(\x) - \phi_t^{\kappa}(\y)$ from the separation vector under the deterministic flow, $X_t:=\phi_t^{0}(\x) - \phi_t^{0}(\y)$. 
\begin{lemma}\label{lemma:EXPvartheta}
Let $u\in \mathfrak{X}(\Sph)$. For all $\x,\y\in \Sph$ and $t\in [0,1]$ define
    \begin{align}
        \vartheta_t^{\kappa}(\x,\y) := \left| \phi_t^{\kappa}(\x) - \phi_t^{\kappa}(\y) - (\phi_t^{0}(\x) - \phi_t^{0}(\y) ) \right|.
    \end{align}
    Then, there exists $C>0$ such that
    \begin{align}
        \EE_\bB \vartheta_t^{\kappa} (\x,\y) \leq C \left( \sqrt{\kappa} + |\x-\y| \right) \left|\phi_1^{0}(\x) - \phi_1^{0}(\y)\right|,
    \end{align}
    for all $\kappa\leq 1$.
\end{lemma}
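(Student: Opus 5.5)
The plan is to compare the two separation vectors directly at the level of their differential equations and close a second-moment Gr\"onwall estimate, with all constants uniform in $\kappa\le 1$ and $t\in[0,1]$. At the end, the bi-Lipschitz bound of Lemma~\ref{lemma:phikappaLip} replaces $|\x-\y|$ by $|\phi_1^0(\x)-\phi_1^0(\y)|$. Set $X_t^\kappa=\phi_t^\kappa(\x)-\phi_t^\kappa(\y)$, $X_t=\phi_t^0(\x)-\phi_t^0(\y)$ and $Z_t=X_t^\kappa-X_t$, so that $\vartheta_t^\kappa=|Z_t|$ and $Z_0=0$. Subtracting the two equations gives
\begin{align}
\d Z_t=\Big[v_\kappa(\phi_t^\kappa(\x))-v_\kappa(\phi_t^\kappa(\y))-\big(u(\phi_t^0(\x))-u(\phi_t^0(\y))\big)\Big]\d t+\sqrt{2\kappa}\,\big(\Pi_{\phi_t^\kappa(\x)}-\Pi_{\phi_t^\kappa(\y)}\big)\d\bB_t .
\end{align}

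Using the smooth radial extension of $u$ to a shell around $\Sph$, I write $u(a)-u(b)=\mathcal A(a,b)(a-b)$ with $\mathcal A(a,b)=\int_0^1 D u(b+s(a-b))\,\d s$. This matrix is Lipschitz in $(a,b)$ with constant $\lesssim\Vert u\Vert_{C^2}$. Setting $A^\kappa_t=\mathcal A(\phi^\kappa_t(\x),\phi^\kappa_t(\y))$ and $A_t=\mathcal A(\phi^0_t(\x),\phi^0_t(\y))$, the drift becomes
\begin{align}
A_t^\kappa Z_t+(A_t^\kappa-A_t)X_t-2\kappa X_t^\kappa,\qquad |A_t^\kappa-A_t|\lesssim |\phi_t^\kappa(\x)-\phi_t^0(\x)|+|\phi_t^\kappa(\y)-\phi_t^0(\y)|.
\end{align}
The diffusion coefficient is bounded by $\sqrt{2\kappa}\Vert\Pi\Vert_{C^1}|X_t^\kappa|$, exactly as in \eqref{eq:lipvkappa}.

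Next I take $h(t)=\EE_\bB|Z_t|^2$ and use It\^o isometry together with Cauchy--Schwarz on the time integral, as in Lemma~\ref{lemma:phikappaLip}. This gives
\begin{align}
h(t)\lesssim \int_0^t h(s)\,\d s+\int_0^t |X_s|^2\,\EE_\bB\big(|\phi_s^\kappa(\x)-\phi_s^0(\x)|^2+|\phi_s^\kappa(\y)-\phi_s^0(\y)|^2\big)\d s+(\kappa+\kappa^2)\int_0^t\EE_\bB|X_s^\kappa|^2\,\d s .
\end{align}
The key point is that $X_s$ is deterministic, so it factors out of the expectation, with $|X_s|\lesssim|\x-\y|$. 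Lemma~\ref{lemma:EXPphikappaphi} bounds the middle expectation by $C\kappa$, and Lemma~\ref{lemma:phikappaLip} gives $\EE_\bB|X_s^\kappa|^2\lesssim|\x-\y|^2$. Gr\"onwall then yields $h(t)\le C\kappa|\x-\y|^2$, and Jensen gives $\EE_\bB\vartheta_t^\kappa\le C\sqrt\kappa\,|\x-\y|$. This is in fact stronger than required: it is bounded by $C(\sqrt\kappa+|\x-\y|)|\x-\y|$.

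Finally, the lower bi-Lipschitz bound in Lemma~\ref{lemma:phikappaLip} gives $|\x-\y|\le C_0^{1/2}|\phi_1^0(\x)-\phi_1^0(\y)|$, which converts the estimate into the stated form. The step I expect to need the most care is the drift decomposition. The It\^o correction $-2\kappa\x$ and the extension of $u$ and $\Pi$ off the sphere must be handled so that the mean-value matrices stay uniformly Lipschitz along both processes; this is needed because $\phi^\kappa_t$ stays on $\Sph$ only thanks to the Stratonovich/It\^o structure. If the direct factorization of the $(A^\kappa-A)X$ term ran into trouble, the fallback is to bound that term crudely by $|x-y|$ times Lemma~\ref{lemma:EXPcrudevartheta}-type estimates. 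That route would explain the extra $|\x-\y|$ allowance in the statement.
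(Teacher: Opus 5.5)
Your proof is correct, and it proves more than the statement asks: you get $\EE_\bB\vartheta_t^\kappa(\x,\y)\le C\sqrt{\kappa}\,|\x-\y|$. The stated form then follows from the lower bi-Lipschitz bound of Lemma~\ref{lemma:phikappaLip}.

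The overall scheme matches the paper's: second moment of the deviation, It\^o isometry, and Gr\"onwall, with Lemmas~\ref{lemma:EXPphikappaphi} and~\ref{lemma:phikappaLip} as inputs. What differs is how the drift is decomposed.

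The paper writes the drift difference through intermediate points $\xi_1(s),\xi_2(s)$. It controls their separation by the maximum of $|\phi_s^\kappa(\x)-\phi_s^0(\x)|$ and $|\phi_s^\kappa(\y)-\phi_s^0(\y)|$, together with the cross terms $|\phi_s^\kappa(\x)-\phi_s^0(\y)|$ and $|\phi_s^\kappa(\y)-\phi_s^0(\x)|$. Those cross terms are of size $\sqrt{\kappa}+|\x-\y|$, and this is exactly where the extra factor $|\x-\y|$ in the statement comes from. The paper also splits the martingale term by adding and subtracting $\Pi_{\phi_s^0(\x)}-\Pi_{\phi_s^0(\y)}$. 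You avoid this by noting that the deterministic separation carries no noise.

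Your chord-averaged matrix $\mathcal A(a,b)$ uses the same parametrization for the stochastic pair and the deterministic pair. So it only compares $\phi^\kappa_s(\x)$ with $\phi^0_s(\x)$ and $\phi^\kappa_s(\y)$ with $\phi^0_s(\y)$. Since $X_s$ is deterministic and factors out of the expectation, the forcing in the Gr\"onwall inequality is $O(\kappa|\x-\y|^2)$.

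The paper's weaker form costs nothing in its applications, for example Proposition~\ref{prop:stochdriftlocalTPP}, because there $|\x-\y|$ is small anyway. Your version shows that the $|\x-\y|$ allowance is an artifact of that decomposition, so the fallback you mention at the end is not needed.

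One technical correction. For $a,b\in\Sph$, the chord $b+s(a-b)$ passes through the interior of the ball, and through the origin when $a,b$ are antipodal. A radial extension of $u$ to a shell around $\Sph$ therefore does not define $\mathcal A$ with a uniform Lipschitz constant. Use a smooth extension of $u$ to all of $\R^3$ with bounded $C^2$ norm instead. Examples are a cutoff times the radial extension, or, for the fields at hand, the global polynomial formula $(\a\cdot\x)(\a\times\x)$. Both $\phi^\kappa_t$ and $\phi^0_t$ stay on $\Sph$, so the choice of extension does not change the processes.
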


\begin{proof}
    Note that 
    \begin{align}
        \vartheta_t^{\kappa}(\x,\y) &\leq \left| \int_0^t \left( v_\kappa(\phi_s^{\kappa}(\x)) - v_\kappa(\phi_s^{\kappa}(\y)) - \left( v_\kappa(\phi_s^{0}(\x)) - v_\kappa(\phi_s^{0}(\y)) \right) \right) \d s \right| \\
        &\quad + \sqrt{2\kappa} \left| \int_0^t \left( \Pi_{\phi_s^{\kappa}(\x)} - \Pi_{\phi_s^{\kappa}(\y)} - \left( \Pi_{\phi_s^{0}(\x)} - \Pi_{\phi_s^{0}(\y)} \right) \right) \d \bB_s \right| \\
        &\quad + 2\kappa \left| \int_0^t \left( \phi_s^{0}(\x) - \phi_s^{0}(\y) \right) \d s \right| + \left| \sqrt{2\kappa} \int_0^t \left( \Pi_{\phi_s^{0}(\x)} - \Pi_{\phi_s^{0}(\y)}\right) \d \bB_s\right|.
    \end{align}
Setting $h(t) := \EE_\bB (\vartheta_t^{\kappa}(\x,\y))^2$ and using It\^{o}'s isometry we obtain
\begin{align}
    h(t) &\leq \Vert v_\kappa \Vert_{C^1}^2 \int_0^t h(s) \d s + 2\kappa \Vert \Pi \Vert_{C^1}^2\int_0^t h(s) \d s \\
    &\quad + \int_0^t \EE_\bB \left| D_\x v_\kappa(\xi_1(s)) -  D_\x v_\kappa(\xi_2(s))\right|^2 \left| \phi_s^{0}(\x) - \phi_s^{0}(\y) \right|^2 \d s \\
    &\quad + 2\kappa \int_0^t \EE_\bB \left|D_\x \Pi_{\tilde\xi_1(s)} - D_\x \Pi_{\tilde\xi_2(s)}\right|^2 \left| \phi_s^{0}(\x) - \phi_s^{0}(\y) \right|^2 \d s \\
    &\quad + 4\kappa^2 \int_0^t \left| \phi_s^{0}(\x) - \phi_s^{0}(\y) \right|^2 \d s + 2\kappa \Vert \Pi \Vert_{C^1}^2 \int_0^t  \left| \phi_s^{0}(\x) - \phi_s^{0}(\y) \right|^2 \d s 
\end{align}
for some $\xi_1(s), \tilde \xi_1(s), \xi_2(s), \tilde\xi_2(s)\in \R^3$ with
\begin{align}
    \left| \xi_1(s) - \xi_2(s) \right| \leq \max \lbrace    \left| \phi_s^{\kappa}(\x) - \phi_s^{0}(\x) \right|,  \left| \phi_s^{\kappa}(\y) - \phi_s^{0}(\y) \right|,  \left| \phi_s^{\kappa}(\x) - \phi_s^{0}(\y) \right|,  \left| \phi_s^{\kappa}(\y) - \phi_s^{0}(\x) \right| \rbrace  
\end{align}
and likewise for $\left| \tilde\xi_1(s) - \tilde\xi_2(s) \right|$. In view of Lemma \ref{lemma:EXPphikappaphi} and Lemma \ref{lemma:phikappaLip}, we deduce that
\begin{align}
   \EE_\bB \left| \xi_1(s) - \xi_2(s) \right|^2 \leq C(\sqrt{\kappa } + |\x-\y| )^2
\end{align}
for some $C>1$. Therefore, noting that $\Vert v_\kappa\Vert_{C^2}^2 + 2\kappa\Vert \Pi \Vert_{C^2}^2\lesssim 1$ uniformly for all $\kappa\leq 1$, Lemma~\ref{lemma:phikappaLip} gives
\begin{align}
    h(t) &\leq C\int_0^t h(s) \d s + C (\sqrt{\kappa t} + |\x-\y| )^2 \int_0^t \left| \phi_s^{0}(\x) - \phi_s^{0}(\y) \right|^2 \d s \\
    &\leq C\int_0^t h(s) \d s + C (\sqrt{\kappa t} + |\x-\y| )^2 \left| \phi_1^{0}(\x) - \phi_1^{0}(\y) \right|^2.
\end{align}
As a result, Gr\"{o}nwall inequality yields
\begin{align}
    h(t) \leq C (\sqrt{\kappa t} + |\x-\y| )^2 \left| \phi_1^{0}(\x) - \phi_1^{0}(\y) \right|^2 \exp(Ct)
\end{align}
and the lemma follows {from $\EE_\bB\vartheta_t^{\kappa}\leq \sqrt{h(t)}$}.
\end{proof}

We finish the subsection by specializing to the flow maps considered in this manuscript. Recall that, for $N>1$ and for $\omega=(\omega_1,\omega_2)\in[-N,N]^2$, we denote with $f_\omega^\kappa := f_{\omega_2}^{2,\kappa}\circ f_{\omega_1}^{3,\kappa}$ and $f_\omega :=  f_{\omega_2}^{2}\circ f_{\omega_1}^{3}$ the stochastic and deterministic one-point processes, respectively, arising from \eqref{eq:introSDEITO}. The usual Gr\"{o}nwall-type arguments and Burkh\"{o}lder inequality yield the following result.

\begin{lemma}\label{lemma:EXPsupomegaLipTPP}
    Let $\x,\y\in \Sph$ and $n\geq 1$. Then, there exists $C_{N,n}>0$ such that
    \begin{align}\label{eq:unifomegaLipx}
        \EE_\bB \left[ \sup_{\underline{\omega}\in [-N,N]^{2n}} |f_{\underline{\omega}}^\kappa(\x) - f_{\underline{\omega}}^\kappa(\y)| \right] &\leq C_{N,n} |\x-\y|
        \end{align}
        and
        \begin{align}
        \EE_\bB \left[ \sup_{{\underline{\omega}}\in [-N,N]^{2n}} |f_{\underline{\omega}}^\kappa(\x) - f_{\underline{\omega}}(\x)| \right] &\leq C_{N,n}  \sqrt{\kappa},
    \end{align}
    for all $\kappa\in (0,1)$.
\end{lemma}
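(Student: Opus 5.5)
The plan is to reduce both estimates to single time-one steps and then iterate over the $2n$ alternating flows. The obstacle in each step is the supremum over $\underline{\omega}$, since Lemmas \ref{lemma:phikappaLip} and \ref{lemma:EXPphikappaphi} only control one fixed amplitude at a time. I would handle this with pathwise, not merely mean, Lipschitz control.

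\emph{Single step.} Write each elementary stochastic flow $f^{\bfe,\kappa}_{\omega}$, $\bfe\in\lbrace\a,\b\rbrace$, as the solution of the It\^o SDE with drift $\omega u^{\bfe}(\x)-2\kappa\x$ and diffusion $\sqrt{2\kappa}\Pi_\x$. The drift is globally Lipschitz in $\x$ with constant $\le C N$, uniformly in $|\omega|\le N$. For the difference process $Y_t=\phi^\kappa_t(\x)-\phi^\kappa_t(\y)$, I would estimate $\sup_{t\le1}|Y_t|$ pathwise as follows:
\[
|Y_t|\le|\x-\y|+CN\int_0^t|Y_s|\,\d s+\sup_{s\le t}|M_s|,
\]
where $M_t=\sqrt{2\kappa}\int_0^t(\Pi_{\phi^\kappa_s(\x)}-\Pi_{\phi^\kappa_s(\y)})\,\d\bB_s$. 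The key observation is that $M$ depends on $\omega$ through the path. To remove the $\omega$-dependence, I would prefer a continuous-in-$\omega$ argument: the map $\omega\mapsto f^{\bfe,\kappa}_\omega$ is a.s.\ $C^1$ in $\omega$ by smooth dependence of stochastic flows on parameters, with $\EE\sup_{\x}|\partial_\omega f^{\bfe,\kappa}_\omega(\x)|^p\le C_{N,p}$. The supremum over $\omega\in[-N,N]$ can then be controlled by a Kolmogorov/Garsia--Rodemich--Rumsey chaining in the single parameter $\omega$ combined with moment bounds for each fixed $\omega$. Burkholder--Davis--Gundy and Gr\"onwall give $\EE\sup_{t\le1}|Y_t|^p\le C_{N,p}|\x-\y|^p$ for fixed $\omega$. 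The increments in $\omega$ are bounded by $C_{N,p}|\omega-\omega'|^p|\x-\y|^p$ after differentiating the SDE in $\omega$ and applying the same Gr\"onwall/BDG argument to the linearized equation. Kolmogorov with $p>1$ then yields $\EE\sup_{|\omega|\le N}|f^{\bfe,\kappa}_\omega(\x)-f^{\bfe,\kappa}_\omega(\y)|^p\le C_{N,p}|\x-\y|^p$.

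The same scheme applied to $\phi^\kappa_t(\x)-\phi^0_t(\x)$ gives the deviation bound. In that case the forcing terms are $2\kappa\x$ and $\sqrt{2\kappa}\int\Pi\,\d\bB$, exactly as in Lemma \ref{lemma:EXPphikappaphi}, so one obtains $\EE\sup_\omega|f^{\bfe,\kappa}_\omega(\x)-f^{\bfe}_\omega(\x)|^p\le C_{N,p}\kappa^{p/2}$.

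\emph{Iteration.} The $2n$ elementary maps use independent Brownian increments on consecutive unit intervals. I would apply the Markov property and condition on $\mathcal F_{j}$ at each step, using the $L^p$ bounds (with $p=2$, say) uniformly in the starting points. This is legitimate because the single-step bound is uniform in the initial data, and a supremum over the amplitudes of later steps can be taken after conditioning. Chaining gives
\[
\EE\sup_{\underline\omega}|f^\kappa_{\underline\omega}(\x)-f^\kappa_{\underline\omega}(\y)|^2\le C_{N,n}|\x-\y|^2,
\]
and Jensen then yields \eqref{eq:unifomegaLipx}.

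For the second estimate I would telescope:
\[
f^\kappa_{\underline\omega}-f_{\underline\omega}=\sum_{j}\bigl(F^{\kappa}_{>j}\circ g^\kappa_j\circ F_{<j}-F^\kappa_{>j}\circ g_j\circ F_{<j}\bigr),
\]
where $g_j$ denotes the $j$-th elementary map, $F_{<j}$ the deterministic composition of the earlier maps, and $F^\kappa_{>j}$ the stochastic composition of the later ones. Each term is bounded by the uniform Lipschitz bound for the stochastic composition $F^\kappa_{>j}$ just proven, applied at the random points $g^\kappa_j(F_{<j}\x)$ and $g_j(F_{<j}\x)$ (which is allowed by conditioning on the earlier increments), times the single-step deviation bound $C\sqrt\kappa$. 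Summing the $2n$ terms gives $C_{N,n}\sqrt\kappa$.

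The main obstacle is interchanging the supremum over $\underline\omega$ with the expectation. My fallback if the parameter-differentiability route is cumbersome is a discretization: take a $\kappa$-independent finite net of $[-N,N]^{2n}$ of mesh $h$ together with pathwise $\omega$-Lipschitz control of $f^\kappa_{\underline\omega}$. This control comes from $\sup_{\x}|\partial_\omega f^{\bfe,\kappa}_\omega(\x)|\le C_N(1+\sup_t|\text{noise}|)$ via Gr\"onwall along the path, with the noise expressed through the integral form of the SDE. In the fallback, the union bound over the net costs only a constant factor depending on $N$ and $n$.
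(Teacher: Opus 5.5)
Your single-step argument is sound: BDG/Gr\"onwall moment bounds for fixed $\omega$ and for $\partial_\omega$ of the difference, followed by Kolmogorov in the single parameter $\omega$. The gap is in the iteration and in the telescoping.

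After the first step, the starting points of the second step are $a_{\omega_1}=f^{\b,\kappa}_{\omega_1}(\x)$ and $b_{\omega_1}=f^{\b,\kappa}_{\omega_1}(\y)$. These are random and move with $\omega_1$, so what you must control is
\[
\EE\Bigl[\sup_{\omega_1}\sup_{\omega_2}\bigl|f^{\a,\kappa}_{\omega_2}(a_{\omega_1})-f^{\a,\kappa}_{\omega_2}(b_{\omega_1})\bigr|\Bigr].
\]
Conditioning on $\mathcal F_1$ and using a single-step bound that is uniform over \emph{deterministic} initial data gives, for each \emph{fixed} $\omega_1$, the conditional bound $C|a_{\omega_1}-b_{\omega_1}|$. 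It says nothing about the supremum over the continuum of $\omega_1$-dependent starting pairs, because $\EE[\sup_{\omega_1}(\cdot)\mid\mathcal F_1]\neq\sup_{\omega_1}\EE[(\cdot)\mid\mathcal F_1]$. So the justification ``the single-step bound is uniform in the initial data'' does not suffice.

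What the iteration actually needs is a moment bound on a random Lipschitz constant with the supremum inside, for instance $\EE\bigl[\sup_{|\omega|\le N}\sup_{a\ne b}|f^{\bfe,\kappa}_\omega(a)-f^{\bfe,\kappa}_\omega(b)|/|a-b|\bigr]<\infty$. With that, independence of the Brownian increments on the $2n$ unit intervals gives $\EE\prod_j L_j=\prod_j\EE L_j$ and closes the first estimate.

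The telescoping has the same defect. The points $g^\kappa_j(F_{<j}\x)$ and $g_j(F_{<j}\x)$ move with $\omega_{\le j}$ inside the supremum, so a two-point bound for $F^\kappa_{>j}$ at fixed points does not apply. The one-step deviation must also hold uniformly over the starting point $F_{<j}\x$, i.e.\ you need $\EE\sup_{z\in\Sph,\,|\omega|\le N}|f^{\bfe,\kappa}_\omega(z)-f^{\bfe}_\omega(z)|\lesssim\sqrt\kappa$. Both uniform bounds are provable, via moment bounds on $D_x$, $D_x^2$ and $\partial_\omega D_x$ of the one-step flow plus a Kolmogorov/Sobolev embedding on $\Sph\times[-N,N]$. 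Your proposal neither states nor proves them.

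The paper avoids the iteration altogether. It works on all of $[0,2n]$ with all $2n$ amplitudes at once. By BDG/Gr\"onwall it shows that $\EE\sup_{t}|\phi^\kappa_t(\x;\underline\omega)-\phi^\kappa_t(\y;\underline\omega)|^p$, and the corresponding moments of each $\partial_{\omega_j}$ of this difference, are $\lesssim|\x-\y|^p$ uniformly in $\underline\omega\in[-N-1,N+1]^{2n}$. It then applies the Sobolev embedding $W^{1,p}\hookrightarrow C^0$ on the $2n$-dimensional cube with $p>2n$ pathwise, which moves the supremum inside the expectation in one step. Running your Kolmogorov argument jointly in all $2n$ parameters for the full composition would repair your proof in exactly this way.

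Your fallback does not rescue the argument. The linearized equation for $\partial_\omega\phi$ contains the multiplicative It\^o term $\sqrt{2\kappa}\,D\Pi_{\phi}[\partial_\omega\phi]\,\d\bB$, so a pathwise Gr\"onwall bound in terms of $\sup_t|\bB_t|$ is not available. A fixed-mesh net leaves an additive error $h\sup_{\underline\omega}|\partial_{\underline\omega}(f^\kappa_{\underline\omega}(\x)-f^\kappa_{\underline\omega}(\y))|$. That error is $O(|\x-\y|)$ only if the mixed derivative is controlled with the supremum inside the expectation, which is the very problem you were trying to avoid. Letting $h$ shrink with $|\x-\y|$ instead makes the union bound blow up.
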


\begin{proof}
    We show the first inequality, as the ideas to prove both bounds are similar. Fix $\x,\y\in \Sph$ and $\kappa\in(0,1)$, let $g_{\underline{\omega}}(t) = \phi_t^\kappa(\x;{\underline{\omega}}) - \phi_t^\kappa(\y;{\underline{\omega}})$, where $\phi_t^\kappa(\cdot,{\underline{\omega}}):= \phi_t^\kappa(\cdot)$ solves the It\^{o} differential equation \eqref{eq:introSDEITO} with vector field $u(t,\cdot, \underline{\omega};\a,\b)$. In particular, $g_{\underline{\omega}}(2n) = f_{\underline{\omega}}^\kappa(\x) - f_{\underline{\omega}} ^\kappa(\y)$. There holds
    \begin{align}
        g_{\underline{\omega}}(s) &= \x - \y + \int_0^s \left( u(r,\phi_{r}^\kappa(\x,{\underline{\omega}});\a,\b) - u(r,\phi_{r}^\kappa(\y,{\underline{\omega}});\a,\b) \right) \d r + 2\kappa \int_0^s g_{\underline{\omega}}(r) \d r \\
        &\quad +\sqrt{2\kappa}\int_0^s \left( \Pi_{\phi_{r}^\kappa(\x,{\underline{\omega}})} - \Pi_{\phi_{r}^\kappa(\y,{\underline{\omega}})} \right) \d \bB_r.
    \end{align}
    Let $p\geq 1$ and $G_{\underline{\omega}}(t) = \EE_\bB \left[ |\sup_{0\leq s \leq t}|g_{\underline{\omega}}(s)||^p\right]$. Using Burkholder-Davis-Gundy inequality,
    \begin{align}
        \EE_{\bB} \left[ \left( \sup_{0\leq s \leq t} \left| \int_0^s \left( \Pi_{\phi_{r}^\kappa(\x,{\underline{\omega}})} - \Pi_{\phi_{r}^\kappa(\y,{\underline{\omega}})} \right) \d \bB_r \right| \right)^p \right] &\lesssim \EE_{\bB} \left[\left( \int_0^t \left( \Pi_{\phi_{r}^\kappa(\x,{\underline{\omega}})} - \Pi_{\phi_{r}^\kappa(\y,{\underline{\omega}})} \right)^2 \d r \right)^{p/2} \right] \\
        &\lesssim \EE_{\bB}\left[ \left( \int_0^t |g_{\underline{\omega}}(r)|^2 \d r\right)^{p/2} \right] \\
        &\lesssim \EE_{\bB}\left[ \left( \int_0^t (\sup_{0\leq s \leq r}|g_{\underline{\omega}}(s)|)^2 \d r\right)^{p/2} \right] \\
        &\lesssim \EE_{\bB}\left[ \int_0^t \left(\sup_{0\leq s \leq r}|g_{\underline{\omega}}(s)|\right)^p \d r \right] \\
        &\lesssim \int_0^t G_{\underline{\omega}}(r) \d r ,
    \end{align}
    while we also have
    \begin{align}
        \EE_{\bB} \left[ \left( \sup_{0\leq s \leq t} \left| \int_0^s \left( u(r,\phi_{r}^\kappa(\x,{\underline{\omega}});\a,\b) - u(r,\phi_{r}^\kappa(\x,{\underline{\omega}});\a,\b) \right) \d r \right| \right)^p \right] \lesssim \int_0^t G_{\underline{\omega}}(r) \d r ,
    \end{align}
    and similarly for the remaining contribution. Hence, using Gr\"{o}nwall on $G_{\underline{\omega}}(t)$ we conclude that
    \begin{align}\label{eq:pmomentG}
       \sup_{t\in[0,2n]} G_{\underline{\omega}}(t)\leq C_N |\x-\y|^p,
    \end{align}
    for all $\underline{\omega}\in [-N-1,N+1]^{2n}$, for some $C_N>0$ independent of $\underline{\omega}$.
    
    Now, to exchange the supremum in ${\underline{\omega}}$ and the expectation in $\bB$ we use the Sobolev inequality. For this, we consider first $\partial_j\phi_t^\kappa(\x;\underline{\omega}):= \partial_{\omega_j}\phi_t^\kappa(\x;\underline{\omega})$ and $g_{\underline{\omega}}^{j}=\partial_{\omega_j}g_{\underline{\omega}}(t)$, where $\underline{\omega}=(\omega_1,..., \omega_{2n})$ and $1\leq j \leq 2n$. There holds
    \begin{align}
        \partial_j \phi_t^\kappa(\x;\underline{\omega}) &= \int_0^t \left( \left( Du(s,\phi_s^\kappa(\x,\underline{\omega});\a,\b) - 2\kappa Id \right) \partial_j \phi_s^\kappa(\x;\underline{\omega}) \right) \d s + \int_0^t \partial_{\omega_j}u(s,\phi_s^\kappa(\x,\underline{\omega});\a,\b) \d s \\
        &\quad + \sqrt{2\kappa}\int_0^t D\Pi_{\phi_s^\kappa(\x,\underline{\omega})} \partial_j\phi_s^\kappa(\x,\underline{\omega}) \d \bB_s.
    \end{align}
    We next note that $\partial_{\omega_j}u(s,\phi_s^\kappa(\x,\underline{\omega});\a,\b)= u^\sigma(\phi_s^\kappa(\x,\underline{\omega}))1_{[t_j,t_j+1)}(s)$ for some $\sigma\in \lbrace \a, \b \rbrace$ and some $t_j\in \N$ with $0\leq t_j \leq 2n-1$ and $u^\sigma$ uniformly bounded. Hence, a Gr\"{o}nwall-type argument and Burkholder-Davis-Gundy inequality yields
    \begin{align}\label{eq:pmomentphij}
        \sup_{\underline{\omega}\in [-N-1,N+1]^{2n}}\EE_\bB (\sup_{t\in[0,2n]} |\partial_j \phi_t^\kappa(\x,\underline{\omega})|)^p \leq C,
    \end{align}
    for some $C>0$. On the other hand,
    \begin{align}
        g_{\underline{\omega}}^j(t) &= \int_0^t (\partial_{\omega_j}u(s,\phi_t^\kappa(\x,\underline{\omega}),{\underline{\omega}};\a,\b) - \partial_{\omega_j}u(s,\phi_t^\kappa(\y,\underline{\omega}),{\underline{\omega}};\a,\b) ) \d s \\
        &\quad + \int_0^t \left( D u(s,\phi_s^\kappa(\x,\underline{\omega}),{\underline{\omega}};\a,\b) \partial_j\phi_s^\kappa(\x,\underline{\omega}) - D u(s,\phi_s^\kappa(\y,\underline{\omega}),{\underline{\omega}};\a,\b) \partial_j\phi_s^\kappa(\y,\underline{\omega}) \right) \d s \\
        &\quad + 2\kappa\int_0^t g_{\underline{\omega}}^j(s) \d s + \sqrt{2\kappa}\int_0^t \left( D\Pi_{\phi_s^\kappa(\x,\underline{\omega})} \partial_j \phi_s^\kappa(\x,\underline{\omega}) - D\Pi_{\phi_s^\kappa(\y,\underline{\omega})} \partial_j \phi_s^\kappa(\y,\underline{\omega}) \right) \d \bB_s,
    \end{align}
    and we note that for $G_{\underline{\omega}}^j(t) := \EE_\bB \left[ |\sup_{0\leq s \leq t}|g_{\underline{\omega}}^j(s)||^p\right]$ we have
    \begin{align}
        \EE_{\bB} \left[ \left| \sup_{0\leq s \leq t} \int_0^s (\partial_{\omega_j}u(r,\phi_r^\kappa(\x,\underline{\omega}),{\underline{\omega}};\a,\b) - \partial_{\omega_j}u(r,\phi_r^\kappa(\y,\underline{\omega}),{\underline{\omega}};\a,\b) ) \d r \right|^p \right] \lesssim \int_0^t G_{\underline{\omega}}(r) \d r \lesssim |\x-\y|^p,
    \end{align}
    while also
    \begin{align}
        \EE_{\bB}& \left[ \left| \sup_{0\leq s \leq t} \int_0^s \left( D u(r,\phi_r^\kappa(\x,\underline{\omega}),{\underline{\omega}};\a,\b) \partial_j\phi_r^\kappa(\x,\underline{\omega}) - D u(s,\phi_r^\kappa(\y,\underline{\omega}),{\underline{\omega}};\a,\b) \partial_j\phi_r^\kappa(\y,\underline{\omega}) \right) \d r \right|^p \right] \\
        &\quad \lesssim \EE_{\bB}\left[ \int_0^t |g_{\underline{\omega}}(r)|^p |\partial_j \phi_r^\kappa(\x,\underline{\omega})|^p \d r \right] + \EE_{\bB}\left[ \int_0^t |g_{\underline{\omega}}^j(r)|^p \d r \right] \\
        &\quad \lesssim \EE_{\bB}\left[ \int_0^t (\sup_{0\leq s \leq r}|g_{\underline{\omega}}(r)|)^p (\sup_{0\leq s \leq r}| \partial_j \phi_r^\kappa(\x,\underline{\omega})|)^p \d r \right] + \EE_{\bB}\left[ \int_0^t (\sup_{0\leq s \leq r}|g_{\underline{\omega}}^j(r)|)^p \d r \right] \\ 
        &\quad \lesssim |\x-\y|^p + \int_0^t G_{\underline{\omega}}^j(r) \d r.
    \end{align}
    In the above we used that $E(|XY|)\leq E(X^2)^\frac12 E(Y^2)^\frac12$ and that \eqref{eq:pmomentG} and \eqref{eq:pmomentphij} hold for all $p\geq 1$. Appealing again to the Burkholder-Davis-Gundy inequality for the martingale term and the Gr\"{o}nwall inequality, we conclude that
    \begin{align}\label{eq:pmomentGj}
        \sup_{\underline{\omega}\in[-N-1,N+1]^{2n}} \sup_{t\in[0,2n]} G_{\underline{\omega}}^j(t) \lesssim |\x-\y|^p.
    \end{align}
We are now in position to prove \eqref{eq:unifomegaLipx}. Since $[-N,N]^{2n}\Subset [-N-1,N+1]^{2n}$ we note that for all $\underline{\omega}\in [-N,N]^{2n}$ and all $t\in[0,2n]$, we have the Sobolev inequality
\begin{align}
    \sup_{\underline{\omega}\in [-N,N]^{2n}}|g_{\underline{\omega}}(t)| \lesssim \left( \int_{[-N-1,N+1]^{2n}} |g_{\underline{\varpi}}(t)|^p \d \underline{\varpi} + \sum_{j=0}^{2n} \int_{[-N-1,N+1]^{2n}}|g_{\underline{\varpi}}^j(t)|^p \d \underline{\varpi} \right)^\frac{1}{p}
\end{align}
for all $p>2n$. Hence, in view of \eqref{eq:pmomentG} and \eqref{eq:pmomentGj},
\begin{align}
    \left(\EE_{\bB}\left[ \sup_{\underline{\omega}\in [-N,N]^{2n}} |g_{\underline{\omega}}(t)| \right] \right)^p &\leq \EE_{\bB}\left[ \sup_{\underline{\omega}\in [-N,N]^{2n}} |g_{\underline{\omega}}(t) |^p\right]  \\
    &\lesssim \int_{[-N-1,N+1]^{2n}} \EE_{\bB}\left [|g_{\underline{\varpi}}(t)|^p\right] \d \underline{\varpi} + \sum_{j=0}^{2n} \int_{[-N-1,N+1]^{2n}} \EE_\bB \left[|g_{\underline{\varpi}}^j(t)|^p \right]\d \underline{\varpi}  \\
    &\lesssim |\x-\y|^p
\end{align} for all $t\in[0,2n]$. The bound \eqref{eq:unifomegaLipx} then follows.
\end{proof}

The ideas presented in the proof of Lemma \ref{lemma:EXPsupomegaLipTPP} also show the Lipschitz continuity of $f_\omega^\kappa$ with respect to the random-amplitude noise $\omega$. 

\begin{lemma}\label{lemma:EXPunifomegaTPP}
Let $n\geq 1$ and $N>1$. There exists $C_{N,n}>0$ such that 

\begin{align}
    \EE_\bB \left[ \sup_{ \underline{\varpi}\in [-N,N]^{2n}} |D_{\underline{\omega}}f_{\underline{\varpi}}^\kappa(\x)| \right] \leq C_{N,n} 
\end{align}
In particular, for all $\kappa\in[0,1]$ and all $\x\in \mathsf{X}$,
\begin{align}
    \EE_\bB \left[ \sup_{\underset{\underline{\omega}\neq \underline{\varpi}}{\underline{\omega},\underline{\varpi}\in [-N,N]^{2n}}} \frac{|f_{\underline{\omega}}^\kappa(\x) - f_{\underline{\varpi}}^\kappa(\x)|^2}{|\underline{\omega}-\underline{\varpi}|_\infty^2} \right] \leq C_{N,n}.
\end{align}
\end{lemma}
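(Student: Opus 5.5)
The plan is to repeat the scheme of the proof of Lemma \ref{lemma:EXPsupomegaLipTPP}, now applied to the derivative process $\partial_j\phi_t^\kappa(\x;\underline{\omega}) = \partial_{\omega_j}\phi_t^\kappa(\x;\underline{\omega})$ and its second derivatives $\partial_k\partial_j\phi_t^\kappa$. The argument has three steps: $p$-th moment bounds uniform in $\underline{\omega}$, a Sobolev embedding in the noise variable to pass the supremum inside the expectation, and the mean value theorem for the Lipschitz statement.

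\emph{Step 1: moments of the first derivative.} Fix $p\geq 1$. The SDE for $\partial_j\phi_t^\kappa$ was written in the proof of Lemma \ref{lemma:EXPsupomegaLipTPP}. Its drift is linear in $\partial_j\phi^\kappa$ with coefficient $Du-2\kappa\,Id$, which is bounded uniformly for $\underline{\omega}\in[-N-1,N+1]^{2n}$. Its forcing $u^\sigma(\phi_s^\kappa)\mathbf 1_{[t_j,t_j+1)}(s)$ is bounded. Its martingale part has coefficient $\sqrt{2\kappa}D\Pi$, also bounded. Applying Burkholder--Davis--Gundy and Gr\"onwall to $\EE_\bB[\sup_{s\le t}|\partial_j\phi_s^\kappa|^p]$ gives \eqref{eq:pmomentphij}: a bound $C_{N,n,p}$ uniform in $\underline{\omega}$ and in $\kappa\in[0,1]$.

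\emph{Step 2: moments of the second derivative.} Differentiating once more in $\omega_k$ yields again a linear SDE for $\partial_k\partial_j\phi_t^\kappa$, with the same coefficients $Du-2\kappa\,Id$ and $\sqrt{2\kappa}D\Pi$. The new source terms are:
\begin{itemize}
\item[] $D^2u(\partial_k\phi^\kappa,\partial_j\phi^\kappa)$;
\item[] $\partial_{\omega_k}Du\,\partial_j\phi^\kappa$ and $D\partial_{\omega_j}u\,\partial_k\phi^\kappa$, which are bounded multiples of $Du^\sigma$ on unit time windows;
\item[] the It\^o term $\sqrt{2\kappa}D^2\Pi(\partial_k\phi^\kappa,\partial_j\phi^\kappa)$.
\end{itemize}
Each source is controlled in $L^p$ by Cauchy--Schwarz combined with Step 1 at exponent $2p$, which is why the moment bounds must hold for every $p$. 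Burkholder--Davis--Gundy and Gr\"onwall then give
\begin{align}
\sup_{\underline{\omega}\in[-N-1,N+1]^{2n}}\EE_\bB\Big[\sup_{t\le 2n}|\partial_k\partial_j\phi_t^\kappa(\x;\underline{\omega})|^p\Big]\le C.
\end{align}
The main technical point is a justification issue: the piecewise-constant-in-time field $u(t,\cdot,\underline{\omega})$ must be differentiable in $\underline{\omega}$ as a solution of the SDE. This is handled by the standard theorem on smooth dependence of SDE solutions on parameters. On each unit window the drift depends smoothly (linearly) on the single active amplitude, so the result can be applied window by window and chained via the Markov property.

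\emph{Step 3: supremum and Lipschitz bound.} Take $p>2n$ and apply the Sobolev embedding $W^{1,p}([-N-1,N+1]^{2n})\hookrightarrow C^0([-N,N]^{2n})$ to the map $\underline{\varpi}\mapsto D_{\underline{\omega}}f^\kappa_{\underline{\varpi}}(\x)$, exactly as at the end of the proof of Lemma \ref{lemma:EXPsupomegaLipTPP}. This gives
\begin{align}
\EE_\bB\Big[\sup_{[-N,N]^{2n}}|D_{\underline\omega}f^\kappa|^p\Big]\lesssim\int\EE_\bB|D_{\underline\omega}f^\kappa_{\underline\varpi}|^p\,\d\underline\varpi+\sum_k\int\EE_\bB|\partial_kD_{\underline\omega}f^\kappa_{\underline\varpi}|^p\,\d\underline\varpi\le C.
\end{align}
The first claim follows by Jensen. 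For the second, the box $[-N,N]^{2n}$ is convex, so the mean value theorem gives
\begin{align}
|f^\kappa_{\underline\omega}(\x)-f^\kappa_{\underline\varpi}(\x)|\le \sqrt{2n}\,\sup_{[-N,N]^{2n}}|D_{\underline\omega}f^\kappa|\,|\underline\omega-\underline\varpi|_\infty .
\end{align}
Squaring and taking expectations, the $p=2$ version of the bound just established yields the stated estimate with a constant independent of $\kappa$ and $\x$.
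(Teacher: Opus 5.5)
Your proposal is correct and follows the same route as the paper's (only sketched) proof: $p$-moment bounds for the first and then second $\underline\omega$-derivatives of $\phi_t^\kappa$, uniform over $[-N-1,N+1]^{2n}$, followed by the Sobolev embedding in the noise variable and the mean value theorem. One small fix: the second-moment bound used in the last step should be deduced from the $p>2n$ estimate via Jensen, not from a ``$p=2$ version'' of the Sobolev step, since $p=2$ is not admissible there because $2\le 2n$.
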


\begin{proof}
    We obtain first uniform-in-time bounds on $p$-moments of $D_{\underline{\omega}}\phi_t^\kappa(\x,\underline{\varpi})$ that are uniform in $\varpi\in [-N-1,N+1]^{2n}$. These are then used to obtain uniform-in-time bounds on $p$-moments of $D^2_{\underline{\omega}}\phi_t^\kappa(\x,\underline{\varpi})$ that are also uniform in $\varpi\in [-N-1,N+1]^{2n}$. The Sobolev inequality then yields the claimed estimate. We omit the routine details.
\end{proof}

\subsection{Local stochastic drift function for the two-point process}\label{ssec:localstochTPP}
We begin by recalling from Proposition \ref{prop:blackboxdriftTPP} that 
\begin{equation}
W(\x,\y) = d_{\Sph}(\x,\y)^{-\xi}\psi(\x,\theta(\x,\y)),
\end{equation}
for $\theta(\x,\y) := \frac{\mathrm{exp}_{\x}^{-1}(\y)}{d_\Sph(\x,\y)}$, some $\psi:\mathsf{X}\times T\mathsf{X}\rightarrow \R_+$, and some $\xi\in(0,1)$. Here $\mathrm{exp}_\x:T_\x\mathsf{X}\to\mathsf{X}$ denotes the exponential map at time $1$. We note that for $\x,\y\in \Sph$, its distance $d_\Sph(\x,\y)$ on $\Sph$ is given by $d_\Sph(\x,\y)=\arccos(\x\cdot\y)$, while if we view $\x,\y\in \R^3$, then $d_{\R^3}(\x,\y) = |\x-\y|$. The two distances are related by
\begin{align}
    |\x-\y|= 2\sin \left( \frac{d_{\Sph}(\x,\y)}{2} \right), \quad d_{\Sph}(\x,\y)= 2\arcsin \left( \frac{|\x-\y|}{2} \right), \quad 
\end{align}
and since $|\x-\y|\leq 2$ and $d_{\Sph}(\x,\y)\leq \pi$ for all $\x,\y\in \Sph$ we have the expansions
\begin{equation}\label{eq:metricequiv}
\begin{split}
    |\x-\y| &= d_{\Sph}(\x,\y) - \mathfrak{R}_\Sph(\x,\y) d_{\Sph}(\x,\y)^3, \\
    d_{\Sph}(\x,\y) &= |\x-\y| - \mathfrak{R}_{\R^3}(\x,\y) |\x-\y|^3,
\end{split}
\end{equation}
with $\Vert \mathfrak{R}_\Sph \Vert_{L^\infty(\Sph\times\Sph)} + \Vert \mathfrak{R}_{\R^3} \Vert_{L^\infty(\Sph\times\Sph)} \leq C$, for some $C>0$. Moreover,
\begin{equation}\label{eq:inversemetricequiv}
\begin{split}
    |\x-\y|^{-\alpha} &= d_{\Sph}(\x,\y)^{-\alpha} - \widetilde{\mathfrak{R}}_\Sph(\x,\y) d_{\Sph}(\x,\y)^{2-\alpha}, \\
    d_{\Sph}(\x,\y)^{-\alpha} &= |\x-\y|^{-\alpha} - \widetilde{\mathfrak{R}}_{\R^3}(\x,\y) |\x-\y|^{2-\alpha},
\end{split}
\end{equation}
for $0< \alpha\leq 1$, where now  $\Vert \widetilde{\mathfrak{R}}_\Sph \Vert_{L^\infty(\Delta(s_0))} + \Vert \widetilde{\mathfrak{R}}_{\R^3} \Vert_{L^\infty(\Delta(s_0))} \leq \widetilde{C}$ for some $\widetilde{C}>0$ and some $s_0>0$. In particular, for all $\ep>0$ there exists $\delta>0$ such that $|\x-\y|< \ep$ for all $d_\Sph(\x,\y)< \delta$. Then, in view of \eqref{eq:inversemetricequiv}, for all $\xi\in (0,1]$ there holds
\begin{align}\label{eq:inversespheremetric}
    d_\Sph(\x,\y)^{-\xi} \leq C_2 |\x-\y|^{-\xi},\quad |\x-\y|^{-\xi} \leq C_3 d_\Sph(\x,\y)^{-\xi},
\end{align}
for all $d_\Sph(\x,\y)< \delta$, for some universal constants $C_2,C_3>0$.

We first show that the local drift function for the deterministic two-point process is stable under the stochastic dynamics if $\kappa$ is small enough.
    \begin{proposition}\label{prop:stochdriftlocalTPP}
        Let $\ep_0>0$. There exists $s_*>0$ and $\kappa_*>0$ such that
            \begin{align}
              {\EE_\bB \left| W(f_\omega^\kappa(\x),f_\omega^\kappa(\y)) - W(f_\omega(\x),f_\omega(\y)) \right|} < \ep_0{W(\x,\y)}
    \end{align}
    for all $(\x,\y)\in \Delta(\ep)$, all $\omega\in [-N,N]^2$,  all $\kappa\in[0,\kappa_*)$ and all $0<\ep<s_*$.
    \end{proposition}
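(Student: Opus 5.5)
The plan is to split $W=d_\Sph^{-\xi}\psi$ into its radial singular factor and its bounded angular factor, and to control the error in each by the stochastic stability lemmas. Write $X^\kappa=f_\omega^\kappa(\x)-f_\omega^\kappa(\y)$ and $X=f_\omega(\x)-f_\omega(\y)$. By the composition structure $f_\omega^\kappa=f^{2,\kappa}_{\omega_2}\circ f^{3,\kappa}_{\omega_1}$ and the Markov property, Lemmas \ref{lemma:phikappaLip}, \ref{lemma:EXPphikappainv} and \ref{lemma:EXPvartheta} extend to the time-2 map with constants depending only on $N$.

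In particular, Lemma \ref{lemma:EXPvartheta}, iterated over the two half-steps together with the bi-Lipschitz bound $|X|\simeq|\x-\y|$ from Lemma \ref{lemma:phikappaLip}, gives
\[
\EE_\bB|X^\kappa-X|\lesssim(\sqrt\kappa+|\x-\y|)\,|\x-\y|.
\]
By \eqref{eq:inversespheremetric}, for $(\x,\y)\in\Delta(\ep)$ with $\ep$ small it is harmless to work with Euclidean distances in place of $d_\Sph$, paying an error of relative size $O(\ep^2)$ via \eqref{eq:inversemetricequiv}.

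The estimate itself uses the decomposition
\begin{align}
|W(f^\kappa_\omega\x,f^\kappa_\omega\y)-W(f_\omega\x,f_\omega\y)|&\le \big||X^\kappa|^{-\xi}-|X|^{-\xi}\big|\,\psi(f^\kappa_\omega\x,\theta^\kappa)\\
&\quad+|X|^{-\xi}\,|\psi(f^\kappa_\omega\x,\theta^\kappa)-\psi(f_\omega\x,\theta)|,
\end{align}
where $\theta^\kappa$ and $\theta$ denote the unit directions of the stochastic and deterministic separations. The function $\psi$ is continuous and positive on the compact unit tangent bundle over a compact region, so it is bounded above and below. The radial term is handled by the elementary inequality
\[
\big|a^{-\xi}-b^{-\xi}\big|\le \xi\,|a-b|\,\min(a,b)^{-\xi-1}\le |a-b|\,(a^{-\xi-1}+b^{-\xi-1}).
\]
Applying Cauchy--Schwarz to $\EE_\bB\big[|X^\kappa-X|\,|X^\kappa|^{-\xi-1}\big]$ requires a negative moment $\EE|X^\kappa|^{-2\xi-2}$. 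Lemma \ref{lemma:EXPphikappainv} only covers exponents up to $1$, so I would instead first redo its proof with $g_\ep(x)=(|x|^2+\ep^2)^{-p/2}$ for $p=2\xi+2$. The same Itô/Gr\"onwall argument goes through for any fixed $p$, since the coefficients $b$ and $G$ are Lipschitz-small in $|Y_t|$, and yields $\EE|X^\kappa|^{-p}\lesssim|\x-\y|^{-p}$. One also needs the second-moment version of the $\vartheta$ estimate, which the proof of Lemma \ref{lemma:EXPvartheta} provides, since it bounds $h(t)=\EE\vartheta^2$. Combining these gives a radial error
\[
\lesssim(\sqrt\kappa+|\x-\y|)\,|\x-\y|^{-\xi}\lesssim(\sqrt{\kappa}+\ep)\,W(\x,\y).
\]

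The angular term is the main obstacle, because $\theta^\kappa=X^\kappa/|X^\kappa|$ (after projecting to the tangent space, using the exponential map at $f_\omega\x$) is not Lipschitz in $X^\kappa$ near $X^\kappa=0$. Since $\psi$ is only known to be continuous, I would use its uniform continuity on the compact set: given $\eta>0$ there is $\delta>0$ such that the difference in $\psi$ is below $\eta$ whenever $|f^\kappa_\omega\x-f_\omega\x|+|\theta^\kappa-\theta|<\delta$, and is bounded by $2\max\psi$ otherwise. By $|\theta^\kappa-\theta|\le 2|X^\kappa-X|/|X|$, the chart-change error $O(|X|)$, and Markov's inequality with Lemma \ref{lemma:EXPphikappaphi}, the probability of the bad event is at most
\[
\delta^{-1}\big(C\sqrt\kappa+C(\sqrt\kappa+\ep)\big).
\]
Multiplying by $|X|^{-\xi}\lesssim|\x-\y|^{-\xi}\lesssim W(\x,\y)/\min\psi$, the angular error is at most
\[
\big(\eta+C\delta^{-1}(\sqrt\kappa+\ep)\big)\,W(\x,\y).
\]

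To finish, choose $\eta$ so that both contributions are below $\ep_0/2$ after absorbing the constants $\max\psi/\min\psi$. This fixes $\delta$, and then one takes $s_*$ and $\kappa_*$ small enough, uniformly in $\omega\in[-N,N]^2$ since all constants depend only on $N$.
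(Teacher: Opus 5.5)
Your proof is correct. It shares the paper's split into a radial factor and an angular factor, and it treats the angular factor the same way, through uniform continuity of $\psi$ plus Markov's inequality on a bad event. Your direction bound via $|u/|u|-v/|v||\le 2|u-v|/|v|$ and an $O(|X|)$ chart correction is equivalent to the paper's use of the explicit geodesic formula on $\Sph$.

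The genuine difference is the singular radial term.

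\emph{The paper's route.} It splits over the event $E=\{Y_1^\kappa\ge Y_1/2\}$, where $Y_1^\kappa,Y_1$ are the spherical distances. On $E$ the mean value theorem yields the deterministic factor $(Y_1/2)^{-\xi-1}$ times $\EE_\bB|Y_1^\kappa-Y_1|$, so only the first-moment bound of Lemma~\ref{lemma:EXPvartheta} is needed. On $E^c$ it combines $\PP(E^c)\lesssim\sqrt\kappa+|\x-\y|$ with H\"older's inequality and the exponent-one negative moment of Lemma~\ref{lemma:EXPphikappainv}. This gives the weaker rate $(\sqrt\kappa+|\x-\y|)^{1-\xi}W(\x,\y)$.

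\emph{Your route.} You apply Cauchy--Schwarz globally. This forces you to use the $L^2$ form of the $\vartheta$-estimate, which the proof of Lemma~\ref{lemma:EXPvartheta} does supply. It also requires a negative moment of order $2\xi+2>1$. Your extension of Lemma~\ref{lemma:EXPphikappainv} to that order is sound: for $g_\epsilon(x)=(|x|^2+\epsilon^2)^{-p/2}$ one has $|x|\,|\nabla g_\epsilon|+|x|^2|\nabla^2 g_\epsilon|\lesssim_p g_\epsilon$. Since the drift and diffusion coefficients are bounded by a constant times $|Y_t|$, the same It\^o--Gr\"onwall--Fatou argument closes for every $p>0$. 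Composing the two half-steps then needs higher-moment versions of the Lipschitz bound (e.g.\ fourth moments), which are routine.

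\emph{Trade-off.} Your route gives the sharper linear rate $\sqrt\kappa+|\x-\y|$, at the cost of a strengthened negative-moment lemma. The paper's event splitting works with the lemmas exactly as stated but loses the power $1-\xi$. Either rate is enough for the proposition.
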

\begin{proof}
    Fix $\ep_0>0$. Note that
    \begin{align}
        \left| W(f_\omega^\kappa(\x),f_\omega^\kappa(\y)) - W(f_\omega^0(\x),f_\omega^0(\y)) \right|  &= \left| (Y_1^\kappa)^{-\xi} \psi^\kappa - (Y_1)^{-\xi}\psi^0\right| \\
        &\leq \psi^\kappa \left| (Y_1^\kappa)^{-\xi} - (Y_1)^{-\xi} \right| + (Y_1)^{-\xi}\left| \psi^\kappa - \psi^0 \right|,
    \end{align}
    where 
    \begin{align}
        Y_1^\kappa := d_\Sph(f_\omega^\kappa(\x),f_\omega^\kappa(\y)), \quad Y_1 := d_\Sph(f_\omega(\x),f_\omega(\y))
    \end{align}
    and for $\theta(\x',\y') := \frac{\mathrm{exp}_{\x'}^{-1}(\y')}{d_\Sph(\x',\y')}$ we denote
    \begin{align}
        \psi^\kappa = \psi(f_\omega^\kappa(\x), \theta(f_\omega^\kappa(\x), f_\omega^\kappa(\y))), \quad \psi^0 = \psi(f_\omega(\x), \theta(f_\omega(\x), f_\omega(\y))).
    \end{align}
    Let $\eta:= \frac{Y_1}{2}$ and consider the event $E:= \lbrace   Y_1^\kappa \geq \eta \rbrace  $. The mean value Theorem then yields
    \begin{align}
        | |Y_1^\kappa|^{-\xi} - |Y_1|^{-\xi} | \leq \xi \left( |Y_1^\kappa| \wedge |Y_1| \right)^{-(\xi+1)} |Y_1^\kappa - Y_1|,
    \end{align}
    where $a\wedge b:= \min(a,b)$.  Hence, on $E$ we have 
    \begin{equation}
        \left( |Y_1^\kappa| \wedge |Y_1| \right)^{-(\xi+1)}\leq \eta^{-(\xi+1)}.
    \end{equation}
    We introduce also 
    \begin{align}
        X_1^\kappa := |f_\omega^\kappa(\x) - f_\omega^\kappa(\y)|, \quad X_1 := |f_\omega(\x) - f_\omega(\y) |.
    \end{align}
    From \eqref{eq:metricequiv} and noting that $|X_1^\kappa|\leq 2$ because $f_\omega^\kappa(\x),f_\omega^\kappa(\y)\in \Sph$ for all $\kappa\geq 0$, we obtain
    \begin{align}
        \left| Y_1^\kappa - Y_1 \right| \leq |X_1^\kappa - X_1 | + C\left( |X_1^\kappa|^2 + |X_1|^2 \right)
    \end{align}
    and since $|X_1^\kappa - X_1 | \leq \vartheta_1^\kappa$ we deduce from Lemmas \ref{lemma:phikappaLip} and \ref{lemma:EXPvartheta} that
    \begin{align}
        \EE_\bB \left| Y_1^\kappa - Y_1 \right| \lesssim \left( \sqrt{\kappa} + |\x-\y|\right) |\x- \y| + |\x - \y|^2.
    \end{align}
    Therefore 
    \begin{align}
        \EE_\bB \left[ |\psi^\kappa||Y_1^\kappa|^{-\xi} - |Y_1|^{-\xi} | \mathbf{1}_E \right] \lesssim \eta^{-(\xi+1)}|\x-\y|\left( \sqrt{\kappa} + |\x-\y| \right).
    \end{align}
    An application of \eqref{eq:inversespheremetric} and Lemma \ref{lemma:phikappaLip} together with $1+\xi>0$ and $\inf \psi >0$ yields
    \begin{align}
        \eta^{-(\xi+1)} \lesssim |Y_1|^{-(\xi+1)}\lesssim |X_1|^{-(\xi+1)}\lesssim |\x-\y|^{-(\xi+1)}\lesssim |\x-\y|^{-1}\d_{\Sph}(\x,\y)^{-\xi}\lesssim |\x-\y|^{-1}W(\x,\y).
    \end{align}
    As a result, we conclude that
    \begin{align}\label{eq:Exp_on_E_StochTPP}
         \EE_\bB \left[ |\psi^\kappa||Y_1^\kappa|^{-\xi} - |Y_1|^{-\xi} | \mathbf{1}_E \right]  \lesssim \left( \sqrt{\kappa} + |\x-\y| \right)W(\x,\y).
    \end{align}

    On the other hand, for the event $E^c=\lbrace   |Y_1^\kappa|< \eta \rbrace   $, we  observe that $\vartheta_1^\kappa\geq |Y_1| - |Y_1^\kappa| \geq \eta$ and thus $E^c\subseteq \lbrace   \vartheta_1^{\kappa}  \geq \eta \rbrace  $. In particular, Markov inequality yields
    \begin{align}\label{eq:Prob_on_Ec_StochTPP}
        \PP(E^c) \leq \PP \left( \vartheta_1^{\kappa}  \geq \eta\right) \leq \eta^{-1}\EE_\bB \vartheta_1^{\kappa} \leq C \left( \sqrt{\kappa} + |\x-\y|\right),
    \end{align}
    after noting that $\eta^{-1}\lesssim |\x-\y|^{-1}$ in view of \eqref{eq:inversespheremetric} and Lemma \ref{lemma:phikappaLip}. Moreover,
    \begin{align}
        \EE_\bB \left[ |\psi^\kappa| |Y_1^\kappa|^{-\xi} - |Y_1|^{-\xi} | \mathbf{1}_{E^c} \right] \lesssim  \EE_\bB[|Y_1^\kappa|^{-\xi}\mathbf{1}_{E^c}] + \EE_\bB[|Y_1|^{-\xi} \mathbf{1}_{E^c}] \lesssim {|\x-\y|^{-\xi}}\lesssim W(\x,\y), 
    \end{align}
    where we used Lemma~\ref{lemma:EXPphikappainv} and the  H\"{o}lder inequality 
    \begin{equation}\label{eq:Exp_on_Ec_StochTPP}
    \EE_\bB[|Y_1^\kappa|^{-\xi} \mathbf{1}_{E^c}]\leq \left(\EE_\bB [|Y_1^\kappa|^{-1}] \right)^{\xi} (\PP(E^c))^{1-\xi}
    \end{equation} with $p=\frac{1}{\xi}>1$ and $q=\frac{1}{1-\xi}>1$. Hence, combining \eqref{eq:Exp_on_E_StochTPP}, \eqref{eq:Prob_on_Ec_StochTPP}, and \eqref{eq:Exp_on_Ec_StochTPP},
    \begin{equation}\label{eq:EXPA}
    \begin{split}
        \EE_\bB \left[ |\psi^\kappa| |Y_1^\kappa|^{-\xi} - |Y_1|^{-\xi} | \right] &= \EE_\bB \left[ |\psi^\kappa| |Y_1^\kappa|^{-\xi} - |Y_1|^{-\xi} | \mathbf{1}_E \right] + \EE_\bB \left[ |\psi^\kappa| |Y_1^\kappa|^{-\xi} - |Y_1|^{-\xi} | \mathbf{1}_E^c \right] \\
        &\lesssim (\sqrt{\kappa} + |\x-\y|)^{1-\xi} W(\x,\y).
        \end{split}
    \end{equation}

    We next argue for $\EE_\bB \left[ |Y_1|^{-\xi}|\psi^\kappa - \psi^0| \right]$.     First, the function $\psi$ is uniformly continuous, namely for all  \linebreak $(\x',\theta_\x'),(\y',\theta_\y')\in UT\Sph$ and all $\epsilon_0>0$ there exists $\delta_0>0$ such that $|\psi(\x',\theta_{\x}')-\psi(\y',\theta_{\y}')|< \epsilon_0$ whenever $d((\x',\theta_\x'),(\y',\theta_\y'))< \delta_0$. In our setting,
    \begin{align}
        d((\x',\theta_\x'),(\y',\theta_\y')) \leq d_\Sph(f_\omega^\kappa(\x), f_\omega(\x)) + \left| \theta(f_\omega^\kappa(\x), f_\omega^\kappa(\y)) - \theta(f_\omega(\x), f_\omega(\y)) \right|
    \end{align}
    after denoting
    \begin{equation}
    (\x',\theta_\x') := (f_\omega^\kappa(\x), \theta(f_\omega^\kappa(\x), f_\omega^\kappa(\y)), \quad (\y',\theta_\y') :=(f_\omega(\x), \theta(f_\omega(\x), f_\omega(\y)).
    \end{equation}
    Moreover, for $\z,\z'\in \Sph$, let $\varphi(\z,\z')= \mathrm{exp}_\z^{-1}(\z')$ so that $\mathrm{exp}_\z(\varphi(\z,\z'))=\z'$, by definition of exponential map, and $\Vert \varphi(\z,\z') \Vert = d_\Sph(\z,\z')$.
    On $\Sph$, we have that
    \begin{align}
        \z' = \cos(d_\Sph(\z,\z'))\z + \sin(d_\Sph(\z,\z'))\frac{\varphi(\z,\z')}{d_\Sph(\z,\z')}
    \end{align}
    and, since $\theta(\z,\z') = \frac{\varphi(\z,\z')}{d_\Sph(\z,\z')}$, it holds 
    \begin{align}
        f_{\omega}^\kappa(\y) &= \cos(Y_1^\kappa)f_\omega^\kappa(\x) + \sin (Y_1^\kappa) \theta(f_\omega^\kappa(\x),f_\omega^\kappa(\y)), \\
        f_{\omega}(\y) &= \cos(Y_1)f_\omega(\x) + \sin (Y_1) {\theta(f_\omega(\x),f_\omega(\y))}.
    \end{align}
    Subtracting leads to
    \begin{align}
        \theta(f_\omega^\kappa(\x),f_\omega^\kappa(\y)) - \theta(f_\omega(\x),f_\omega(\y)) & = \frac{ (f_\omega^\kappa(\y) - f_\omega^\kappa(\x)) - (f_\omega(\y) - f_\omega(\x))}{\sin(Y_1) }  + \frac{(1-\cos(Y_1))(f_\omega^\kappa(\x) - f_\omega(\x))}{\sin(Y_1) } \\
        &\quad - \frac{f_\omega^\kappa(\x) \left( \cos(Y_1^\kappa) - \cos(Y_1) \right)}{\sin(Y_1) }   + \frac{\theta(f_\omega^\kappa(\x),f_\omega^\kappa(\y)) \left( \sin(Y_1^\kappa) - \sin(Y_1) \right)}{\sin(Y_1)}.
    \end{align}
    For $d_{\Sph}(\x,\y)<s_*$ and $s_*>0$ small enough, combining the trivial inequality 
    \begin{equation}
    Y_1\lesssim X_1\lesssim |\x-\y|\lesssim d_{\Sph}(\x,\y)<\frac{\pi}{4},
    \end{equation}
    with \eqref{eq:inversespheremetric} gives $\sin^{-1}(Y_1)\leq 2(Y_1)^{-1} \lesssim |\x-\y|^{-1}$ and $|1-\cos(Y_1)|\lesssim |Y_1|^2$.
    Therefore, since $\Vert f_\omega^\kappa(\x)\Vert = \Vert \theta(f_\omega^\kappa(\x),f_\omega^\kappa(\y)) \Vert = 1$, a direct application of the mean value theorem and Lemmas \ref{lemma:EXPphikappaphi} and \ref{lemma:EXPvartheta} concludes that
    \begin{equation}\label{eq:EXPthetaDifference}
    \begin{split}
        \EE_\bB \left| \theta(f_\omega^\kappa(\x),f_\omega^\kappa(\y)) - \theta(f_\omega(\x),f_\omega(\y)) \right| &\lesssim |\x-\y|^{-1}\EE_\bB[\vartheta_1^\kappa] + \EE_\bB [\left| f_\omega^\kappa(\x) - f_\omega(\x) \right|] \\
        &\lesssim \sqrt{\kappa} + |\x-\y|.
    \end{split}
    \end{equation}
    Note that $|Y_1|^{-\xi}\lesssim W(\x,\y)$ follows from Lemma \ref{lemma:phikappaLip} and consider now the set $F = \lbrace   d((\x',\theta_\x'),(\y',\theta_\y')) < \delta_0 \rbrace  $. Then,
    \begin{align}
        \EE_\bB \left[|Y_1|^{-\xi}|\psi^\kappa - \psi^0| \big| F \right] \lesssim \epsilon_0 W(\x,\y) \leq \frac{\ep_0}{8}W(\x,\y),
    \end{align}
    for $\epsilon_0$ sufficiently small. On its complement, $F^c$, from
    \begin{align}
        \lbrace   d((\x',\theta_\x'),(\y',\theta_\y'))  \geq \delta_0 \rbrace   \subseteq  \left\lbrace   d_{\Sph}(f_\omega^\kappa(\x), f_\omega(\x)) \geq \frac{\delta_0}{2} \right\rbrace   \bigcup \left\lbrace   |\theta(f_\omega^\kappa(\x),f_\omega^\kappa(\y)) - \theta(f_\omega(\x),f_\omega(\y))| \geq \frac{\delta_0}{2} \right\rbrace  
    \end{align}
    we have $\PP(F^c) \lesssim_{\delta_0}\sqrt{\kappa} + |\x-\y|$ due to Lemma \ref{lemma:phikappaLip} and \eqref{eq:EXPthetaDifference}. 
    In addition, Lemma~\ref{lemma:phikappaLip} implies
    \begin{equation}
    |Y_1|^{-\xi}|\psi^\kappa - \psi^0|\lesssim |\x-\y|^{-\xi}\lesssim W(\x,\y)
    \end{equation}
    from which it follows
    \begin{equation}\label{eq:EXPB}
    \begin{split}
        \EE_\bB \left[|Y_1|^{-\xi}|\psi^\kappa - \psi^0| \right] &= \EE_\bB \left[|Y_1|^{-\xi}|\psi^\kappa - \psi^0| \big| F \right]\PP(F) + \EE_\bB \left[|Y_1|^{-\xi}|\psi^\kappa - \psi^0|\big| F^c \right]\PP(F^c) \\
        &\lesssim_{\delta_0}(\sqrt{\kappa}+ |\x-\y|) W(\x,\y).
    \end{split}
    \end{equation}
     Gathering \eqref{eq:EXPA} and \eqref{eq:EXPB}, the proposition is proved after choosing $s_*>0$ and $\kappa_*>0$ small enough.
\end{proof}

\section{Drift function for the stochastic one-point process}\label{sec:driftstochOPP}
In this section we adapt the drift function $\mathrm{V}$ for the one-point process to the presence of the stochastic term coming from the diffusion operator. We thus define $\mathrm{V}_\kappa$ in terms of modified local drift functions
\begin{align}
\begin{array}{ll}
    V_{\sfx,\kappa}(p):=\max\{r_\sfx(p),\sqrt{\kappa}\}^{-\alpha}, & r_\sfx(p):=\max\{|y|,|z|\}, \\
    V_{\sfy,\kappa}(p):=\max\{r_\sfy(f_N^3(p)),\sqrt{\kappa}\}^{-\alpha}, & r_\sfy(p):=\sqrt{x^2 + z^2}, \\
    V_{\sfz,\kappa}(p):=\max\{r_\sfz(p),\sqrt{\kappa}\}^{-\alpha}, & r_\sfz(p):=\sqrt{x^2 + y^2},
\end{array}
\end{align}
for $p=(x,y,z)\in \Sph$, and some $\alpha\in (0,\frac14)$ fixed. We show that each $V_{\sigma,\kappa}$ satisfies a Lyapunov-drift condition for the stochastic flow map $f_\omega^\kappa$, for $\sigma\in \Sigma$. To do so, we first show in Lemma \ref{lemma:driftboundawayx}, \ref{lemma:driftboundawayz} and \ref{lemma:driftboundawayy} that on a compact set $\mathsf{Q}_\sigma(s_0)^c$ away from $F_\sigma$ the drift function $V_{\sigma,\kappa}$ is bounded in expectation, for $\kappa>0$ small enough. For points $p\in \mathsf{Q}_\sigma(s_0)$ we next distinguish three regions $\mathcal{K}_{A_0}$, $\mathcal{K}_{A_1}\setminus \mathcal{K}_{A_0}$ and $\mathsf{Q}_\sigma(s_0)\setminus \mathcal{K}_{A_1}$ as portrayed in Figure~\ref{fig:three_zones_core}, for some $A_1>A_0>1$ to be determined, where for $A>1$ we denote 
\begin{align}
    \mathcal{K}_A = \lbrace p=(x,y,z)\in \Sph : r_\sigma(p) \leq \sqrt{\kappa}A \rbrace.
\end{align}
We call $\mathcal{K}_{A_1}$ the stochastic regime, where the effects of the Brownian motion are of the same order as those of the velocity field. In this region, we further distinguish the inner core $\mathcal{K}_{A_0}$, where the stochasticity of the Brownian motion is enough to move the process quantitatively away from the fixed point with sufficiently high probability, and the annular region $\mathcal{K}_{A_1}\setminus \mathcal{K}_{A_0}$, where the shearing mechanism is still sufficiently strong to produce a drift estimate for $\mathrm{V}_{\sfx,\kappa}$. Finally, $\mathsf{Q}_\sigma(s_0)\setminus \mathcal{K}_{A_1}$ denotes the inviscid dominated regime, where the stochastic evolution can be understood as a small perturbation of the deterministic dynamics and the arguments of Lemmas \ref{lemma:localxlyap}, \ref{lemma:localzlyap} and \ref{lemma:localylyap} can be applied.

\begin{figure}[!ht]
    \centering
    \includegraphics[width=.7\linewidth]{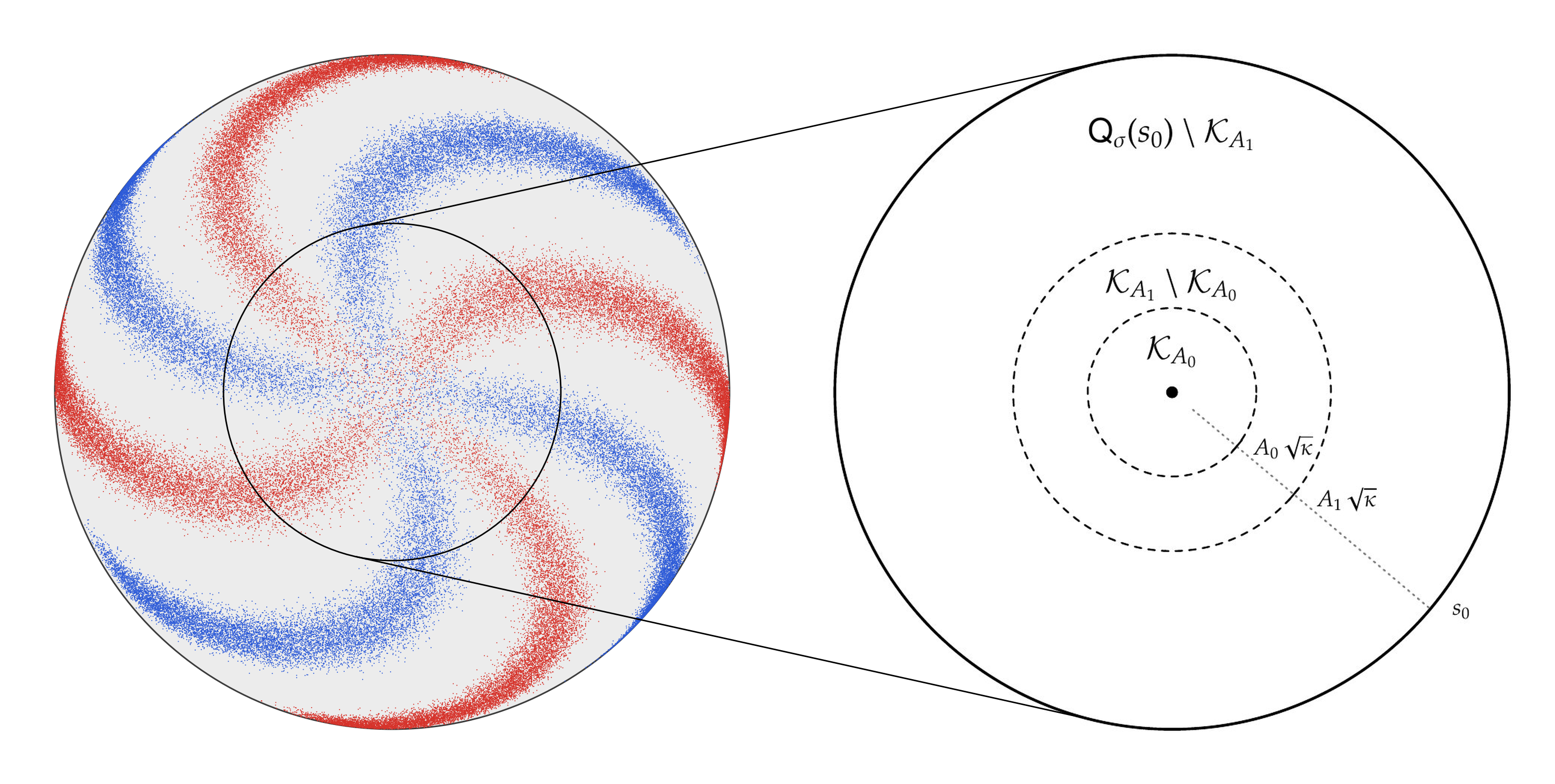}
    \caption{Stochastic dynamics near fixed points $F_\sigma$. On the left, the configuration after one iteration of $10^5$ tracers on $\Sph$, shown from the south pole, with $\omega_1\approx 0.91$, $\omega_2\approx -0.47$ and  $\kappa=5\times10^{-4}$.
    On the right, the corresponding three-zone decomposition around the fixed point.}
    \label{fig:three_zones_core}
\end{figure}

Recall from Section~\ref{sec:intro} that, for $\omega=(\omega_1,\omega_2)\in[-N,N]^2$, $t\in[0,2]$ and $p\in \Sph$, the associated Rossby-Haurwitz vector field is
\begin{align}\label{eq:defin_u_omega_t02}
    u_\omega(t,p) := \omega_1u^3(p)\mathbf{1}_{[0,1)}(t) + \omega_2u^2(p)\mathbf{1}_{[1,2)}(t).
\end{align}
Moreover, $\phi_t^\kappa$ satisfies
\begin{equation}\label{eq:piecewise_sde_fx}
\left\lbrace
     \begin{aligned}
     \dd \phi_t^\kappa &= \bigl(u_\omega(t,\phi_t^\kappa)-2\kappa \phi_t^\kappa\bigr)\,\dd t + \sqrt{2\kappa}\,\Pi_{\phi_t^\kappa}\,\dd \bB_t,  \\
     \phi_0^\kappa &= p, 
    \end{aligned}
    \right.
\end{equation}
where $p\in \Sph$, while $\phi_t$ solves the corresponding $\kappa=0$ deterministic equation,
\begin{align}\label{eq:piecewise_ode_fx}
\left\lbrace
\begin{aligned}
    \dot \phi_t &= u_\omega(t,\phi_t), \\ 
    \phi_0 &= p.
    \end{aligned}
    \right.
\end{align}
Finally, recall also that
\begin{align}
    f_\omega^\kappa(p):=\phi_2^\kappa, \qquad f_\omega(p):=\phi_2.
\end{align}

Before specializing to each set of fixed points $F_\sigma$, we begin by applying Lemma \ref{lemma:EXPsupomegaLipTPP} to record the stability in expectation of $f_\omega^\kappa$ with respect to $f_\omega$.

\begin{lemma}\label{lemma:noisy_two_step_L2}
There exists $C_N=C(N)>0$ such that 
\begin{align}
    \EE_\bB |\phi^\kappa_1-\phi_1|^2 \leq C_N\kappa,  \qquad \EE_\bB |\phi_2^\kappa-\phi_2|^2 \leq C_N\kappa.
\end{align}
for all $\kappa\in(0,1]$, all $p\in \Sph$, and all $\omega=(\omega_1,\omega_2)\in[-N,N]^2$,
\end{lemma}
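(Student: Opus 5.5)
The plan is to run a Gr\"onwall argument directly on the piecewise SDE \eqref{eq:piecewise_sde_fx} against the ODE \eqref{eq:piecewise_ode_fx}, working on each unit time interval separately. The point is that the bound must be uniform in $\omega\in[-N,N]^2$. Since $u_\omega(t,\cdot)$ equals $\omega_1u^3$ or $\omega_2u^2$ and $\Vert u^\sigma\Vert_{C^1(\R^3)}\lesssim 1$ on a neighbourhood of the sphere (radial or polynomial extension), the Lipschitz constant of the drift is at most $C(1+N)$.

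First consider $t\in[0,1]$. Setting $h(t):=\EE_\bB|\phi_t^\kappa-\phi_t|^2$, I write
\begin{align}
\phi_t^\kappa-\phi_t=\int_0^t\bigl(\omega_1u^3(\phi_s^\kappa)-\omega_1u^3(\phi_s)\bigr)\d s-2\kappa\int_0^t\phi_s^\kappa\,\d s+\sqrt{2\kappa}\int_0^t\Pi_{\phi_s^\kappa}\,\d\bB_s .
\end{align}
The terms are handled as follows:
\begin{itemize}
\item The drift difference is controlled through Cauchy--Schwarz by $N^2\Vert u^3\Vert_{C^1}^2\int_0^th(s)\,\d s$.
\item The It\^o correction contributes at most $4\kappa^2$, since $|\phi_s^\kappa|=1$.
\item The It\^o isometry bounds the stochastic integral by $2\kappa\int_0^t\Vert\Pi_{\phi_s^\kappa}\Vert^2\d s\le C\kappa$.
\end{itemize}
Combining these gives $h(t)\le C\kappa+CN^2\int_0^th$, and Gr\"onwall yields $h(1)\le C\kappa e^{CN^2}=:C_N\kappa$. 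This is exactly Lemma~\ref{lemma:EXPphikappaphi}, with the $C^1$ norm of the velocity now bounded by $N\Vert u^3\Vert_{C^1}$. The $\kappa^2$ term is absorbed because $\kappa\le1$.

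Next consider $t\in[1,2]$. Here I repeat the argument with $\omega_2u^2$, starting from the nonzero initial discrepancy $\phi_1^\kappa-\phi_1$:
\begin{align}
h(t)\le 3h(1)+C\kappa+CN^2\int_1^th(s)\,\d s ,
\end{align}
so $h(2)\le(3C_N\kappa+C\kappa)e^{CN^2}$, which again has the form $C_N\kappa$ after enlarging $C_N$. An alternative is to note that this is the content of the second estimate of Lemma~\ref{lemma:EXPsupomegaLipTPP} with $n=1$ (in the $L^2$ version, i.e.\ \eqref{eq:pmomentG}-type moments with $p=2$ at fixed $\omega$). I would cite that lemma and include the above computation for explicitness.

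There is no serious obstacle. The only point needing care is that the constants be independent of $p$ and $\omega$. This holds because the Lipschitz bound on $u_\omega$ depends only on $N$, $\Pi$ has bounded $C^0$ norm on the process, which stays on $\Sph$, and the Markov property at time $1$ (restarting from $\phi_1^\kappa$) is used only through the Gr\"onwall step, not through any conditioning.
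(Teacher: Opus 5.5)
Your proof is correct. The first bound is handled exactly as in the paper: Lemma~\ref{lemma:EXPphikappaphi} with $u=\omega_1u^3$, with constants uniform in $|\omega_1|\le N$.

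For the second bound your route differs from the paper's.
\begin{itemize}
\item \textbf{The paper's route.} It inserts the intermediate point $\widehat\phi_2:=f^{2}_{\omega_2}(\phi_1^\kappa)$ and splits $|\phi_2^\kappa-\phi_2|^2\le 2|\phi_2^\kappa-\widehat\phi_2|^2+2|\widehat\phi_2-\phi_2|^2$. The first piece is controlled by conditioning on $\phi_1^\kappa$ and applying Lemma~\ref{lemma:EXPphikappaphi} again to the second step, which is driven by $\overline\bB_t=\bB_t-\bB_1$. The second piece is controlled by the Lipschitz constant of the deterministic map $f^{2}_{\omega_2}$ times the first bound.
\item \textbf{Your route.} You simply continue the Gr\"onwall argument on $[1,2]$, starting from the nonzero discrepancy $h(1)$. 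The It\^o isometry applies directly to $\int_1^t\Pi_{\phi_s^\kappa}\,\d\bB_s$.
\end{itemize}

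What each buys:
\begin{itemize}
\item Your version needs no conditioning, so it never appeals to the restart property of the SDE from the $\mathcal F_1$-measurable random point $\phi_1^\kappa$. It also extends verbatim to $n$ steps, with a constant of size $e^{CnN^2}$.
\item The paper's version is more modular. It reuses the one-step lemma as a black box and separates the fresh noise of the second step from the propagated first-step error.
\end{itemize}

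Some cosmetic points:
\begin{itemize}
\item Your $[1,2]$ decomposition has four terms, so the elementary inequality gives $4h(1)$ rather than $3h(1)$. This is harmless.
\item The second estimate of Lemma~\ref{lemma:EXPsupomegaLipTPP}, as stated, is only a first-moment bound $\EE_\bB[\sup|f^\kappa_{\underline\omega}-f_{\underline\omega}|]\le C\sqrt\kappa$. It does not by itself give the $L^2$ claim, so your explicit Gr\"onwall computation is what actually proves the lemma.
\item Your closing remark about using the Markov property at time $1$ ``through the Gr\"onwall step'' can be dropped, since your argument does not use it at all.
\end{itemize}
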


\begin{proof}
The first inequality follows directly from Lemma~\ref{lemma:EXPphikappaphi} with $u=\omega_1 u^3$ and $t=1$, after noticing that $C_u$ is uniform in $\omega_1\in[-N,N]$ since $|\omega_1|\leq N$. Hence, for some $C_{1,N}>0$, 
\begin{equation}
\EE_\bB |\phi_1^\kappa-\phi_1|^2 \leq C_{1,N}\kappa.
\end{equation}

For the second inequality, we introduce an intermediate point $\widehat \phi_2 := f_{\omega_2}^{2}(\phi_1^\kappa)$, which represent the evolution under the deterministic flow $f^2_{\omega_2}$ of the random endpoint $\phi_1^\kappa$.
Then 
\begin{equation}
|\phi_2^\kappa-\phi_2|^2 \leq 2|\phi_2^\kappa-\widehat \phi_2|^2 + 2|\widehat \phi_2-\phi_2|^2,
\end{equation}
where conditioning on $\phi_1^\kappa$ and using Lemma~\ref{lemma:EXPphikappaphi} with $u=\omega_2 u^2$ and $t=1$ leads to the bound 
\begin{equation}
\EE_\bB\left[\,|\phi_2^\kappa-\widehat \phi_2|^2 \,\big|\, \phi_1^\kappa \right] \leq C_{1,N}\kappa,
\end{equation}
thus $\EE_\bB |\phi_2^\kappa-\widehat \phi_2|^2 \leq C_{1,N}\kappa$.

For the second term, since $f_{\omega_2}^{2}$ is a smooth diffeomorphism of $\Sph$ and $\omega_2\in[-N,N]$, there exists $ C_{2,N}>0$ such that
\begin{equation}
|\widehat \phi_2-\phi_2| = |f_{\omega_2}^{2}(\phi_1^\kappa)-f_{\omega_2}^{2}(\phi_1)| \leq C_{2,N} |\phi_1^\kappa-\phi_1|.
\end{equation}
As a result, $\EE_\bB |\widehat \phi_2-\phi_2|^2 \leq C_{1,N} C_{2,N}^2 \kappa$ and combining the two estimates proves the lemma.
\end{proof}

\subsection{Stochastic drift function for $F_\sfx$ fixed points}\label{ssec:viscous_drift_Fx}
We now turn our attention to the fixed points present in the inviscid dynamics.
Starting from $F_\sfx=\{(\pm1,0,0)\}$, we discuss the drift condition for the viscous Lyapunov function 
\begin{align}
    V_{\sfx,\kappa}(p):=\max\{r_\sfx(p),\sqrt{\kappa}\}^{-\alpha}, \qquad r_\sfx(p):=\max\{|y|,|z|\},
\end{align}
where $p=(x,y,z)\in \Sph$, and throughout we fix $\alpha\in\Bigl(0,\frac14\Bigr)$. 

We work around the fixed point $(1,0,0)$, noting that all arguments apply to its opposite pole by symmetry. We define, for $y^2+z^2<s_0^2$, the local chart and its corresponding neighbourhood respectively 
\begin{align}
    \Psi_{\sfx}(y,z) := \bigl(\sqrt{1-y^2-z^2},\,y,\,z\bigr),\qquad \mathsf{Q}_{\sfx}(s_0) := \Psi_{\sfx}\bigl(\{(y,z)\in\RR^2:\ y^2+z^2<s_0^2\}\bigr).
\end{align}

The goal of this subsection is the following Lyapunov-Foster drift estimate.

\begin{proposition}\label{prop:driftkappax}
There exists $s_0$ small enough such that there exist $\gamma\in(0,1)$, $\beta_\sfx\geq 1$ and $\kappa_0>0$ for which
\begin{align}
    \EE_{\omega,\bB}\left[V_{\sfx,\kappa}(f_\omega^\kappa(p))\right]  \leq  \gamma\,V_{\sfx,\kappa}(p) + \beta_\sfx \mathbf{1}_{\mathsf{Q}_\sfx(s_0)^c}(p)
\end{align}
for all $p\in \Sph$ and for all $\kappa\in(0,\kappa_0)$.
\end{proposition}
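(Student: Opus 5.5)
Split $\Sph$ into the compact region $\mathsf{Q}_\sfx(s_0)^c$ and three regimes inside $\mathsf{Q}_\sfx(s_0)$, as in the three-zone decomposition described above. Fix $1<A_0<A_1$, both to be chosen independently of $\kappa$.

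\emph{Compact region.} On $\mathsf{Q}_\sfx(s_0)^c$ the bound is crude: $V_{\sfx,\kappa}\le \kappa^{-\alpha/2}$ everywhere, but this is not uniform in $\kappa$. We therefore use Lemma~\ref{lemma:noisy_two_step_L2} together with the deterministic bi-Lipschitz bound of Lemma~\ref{lemma:phikappaLip}. These give $r_\sfx(f_\omega(p))\ge c_N s_0$ for all $\omega$. Chebyshev then gives
\begin{align}
\PP_\bB\bigl(r_\sfx(f^\kappa_\omega(p))<c_Ns_0/2\bigr)\lesssim \kappa/s_0^2 .
\end{align}
Splitting on this event yields $\EE[V_{\sfx,\kappa}(f^\kappa_\omega(p))]\lesssim s_0^{-\alpha}+\kappa^{1-\alpha/2}s_0^{-2}\le\beta_\sfx$.

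\emph{Inviscid-dominated regime $\sqrt\kappa A_1< r_\sfx(p)<s_0$.} Here we redo the case analysis of Lemma~\ref{lemma:localxlyap} with the events $E_i$ defined through the deterministic midpoint $p_1=f^3_{\omega_1}(p)$. The probabilities of the $E_i$ are unchanged, and on each event the deterministic endpoint satisfies $r_\sfx(f_\omega(p))\ge c_i r_\sfx(p)$, with $c_i$ polynomial in $N$. By Lemma~\ref{lemma:noisy_two_step_L2}, the noisy endpoint deviates by $O(\sqrt\kappa)$ in $L^2$, which is at most $r_\sfx(p)/A_1$. On the event $|f^\kappa_\omega(p)-f_\omega(p)|\le c_i r_\sfx(p)/2$, the factor gains are preserved up to $2^\alpha$. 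On the complement, whose probability is at most $C\kappa/(c_i r_\sfx(p))^2\le C/(c_i^2A_1^2)$, we use the trivial bound $V_{\sfx,\kappa}\le\kappa^{-\alpha/2}\le A_1^{\alpha}V_{\sfx,\kappa}(p)$. Since $\alpha<2$, this contributes $C A_1^{\alpha-2}V_{\sfx,\kappa}(p)$, which is small for large $A_1$. A subtlety is that the factor $2^{-\alpha}$ in the good event $E_4$ must survive the loss; we handle this by using the contraction factor $4^{-\alpha}$ there, which is available by enlarging the thresholds in $E_4$. Finally, in this regime $\max\{r,\sqrt\kappa\}=r$ up to the $\sqrt\kappa$ floor, and the floor only helps.

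\emph{Stochastic regimes $r_\sfx(p)\le \sqrt\kappa A_1$.} In the annulus $\sqrt\kappa A_0<r_\sfx\le\sqrt\kappa A_1$ we argue as in the inviscid regime, except that the target is weaker: the image satisfies $V_{\sfx,\kappa}\le\kappa^{-\alpha/2}\le A_1^\alpha V_{\sfx,\kappa}(p)$ always. The shearing event $E_4$, of probability close to $1-O(N^{-1/2})$, pushes $r_\sfx$ up by a factor of order $N^{1/2}$. This beats the noise of size $\sqrt\kappa\le r/A_0$, and we get
\begin{align}
\EE\le\bigl(2^{-\alpha}+O(N^{-1/4})A_1^{\alpha}+O(A_0^{-2})A_1^{\alpha}\bigr)V_{\sfx,\kappa}(p).
\end{align}
In the core $r_\sfx(p)\le\sqrt\kappa A_0$ we have $V_{\sfx,\kappa}(p)\ge (A_0\sqrt\kappa)^{-\alpha}$, and we must show that the image leaves the core with high probability. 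The Brownian increment over time $1$ has Gaussian size $\sqrt{2\kappa}$ in the $(y,z)$ directions. Since the drift near $(1,0,0)$ is Lipschitz with constant $O(N)$, it only shears, and an anti-concentration (small-ball) estimate for the resulting nondegenerate Gaussian-like law gives
\begin{align}
\PP\bigl(r_\sfx(f^\kappa_\omega(p))\le\delta\sqrt\kappa\bigr)\lesssim_N\delta ,
\end{align}
uniformly over the core. Combining this with the shear amplification from $\omega_2$, we expect $r_\sfx$ to reach $\ge M\sqrt\kappa$ with probability $1-\epsilon$. Since $V_{\sfx,\kappa}\le\kappa^{-\alpha/2}$ always, this gives
\begin{align}
\EE V_{\sfx,\kappa}(f_\omega^\kappa(p))\le\bigl(\epsilon+M^{-\alpha}\bigr)\kappa^{-\alpha/2}\le\gamma A_0^{-\alpha}\cdots ,
\end{align}
and here the order of constants matters. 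We choose $N$ first, then $A_0$, then $A_1$, then $s_0$, then $\kappa_0$.

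\emph{Main obstacle.} We expect the main difficulty to be this core estimate: a quantitative small-ball or anti-concentration bound for the two-step stochastic flow near the hyperbolic point, made uniform in $\omega$. We would first try Gaussian linearization, approximating the flow by the linearized SDE $\d Y=\omega\,DU\,Y\,\d t+\sqrt{2\kappa}\,\d B$ with error controlled by Lemma~\ref{lemma:EXPvartheta}. The covariance of this linearized process is explicit and bounded below by $c\kappa$, which yields the small-ball probability.
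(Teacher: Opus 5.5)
Your treatment of $\mathsf{Q}_\sfx(s_0)^c$ and of the inviscid-dominated regime matches the paper (Lemma~\ref{lemma:driftboundawayx}, Proposition~\ref{prop:driftanuularx}), with one slip. For $r=r_\sfx(p)\ge A_1\sqrt\kappa$ the inequality $\kappa^{-\alpha/2}\le A_1^{\alpha}V_{\sfx,\kappa}(p)$ goes the wrong way. Keep the $r$-dependence of Chebyshev instead: $\frac{C\kappa}{c_i^2r^2}\kappa^{-\alpha/2}=Cc_i^{-2}(\sqrt\kappa/r)^{2-\alpha}r^{-\alpha}\le Cc_i^{-2}A_1^{\alpha-2}V_{\sfx,\kappa}(p)$, which gives the bound you state.

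The genuine gap is the window $r_\sfx(p)\le A_1\sqrt\kappa$, where your constants cannot be ordered consistently.
\begin{itemize}
\item In the annulus you need $O(N^{-1/4})A_1^{\alpha}$ and $O(A_0^{-2})A_1^{\alpha}$ to be small. But by your own order $N$ and $A_0$ are fixed before $A_1$, and the inviscid regime then forces $A_1$ to be large, with $A_1^{2-\alpha}\gtrsim C_Nc_i^{-2}$. Here $C_N$ is the constant of Lemma~\ref{lemma:noisy_two_step_L2}, which as proved (via Gr\"onwall) grows super-polynomially in $N$. So the second term cannot be made small at all, and the first would need $A_1^{\alpha}\ll N^{1/4}$, which contradicts that lower bound.
\item Reordering does not help. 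The core needs $\epsilon+M^{-\alpha}\le\gamma A_0^{-\alpha}$ with $M\gtrsim A_0$. The probability that the sheared Gaussian image stays within $M\sqrt\kappa$ is only polynomially small in $N/M$ (compare Lemma~\ref{lemma:averaged_affine_small_ball}), so $N$ must dominate $A_0$. Meanwhile your noise control in the annulus needs $A_0$ to dominate $\sqrt{C_N}$.
\end{itemize}
The source of this circularity is the bound $V_{\sfx,\kappa}\le\kappa^{-\alpha/2}\le(r/\sqrt\kappa)^{\alpha}V_{\sfx,\kappa}(p)$ on non-stretching $\omega$-events. Its cost is not uniform over the window.

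The missing idea, which is how the paper closes the argument, is to treat the \emph{whole} window $r_\sfx(p)\le A_1\sqrt\kappa$, for arbitrary $A_1$, through the rescaled limiting process. Your Gaussian linearization is the right tool, but it must be applied there, not only in an inner core.
\begin{itemize}
\item After $(y,z)\mapsto(y,z)/\sqrt\kappa$, compare the dynamics with the explicit linear shear-plus-Brownian process. The cost is $C_{A_1,N}\kappa^{1/4}$ (Lemma~\ref{lemma:EXPdiffrescaledlimiting}, Proposition~\ref{prop:stabilitydriftrescaledx}), which is absorbed by choosing $\kappa_0$ last.
\item For the limiting process, the drift holds \emph{uniformly over all} rescaled radii $h\ge A_0$, with $A_0=A_0(\alpha)$ independent of $N$. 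Averaging the last shear over its random amplitude (Lemma~\ref{lemma:EEmaxshearX}) gives a bound of order $N^{-\alpha}h^{-\alpha}$ plus a Gaussian tail, with no $(r/\sqrt\kappa)^{\alpha}$ loss (Proposition~\ref{prop:driftlimrescaledXannular}).
\item For $h\le A_0$, the small-ball estimate only requires $N\ge N_0(A_0)$ (Proposition~\ref{prop:driftlimrescaledXinner}).
\end{itemize}
The order $A_0(\alpha)$, then $N$, then $A_1(N)$ and $s_0(N)$, then $\kappa_0(A_1,N)$ then closes. Your perturbative inviscid argument is used only where $r_\sfx(p)\ge A_1(N)\sqrt\kappa$.
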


As we previously commented, the proof of the proposition is divided according to the distance of $p$ to the fixed points $F_\sfx$, given by $r_\sfx(p)$, relative to the stochastic noise, which is of order $\sqrt{\kappa}$.

From Lemma~\ref{lemma:noisy_two_step_L2} it follows immediately a uniform bound in $\kappa\in(0,1)$ for the expected value of $V_{\sfx,\kappa}(\phi_2^\kappa)$ for points outside the small cap $\mathsf{Q}_{\sfx}(s_0)^c$.

\begin{lemma}\label{lemma:driftboundawayx}
    There exists $C>0$ and $\kappa_0>0$ such that
    \begin{align}
        \EE [V_{\sfx,\kappa}(\phi_2^\kappa)] \leq C
    \end{align}
    for all $p\in \mathsf{Q}_{\sfx}(s_0)^c$ and all $\kappa\in(0,\kappa_0)$.
\end{lemma}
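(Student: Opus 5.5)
The plan is to split according to whether the endpoint $\phi_2^\kappa$ stays a fixed distance away from $F_\sfx$ or not. Write $m_0 := \min_{\omega\in[-N,N]^2}\min_{p\in \mathsf{Q}_\sfx(s_0)^c} r_\sfx(f_\omega(p))$. Since $f_\omega$ is a diffeomorphism fixing $F_\sfx$ (in fact $f_\omega^{-1}=f^{3}_{-\omega_1}\circ f^{2}_{-\omega_2}$ maps a small neighbourhood of $F_\sfx$ into $\mathsf{Q}_\sfx(s_0)$ by continuity and compactness of $[-N,N]^2$), we have $m_0>0$, depending only on $s_0$ and $N$. Equivalently, by the bi-Lipschitz property in Lemma~\ref{lemma:phikappaLip} applied to the two elementary flows, $r_\sfx(\phi_2)\gtrsim_N r_\sfx(p)\geq c s_0$ for some $c>0$ (up to the comparison between $\max\{|y|,|z|\}$ and the chordal distance to $(\pm1,0,0)$).

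Next we split the expectation over the event $G:=\{|\phi_2^\kappa-\phi_2|\leq m_0/2\}$. Since $r_\sfx$ is $1$-Lipschitz with respect to the Euclidean distance on $\R^3$, on $G$ we have $r_\sfx(\phi_2^\kappa)\geq m_0/2$. Hence $V_{\sfx,\kappa}(\phi_2^\kappa)\leq (m_0/2)^{-\alpha}$ there, uniformly in $\kappa$.

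On $G^c$ we only use the trivial bound $V_{\sfx,\kappa}\leq \kappa^{-\alpha/2}$, which is built into the cutoff at scale $\sqrt{\kappa}$. Chebyshev's inequality together with Lemma~\ref{lemma:noisy_two_step_L2} gives
\begin{align}
\PP_\bB(G^c)\leq \frac{4}{m_0^2}\,\EE_\bB|\phi_2^\kappa-\phi_2|^2\leq \frac{4C_N}{m_0^2}\,\kappa .
\end{align}
Therefore
\begin{align}
\EE_\bB[V_{\sfx,\kappa}(\phi_2^\kappa)]\leq (m_0/2)^{-\alpha}+\frac{4C_N}{m_0^2}\,\kappa^{1-\alpha/2}\leq C,
\end{align}
uniformly for $\kappa\in(0,\kappa_0)$, $p\in\mathsf{Q}_\sfx(s_0)^c$ and $\omega\in[-N,N]^2$. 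Averaging in $\omega$ preserves the bound. Here $\alpha<\tfrac14$ makes the exponent $1-\alpha/2$ positive.

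The only slightly delicate step is the uniform positivity of $m_0$, that is, making sure the deterministic image cannot approach $F_\sfx$ when $p$ starts outside the cap. This follows from invertibility and joint continuity in $(\omega,p)$ on a compact set, since $F_\sfx$ is invariant under both $f^3$ and $f^2$.
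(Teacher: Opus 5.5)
Your proposal is correct and follows the paper's proof essentially step for step. It uses a uniform lower bound $m_0>0$ (the paper's $c_0$) on $r_\sfx(\phi_2)$ over $\{r_\sfx\geq s_0\}\times[-N,N]^2$, the good event $\{|\phi_2^\kappa-\phi_2|\leq m_0/2\}$ together with the $1$-Lipschitz property of $r_\sfx$, and Chebyshev plus Lemma~\ref{lemma:noisy_two_step_L2} with the trivial bound $\kappa^{-\alpha/2}$ on the complement. Your explicit reason why $m_0>0$ — both elementary flows are bijections fixing $F_\sfx$, so the image of $\{r_\sfx\geq s_0\}$ never meets $F_\sfx$ — fills in a step that the paper only asserts via continuity and compactness.
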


\begin{proof}
    Since the mapping $(p,\omega)\mapsto r_\sfx(\phi_2(p,\omega))$ is continuous on the compact set $\mathcal{K}:=\mathsf{Q}_\sfx(s_0)^c\times[-N,N]^2$, there exists $c_0>0$ such that $r_\sfx(\phi_2)\geq c_0$ for all points in $\mathcal{K}$.
    Now, using that the map $r_\sfx$ is Lipschitz continuous on $\Sph$ with constant $1$, we define
    \begin{equation}
        G:=\{|\phi^\kappa_2-\phi_2|\leq \delta_0\}, \qquad \delta_0:=\frac{c_0}{2}.
    \end{equation}
    On $G$ we use Lipschitz continuity to write 
    \begin{equation}
        V_{\sfx,\kappa}(\phi_2^\kappa) \leq r_\sfx(\phi_2^\kappa)^{-\alpha} \leq (r_\sfx(\phi_2)-|\phi_2^\kappa-\phi_2|)^{-\alpha} \leq  \left(\frac{c_0}{2}\right)^{-\alpha},
    \end{equation}
    while on $G^c$, using Chebyshev's inequality and Lemma~\ref{lemma:noisy_two_step_L2}, we estimate
    \begin{align}
    \PP_{\omega,\bB}(G^c)\leq\delta_0^{-2}\EE_{\omega,\bB}|\phi_2^\kappa-\phi_2|^2\leq C_N\delta_0^{-2}\kappa.
    \end{align}
    Finally, since on $G^c$ holds the trivial bound $V_{\sfx,\kappa}(\phi_2^\kappa)\leq \kappa^{-\alpha/2}$, we deduce
    \begin{align}
    \EE_{\omega,\bB}[V_{\sfx,\kappa}(\phi_2^\kappa)] \leq \EE_{\omega,\bB}[V_{\sfx,\kappa}(\phi_2^\kappa)|G]\PP_{\omega,\bB}(G)+\EE_{\omega,\bB}[V_{\sfx,\kappa}(\phi_2^\kappa)|G^c]\PP_{\omega,\bB}(G^c) \leq  \left(\frac{c_0}{2}\right)^{-\alpha} + C_N\delta_0^{-2}\kappa^{1-\alpha/2},
\end{align}
which, since $\alpha<2$, is uniform in $\kappa\in(0,1)$.
\end{proof}

\subsubsection{Stochastic regime} 
We consider here those $p\in \mathsf{Q}_\sfx(s_0)$ such that $r_\sfx(p)\leq A \sqrt{\kappa}$, for some $A>1$ to be determined. The point $p$ is now at distance of order $\sqrt{\kappa}$ to the fixed point $(1,0,0)$ and since the velocity field vanishes linearly at $(1,0,0)$ we observe that diffusion and advection are of the same order. To precisely describe the motion in this regime, we rescale our variables.

For $A>1$ we define the compact set
\begin{align}
    K_A:=\lbrace (y,z)\in\R^2:\ \max\lbrace  |y|,|z|\rbrace  \leq A \rbrace  ,
\end{align}
and, for some $\alpha\in(0,1)$, the functions
\begin{align}
    g_\sfx(y,z):=h(y,z)^{-\alpha}, \quad h(y,z) := \max \lbrace |y|,|z|, 1 \rbrace.
\end{align}
We have the trivial bounds $A^{-\alpha}\leq g_\sfx(y,z) \leq 1$, for all $(y,z)\in K_A$ and for $p=\Psi_\sfx(\sqrt{\kappa}y,\sqrt{\kappa}z)\in \mathsf{Q}_\sfx(s_0)$, we have 
\begin{align}\label{eq:exact_scaling_identity}
    V_{\sfx,\kappa}(p)=\kappa^{-\alpha/2}g_\sfx(y,z).
\end{align}
Starting from $u_\omega$ defined in \eqref{eq:defin_u_omega_t02}, we introduce the rescaled two-step process
\begin{align}
    \widetilde{f}_\omega^\kappa(y,z)= \frac{1}{\sqrt{\kappa}}\left( (f_\omega^\kappa(\Psi_\sfx(\sqrt{\kappa}y,\sqrt{\kappa}z)))_y, (f_\omega^\kappa(\Psi_\sfx(\sqrt{\kappa}y,\sqrt{\kappa}z)))_z \right),
\end{align}
which can be rewritten as
\begin{align}
    \widetilde{f}_\omega^{\kappa}(y,z) := \kappa^{-\frac12}((\phi_2^{\kappa})_y,(\phi_2^{\kappa})_z), 
\end{align}
where $\phi_2^{\kappa}$ is the solution at time $2$ of the stochastic differential equation \eqref{eq:piecewise_sde_fx} with initial datum $\phi_0^\kappa=\Psi_\sfx(\sqrt{\kappa}y,\sqrt{\kappa}z)$.

For $(y,z)\in K_A$ we have the following equivalence result.

\begin{lemma}\label{lemma:boundary_layer_reduction}
Let $A>1$ and $\alpha\in(0,1)$. The following statements are equivalent:
\begin{enumerate}
    \item there exists $\gamma\in(0,1)$ such that
    \begin{align}\label{eq:physical_core_drift}
        \EE_{\omega,\bB}\left[V_{\sfx,\kappa}(\phi_2^\kappa)\right]\leq\gamma V_{\sfx,\kappa}(p)
    \end{align}
    for all sufficiently small $\kappa$ and all $p\in \mathsf{Q}_\sfx(s_0)$ with $r_\sfx(p)\leq A\sqrt{\kappa}$,
    \item There exists $\gamma\in(0,1)$ such that
    \begin{align}\label{eq:scaled_core_drift}
         \EE_{\omega,\bB}[g_\sfx(\widetilde{f}_\omega^\kappa(y,z))]\leq\gamma g_\sfx(y,z),
    \end{align}
    for all sufficiently small $\kappa$ and all $(y,z)\in K_A$,
\end{enumerate}
\end{lemma}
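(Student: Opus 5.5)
The equivalence is a change of variables, and the plan is to prove it through the exact scaling identity \eqref{eq:exact_scaling_identity}, applied at both the initial and the terminal point. The map $(y,z)\mapsto p=\Psi_\sfx(\sqrt{\kappa}y,\sqrt{\kappa}z)$ is a bijection between $K_A$ (intersected with the disc of radius $s_0/\sqrt{\kappa}$, which contains $K_A$ once $\sqrt{2}A\sqrt{\kappa}<s_0$) and the set $\{p\in\mathsf{Q}_\sfx(s_0): r_\sfx(p)\le A\sqrt{\kappa}\}$ near $(1,0,0)$. This holds because $r_\sfx(\Psi_\sfx(\sqrt\kappa y,\sqrt\kappa z))=\sqrt{\kappa}\max\{|y|,|z|\}$. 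The antipodal point $(-1,0,0)$ is handled by the symmetric chart $-\Psi_\sfx$ in the same way. Under this correspondence $V_{\sfx,\kappa}(p)=\kappa^{-\alpha/2}g_\sfx(y,z)$, since $\max\{r_\sfx(p),\sqrt\kappa\}=\sqrt\kappa\, h(y,z)$.

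The key observation for the terminal point is that $V_{\sfx,\kappa}$ depends only on the $y$ and $z$ coordinates, through $r_\sfx$. It does not require the point to lie in the chart domain. So for any $q=(x',y',z')\in\Sph$,
\begin{align}
    V_{\sfx,\kappa}(q)=\max\{|y'|,|z'|,\sqrt\kappa\}^{-\alpha}=\kappa^{-\alpha/2}\,g_\sfx\bigl(\kappa^{-1/2}y',\kappa^{-1/2}z'\bigr).
\end{align}
Applying this pointwise with $q=\phi_2^\kappa$, and using that $\widetilde f^\kappa_\omega(y,z)=\kappa^{-1/2}((\phi_2^\kappa)_y,(\phi_2^\kappa)_z)$ when $\phi_0^\kappa=p$, gives the almost sure identity $V_{\sfx,\kappa}(\phi_2^\kappa)=\kappa^{-\alpha/2}g_\sfx(\widetilde f^\kappa_\omega(y,z))$. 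This covers every realization of $(\omega,\bB)$, including those where $\phi_2^\kappa$ leaves the cap $\mathsf{Q}_\sfx(s_0)$ or lands in the opposite hemisphere.

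Taking $\EE_{\omega,\bB}$ of this identity and dividing by the deterministic positive factor $\kappa^{-\alpha/2}$ shows that \eqref{eq:physical_core_drift} at $p$ is equivalent to \eqref{eq:scaled_core_drift} at $(y,z)$, with the same $\gamma$ and the same range of small $\kappa$. The only restriction on that range is $\sqrt{2}A\sqrt\kappa<s_0$, which guarantees the bijection. Both implications follow at once.

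No step is a genuine obstacle. The one point requiring care is to check that the rescaled process in the statement is defined exactly through the $y$ and $z$ coordinates of $\phi_2^\kappa$, with no chart inverse applied at time $2$. If a chart inverse were used there, the identity would fail on the event that the endpoint exits the chart domain. Since it is not, integrability is trivial: both sides are bounded by $\kappa^{-\alpha/2}$.
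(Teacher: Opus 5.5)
Your proof is correct and follows the paper's argument. Both combine the scaling identity $V_{\sfx,\kappa}(p)=\kappa^{-\alpha/2}g_\sfx(y,z)$ at the initial point with the pointwise identity $V_{\sfx,\kappa}(\phi_2^\kappa)=\kappa^{-\alpha/2}g_\sfx(\widetilde f_\omega^\kappa(y,z))$ at the endpoint, and then take expectations. Two details the paper leaves implicit are made explicit in your version: the endpoint identity needs no chart, because $\widetilde f_\omega^\kappa$ is built from the raw $y,z$ coordinates, and the condition $\sqrt2 A\sqrt\kappa<s_0$ is what places the image of $K_A$ inside $\mathsf{Q}_\sfx(s_0)$.
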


\begin{proof}
For $p=\Psi_\sfx(\sqrt{\kappa}y,\sqrt{\kappa}z)$, we have $r_\sfx(p)\leq A\sqrt{\kappa}$ if and only if $(y,z)\in K_A$. By definition of $\widetilde{f}_\omega^\kappa(y,z)$, we have
\begin{align}
    \EE_{\omega,\bB}\left[V_{\sfx,\kappa}(\phi_2^\kappa)\right] &= \kappa^{-\alpha/2}  \EE[g_\sfx(\widetilde{f}_\omega^\kappa(y,z))]
\end{align}
for all $(y,z)\in K_A$. In view of \eqref{eq:exact_scaling_identity}, the lemma is proved.
\end{proof}

Thanks to the above lemma, we next aim to show $g_\sfx$ satisfies the drift condition \eqref{eq:scaled_core_drift} for $\widetilde{f}_\omega^\kappa$. To that purpose, we first obtain the exact stochastic differential equations satisfied by the rescaled one-point process.
We denote it as $\widetilde \phi_t^\kappa:=(\widetilde y_t^\kappa, \widetilde z_t^\kappa)$ for consistency, hence having $\widetilde{f}_\omega^\kappa=\widetilde\phi_2^\kappa$.
Using the explicit formulas
\begin{align}\label{eq:u2u3-explicit-boundary}
    u^3(x,y,z)=z(-y,x,0), \qquad u^2(x,y,z)=y(z,0,-x).
\end{align}
we obtain for $t\in[0,1)$ and $t\in[1,2)$ respectively  
\begin{equation}\label{eq:explicit_sde_sfx}
\left\lbrace\begin{aligned}
    d \widetilde y_t^{\kappa} &= \left(\omega_1 x_t^{\kappa} \widetilde z_t^{\kappa} - 2\kappa \widetilde y_t^{\kappa}  \right) \d t  + \sqrt{2}e_2 \cdot \Pi_{\phi_t^{\kappa}}\d \bB_t, \\
    d \widetilde z_t^{\kappa} &= -2\kappa \widetilde z_t^{\kappa} \d t + \sqrt{2} e_3\cdot  \Pi_{\phi_t^{\kappa}}\d \bB_t,
\end{aligned} \right.
\qquad 
\left\lbrace\begin{aligned}
    d \widetilde y_t^{\kappa} &= - 2\kappa \widetilde y_t^{\kappa} \d t  + \sqrt{2}e_2 \cdot \Pi_{\phi_t^{\kappa}}\d \overline\bB_{t}, \\
    d \widetilde z_t^{\kappa} &= -\left( \omega_2 x_t^{\kappa} \widetilde y_t^{\kappa}   + 2\kappa \widetilde z_t^{\kappa} \right) \d t + \sqrt{2} e_3\cdot  \Pi_{\phi_t^{\kappa}}\d \overline\bB_{t},
\end{aligned} \right.
\end{equation}
where $\overline\bB_t=\bB_{t}-\bB_1$, $x_t^\kappa=(\phi_t^\kappa)_x=\sqrt{1-(y_t^\kappa)^2-(z_t^\kappa)^2}$ for $t\in[1,2)$, and  
\begin{equation}\label{eq:explicit_PI_phikappat}
    \Pi_{\phi_t^{\kappa}}= 
    \begin{pmatrix}
        1-( x_t^\kappa)^2  &  -x_t^\kappa y_t^\kappa  &  -x_t^\kappa z_t^\kappa \\
          -x_t^\kappa y_t^\kappa & 1-( y_t^\kappa)^2  &  -y_t^\kappa z_t^\kappa \\
           -x_t^\kappa z_t^\kappa  &  -y_t^\kappa z_t^\kappa & 1-( z_t^\kappa)^2
    \end{pmatrix}.
\end{equation}
The aim is to compare $\widetilde \phi_t^\kappa$ with the corresponding limiting solution $\widetilde \phi_t:=(\widetilde y_t, \widetilde z_t)$ at time $t=2$ (i.e. $\widetilde{f}^\kappa_\omega$ and $\widetilde{f}_\omega(y,z)$ respectively), which satisfies, for $t\in[0,1)$ and $t\in[1,2)$ respectively, 
\begin{equation}\label{eq:explicit_ode_sfx}
\left\lbrace\begin{aligned}
    d  \widetilde y_t &= \omega_1  \widetilde z_t  \d t  + \sqrt{2} \d B_t^2, \\
   d \widetilde z_t &=  \sqrt{2} \d B_t^3,
\end{aligned}\right.
\qquad 
\left\lbrace\begin{aligned}
    d \widetilde y_t &=  \sqrt{2}\d \overline{B}_{t}^2, \\
    d \widetilde z_t &= -\omega_2  \widetilde y_t\d t + \sqrt{2} \d \overline{B}_t^3.
\end{aligned}\right.
\end{equation}
The comparison between $\widetilde\phi^\kappa_t$ and $\widetilde \phi_t$ is perturbative only as long as the rescaled state remains moderate in size.
This motivates the introduction of the following stopping time\footnote{The exponent $\frac14$ is chosen to balance perturbative remainders in \eqref{eq:explicit_sde_sfx} with the exit event contribution.}
\begin{align}\label{def:stopping_time}
    \tau_\kappa := \inf\bigl\{ t\in[0,2):\ |\widetilde \phi^\kappa_{t}|\geq \kappa^{-\frac14} \bigr\}\wedge 2.
\end{align}
Indeed, if $|\widetilde \phi^\kappa_{t}|\leq \kappa^{-\frac14}$, for $\kappa$ small enough
\begin{align}\label{eq:stopping_time_chart_permanence}
    x_t^\kappa=\sqrt{1-\kappa |(\widetilde y^\kappa_{t},\widetilde z^\kappa_{t})|^2}\geq \frac12,
\end{align}
namely the path $t\mapsto \phi_t^\kappa$ starting from $(y,z)\in K_A$ remains in the chart around $(1,0,0)$.

\begin{lemma}\label{lemma:EXPdiffrescaledlimiting}
Let $A,N>1$. There exists a constant $C_{A,N}>0$ and $\kappa_0>0$ such that for all $\omega\in[-N,N]^2$ and all $(y,z)\in K_A$ there holds
\begin{align}
    \EE\bigl[|\widetilde{f}^\kappa_\omega(y,z)-\widetilde{f}_\omega(y,z)|\bigr]\leq C_{A,N}\kappa^{1/4},
\end{align}
for all $\kappa\in(0,\kappa_0)$.
\end{lemma}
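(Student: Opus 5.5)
We compare the rescaled process $\widetilde\phi^\kappa_t$ with the linear limiting process $\widetilde\phi_t$ by coupling them through the same Brownian motion $\bB$ and the same amplitudes $\omega$. The limit \eqref{eq:explicit_ode_sfx} is driven by $(B^2_t,B^3_t)$ only, so we write the noise term of \eqref{eq:explicit_sde_sfx} as
\begin{align}
\sqrt2\,e_j\cdot\Pi_{\phi^\kappa_t}\d\bB_t=\sqrt2\,\d B^j_t+\sqrt2\,(\Pi_{\phi^\kappa_t}-\Pi_{e_1})_{j\cdot}\d\bB_t,\qquad j=2,3,
\end{align}
where $\Pi_{e_1}=\mathrm{diag}(0,1,1)$. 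The difference $D_t:=\widetilde\phi^\kappa_t-\widetilde\phi_t$ then has $D_0=0$ and satisfies a linear SDE in $D_t$ with three forcing sources. The first is the drift error $\omega_i(x^\kappa_t-1)(\cdot)$, of size $N\kappa|\widetilde\phi^\kappa_t|^3$ because $1-x_t^\kappa\lesssim\kappa|\widetilde\phi^\kappa_t|^2$. The second is the It\^o correction $2\kappa\widetilde\phi^\kappa_t$. The third is the diffusion error, bounded by $|\Pi_{\phi^\kappa_t}-\Pi_{e_1}|\lesssim 1-x_t^\kappa+\sqrt\kappa|\widetilde\phi^\kappa_t|\lesssim\sqrt\kappa|\widetilde\phi^\kappa_t|$, using \eqref{eq:explicit_PI_phikappat}. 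We handle the two time intervals $[0,1)$ and $[1,2)$ in turn, restarting from the state at $t=1$, with $\overline\bB$ as in the statement.

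Up to the stopping time \eqref{def:stopping_time} we have $|\widetilde\phi^\kappa_t|\le\kappa^{-1/4}$ and $x^\kappa_t\ge\frac12$ by \eqref{eq:stopping_time_chart_permanence}. We work with the stopped difference $D_{t\wedge\tau_\kappa}$. Its drift is Lipschitz in $D$ with constant $N$, plus a forcing bounded by $N\kappa|\widetilde\phi^\kappa_t|^3+2\kappa|\widetilde\phi^\kappa_t|$. Its martingale part has quadratic variation $\lesssim\kappa|\widetilde\phi^\kappa_t|^2$. By It\^o's isometry (or BDG) and Gr\"onwall we get
\begin{align}
\EE\sup_{t\le2}|D_{t\wedge\tau_\kappa}|^2\lesssim_N \int_0^2\EE\bigl[\kappa^2|\widetilde\phi^\kappa_{s\wedge\tau_\kappa}|^6+\kappa|\widetilde\phi^\kappa_{s\wedge\tau_\kappa}|^2\bigr]\d s .
\end{align}
The moments on the right are controlled a priori. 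Since $(y,z)\in K_A$, the stopped process has linear growth coefficients, so $\EE\sup_{t\le 2}|\widetilde\phi^\kappa_{t\wedge\tau_\kappa}|^{q}\le C_{A,N,q}$ for every $q$. This is uniform in $\kappa$, because on the stopped interval the coefficients are bounded uniformly. The crude bound $\kappa^2|\widetilde\phi|^6\le\kappa^{1/2}$ also suffices directly. Altogether, $\EE|D_{2\wedge\tau_\kappa}|\lesssim_{A,N}\kappa^{1/4}$.

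The main obstacle is the exit event $\{\tau_\kappa<2\}$, where no perturbative control is available. Markov's inequality and the uniform moment bounds give
\begin{align}
\PP(\tau_\kappa<2)\le\PP\Bigl(\sup_{t\le2}|\widetilde\phi^\kappa_{t\wedge\tau_\kappa}|\ge\kappa^{-1/4}\Bigr)\le C_{A,N,q}\kappa^{q/4}.
\end{align}
On this event we bound $|\widetilde f^\kappa_\omega|\le\kappa^{-1/2}$, since $|\phi^\kappa_2|=1$, and use Cauchy--Schwarz for $\widetilde f_\omega$, which is Gaussian with bounded moments. The contribution is then at most $\kappa^{-1/2}\PP(\tau_\kappa<2)+C\PP(\tau_\kappa<2)^{1/2}\lesssim\kappa^{1/4}$ once we take $q=8$. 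On $\{\tau_\kappa=2\}$ we have $\widetilde f^\kappa_\omega-\widetilde f_\omega=D_{2\wedge\tau_\kappa}$. Summing the two contributions proves the lemma, with all constants uniform in $\omega\in[-N,N]^2$ and $(y,z)\in K_A$.
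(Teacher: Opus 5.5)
Your proposal is correct and follows essentially the same route as the paper. You stop at $\tau_\kappa$, the first time $|\widetilde\phi^\kappa_t|$ reaches $\kappa^{-1/4}$. You control the stopped difference by BDG and Gr\"onwall, since all remainder terms are small on $[0,\tau_\kappa]$. You then bound the exit event $\{\tau_\kappa<2\}$ via Markov's inequality on maximal moments.

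The differences are cosmetic. You make the Brownian coupling explicit through $\Pi_{\phi^\kappa_t}-\Pi_{e_1}$, which the paper leaves implicit. On $\{\tau_\kappa<2\}$ you use a higher-moment Markov bound together with $|\widetilde f^\kappa_\omega|\le\kappa^{-1/2}$. The paper instead uses second moments and Cauchy--Schwarz, obtaining $\PP(\tau_\kappa<2)\lesssim\kappa^{1/2}$.
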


\begin{proof}
Fix $\kappa\in(0,\kappa_0)$, where $\kappa_0>0$ is chosen small enough so that \eqref{eq:stopping_time_chart_permanence} applies. Let $\tau_\kappa$ be the stopping time introduced in \eqref{def:stopping_time} and decompose
\begin{align}
    \EE\Bigl[|\widetilde{f}^\kappa_\omega-\widetilde{f}_\omega|\Bigr]
    &=\EE\Bigl[|\widetilde{f}^\kappa_\omega-\widetilde{f}_\omega| \Big| \tau_\kappa \geq 2 \Bigr]\PP(\tau_\kappa \geq 2)  + \EE\Bigl[  |\widetilde{f}^\kappa_\omega-\widetilde{f}_\omega|  \big| \tau_\kappa<2  \Bigr] \PP(\tau_\kappa < 2) \\
    &\leq\EE\Bigl[|\widetilde\phi^\kappa_2-\widetilde\phi_2| \Bigr]  + \EE\Bigl[  \left(\sup_{0\leq t\leq 2}|\widetilde\phi^\kappa_2| + \sup_{0\leq t\leq 2}|\widetilde\phi_2| \right) 1_{\lbrace \tau_\kappa<2 \rbrace} \Bigr] \\
    &\leq \EE\Bigl[\sup_{0\leq t\leq 2}|\widetilde\phi_t^\kappa-\widetilde\phi_t|^2\Bigr]^{1/2} + \Bigl( \EE \Bigl[\sup_{0\leq t\leq 2}|\widetilde\phi^\kappa_t|^2\Bigr]^\frac12 + \EE \Bigl[\sup_{0\leq t\leq 2}|\widetilde\phi_t|^2\Bigr]^\frac12 \Bigr)
    \PP(\tau_\kappa<2)^\frac12.
\end{align}
A standard Burkholder-Davis-Gundy argument and Gr\"onwall inequality applied to the processes $\widetilde \phi_t^\kappa$, $ \widetilde \phi_t$, and $ \widetilde \phi^\kappa_{t\wedge\tau_\kappa}-\widetilde \phi_{t\wedge\tau_\kappa}$ yields
\begin{align}\label{eq:BDGforsupdifference}
    \EE\Bigl[\sup_{0\leq t\leq 2}|\widetilde \phi_t^\kappa|^2\Bigr] \leq C_{A,N}, \qquad \EE\Bigl[\sup_{0\leq t\leq 2}|\widetilde \phi_t|^2\Bigr] \leq C_{A,N},\qquad  \EE\Bigl[\sup_{0\leq t\leq 2}|\widetilde \phi^\kappa_{t\wedge\tau_\kappa}-\widetilde \phi_{t\wedge\tau_\kappa}|^2\Bigr]\leq C_{A,N}\kappa^{1/2},
\end{align}
where $\widetilde\phi^\kappa_{t\wedge\tau_\kappa}$ and $\widetilde \phi_{t\wedge\tau_\kappa}$ satisfy the stopped version of \eqref{eq:explicit_sde_sfx} and \eqref{eq:explicit_ode_sfx} respectively.
Indeed, with the choice of $\tau_\kappa$ all the remainder terms are of order $\kappa^{\frac14}$ while $u_\omega$ is globally Lipschitz with Lipschitz constant bounded by $N$ uniformly in $t\in[0,2]$. 

We estimate
\begin{align}
    \PP(\tau_\kappa<2) = \PP\Bigl(\sup_{0\leq t\leq 2}|\widetilde\phi^\kappa_t|\geq \kappa^{-1/4}\Bigr) \leq \kappa^{1/2} \EE\Bigl[\sup_{0\leq t\leq 2}|\widetilde\phi^\kappa_t|^2\Bigr] \leq C_{A,N}\kappa^{1/2},
\end{align}
which combined with \eqref{eq:BDGforsupdifference} concludes the proof.
\end{proof}
It follows immediately from the above proposition that, for small $\kappa>0$, establishing a drift condition for $\widetilde{f}_\omega$ is enough to derive a drift for $\widetilde{f}^\kappa_\omega$.

\begin{proposition}\label{prop:stabilitydriftrescaledx}
Let $\alpha,\gamma\in(0,1)$ and $A,N>1$. Then there exists $\kappa_0=\kappa_0(\alpha,\gamma,A,N)>0$ such that
\begin{align}
    \EE\bigl[|g_\sfx(\widetilde{f}_\omega^\kappa(y,z))-g_\sfx(\widetilde{f}_\omega(y,z))|\bigr] \leq \frac{1-\gamma}{2}g_\sfx(y,z)
\end{align}
for all $(y,z)\in K_A$ and all $\kappa\in(0,\kappa_0)$.
\end{proposition}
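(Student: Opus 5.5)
The plan is to combine the Lipschitz continuity of $g_\sfx$ with the $L^1$ stability of Lemma~\ref{lemma:EXPdiffrescaledlimiting}, and then to use the lower bound $g_\sfx\geq A^{-\alpha}$ on $K_A$ to turn an additive error into a multiplicative one.

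\textbf{Lipschitz bound for $g_\sfx$.} Since $h(y,z)=\max\{|y|,|z|,1\}\geq 1$ is $1$-Lipschitz in the sup-norm, and $t\mapsto t^{-\alpha}$ has derivative bounded by $\alpha$ on $[1,\infty)$, we get for all $\zeta,\zeta'\in\R^2$
\begin{align}
    |g_\sfx(\zeta)-g_\sfx(\zeta')| \leq \alpha\,|h(\zeta)-h(\zeta')| \leq \alpha\,|\zeta-\zeta'|.
\end{align}
Thus $g_\sfx$ is globally $\alpha$-Lipschitz. It is also bounded by $1$, so no moment control of the processes is needed beyond first order.

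\textbf{Applying the stability lemma.} Taking expectations and invoking Lemma~\ref{lemma:EXPdiffrescaledlimiting}, we obtain for all $(y,z)\in K_A$, all $\omega\in[-N,N]^2$ and all $\kappa<\kappa_0'$:
\begin{align}
    \EE\bigl[|g_\sfx(\widetilde{f}_\omega^\kappa(y,z))-g_\sfx(\widetilde{f}_\omega(y,z))|\bigr] \leq \alpha\,\EE\bigl[|\widetilde{f}^\kappa_\omega(y,z)-\widetilde{f}_\omega(y,z)|\bigr] \leq \alpha C_{A,N}\kappa^{1/4}.
\end{align}
The bound of the lemma is uniform in $\omega$, so it survives if the expectation also averages over $\omega$.

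\textbf{Making the error multiplicative.} On $K_A$ we have $g_\sfx(y,z)\geq A^{-\alpha}$. It therefore suffices to choose
\begin{align}
    \kappa_0=\min\Bigl\{\kappa_0',\ \Bigl(\tfrac{(1-\gamma)A^{-\alpha}}{2\alpha C_{A,N}}\Bigr)^{4}\Bigr\},
\end{align}
which gives $\alpha C_{A,N}\kappa^{1/4}\leq \frac{1-\gamma}{2}A^{-\alpha}\leq \frac{1-\gamma}{2}g_\sfx(y,z)$. This $\kappa_0$ depends only on $\alpha,\gamma,A,N$, as required.

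\textbf{Main obstacle.} The only substantive input is Lemma~\ref{lemma:EXPdiffrescaledlimiting}, in particular the stopping-time control of the exit from the chart, and this is assumed. Once it is in hand the argument is routine. The point to check is that $h\geq 1$ keeps $g_\sfx$ globally Lipschitz, so that no singularity at the origin enters; this is exactly why the cutoff at scale $\sqrt{\kappa}$ appears in $V_{\sfx,\kappa}$.
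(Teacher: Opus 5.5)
Your proposal is correct and follows essentially the same route as the paper: global Lipschitz continuity of $g_\sfx$ (you make the constant explicit, namely $\alpha$, which works because $h\geq 1$), then Lemma~\ref{lemma:EXPdiffrescaledlimiting}, then the lower bound $g_\sfx\geq A^{-\alpha}$ on $K_A$ to turn the additive $O(\kappa^{1/4})$ error into a multiplicative one. Your choice of $\kappa_0$ matches the paper's up to the explicit form of the constant.
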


\begin{proof}
The proposition follows directly from combining $g_\sfx$ being globally Lipschitz on $\RR^2$ with Lemma~\ref{lemma:EXPdiffrescaledlimiting} and the trivial bound $g_\sfx(y,z)\geq A^{-\alpha}$ on $K_A$ to obtain
\begin{align}
    \EE\bigl[|g_\sfx(\widetilde{f}_\omega^\kappa(y,z))-g_\sfx(\widetilde{f}_\omega(y,z))|\bigr] \leq C_{\alpha,A,N}A^\alpha \kappa^{1/4} g_\sfx(y,z).
\end{align}
Choosing now $\kappa_0>0$ such that $C_{\alpha,A,N}A^\alpha\kappa_0^{1/4}\leq \frac{1-\gamma}{2}$ concludes the proof.
\end{proof}
Having established that it is enough to obtain a drift estimate for the limiting rescaled process $\widetilde f_\omega$, we integrate explicitly \eqref{eq:explicit_ode_sfx} to write, for $(y,z)\in K_A$,
\begin{equation}\label{eq:integrated_explicit_ode_sfx}
\begin{aligned}
    \left( \widetilde y_1, \widetilde z_1\right) &= \left( y + \omega_1 z + \sqrt{2}\, B_1^2 + \omega_1 \sqrt{2}\int_0^1 B_s^3\,\dd s, \quad z + \sqrt{2}\,B_1^3 \right),\\
    \left( \widetilde y_2, \widetilde z_2 \right) &= \left(  \widetilde y_1 + \sqrt{2}\,\overline B_1^2, \quad \widetilde z_1 - \omega_2 \widetilde y_1 - \omega_2 \sqrt{2}\int_0^1 \overline B_s^2\,\dd s + \sqrt{2}\,\overline B_1^3 \right)
\end{aligned}
\end{equation}
where $\omega_1,\omega_2\in[-N,N]$ uniformly and $B_t=(B_t^2,B^3_t)$ and $\overline B_t=(\overline B_t^2,\overline B_t^3)$ are independent two-dimensional Brownian motions, and all these objects are mutually independent.
To prove a drift estimate for $\widetilde f_\omega$, we divide the stochastic region $K_A=K^\sfx_{A_1}$ into an inner core $K^\sfx_{A_0}$ and an annular core $K^\sfx_{A_1}\setminus K^\sfx_{A_0}$, for suitably chosen $1 <A_0 <A_1$.

\subsubsection{The annular core $K^\sfx_{A_1}\setminus K^\sfx_{A_0}$} Consider here $1<A_0<A_1$ to be determined. The following abstract lemma records stretching estimates of shearing motion with random amplitudes. 

\begin{lemma}\label{lemma:EEmaxshearX}
Let $\alpha\in(0,1)$. Then there exists $C_\alpha>0$ such that for all $a,b\in\RR$ and all $N\geq 1$,
\begin{align}
    \EE_\omega\Bigl[\min\{1,(N|a+\omega b|)^{-\alpha}\}\Bigr] \leq C_\alpha \min\bigl\{1,(N|a|)^{-\alpha},(N^2|b|)^{-\alpha}\bigr\},
\end{align}
where $\omega\sim\mathrm{Unif}([-N,N])$.
\end{lemma}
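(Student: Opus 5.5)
The bound is a direct computation with the uniform law. I would treat it as three separate upper bounds, since the right-hand side is a minimum of three quantities, and prove each one. The integrand is always at most $1$, so the bound by $C_\alpha\cdot 1$ is trivial. It remains to show
\begin{align}
    \EE_\omega\bigl[\min\{1,(N|a+\omega b|)^{-\alpha}\}\bigr]\leq C_\alpha (N|a|)^{-\alpha}
    \quad\text{and}\quad
    \EE_\omega\bigl[\min\{1,(N|a+\omega b|)^{-\alpha}\}\bigr]\leq C_\alpha (N^2|b|)^{-\alpha}.
\end{align}

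For the $(N^2|b|)^{-\alpha}$ bound I assume $b\neq 0$ and change variables $s=a+\omega b$. Then $s$ is uniformly distributed on an interval of length $2N|b|$ with density $(2N|b|)^{-1}$. Dropping the $\min$ with $1$ and extending the integral to all of $\R$ where the integrand is largest (near $s=0$), the expectation is bounded by
\begin{align}
    \frac{1}{2N|b|}\int_{-N|b|}^{N|b|} \min\{1,(N|s|)^{-\alpha}\}\,\dd s
    \leq \frac{1}{2N|b|}\int_{-N|b|}^{N|b|} (N|s|)^{-\alpha}\,\dd s
    =\frac{N^{-\alpha}(N|b|)^{1-\alpha}}{(1-\alpha)N|b|}
    =\frac{(N^2|b|)^{-\alpha}}{1-\alpha}.
\end{align}
This uses the rearrangement fact that the integral of a symmetric decreasing function over an interval of fixed length is maximized when the interval is centered at $0$. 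This step is where $\alpha<1$ is needed, for integrability at the origin.

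For the $(N|a|)^{-\alpha}$ bound I split according to whether $|\omega b|\leq |a|/2$.
\begin{itemize}
\item On the event $|\omega b|\leq |a|/2$ we have $|a+\omega b|\geq |a|/2$, so the integrand is at most $2^\alpha (N|a|)^{-\alpha}$.
\item On the complementary event, necessarily $N|b|\geq |\omega b|>|a|/2$. The integrand there is still pointwise bounded, but without a useful bound in terms of $a$. Instead I use the previous bound, $(N^2|b|)^{-\alpha}/(1-\alpha)$, and the inequality $N^2|b|\geq N|a|/2$ to get $(N^2|b|)^{-\alpha}\leq 2^{\alpha}(N|a|)^{-\alpha}$.
\end{itemize}

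In both cases the expectation is at most $C_\alpha (N|a|)^{-\alpha}$ with $C_\alpha=2^\alpha(1-\alpha)^{-1}+2^\alpha$. Taking the minimum of the three bounds concludes.

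The only mildly delicate point is the rearrangement step. If that is inconvenient to justify, I would instead split $\{|a+\omega b|\leq \delta\}$ and its complement, with probability $\leq \delta/(N|b|)$, and optimize over $\delta$.
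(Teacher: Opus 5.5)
Your argument is correct and is essentially the paper's proof. The paper splits deterministically on $2N|b|\le|a|$, where $|a+\omega b|\ge|a|/2$ for every $\omega$. In the opposite case $2N|b|>|a|$, it uses the same recentering/rearrangement bound to get $(N^2|b|)^{-\alpha}$, together with $N^2|b|>N|a|/2$. This is the same as your split by the event $\{|\omega b|\le|a|/2\}$, with the complementary event controlled by the $b$-bound. One small fix: the phrase ``extending the integral to all of $\R$'' should instead say that you recenter the interval of length $2N|b|$ at the origin, which is what your display actually does; the integral over $\R$ would diverge for $\alpha<1$.
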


\begin{proof}
    Set
    \begin{equation}
        I(a,b,N):=\frac1{2N}\int_{-N}^{N}\min\{1,(N|a+\omega b|)^{-\alpha}\}\,\dd\omega
    \end{equation}
    and note that for $a=b=0$ the estimate is trivial. We split the proof into two cases based on the sizes of $N|b|$ and $|a|$.
    
\diampar{Case 1} Assume $2N|b|\leq |a|$ and note that for every $\omega\in[-N,N]$, it holds $|a+\omega b|\geq \frac{|a|}{2}$ so that 
\begin{equation}
    I(a,b,N) \leq \frac1{2N}\int_{-N}^{N}\min\{1,(N|a+\omega b|)^{-\alpha}\}\,\dd\omega \leq \min\{1,(N|a|/2)^{-\alpha}\} \leq 2^\alpha \min\{1,(N|a|)^{-\alpha}\}.
\end{equation}

\diampar{Case 2} Consider now $2N|b|> |a|$, which in particular implies $b\neq 0$.  Since the function
\begin{equation}
u\longmapsto \min\{1,(N|u|)^{-\alpha}\}
\end{equation}
is even and non-increasing on $[0,\infty)$, among all intervals of fixed length $2N|b|$ the integral is maximised by the interval centred at the origin. Therefore
\begin{align}
    I(a,b,N)\leq\frac1{2N|b|}\int_{-N|b|}^{N|b|}\min\{1,(N|u|)^{-\alpha}\}\,\dd u=\frac1{2N^2|b|}\int_{-N^2|b|}^{N^2|b|}\min\{1,|v|^{-\alpha}\}\,\dd v,
\end{align}
upon changing variables $v=Nu$.
Computing the last integral explicitly leads to $I(a,b,N)\leq 1$ when $N^2|b|\leq 1$ and $I(a,b,N)\leq C_\alpha(N^2|b|)^{-\alpha}$ when $N^2|b|\geq 1$, hence 
\begin{equation}
    I(a,b,N)\leq C_\alpha\min\{1,(N^2|b|)^{-\alpha}\}.
\end{equation}
Combining the two cases concludes the proof.
\end{proof}

We immediately apply this lemma to determine the threshold $A_0>1$ such that $g_\sfx$ satisfies a local Lyapunov-drift condition for the limiting rescaled dynamics in the annular region $(K^\sfx_{A_0})^c$.

\begin{proposition}\label{prop:driftlimrescaledXannular}
Fix $\alpha\in(0,1)$. Then there exist $A_0=A_0(\alpha)>1$ and $ N_\infty=N_\infty(\alpha)\in\NN$ such that for every $N\geq N_\infty$ and every $(y,z)\in\RR^2$ with $h(y,z)\geq A_0$, there holds
\begin{align}
    \EE [g_\sfx(\widetilde f_\omega(y,z))]\leq \frac12 g_\sfx(y,z).
\end{align}
\end{proposition}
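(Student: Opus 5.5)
The plan is to reduce the claim to two applications of Lemma~\ref{lemma:EEmaxshearX}, using the explicit formulas \eqref{eq:integrated_explicit_ode_sfx}. Write $g_\sfx(\widetilde y_2,\widetilde z_2)=\min\{1,|\widetilde y_2|^{-\alpha},|\widetilde z_2|^{-\alpha}\}$, which is at most $\min\{1,|\widetilde y_2|^{-\alpha}\}$ and at most $\min\{1,|\widetilde z_2|^{-\alpha}\}$. In the second step we have
\[
\widetilde z_2 = a_2 - \omega_2 \widetilde y_1,\qquad a_2:=\widetilde z_1-\omega_2\sqrt2\int_0^1\overline B^2_s\,\dd s+\sqrt2\,\overline B^3_1 .
\]
Since $a_2$ still depends on $\omega_2$, I would instead treat $\widetilde z_2=\widetilde z_1+\sqrt2\overline B_1^3-\omega_2 b_2$ with $b_2:=\widetilde y_1+\sqrt2\int_0^1\overline B^2_s\,\dd s$. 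This form is affine in $\omega_2$, which is exactly the setting of Lemma~\ref{lemma:EEmaxshearX}.

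First I will condition on all Brownian increments and on $\omega_1$, and average only over $\omega_2$. Writing $\min\{1,|u|^{-\alpha}\}=N^{\alpha}\min\{N^{-\alpha},(N|u|)^{-\alpha}\}\le N^\alpha\min\{1,(N|u|)^{-\alpha}\}$, Lemma~\ref{lemma:EEmaxshearX} gives
\[
\EE_{\omega_2}\min\{1,|\widetilde z_2|^{-\alpha}\}\lesssim C_\alpha\, N^{-\alpha}|b_2|^{-\alpha}.
\]
Combined with the trivial bound, this yields $\EE_{\omega_2}[g_\sfx]\lesssim C_\alpha\min\{1,N^{-\alpha}|b_2|^{-\alpha}\}$. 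The same argument applied to $b_2=y+\omega_1 z+\text{(Brownian terms)}$ is affine in $\omega_1$. Lemma~\ref{lemma:EEmaxshearX} then gives a bound of order $C_\alpha\min\{1,(N|y+G|)^{-\alpha},(N^2|z+G'|)^{-\alpha}\}$, where $G,G'$ are Gaussian, with $G'=\sqrt2\int_0^1B^3_s\,\dd s$ entering multiplied by $\omega_1$ and $G$ collecting the remaining Gaussian terms.

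It remains to handle the Gaussian shifts, which I view as the main obstacle. The key fact is that for a Gaussian $G$ of bounded variance and $|c|=h$ large,
\[
\EE\min\{1,(M|c+G|)^{-\alpha}\}\le (Mh/2)^{-\alpha}+\PP(|G|>h/2)\lesssim (Mh)^{-\alpha}+e^{-h^2/C}.
\]
Since $h(y,z)=\max\{|y|,|z|\}\ge A_0$, either $|y|=h$ or $|z|=h$. In both cases the final bound becomes $C_\alpha(N^{-\alpha}+e^{-A_0^2/C})\,h^{-\alpha}\cdot h^{\alpha}\min\{\cdot\}$, and more precisely it is at most $C_\alpha N^{-\alpha}h^{-\alpha}\cdot N^{-\alpha}+C e^{-A_0^2/C}$. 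The exponential tail must be compared against $g_\sfx(y,z)=h^{-\alpha}$. This works because $e^{-h^2/C}\le \varepsilon h^{-\alpha}$ for $h\ge A_0(\alpha)$ large. One care point is that the shift of $\widetilde z_1$ by the Gaussian $\sqrt2 B^3_1$ must be handled similarly when $|z|=h$, and likewise the extra Gaussian $\omega_1\sqrt2\int B^3$ enters through $\omega_1$. Because the lemma allows arbitrary $a,b$, I will absorb this by applying it with $b=z+\sqrt2\int_0^1 B^3_s\,\dd s$ and bounding the Gaussian deviation of $b$ from $z$ by the same tail argument.

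Finally, choose $A_0$ so that the tail terms are at most $\tfrac14 h^{-\alpha}$, and $N_\infty$ so that $C_\alpha N^{-\alpha}\le\tfrac14$. These two choices give $\EE[g_\sfx(\widetilde f_\omega(y,z))]\le\tfrac12 g_\sfx(y,z)$.
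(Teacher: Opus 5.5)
Your proposal is correct and follows essentially the paper's route. You bound $g_\sfx$ by $\min\{1,|\widetilde z_2|^{-\alpha}\}$ and average over $\omega_2$ to get $C_\alpha\min\{1,(N|\Theta_1|)^{-\alpha}\}$ with $\Theta_1=\Lambda_1+\omega_1\Lambda_2$; your rescaling $\min\{1,|u|^{-\alpha}\}\le N^\alpha\min\{1,(N|u|)^{-\alpha}\}$ is a clean way to get this directly from Lemma~\ref{lemma:EEmaxshearX}. You then apply the lemma again in $\omega_1$ and absorb the Gaussian deviations via tails, whereas the paper first restricts to the good event $\{|G_1|+|G_2|+|G_3|\le h_0/4\}$ and only then averages over $\omega_1$.

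Your garbled intermediate bound should read $C_\alpha N^{-\alpha}h^{-\alpha}+Ce^{-h^2/C}$: there is no extra factor $N^{-\alpha}$ when $|y|=h$, and the tail is in $h$ rather than $A_0$. This corrected bound is exactly what your final choices of $A_0$ and $N_\infty$ use, so the conclusion stands.
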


\begin{proof}
Fix $(y,z)\in\RR^2$ and let $h_0:=h(y,z)\geq 1$. Let $\widetilde{f}_{\omega_1}(y,z) = (\widetilde y_1,\widetilde z_1)$ and $\widetilde{f}_\omega(y,z) = (\widetilde y_2,\widetilde z_2)$ be given by \eqref{eq:integrated_explicit_ode_sfx}. Since $g_\sfx(\widetilde y_2,\widetilde z_2)\leq \min\{1,|\widetilde z_2|^{-\alpha}\}$, it suffices to estimate $|\widetilde z_2|$. Set
\begin{align}
    \Theta_1 :=\widetilde y_1+\sqrt{2}\int_0^1 \overline B_s^2\,\dd s, \quad \Theta_2 := \widetilde z_1+\sqrt{2}\,\overline B_1^3.
\end{align}
Then $\widetilde z_2=\Theta_2-\omega_2 \Theta_1$ and we obtain
\begin{align}
    \EE_{\omega_2}[g_\sfx(\widetilde f_\omega(y,z))] \leq \EE_{\omega_2}[\min\{1,|\Theta_2-\omega_2\Theta_1|^{-\alpha}\}] \leq C_{1,\alpha}\, \EE\Bigl[\min\{1,(N|\Theta_1|)^{-\alpha}\}\Bigr],
\end{align}
where the last inequality follows from the same argument used in the proof of Lemma~\ref{lemma:EEmaxshearX}.
We next note that $\Theta_1 = \Lambda_1 + \omega_1 \Lambda_2$, for 
\begin{align}
    \Lambda_1 := y+\sqrt{2}\,B_1^2+\sqrt{2}\int_0^1 \overline B_s^2\,\dd s, \quad \Lambda_2 := z+\sqrt{2}\int_0^1 B_s^3\,\dd s.
\end{align}
Setting $G_1:=\sqrt{2}\,B_1^2\sim \cN(0,2)$, $G_2:=\sqrt{2}\int_0^1 \overline B_s^2\,\dd s\sim \cN(0,2/3)$ and $G_3:=\sqrt{2}\int_0^1 B_s^3\,\dd s\sim \cN(0,2/3)$, we consider
\begin{align}
    F:=\left\lbrace |G_1| + |G_2| + |G_3| \leq \frac{h_0}{4} \right\rbrace.
\end{align}
On $F$ we have $|\Lambda_1-y|\leq \frac{h_0}{4}$ and also $|\Lambda_2-z|\leq \frac{h_0}{4}$ and therefore $\max\{|\Lambda_1|,|\Lambda_2|\}\geq \frac{3h_0}{4}$. Letting $A_0>1$ we have $h_0\geq 1$ and for $N>1$ we see that Lemma~\ref{lemma:EEmaxshearX} implies the existence of $C_{2,\alpha}>0$, depending only on $\alpha$, such that
\begin{align}
    \EE_{\omega_1,\bB}\Bigl[\min\{1,(N|\Theta_1|)^{-\alpha}\}| F \Bigr] = \EE_\bB \Bigl[\EE_{\omega_1}\bigl[\min\{1,(N|\Lambda_1+\omega_1 \Lambda_2|)^{-\alpha}\} | F \bigr] \Bigr] \leq C_{2,\alpha}N^{-\alpha}h_0^{-\alpha}.
\end{align}
On the other hand, 
\begin{align}
    F^c\subset\Bigl\{|G_1|>\frac{h_0}{12}\Bigr\}\cup\Bigl\{|G_2|>\frac{h_0}{12}\Bigr\}\cup\Bigl\{|G_3|>\frac{h_0}{12}\Bigr\}
\end{align}
and since each $G_j$ is a Gaussian random variable, there exist universal constants $c,C>0$ such that $\PP(F^c)\leq C e^{-c h_0^2}$. In particular, we may choose $A_0>1$ large enough depending only on $c$, $C$, $C_{1,\alpha}$ so that $C_{1,\alpha}\PP(F^c)\leq \frac14 h_0^{-\alpha}$ for all $ h_0\geq A_0$. Summing up, for $h_0\geq A_0$, we deduce that
\begin{align}
    \EE[g_\sfx(\widetilde{f}_\omega(y,z))] &\leq C_{1,\alpha}\EE_{\omega_1,\bB}\Bigl[\min\{1,(N|\Theta_1|)^{-\alpha}\}|F\Bigr]+C_{1,\alpha}\EE_{\omega_1,\bB}\Bigl[\min\{1,(N|\Theta_1|)^{-\alpha}\}|F^c\Bigr]\PP(F^c) \\
    &\leq C_{1,\alpha}C_{2,\alpha}N^{-\alpha} h_0^{-\alpha}+C_{1,\alpha}\PP(F^c)\\
    &\leq C_{1,\alpha}C_{2,\alpha}N^{-\alpha} h_0^{-\alpha}+\frac14 h_0^{-\alpha}.
\end{align}
Choosing $N_\infty\in\NN$ so large that $C_{1,\alpha}C_{2,\alpha}N^{-\alpha}\leq \frac14$ for all $N\geq N_\infty$, the lemma is proved.
\end{proof}

\subsubsection{The inner core $K^\sfx_{A_0}$}
Once $A_0$ is determined by Proposition \ref{prop:driftlimrescaledXannular} above, we now turn to studying the dynamics in the core $K^\sfx_{A_0}$. Before proving a drift estimate for $\widetilde f_\omega$ in this region, we first need some preliminary lemmas.

\begin{lemma}\label{lemma:bounded_density_truncated_reciprocal}
Let $\zeta$ be a real-valued random variable with probability density function $\rho:\R\rightarrow[0,\infty)$ satisfying $\|\rho\|_{L^\infty(\R)}\leq M$. Then, for every $c\in(0,1]$,
\begin{align}\label{eq:truncated_reciprocal_bound}
    \EE\left[\min\left\lbrace 1,\frac{c}{|\zeta|}\right\rbrace  \right] \leq \left(1+2M+2M\log(c^{-1})\right)c.
\end{align}
\end{lemma}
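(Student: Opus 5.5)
The plan is to split the expectation according to the size of $|\zeta|$ relative to $c$ and to $1$, and to use only the sup bound $\|\rho\|_{L^\infty}\leq M$ in the regions where $|\zeta|$ is small. Write
\begin{align}
    \EE\left[\min\left\lbrace 1,\frac{c}{|\zeta|}\right\rbrace\right] = \EE\left[\mathbf{1}_{\{|\zeta|\leq c\}}\right] + \EE\left[\frac{c}{|\zeta|}\mathbf{1}_{\{c<|\zeta|\leq 1\}}\right] + \EE\left[\frac{c}{|\zeta|}\mathbf{1}_{\{|\zeta|>1\}}\right],
\end{align}
using that the minimum equals $1$ exactly when $|\zeta|\leq c$.

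The first term is $\PP(|\zeta|\leq c)=\int_{-c}^{c}\rho\,\dd u\leq 2Mc$. The third term is bounded trivially by $c$, since $c/|\zeta|<c$ there; this needs no density information and accounts for the constant $1$ in the prefactor. For the middle term I would write it as an integral against the density and bound $\rho$ by $M$:
\begin{align}
    \int_{c<|u|\leq 1}\frac{c}{|u|}\rho(u)\,\dd u \leq 2Mc\int_c^1\frac{\dd u}{u} = 2Mc\log(c^{-1}),
\end{align}
which is finite and nonnegative because $c\in(0,1]$.

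Summing the three contributions gives exactly $(1+2M+2M\log(c^{-1}))c$. There is no genuine obstacle. The only point requiring care is to cut the integral at $|u|=1$ and not to use the density bound on the tail, since $\int_1^\infty u^{-1}\,\dd u$ diverges. On the tail one uses instead that $c/|u|\leq c$ and that $\rho$ integrates to one.
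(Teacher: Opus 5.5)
Your proposal is correct and matches the paper's proof essentially verbatim. It uses the same three-region split at $|\zeta|=c$ and $|\zeta|=1$, with the density bound $M$ used only on $\{|\zeta|\leq 1\}$, giving the bounds $2Mc$, $2Mc\log(c^{-1})$ and $c$, which sum to the claimed constant.
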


\begin{proof}
Splitting the expectation into the three regions $\lbrace  |\zeta|\leq c\rbrace  $, $\lbrace   c<|\zeta|\leq 1\rbrace  $, and $\lbrace  |\zeta|>1\rbrace  $ we obtain
\begin{align}
    \EE\left[\min\left\lbrace  1,\frac{c}{|\zeta|}\right\rbrace  \right]
    &=\PP(|\zeta|\leq c) +  c\,\EE\left[|\zeta|^{-1}\big|  c<|\zeta|\leq 1\right]\PP(c<|\zeta|\leq 1) + c\,\EE\left[|\zeta|^{-1}  \big| |\zeta|>1 \right]\PP(|\zeta|>1) \notag\\
    &\leq \PP(|\zeta|\leq c)  + c\int_{\lbrace  c<|x|\leq 1\rbrace  } \frac{\rho(x)}{|x|}\,\dd x + c\,\PP(|\zeta|>1).
\end{align}
Since $\rho\leq M$, we have $\PP(|\zeta|\leq c)\leq 2Mc$, while
\begin{align}
    \int_{\lbrace c<|x|\leq 1\rbrace  } \frac{\rho(x)}{|x|}\,\dd x \leq M\int_{\lbrace c<|x|\leq 1\rbrace  } \frac{1}{|x|}\,\dd x =  2M\int_c^1 \frac{1}{x}\,\dd x = 2M\log(c^{-1}).
\end{align}
Finally, $c\,\PP(|\zeta|>1)\leq c$ and the combination of these estimates completes the proof.
\end{proof}

The next lemma shows that the shearing motion localises points with small probability.

\begin{lemma}\label{lemma:averaged_affine_small_ball}
Let $\omega\sim \mathrm{Unif}[-N,N]$ with $N\geq1$, and let $\eta\sim \mathcal{N}(0,\upsilon^2)$ be independent of $\omega$. Let $\mathcal H$ be a $\sigma$-algebra independent of $(\omega,\eta)$, and let $\Lambda_1,\Lambda_2$ be real-valued, $\mathcal H$-measurable random variables. Then, for every $r\in(0,N]$,
\begin{align}\label{eq:averaged_affine_small_ball}
    \PP\left(|\Lambda_1+\omega(\Lambda_2+\eta)|\leq r \,\big|\, \mathcal H\right) \leq  \frac{2r}{N}\left(1+\upsilon^{-1} +\upsilon^{-1}\log\frac{N}{r}\right) \qquad\text{a.s.,}
\end{align}
In particular, $ \PP\left(|\Lambda_1+\omega(\Lambda_2+\eta)|\leq r \right)  \leq  \frac{2r}{N}\left(1+\upsilon^{-1} +\upsilon^{-1}\log\frac{N}{r}\right)$.
\end{lemma}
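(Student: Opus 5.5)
We condition on $\mathcal H$ throughout; since $(\omega,\eta)$ is independent of $\mathcal H$, we may treat $\Lambda_1=a$ and $\Lambda_2=b$ as fixed real numbers and prove the bound for the probability over $(\omega,\eta)$ alone, uniformly in $(a,b)$. The unconditional estimate then follows by taking expectations. The plan is to integrate first in $\omega$ with $\eta$ frozen, and then average over $\eta$ using Lemma~\ref{lemma:bounded_density_truncated_reciprocal}.

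\textbf{Step 1: the $\omega$-integral.} Fix $\eta$ and set $\zeta:=b+\eta$. If $\zeta\neq 0$, the set $\{\omega\in\R:\ |a+\omega\zeta|\le r\}$ is an interval of length $2r/|\zeta|$. Since $\omega$ has density $(2N)^{-1}$ on $[-N,N]$, we get
\begin{align}
    \PP_\omega\bigl(|a+\omega\zeta|\le r\bigr)\le \min\Bigl\{1,\frac{r}{N|\zeta|}\Bigr\}.
\end{align}
The event $\zeta=0$ has probability zero because $\eta$ is Gaussian, so we may ignore it.

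\textbf{Step 2: the $\eta$-average.} Next we average over $\eta$. The variable $\zeta=b+\eta$ has a Gaussian density bounded by $M=(2\pi)^{-1/2}\upsilon^{-1}\le \upsilon^{-1}$, uniformly in $b$. We apply Lemma~\ref{lemma:bounded_density_truncated_reciprocal} with $c=r/N$, which lies in $(0,1]$ because $r\le N$. This gives
\begin{align}
    \EE_\eta\Bigl[\min\Bigl\{1,\frac{r}{N|\zeta|}\Bigr\}\Bigr]\le \frac rN\Bigl(1+2\upsilon^{-1}+2\upsilon^{-1}\log\frac Nr\Bigr).
\end{align}
The right-hand side is at most $\frac{2r}{N}\bigl(1+\upsilon^{-1}+\upsilon^{-1}\log\frac Nr\bigr)$, which is the claimed bound.

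\textbf{Step 3: measurability.} The only point needing care is that the frozen-variable computation is legitimate. Because $(\omega,\eta)$ is independent of $\mathcal H$ and $\Lambda_1,\Lambda_2$ are $\mathcal H$-measurable, the conditional probability equals $\Phi(\Lambda_1,\Lambda_2)$, where $\Phi(a,b)$ is the deterministic probability computed in Steps 1 and 2; this is the standard freezing (substitution) lemma for conditional expectations. The bound on $\Phi$ is uniform in $(a,b)$, so the conditional estimate holds almost surely. I expect no genuine obstacle here beyond checking that the density bound on $\zeta$ is uniform in the shift $b$.
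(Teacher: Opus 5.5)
Your proposal is correct and follows essentially the same route as the paper. You bound the $\omega$-probability by $\min\{1, r/(N|\Lambda_2+\eta|)\}$ conditionally on $(\mathcal H,\eta)$, average over $\eta$ via Lemma~\ref{lemma:bounded_density_truncated_reciprocal} with $c=r/N$ and the Gaussian density bound $(2\pi\upsilon^2)^{-1/2}$, and take expectations to get the unconditional estimate. Your freezing-lemma justification and the final comparison $1+2\upsilon^{-1}(1+\log\frac{N}{r})\le 2(1+\upsilon^{-1}+\upsilon^{-1}\log\frac{N}{r})$ make explicit two steps the paper leaves implicit.
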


\begin{proof}
Note that the set $a\in[-N,N]$ for which $|\Lambda_1+a(\Lambda_2+\eta)|\leq r$ has Lebesgue measure at most $\min\left\lbrace 2N,\frac{2r}{|\Lambda_2+\eta|}\right\rbrace$. Since $\omega\sim \text{Unif}[-N,N]$, we have
\begin{align}
    \PP\left(|\Lambda_1+\omega(\Lambda_2+\eta)|\leq r \,\big|\, \mathcal H,\eta\right) \leq  \min\left\lbrace 1,\frac{r}{N|\Lambda_2+\eta|}\right\rbrace  .
\end{align}
Next,
\begin{align}
    \PP\left(|\Lambda_1+\omega(\Lambda_2+\eta)|\leq r \,\big|\, \mathcal H\right) = \EE_\eta\left [\PP\left(|\Lambda_1+\omega(\Lambda_2+\eta)|\leq r \,\big|\, \mathcal H,\eta\right)\right]  \leq\EE_{\Lambda_2+\eta} \left[\min\left\lbrace 1,\frac{r}{N|\Lambda_2+\eta|}\right\rbrace \right]
\end{align}
and applying Lemma~\ref{lemma:bounded_density_truncated_reciprocal} with $c=\frac{r}{N}\in(0,1]$, $\zeta=\Lambda_2+\eta\sim \mathcal{N}(\Lambda_2,\upsilon^2)$ whose probability density function $\varrho_{\Lambda_2+\eta}$ satisfies $\Vert \varrho_{\Lambda_2+\eta} \Vert_{L^\infty}\leq (2\pi\upsilon^2)^{-\frac12}$ we obtain
\begin{align}
    \PP\left(|\Lambda_1+\omega(\Lambda_2+\eta)|\leq r \,\big|\, \mathcal H\right)   &\leq \left(1+2(2\pi\upsilon^2)^{-\frac12}+2(2\pi\upsilon^2)^{-\frac12}\log(\frac{N}{r})\right)\frac{r}{N},
\end{align}
as claimed. Finally, set $E= \lbrace  |\Lambda_1+\omega(\Lambda_2+\eta)|\leq r \rbrace  $ and note that
\begin{align}
    \PP(E) = \EE[1_E] = \EE[\EE[1_E|\mathcal H]] = \EE[\PP(E|\mathcal{H})].
\end{align}
The lemma follows as the bound for $\PP(E|\mathcal{H})$ is deterministic and independent of $\mathcal{H}$.
\end{proof}

We are now in position to prove a drift for the limiting rescaled process. 
\begin{proposition}\label{prop:driftlimrescaledXinner}
    Let $A_0>1$ and $\alpha\in(0,1)$. There exists $N_0=N_0(A_0)\in \NN$ and $\gamma\in(0,1)$ such that 
    \begin{align}
        \EE [ g_\sfx(\widetilde{f}_\omega(y,z))] \leq \gamma g_\sfx(y,z),
    \end{align}
    for all $N\geq N_0$ and all $(y,z)\in K^\sfx_{A_0}$.
\end{proposition}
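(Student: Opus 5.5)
The plan is to exploit the fact that in the inner core $g_\sfx$ is at most $1$ and equals $1$ only when $h\le 1$. Hence it suffices to show that $\EE[g_\sfx(\widetilde f_\omega(y,z))]$ is bounded by a constant strictly smaller than $A_0^{-\alpha}\le g_\sfx(y,z)$. Concretely, I aim to prove that for $N$ large
\begin{align}
    \EE\bigl[g_\sfx(\widetilde f_\omega(y,z))\bigr]\le \frac12 A_0^{-\alpha}\le \frac12 g_\sfx(y,z),\qquad (y,z)\in K^\sfx_{A_0}.
\end{align}
Given this, the proposition follows with $\gamma=\frac12$.

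\textbf{Reduction to a small-ball estimate.} Since $g_\sfx(\widetilde y_2,\widetilde z_2)\le \min\{1,|\widetilde z_2|^{-\alpha}\}$, it is enough to make $|\widetilde z_2|$ large with high probability. From \eqref{eq:integrated_explicit_ode_sfx} we have $\widetilde z_2=\Theta_2-\omega_2\Theta_1$, with
\begin{align}
    \Theta_1=\Lambda_1+\omega_1\Lambda_2,\qquad \Lambda_1=y+G_1+G_2,\qquad \Lambda_2=z+G_3,
\end{align}
where $G_1,G_2,G_3$ are the Gaussian variables from Proposition~\ref{prop:driftlimrescaledXannular}.

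\textbf{First shear.} Apply Lemma~\ref{lemma:averaged_affine_small_ball} with $\omega=\omega_1$, $\eta=G_3$ (variance $2/3$), $\mathcal H=\sigma(y,B^2,\overline B^2)$, $\Lambda_1$ as above, and the deterministic $\Lambda_2$-part equal to $z$. The independence hypotheses hold because $G_3$ depends only on $B^3$, which is independent of $B^2$ and $\overline B^2$. This yields
\begin{align}
    \PP(|\Theta_1|\le r)\lesssim \frac{r}{N}\Bigl(1+\log\frac{N}{r}\Bigr)
\end{align}
uniformly in $(y,z)$; indeed, $A_0$ does not even enter here.

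\textbf{Second shear.} Condition on everything except $\omega_2$. On the event $\{|\Theta_1|>r\}$, Lemma~\ref{lemma:EEmaxshearX}, used in the form of the proof of Proposition~\ref{prop:driftlimrescaledXannular} with $a=\Theta_2$ and $b=-\Theta_1$, gives
\begin{align}
    \EE_{\omega_2}\bigl[\min\{1,|\Theta_2-\omega_2\Theta_1|^{-\alpha}\}\bigr]\le C_\alpha (N|\Theta_1|)^{-\alpha}\le C_\alpha (Nr)^{-\alpha}.
\end{align}
Splitting according to $\{|\Theta_1|\le r\}$ and its complement, we get
\begin{align}
    \EE\bigl[g_\sfx(\widetilde f_\omega)\bigr]\lesssim \frac{r}{N}\Bigl(1+\log\frac{N}{r}\Bigr)+C_\alpha(Nr)^{-\alpha}.
\end{align}
Choosing $r=1$, the right-hand side is $O(N^{-1}\log N+N^{-\alpha})$, which tends to zero.

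\textbf{Choice of $N_0$.} Pick $N_0(A_0)$ so that this bound is at most $\frac12 A_0^{-\alpha}$ for all $N\ge N_0$. This gives the claim with $\gamma=\frac12$.

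\textbf{Main obstacle.} The delicate point is the independence bookkeeping needed to apply Lemma~\ref{lemma:averaged_affine_small_ball}. One must verify that $\eta=G_3$ is independent of both $\mathcal H$ and $\omega_1$, and that $\Theta_2$, which involves $B_1^3$, is only used after conditioning. This is handled by conditioning first on $(B,\overline B,\omega_1)$ in the second-shear step, and only then averaging in the first-shear step.

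A secondary check is that the Lemma~\ref{lemma:EEmaxshearX} bound holds with $N$ as written. The lemma is stated with $\min\{1,(N|\cdot|)^{-\alpha}\}$, while here we need $\min\{1,|\cdot|^{-\alpha}\}$, which is larger. For this case I would redo Case~2 of its proof directly: averaging $\min\{1,|a+\omega b|^{-\alpha}\}$ over $\omega\sim\mathrm{Unif}[-N,N]$ gives $\lesssim \min\{1,(N|b|)^{-\alpha}\}$, which is all that is used above.
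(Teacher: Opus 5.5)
Your proof is correct, but it takes a different route from the paper's.

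The paper uses only the second shear. It writes $\widetilde z_2=(\widetilde z_1+\sqrt2\,\overline B_1^3)-\omega_2(\widetilde y_1+\eta)$, where $\eta=\sqrt2\int_0^1\overline B_s^2\,\dd s$ is independent of $\omega_2$ and of $\sigma(\omega_1,(B_t)_{t\in[0,1]},\overline B_1^3)$. It then applies Lemma~\ref{lemma:averaged_affine_small_ball} once, with the point-dependent threshold $r=2h_0\le 2A_0\le N$, to get $\PP(|\widetilde z_2|\le 2h_0)\lesssim A_0N^{-1}(1+\log N)$. Off this event $g_\sfx$ contracts by $2^{-\alpha}$. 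On it, the paper uses the crude bound $g_\sfx\le 1\le A_0^{\alpha}g_\sfx(y,z)$, which yields $\gamma=\frac{1+2^{-\alpha}}{2}$.

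You instead use the first shear together with the first-step noise $G_3$ to keep the effective shear coefficient $\Theta_1$ away from zero (small-ball lemma with $r=1$). You then average over $\omega_2$ as in Proposition~\ref{prop:driftlimrescaledXannular}. Your independence bookkeeping is sound: $G_3$ depends only on $B^3|_{[0,1]}$, and its correlation with $B_1^3$ inside $\Theta_2$ is harmless because the $\omega_2$-average only sees $\Theta_1$. Your rederivation of the $\min\{1,|\cdot|^{-\alpha}\}$ version of Lemma~\ref{lemma:EEmaxshearX} is also correct.

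What your route buys is an absolute bound $\sup_{(y,z)\in\R^2}\EE[g_\sfx(\widetilde f_\omega(y,z))]\lesssim_\alpha N^{-1}\log N+N^{-\alpha}$. So $\gamma$ can be made arbitrarily small, and the inner-core and annular arguments share one structure. The cost is a second lemma and an extra layer of conditioning compared with the paper's single small-ball estimate.

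The two shears are not essential for that absolute bound. The paper's single small-ball estimate, with a fixed large threshold $R$ in place of $2h_0$, gives $\EE[\min\{1,|\widetilde z_2|^{-\alpha}\}]\le\PP(|\widetilde z_2|\le R)+R^{-\alpha}$ just as well.
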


\begin{proof}
    We begin by recalling that ${A_0}^{-\alpha}\leq g_\sfx(y,z) \leq 1$ and $1\leq h(y,z) \leq A_0$, for all $(y,z)\in K^\sfx_{A_0}$. In particular, there holds
    \begin{align}
         g_\sfx(\widetilde{f}_\omega(y,z)) \leq {A_0}^\alpha g_\sfx(y,z)
    \end{align}
    for all $(y,z)\in K^\sfx_{A_0}$. Let $(y,z)\in K^\sfx_{A_0}$ and set $h_0=h(y,z)\in [1,A_0]$, we define the set $E:=\lbrace |\widetilde z_2| \leq 2h_0 \rbrace$. We have
\begin{align}
\widetilde z_2 = \left(\widetilde z_1+\sqrt{2}\,\overline B_1^3\right)-\omega_2\left(\widetilde y_1+\sqrt{2}\int_0^1 \overline B_s^2\,\dd s\right).
\end{align}
For the $\sigma$-algebra $\mathcal H_1:=\sigma(\omega_1,(B_t)_{t\in[0,1]},\overline B_1^3)$ we observe that $\widetilde y_1$ and $\widetilde z_1+\sqrt{2}\,\overline B_1^3$ are $\mathcal H_1$-measurable, while $\eta:=\sqrt{2}\int_0^1 \overline B_s^2\,\dd s\sim \mathcal{N}(0,2/3)$ is independent of $\mathcal H_1$ and of $\omega_2$. Using Lemma~\ref{lemma:averaged_affine_small_ball} on $\mathcal H_1$, with
\begin{align}
\Lambda_1=\widetilde z_1+\sqrt{2}\,\overline B_1^3,\qquad \Lambda_2=\widetilde y_1
\end{align}
and threshold $r=2h_0\leq 2A_0\leq N$ we obtain
\begin{align}
    \PP(E) \leq C\,\frac{2h_0}{N}\left(1+\log\frac{N}{2h_0}\right) \leq  CA_0\frac{1+\log N}{N}.
\end{align}
On the other hand, on $E^c$ we have that $|\widetilde{z}_2| \geq 2h_0$ and $g_\sfx(\widetilde y_2, \widetilde z_2) \leq 2^{-\alpha}g_\sfx(y,z)$. Combining both estimates, we obtain
\begin{align}
    P_\sfx g_\sfx(y,z) &= \PP(E)\EE[g_\sfx(\widetilde y_2, \widetilde z_2) | E] + \PP(E^c)\EE[g_\sfx(\widetilde y_2, \widetilde z_2) | E^c] \\
    &\leq \left( CA_0^{\alpha + 1}\frac{1 + \log (N)}{N} + 2^{-\alpha} \right) g_\sfx(y,z)
\end{align}
and the lemma follows, for $\gamma:=\frac{1+2^{-\alpha}}{2}<1$, after choosing $N_0=N_0(A_0,\alpha)$ such that $CA_0^{\alpha + 1}\frac{1 + \log (N)}{N}\leq \frac{1-2^{-\alpha}}{2}$ for all $N\geq N_0$ large enough.
\end{proof}

In view of Propositions \ref{prop:stabilitydriftrescaledx}, \ref{prop:driftlimrescaledXannular}, \ref{prop:driftlimrescaledXinner}, and Lemma~\ref{lemma:boundary_layer_reduction}, the following result is immediate.
\begin{corollary}\label{cor:driftrescaledx}
Let $A_1>A_0$. Then, there exists $\gamma\in(0,1)$ and $\kappa_0>0$ such that
\begin{align}
    \EE \left[ V_{\sfx,\kappa}(f_\omega^\kappa(p))\right] \leq \gamma V_{\sfx,\kappa}(p)
\end{align}
for all $p\in \mathsf{Q}_\sfx(s_0)$ satisfying $r_\sfx(p)\leq A_1\sqrt{\kappa}$ and all $\kappa\in (0,\kappa_0)$.
\end{corollary}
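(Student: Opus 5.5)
The plan is to reduce Corollary~\ref{cor:driftrescaledx} to a drift for the limiting rescaled process $\widetilde f_\omega$ on the whole compact box $K^\sfx_{A_1}$, and then to transfer that drift to $V_{\sfx,\kappa}$. By Lemma~\ref{lemma:boundary_layer_reduction} with $A=A_1$, it suffices to find $\gamma\in(0,1)$ such that $\EE[g_\sfx(\widetilde f^\kappa_\omega(y,z))]\leq\gamma\, g_\sfx(y,z)$ for all $(y,z)\in K^\sfx_{A_1}$ and all small $\kappa$.

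First I obtain a uniform limiting drift on $K^\sfx_{A_1}$, splitting into two regions.
\begin{enumerate}
    \item On the annulus, where $h(y,z)\geq A_0$, Proposition~\ref{prop:driftlimrescaledXannular} gives contraction factor $\tfrac12$ for $N\geq N_\infty$.
    \item On the inner core $K^\sfx_{A_0}$, Proposition~\ref{prop:driftlimrescaledXinner} gives some $\gamma_0\in(0,1)$ for $N\geq N_0(A_0)$.
\end{enumerate}
Fixing $N\geq\max\{N_\infty,N_0\}$ (consistent with the $N$ used in the inviscid one-point drift, taken large enough), set $\gamma_1:=\max\{\tfrac12,\gamma_0\}$. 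Then
\[
\EE[g_\sfx(\widetilde f_\omega(y,z))]\leq\gamma_1\, g_\sfx(y,z)\qquad\text{for all }(y,z)\in K^\sfx_{A_1}.
\]
Note that $A_0$ depends only on $\alpha$, so $A_1>A_0$ is an arbitrary but fixed constant. The annular statement holds for every $(y,z)$ with $h\geq A_0$, in particular on $K^\sfx_{A_1}\setminus K^\sfx_{A_0}$.

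Next I perturb to $\kappa>0$. Apply Proposition~\ref{prop:stabilitydriftrescaledx} with $A=A_1$ and $\gamma=\gamma_1$, which yields $\kappa_0=\kappa_0(\alpha,\gamma_1,A_1,N)$ such that the $L^1$ difference $\EE|g_\sfx(\widetilde f^\kappa_\omega)-g_\sfx(\widetilde f_\omega)|$ is at most $\tfrac{1-\gamma_1}{2}g_\sfx(y,z)$ on $K^\sfx_{A_1}$. The triangle inequality then gives the contraction factor $\gamma:=\tfrac{1+\gamma_1}{2}<1$ for the rescaled stochastic process. Lemma~\ref{lemma:boundary_layer_reduction} converts this into the claimed bound for $V_{\sfx,\kappa}$ at every $p\in\mathsf{Q}_\sfx(s_0)$ with $r_\sfx(p)\leq A_1\sqrt{\kappa}$. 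Shrinking $\kappa_0$ if needed also guarantees $A_1\sqrt{\kappa}<s_0$ and the chart condition \eqref{eq:stopping_time_chart_permanence}.

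The only delicate point I expect is bookkeeping of constants: $\kappa_0$ must be chosen after $A_1$ and $N$ are fixed, and it must be checked that Lemma~\ref{lemma:EXPdiffrescaledlimiting}, which underlies Proposition~\ref{prop:stabilitydriftrescaledx}, applies uniformly over the compact $K^\sfx_{A_1}$. Both are immediate from the stated dependencies. The antipodal fixed point $(-1,0,0)$ is handled by the same symmetry argument used throughout the section.
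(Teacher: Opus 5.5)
Your proposal is correct and follows the paper's proof essentially verbatim. You reduce to $g_\sfx$ via Lemma~\ref{lemma:boundary_layer_reduction}, obtain the limiting contraction $\max\{\gamma_0,\tfrac12\}$ from Propositions~\ref{prop:driftlimrescaledXinner} and \ref{prop:driftlimrescaledXannular}, and absorb the $\kappa$-perturbation with Proposition~\ref{prop:stabilitydriftrescaledx} to reach the same factor $\tfrac{1+\max\{\gamma_0,1/2\}}{2}$; your explicit ordering of the choices of $N$, $A_1$ and then $\kappa_0$ is a welcome addition to the paper's terser version.
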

\begin{proof}
    Lemma \ref{lemma:boundary_layer_reduction} allows us to work directly on $g_\sfx$, for which 
    \begin{equation}
        \EE[g_\sfx(\widetilde\phi_2^\kappa)] =\EE[g_\sfx(\widetilde\phi_2^\kappa)-g_\sfx(\widetilde\phi_2)]+\EE[g_\sfx(\widetilde\phi_2)].
    \end{equation}
    The second term is $\gamma$-contractive either by Proposition \ref{prop:driftlimrescaledXinner}, if $(y,z)\in K^\sfx_{A_0}$, or $\frac12$-contractive by Proposition \ref{prop:driftlimrescaledXannular}, if $(x,y)\in K^\sfx_{A_1}\setminus K^\sfx_{A_0}$. 
    Applying then Proposition \ref{prop:stabilitydriftrescaledx} with  $\bar\gamma=\frac{1+\max\{\gamma,\frac12\}}{2}$ concludes the proof.
\end{proof}

\subsubsection{Inviscid-dominated regime}
We consider here the region where the inviscid dynamics dominate the stochastic processes by assuming that $r_\sfx(p)\geq A_1\sqrt{\kappa}$, for some $A_1\geq A_0$ to be determined. We show that a version of Lemma \ref{lemma:localxlyap} holds for all $p\in \mathsf{Q}_\sfx(s_0)$.

\begin{proposition}\label{prop:driftanuularx}
Let $\alpha=\frac18$. Then there exist $N_1>1$, $A_1=A_1(N_1)>A_0$, $s_0>0$, $\kappa_0>0$ and $\gamma\in(0,1)$ such that, 
\begin{align}\label{eq:noisy_annulus_drift}
    \EE_{\omega,\bB}\left[V_{\sfx,\kappa}(\phi_2^\kappa)\right] \leq \gamma V_{\sfx,\kappa}(p),
\end{align}
for all $\kappa\in(0,\kappa_0)$ and all $p\in \mathsf{Q}_\sfx(s_0)$ such that $r_\sfx(p)\geq A_1\sqrt{\kappa}$.
\end{proposition}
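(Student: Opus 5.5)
The plan is to treat the stochastic evolution in this regime as a small perturbation of the inviscid step. We then rerun the case analysis of Lemma~\ref{lemma:localxlyap}, using the truncated function $V_{\sfx,\kappa}=\max\{r_\sfx,\sqrt\kappa\}^{-\alpha}$.

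\textbf{Step 1 (size of the perturbation).} Fix $p\in\mathsf{Q}_\sfx(s_0)$ with $r:=r_\sfx(p)\geq A_1\sqrt\kappa$, so that $V_{\sfx,\kappa}(p)=r^{-\alpha}$. By Lemma~\ref{lemma:noisy_two_step_L2}, $\EE_\bB|\phi_2^\kappa-\phi_2|^2\leq C_N\kappa\leq C_NA_1^{-2}r^2$. The Brownian displacement is therefore a small fraction of $r$, uniformly in $\omega$, once $A_1$ is large compared with $C_N$.

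\textbf{Step 2 (good and bad events).} Define the good event $G:=\{|\phi_2^\kappa-\phi_2|\leq \delta\, r_\sfx(\phi_2)\}$ for a small $\delta$, for instance $\delta=\frac12$.
\begin{itemize}
\item On $G$, $r_\sfx$ is $1$-Lipschitz, so $V_{\sfx,\kappa}(\phi_2^\kappa)\leq (1-\delta)^{-\alpha}r_\sfx(\phi_2)^{-\alpha}=(1-\delta)^{-\alpha}V_\sfx(f_\omega(p))$.
\item On $G^c$, use the trivial bound $V_{\sfx,\kappa}\leq\kappa^{-\alpha/2}$. Chebyshev gives $\PP_\bB(G^c)\leq C_N\kappa\,\delta^{-2}r_\sfx(\phi_2)^{-2}$.
\end{itemize}
The inviscid step contracts in at most $r_\sfx(\phi_2)\gtrsim r/(8N^2)$ in every event of the proof of Lemma~\ref{lemma:localxlyap} (the worst case is $E_2$ in the second case). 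On $G^c$ this yields
\begin{align}
\EE_\bB[V_{\sfx,\kappa}(\phi_2^\kappa)\mathbf 1_{G^c}]\leq C_N'\,\kappa^{1-\alpha/2}r^{-2}\leq C_N'A_1^{-(2-\alpha)}r^{-\alpha},
\end{align}
using $\kappa\leq r^2A_1^{-2}$. This is at most $\epsilon\,r^{-\alpha}$ once $A_1=A_1(N_1)$ is large.

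\textbf{Step 3 (combining).} Averaging over $\omega$ and invoking Lemma~\ref{lemma:localxlyap} gives
\begin{align}
\EE_{\omega,\bB}[V_{\sfx,\kappa}(\phi_2^\kappa)]\leq \bigl((1-\delta)^{-\alpha}\tfrac{95}{100}+\epsilon\bigr)V_{\sfx,\kappa}(p).
\end{align}
The constant in parentheses is below $1$ for $\delta$ and $\epsilon$ small. If the factor $(1-\delta)^{-\alpha}$ turns out to be too costly, we instead put the $\delta$-loss directly into the bounds \eqref{eq:firstEbound}–\eqref{eq:secondEbound}, where the gain $2^{-\alpha}$ has room to absorb it.

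\textbf{Step 4 (the expected obstacle).} The delicate point is that Lemma~\ref{lemma:localxlyap} was proved under $N^2s_0\leq c_0$. We therefore choose constants in the order $N_1$ first, then $s_0$, then $A_1$, then $\kappa_0$.
\begin{itemize}
\item The requirement $A_1>A_0$ is compatible with this order.
\item The chart condition \eqref{eq:stopping_time_chart_permanence} is needed only near the boundary, where we use $r\leq s_0$.
\item A uniform lower bound $r_\sfx(\phi_2)\gtrsim_N r$ is required. Rather than rely on the case analysis, we would try the bi-Lipschitz property of Lemma~\ref{lemma:phikappaLip} first. Since the fixed point is preserved, it gives $r_\sfx(f_\omega(p))\geq C_0^{-1/2}r/\sqrt2$ directly. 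This removes the dependence on the case analysis, and only a constant depending on $N$ enters $A_1$.
\end{itemize}
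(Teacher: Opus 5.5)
Your proof is correct, but it is organised differently from the paper's.

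\textbf{The paper's route.} The paper does not quote Lemma~\ref{lemma:localxlyap} as a black box. It uses an additive good event $G=\{|\phi_1^\kappa-\phi_1|\le\eta r,\ |\phi_2^\kappa-\phi_2|\le\eta r\}$ with an $N$-dependent tolerance ($\eta=\frac{1}{8N}$, resp.\ $\frac{1}{16N^2}$), and reopens the event decomposition of that lemma. It then checks event by event that on the expanding event, where $r_\sfx(\phi_2)\ge 2r$, one still has $r_\sfx(\phi_2^\kappa)\ge\frac32 r$. On the remaining events, of probability $O(N^{-1/2})$, one keeps $r_\sfx(\phi_2^\kappa)\gtrsim r/N^2$. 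The bad set is controlled by Markov's inequality, which gives the factor $A_1^{-(1-\alpha)}$.

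\textbf{Your route.} Your relative tolerance $\delta\,r_\sfx(\phi_2)$ turns the noise into a multiplicative loss $(1-\delta)^{-\alpha}$ on $V_\sfx(f_\omega(p))$. The inviscid drift can then be quoted verbatim. The only extra input is the uniform bound $r_\sfx(f_\omega(p))\gtrsim_N r$. This does hold on every event of Lemma~\ref{lemma:localxlyap}, with worst case $r/(8N^2)$. It also follows from the bi-Lipschitz bound for the composition of two time-one flows fixing $(1,0,0)$. Using Chebyshev with the second moment from Lemma~\ref{lemma:noisy_two_step_L2}, you even get the better factor $A_1^{-(2-\alpha)}$ on the bad set.

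\textbf{Comparison.} Your route is shorter and modular, while the paper's is self-contained and makes the contraction constant explicit. The price of yours is that the loss must fit in the slack between $\frac{95}{100}$ and $1$. Your example $\delta=\frac12$ fails, since $2^{1/8}\cdot\frac{95}{100}\approx 1.04>1$. You need $(1-\delta)^{1/8}>\frac{95}{100}$, i.e.\ $\delta<1-(0.95)^8\approx 0.34$; for instance $\delta=\frac14$ works.

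Your fallback of pushing the loss into \eqref{eq:firstEbound}--\eqref{eq:secondEbound} is exactly the paper's argument. The remark on the chart condition \eqref{eq:stopping_time_chart_permanence} in Step~4 plays no role in this regime, since that condition only enters the rescaled stochastic core.
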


\begin{proof}
Fix $\eta>0$ small, to be chosen during the proof, and set for convenience $r:=r_\sfx(p)=\max\lbrace  |y|,|z|\rbrace$ and note that $V_{\sfx,\kappa}(p) = r^{-\alpha}$ for all $p\in \mathsf{Q}_\sfx(s_0)$ with $r_\sfx(p)\geq A_1\sqrt{\kappa}\geq \sqrt{\kappa}$. We define the set
\begin{align}
    G:=\left\lbrace   |\phi_1^\kappa-\phi_1|\leq \eta r,\ \ |\phi_2^\kappa-\phi_2|\leq \eta r \right\rbrace 
\end{align}
and denote $r_2=r_\sfx(\phi_2)$ and $r_1=r_\sfx(\phi_1)$. Since $r_\sfx$ is $1$-Lipschitz continuous we have $|r_\sfx(\phi_2^\kappa) - r_\sfx(\phi_2)|\leq |\phi_2^\kappa - \phi_2|\leq \eta r$ on $G$. We distinguish the two cases as in the deterministic case (see Lemma~\ref{lemma:localxlyap}).

\diampar{Case $|y|\leq |z|$} Here $r=|z|$ and $V_{\sfx,\kappa}(p)=|z|^{-\alpha}$. We recall the splitting $E_1$, $E_2$, and $E_3$ for which
\begin{enumerate}[label=--]
    \item On $E_1$ there holds $r_2 \geq \frac{r}{2}$ and $\PP(E_1)\leq \frac{1}{2N^2}$.
    \item On $E_2$ there holds $r_2 \geq \frac{r}{4N}$ and $\PP(E_2)\leq \frac{4}{N}$.
    \item On $E_3=(E_1\cup E_2)^c$ there holds $r_2 \geq 2r$.
\end{enumerate}
Setting $E=E_3$, we observe that
\begin{equation}
\begin{aligned}
    r_\sfx(p_2^\kappa) \geq (2 - \eta) r \mathbf{1}_{E\cap G} + \left( \frac{1}{4N} - \eta \right) r \mathbf{1}_{E^c\cap G} \geq \frac{3r}{2} \mathbf{1}_{E\cap G} + \frac{r}{8N}\mathbf{1}_{E^c\cap G}
\end{aligned}
\end{equation}
for $\eta= \frac{1}{8N}$ and $\PP(E^c)\leq \frac{8}{N}$ for $N$ large enough. Thus, 
\begin{align}
    \EE_{\omega,\bB}\left[ V_{\sfx,\kappa}(\phi_2^\kappa)|G \right] \leq \left( \left( \frac{2}{3} \right)^ \alpha + 8\frac{(8N)^\alpha}{N} \right)V_{\sfx,\kappa}(p) \leq \gamma V_{\sfx,\kappa}(p)
\end{align}
for some $\gamma\in ((2/3)^\alpha,1)$, for $\alpha=\frac18$ and $N_1$ large enough.

\diampar{Case  $|z|\leq |y|$} Now $r=|y|$ and $V_{\sfx,\kappa}(p)=|y|^{-\alpha}$. We again recall the splitting used in Lemma \ref{lemma:localxlyap}, for which
\begin{enumerate}[label=--]
    \item On $E_1$ there holds $r_2 \geq \frac{r}{8N}$ and $\PP(E_1)\leq \frac{1}{2N^2}$.
    \item On $E_2$ there holds $r_2 \geq \frac{r}{8N^2}$ and $\PP(E_2)\leq \frac{4}{\sqrt{N}}$.
    \item On $E_3$ there holds $r_2 \geq \frac{r}{\sqrt{N}}$ and $\PP(E_3)\leq \frac{8}{\sqrt{N}}$.
    \item On $E_4=(E_1\cup E_2\cup E_3)^c$ there holds $r_2 \geq 2r$.
\end{enumerate}
Hence, setting $E=E_4$ leads to 
\begin{align}
    r_\sfx(\phi_2^\kappa) \geq (2 - \eta) r \mathbf{1}_{E\cap G} + \left( \frac{1}{8N^2} - \eta \right) r \mathbf{1}_{E^c\cap G} \geq \frac{3r}{2} \mathbf{1}_{E\cap G} + \frac{r}{16N^2}\mathbf{1}_{E^c\cap G}
\end{align}
for $\eta=\frac{1}{16N^2}$ and $\PP(E^c) \leq \frac{16}{\sqrt{N}}$. Therefore,
\begin{align}
    \EE_{\omega,\bB}\left[ V_{\sfx,\kappa}(\phi_2^\kappa)|G\right] \leq \left( \left( \frac{2}{3} \right)^ \alpha + 16\frac{(16N^2)^\alpha}{\sqrt{N}} \right)V_{\sfx,\kappa}(p) \leq \gamma V_{\sfx,\kappa}(p)
\end{align}
for some $\gamma\in ((2/3)^\alpha,1)$, as $\alpha=\frac18$ and choosing $N_1$ large enough. 

Finally, the probability of the bad set $G^c$ is estimated using Lemma~\ref{lemma:noisy_two_step_L2} and Markov's inequality as
\begin{equation}
    \PP_{\omega,\bB}(G^c)  \leq \frac{C_N\sqrt{\kappa}}{\eta r},
\end{equation}
which combined with the trivial bound $V_{\sfx,\kappa}\leq \kappa^{-\alpha/2}$ leads to 
\begin{align}\label{eq:bad_event_contribution}
    \EE_{\omega,\bB}\left[V_{\sfx,\kappa}(\phi_2^\kappa)|G^c\right]\PP(G^c)  \leq \frac{C_N}{\eta}\kappa^{(1-\alpha)/2}r^{-1} = \frac{C_N}{\eta}\left(\frac{\sqrt{\kappa}}{r}\right)^{1-\alpha}r^{-\alpha} \leq \frac{C_N}{\eta {A_1}^{1-\alpha}}V_{\sfx,\kappa}(p).
\end{align}
Choosing $A_1=A_1(N_1)>A_0$ large enough so that $\frac{C_N}{\eta A^{1-\alpha}_1} \leq \frac{1-\gamma}{2}$ concludes the proof.
\end{proof}
We can now conclude the local and global Lyapunov estimates.

\begin{proposition}\label{prop:driftkappaxlocal}
There exist $N>1$, $s_0>0$, $\gamma\in(0,1)$, and $\kappa_0>0$ such that
\begin{align}
    \EE_{\omega,\bB}\left[V_{\sfx,\kappa}(f_\omega^\kappa(p))\right] \leq \gamma V_{\sfx,\kappa}(p)
\end{align}
for all $p\in \mathsf{Q}_\sfx(s_0)$ and all $\kappa\in(0,\kappa_0)$.
\end{proposition}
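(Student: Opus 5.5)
The proof is a gluing of the two regimes already treated, with a uniform choice of constants. The first thing to fix is the order of quantifiers. I take $\alpha=\tfrac18$ throughout. With $\alpha$ fixed, Proposition \ref{prop:driftlimrescaledXannular} supplies $A_0=A_0(\alpha)$ and $N_\infty(\alpha)$. Proposition \ref{prop:driftlimrescaledXinner} then supplies $N_0(A_0)$. Proposition \ref{prop:driftanuularx} supplies $N_1$ and $s_0$. Finally I set
\[
N:=\max\{N_\infty,N_0,N_1\}.
\]
Each of these thresholds is only a lower bound on $N$; enlarging $N$ keeps the conclusions valid, possibly after shrinking $s_0$ so that $N^2 s_0\le c_0$ as in Lemma \ref{lemma:localxlyap}. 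With $N$ fixed, Proposition \ref{prop:driftanuularx} gives $A_1=A_1(N)>A_0$ together with some $\kappa_0^{(1)}>0$ and $\gamma_1<1$.

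The stochastic core is handled next. With this $A_1$, Corollary \ref{cor:driftrescaledx} gives $\gamma_2<1$ and $\kappa_0^{(2)}>0$ such that
\[
\EE\bigl[V_{\sfx,\kappa}(f^\kappa_\omega(p))\bigr]\le\gamma_2 V_{\sfx,\kappa}(p)
\quad\text{whenever } r_\sfx(p)\le A_1\sqrt\kappa .
\]
Behind this corollary sits Proposition \ref{prop:stabilitydriftrescaledx}, whose $\kappa_0$ depends on $A_1$ and $N$. That is fine, because both are now fixed.

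Every $p\in\mathsf Q_\sfx(s_0)$ falls into one of the two cases $r_\sfx(p)\le A_1\sqrt\kappa$ or $r_\sfx(p)\ge A_1\sqrt\kappa$. So the claim holds with
\[
\gamma:=\max\{\gamma_1,\gamma_2\},\qquad \kappa_0:=\min\{\kappa_0^{(1)},\kappa_0^{(2)}\}.
\]
By the symmetry noted at the start of Subsection \ref{ssec:viscous_drift_Fx}, the same argument covers the antipodal point $(-1,0,0)$. There one applies the rotation by $\pi$ about the $z$-axis, which preserves $r_\sfx$, the laws of the two flows, and the Brownian noise.

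The main obstacle is ensuring that no hidden dependence is circular. The concern is that the $s_0$ required by Proposition \ref{prop:driftanuularx} must be compatible with the chart $\Psi_\sfx$ used in the rescaled core, and that $\kappa_0$ must be small enough that $A_1\sqrt{\kappa_0}<s_0$. I would check that $s_0$ enters only through the deterministic smallness conditions $Ns_0,N^2s_0\le c_0$, and then shrink $\kappa_0$ so that $A_1\sqrt{\kappa_0}\le s_0/2$; this keeps the core region inside $\mathsf Q_\sfx(s_0)$. A second point to check is that the regime constraint $r_\sfx(p)\ge A_1\sqrt\kappa$ in Proposition \ref{prop:driftanuularx} already absorbs the bad event $G^c$, with a bound uniform in $p$, so no third regime is needed.
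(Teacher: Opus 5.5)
Your proposal is correct and is essentially the paper's own proof. The paper fixes $N\ge\max\{N_\infty,N_0(A_0,\alpha),N_{\mathrm{out}}\}$ and lets $A_1=A_1(N)$ and $s_0=s_0(N)$ come from Proposition \ref{prop:driftanuularx}. It then splits $\mathsf{Q}_\sfx(s_0)$ according to whether $r_\sfx(p)\le A_1\sqrt\kappa$ or not, applying Corollary \ref{cor:driftrescaledx} in the first case and Proposition \ref{prop:driftanuularx} in the second. It takes $\gamma$ as the maximum and $\kappa_0$ as the minimum of the resulting constants, with $s_0^2/A_1^2$ also included in the minimum, which matches your condition $A_1\sqrt{\kappa_0}\le s_0/2$.
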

\begin{proof}
Choose $A_0$ and $N_\infty$ from Proposition~\ref{prop:driftlimrescaledXannular}. Then choose $N$ so large that
\begin{align}
    N\geq\max\{N_\infty,\;N_0(A_0,\alpha),\;N_{\mathrm{out}}\}.
\end{align}
For this $N$, let $A_1=A_1(N), s_0=s_{\mathrm{out}}(N)$, and let $\gamma_{\mathrm{in}},\gamma_{\mathrm{out}}$ and $\kappa_{\mathrm{in}},\kappa_{\mathrm{out}}$ be given by Proposition~\ref{prop:driftanuularx} and Corollary~\ref{cor:driftrescaledx} respectively. Define
\begin{align}
    \kappa_0:=\min\left\{\kappa_{\mathrm{in}}, \kappa_{\mathrm{out}}, \frac{s_0^2}{A_1^2}\right\}>0,\qquad \gamma:=\max\{\gamma_{\mathrm{in}},\gamma_{\mathrm{out}}\}\in(0,1),
\end{align}
and fix $p\in \mathsf{Q}_\sfx(s_0)$ and $\kappa\in(0,\kappa_0)$. Since either $p\in  K^\sfx_{A_1}$ or $p\in  (K^\sfx_{A_1})^c$ holds, the claim follows from Corollary~\ref{cor:driftrescaledx} and Proposition~\ref{prop:driftanuularx}.
\end{proof}

\begin{proof}[Proof of Proposition~\ref{prop:driftkappax}]
Combining Proposition~\ref{prop:driftkappaxlocal} and Lemma~\ref{lemma:driftboundawayx} according to whether $p\in \mathsf{Q}_\sfx(s_0)$ or not yields
\begin{align}
    \EE_{\omega,\bB}\left[V_{\sfx,\kappa}(f_\omega^\kappa(p))\right] \leq \gamma V_{\sfx,\kappa}(p)+\beta_\sfx\mathbf{1}_{\mathsf{Q}_\sfx(s_0)^c}(p)
\end{align}
for all $p\in \mathsf{X}$ and all $\kappa\in(0,\kappa_0)$.
\end{proof}

\subsection{Stochastic drift function for $F_\sfz$ fixed points}\label{ssec:viscous_drift_Fz}
We adapt the analysis of Section~\ref{ssec:viscous_drift_Fx} to the fixed points $F_\sfz = \lbrace (0,0,\pm 1)\rbrace$.
We define $f_\omega^\kappa$, $f_\omega$, $\phi_1^\kappa$, $\phi_2^\kappa$, $\phi_1$, and $\phi_2$ exactly as in Section~\ref{ssec:viscous_drift_Fx}, and we further recall that $r_\sfz(p):=\sqrt{x^2+y^2}$ for $p=(x,y,z)\in\Sph$. We introduce the diffusive Lyapunov function
\begin{equation}
    V_{\sfz,\kappa}(p) := \max\lbrace r_\sfz(p),\sqrt{\kappa}\rbrace^{-\alpha},
\end{equation}
for some $\alpha\in(0,1)$ to be chosen later. Moreover, we define the local chart 
\begin{equation}
    \Psi_\sfz(x,y):=(x,y,\sqrt{1-x^2-y^2}),\qquad \mathsf{Q}_\sfz(s_0):=\Psi_\sfz(\{(x,y)\in\RR^2\,:\,0 < x^2+y^2<s_0^2\}),
\end{equation}
and we remark that by symmetry we will be considering only the point $(0,0,1)$. 

The main goal of the section is to prove the following proposition.
\begin{proposition}\label{prop:driftkappaz}
There exists $s_0>0$ small enough such that there exist $\gamma\in(0,1)$, $\beta_\sfz\geq 1$, and $\kappa_0>0$ for which
\begin{align}
    \EE_{\omega,\bB}\!\left[V_{\sfz,\kappa}(f_\omega^\kappa(p))\right] \leq \gamma\,V_{\sfz,\kappa}(p)+\beta_\sfz\,\mathbf 1_{\mathsf{Q}_{\sfz}(s_0)^c}(p)
\end{align}
for all $p\in \mathsf{X}$ and all $\kappa\in(0,\kappa_0)$.
\end{proposition}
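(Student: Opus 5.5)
I will mirror the three-regime structure used for $F_\sfx$ in Section~\ref{ssec:viscous_drift_Fx}, replacing the hyperbolic shear mechanism by the rotation-then-shear mechanism of Lemma~\ref{lemma:localzlyap}.

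\emph{Away from the poles.} I first prove the analogue of Lemma~\ref{lemma:driftboundawayx}. On the compact set $\mathsf{Q}_\sfz(s_0)^c\times[-N,N]^2$ one has $r_\sfz(\phi_2)\geq c_0>0$. On $G=\{|\phi_2^\kappa-\phi_2|\leq c_0/2\}$, using that $r_\sfz$ is $1$-Lipschitz, I obtain $V_{\sfz,\kappa}(\phi_2^\kappa)\le (c_0/2)^{-\alpha}$. On $G^c$, Chebyshev and Lemma~\ref{lemma:noisy_two_step_L2} give $\PP(G^c)\lesssim\kappa$, and together with the trivial bound $V_{\sfz,\kappa}\leq\kappa^{-\alpha/2}$ this contributes at most $\kappa^{1-\alpha/2}$. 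This produces $\beta_\sfz$.

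\emph{Inviscid-dominated regime, $A_1\sqrt\kappa\le r_\sfz(p)<s_0$.} Here I repeat the proof of Proposition~\ref{prop:driftanuularx}. I work on the good event $G=\{|\phi_1^\kappa-\phi_1|\le\eta r,\ |\phi_2^\kappa-\phi_2|\le\eta r\}$ with $r=r_\sfz(p)$. The events $E_{1,1},E_{1,2},E_2,E_{2,1},E_{2,2},E_3$ from Lemma~\ref{lemma:localzlyap} are defined through the deterministic midpoint $\phi_1$, so they keep their probabilities. The lower bounds for $r_2=r_\sfz(\phi_2)$ then degrade only by $\eta r$. The good event $E_{1,1}$ gives $r_\sfz(\phi_2^\kappa)\geq(\sqrt{(1+M^2N)/(1+N)}-\eta)r$, and the bad events give $r_\sfz(\phi_2^\kappa)\gtrsim r/N$, provided $\eta\sim N^{-1}$. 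Choosing $M$ and then $N$ as in Lemma~\ref{lemma:localzlyap}, this yields a contraction $\gamma<1$. The bad event $G^c$ is handled exactly as in \eqref{eq:bad_event_contribution}: Markov's inequality gives $\PP(G^c)\le C_N\sqrt\kappa/(\eta r)$, so its contribution is at most $C_N\eta^{-1}A_1^{\alpha-1}V_{\sfz,\kappa}(p)$, which is absorbed once $A_1$ is large.

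\emph{Stochastic regime, $r_\sfz(p)\le A_1\sqrt\kappa$.} I rescale via the chart $\Psi_\sfz$, setting $(x,y)\mapsto\kappa^{-1/2}(x,y)$ with $g_\sfz(x,y)=\max\{|(x,y)|,1\}^{-\alpha}$. The analogue of Lemma~\ref{lemma:boundary_layer_reduction} is immediate. Near $(0,0,1)$ we have $z\approx1$, so the limiting rescaled dynamics are as follows. On $[0,1]$, $u^3$ becomes the rigid rotation $(\tilde x,\tilde y)\mapsto R_{\omega_1}(\tilde x,\tilde y)$ plus $\sqrt2$ times a Brownian motion; this is now an Ornstein--Uhlenbeck-type rotating process rather than a shear. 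On $[1,2]$, $u^2$ becomes the shear $\mathrm d\tilde x=\omega_2\tilde y\,\mathrm dt+\sqrt2\,\mathrm dB$, $\mathrm d\tilde y=\sqrt2\,\mathrm dB$. The perturbative comparison, using the stopping time $\tau_\kappa$ as in Lemma~\ref{lemma:EXPdiffrescaledlimiting} and Proposition~\ref{prop:stabilitydriftrescaledx}, carries over verbatim, since the remainders are again $O(\kappa^{1/4})$.

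The main obstacle is the drift for the limiting process. For this I write $\tilde x_2=\Lambda_1+\omega_2(\tilde y_1+\eta)$ with $\eta=\sqrt2\int_0^1\overline B^2_s\,\mathrm ds$.
\begin{itemize}
\item In the \emph{inner core} $|(x,y)|\le A_0$, Lemma~\ref{lemma:averaged_affine_small_ball} directly gives $\PP(|\tilde x_2|\le 2h_0)\lesssim A_0\log N/N$, and the argument concludes as in Proposition~\ref{prop:driftlimrescaledXinner}.
\item In the \emph{annulus} $h_0\ge A_0$, Lemma~\ref{lemma:EEmaxshearX} contracts by a factor $(N|\tilde y_1|)^{-\alpha}$ only when $|\tilde y_1|\gtrsim h_0$. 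Since the rotation by $\omega_1$ is applied to a vector of length $\approx h_0$, the angle uniformly randomizes, so $\PP(|\tilde y_1|\le h_0/\sqrt N)\lesssim N^{-1/2}$ (this is the Gaussian-perturbed version of the $E_2,E_3$ estimate in Lemma~\ref{lemma:localzlyap}). On that small event I use only $|\tilde y_2|=|\tilde y_1+\sqrt2\overline B^2_1|$ and the trivial loss factor $N^{\alpha/2}$.
\end{itemize}
Collecting terms gives $\EE g_\sfz\le(CN^{-\alpha/2}+CN^{\alpha/2-1/2}+Ce^{-ch_0^2})g_\sfz$, which is less than $\tfrac12g_\sfz$ for $\alpha<1/4$ and $N$ large. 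Gluing the three regimes with $\kappa_0\le s_0^2/A_1^2$, as in Proposition~\ref{prop:driftkappaxlocal}, finishes the proof.
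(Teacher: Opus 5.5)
Your architecture matches the paper's. It has the same four components: the uniform bound off the caps; the inviscid-dominated regime, obtained by perturbing the events of Lemma~\ref{lemma:localzlyap} by $\eta r$; the rescaled core with the $\kappa^{1/4}$ comparison; and the inner core via Lemma~\ref{lemma:averaged_affine_small_ball}. Your extra control of $|\phi_1^\kappa-\phi_1|$ is harmless but unnecessary, since those events are defined through the deterministic midpoint and $r_\sfz$ is preserved by the first rotation.

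\textbf{The gap is in the annulus} $h_0\geq A_0$ of the limiting rescaled process. On the small event $\{|\tilde y_1|\leq h_0/\sqrt N\}$ you claim a loss of only $N^{\alpha/2}$ by using $|\tilde y_2|=|\tilde y_1+\sqrt2\,\overline B^2_1|$. On that event, however, $|\tilde y_2|$ is bounded \emph{above} by $h_0/\sqrt N+O(1)$, so it gives no lower bound on $|(\tilde x_2,\tilde y_2)|$. Nor can you argue pointwise with $\tilde x_2=\tilde x_1+\sqrt2\,\overline B^1_1+\omega_2(\tilde y_1+\eta)$. The shear term can reach size $\sqrt N h_0+N|\eta|$, which can exceed $|\tilde x_1|\approx h_0$, so $\tilde x_2$ can vanish for admissible $\omega_2$. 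Since typically $N|\eta|\gg h_0$ when $h_0\ll N$, even a deterministic $E_3$-type argument (``the shear barely moves $x$'') breaks down here. As written, the claimed bound on that event does not follow.

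\textbf{The repair uses tools already in the paper.} Write $\Theta_1=\tilde y_1+\eta$ and $\Theta_2=\tilde x_1+\sqrt2\,\overline B^1_1$. Averaging over $\omega_2$ with the rescaled Lemma~\ref{lemma:EEmaxshearX} gives $\EE_{\omega_2}\min\{1,|\Theta_2+\omega_2\Theta_1|^{-\alpha}\}\leq C\min\{1,|\Theta_2|^{-\alpha},(N|\Theta_1|)^{-\alpha}\}$. Note that the relevant variable is $\Theta_1=h_0\sin(\omega_1+\phi)+G$, not $\tilde y_1$, so your $N^{-1/2}$ angle estimate should be carried out for $\Theta_1$ on $\{|G|\leq h_0/2\}$. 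Then split as follows.
\begin{itemize}
\item If $|\Theta_1|\geq h_0/\sqrt N$, the last branch gives $N^{-\alpha/2}h_0^{-\alpha}$.
\item If $|\Theta_1|<h_0/\sqrt N$, then outside a Gaussian event of probability $Ce^{-ch_0^2}$ one has $|\Theta_2|\geq h_0/2$, because $|(\tilde x_1,\tilde y_1)|\approx h_0$. The middle branch then gives a bounded loss $Ch_0^{-\alpha}$ on an event of probability $O(N^{-1/2})$.
\end{itemize}
Your final estimate survives, with $CN^{-1/2}$ in place of $CN^{\alpha/2-1/2}$.

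The paper avoids this split altogether. It keeps only the $(N|\Theta_1|)^{-\alpha}$ branch and averages it over $\omega_1$ via Lemma~\ref{lemma:EEmaxrotationZ}. This works because $|\sin u+a|^{-\alpha}$ is integrable uniformly in $|a|\leq\frac12$ for $\alpha<1$, and it gives directly the factor $CN^{-\alpha}h_0^{-\alpha}$ plus the Gaussian tail.
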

In particular, Lemma~\ref{lemma:noisy_two_step_L2} being independent of the fixed points implies that a version of Lemma~\ref{lemma:driftboundawayx} holds near $F_\sfz$ after replacing $r_\sfx$ with $r_\sfz$ in the proof. 
We record it here for completeness.
\begin{lemma}\label{lemma:driftboundawayz}
There exists $C>0$ such that
\begin{align}
    \EE\left[V_{\sfz,\kappa}(\phi_2^\kappa)\right]\leq C,
\end{align}
for all $p\in \mathsf{Q}_{\sfz}(s_0)^c$ and all $\kappa\in(0,1)$.
\end{lemma}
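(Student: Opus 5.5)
The argument repeats the proof of Lemma~\ref{lemma:driftboundawayx}, with $r_\sfz$ in place of $r_\sfx$.

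\emph{Deterministic lower bound.} For $p\in\mathsf{Q}_\sfz(s_0)^c$ and $\omega\in[-N,N]^2$ the deterministic endpoint $\phi_2=f_\omega(p)$ stays away from $F_\sfz$. Indeed, $f_\omega$ is a diffeomorphism and fixes $(0,0,\pm1)$, so $p\neq(0,0,\pm1)$ forces $r_\sfz(\phi_2)>0$. The map $(p,\omega)\mapsto r_\sfz(f_\omega(p))$ is continuous on the compact set $\mathsf{Q}_\sfz(s_0)^c\times[-N,N]^2$. It therefore attains a minimum $c_0>0$ there.

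\emph{Good event.} I set $\delta_0:=c_0/2$ and $G:=\{|\phi_2^\kappa-\phi_2|\leq\delta_0\}$. Since $r_\sfz(p)=\sqrt{x^2+y^2}$ is $1$-Lipschitz on $\R^3$, on $G$ we get $r_\sfz(\phi_2^\kappa)\geq c_0/2$. Hence, on $G$,
\begin{align}
V_{\sfz,\kappa}(\phi_2^\kappa)\leq (c_0/2)^{-\alpha}.
\end{align}

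\emph{Bad event.} On $G^c$ I use the trivial bound $V_{\sfz,\kappa}\leq\kappa^{-\alpha/2}$, which holds because of the cutoff at $\sqrt\kappa$. Chebyshev's inequality together with Lemma~\ref{lemma:noisy_two_step_L2} gives
\begin{align}
\PP(G^c)\leq C_N\delta_0^{-2}\kappa.
\end{align}
The contribution of $G^c$ is therefore at most $C_N\delta_0^{-2}\kappa^{1-\alpha/2}\leq C_N\delta_0^{-2}$ for all $\kappa\in(0,1)$, since $\alpha<2$.

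\emph{Conclusion.} Adding the two contributions, $\EE[V_{\sfz,\kappa}(\phi_2^\kappa)]\leq (c_0/2)^{-\alpha}+C_N\delta_0^{-2}=:C$, uniformly in $p\in\mathsf{Q}_\sfz(s_0)^c$ and $\kappa\in(0,1)$.

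The only place needing care is the positivity of $c_0$, that is, invariance of $F_\sfz$ under the deterministic flow. This follows from Proposition~\ref{prop:fixed_points}, because $(0,0,\pm1)$ are common zeros of $u^2$ and $u^3$ in the orthogonal configuration. The other fixed points do not enter, since $r_\sfz$ vanishes only at $(0,0,\pm1)$.
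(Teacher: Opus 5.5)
Your proposal is correct and is essentially the paper's proof. The paper obtains this lemma by rerunning the proof of Lemma~\ref{lemma:driftboundawayx} with $r_\sfz$ in place of $r_\sfx$: a compactness lower bound $r_\sfz(\phi_2)\geq c_0>0$, the $1$-Lipschitz bound on the good event $\{|\phi_2^\kappa-\phi_2|\leq c_0/2\}$, and Chebyshev with Lemma~\ref{lemma:noisy_two_step_L2} plus the trivial bound $\kappa^{-\alpha/2}$ on the complement. Your justification of $c_0>0$ (the bijection $f_\omega$ fixes $(0,0,\pm1)$, and one minimises over the compact set $\{r_\sfz\geq s_0\}\times[-N,N]^2$) spells out a point the paper leaves implicit.
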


As in the $F_\sfx$ case, the proof of Proposition \ref{prop:driftkappaz} is divided according to the size of the distance of $p$ to the fixed points $F_\sfz$, measured by $r_\sfz(p)$, relative to the stochastic noise, which is of order $\sqrt{\kappa}$.
The key simplification in the present setting is that the first map $f^3_{\omega_1}$ preserves $r_\sfz$ exactly.

\subsubsection{Stochastic regime}

We consider here those $p\in \mathsf{Q}_{\sfz}(s_0)$ such that $r_\sfz(p)\leq A\sqrt{\kappa}$, for some $A>1$ to be determined. The point $p$ is now at distance of order $\sqrt{\kappa}$ from the fixed point $(0,0,1)$, and since the velocity field vanishes linearly at $(0,0,1)$, diffusion and advection are of the same order. To precisely describe the motion in this regime, we rescale our variables.

For $A>1$, define the compact set
\begin{align}
    K_A^\sfz := \{(x,y)\in\RR^2:\ x^2+y^2\leq A^2\},
\end{align}
and, for some $\alpha\in(0,1)$, the functions
\begin{align}
    h_\sfz(x,y):=\max\{\sqrt{x^2+y^2},1\}, \qquad g_\sfz(x,y):=h_\sfz(x,y)^{-\alpha}.
\end{align}
We have the trivial bounds $A^{-\alpha}\leq g_\sfz(x,y)\leq 1$ for all $(x,y)\in K_A^\sfz$. Moreover, for $p=\Psi_\sfz(\sqrt{\kappa}x,\sqrt{\kappa}y)\in\mathsf{Q}_{\sfz}(s_0)$,
\begin{align}\label{eq:exact_scaling_identity_z}
    V_{\sfz,\kappa}(p)=\kappa^{-\alpha/2}g_\sfz(x,y).
\end{align}
Starting from $u_\omega$ defined in \eqref{eq:defin_u_omega_t02}, we introduce the rescaled two-step process
\begin{align}\label{eq:def_rescaled_process_z}
    \widetilde f_\omega^\kappa(x,y) := \frac{1}{\sqrt{\kappa}} \left( (f_\omega^\kappa(\Psi_\sfz(\sqrt{\kappa}x,\sqrt{\kappa}y)))_x,\, (f_\omega^\kappa(\Psi_\sfz(\sqrt{\kappa}x,\sqrt{\kappa}y)))_y \right),
\end{align}
which can be rewritten as
\begin{align}
    \widetilde f_\omega^\kappa(x,y) = \kappa^{-1/2}\bigl((\phi_2^\kappa)_x,(\phi_2^\kappa)_y\bigr),
\end{align}
where $\phi_2^\kappa$ is the solution at time $2$ of \eqref{eq:piecewise_sde_fx} with initial datum $\phi_0^\kappa=\Psi_\sfz(\sqrt{\kappa}x,\sqrt{\kappa}y)$.
From \eqref{eq:exact_scaling_identity_z} we immediately have the following equivalence result for $(x,y)\in K_A^\sfz$.
\begin{lemma}\label{lemma:boundary_layer_reduction_z}
Let $A>1$ and $\alpha\in(0,1)$. The following statements are equivalent:
\begin{enumerate}
    \item there exists $\gamma\in(0,1)$ such that
    \begin{align}\label{eq:physical_core_drift_z}
        \EE_{\omega,\bB}\left[V_{\sfz,\kappa}(f_\omega^\kappa(p))\right] \leq \gamma V_{\sfz,\kappa}(p)
    \end{align}
    for all sufficiently small $\kappa$ and all $p\in \mathsf{Q}_{\sfz}(s_0)$ with $r_\sfz(p)\leq A\sqrt{\kappa}$;
    \item there exists $\gamma\in(0,1)$ such that
    \begin{align}\label{eq:scaled_core_drift_z}
         \EE_{\omega,\bB}[g_\sfz(\widetilde f_\omega^\kappa(x,y))]\leq\gamma g_\sfz(x,y)
    \end{align}
    for all sufficiently small $\kappa$ and all $(x,y)\in K_A^\sfz$.
\end{enumerate}
\end{lemma}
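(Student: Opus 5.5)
The equivalence is a direct consequence of the exact scaling identity \eqref{eq:exact_scaling_identity_z}, in the same way as Lemma~\ref{lemma:boundary_layer_reduction} for the $F_\sfx$ points. The plan is to write both sides of \eqref{eq:physical_core_drift_z} in rescaled variables, so that the two inequalities differ only by the common positive factor $\kappa^{-\alpha/2}$.

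First I match the two domains. For $p=\Psi_\sfz(\sqrt{\kappa}x,\sqrt{\kappa}y)$ we have $r_\sfz(p)=\sqrt{\kappa}\sqrt{x^2+y^2}$. Hence $r_\sfz(p)\leq A\sqrt{\kappa}$ holds if and only if $(x,y)\in K_A^\sfz$. For $\kappa<s_0^2/A^2$, every $(x,y)\in K_A^\sfz$ gives a point $p$ lying in $\mathsf{Q}_\sfz(s_0)$ (or $p$ is the pole itself, which is excluded from $\mathsf{X}$ and irrelevant). So for sufficiently small $\kappa$, the map $(x,y)\mapsto p$ is a bijection between $K_A^\sfz$ (minus the origin) and the set of admissible $p$ in (1). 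Here we use the convention, as in the statement, that $p$ lies in the northern chart.

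Next I transform the left-hand side. The function $V_{\sfz,\kappa}$ depends on a point $q=(q_x,q_y,q_z)\in\Sph$ only through $r_\sfz(q)=\sqrt{q_x^2+q_y^2}$. It therefore makes sense for any $q$, including the random endpoint $q=\phi_2^\kappa$, which need not stay in the chart. By definition \eqref{eq:def_rescaled_process_z},
\begin{align}
    r_\sfz(\phi_2^\kappa)=\sqrt{\kappa}\,\bigl|\widetilde f_\omega^\kappa(x,y)\bigr|,
\end{align}
and so
\begin{align}
    V_{\sfz,\kappa}(\phi_2^\kappa)=\max\{\sqrt{\kappa}\,|\widetilde f_\omega^\kappa(x,y)|,\sqrt{\kappa}\}^{-\alpha}=\kappa^{-\alpha/2}g_\sfz(\widetilde f_\omega^\kappa(x,y))
\end{align}
pathwise. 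Taking $\EE_{\omega,\bB}$ gives $\EE[V_{\sfz,\kappa}(f_\omega^\kappa(p))]=\kappa^{-\alpha/2}\EE[g_\sfz(\widetilde f_\omega^\kappa(x,y))]$. The right-hand side, by \eqref{eq:exact_scaling_identity_z}, is $\gamma\kappa^{-\alpha/2}g_\sfz(x,y)$.

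Finally, dividing by $\kappa^{-\alpha/2}>0$ turns (1) into (2) with the same $\gamma$, and multiplying turns (2) back into (1). The thresholds on $\kappa$ are the same in both directions, after intersecting with $\kappa<s_0^2/A^2$. I do not expect a real obstacle. The only point needing care is the one handled in the second step: $g_\sfz$ and $V_{\sfz,\kappa}$ must be evaluated on endpoints that may have left the chart. This is harmless because both functions depend only on the horizontal radius.
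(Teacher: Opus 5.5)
Your argument is correct and is the same as the paper's. The paper proves the $F_\sfx$ analogue by matching $\{r_\sfx(p)\le A\sqrt{\kappa}\}$ with $K_A$ under the rescaled chart and then using the pathwise identity between $V_{\sfx,\kappa}(\phi_2^\kappa)$ and $\kappa^{-\alpha/2}g_\sfx(\widetilde f_\omega^\kappa)$; it states the present $F_\sfz$ version as immediate from \eqref{eq:exact_scaling_identity_z}.

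The one loose end, which the paper also passes over, is the origin of $K_A^\sfz$ in the direction (1)$\Rightarrow$(2). Since the pole is excluded from $\mathsf{Q}_\sfz(s_0)$, the origin is not covered by (1), so it is not quite ``irrelevant''. The inequality there still follows by continuity: $(x,y)\mapsto\EE[g_\sfz(\widetilde f_\omega^\kappa(x,y))]$ is Lipschitz by Lemma~\ref{lemma:EXPsupomegaLipTPP} together with the Lipschitz bound on $g_\sfz$.
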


The equations for the rescaled process $\widetilde \phi^\kappa_t=(\widetilde x^\kappa_t,\widetilde y^\kappa_t)$ are obtained from \eqref{eq:piecewise_sde_fx} and read, for $t\in[0,1)$ and $t\in[1,2)$ respectively, 
\begin{equation}\label{eq:explicit_sde_sfz}
\left\lbrace\begin{aligned}
    d \widetilde x_t^{\kappa} &= \left(-\omega_1 \widetilde y_t^{\kappa}  z_t^{\kappa} - 2\kappa \widetilde x_t^{\kappa}  \right) \d t + \sqrt{2}e_1 \cdot \Pi_{\phi_t^{\kappa}}\d \bB_t, \\
    d \widetilde y_t^{\kappa} &= \left(\omega_1 \widetilde  x_t^{\kappa}  z_t^{\kappa} - 2\kappa \widetilde y_t^{\kappa}  \right) \d t+ \sqrt{2} e_2\cdot  \Pi_{\phi_t^{\kappa}}\d \bB_t,
\end{aligned}\right.
\qquad 
\left\lbrace\begin{aligned}
    d \widetilde x_t^{\kappa} &= \left(\omega_2 \widetilde y_t^{\kappa}  z_t^{\kappa} - 2\kappa \widetilde x_t^{\kappa}  \right) \d t + \sqrt{2}e_1 \cdot \Pi_{\phi_t^{\kappa}}\d \overline\bB_{t}, \\
    d \widetilde y_t^{\kappa} &= - 2\kappa \widetilde y_t^{\kappa} \d t + \sqrt{2} e_2\cdot  \Pi_{\phi_t^{\kappa}}\d \overline\bB_{t},
\end{aligned}\right.
\end{equation}
where $\overline\bB_t=\bB_{t}-\bB_1$ for $t\in[1,2)$ and $\Pi_{\phi^\kappa_t}$ is as in \eqref{eq:explicit_PI_phikappat}. On the other hand, the limiting rescaled process $\widetilde \phi_t=(\widetilde x_t,\widetilde y_t)$ satisfies, again for $t\in[0,1)$ and $t\in[1,2)$ respectively
\begin{equation}\label{eq:explicit_ode_sfz}
\left\{
\begin{aligned}
   d \widetilde x_t &= -\omega_1 \widetilde y_t  \d t  + \sqrt{2}\d B_t^1, \\
    d \widetilde y_t &= \omega_1 \widetilde  x_t \d t  + \sqrt{2}\d B_t^2, 
\end{aligned}\right.
\qquad 
\left\{\begin{aligned}
    d \widetilde x_t &= \omega_2 \widetilde y_t   \d t  + \sqrt{2}\d \overline B_t^1, \\
    d \widetilde y_t &=  \sqrt{2}\d \overline B_t^2. 
\end{aligned}\right.
\end{equation}
As in the $F_\sfx$ section, we next compare the exact rescaled process $\widetilde f_\omega^\kappa:=\widetilde\phi^\kappa_2$ with a limiting rescaled process $\widetilde f_\omega:=\widetilde\phi_2$.

\begin{lemma}\label{lemma:EXPdiffrescaledlimitingz}
Let $A,N>1$. There exists a constant $C_{A,N}>0$ and $\kappa_0>0$ such that for all $\omega\in[-N,N]^2$ and all $(x,y)\in K_A^\sfz$ there holds
\begin{align}
    \EE\bigl[|\widetilde f_\omega^\kappa(x,y)-\widetilde f_\omega(x,y)|\bigr] \leq C_{A,N}\kappa^{1/4}
\end{align}
for all $\kappa\in(0,\kappa_0)$.
\end{lemma}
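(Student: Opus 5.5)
The plan is to mirror the proof of Lemma~\ref{lemma:EXPdiffrescaledlimiting}, with the chart $\Psi_\sfz$ around $(0,0,1)$ replacing $\Psi_\sfx$. We introduce the stopping time
\begin{align}
    \tau_\kappa := \inf\bigl\{ t\in[0,2):\ |\widetilde \phi^\kappa_{t}|\geq \kappa^{-\frac14} \bigr\}\wedge 2 .
\end{align}
Before $\tau_\kappa$ we have $z_t^\kappa=\sqrt{1-\kappa|\widetilde\phi_t^\kappa|^2}\geq \frac12$ for $\kappa$ small, so the path stays in the chart. Exactly as before, we split
\begin{align}
    \EE|\widetilde f^\kappa_\omega-\widetilde f_\omega| \leq \EE\Bigl[\sup_{0\leq t\leq 2}|\widetilde\phi^\kappa_{t\wedge\tau_\kappa}-\widetilde\phi_{t\wedge\tau_\kappa}|^2\Bigr]^{\frac12} + \Bigl(\EE\Bigl[\sup_{0\leq t\leq 2}|\widetilde\phi_t^\kappa|^2\Bigr]^{\frac12}+\EE\Bigl[\sup_{0\leq t\leq 2}|\widetilde\phi_t|^2\Bigr]^{\frac12}\Bigr)\PP(\tau_\kappa<2)^{\frac12}.
\end{align}

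\textbf{Step 1: uniform second moments.} We show that $\EE\sup_{t\le 2}|\widetilde\phi_t|^2\le C_{A,N}$ and $\EE\sup_{t\le 2}|\widetilde\phi^\kappa_t|^2\le C_{A,N}$ for initial data in $K_A^\sfz$. For \eqref{eq:explicit_ode_sfz} this is immediate: the system is linear with drift coefficients bounded by $N$ and additive noise, so Burkholder--Davis--Gundy and Gr\"onwall suffice. For \eqref{eq:explicit_sde_sfz}, the drift is linear in $\widetilde\phi^\kappa_t$ with coefficients bounded by $N$, since $|z_t^\kappa|\le1$. The diffusion coefficients $\sqrt2 e_i\cdot\Pi_{\phi^\kappa_t}$ are bounded by $\sqrt2$ because $\Pi$ is an orthogonal projection. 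Hence the same argument applies with no chart restriction. Markov's inequality then gives $\PP(\tau_\kappa<2)\le \kappa^{1/2}C_{A,N}$, so the second term is $O(\kappa^{1/4})$.

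\textbf{Step 2: stopped difference.} Subtracting \eqref{eq:explicit_ode_sfz} from \eqref{eq:explicit_sde_sfz}, the error terms up to time $t\wedge\tau_\kappa$ come in three groups.
\begin{itemize}
    \item Drift mismatch: this is $\omega_j\widetilde y^\kappa_t(z^\kappa_t-1)$ (and the analogous $\widetilde x$ term). Since $|1-z^\kappa_t|\le \kappa|\widetilde\phi^\kappa_t|^2$, it is bounded by $N\kappa|\widetilde\phi^\kappa_t|^3\le N\kappa^{1/4}$ before $\tau_\kappa$.
    \item The $2\kappa\widetilde\phi^\kappa$ terms: these are $O(\kappa^{3/4})$ before $\tau_\kappa$.
    \item Noise mismatch: in the chart, $e_i\cdot\Pi_{\phi}-e_i^T=-\phi_i\phi^T$ for $i=1,2$, so its norm is $\le|\phi_i|=\sqrt\kappa|\widetilde\phi^\kappa_i|\le\kappa^{1/4}$.
\end{itemize}
The matching parts are linear with Lipschitz constant $N$. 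Applying BDG to the noise mismatch and Gr\"onwall to the whole yields $\EE\sup_{t\le 2}|\widetilde\phi^\kappa_{t\wedge\tau_\kappa}-\widetilde\phi_{t\wedge\tau_\kappa}|^2\le C_{A,N}\kappa^{1/2}$. The Brownian motions in the two systems are coupled as the $x,y$ components of the same $\bB$ (and $\overline\bB$ on $[1,2)$). On $\{\tau_\kappa=2\}$ the stopped and unstopped processes coincide at time $2$, which justifies the split above.

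The main obstacle is the bookkeeping of the noise mismatch on the stopped interval. One must confirm that the $z$-component of $\bB$ enters $\widetilde x^\kappa,\widetilde y^\kappa$ only through the $-\phi_i\phi_3$ entries, which are $O(\kappa^{1/4})$; the Brownian motion is still coupled consistently across the restart at $t=1$. Once this is in place, combining Steps 1 and 2 gives the bound $C_{A,N}\kappa^{1/4}$.
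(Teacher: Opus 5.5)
Your proposal is correct and follows the paper's proof. The paper likewise reuses the $F_\sfx$ stopping-time argument at level $\kappa^{-1/4}$: a BDG--Gr\"onwall bound of order $\kappa^{1/2}$ on the stopped difference, coming from the $O(\kappa^{1/4})$ drift and noise mismatches in the north chart, together with Cauchy--Schwarz on $\{\tau_\kappa<2\}$ using the uniform second moments. Your exit bound $\PP(\tau_\kappa<2)\le C_{A,N}\kappa^{1/2}$ via Markov is the one actually needed for the rate $\kappa^{1/4}$ (the $\kappa^{1/4}$ exit estimate quoted in the paper's sketch would only give $\kappa^{1/8}$). Your observation that the second-moment bound for $\widetilde\phi^\kappa$ holds globally, because $|z^\kappa_t|\le 1$ and $\Pi$ is a projection, fills in a step the paper leaves implicit.
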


\begin{proof}
The proof is identical in structure to that of Lemma~\ref{lemma:EXPdiffrescaledlimiting}. For the rescaled trajectory $\widetilde \phi_t^\kappa$ define the stopping time
\begin{align}
    \tau_\kappa := \inf\{t\in[0,2]: |\widetilde \phi_t^\kappa|\geq \kappa^{-1/4}\}\wedge 2.
\end{align}
On $[0,\tau_\kappa]$, the spherical trajectory remains in the north chart and $z_t^\kappa\geq \frac12$. Consequently the exact rescaled coefficients are $O(\kappa^{\frac14})$-perturbations of the limiting coefficients in \eqref{eq:explicit_sde_sfz}, uniformly in $\omega\in[-N,N]^2$. A standard Burkholder--Davis--Gundy argument and Gr\"onwall's inequality then yields the analogue of \eqref{eq:BDGforsupdifference}.
Combining this with the same maximal moment bound used in the $F_\sfx$ case and the estimate $\PP(\tau_\kappa<2)\lesssim \kappa^{\frac14}$, one obtains the claim exactly as in the proof of Lemma~\ref{lemma:EXPdiffrescaledlimiting}.
\end{proof}

\begin{proposition}\label{prop:stabilitydriftrescaledz}
Let $\alpha,\gamma\in(0,1)$ and $A,N>1$. Then there exists $\kappa_0=\kappa_0(\alpha,\gamma,A,N)>0$ such that
\begin{align}
    \EE\bigl[|g_\sfz(\widetilde f_\omega^\kappa(x,y))-g_\sfz(\widetilde f_\omega(x,y))|\bigr] \leq \frac{1-\gamma}{2}g_\sfz(x,y)
\end{align}
for all $(x,y)\in K_A^\sfz$ and all $\kappa\in(0,\kappa_0)$.
\end{proposition}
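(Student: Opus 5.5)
The argument mirrors the proof of Proposition~\ref{prop:stabilitydriftrescaledx} almost verbatim, with $g_\sfx$ replaced by $g_\sfz$ and Lemma~\ref{lemma:EXPdiffrescaledlimiting} replaced by Lemma~\ref{lemma:EXPdiffrescaledlimitingz}. The first step is to note that $g_\sfz=h_\sfz^{-\alpha}$ is globally Lipschitz on $\RR^2$. Indeed, $h_\sfz(x,y)=\max\{\sqrt{x^2+y^2},1\}$ is $1$-Lipschitz, as it is the maximum of a $1$-Lipschitz function and a constant, and it takes values in $[1,\infty)$. On $[1,\infty)$ the map $s\mapsto s^{-\alpha}$ has derivative bounded by $\alpha$. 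Combining these gives
\begin{align}
    |g_\sfz(a)-g_\sfz(b)|\leq \alpha|a-b|,\qquad a,b\in\RR^2 .
\end{align}

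Next, apply this pointwise bound with $a=\widetilde f^\kappa_\omega(x,y)$ and $b=\widetilde f_\omega(x,y)$, then take expectation in $\omega$ and $\bB$. Lemma~\ref{lemma:EXPdiffrescaledlimitingz} controls the right-hand side uniformly for $\omega\in[-N,N]^2$ and $(x,y)\in K_A^\sfz$, and yields
\begin{align}
    \EE\bigl[|g_\sfz(\widetilde f_\omega^\kappa(x,y))-g_\sfz(\widetilde f_\omega(x,y))|\bigr]\leq \alpha C_{A,N}\kappa^{1/4}
\end{align}
for all $\kappa<\kappa_0'$, where $\kappa_0'$ is the threshold from that lemma. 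To turn this additive error into a relative one, use the lower bound $g_\sfz(x,y)\geq A^{-\alpha}$ on $K_A^\sfz$. This bounds the right-hand side by $\alpha C_{A,N}A^{\alpha}\kappa^{1/4}\,g_\sfz(x,y)$.

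Finally, choose $\kappa_0=\min\{\kappa_0',\,((1-\gamma)/(2\alpha C_{A,N}A^\alpha))^4\}$, which makes the prefactor at most $\frac{1-\gamma}{2}$ and depends only on $(\alpha,\gamma,A,N)$. There is no real obstacle at this level, since all the work sits inside Lemma~\ref{lemma:EXPdiffrescaledlimitingz}. The only point to check is that the constant $C_{A,N}$ there is uniform in $\omega\in[-N,N]^2$ and in $(x,y)\in K_A^\sfz$; this holds because the rotation-then-shear coefficients in \eqref{eq:explicit_ode_sfz} are globally Lipschitz with constant $\lesssim N$. With that uniformity, the resulting $\kappa_0$ is indeed independent of $\omega$ and of the starting point.
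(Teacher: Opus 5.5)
Your proposal is correct and follows the paper's argument: the global Lipschitz bound for $g_\sfz$, the $\kappa^{1/4}$ comparison of Lemma~\ref{lemma:EXPdiffrescaledlimitingz}, and the lower bound $g_\sfz\geq A^{-\alpha}$ on $K_A^\sfz$ turn the additive error into the relative one. Your explicit Lipschitz constant $\alpha$ and explicit choice of $\kappa_0$ simply make precise what the paper leaves implicit.
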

\begin{proof}
The proof is identical to that of Proposition~\ref{prop:stabilitydriftrescaledx}, since $g_\sfz$ is globally Lipschitz on $\RR^2$ and $g_\sfz(x,y)\geq A^{-\alpha}$ on $K_A^\sfz$.
\end{proof}

Moving on to determine a drift for the limiting process, we record that the explicit solution of the first limiting map can be written for all $t\in[0,1]$ as
\begin{align}
    (\widetilde x_t,\widetilde y_t) = R_{\omega_1}(x,y)+G^{(1)},\qquad R_\theta=
\begin{pmatrix}
\cos\theta & -\sin\theta \\
\sin\theta & \cos\theta
\end{pmatrix},
\end{align}
and $G^{(1)}\sim \cN(0,2I_2)$ is independent of $\omega_1$. For the second map,
\begin{align}
    \widetilde y_2  &= \widetilde y_1+\sqrt{2}\,\overline B_1^2,  \\ 
    \widetilde x_2 &= \widetilde x_1+\sqrt{2}\,\overline B_1^1 + \omega_2\left( \widetilde y_1+\sqrt{2}\int_0^1 \overline B_s^2\,\dd s \right).
\end{align}
We will use this to mirror the same annular/inner-core decomposition as in the $F_\sfx$ section.

\subsubsection{The annular core $K_{A_1}^\sfz\setminus K_{A_0}^\sfz$}

We begin with the large-rescaled region.
\begin{lemma}\label{lemma:EEmaxrotationZ}
Let $\alpha\in(0,1)$. Then there exists $C_\alpha>0$ such that for all $h>0$, all $c\in\RR$ with $|c|\leq h/2$, all $\theta\in\RR$, and all $N\geq 1$,
\begin{align}
    \EE_\omega\left[\min\{1,(N|h\sin(\omega+\theta)+c|)^{-\alpha}\}\right] \leq C_\alpha \min \lbrace 1, (Nh)^{-\alpha}\rbrace,
\end{align}
where $\omega\sim\mathrm{Unif}([-N,N])$.
\end{lemma}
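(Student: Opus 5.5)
First I would dispose of the trivial regime. If $Nh\leq 2$, the right-hand side is at least $C_\alpha 2^{-\alpha}$, and the left-hand side is always at most $1$. So it suffices to take $C_\alpha\geq 2^\alpha$, and from now on I assume $Nh>2$.

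Next I would set $\varphi(\omega):=h\sin(\omega+\theta)+c$ and use the layer-cake formula for the bounded function $u\mapsto\min\{1,(N|u|)^{-\alpha}\}$:
\begin{align}
    \EE_\omega\left[\min\{1,(N|\varphi(\omega)|)^{-\alpha}\}\right] \leq \PP\left(|\varphi|\leq \tfrac{h}{4}\right) + \left(\tfrac{Nh}{4}\right)^{-\alpha}.
\end{align}
The second term is already of the desired size. The task reduces to showing that $\PP(|\varphi|\leq h/4)\lesssim_\alpha (Nh)^{-\alpha}$.

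The key observation is that the constraint $|c|\leq h/2$ keeps the window $|\sin(\omega+\theta)+c/h|\leq 1/4$ strictly inside $[-3/4,3/4]$. On that region $|\cos(\omega+\theta)|\geq \sqrt{7}/4$, so $\varphi$ is non-degenerate there. More precisely, I would bound the small-ball probability at every scale $r\leq h/4$: on each monotonicity interval of $\sin$, the set $\{|\varphi|\leq r\}$ is an interval of length at most $2r/(h\sqrt7/4)\lesssim r/h$. The number of such intervals meeting $[-N,N]$ is $O(N/\pi+1)=O(N)$, since $N\geq1$. Dividing by $2N$ gives
\begin{align}
    \PP(|\varphi|\leq r)\leq C\frac{r}{h}\qquad\text{for all } r\leq h/4,
\end{align}
with $C$ independent of $N,\theta,c$.

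I would then redo the layer-cake argument more carefully instead of using the crude split above, because the crude split only gives $\PP(|\varphi|\leq h/4)\leq C/4$, which is not small. Writing
\begin{align}
    \EE\left[\min\{1,(N|\varphi|)^{-\alpha}\}\right]=\int_0^1\PP\left(N|\varphi|< s^{-1/\alpha}\right)\dd s,
\end{align}
the integrand is at most $C\,s^{-1/\alpha}/(Nh)$ whenever $s^{-1/\alpha}/N\leq h/4$. For larger thresholds I use the trivial bound $1$. This happens only when $s< (Nh/4)^{-\alpha}$, so the trivial-bound portion contributes at most $(Nh/4)^{-\alpha}$. The remaining portion is $C(Nh)^{-1}\int_{(Nh/4)^{-\alpha}}^1 s^{-1/\alpha}\dd s$. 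Since $\alpha<1$, the exponent $1/\alpha>1$, and the integral is $\lesssim_\alpha (Nh)^{1-\alpha}$. That portion is therefore $\lesssim_\alpha (Nh)^{-\alpha}$, which completes the bound.

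The main obstacle is the uniform small-ball estimate $\PP(|\varphi|\leq r)\lesssim r/h$. Near critical points of $\sin$, the naive bound degrades to $\sqrt{r/h}$; the hypothesis $|c|\leq h/2$ is exactly what keeps the level set away from those critical points. Counting periods inside $[-N,N]$ with $N\geq 1$ is then routine.
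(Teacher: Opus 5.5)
Your proof is correct, but it takes a different route from the paper's.

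\textbf{The paper's route.} It drops the truncation at once via $\min\{1,x^{-\alpha}\}\le x^{-\alpha}$. It then uses that $(\omega+\theta)\bmod 2\pi$ has density bounded uniformly in $N\ge 1$ and $\theta$ to reduce to showing $\sup_{|a|\le 1/2}\int_0^{2\pi}|\sin u+a|^{-\alpha}\,\dd u<\infty$, with $a=c/h$. This is proved pointwise:
\begin{itemize}
\item on $\delta$-neighbourhoods of the two zeros $u_j(a)$, the bound $|\cos u_j(a)|\ge\sqrt3/2$ gives $|\sin u+a|\gtrsim|u-u_j(a)|$;
\item on the complement, a compactness argument gives $|\sin u+a|\ge m>0$.
\end{itemize}

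\textbf{Your route.} You keep the truncation and work with the distribution function. You prove a linear small-ball estimate $\PP(|\varphi|\le r)\lesssim r/h$ for $r\le h/4$, using period counting and $|\cos|\ge\sqrt7/4$ on the relevant level sets, and feed it into the layer-cake formula.

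\textbf{Comparison.} The key input is the same in both: $|c|\le h/2$ keeps the near-zero set of $\varphi$ away from the critical points of $\sin$. The two arguments are essentially dual. Running your layer-cake computation with $s$ over $(0,\infty)$ would even recover the paper's untruncated bound $\EE_\omega|\varphi|^{-\alpha}\lesssim_\alpha h^{-\alpha}$. What your version buys is that you never control $\varphi$ away from its zeros, so the compactness step disappears. It also makes the $\frac{\alpha}{1-\alpha}$ blow-up of the constant as $\alpha\to 1$ explicit.

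\textbf{Two cosmetic fixes.}
\begin{itemize}
\item Delete the first ``crude split'' reduction. $\PP(|\varphi|\le h/4)$ is of order one, so the claim that it suffices to show $\PP(|\varphi|\le h/4)\lesssim(Nh)^{-\alpha}$ is false. You do abandon it, but as written it reads as a step of the proof.
\item For $2<Nh\le 4$ the lower limit $(Nh/4)^{-\alpha}$ exceeds $1$. Either treat $Nh\le 4$ as the trivial regime (taking $C_\alpha\ge 4^\alpha$), or read that limit as $\min\{1,(Nh/4)^{-\alpha}\}$.
\end{itemize}
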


\begin{proof}
    First, the constant upper bound is immediate for any $C_\alpha>1$. Setting
    \begin{equation}
        I:=\frac{1}{2N}\int_{-N}^{N}\min\{1,(N|h\sin(\omega+\theta)+c|)^{-\alpha}\}\,\dd\omega,
    \end{equation}
    we immediately have from $\min\{1,x^{-\alpha}\}\leq x^{-\alpha}$ that
    \begin{equation}
        I\leq N^{-\alpha}h^{-\alpha}\frac{1}{2N}\int_{-N}^{N}\bigl|\sin(\omega+\theta)+a\bigr|^{-\alpha}\,\dd\omega,\qquad a:=\frac{c}{h}.
    \end{equation}
    Using that the density of $(\omega +\theta) \!\mod 2\pi$ is uniformly bounded in $N$ and $\theta$, the integral further reduces to 
    \begin{equation}
        I\leq C_0N^{-\alpha}h^{-\alpha}\int_0^{2\pi}|\sin(u) + a|^{-\alpha}\,\dd u.
    \end{equation}
    By assumptions, $|a|\leq \frac12$ implies that the integral can be bounded uniformly in $a$.
    Indeed for each such  $a$, the function $u\mapsto \sin u+a$ has exactly two zeros $u_1(a),u_2(a)$ in $[0,2\pi)$, and these are uniformly non-degenerate since
\begin{equation}
|\cos u_j(a)|=\sqrt{1-a^2}\geq\frac{\sqrt3}{2},\qquad j=1,2.
\end{equation}
Hence, fixing $\delta:=\sqrt3/4$, the mean value theorem gives
\begin{equation}
|\sin u+a|\geq\frac{\sqrt3}{4}\,|u-u_j(a)|
\end{equation}
whenever $|u-u_j(a)|\leq \delta$, so the contribution of each $\delta$-neighbourhood is bounded by a constant depending only on $\alpha$, since $\alpha<1$.
On the complement of these two neighbourhoods, the continuous function $(a,u)\mapsto |\sin u+a|$ is strictly positive on a compact set, hence bounded below by some $m>0$ uniformly in $|a|\leq 1/2$. Therefore the remaining contribution is bounded by $2\pi m^{-\alpha}$. This proves
\begin{equation}
\sup_{|a|\leq 1/2}\int_0^{2\pi}|\sin u+a|^{-\alpha}\,\dd u<\infty,
\end{equation}
and concludes the proof.
\end{proof}

We now determine the threshold $A_0>1$ such that $g_\sfz$ satisfies a local Lyapunov drift condition for the limiting rescaled dynamics in the annular region $(K_{A_0}^\sfz)^c$.

\begin{proposition}\label{prop:driftlimrescaledZannular}
Fix $\alpha\in(0,1)$. Then there exist $A_0=A_0(\alpha)>1$ and $N_\infty=N_\infty(\alpha)\in\NN$ such that for every $N\geq N_\infty$ and every $(x,y)\in\RR^2$ with $h_\sfz(x,y)\geq A_0$, there holds
\begin{align}
    \EE[g_\sfz(\widetilde f_\omega(x,y))] \leq \frac12\,g_\sfz(x,y).
\end{align}
\end{proposition}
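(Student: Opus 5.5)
The plan is to follow the proof of Proposition~\ref{prop:driftlimrescaledXannular}. Fix $(x,y)$ with $h_0:=h_\sfz(x,y)=\sqrt{x^2+y^2}\geq A_0$. Since $g_\sfz(\widetilde x_2,\widetilde y_2)\leq \min\{1,|\widetilde x_2|^{-\alpha}\}$, it suffices to control $|\widetilde x_2|$, and here the shear in the second step works in our favour. We write
\begin{align}
\widetilde x_2=\Theta_2+\omega_2\Theta_1,\qquad \Theta_1:=\widetilde y_1+\sqrt2\int_0^1\overline B^2_s\,\dd s,\qquad \Theta_2:=\widetilde x_1+\sqrt2\,\overline B^1_1 .
\end{align}
Conditioning on everything except $\omega_2$ and applying Lemma~\ref{lemma:EEmaxshearX} with $a=\Theta_2$, $b=\Theta_1$ (used in the same weakened form $\min\{1,|a+\omega b|^{-\alpha}\}\lesssim \min\{1,(N|b|)^{-\alpha}\}$ as in the $F_\sfx$ case) gives
\begin{align}
\EE[g_\sfz(\widetilde f_\omega(x,y))]\leq C_{1,\alpha}\,\EE\bigl[\min\{1,(N|\Theta_1|)^{-\alpha}\}\bigr].
\end{align}

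Next we use the explicit rotation formula $(\widetilde x_1,\widetilde y_1)=R_{\omega_1}(x,y)+G^{(1)}$. Writing $(x,y)=h_0(\cos\theta_0,\sin\theta_0)$, we get
\begin{align}
\Theta_1=h_0\sin(\omega_1+\theta_0)+c,\qquad c:=G^{(1)}_2+\sqrt2\int_0^1\overline B^2_s\,\dd s .
\end{align}
The variable $c$ is a centred Gaussian independent of $\omega_1$. Let $F:=\{|c|\leq h_0/2\}$. On $F$, conditioning on the Brownian paths and applying Lemma~\ref{lemma:EEmaxrotationZ} with $h=h_0$ gives
\begin{align}
\EE[\min\{1,(N|\Theta_1|)^{-\alpha}\}\mid F]\leq C_{2,\alpha}N^{-\alpha}h_0^{-\alpha}.
\end{align}
On $F^c$ we use the trivial bound $1$ together with the Gaussian tail $\PP(F^c)\leq Ce^{-ch_0^2}$.

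Collecting terms,
\begin{align}
\EE[g_\sfz(\widetilde f_\omega(x,y))]\leq C_{1,\alpha}C_{2,\alpha}N^{-\alpha}h_0^{-\alpha}+C_{1,\alpha}Ce^{-ch_0^2}.
\end{align}
We choose $A_0$ so that $C_{1,\alpha}Ce^{-ch_0^2}\leq \tfrac14h_0^{-\alpha}$ for all $h_0\geq A_0$, and then $N_\infty$ so that $C_{1,\alpha}C_{2,\alpha}N^{-\alpha}\leq\tfrac14$. Since $g_\sfz(x,y)=h_0^{-\alpha}$, this gives the bound $\tfrac12 g_\sfz(x,y)$.

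The main obstacle is the first reduction. We must check that the weakened form of Lemma~\ref{lemma:EEmaxshearX} indeed yields a bound that involves only $N|\Theta_1|$. The two cases in its proof give
\begin{align}
\min\{1,(N|a|)^{-\alpha}\}\quad\text{when } 2N|b|\leq|a|,\qquad (N^2|b|)^{-\alpha}\quad\text{otherwise}.
\end{align}
In the first case $N|a|\geq 2N^2|b|\geq N|b|$, and in the second $N^2|b|\geq N|b|$. Both are therefore at most $C\min\{1,(N|b|)^{-\alpha}\}$, so the reduction holds. A second point to track is that $c$ must be independent of $\omega_1$ for Lemma~\ref{lemma:EEmaxrotationZ} to apply; this is where the explicit decomposition of the limiting flow is used.
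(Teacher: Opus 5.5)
Your proposal is correct and is essentially the paper's proof: the same splitting $\widetilde x_2=\Theta_2+\omega_2\Theta_1$, averaging in $\omega_2$ to reduce to $\min\{1,(N|\Theta_1|)^{-\alpha}\}$, the representation $\Theta_1=h_0\sin(\omega_1+\theta_0)+c$ with $c$ a centred Gaussian independent of $\omega_1$, Lemma~\ref{lemma:EEmaxrotationZ} on $\{|c|\le h_0/2\}$, the Gaussian tail on the complement, and choosing $A_0$ before $N_\infty$; note that you should condition on $c$ itself, not on the path $B$, because $G^{(1)}$ is built from $B$ and $\omega_1$ jointly. One small fix to your final check: the two cases you quote bound the $N$-weighted integrand $\min\{1,(N|a+\omega b|)^{-\alpha}\}$ of Lemma~\ref{lemma:EEmaxshearX}, whereas you need the larger unweighted $\min\{1,|a+\omega b|^{-\alpha}\}$; the bound $C_\alpha\min\{1,(N|b|)^{-\alpha}\}$ for it follows instead by applying the lemma to $(a/N,b/N)$, whose term $(N^2|b|/N)^{-\alpha}$ is exactly $(N|b|)^{-\alpha}$, or by redoing the two cases directly.
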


\begin{proof}
Fix $(x,y)\in\RR^2$ and let $h_0:=h_\sfz(x,y)\geq 1$. Since $h_0\geq A_0>1$, we may write $ (x,y)=h_0(\cos\phi,\sin\phi) $ for some $\phi\in\RR$, and $ \widetilde x_2=\Theta_2+\omega_2\Theta_1$ after setting
\begin{align}
    \Theta_1:= \widetilde y_1+\sqrt{2}\int_0^1 \overline B_s^2\,\dd s, \qquad \Theta_2 := \widetilde x_1+\sqrt{2}\,\overline B_1^1.
\end{align}
Since $g_\sfz(\widetilde x_2,\widetilde y_2)\leq \min\{1,|\widetilde x_2|^{-\alpha}\}$, the same one-dimensional averaging argument used in the proof of Lemma~\ref{lemma:EEmaxshearX} implies the existence of $C_{1,\alpha}>0$ such that
\begin{align}
    \EE_{\omega_2}[g_\sfz(\widetilde f_\omega(x,y))] \leq C_{1,\alpha}\,\min\{1,(N|\Theta_1|)^{-\alpha}\},
\end{align}
thus,
\begin{align}
    \EE[g_\sfz(\widetilde f_\omega(x,y))] \leq C_{1,\alpha}\,\EE\bigl[\min\{1,(N|\Theta_1|)^{-\alpha}\}\bigr].
\end{align}
Next, using the representation of $\widetilde y_1$, we may write
\begin{align}
    \Theta_1=h_0\sin(\omega_1+\phi)+G,
\end{align}
where $G$ is a centred Gaussian random variable, independent of $\omega_1$, with variance independent of $h_0$.
On $F:=\{|G|\leq h_0/2\}$, Lemma~\ref{lemma:EEmaxrotationZ} yields
\begin{align}
    \EE_{\omega_1,\bB}\Bigl[\min\{1,(N|\Theta_1|)^{-\alpha}\}\,\big|\,F\Bigr]  \leq  C_{2,\alpha}N^{-\alpha}h_0^{-\alpha}.
\end{align}
On the other hand, since $G$ is Gaussian, there exist universal constants $c,C>0$ such that $\PP(F^c)\leq C e^{-c h_0^2}$. In particular, we may choose $A_0>1$ large enough so that $C_{1,\alpha}\PP(F^c)\leq \frac14 h_0^{-\alpha}$ for all $h_0\geq A_0$. Summing up, for $h_0\geq A_0$,
\begin{align}
    \EE[g_\sfz(\widetilde f_\omega(x,y))] &\leq C_{1,\alpha}C_{2,\alpha}N^{-\alpha}h_0^{-\alpha} + C_{1,\alpha}\PP(F^c) \\
    &\leq C_{1,\alpha}C_{2,\alpha}N^{-\alpha}h_0^{-\alpha} + \frac14 h_0^{-\alpha}.
\end{align}
Choosing $N_\infty\in\NN$ so large that $C_{1,\alpha}C_{2,\alpha}N^{-\alpha}\leq \frac14$ for all $N\geq N_\infty$, the proposition is proved.
\end{proof}

\subsubsection{The inner core $K_{A_0}^\sfz$}

Once $A_0$ is determined by Proposition~\ref{prop:driftlimrescaledZannular} above, we
now turn to studying the dynamics in the core $K_{A_0}^\sfz$.

\begin{proposition}\label{prop:driftlimrescaledZinner}
Let $A_0>1$ and $\alpha\in(0,1)$. There exists $N_0=N_0(A_0)\in\NN$ and $\gamma\in(0,1)$ such that
\begin{align}
    \EE[g_\sfz(\widetilde f_\omega(x,y))] \leq \gamma g_\sfz(x,y)
\end{align}
for all $N\geq N_0$ and all $(x,y)\in K_{A_0}^\sfz$.
\end{proposition}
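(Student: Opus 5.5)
I will mirror the proof of Proposition~\ref{prop:driftlimrescaledXinner}, with the shearing coordinate now being $\widetilde x_2$ rather than $\widetilde z_2$.

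\textbf{Step 1: crude bound.} For $(x,y)\in K_{A_0}^\sfz$ set $h_0:=h_\sfz(x,y)\in[1,A_0]$. Since $A_0^{-\alpha}\leq g_\sfz\leq 1$ on $K^\sfz_{A_0}$, we have the trivial bound
\begin{align}
g_\sfz(\widetilde f_\omega(x,y))\leq A_0^{\alpha}g_\sfz(x,y)
\end{align}
for every realization.

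\textbf{Step 2: the bad event.} Define $E:=\{|\widetilde x_2|\leq 2h_0\}$. By the explicit formulas for the second limiting map,
\begin{align}
\widetilde x_2=\Lambda_1+\omega_2(\Lambda_2+\eta),
\end{align}
where $\Lambda_1:=\widetilde x_1+\sqrt2\,\overline B^1_1$, $\Lambda_2:=\widetilde y_1$ and $\eta:=\sqrt2\int_0^1\overline B^2_s\,\dd s\sim\cN(0,2/3)$. Take the $\sigma$-algebra $\mathcal H_1:=\sigma(\omega_1,(B_t)_{t\in[0,1]},\overline B^1_1)$. Then $\Lambda_1,\Lambda_2$ are $\mathcal H_1$-measurable.

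The only point needing care is that $\eta$ must be independent of $\mathcal H_1$ and of $\omega_2$. This holds because $\overline B^1$ and $\overline B^2$ are independent coordinates of $\overline\bB$, and $\overline\bB$ is independent of the first-step noise $(B_t)_{t\in[0,1]}$. Note that $\mathcal H_1$ does not contain $\overline B^2$, so conditioning on $\mathcal H_1$ does not fix $\eta$.

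Apply Lemma~\ref{lemma:averaged_affine_small_ball} with $r=2h_0\leq 2A_0\leq N$ (choose $N_0\geq 2A_0$) and $\upsilon^2=2/3$. This gives
\begin{align}
\PP(E)\leq C\,\frac{A_0(1+\log N)}{N}.
\end{align}

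\textbf{Step 3: the good event.} On $E^c$ we have $|\widetilde x_2|>2h_0$, hence $h_\sfz(\widetilde f_\omega)\geq 2h_0$. Since $g_\sfz(x,y)=h_0^{-\alpha}$ for $h_0\geq 1$, this yields
\begin{align}
g_\sfz(\widetilde f_\omega(x,y))\leq 2^{-\alpha}g_\sfz(x,y).
\end{align}

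\textbf{Step 4: combining.} Splitting the expectation on $E$ and $E^c$,
\begin{align}
\EE[g_\sfz(\widetilde f_\omega(x,y))]\leq\Bigl(CA_0^{1+\alpha}\frac{1+\log N}{N}+2^{-\alpha}\Bigr)g_\sfz(x,y).
\end{align}
Choose $N_0=N_0(A_0,\alpha)$ large enough that the first term is at most $\frac{1-2^{-\alpha}}{2}$ for all $N\geq N_0$. This gives the claim with $\gamma=\frac{1+2^{-\alpha}}{2}$.

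\textbf{Expected obstacle.} There is no genuine obstacle. The rotation in the first step plays no role, because the Gaussian $\eta$ coming from the second step alone supplies the anti-concentration needed in Lemma~\ref{lemma:averaged_affine_small_ball}. The one thing to check carefully is the independence structure described in Step 2.
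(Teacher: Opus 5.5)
Your proof is correct and follows the route the paper intends: the paper only remarks that the proof is analogous to Proposition~\ref{prop:driftlimrescaledXinner}, since the last motion of $\widetilde f_\omega$ is a shear. Your write-up carries out that analogy, using the decomposition $\widetilde x_2=\Lambda_1+\omega_2(\Lambda_2+\eta)$, Lemma~\ref{lemma:averaged_affine_small_ball} with $r=2h_0\leq N$, and the $2^{-\alpha}$ contraction on $E^c$, and it correctly checks the required independence structure.
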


The proof is analogous to that of Proposition~\ref{prop:driftlimrescaledXinner}, since the last motion of $\widetilde{f}_\omega$ is shearing. 
In view of Propositions~\ref{prop:stabilitydriftrescaledz}, \ref{prop:driftlimrescaledZannular}, \ref{prop:driftlimrescaledZinner}, and Lemma~\ref{lemma:boundary_layer_reduction_z}, the following result is immediate.

\begin{corollary}\label{cor:exact_core_fixed_Fz}
Let $A_1>A_0$. Then there exist $\gamma\in(0,1)$ and $\kappa_0>0$ such that
\begin{align}
    \EE_{\omega,\bB}[V_{\sfz,\kappa}(f_\omega^\kappa(p))]  \leq  \gamma V_{\sfz,\kappa}(p)
\end{align}
for all $p\in\mathsf{Q}_{\sfz}(s_0)$ satisfying $r_\sfz(p)\leq A_1\sqrt{\kappa}$ and all $\kappa\in(0,\kappa_0)$.
\end{corollary}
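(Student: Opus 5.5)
The plan mirrors the proof of Corollary~\ref{cor:driftrescaledx} in the $F_\sfx$ case. First, Lemma~\ref{lemma:boundary_layer_reduction_z} lets us pass from the physical drift for $V_{\sfz,\kappa}$ on $\{p\in\mathsf{Q}_\sfz(s_0): r_\sfz(p)\leq A_1\sqrt{\kappa}\}$ to the rescaled statement \eqref{eq:scaled_core_drift_z} for $g_\sfz$ on $K^\sfz_{A_1}$. Concretely, with $p=\Psi_\sfz(\sqrt{\kappa}x,\sqrt{\kappa}y)$ the identity \eqref{eq:exact_scaling_identity_z} gives $\EE[V_{\sfz,\kappa}(f^\kappa_\omega(p))]=\kappa^{-\alpha/2}\EE[g_\sfz(\widetilde f^\kappa_\omega(x,y))]$. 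For this to hold we need $\widetilde f^\kappa_\omega$ to be computed in the north chart. That is why Lemma~\ref{lemma:EXPdiffrescaledlimitingz} uses the stopping time $\tau_\kappa$. On $\{\tau_\kappa<2\}$ we only use the trivial bound $g_\sfz\leq 1$, and this event has small probability.

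Next, for $(x,y)\in K^\sfz_{A_1}$ we split
\begin{align}
\EE[g_\sfz(\widetilde f^\kappa_\omega)]\leq \EE\bigl[|g_\sfz(\widetilde f^\kappa_\omega)-g_\sfz(\widetilde f_\omega)|\bigr]+\EE[g_\sfz(\widetilde f_\omega)].
\end{align}
For the limiting term we distinguish two cases. If $(x,y)\in K^\sfz_{A_0}$, Proposition~\ref{prop:driftlimrescaledZinner} gives contraction by $\gamma_{\rm in}$. If $h_\sfz(x,y)\geq A_0$, Proposition~\ref{prop:driftlimrescaledZannular} gives contraction by $\tfrac12$. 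In both cases we take $N\geq\max\{N_\infty,N_0(A_0)\}$. Set $\gamma_*:=\max\{\gamma_{\rm in},\tfrac12\}$ and $\bar\gamma:=\tfrac{1+\gamma_*}{2}$. We then apply Proposition~\ref{prop:stabilitydriftrescaledz} with $A=A_1$ and with $\gamma$ replaced by $\gamma_*$. This bounds the perturbative term by $\tfrac{1-\gamma_*}{2}g_\sfz(x,y)$ for $\kappa<\kappa_0(\alpha,\gamma_*,A_1,N)$. Summing the two bounds gives $\EE[g_\sfz(\widetilde f^\kappa_\omega)]\leq\bar\gamma\, g_\sfz(x,y)$. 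Translating back through Lemma~\ref{lemma:boundary_layer_reduction_z} gives the claim with $\gamma=\bar\gamma$.

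The only genuinely delicate input is Proposition~\ref{prop:driftlimrescaledZinner}, which is only sketched. I would prove it exactly as Proposition~\ref{prop:driftlimrescaledXinner}. Since $\widetilde x_2=\Theta_2+\omega_2\Theta_1$ with $\Theta_1=\widetilde y_1+\eta$ and $\eta=\sqrt2\int_0^1\overline B^2_s\,\dd s\sim\cN(0,2/3)$, we condition on $\mathcal H=\sigma(\omega_1,B,\overline B^1_1)$. Here one must check that $\eta$ is independent of $\mathcal H$; this holds since $\overline B^1_1$ and $\overline B^2$ are independent coordinates. Lemma~\ref{lemma:averaged_affine_small_ball} with $r=2h_0\leq 2A_0\leq N$ then gives $\PP(|\widetilde x_2|\leq 2h_0)\lesssim A_0\frac{1+\log N}{N}$. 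Off this event we have $g_\sfz(\widetilde f_\omega)\leq 2^{-\alpha}g_\sfz(x,y)$, and on it $g_\sfz(\widetilde f_\omega)\leq A_0^\alpha g_\sfz(x,y)$.

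The main obstacle is bookkeeping the constants in the right order. We fix $\alpha$, then $A_0$, then $N$, then $A_1$, and only at the end choose $\kappa_0$ small. This order ensures that the stability error from Proposition~\ref{prop:stabilitydriftrescaledz}, which is of size $C_{A_1,N}A_1^\alpha\kappa^{1/4}$, is absorbed without affecting $\gamma$.
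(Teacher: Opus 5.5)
Your proposal is correct and matches the paper, which calls the corollary immediate from Lemma~\ref{lemma:boundary_layer_reduction_z} and Propositions~\ref{prop:stabilitydriftrescaledz}, \ref{prop:driftlimrescaledZannular} and \ref{prop:driftlimrescaledZinner}, combined exactly as you do with $\bar\gamma=\frac{1+\max\{\gamma_{\mathrm{in}},1/2\}}{2}$, mirroring Corollary~\ref{cor:driftrescaledx}. One small correction: the identity $\EE[V_{\sfz,\kappa}(f^\kappa_\omega(p))]=\kappa^{-\alpha/2}\EE[g_\sfz(\widetilde f^\kappa_\omega(x,y))]$ is exact because $r_\sfz$ depends only on the first two coordinates of the endpoint, so the stopping time is needed only inside Lemma~\ref{lemma:EXPdiffrescaledlimitingz}, not for the reduction itself.
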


\subsubsection{Inviscid-dominated regime}
We consider here the region where the inviscid dynamics dominate the stochastic processes by assuming that $r_\sfz(p)\geq A_1\sqrt{\kappa}$, for some $A_1\geq A_0$ to be determined. 
We show that a version of Lemma~\ref{lemma:localzlyap} holds for all $p\in \mathsf{Q}_{\sfz}(s_0)$.

\begin{remark}
    The invariance of $r_\sfz$ via the first rotation simplifies the $F_\sfz$ analysis relative to the $F_\sfx$ in a fundamental way.
    Near $F_\sfx$, the first deterministic step $f_{\omega_1}^3$ rotates the $(x,y)$-plane and thereby mixes the coordinates of $r_\sfx = \max\lbrace|y|,|z|\rbrace$, so the good event must control both $|\phi_1^\kappa - \phi_1|$ and $|\phi_2^\kappa - \phi_2|$.
    Near $F_\sfz$, the first step leaves $r_\sfz$ invariant deterministically for every realisation of $\omega_1$.
    As a result, the three-case decomposition can be defined entirely in terms of the deterministic midpoint $\phi_1$ (for which $r_\sfz(\phi_1) = r_\sfz(p)$ exactly), and the only perturbation that must be controlled explicitly is the endpoint deviation $|\phi_2^\kappa - \phi_2|$.
    The good event therefore reduces to a single condition.
\end{remark}

\begin{proposition}\label{prop:driftannularz}
    Fix $\alpha=\frac18$. Then there exist $N>1$, $A>1$, $s_0>0$, $\kappa_0>0$,  and $\gamma\in(0,1)$ such that
    \begin{align}
        \EE_{\omega,\bB}\left[V_{\sfz,\kappa}(\phi_2^\kappa)\right] \leq \gamma\, V_{\sfz,\kappa}(p)
    \end{align}
    for all $\kappa\in(0,\kappa_0)$ and all $p\in\mathsf{Q}_\sfz(s_0)$ with $r_\sfz(p)\geq A\sqrt\kappa$.
\end{proposition}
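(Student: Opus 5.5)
The plan is to mirror the proof of Proposition~\ref{prop:driftanuularx}, using the simplification recorded in the preceding remark. Set $r:=r_\sfz(p)\geq A\sqrt\kappa$. Then $V_{\sfz,\kappa}(p)=r^{-\alpha}$ and $r_\sfz(\phi_1)=r$ exactly. Fix $\eta>0$ small, to be chosen in terms of $N$, and introduce the single good event
\begin{align}
    G:=\left\lbrace |\phi_2^\kappa-\phi_2|\leq \eta r\right\rbrace.
\end{align}
Because $r_\sfz$ is $1$-Lipschitz, on $G$ we have $r_\sfz(\phi_2^\kappa)\geq r_\sfz(\phi_2)-\eta r$. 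It is therefore enough to revisit the deterministic case analysis of Lemma~\ref{lemma:localzlyap}, which is organised by the position of the deterministic midpoint $\phi_1=(x_1,y_1,z_1)$ through the sets $E_1,E_{1,1},E_{1,2},E_2,E_{2,1},E_{2,2},E_3$, and to perturb its lower bounds on $r_2=r_\sfz(\phi_2)$.

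On the good sub-event $E_{1,1}$ that lemma gives $r_2\geq \sqrt{(1+M^2N)/(1+N)}\,r$. On every other sub-event it gives $r_2\geq c_N r$ for some $c_N\gtrsim N^{-1}$ (the worst case being $(1+4N^2)^{-1/2}$), and the probabilities of these sub-events are $O(N^{-1/2})$. I will take $\eta:=c_N/2$. Then on $G$ the bound on $E_{1,1}$ becomes $r_\sfz(\phi_2^\kappa)\geq \tfrac12\sqrt{(1+M^2N)/(1+N)}\,r$, and every other sub-event keeps $r_\sfz(\phi_2^\kappa)\geq \tfrac{c_N}{2}r$. The resulting estimate for $\EE[V_{\sfz,\kappa}(\phi_2^\kappa)\mathbf 1_G]$ has the same form as the sum in Lemma~\ref{lemma:localzlyap}, except that the leading term picks up a factor $2^\alpha$ and the remaining terms pick up at most $(2/c_N)^{\alpha}N^{-1/2}$. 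Two facts need to be used here. First, on $G$ we still have $r_\sfz(\phi_2^\kappa)\geq\sqrt\kappa$, so $V_{\sfz,\kappa}(\phi_2^\kappa)=r_\sfz(\phi_2^\kappa)^{-\alpha}$; in any case $\max\{\cdot,\sqrt\kappa\}^{-\alpha}\leq(\cdot)^{-\alpha}$ makes this harmless. Second, the probabilities come from $\omega$ alone, and the event $G$ only restricts them further. With $\alpha=\frac18$, choose $M$ large so that $2\cdot 2^{\alpha}\bigl((1+N)/(1+M^2N)\bigr)^{\alpha/2}\leq\frac45$, and then $N$ large so that the remaining terms sum to at most $\frac1{20}$. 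This gives $\EE[V_{\sfz,\kappa}(\phi_2^\kappa)\mathbf 1_G]\leq\gamma_0 V_{\sfz,\kappa}(p)$ with $\gamma_0<1$. The symmetric case $y_1<0$ is handled by the $\pi$-rotation symmetry, as in the deterministic proof.

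For the bad event, Lemma~\ref{lemma:noisy_two_step_L2} and Markov's inequality give $\PP(G^c)\leq C_N\sqrt\kappa/(\eta r)$. Combined with the trivial bound $V_{\sfz,\kappa}\leq\kappa^{-\alpha/2}$, this yields, exactly as in \eqref{eq:bad_event_contribution},
\begin{align}
    \EE[V_{\sfz,\kappa}(\phi_2^\kappa)\mathbf 1_{G^c}]\leq \frac{C_N}{\eta}\Big(\frac{\sqrt\kappa}{r}\Big)^{1-\alpha}r^{-\alpha}\leq \frac{C_N}{\eta A^{1-\alpha}}V_{\sfz,\kappa}(p).
\end{align}
Choosing $A=A(N)$ large enough makes this at most $\frac{1-\gamma_0}{2}V_{\sfz,\kappa}(p)$, and $\gamma:=\frac{1+\gamma_0}{2}$ then works. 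The constants $s_0$ and $\kappa_0$ are taken as in Lemma~\ref{lemma:localzlyap}: $s_0$ small enough that $z\geq 1-s_0$ and the chart estimates hold, and $\kappa_0\leq s_0^2/A^2$ so that the regime is nonempty.

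The main obstacle is the bookkeeping of the order of the constants. The worst deterministic lower bound factor $c_N\sim N^{-1}$ fixes $\eta$. This in turn forces $A$ to be polynomially large in $N$, which is admissible because $N$ is chosen first and $A$ only enters through the bad-event term. A secondary point is to check that, on $E_{2}$ and $E_3$, the deterministic estimates of Lemma~\ref{lemma:localzlyap} used only $\phi_1$ and $\omega$ and not the initial point's finer structure. This holds because $r_\sfz(\phi_1)=r$, so perturbing only the endpoint suffices.
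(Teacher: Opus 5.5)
Your proposal is correct and follows the same route as the paper. Both use a single good event $G=\{|\phi_2^\kappa-\phi_2|\leq\eta\, r_\sfz(p)\}$, the deterministic splitting of Lemma~\ref{lemma:localzlyap} into $E_{1,1}$ versus the remaining events with $\eta\sim(1+4N^2)^{-1/2}$, and a Markov-type bound on $G^c$ that is absorbed by taking $A$ large after $N$ is fixed. The differences are cosmetic: the paper bounds $\PP(G^c)$ through the second moment, giving $C_N/(\eta^2A^{2-\alpha})$ instead of your $C_N/(\eta A^{1-\alpha})$, and fixes $M=8$ to obtain the leading factor $2^{-\alpha}$; neither choice changes the argument.
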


\begin{proof}
    Fix a small $\eta>0$ (to be determined) and define the good set
    \begin{equation}
        G:=\{|\phi_2^\kappa - \phi_2|\leq \eta r_\sfz(p)\}.
    \end{equation}
On the good event $G$ we follow the splitting presented in Lemma~\ref{lemma:localzlyap}, recalling that 
\begin{equation}
    E_1=\lbrace N^{-\frac12} |x_1| \leq y_1 \rbrace ,\quad E_2= \left\lbrace \frac{|x_1|}{2N} \leq y_1 \leq \frac{|x_1|}{N^\frac12} \right\rbrace  ,\quad E_3= \left\lbrace 0 < y_1 < \frac{|x_1|}{2N} \right\rbrace ,
\end{equation}
and further refining $E_1$ and $E_2$ into
\begin{equation}
    E_{1,1}=\left\lbrace|\omega_2| > 2\frac{M+1}{1-\delta_0}N^\frac12 \right\rbrace, \quad E_{1,2}=E_{1,1}^c,\quad E_{2,1} = \left\lbrace |x_2| \leq \frac{y_1}{2} \right\rbrace, \quad E_{2,2}=E_{2,1}^c,
\end{equation}
where $\delta_0\in(0,1)$ and $M>0$ will be determined later. Let $r_2=r_\sfz(p_2)$ and $r_1=r_\sfz(p_1)$, with $p_2=\phi_2$ and $p_1=\phi_1$. We recall that 
\begin{enumerate}[label=--]
    \item On $E_{1,1}$ there holds $r_2 \geq \sqrt{\frac{1+M^2N}{1+N}}r_1\geq \frac{M}{2}r_1$ for $N\geq 1$.
    \item On $E_{1,2}$ there holds $r_2 \geq \frac{r_1}{\sqrt{1+N}}$ and $\PP(E_{1,2}) = 2\frac{M+1}{1-\delta_0}N^{-\frac12}$.
    \item On $E_2$ there holds $r_2\geq \frac{r_1}{\sqrt{1+4N^2}}$ and $\PP(E_2) \leq C_2N^{-\frac12}$ for some universal $C_2>0$.
    \item On $E_3$ there holds $r_2 \geq \frac{r_1}{4}$ and $\PP(E_3) \leq C_3 N^{-\frac12}$ for some universal $C_3>0$.
\end{enumerate}
and we set $E:=E_{1,1}$ and $E^c=E_{1,2}\cup E_2\cup E_3$. Since $|r_\sfz(\phi_2^\kappa)-r_\sfz(\phi_2)|\leq 4|\phi_2^\kappa - \phi_2|\leq 4\eta r_\sfz(p)$ we now have
\begin{align}
    r_2^\kappa &\geq \left( \frac{M}{2} - 4\eta \right) r_\sfz(p) \mathbf 1_{E} + \left( \frac{1}{\sqrt{1+4N^2}} - 4\eta \right) r_\sfz(p)  \mathbf 1_{E^c} \\
    &\geq \frac{M}{4}r_\sfz(p) \mathbf 1_{E} + \frac{1}{2\sqrt{1+4N^2}}r_\sfz(p)  \mathbf 1_{E^c}
\end{align}
for $4\eta= \frac{1}{2\sqrt{1+4N^2}}$ and $M>4$. For $M=8$ we have $\PP(E^c)\lesssim N^{-\frac12}$ and 
\begin{align}
    \EE_{\bB,\omega}\left[V_{\sfz,\kappa}(\phi_2^\kappa)\mathbf 1_{G}\right]
    &\leq 2^{-\alpha}V_{\sfz,\kappa}(p) + (2\sqrt{1+4N^2})^{\alpha}\PP(E^c)V_{\sfz,\kappa}(p) \\
    &\leq \gamma V_{\sfz,\kappa}(p)
\end{align}
choosing $\alpha = \frac18$ and $N>1$ large enough, for some $\gamma\in (2^{-\alpha},1)$. For $G^c$, we argue as in Proposition \ref{prop:driftanuularx} to conclude that
\begin{align}\label{eq:bad_event_z_contribution}
    \EE_{\omega,\bB}\!\left[V_{\sfz,\kappa}(\phi_2^\kappa)\mathbf 1_{G^c}\right]  &\leq \frac{C_N}{\eta^2A^{2-\alpha}}V_{\sfz,\kappa}(p) \leq \frac{1-\gamma}{2}V_{\sfz,\kappa}(p)
\end{align}
for $A$ large enough. With this, the proof is completed. 
\end{proof}

We can now conclude the local and global Lyapunov estimates, as done for the fixed point $F_\sfx$ in Section~\ref{ssec:viscous_drift_Fx}.

\begin{proposition}\label{prop:driftkappazlocal}
There exist $N>1$, $s_0>0$, $\gamma\in(0,1)$, and $\kappa_0>0$ such that
\begin{align}
    \EE_{\omega,\bB}\left[V_{\sfz,\kappa}(f_\omega^\kappa(p))\right] \leq \gamma V_{\sfz,\kappa}(p)
\end{align}
for all $p\in \mathsf{Q}_{\sfz}(s_0)$ and all $\kappa\in(0,\kappa_0)$.
\end{proposition}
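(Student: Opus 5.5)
The proof mirrors that of Proposition~\ref{prop:driftkappaxlocal}: we glue the stochastic-regime estimate of Corollary~\ref{cor:exact_core_fixed_Fz} with the inviscid-dominated estimate of Proposition~\ref{prop:driftannularz}. The only real content is to fix the parameters in a consistent order, so that both results apply with the same $N$, the same threshold $A_1$ separating the two regimes, and a common $\kappa_0$.

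First, fix $\alpha=\frac18$ and take $A_0=A_0(\alpha)$ and $N_\infty(\alpha)$ from Proposition~\ref{prop:driftlimrescaledZannular}. Then choose $N$ large enough to satisfy three requirements: $N\geq N_\infty$, $N\geq N_0(A_0)$ as required by Proposition~\ref{prop:driftlimrescaledZinner}, and $N$ at least the value required in Proposition~\ref{prop:driftannularz}. With $N$ now fixed, Proposition~\ref{prop:driftannularz} supplies $A=A(N)$, $s_0=s_0(N)$, $\gamma_{\mathrm{out}}$ and $\kappa_{\mathrm{out}}$. Set $A_1:=\max\{A,A_0+1\}>A_0$. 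Increasing $A$ is harmless, because the bad-event bound \eqref{eq:bad_event_z_contribution} only improves as $A$ grows, so the conclusion of Proposition~\ref{prop:driftannularz} still holds for all $r_\sfz(p)\geq A_1\sqrt\kappa$.

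Next, apply Corollary~\ref{cor:exact_core_fixed_Fz} with this $A_1$ and $N$. This gives $\gamma_{\mathrm{in}}$ and $\kappa_{\mathrm{in}}$; here $\kappa_{\mathrm{in}}$ comes from Proposition~\ref{prop:stabilitydriftrescaledz} with $A=A_1$, and the reduction to the rescaled process is Lemma~\ref{lemma:boundary_layer_reduction_z}. Now define
\begin{align}
\kappa_0:=\min\Bigl\{\kappa_{\mathrm{in}},\kappa_{\mathrm{out}},\frac{s_0^2}{A_1^2}\Bigr\},\qquad \gamma:=\max\{\gamma_{\mathrm{in}},\gamma_{\mathrm{out}}\}.
\end{align}
The constraint $\kappa_0\leq s_0^2/A_1^2$ ensures that the stochastic core $\{r_\sfz\leq A_1\sqrt\kappa\}$ lies inside $\mathsf{Q}_\sfz(s_0)$, so the rescaled chart $\Psi_\sfz$ is available there.

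Finally, for any $p\in\mathsf{Q}_\sfz(s_0)$ and $\kappa<\kappa_0$, there are two cases. If $r_\sfz(p)\leq A_1\sqrt\kappa$, Corollary~\ref{cor:exact_core_fixed_Fz} gives the contraction with rate $\gamma_{\mathrm{in}}$. If $r_\sfz(p)>A_1\sqrt\kappa$, Proposition~\ref{prop:driftannularz} gives the contraction with rate $\gamma_{\mathrm{out}}$. Either way the bound holds with $\gamma$. The south pole $(0,0,-1)$ is handled by the symmetry under $z\mapsto -z$ combined with a rotation, which preserves $r_\sfz$.

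The main obstacle is the compatibility of the parameter dependencies. The core result needs $N$ large relative to $A_0$, while the outer result needs $A$ large relative to $N$ (through $\eta\sim N^{-1}$). We break this potential circularity by choosing $A_0$ first (it depends only on $\alpha$), then $N$, then $A_1(N)$, and $\kappa_0$ last. This ordering works because Corollary~\ref{cor:exact_core_fixed_Fz} holds for any $A_1>A_0$, at the cost of a smaller $\kappa_0$.
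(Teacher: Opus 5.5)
Your proposal is correct and follows the argument the paper intends. It mirrors the proof of Proposition~\ref{prop:driftkappaxlocal}: fix $A_0$, then $N$, then the outer threshold and $s_0$, and finally $\kappa_0=\min\{\kappa_{\mathrm{in}},\kappa_{\mathrm{out}},s_0^2/A_1^2\}$. Your explicit choice $A_1=\max\{A,A_0+1\}$ is a sensible fix, since the outer estimate trivially persists on the smaller set $\{r_\sfz\geq A_1\sqrt\kappa\}$; for the south pole, the cleanest symmetry is the $\pi$-rotation $(x,y,z)\mapsto(x,-y,-z)$ about the $x$-axis, which commutes with both elementary flows (and with the noise in law) and preserves $r_\sfz$.
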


\begin{proof}[Proof of Proposition~\ref{prop:driftkappaz}]
Combining Proposition~\ref{prop:driftkappazlocal} and Lemma~\ref{lemma:driftboundawayz}, according to whether $p\in \mathsf{Q}_{\sfz}(s_0)$ or not, yields
\begin{align}
    \EE_{\omega,\bB}\left[V_{\sfz,\kappa}(f_\omega^\kappa(p))\right] \leq \gamma V_{\sfz,\kappa}(p)+C\mathbf 1_{\mathsf{Q}_{\sfz}(s_0)^c}(p)
\end{align}
for all $p\in \mathsf{X}$ and all $\kappa\in(0,\kappa_0)$.
\end{proof}

\subsection{Stochastic drift function for $F_\mathsf{y}$ fixed points}\label{ssec:viscous_drift_Fy}

We now turn to the fixed points $F_\mathsf{y}=\{(0,\pm1,0)\}$ and throughout this subsection we work near $(0,1,0)$, the point $(0,-1,0)$ being treated identically by symmetry.
As in the deterministic construction of Lemma~\ref{lemma:localylyap}, the natural radial quantity is not measured directly at $p$, but after the fixed post-rotation $f_N^3$.
We keep the notation $ f_\omega^\kappa, f_\omega, \phi_1^\kappa, \phi_1, \phi_2^\kappa, \phi_2$ introduced in Section~\ref{ssec:viscous_drift_Fx} and we further define $r_\mathsf{y}(p):=\sqrt{x^2+z^2}$ for $p=(x,y,z)\in\Sph$ and the functions
\begin{align}
    \widetilde V_{\mathsf{y},\kappa}(p)  := \max\{r_\mathsf{y}(p),\sqrt{\kappa}\}^{-\alpha}, \qquad  V_{\mathsf{y},\kappa}(p) := \widetilde V_{\mathsf{y},\kappa}(f_N^3(p)),
\end{align}
for some $N>1$ and some $\alpha\in(0,\frac14)$ to be determined later. The local chart around $F_\sfy$ reads
\begin{align}
    \Psi_\sfy(x,z):=(x,\sqrt{1-x^2-z^2},z), \qquad \mathsf{Q}_\mathsf{y}(s_0):=\Psi_\sfy(\{(x,z)\in\RR^2 \; :\;\ 0< x^2+z^2<s_0^2\}).
\end{align}

The goal of this subsection is the following Lyapunov--Foster drift estimate.

\begin{proposition}\label{prop:driftkappay}
There exist $N>1$, $s_0>0$, $\gamma\in(0,1)$, $\beta_\mathsf{y}\geq1$, and $\kappa_0>0$ such that
\begin{align}
    \EE_{\omega,\bB}\bigl[V_{\mathsf{y},\kappa}(f_\omega^\kappa(p))\bigr] \leq \gamma\,V_{\mathsf{y},\kappa}(p) + \beta_\mathsf{y}\mathbf 1_{\mathsf{Q}_\mathsf{y}(s_0)^c}(p)
\end{align}
for all $p\in \mathsf{X}$ and all $\kappa\in(0,\kappa_0)$.
\end{proposition}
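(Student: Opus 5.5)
We follow the template of Propositions~\ref{prop:driftkappax} and \ref{prop:driftkappaz}, adapted to the post-rotation in $V_{\mathsf{y},\kappa}$ as in the inviscid Lemma~\ref{lemma:localylyap}. Away from the cap, $V_{\mathsf{y},\kappa}$ is bounded in expectation. The map $p\mapsto r_\mathsf{y}(f_N^3(\phi_2(p,\omega)))$ is continuous and positive on the compact set $\mathsf{Q}_\mathsf{y}(s_0)^c\times[-N,N]^2$, hence bounded below by some $c_0>0$. The map $r_\mathsf{y}\circ f_N^3$ is Lipschitz with a constant depending on $N$. Arguing on the good event $\{|\phi_2^\kappa-\phi_2|\le\delta_0\}$ and using Chebyshev together with Lemma~\ref{lemma:noisy_two_step_L2} and the trivial bound $V_{\mathsf{y},\kappa}\le\kappa^{-\alpha/2}$ on its complement, exactly as in Lemma~\ref{lemma:driftboundawayx}, gives the constant $\beta_\mathsf{y}$.

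Inside $\mathsf{Q}_\mathsf{y}(s_0)$, write $\overline p=f_N^3(p)$ and $q_{\omega_1}=f_{\omega_1}^{3,\kappa}(p)$. We then split according to $r_\mathsf{y}(\overline p)$ versus $A_1\sqrt\kappa$.

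\emph{Inviscid-dominated regime} ($r_\mathsf{y}(\overline p)\ge A_1\sqrt\kappa$). Introduce the good event $G$ on which both one-step deviations are at most $\eta\, r_\mathsf{y}(\overline p)$. On $G$, reproduce the two-stage conditioning of Lemma~\ref{lemma:localylyap}:
\begin{enumerate}[label=(\roman*)]
\item Condition on $\omega_1$. The shear $f^2_{\omega_2}$ followed by $f_N^3$ is treated by the $E_1$--$E_3$ splitting of Lemma~\ref{lemma:localzlyap}, with the roles of the coordinates exchanged. This gives a factor $2\bigl((1+N)/(1+M^2N)\bigr)^{\alpha/2}+O(N^{-1/2+\alpha})$, perturbed by $O(\eta)$ terms.
\item Average over $\omega_1$. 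Use the reflection $\mathsf{R}_\sfz$ and the substitution $\widetilde\omega_1=N-\omega_1$ to obtain the crude bound $\mathbb{E}_{\omega_1}\widetilde V(q_{\omega_1})\le 4^{\alpha+1}\widetilde V(\overline p)$.
\end{enumerate}
The bad event $G^c$ is handled by Markov and Lemma~\ref{lemma:noisy_two_step_L2}. This yields a contribution $C_N\eta^{-1}A_1^{-(1-\alpha)}V_{\mathsf{y},\kappa}(p)$, which is absorbed by taking $A_1$ large, as in \eqref{eq:bad_event_contribution}.

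\emph{Stochastic regime} ($r_\mathsf{y}(\overline p)\le A_1\sqrt\kappa$). Rescale in the chart $\Psi_\sfy$ around $\overline p$ by $\sqrt\kappa$. This gives the analogue of Lemma~\ref{lemma:boundary_layer_reduction}, with $g_\mathsf{y}=\max\{|(x,z)|,1\}^{-\alpha}$. The composite map $f_N^3\circ f_{\omega_2}^{2,\kappa}\circ f^{3,\kappa}_{\omega_1-N}$, once rescaled, converges to a limiting linear SDE. Near $(0,1,0)$, $u^3$ generates the shear $x\mapsto x-\omega z$ and $u^2$ generates a rotation in the $(x,z)$ plane, so the limit consists of two shears/rotations plus independent Brownian increments. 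The comparison estimate $O(\kappa^{1/4})$ follows from the stopping-time argument of Lemma~\ref{lemma:EXPdiffrescaledlimiting}. Then:
\begin{itemize}
\item On the annulus $h\ge A_0$: the last nontrivial operation is the random-amplitude shear by $\omega_1-N$, which is uniform on $[-2N,0]$ and hence has density comparable to that of $\omega_1$. Applying Lemma~\ref{lemma:EEmaxshearX} and Gaussian tail bounds gives contraction by $1/2$, as in Proposition~\ref{prop:driftlimrescaledXannular}.
\item On the inner core: Lemma~\ref{lemma:averaged_affine_small_ball} applies, since the shear multiplies a coordinate containing an independent Gaussian $\sqrt2\int_0^1 B_s\,\dd s$. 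This gives contraction by $(1+2^{-\alpha})/2$.
\end{itemize}
Combining the two regimes with $\kappa_0\le s_0^2/A_1^2$ proves the proposition.

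The main obstacle is the inviscid-dominated regime. The deterministic proof uses the crude factor $4^{\alpha+1}$ from the $\omega_1$ averaging and then compensates with a very large $M$. In the stochastic setting, the perturbation $\eta$ must be kept smaller than the $N$-dependent sector widths, roughly $\eta\lesssim N^{-1}$. It must also be propagated through the fixed rotation $f_N^3$, whose Lipschitz constant is about $N$. I would therefore take $\eta\sim N^{-2}$, then $A_1=A_1(N)$, then $\kappa_0$, checking that the order of these constant choices closes.
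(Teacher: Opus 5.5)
The gap is in your stochastic regime. After conjugating by $f_N^3$ and rescaling, the limiting map near $(0,1,0)$ is not a shear--shear composition as near $F_\mathsf{x}$. By Lemma~\ref{lemma:limiting_rescaled_dynamics_y} it is, up to noise, $S_N\circ R_{\omega_2}\circ S_{\omega_1-N}$, where $S_a(x,z)=(x-az,z)$ and $R_\theta$ is the rotation by $\theta$. So the random shear by $\omega_1-N$ acts \emph{first}. The last random operation is the rotation by $\omega_2$, and it is followed by the deterministic shear $S_N$, which contracts the direction $(N,1)$ by $(1+N^2)^{-1/2}$.

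Averaging over the shear alone therefore cannot give contraction. For $(x,z)=(h_0,0)$ the first shear only adds noise. Any bound that does not average over $\omega_2$ then loses a factor $N^{\alpha}$ through $S_N$, since pointwise you only get $h_\mathsf{y}(\widetilde f_\omega)\gtrsim h_0/N$.

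The missing idea is the representation $\widetilde x=\sqrt{1+N^2}\,\rho_1\sin(\omega_2+\theta_1)+\zeta$, combined with the rotation-averaging estimate of Lemma~\ref{lemma:EEmaxrotationZ} (uniformly nondegenerate zeros of $\sin u+a$ for $|a|\le\frac12$). This reduces the problem to $\min\{1,(N|\widetilde x_1|)^{-\alpha}\}$. Only then does Lemma~\ref{lemma:EEmaxshearX} in $\omega_1$ apply, with $\widetilde x_1=\Lambda_1+\omega_1\Lambda_2$, $\Lambda_1=x+Nz+G_1$ and $\Lambda_2=-(z+G_3)$.

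Your ``Gaussian tail bounds'' as in Proposition~\ref{prop:driftlimrescaledXannular} also do not suffice here, for two reasons.
\begin{itemize}
\item The noise $\zeta\sim\mathcal{N}(0,2+2N^2)$ has the same size as the signal $\sqrt{1+N^2}\rho_1$. The bad event $\{|\zeta|>\frac12\sqrt{1+N^2}\rho_1\}$ is governed by $\rho_1$, not by $h_0$, and after averaging it is not Gaussian-small in $h_0$ uniformly in $N$. The paper bounds $\mathbb{E}[e^{-c\rho_1^2}]\lesssim h_0^{-\alpha'}$ with $\alpha'=\frac{1+\alpha}{2}>\alpha$.
\item When $|z|\approx h_0/N$ and $|x+Nz|\ll h_0$, the only decay comes from $(N^2|z+G_3|)^{-\alpha}$. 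A tail cut would then require $|G_3|\ll h_0/N$, which is not a high-probability event. This is why the paper needs the negative-moment smoothing of Lemma~\ref{lemma:gaussian_smoothing}(i).
\end{itemize}

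The inner core has the same defect. Lemma~\ref{lemma:averaged_affine_small_ball} only applies to the first-step coordinate $\widetilde x_1=(x+Nz+G_1)-\omega_1(z+G_3)$, where it gives $\mathbb{P}(|\widetilde x_1|\le r)\lesssim \frac rN\log\frac Nr$. Because $S_N$ can shrink lengths by a factor of order $N^{-1}$, guaranteeing a final radius $>2h_0$ would need $r\sim Nh_0$, and there the bound is $O(h_0)$, which is useless. The final coordinate $\widetilde x$ is not affine in a uniform variable, so the lemma cannot be used at the last step either.

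What works is Gaussian spreading of the second-step noise through the fixed post-shear. Conditionally on $(\omega_1,\omega_2,G_1,G_2,G_3)$ one has $\widetilde x\sim\mathcal{N}(a,2+2N^2)$, hence $\mathbb{P}(|\widetilde x|\le 2h_0)\le 4h_0/N$; this is Proposition~\ref{prop:driftlimrescaledYinner}.

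Your away-from-cap step matches Lemma~\ref{lemma:driftboundawayy}. Your inviscid-dominated step can be closed, though it differs from the paper, which combines product stretching events for both stages rather than your crude $\omega_1$-average. The fact that closes it is that the deterministic map $f_N^3\circ f_\omega\circ f^3_{-N}$ fixes $(0,1,0)$ and is bi-Lipschitz with $N$-dependent constants. Hence $r_\mathsf{y}(f_N^3(f_\omega(p)))\ge c_N\, r_\mathsf{y}(f_N^3(p))$ uniformly in $\omega$. Taking $\eta\ll c_N/\mathrm{Lip}(f_N^3)$, the noisy radius on $G$ is pointwise comparable to the deterministic one, so Lemma~\ref{lemma:localylyap} applies directly; $A_1$ is chosen afterwards.
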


As in Section~\ref{ssec:viscous_drift_Fx}, the proof is divided according to the size of $r_\mathsf{y}(f_N^3(p))$ relative to the stochastic scale $\sqrt{\kappa}$.
We first record the stability in expectation of the noisy two-step map after the fixed post-rotation.

\begin{lemma}\label{lemma:noisy_two_step_L2_y}
There exists $C_N=C(N)>0$ such that
\begin{align}
    \EE_\bB\bigl|f_N^3(\phi_1^\kappa)-f_N^3(\phi_1)\bigr|^2 + \EE_\bB\bigl|f_N^3(\phi_2^\kappa)-f_N^3(\phi_2)\bigr|^2 \leq C_N\kappa
\end{align}
for all $\kappa\in(0,1]$, all $p\in \Sph$, and all $\omega=(\omega_1,\omega_2)\in[-N,N]^2$.
\end{lemma}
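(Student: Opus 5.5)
The plan is to reduce this to Lemma~\ref{lemma:noisy_two_step_L2} by using that the post-rotation $f_N^3$ is a fixed, deterministic, smooth map of $\Sph$ that does not depend on $\omega$, on $\bB$ or on $\kappa$. From \eqref{eq:flowmapfE}, $f_N^3$ is the rotation about the $z$-axis by the angle $Nz$. Hence it is a smooth diffeomorphism of $\Sph$, and it extends smoothly to $\R^3$ by the same formula. Its Lipschitz constant with respect to the ambient Euclidean distance is bounded by some $L_N$ depending only on $N$. To see this, differentiate the explicit formula: the Jacobian has entries bounded by $1+N|x|+N|y|\leq 1+2N$ on the unit ball. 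Apply the mean value inequality along the straight segment joining two points of $\Sph$. That segment lies in the closed unit ball, where the extension is smooth with Jacobian bounded by $C(1+N)$. This gives
\begin{align}
    |f_N^3(\x)-f_N^3(\y)|\leq L_N|\x-\y|,\qquad L_N\lesssim 1+N,
\end{align}
for all $\x,\y\in\Sph$.

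With this in hand, apply the bound pointwise in $\bB$ to $\x=\phi_j^\kappa$ and $\y=\phi_j$ for $j=1,2$. Both points lie on $\Sph$ because the Stratonovich/It\^{o} formulation \eqref{eq:introSDEITO} preserves the sphere. This gives $|f_N^3(\phi_j^\kappa)-f_N^3(\phi_j)|^2\leq L_N^2|\phi_j^\kappa-\phi_j|^2$. Taking $\EE_\bB$ and invoking Lemma~\ref{lemma:noisy_two_step_L2}, which holds uniformly in $p\in\Sph$, $\omega\in[-N,N]^2$ and $\kappa\in(0,1]$, bounds each term by $L_N^2C_N\kappa$. Summing the two terms yields the claim with constant $2L_N^2C_N$, which we rename $C_N$.

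I do not expect a real obstacle. The only point needing care is that the Lipschitz bound is applied to an ambient extension of $f_N^3$ and not merely to its restriction to the sphere, so that Euclidean distances are controlled directly. One could instead compare geodesic distances using \eqref{eq:metricequiv}, but the explicit extension avoids this.
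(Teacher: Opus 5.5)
Your proposal is correct and follows the paper's own proof. Both arguments use that $f_N^3$ is a fixed smooth map whose Lipschitz constant depends only on $N$, and then apply Lemma~\ref{lemma:noisy_two_step_L2} to each of the two terms. Your explicit Jacobian bound via the ambient extension just makes concrete the Lipschitz claim that the paper states in one line.
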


\begin{proof}
Since $f_N^3:\Sph\to \Sph$ is a smooth diffeomorphism, its derivative is uniformly bounded for fixed $N$.
Hence, there exists $C_N>0$ such that
\begin{align}
    \bigl|f_N^3(a)-f_N^3(b)\bigr|
    \leq
    C_N |a-b|,
\end{align}
for all $a,b\in \Sph$. 
Applying this with $a=\phi_1^\kappa$, $b=\phi_1$, and then with $a=\phi_2^\kappa$, $b=\phi_2$, and using Lemma~\ref{lemma:noisy_two_step_L2}, proves the claim.
\end{proof}

We next obtain a uniform bound away from the $F_\mathsf y$ caps.

\begin{lemma}\label{lemma:driftboundawayy}
There exists $C>0$ such that
\begin{align}
    \EE_{\omega,\bB}\bigl[V_{\mathsf{y},\kappa}(f_\omega^\kappa(p))\bigr] \leq C
\end{align}
for all $p\in \mathsf{Q}_\mathsf{y}(s_0)^c$ and all $\kappa\in(0,1)$.
\end{lemma}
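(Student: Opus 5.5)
The plan mirrors the proof of Lemma~\ref{lemma:driftboundawayx}, with $r_\sfx$ replaced by the composite quantity $r_\sfy\circ f_N^3$ and Lemma~\ref{lemma:noisy_two_step_L2} replaced by Lemma~\ref{lemma:noisy_two_step_L2_y}.

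\textbf{Step 1: deterministic lower bound on the compact set.} The function $(p,\omega)\mapsto r_\sfy(f_N^3(\phi_2(p,\omega)))$ is continuous on the compact set $\mathcal K:=\mathsf{Q}_\sfy(s_0)^c\times[-N,N]^2$. We need it to be bounded below by some $c_0>0$ there. This holds if it never vanishes on $\mathcal K$. It vanishes exactly when $f_N^3(\phi_2)\in F_\sfy$, that is, when $\phi_2\in (f_N^3)^{-1}(F_\sfy)$.

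\textbf{Step 2: handling the possible degeneracy.} This is the main obstacle. Since $f_N^3$ rotates points of the equator $\{z=0\}$ by angle $Nz=0$, it fixes $F_\sfy$. Hence $(f_N^3)^{-1}(F_\sfy)=F_\sfy$. Moreover $F_\sfy$ consists of fixed points of both $f^3_{\omega_1}$ and $f^2_{\omega_2}$, and these maps are invertible. It follows that $\phi_2\in F_\sfy$ forces $p\in F_\sfy$, which is excluded on $\mathsf{Q}_\sfy(s_0)^c$. So the function is strictly positive on $\mathcal K$, and by compactness it is bounded below by some $c_0>0$.

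\textbf{Step 3: good event and bad event.} Set
\[
G:=\{|f_N^3(\phi_2^\kappa)-f_N^3(\phi_2)|\le c_0/2\}.
\]
The map $r_\sfy$ is $1$-Lipschitz on $\Sph$. Hence on $G$ we have $r_\sfy(f_N^3(\phi_2^\kappa))\ge c_0/2$, which gives $V_{\sfy,\kappa}(\phi_2^\kappa)\le (c_0/2)^{-\alpha}$.

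On $G^c$ we use two facts:
\begin{itemize}
\item the trivial bound $V_{\sfy,\kappa}\le\kappa^{-\alpha/2}$;
\item Chebyshev's inequality combined with Lemma~\ref{lemma:noisy_two_step_L2_y}, which gives $\PP(G^c)\le 4C_Nc_0^{-2}\kappa$.
\end{itemize}
Together these yield
\[
\EE_{\omega,\bB}\bigl[V_{\sfy,\kappa}(f_\omega^\kappa(p))\bigr]\le (c_0/2)^{-\alpha}+4C_Nc_0^{-2}\kappa^{1-\alpha/2}.
\]
Since $\alpha<2$, the right-hand side is bounded uniformly in $\kappa\in(0,1)$, which proves the lemma.
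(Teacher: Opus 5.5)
Your proof is correct and follows the paper's route. The paper's proof is a one-line reference to Lemma~\ref{lemma:driftboundawayx} applied to the map $(p,\omega)\mapsto r_\sfy(f_N^3(f_\omega(p)))$, and your Steps 1 and 3 carry out exactly that argument with Lemma~\ref{lemma:noisy_two_step_L2_y} in place of Lemma~\ref{lemma:noisy_two_step_L2}. Your Step 2 usefully makes explicit why this map stays positive off $F_\sfy$, namely that $f_N^3$ fixes $F_\sfy$ and that $F_\sfy$ consists of common fixed points of the invertible maps $f^3_{\omega_1}$ and $f^2_{\omega_2}$; the paper leaves this point implicit.
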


\begin{proof}
The proof follows as in Lemma \ref{lemma:driftboundawayx} by considering the map $(p,\omega)\longmapsto r_\mathsf{y}\bigl(f_N^3(f_\omega(p))\bigr)$.
\end{proof}

\subsubsection{Stochastic regime}

We now treat the regime in which $r_\mathsf{y}(f_N^3(p))\lesssim \sqrt{\kappa}$.
As in the $F_\sfx$ and $F_\sfz$ analysis, we pass to rescaled variables, but here the rescaling is performed after conjugation by $f_N^3$.

For $A>1$, define
\begin{align}
    K_A^\mathsf{y} := \{(x,z)\in\RR^2:\ \sqrt{x^2+z^2}\leq A\},
\end{align}
and, for some $\alpha\in(0,\frac14)$,
\begin{align}
    g_\mathsf{y}(x,z):=h_\mathsf{y}(x,z)^{-\alpha}, \qquad h_\mathsf{y}(x,z):=\max\{\sqrt{x^2+z^2},1\},
\end{align}
hence $A^{-\alpha}\leq g_\mathsf{y}(x,z)\leq 1$ for all $(x,z)\in K_A^\mathsf{y}$. If $\overline p=f_N^3(p)=\Psi_\mathsf{y}(\sqrt{\kappa}x,\sqrt{\kappa}z)$, we have
\begin{align}\label{eq:exact_scaling_identity_y}
    V_{\mathsf{y},\kappa}(p) = \kappa^{-\alpha/2}g_\mathsf{y}(x,z).
\end{align}
We introduce the corresponding rescaled two-step process
\begin{align}
    \widetilde f_\omega^\kappa(x,z) := \frac{1}{\sqrt{\kappa}} \left( \bigl(f_N^3\circ f_\omega^\kappa\circ f_{-N}^3(\Psi_\mathsf{y}(\sqrt{\kappa}x,\sqrt{\kappa}z))\bigr)_x,\, \bigl(f_N^3\circ f_\omega^\kappa\circ f_{-N}^3(\Psi_\mathsf{y}(\sqrt{\kappa}x,\sqrt{\kappa}z))\bigr)_z \right).
\end{align}
For this process, the following equivalence holds, whose proof is identical to the one of Lemma~\ref{lemma:boundary_layer_reduction}.

\begin{lemma}\label{lemma:boundary_layer_reduction_y}
Let $A>1$ and $\alpha\in(0,1)$. The following are equivalent:
\begin{enumerate}
    \item there exists $\gamma\in(0,1)$ such that
    \begin{align}\label{eq:physical_core_drift_y}
        \EE_{\omega,\bB}\bigl[V_{\mathsf{y},\kappa}(f_\omega^\kappa(p))\bigr] \leq \gamma\,V_{\mathsf{y},\kappa}(p)
    \end{align}
    for all sufficiently small $\kappa$ and all $p\in \mathsf{Q}_\mathsf{y}(s_0)$ such that $r_\mathsf{y}(f_N^3(p))\leq A\sqrt{\kappa}$;
    \item there exists $\gamma\in(0,1)$ such that
    \begin{align}\label{eq:scaled_core_drift_y}
        \EE_{\omega,\bB}\bigl[g_\mathsf{y}(\widetilde f_\omega^\kappa(x,z))\bigr] \leq \gamma\,g_\mathsf{y}(x,z)
    \end{align}
    for all sufficiently small $\kappa$ and all $(x,z)\in K_A^\mathsf{y}$.
\end{enumerate}
\end{lemma}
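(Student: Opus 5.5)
The plan mirrors the proof of Lemma~\ref{lemma:boundary_layer_reduction}: we identify the physical condition $r_\mathsf{y}(f_N^3(p))\leq A\sqrt{\kappa}$ with membership of the rescaled coordinates in $K_A^\mathsf{y}$, and we check that both sides of the drift inequality transform by the same factor $\kappa^{-\alpha/2}$.

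First, for $p\in\mathsf{Q}_\mathsf{y}(s_0)$ with $r_\mathsf{y}(f_N^3(p))\leq A\sqrt{\kappa}\leq s_0$, set $\overline p=f_N^3(p)$. Since $f_N^3$ is a rotation about the $z$-axis, it maps a neighbourhood of $(0,1,0)$ into the chart domain of $\Psi_\mathsf{y}$ once $s_0$ is small relative to the rotation angle. If needed we replace $\Psi_\mathsf{y}$ by the chart centred at $f_N^3((0,1,0))$, which is what the definition of $V_{\mathsf{y},\kappa}$ implicitly uses. We then write $\overline p=\Psi_\mathsf{y}(\sqrt{\kappa}x,\sqrt{\kappa}z)$ with $(x,z)\in K_A^\mathsf{y}$; this correspondence is a bijection because $r_\mathsf{y}(\Psi_\mathsf{y}(\sqrt\kappa x,\sqrt\kappa z))=\sqrt{\kappa}\sqrt{x^2+z^2}$. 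Identity~\eqref{eq:exact_scaling_identity_y} then gives $V_{\mathsf{y},\kappa}(p)=\kappa^{-\alpha/2}g_\mathsf{y}(x,z)$.

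Second, we compute the left-hand side. We have $p=f_{-N}^3(\overline p)$, so $f_N^3(f_\omega^\kappa(p))=f_N^3\circ f_\omega^\kappa\circ f_{-N}^3(\Psi_\mathsf{y}(\sqrt\kappa x,\sqrt\kappa z))$, whose $(x,z)$-coordinates are $\sqrt{\kappa}\,\widetilde f_\omega^\kappa(x,z)$. Since $\max\{r,\sqrt\kappa\}^{-\alpha}$ with $r=\sqrt{\kappa}\,|\cdot|$ equals $\kappa^{-\alpha/2}\max\{|\cdot|,1\}^{-\alpha}$, and $r_\mathsf{y}$ depends only on the $(x,z)$-components (this holds globally on $\Sph$, not just in the chart), we obtain pointwise in $(\omega,\bB)$
\[
V_{\mathsf{y},\kappa}(f_\omega^\kappa(p))=\widetilde V_{\mathsf{y},\kappa}\bigl(f_N^3(f_\omega^\kappa(p))\bigr)=\kappa^{-\alpha/2}g_\mathsf{y}\bigl(\widetilde f_\omega^\kappa(x,z)\bigr).
\]
Taking expectations gives $\EE_{\omega,\bB}[V_{\mathsf{y},\kappa}(f_\omega^\kappa(p))]=\kappa^{-\alpha/2}\EE_{\omega,\bB}[g_\mathsf{y}(\widetilde f_\omega^\kappa(x,z))]$.

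Finally, dividing both sides by $\kappa^{-\alpha/2}>0$ shows that \eqref{eq:physical_core_drift_y} holds with constant $\gamma$ if and only if \eqref{eq:scaled_core_drift_y} holds with the same $\gamma$, for the same range of $\kappa$. The only point requiring care is the first step: the chart and the condition $A\sqrt\kappa\leq s_0$ (that is, $\kappa$ small) must make the map $(x,z)\mapsto p$ onto the relevant set. We do not expect any genuine obstacle here.
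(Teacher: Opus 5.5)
Your proposal is correct and matches the paper, which simply says the proof is identical to that of Lemma~\ref{lemma:boundary_layer_reduction}: match $p$ with $(x,z)$ via $f_N^3(p)=\Psi_\mathsf{y}(\sqrt{\kappa}x,\sqrt{\kappa}z)$, then extract the common factor $\kappa^{-\alpha/2}$ from both sides using \eqref{eq:exact_scaling_identity_y} and the definition of $\widetilde f_\omega^\kappa$; your remark that $r_\mathsf{y}$ only depends on the $(x,z)$-components makes the pointwise identity airtight. Two cosmetic points: the chart hedge is unnecessary, since $f_N^3$ rotates by the angle $Nz$, which vanishes at $(0,1,0)$, so $f_N^3$ fixes that point; and since $\mathsf{Q}_\mathsf{y}(s_0)$ excludes the fixed point, the correspondence is really onto $K_A^\mathsf{y}\setminus\{0\}$, so in (1)$\Rightarrow$(2) you recover the origin by continuity of $(x,z)\mapsto\EE_{\omega,\bB}[g_\mathsf{y}(\widetilde f_\omega^\kappa(x,z))]$ (only (2)$\Rightarrow$(1) is used later anyway).
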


We next identify the limiting rescaled process, in the spirit of Sections~\ref{ssec:viscous_drift_Fx} and \ref{ssec:viscous_drift_Fz}. For convenience we write
\begin{align}
    G_1:=\sqrt2\,B_1^1, \qquad G_2:=\sqrt2\,B_1^3, \qquad G_3:=\sqrt2\int_0^1 B_s^3\,\dd s,
\end{align}
where $B^1,B^3$ are independent Brownian motions, so that $G_1\sim \mathcal N(0,2)$, $G_2\sim \mathcal N(0,2)$, and $G_3\sim \mathcal N(0,2/3)$.

\begin{lemma}\label{lemma:limiting_rescaled_dynamics_y}
Let $\omega_1,\omega_2\sim\mathrm{Unif}([-N,N])$ be independent. Let $(G_1,G_2,G_3)$ be as above and let $\xi=(\xi_1,\xi_2)\sim \mathcal N(0,2I_2)$, independent of $(\omega_1,\omega_2,G_1,G_2,G_3)$. Then, for every $(x,z)\in\RR^2$, the limiting rescaled map $\widetilde f_\omega(x,z)=(\widetilde x,\widetilde z)$ is given by
\begin{align}
    \widetilde x_1 &:= x+(N-\omega_1)z+G_1-\omega_1 G_3, \\
    \widetilde z_1 &:= z+G_2,
\end{align}
and
\begin{align}
    \widetilde x &= (\widetilde x_1-N\widetilde z_1)\cos(\omega_2) + (N\widetilde x_1+\widetilde z_1)\sin(\omega_2) + (\xi_1-N\xi_2), \\
    \widetilde z &= -\widetilde x_1\sin(\omega_2)+\widetilde z_1\cos(\omega_2)+\xi_2.
\end{align}
In particular, there exist random variables $\rho_1\geq0$, $\theta_1\in\RR$, and $\zeta\sim \mathcal N(0,2+2N^2)$, with $\zeta$ independent of $(\omega_2,\rho_1,\theta_1)$, such that
\begin{align}\label{eq:rotation_representation_y}
    \widetilde x = \sqrt{1+N^2}\,\rho_1\sin(\omega_2+\theta_1)+\zeta, \qquad \rho_1:=\sqrt{\widetilde x_1^2+\widetilde z_1^2}.
\end{align}
\end{lemma}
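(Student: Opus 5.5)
The plan is to identify $\widetilde f_\omega$ as the $\kappa\to0$ limit of the rescaled conjugated process $\widetilde f_\omega^\kappa$, exactly as in Sections~\ref{ssec:viscous_drift_Fx}--\ref{ssec:viscous_drift_Fz}. Concretely, $\widetilde f_\omega$ is the map obtained by composing the linearisations at $(0,1,0)$ of the three elementary steps, rescaled by $\sqrt\kappa$:
\[
f_{-N}^3, \qquad f_{\omega_2}^{2,\kappa}\circ f_{\omega_1}^{3,\kappa}, \qquad f_N^3 .
\]
Since $\Pi_{(0,1,0)}$ projects $\R^3$ onto the $(x,z)$-plane, the limiting noise in the chart $\Psi_\sfy$ is $\sqrt2(B^1,B^3)$. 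The Itô correction $-2\kappa\phi$ contributes $O(\kappa)$ after rescaling and disappears in the limit. Justifying the passage to the limit (a stopping time at $|\widetilde\phi_t^\kappa|\geq\kappa^{-1/4}$, Burkholder--Davis--Gundy and Gr\"onwall as in Lemma~\ref{lemma:EXPdiffrescaledlimiting}) is routine and I would only sketch it. The content of the lemma is the explicit computation.

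\emph{Step 1 (pre-rotation).} The map $f^3_{-N}$ rotates about the $z$-axis by the angle $-Nz$, which is of order $N\sqrt\kappa$ at the point $\Psi_\sfy(\sqrt\kappa x,\sqrt\kappa z)$. Using $y=1+O(\kappa)$, we get $x'=x\cos(Nz)+y\sin(Nz)$. In rescaled variables this becomes $(x,z)\mapsto(x+Nz,z)$.

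\emph{Step 2 (first noisy shear).} By \eqref{eq:u2u3-explicit-boundary}, $u^3=z(-y,x,0)$, so near $y=1$ the limiting rescaled SDE on $[0,1)$ is
\[
\d\widetilde x_t=-\omega_1\widetilde z_t\,\d t+\sqrt2\,\d B^1_t,\qquad \d\widetilde z_t=\sqrt2\,\d B^3_t .
\]
Integrating from $(x+Nz,z)$ gives $\widetilde z_1=z+G_2$ and
\[
\widetilde x_1=x+Nz-\omega_1 z+G_1-\omega_1\sqrt2\int_0^1B^3_s\,\d s=x+(N-\omega_1)z+G_1-\omega_1G_3 .
\]

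\emph{Step 3 (second noisy step).} Near $(0,1,0)$ the field $\omega_2u^2=\omega_2y(z,0,-x)$ does not vanish to second order in $y$. Instead, it acts on the rescaled $(x,z)$ plane as the linear rotation $\d X=\omega_2 JX\,\d t+\sqrt2\,\d\overline B_t$, where $J=\begin{pmatrix}0&1\\-1&0\end{pmatrix}$. By Duhamel's formula,
\[
X_2=e^{\omega_2J}X_1+\sqrt2\int_0^1e^{\omega_2(1-s)J}\,\d\overline B_s .
\]
Rotation invariance of Brownian motion makes the stochastic integral exactly $\cN(0,2I_2)$. Call it $\xi$; it is independent of $\omega_2$ and of the first step. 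This yields $x_2=\widetilde x_1\cos\omega_2+\widetilde z_1\sin\omega_2+\xi_1$ and $z_2=-\widetilde x_1\sin\omega_2+\widetilde z_1\cos\omega_2+\xi_2$.

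\emph{Step 4 (post-rotation).} Arguing as in Step 1, $f_N^3$ acts as $(x,z)\mapsto(x-Nz,z)$. Hence $\widetilde z=z_2$ and $\widetilde x=x_2-Nz_2$. Expanding and collecting the $\cos\omega_2$ and $\sin\omega_2$ terms gives exactly the stated formula, with noise part $\xi_1-N\xi_2$.

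\emph{Step 5 (phase representation).} Set
\[
\zeta:=\xi_1-N\xi_2\sim\cN(0,2+2N^2),
\]
which is independent of everything else. The coefficient vector $(\widetilde x_1-N\widetilde z_1,\;N\widetilde x_1+\widetilde z_1)$ is the image of $(\widetilde x_1,\widetilde z_1)$ under $\begin{pmatrix}1&-N\\N&1\end{pmatrix}$, which is $\sqrt{1+N^2}$ times a rotation. Its norm is therefore $\sqrt{1+N^2}\,\rho_1$. Writing it as $\sqrt{1+N^2}\rho_1(\sin\theta_1,\cos\theta_1)$ gives
\[
\widetilde x=\sqrt{1+N^2}\,\rho_1\sin(\omega_2+\theta_1)+\zeta .
\]
Here $\rho_1$ and $\theta_1$ depend only on the first step, so they are independent of $\omega_2$ and $\zeta$.

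The main obstacle I expect is bookkeeping in Steps 1 and 4. The orders of $N\sqrt\kappa$ versus $\sqrt\kappa$ must be handled consistently, so that the conjugating rotations really do linearise to shears with coefficient $\pm N$ and the rescaled limit is well defined. A secondary point is the signs coming from the orientation of $u^3$ and of the rotations. I would fix these by checking against \eqref{eq:flowmapfE} directly.
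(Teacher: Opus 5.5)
Your proposal is correct and follows the same route as the paper. You identify $\widetilde f_\omega$ as the $\kappa\to0$ limit of the conjugated rescaled dynamics (shear $x\mapsto x+Nz$, noisy shear with rate $-\omega_1$, noisy rotation by $\omega_2$, shear $x\mapsto x-Nz$), and then obtain the phase representation from $(\widetilde x_1-N\widetilde z_1)^2+(N\widetilde x_1+\widetilde z_1)^2=(1+N^2)\rho_1^2$ together with the independence of $\xi$. The paper only sketches this; your version supplies the explicit step-by-step computation, including the Duhamel/rotation-invariance argument that gives $\xi\sim\mathcal N(0,2I_2)$ independent of $\omega_2$ and of the first step.
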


\begin{proof}
We argue analogously as in the previous section, using the explicit form of $u_\omega$ together with the conjugation $f_{-N}^3$, the exact rescaling and the resulting limit obtained for $\kappa\to 0$.
The representation \eqref{eq:rotation_representation_y} follows from the identity
\begin{align}
    \bigl(\widetilde x_1-N\widetilde z_1\bigr)^2 + \bigl(N\widetilde x_1+\widetilde z_1\bigr)^2 = (1+N^2)\bigl(\widetilde x_1^2+\widetilde z_1^2\bigr),
\end{align}
together with the independence of $\xi$.
\end{proof}
We now split the stochastic core into annular and inner zones.

\subsubsection{The annular core $K_{A_1}^\mathsf{y}\setminus K_{A_0}^\mathsf{y}$}
Unlike the corresponding annular-core analysis in the $F_\sfx$ and $F_\sfz$ regimes, after conjugation by $f_N^3$ the quantity $r_\sfy(f_N^3(\cdot))$ is no longer adapted to a single coordinate of the rescaled limiting dynamics.
In particular, the deterministic component of the map does not reduce to the same simple shear/rotation structure used previously, and so the proof cannot be obtained by a direct repetition of the arguments from the other sections.
To recover a quantitative drift estimate, we instead exploit more explicitly the Gaussian structure of the limiting rescaled process: the relevant radial variable can be written in terms of linear combinations of Gaussian random variables, and this allows us to control both the averaged rotation effect and the Gaussian tails that appear in the annular regime.
The following two preliminary lemmas isolate precisely these probabilistic features and will be the key ingredients in the proof of Proposition~\ref{prop:driftlimrescaledYannular}.
\begin{lemma}[Gaussian smoothing estimates]\label{lemma:gaussian_smoothing}
Fix $0<\sigma_-\leq \sigma_+<\infty$, and let $G\sim \mathcal N(0,\sigma^2)$ with
$\sigma\in[\sigma_-,\sigma_+]$. Then,
\begin{enumerate}
    \item[(i)] For every $\alpha\in(0,1)$, there exists $C_{\alpha,\sigma_-,\sigma_+}>0$, depending only on $\alpha,\sigma_-,\sigma_+$, such that for all $\lambda\geq 1$ and all $a\in\RR$,
    \begin{align}
        \EE\bigl[\min\{1,(\lambda|a+G|)^{-\alpha}\}\bigr] \leq C_{\alpha,\sigma_-,\sigma_+} \min\{1,(\lambda|a|)^{-\alpha}\}.
    \end{align}

    \item[(ii)] For every $c_0>0$, there exists $c_1=c_1(c_0,\sigma_+)>0$ such that for all $a\in\RR$,
    \begin{align}
        \EE\bigl[e^{-c_0(a+G)^2}\bigr] \leq e^{-c_1 a^2}.
    \end{align}
\end{enumerate}
\end{lemma}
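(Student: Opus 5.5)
For (i), I would compare the truncated reciprocal power with a Gaussian of bounded variance, splitting according to the size of $|a|$ relative to the spread $\sigma_+$. Set $\varphi_\lambda(u):=\min\{1,(\lambda|u|)^{-\alpha}\}$. This function is even and non-increasing in $|u|$, and $\varphi_\lambda\le 1$.

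\emph{Case $|a|\le 2\sigma_+$.} Here I bound the expectation by a constant. Since $\lambda\ge 1$ and $|a|\le 2\sigma_+$, we have $\varphi_\lambda(a)\ge \min\{1,(2\sigma_+\lambda)^{-\alpha}\}$. It therefore suffices to show that $\EE[\varphi_\lambda(a+G)]\lesssim \lambda^{-\alpha}$. I would use $\varphi_\lambda(u)\le \lambda^{-\alpha}|u|^{-\alpha}$ and the fact that the density of $a+G$ is bounded by $(2\pi\sigma_-^2)^{-1/2}$. This gives
\begin{align}
\EE|a+G|^{-\alpha}\le \int_{|u|\le 1}(2\pi\sigma_-^2)^{-1/2}|u|^{-\alpha}\,\dd u+1,
\end{align}
which is finite because $\alpha<1$. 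Hence
\begin{align}
\EE[\varphi_\lambda(a+G)]\le C\lambda^{-\alpha}\le C'(2\sigma_+)^{\alpha}\varphi_\lambda(a)\cdot\max\{1,(2\sigma_+)^{-\alpha}\},
\end{align}
with constants depending only on $\alpha,\sigma_\pm$.

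\emph{Case $|a|>2\sigma_+$.} I split on the event $\{|G|\le |a|/2\}$. On this event $|a+G|\ge |a|/2$, so monotonicity gives $\varphi_\lambda(a+G)\le 2^\alpha\varphi_\lambda(a)$. On the complement I use $\varphi_\lambda\le 1$ together with the Gaussian tail bound $\PP(|G|>|a|/2)\le 2e^{-a^2/(8\sigma_+^2)}$. It then remains to absorb this tail into $\varphi_\lambda(a)$, and this is the one delicate point, because $\varphi_\lambda(a)=(\lambda|a|)^{-\alpha}$ can be small when $\lambda$ is large while the tail term does not depend on $\lambda$.

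For this reason I would handle the complement differently: I would not use the trivial bound $\varphi_\lambda\le 1$ there. Instead I bound $\EE[\varphi_\lambda(a+G)\mathbf{1}_{|G|>|a|/2}]\le \lambda^{-\alpha}\EE[|a+G|^{-\alpha}\mathbf{1}_{|G|>|a|/2}]$. By Cauchy--Schwarz and the bounded negative moment $\EE|a+G|^{-2\alpha}\le C$ (valid for $2\alpha<1$, or with Hölder exponent $p$ such that $p\alpha<1$ in general), this is at most $C\lambda^{-\alpha}e^{-ca^2}$. Finally $e^{-ca^2}\le C|a|^{-\alpha}$, so the complement contributes at most $C(\lambda|a|)^{-\alpha}$, with the same type of constant. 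Combining the two pieces gives (i).

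For (ii), I would compute the expectation explicitly by completing the square:
\begin{align}
\EE\bigl[e^{-c_0(a+G)^2}\bigr]=(1+2c_0\sigma^2)^{-1/2}\exp\Bigl(-\frac{c_0a^2}{1+2c_0\sigma^2}\Bigr)\le \exp\Bigl(-\frac{c_0}{1+2c_0\sigma_+^2}\,a^2\Bigr).
\end{align}
So (ii) holds with $c_1:=c_0/(1+2c_0\sigma_+^2)$, which depends only on $c_0$ and $\sigma_+$.

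I expect the main obstacle to be the large-$\lambda$ tail absorption in (i). It is resolved by keeping the factor $\lambda^{-\alpha}$ on the bad event and using the uniformly bounded negative moments of a Gaussian with variance bounded below by $\sigma_-^2$.
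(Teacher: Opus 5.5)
Your proof is correct. Part (ii) is the same explicit completion-of-the-square computation as in the paper. For (i) you use the same ingredients as the paper: the pointwise bound $\min\{1,(\lambda|u|)^{-\alpha}\}\le\lambda^{-\alpha}|u|^{-\alpha}$, local integrability of $|u|^{-\alpha}$ against a density bounded by $(2\pi\sigma_-^2)^{-1/2}$, and Gaussian tails. You organize them differently, though.

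The paper applies the pointwise bound globally, so $\lambda$ factors out at once. Part (i) then reduces to the $\lambda$-free estimate $\EE|a+G|^{-\alpha}\le C_\alpha(\sigma+|a|)^{-\alpha}\le C\min\{1,|a|^{-\alpha}\}$. This is proved by rescaling $t=a/\sigma$ and splitting the integration variable according to whether $|u|\le|t|/2$. The conclusion follows from $\lambda^{-\alpha}\min\{1,|a|^{-\alpha}\}\le\min\{1,(\lambda|a|)^{-\alpha}\}$, which holds because $\lambda\ge1$.

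Your route splits on the size of $|a|$ and on the event $\{|G|\le|a|/2\}$, using H\"older with a negative moment of order $p\alpha<1$ on the bad event. It is equally valid and avoids any explicit integral computation. However, the large-$\lambda$ absorption you flag as the main obstacle only arises because you keep $\varphi_\lambda$ on the good event. If you also bound $\varphi_\lambda(a+G)\le\lambda^{-\alpha}(|a|/2)^{-\alpha}$ there, your argument becomes a $\lambda$-free negative-moment bound, which is exactly the paper's structure and yields the cleaner intermediate estimate.

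One detail to add in your second case concerns $\lambda|a|<1$, which can occur there when $\sigma_+<\tfrac12$. In that regime the complement bound $C(\lambda|a|)^{-\alpha}$ is not by itself $\lesssim\min\{1,(\lambda|a|)^{-\alpha}\}$. Combine it with the trivial bound $\EE[\varphi_\lambda(a+G)]\le 1$, or with $(\lambda|a|)^{-\alpha}\le(2\sigma_+)^{-\alpha}$, to close the argument.
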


\begin{proof}
We start by proving (i). Since $\min\{1,(\lambda|x|)^{-\alpha}\}\leq \lambda^{-\alpha}|x|^{-\alpha}$, it is enough to obtain
\begin{equation}\label{eq:negmoment_bound_short}
\EE[|a+G|^{-\alpha}] \leq C_{\alpha,\sigma_-,\sigma_+} \min\{1,|a|^{-\alpha}\},
\end{equation}
uniformly for $\sigma\in[\sigma_-,\sigma_+]$. Indeed, for $\lambda\geq1$, it holds
\begin{equation}
\EE[\min\{1,(\lambda|a+G|)^{-\alpha}\}]\leq\lambda^{-\alpha}\EE[|a+G|^{-\alpha}]\leq C_\alpha \lambda^{-\alpha}\min\{1,|a|^{-\alpha}\}\leq C_\alpha \min\{1,(\lambda|a|)^{-\alpha}\}.
\end{equation}
To prove \eqref{eq:negmoment_bound_short} set $t:=\frac{a}{\sigma}$ and write
\begin{equation}
    \EE[|a+G|^{-\alpha}]=\sigma^{-\alpha}F_\alpha(t), \qquad F_\alpha(t):=\int_\RR |u|^{-\alpha}\varphi_1(u-t)\,\dd u,
\end{equation}
where $\varphi_1(v):=(2\pi)^{-1/2}e^{-v^2/2}$ denotes the density of $\sigma^{-1}G\sim N(0,1)$.
It is easy to check that for all $t\in\RR$,
\begin{equation}
F_\alpha(t)\leq C_\alpha(1+|t|)^{-\alpha}.
\end{equation}
Indeed, when $|t|\leq 2$, splitting the integration whether $|u|\leq 4$ or not and using integrability of $|u|^{-\alpha}$ near zero and $\varphi_1$ near $\infty$ respectively, shows that $F_\alpha(t)\leq C_\alpha$ uniformly. On the other hand, when $|t|>2$, integrating where $|u|>|t|/2$ or not and again using the monotonicity and integrability properties of $|u|^{-\alpha}$ and $\varphi_1(t)$ respectively proves that $F_\alpha(t)\leq C_\alpha|t|^{-\alpha}$.

Consequently,
\begin{equation}
\EE[|a+G|^{-\alpha}] \leq C_\alpha \sigma^{-\alpha}(1+|a|/\sigma)^{-\alpha} = C_\alpha (\sigma+|a|)^{-\alpha} \leq C_{\alpha,\sigma_-,\sigma_+}\min \lbrace 1, |a|^{-\alpha} \rbrace
\end{equation}
and using $\sigma\in[\sigma_-,\sigma_+]$, we obtain \eqref{eq:negmoment_bound_short}. 

For point (ii), a direct Gaussian computation yields
\begin{equation}
\EE\bigl[e^{-c_0(a+G)^2}\bigr]=\frac{1}{\sqrt{1+2c_0\sigma^2}}\exp\!\left(-\frac{c_0}{1+2c_0\sigma^2}a^2\right)\leq e^{-c_1a^2},
\end{equation}
where we used that $\sigma\leq \sigma_+$.
\end{proof}

We are now in position to prove a drift estimate for the rescaled process.

\begin{proposition}\label{prop:driftlimrescaledYannular}
Fix $\alpha\in(0,1)$.  Then there exist $A_0=A_0(\alpha)>1$ and $N_\infty=N_\infty(\alpha)\in\NN$ such that, for every $N\geq N_\infty$ and every $(x,z)\in\RR^2$ with $h_\mathsf{y}(x,z)\geq A_0$, there holds
\begin{align}
    \EE_{\omega,\bB}\bigl[g_\mathsf{y}(\widetilde f_\omega(x,z))\bigr] \leq \frac12\,g_\mathsf{y}(x,z).
\end{align}
\end{proposition}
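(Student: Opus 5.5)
We follow the template of Propositions~\ref{prop:driftlimrescaledXannular} and \ref{prop:driftlimrescaledZannular}, using the representation \eqref{eq:rotation_representation_y} of Lemma~\ref{lemma:limiting_rescaled_dynamics_y}. Let $h_0:=h_\mathsf{y}(x,z)\geq A_0$. Since $g_\mathsf{y}(\widetilde x,\widetilde z)\leq \min\{1,|\widetilde x|^{-\alpha}\}$, it suffices to bound $\EE[\min\{1,|\widetilde x|^{-\alpha}\}]$ by $\frac12 h_0^{-\alpha}$.

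The argument runs in three steps.

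\textbf{Step 1 (average over $\omega_2$ and $\zeta$).} We write $\widetilde x=\sqrt{1+N^2}\,\rho_1\sin(\omega_2+\theta_1)+\zeta$. The variable $\zeta\sim\mathcal N(0,2+2N^2)$ has standard deviation of order $N$, so a fixed constant $c$ with $|c|\leq \rho_1\sqrt{1+N^2}/2$ cannot be assumed. We therefore handle the Gaussian and the rotation jointly. Conditionally on $(\rho_1,\theta_1,\omega_2)$, Lemma~\ref{lemma:gaussian_smoothing}(i), applied after normalising by $\sqrt{1+N^2}$, gives
\begin{equation}
\EE_\zeta[\min\{1,|\widetilde x|^{-\alpha}\}]\leq C_\alpha (1+N^2)^{-\alpha/2}\min\{1,|\rho_1\sin(\omega_2+\theta_1)|^{-\alpha}\}.
\end{equation}
This uses that $\zeta/\sqrt{1+N^2}$ has variance in the fixed range $[2,2]$, and that $|u|^{-\alpha}$ averaged against a Gaussian of variance $\sigma^2\sim N^2$ is at most $C\sigma^{-\alpha}$. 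Next we average over $\omega_2\sim\mathrm{Unif}[-N,N]$. Because $\alpha<1$, the integrability of $|\sin u|^{-\alpha}$, exactly as in the proof of Lemma~\ref{lemma:EEmaxrotationZ} with $c=0$, yields
\begin{equation}
\EE_{\omega_2,\zeta}[\min\{1,|\widetilde x|^{-\alpha}\}]\leq C_\alpha N^{-\alpha}\rho_1^{-\alpha}.
\end{equation}
One also has the trivial bound $\leq 1$.

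\textbf{Step 2 (lower bound on $\rho_1$).} By definition $\rho_1\geq |\widetilde z_1|$ and $\rho_1\geq |\widetilde x_1|$, where $\widetilde z_1=z+G_2$ and $\widetilde x_1=x+(N-\omega_1)z+G_1-\omega_1G_3$. We split into two cases.

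If $|z|\geq h_0/2$, we use $\widetilde z_1$. On $\{|G_2|\leq h_0/4\}$ we have $\rho_1\geq h_0/4$, and the complement has probability at most $Ce^{-ch_0^2}$.

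If $|z|<h_0/2$, then $|x|\geq h_0/2$. On $\{|G_1|\leq h_0/8\}$ the quantity $x+G_1$ has size at least $3h_0/8$. The remaining term $(N-\omega_1)z-\omega_1G_3$ is affine in $\omega_1$. Applying Lemma~\ref{lemma:EEmaxshearX} with $a=x+G_1+Nz$ and $b=-(z+G_3)$ gives
\begin{equation}
\EE_{\omega_1}\bigl[\min\{1,|\widetilde x_1|^{-\alpha}\}\bigr]\leq C\min\{1,|a|^{-\alpha},(N|b|)^{-\alpha}\}.
\end{equation}
Here $a$ can be small, because $x+Nz$ may cancel. In that situation, however, $|z|\approx|x|/N\gtrsim h_0/N$, so $|b|$ is of order $h_0/N$ unless $G_3$ cancels it. This cancellation is the delicate point. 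It is controlled by the Gaussian smoothing bound Lemma~\ref{lemma:gaussian_smoothing}(i) for $G_3$ and $G_1$ (rather than by tail cutoffs), applied to the reciprocal moments $\EE|a|^{-\alpha}$ and $\EE|b|^{-\alpha}$. This yields a bound of order $C_\alpha h_0^{-\alpha}$, possibly up to a power $N^{\alpha}$ loss.

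\textbf{Step 3 (combination).} Combining Steps 1 and 2 gives
\begin{equation}
\EE[g_\mathsf{y}(\widetilde f_\omega)]\leq C_\alpha N^{-\alpha}\,\EE[\rho_1^{-\alpha}\wedge \cdot\,]+Ce^{-ch_0^2}.
\end{equation}
We then choose $A_0$ so that the tail term is at most $\frac14 h_0^{-\alpha}$, and $N_\infty$ so that the main term is at most $\frac14 h_0^{-\alpha}$.

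The main obstacle is the second case of Step 2. There the $N$-dependence is tight: the possible factor $N^{\alpha}$ lost to near-cancellation in $\widetilde x_1$ must be beaten by the $N^{-\alpha}$ gain from the $\sqrt{1+N^2}$ amplification in Step 1. If the two exactly balance, we fall back on the Gaussian smoothing of $\zeta$, whose variance $2+2N^2$ independently supplies a factor $N^{-\alpha}$. We would then keep both gains, aiming for an overall bound of order $N^{-\alpha}h_0^{-\alpha}$ up to constants.
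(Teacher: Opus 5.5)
Your Step~1 is correct and takes a different route from the paper. The paper truncates $\zeta$ on $F_1=\{|\zeta|\le\frac12\sqrt{1+N^2}\,\rho_1\}$, applies Lemma~\ref{lemma:EEmaxrotationZ} there, and then has to absorb a tail term $Ce^{-c\rho_1^2}$ using the auxiliary exponent $\alpha'=\frac{1+\alpha}{2}>\alpha$ and a large $A_0$. Integrating $\zeta$ out directly instead gives $\EE_{\omega_2,\zeta}[\min\{1,|\widetilde x|^{-\alpha}\}]\le C_\alpha N^{-\alpha}\min\{1,\rho_1^{-\alpha}\}$ with no tail term, which is simpler. One citation point: the factored bound $C_\alpha\lambda^{-\alpha}\min\{1,|a|^{-\alpha}\}$ you write is the negative-moment estimate $\EE|a+G|^{-\alpha}\le C\min\{1,|a|^{-\alpha}\}$ from inside the proof of Lemma~\ref{lemma:gaussian_smoothing}. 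The stated form (i) only gives the weaker $C\min\{1,(\lambda|a|)^{-\alpha}\}$.

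The genuine gap is in Step~2 (second case) together with Step~3. The whole proposition rests on an $N$-independent bound $\EE[\min\{1,\rho_1^{-\alpha}\}]\le C_\alpha h_0^{-\alpha}$, and you leave open a possible $N^{\alpha}$ loss. Your fallback cannot repair such a loss. The factor $(1+N^2)^{-\alpha/2}$ from Step~1 \emph{is} the Gaussian smoothing of $\zeta$: the $\sqrt{1+N^2}$ amplification and the variance $2+2N^2$ are the same gain. So there is no second, independent $N^{-\alpha}$ to keep, and a real $N^{\alpha}$ loss would leave you with $C_\alpha h_0^{-\alpha}$ and no contraction.

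The missing observation is that no loss occurs. Write $a=x+Nz+G_1$ and $b=-(z+G_3)$. Lemma~\ref{lemma:EEmaxshearX} applied to $(a/N,b/N)$ gives the average over $\omega_1$ as $C_\alpha\min\{1,|a|^{-\alpha},(N|b|)^{-\alpha}\}$. Then use $\EE\min\le\min\EE$ and Lemma~\ref{lemma:gaussian_smoothing}(i) with $\lambda=1$ for $G_1$ and $\lambda=N$ for $G_3$; its constants are uniform in $\lambda\ge1$. This gives
\[
\EE\bigl[\min\{1,|\widetilde x_1|^{-\alpha}\}\bigr]\le C_\alpha\min\bigl\{1,|x+Nz|^{-\alpha},(N|z|)^{-\alpha}\bigr\}\le C_\alpha 2^{\alpha}\bigl(|x+Nz|+N|z|\bigr)^{-\alpha}\le C_\alpha 4^{\alpha}h_0^{-\alpha},
\]
where the last step is the triangle inequality $h_0\le|x|+|z|\le 2(|x+Nz|+N|z|)$. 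This is exactly the corresponding step in the paper. The cancellation $x+Nz\approx 0$ that worries you forces $N|z|\approx|x|$, and that is precisely what this inequality records.

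With this in hand, the case split on $|z|$ and the tail cutoffs on $G_1,G_2$ are unnecessary. The bound $\rho_1\ge|\widetilde x_1|$ alone yields $\EE[g_\mathsf{y}(\widetilde f_\omega(x,z))]\le C'_\alpha N^{-\alpha}h_0^{-\alpha}$. Choosing $N_\infty$ with $C'_\alpha N_\infty^{-\alpha}\le\frac12$ then finishes the proof, and on your route any $A_0>1$ works.
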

\begin{proof}
Fix $(x,z)\in\RR^2$, and set $h_0:=h_\mathsf y(x,z)=\sqrt{x^2+z^2}$.
Since $h_0\geq A_0>1$, we have $g_\mathsf y(x,z)=h_0^{-\alpha}$ and $g_\mathsf y(\widetilde x,\widetilde z)\leq \min\{1,|\widetilde x|^{-\alpha}\}$, so it suffices to estimate $\widetilde x$.
From Lemma~\ref{lemma:limiting_rescaled_dynamics_y} recall that
\begin{equation}
\widetilde x = \sqrt{1+N^2}\,\rho_1\sin(\omega_2+\theta_1)+\zeta, \qquad \rho_1:=\sqrt{\widetilde x_1^2+\widetilde z_1^2},
\end{equation}
where
\begin{equation}
\widetilde x_1=x+(N-\omega_1)z+G_1-\omega_1G_3, \qquad \widetilde z_1=z+G_2,
\end{equation}
and $\zeta\sim\mathcal N(0,2+2N^2)$ is independent of $(\omega_2,\rho_1,\theta_1)$, and define the good event
\begin{equation}
F_1:=\Bigl\{|\zeta|\leq \frac12\sqrt{1+N^2}\,\rho_1\Bigr\}.
\end{equation}
Conditioning on $\mathcal{H}_5:= \sigma(\omega_1,G_1,G_2,G_3,\zeta)$, on $F_1$ we write
\begin{equation}
\widetilde x = N\Bigl(h\sin(\omega_2+\theta_1)+c\Bigr), \qquad h:=\frac{\sqrt{1+N^2}}{N}\rho_1,\qquad c:=\frac{\zeta}{N},
\end{equation}
and we have $|c|\leq h/2$. Hence, by Lemma~\ref{lemma:EEmaxrotationZ}, $\rho_1\geq|\widetilde x_1|$ and $\sqrt{1+N^2}\geq N$,
\begin{align}
    \EE_{\omega_2}\Bigl[\min\{1,|\widetilde x|^{-\alpha}\}\,\Big|\,\mathcal{H}_5\Bigr]
    &\leq C_\alpha \min \lbrace 1, N^{-\alpha}h^{-\alpha} \rbrace\\
    &= C_\alpha \min \lbrace 1, (\sqrt{1+N^2}\,\rho_1)^{-\alpha} \rbrace\\
    &\leq C_\alpha \min \lbrace1, (N|\widetilde{x}_1|)^{-\alpha} \rbrace.
\end{align}
On the complement $F_1^c$, for $\mathcal{H}_4 = \sigma(\omega_1,G_1,G_2,G_3)$  the Gaussian tail of $\zeta/\sqrt{1+N^2}\sim \cN(0,2)$ leads to
\begin{align}
    \PP\bigl(F_1^c\,\big|\,\mathcal{H}_4\bigr)\leq C e^{-c\rho_1^2},
\end{align}
so that, using $g_\mathsf y\leq 1$ on $F_1^c$,
\begin{align}\label{eq:reduction_annular_y_corrected}
    \EE_{\omega_2,\zeta}\Bigl[g_\mathsf y(\widetilde f_\omega(x,z))\,\Big|\,\mathcal{H}_4 \Bigr]\leq C_\alpha \min\{1,(N|\widetilde x_1|)^{-\alpha}\}+C e^{-c\rho_1^2}.
\end{align}
We now treat separately the two terms appearing in \eqref{eq:reduction_annular_y_corrected} starting with the first one. Write $\mathcal{H}_2=\sigma(G_1,G_3)$ and
\begin{equation}
\widetilde x_1=\Lambda_1+\omega_1\Lambda_2,\qquad\Lambda_1:=x+Nz+G_1,\qquad\Lambda_2:=-(z+G_3),
\end{equation}
so that an application of Lemma~\ref{lemma:EEmaxshearX} gives
\begin{align}
    \EE_{\omega_1}\Bigl[\min\{1,(N|\widetilde x_1|)^{-\alpha}\}\,\Big|\, \mathcal{H}_2\Bigr] \leq C_\alpha\min\bigl\{1,(N|\Lambda_1|)^{-\alpha},(N^2|\Lambda_2|)^{-\alpha}\bigr\}.
\end{align}
Taking expectation in $G_1,G_3$ and using
\begin{equation} \min\{1,(N|\Lambda_1|)^{-\alpha},(N^2|\Lambda_2|)^{-\alpha}\} \leq \min\{1,(N|\Lambda_1|)^{-\alpha}\},
\end{equation}
as well as the analogous bound with $\Lambda_2$, we obtain
\begin{align}
    \EE\Bigl[\EE_{\omega_1}\bigl[\min\{1,(N|\widetilde x_1|)^{-\alpha}\}\,\big|\, \mathcal{H}_2 \bigr]\Bigr]\leq C_\alpha\min\Bigl\{1,\,\EE\bigl[\min\{1,(N|\Lambda_1|)^{-\alpha}\}\bigr],\,\EE\bigl[\min\{1,(N^2|\Lambda_2|)^{-\alpha}\}\bigr]\Bigr\}.
\end{align}
Since $\Lambda_1=x+Nz+G_1$ and $\Lambda_2=-(z+G_3)$, point (i) from Lemma~\ref{lemma:gaussian_smoothing} with exponent $\alpha$ yields
\begin{align}
    \EE\Bigl[\EE_{\omega_1}\bigl[\min\{1,(N|\widetilde x_1|)^{-\alpha}\}\,\big|\,\mathcal{H}_2 \bigr]\Bigr]&\leq C_\alpha \min\bigl\{1,(N|x+Nz|)^{-\alpha},(N^2|z|)^{-\alpha}\bigr\}\\
    &\leq C_\alpha\min\bigl\{(N|x+Nz|)^{-\alpha},(N^2|z|)^{-\alpha}\bigr\}\\
    &\leq C_\alpha 2^\alpha N^{-\alpha} \Bigl(|x+Nz|+N|z|\Bigr)^{-\alpha}
\end{align}
as both $G_1$ and $G_3$ have uniformly bounded variance. Since $N\geq1$,
\begin{equation}
h_0=\sqrt{x^2+z^2}\leq |x|+|z|\leq |x+Nz|+(N+1)|z| \leq 2(|x+Nz|+N|z|),
\end{equation}
therefore,
\begin{align}\label{eq:first_term_annular_y_corrected}
    \EE\Bigl[ \EE_{\omega_1}\bigl[\min\{1,(N|\widetilde x_1|)^{-\alpha}\}\,\big|\,\mathcal{H}_2\bigr] \Bigr] \leq C_\alpha N^{-\alpha}h_0^{-\alpha}.
\end{align}
We move to the second term in \eqref{eq:reduction_annular_y_corrected}, and we use the trivial bound 
\begin{equation}
    e^{-c\rho_1^2}\leq e^{-c(\Lambda_1+\omega_1\Lambda_2)^2}\leq C_{\alpha'} \min\{1,|\Lambda_1+\omega_1\Lambda_2|^{-{\alpha'}}\},\qquad {\alpha'}=\frac{1+\alpha}{2}<1
\end{equation}
to deduce from Lemma~\ref{lemma:EEmaxshearX}
\begin{equation}
    \EE_{\omega_1,G_2}\bigl[e^{-c\rho_1^2}\,\big|\,\mathcal{H}_2\bigr]\leq C_{\alpha'} \min\{1,|\Lambda_1|^{-{\alpha'}},(N|\Lambda_2|)^{-{\alpha'}}\},
\end{equation}
and, mimicking the argument above, we infer 
\begin{align}\label{eq:second_term_annular_y_corrected}
\EE_{\omega_1,G_1,G_2,G_3}\bigl[e^{-c\rho_1^2}\bigr]  \leq C_{\alpha'} h_0^{-{\alpha'}}.
\end{align}
Finally, combining \eqref{eq:reduction_annular_y_corrected}, \eqref{eq:first_term_annular_y_corrected}, and \eqref{eq:second_term_annular_y_corrected}, we conclude that
\begin{align}
    \EE_{\omega,\bB}\bigl[g_\mathsf y(\widetilde f_\omega(x,z))\bigr]
    \leq C_{\alpha}N^{-\alpha}h_0^{-\alpha}+C_{{\alpha'}} h_0^{-{\alpha'}}.
\end{align}
Since ${\alpha'}>\alpha$, there exists $A_0=A_0(\alpha)>1$ such that, for all $h_0\geq A_0$,
\begin{equation}
C_{{\alpha'}} h_0^{-{\alpha'}} \leq \frac14 h_0^{-\alpha},
\end{equation}
and, once $A_0$ is fixed, choose $N_\infty=N_\infty(\alpha)$ so large that, for all $N\geq N_\infty$,
\begin{equation}
C_{\alpha}N^{-\alpha}\leq \frac14.
\end{equation}
Then, for every $N\geq N_\infty$ and every $h_0\geq A_0$,
\begin{equation}
\EE_{\omega,\bB}\bigl[g_\mathsf y(\widetilde f_\omega(x,z))\bigr] \leq \frac12 h_0^{-\alpha}=\frac12 g_\mathsf y(x,z).
\end{equation}
This proves the proposition.
\end{proof}

\subsubsection{The inner core $K_{A_0}^\mathsf{y}$}

Once $A_0$ is fixed by Proposition~\ref{prop:driftlimrescaledYannular}, the dynamics inside $K_{A_0}^\mathsf y$ are handled by the direct Gaussian spreading mechanism.

\begin{proposition}\label{prop:driftlimrescaledYinner}
Let $A_0>1$ and $\alpha\in(0,1)$. Then there exist $N_0=N_0(A_0,\alpha)\in\NN$ and $\gamma\in(0,1)$ such that
\begin{align}
    \EE_{\omega,\bB}\bigl[g_\mathsf{y}(\widetilde f_\omega(x,z))\bigr] \leq \gamma\,g_\mathsf{y}(x,z)
\end{align}
for all $N\geq N_0$ and all $(x,z)\in K_{A_0}^\mathsf{y}$.
\end{proposition}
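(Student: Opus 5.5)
Because $(x,z)\in K_{A_0}^\mathsf{y}$ means $1\leq h_\mathsf{y}(x,z)\leq A_0$, we have $A_0^{-\alpha}\leq g_\mathsf{y}(x,z)\leq 1$. Hence $g_\mathsf{y}(\widetilde f_\omega(x,z))\leq 1\leq A_0^{\alpha}g_\mathsf{y}(x,z)$ always, so the worst case loses a factor $A_0^\alpha$. The plan mirrors Proposition~\ref{prop:driftlimrescaledXinner}. Set $h_0:=h_\mathsf{y}(x,z)$ and introduce the bad event $E:=\{|\widetilde x|\leq 2h_0\}$, where $\widetilde f_\omega(x,z)=(\widetilde x,\widetilde z)$ is given by Lemma~\ref{lemma:limiting_rescaled_dynamics_y}. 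On $E^c$ we have $h_\mathsf{y}(\widetilde x,\widetilde z)\geq 2h_0$, and thus $g_\mathsf{y}(\widetilde f_\omega)\leq 2^{-\alpha}g_\mathsf{y}(x,z)$. It therefore suffices to prove $\PP(E)\lesssim_{A_0}\epsilon(N)$ with $\epsilon(N)\to0$. Then
\begin{align}
\EE[g_\mathsf{y}(\widetilde f_\omega)]\leq \bigl(A_0^{\alpha}\PP(E)+2^{-\alpha}\bigr)g_\mathsf{y}(x,z),
\end{align}
and choosing $N_0$ with $A_0^\alpha\PP(E)\leq\frac{1-2^{-\alpha}}{2}$ gives $\gamma=\frac{1+2^{-\alpha}}{2}$.

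To bound $\PP(E)$, use the representation \eqref{eq:rotation_representation_y}: $\widetilde x=\sqrt{1+N^2}\,\rho_1\sin(\omega_2+\theta_1)+\zeta$, with $\zeta\sim\mathcal N(0,2+2N^2)$ independent of $(\omega_2,\rho_1,\theta_1)$. The cleanest route is to bypass $\rho_1$ entirely and exploit that $\zeta$ has standard deviation of order $N$. Condition on $\mathcal H:=\sigma(\omega_1,\omega_2,G_1,G_2,G_3)$. Then $\widetilde x$ is Gaussian with variance $2+2N^2$ and an $\mathcal H$-measurable mean, so its density is bounded by $(2\pi(2+2N^2))^{-1/2}\leq (2\sqrt{\pi}N)^{-1}$. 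Hence
\begin{align}
\PP(|\widetilde x|\leq 2h_0\,|\,\mathcal H)\leq \frac{4h_0}{2\sqrt{\pi}N}\leq \frac{2A_0}{\sqrt{\pi}N}
\end{align}
almost surely. Taking expectations gives $\PP(E)\leq CA_0/N$, which is even better than the $\log N/N$ bound in the $F_\sfx$ case. This is presumably the ``direct Gaussian spreading mechanism'' alluded to before the statement.

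The main point to verify is the independence structure in Lemma~\ref{lemma:limiting_rescaled_dynamics_y}. Specifically, $\zeta=\xi_1\cos\omega_2+\xi_1N\sin\omega_2+\dots$ must be, conditionally on $\omega_2$ (and on the first-step variables), a centred Gaussian of variance at least of order $N^2$. Using the explicit formula $\widetilde x=(\cdots)+(\xi_1-N\xi_2)$ with $\xi\sim\mathcal N(0,2I_2)$ independent of $(\omega,G)$, the noise term $\xi_1-N\xi_2\sim\mathcal N(0,2+2N^2)$ is independent of everything else, so the conditional-density argument applies verbatim. If that explicit form turned out to be inaccurate, the fallback is to use the rotation structure instead. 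Conditioning on $\mathcal H_5$ and applying the small-ball argument for $h\sin(\omega_2+\theta_1)+c$ over $\omega_2$ uniform would give a probability of order $\sqrt{h_0/(N\rho_1)}$. This would require a lower bound on $\rho_1$ in probability, obtained via Lemma~\ref{lemma:averaged_affine_small_ball} applied to $\widetilde x_1=\Lambda_1+\omega_1(\Lambda_2)$ with $\Lambda_2=-(z+G_3)$ having a Gaussian component, together with $\widetilde z_1=z+G_2$.
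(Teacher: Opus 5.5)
Your argument is correct and is essentially the paper's proof: the same split according to whether $|\widetilde x|>2h_0$, the same conditioning on $\mathcal H=\sigma(\omega_1,\omega_2,G_1,G_2,G_3)$ giving $\widetilde x\,|\,\mathcal H\sim\mathcal N(a,2+2N^2)$, and the same resulting bound $\bigl(2^{-\alpha}+CA_0^{1+\alpha}N^{-1}\bigr)g_\mathsf{y}(x,z)$. The fallback through the rotation representation is unnecessary. You should also drop the aside writing $\zeta$ in terms of $\cos\omega_2,\sin\omega_2$, which is incorrect as stated: $\zeta=\xi_1-N\xi_2$ is exactly the independent Gaussian you then use.
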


\begin{proof}
Fix $(x,z)\in K_{A_0}^\mathsf{y}$ and set $h_0:=h_\mathsf{y}(x,z)\in[1,A_0]$ so that $g_\mathsf{y}(x,z)=h_0^{-\alpha}$ and $A_0^{-\alpha}\leq g_\mathsf{y}(x,z)\leq 1$. For $(\widetilde x,\widetilde z)=\widetilde f_\omega(x,z)$  define
\begin{align}
    E:=\{|\widetilde x|>2h_0\}, \qquad E^c=\{|\widetilde x|\leq 2h_0\}.
\end{align}
On $E$, we have $ g_\mathsf{y}(\widetilde x,\widetilde z)\leq 2^{-\alpha}g_\mathsf{y}(x,z)$, while conditionally on
\begin{align}
    \mathcal H:= \sigma\bigl(\omega_1,\omega_2,G_1,G_2,G_3\bigr),
\end{align}
Lemma~\ref{lemma:limiting_rescaled_dynamics_y} implies that $\widetilde x\,|\,\mathcal H\sim\mathcal N(a,2+2N^2)$ for some $\mathcal H$-measurable $a\in\RR$.
Therefore
\begin{align}
    \PP(E^c\,|\,\mathcal H) \leq \frac{4h_0}{\sqrt{2\pi(2+2N^2)}} \leq \frac{4h_0}{N},
\end{align}
and we note that taking expectations removes the conditioning on $\mathcal{H}$.
Using the trivial bound $g_\mathsf{y}(\widetilde x,\widetilde z)\leq 1\leq A_0^\alpha g_\mathsf{y}(x,z)$, we conclude
\begin{align}
    \EE_{\omega,\bB}\bigl[g_\mathsf{y}(\widetilde f_\omega(x,z))\bigr]
    &=\EE\bigl[g_\mathsf{y}(\widetilde f_\omega(x,z)) | E \bigr]\PP(E) + \EE\bigl[g_\mathsf{y}(\widetilde f_\omega(x,z)) | E^c \bigr]\PP(E^c)\\
    &\leq \left( 2^{-\alpha} + 4\frac{A_0^{1+\alpha}}{N} \right)g_\mathsf{y}(x,z).
\end{align}
Choosing $N_0=N_0(A_0,\alpha)$ so large that the coefficient is strictly smaller than $1$ proves the result.
\end{proof}

We next compare the exact rescaled process $\widetilde f_\omega^\kappa$ with the limiting
process $\widetilde f_\omega$, obtaining the same result as Lemma~\ref{lemma:EXPdiffrescaledlimiting}. We omit the proof.
\begin{lemma}\label{lemma:EXPdiffrescaledlimitingy}
Let $A,N>1$. There exists $C_{A,N}>0$ and $\kappa_0>0$ such that
\begin{align}
    \EE\Bigl[ \bigl| \widetilde f_\omega^\kappa(x,z)-\widetilde f_\omega(x,z) \bigr| \Bigr] \leq C_{A,N}\kappa^{1/4}
\end{align}
for all $(x,z)\in K_A^\mathsf{y}$, all $\omega\in[-N,N]^2$, and all $\kappa\in(0,\kappa_0)$.
\end{lemma}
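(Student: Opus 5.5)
We follow the proof of Lemma~\ref{lemma:EXPdiffrescaledlimiting}, adapted to the conjugated setting. We write $\widetilde\phi^\kappa_t$ for the rescaled trajectory of the conjugated stochastic process in the chart $\Psi_\sfy$. It starts at $(x,z)\in K_A^\sfy$, first undergoes the deterministic pre-rotation $f^3_{-N}$, then the two noisy half-steps, and finally the deterministic post-rotation $f^3_N$. We write $\widetilde\phi_t$ for the corresponding limiting process of Lemma~\ref{lemma:limiting_rescaled_dynamics_y}.

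The first step is to check that the conjugations are harmless. The rotation $f^3_{\pm N}$ fixes $(0,1,0)$ only to first order, so we expand it in the chart. Near $(0,1,0)$ we have $z\approx \sqrt\kappa\,\widetilde z$. The rotation angle is $\pm N z$, so in rescaled coordinates $f^3_{\pm N}$ acts as the linear shear $(\widetilde x,\widetilde z)\mapsto(\widetilde x\mp N\widetilde z,\widetilde z)$ plus a remainder $O_N(\sqrt\kappa\,|\cdot|^2)$. Hence, on the region $|\cdot|\le\kappa^{-1/4}$, the deterministic maps differ from their linear limits by $O_N(\kappa^{1/4})$, uniformly. These linear limits are exactly the terms $N z$ and $N\widetilde z_1$ that appear in Lemma~\ref{lemma:limiting_rescaled_dynamics_y}.

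The second step treats the two noisy half-steps on $[0,2]$. We derive the rescaled SDEs in the $\Psi_\sfy$ chart, as in \eqref{eq:explicit_sde_sfx}, using the explicit forms \eqref{eq:u2u3-explicit-boundary}. We introduce the stopping time
\[
\tau_\kappa:=\inf\{t\in[0,2]:|\widetilde\phi^\kappa_t|\ge\kappa^{-1/4}\}\wedge 2 .
\]
For $t\le\tau_\kappa$ we have $y^\kappa_t\ge\frac12$, so the drift and diffusion coefficients are $O_N(\kappa^{1/4})$-perturbations of the limiting linear coefficients. The point is that $\Pi_{\phi^\kappa_t}$ restricted to the $x,z$ rows equals the identity up to $O(\kappa|\widetilde\phi|^2)$, and the Itô correction is $O(\kappa)$. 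Burkholder--Davis--Gundy and Gr\"onwall, with Lipschitz constants bounded by $CN$, then give three bounds:
\[
\EE\sup_t|\widetilde\phi^\kappa_t|^2\le C_{A,N},\qquad \EE\sup_t|\widetilde\phi_t|^2\le C_{A,N},\qquad \EE\sup_t|\widetilde\phi^\kappa_{t\wedge\tau_\kappa}-\widetilde\phi_{t\wedge\tau_\kappa}|^2\le C_{A,N}\kappa^{1/2}.
\]
The first bound accounts for the factor $N$ gained from the initial pre-shear, since $A$ is replaced by $(1+N)A$.

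The third step concludes. Markov's inequality gives
\[
\PP(\tau_\kappa<2)\le\kappa^{1/2}\,\EE\sup_t|\widetilde\phi^\kappa_t|^2\le C_{A,N}\kappa^{1/2}.
\]
We split the expectation on $\{\tau_\kappa=2\}$ and its complement, apply Cauchy--Schwarz exactly as in Lemma~\ref{lemma:EXPdiffrescaledlimiting}, and then compose with the final post-rotation using the first step. This yields the bound $C_{A,N}\kappa^{1/4}$.

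The main obstacle is the first step. We must verify that the nonlinear remainder of the conjugating rotation, which is amplified by $N$, stays $O(\kappa^{1/4})$ after rescaling. We must also check that the post-rotation step applied to the stochastic endpoint is compatible with the stopping time. For this we plan to enlarge the threshold region by the factor $(1+N)$ and absorb it into $C_{A,N}$.
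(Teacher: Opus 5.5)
Your route is the one the paper intends. The paper omits this proof and refers back to Lemma~\ref{lemma:EXPdiffrescaledlimiting}: stop at $|\widetilde\phi^\kappa_t|=\kappa^{-1/4}$, apply BDG and Gr\"onwall to the stopped difference, bound $\PP(\tau_\kappa<2)$ by Markov, then use Cauchy--Schwarz. Your extra handling of $f^3_{\pm N}$ is exactly what that reduction requires. However, the justification you give for the step you call the main obstacle does not work as written, and two other points need correcting.

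\emph{The conjugation remainder.} A remainder of size $O_N(\sqrt\kappa\,|w|^2)$ is only $O_N(1)$ on $\{|w|\le\kappa^{-1/4}\}$, not $O_N(\kappa^{1/4})$. What rescues the argument is that there is no quadratic term at all. In the chart $\Psi_\sfy$ the map
\[
F_{\pm N}(x,z)=\bigl(x\cos(Nz)\mp\sqrt{1-x^2-z^2}\,\sin(Nz),\,z\bigr)
\]
is odd, so $F_{\pm N}(v)=DF_{\pm N}(0)v+O_N(|v|^3)$. Hence $\kappa^{-1/2}F_{\pm N}(\sqrt\kappa\,w)=L_{\pm N}w+O_N(\kappa|w|^3)$. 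This is $O_N(\kappa^{1/4})$ on the stopped region, and $O_N(\kappa A^3)$ for the pre-rotation on $K_A^\sfy$.

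\emph{The projection matrix.} It is not true that the $x,z$ rows of $\Pi_{\phi^\kappa_t}$ are the identity up to $O(\kappa|\widetilde\phi|^2)$. The entries $-x_ty_t$ and $-z_ty_t$ multiplying $\d B^2_t$ have size $\sqrt\kappa\,|\widetilde\phi^\kappa_t|$. They are still at most $\kappa^{1/4}$ before $\tau_\kappa$, as is the cubic drift remainder coming from $y_t-1=O(\kappa|\widetilde\phi|^2)$. These two terms, not the It\^o correction (which is $O(\kappa|\widetilde\phi|)$, not $O(\kappa)$), produce the $\kappa^{1/2}$ in your third moment bound.

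\emph{The bad event.} On $\{\tau_\kappa<2\}$ the local expansion of the post-rotation is not available. For the Cauchy--Schwarz step, use instead the global bound $|\widetilde f^\kappa_\omega|\le (N+2)\,|\widetilde\phi^\kappa_2|$, where $\widetilde\phi^\kappa_2:=\kappa^{-1/2}\bigl((\phi^\kappa_2)_x,(\phi^\kappa_2)_z\bigr)$. It follows from $|y|\le 1$ and $|\sin(Nz)|\le N|z|$, and combines with your first moment bound; enlarging the threshold by $(1+N)$ only handles the initial pre-shear.

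With these corrections the argument closes and gives $C_{A,N}\kappa^{1/4}$.
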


The following proposition is the $F_\sfy$ version of Proposition~\ref{prop:stabilitydriftrescaledx}.

\begin{proposition}\label{prop:stabilitydriftrescaledy}
Let $\alpha,\gamma\in(0,1)$, $A>1$, and $N>1$. Then there exists $\kappa_0=\kappa_0(\alpha,\gamma,A,N)>0$ such that
\begin{align}
    \EE\Bigl[ \bigl| g_\mathsf{y}(\widetilde f_\omega^\kappa(x,z)) - g_\mathsf{y}(\widetilde f_\omega(x,z)) \bigr| \Bigr] \leq \frac{1-\gamma}{2}\,g_\mathsf{y}(x,z)
\end{align}
for all $(x,z)\in K_A^\mathsf{y}$ and all $\kappa\in(0,\kappa_0)$.
\end{proposition}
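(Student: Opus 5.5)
The plan is to copy the proof of Proposition~\ref{prop:stabilitydriftrescaledx}, changing only notation. The function $g_\mathsf{y}=h_\mathsf{y}^{-\alpha}$ with $h_\mathsf{y}(x,z)=\max\{\sqrt{x^2+z^2},1\}$ is globally Lipschitz on $\RR^2$. Indeed, $h_\mathsf{y}\geq 1$ is $1$-Lipschitz, and $s\mapsto s^{-\alpha}$ has derivative bounded by $\alpha$ on $[1,\infty)$, so
\begin{align}
    |g_\mathsf{y}(a)-g_\mathsf{y}(b)|\leq \alpha|a-b|\qquad\text{for all } a,b\in\RR^2.
\end{align}

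Applying this pointwise to $a=\widetilde f_\omega^\kappa(x,z)$ and $b=\widetilde f_\omega(x,z)$ and taking expectations, Lemma~\ref{lemma:EXPdiffrescaledlimitingy} gives
\begin{align}
    \EE\bigl[|g_\mathsf{y}(\widetilde f_\omega^\kappa(x,z))-g_\mathsf{y}(\widetilde f_\omega(x,z))|\bigr]\leq \alpha C_{A,N}\kappa^{1/4},
\end{align}
uniformly in $(x,z)\in K_A^\mathsf{y}$, $\omega\in[-N,N]^2$ and $\kappa\in(0,\kappa_0')$. Here $\kappa_0'$ is the threshold from that lemma. If the expectation in the statement also averages over $\omega$, the bound survives because it is uniform in $\omega$.

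Next I convert the additive error into a relative one. On $K_A^\mathsf{y}$ we have $g_\mathsf{y}(x,z)\geq A^{-\alpha}$, so the right-hand side is at most $\alpha C_{A,N}A^{\alpha}\kappa^{1/4}\,g_\mathsf{y}(x,z)$. Setting
\begin{align}
    \kappa_0:=\min\Bigl\{\kappa_0',\Bigl(\frac{1-\gamma}{2\alpha C_{A,N}A^\alpha}\Bigr)^4\Bigr\}
\end{align}
yields the claim.

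The only substantive input is Lemma~\ref{lemma:EXPdiffrescaledlimitingy}, whose proof is omitted in the paper, so I take it as given. If it had to be checked, the one new point compared with the $F_\sfx$ case is the fixed conjugation by $f_{\pm N}^3$. This is a smooth, $N$-dependent diffeomorphism fixing $(0,1,0)$, and near the pole its action in rescaled coordinates is, to leading order, the linear map
\begin{align}
    (x,z)\mapsto(x+Nz,\,z),
\end{align}
with an $O(\sqrt{\kappa}\,|(x,z)|^2)$ remainder. The stopping-time argument with $\tau_\kappa$ at level $\kappa^{-1/4}$ then goes through with $N$-dependent constants. I expect this to be the only mildly delicate step; the argument above is otherwise routine.
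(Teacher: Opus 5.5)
Your proof is correct and matches the paper's argument, which is the proof of Proposition~\ref{prop:stabilitydriftrescaledx} transcribed to this setting: global Lipschitz continuity of $g_\mathsf{y}$, the $\kappa^{1/4}$ bound of Lemma~\ref{lemma:EXPdiffrescaledlimitingy}, and $g_\mathsf{y}\geq A^{-\alpha}$ on $K_A^\mathsf{y}$ to fix $\kappa_0$. One small correction to your optional aside, which does not affect the proof: to leading order only $f_{-N}^3$ acts as $(x,z)\mapsto(x+Nz,z)$, whereas $f_N^3$ acts as $(x,z)\mapsto(x-Nz,z)$.
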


In view of Propositions~\ref{prop:driftlimrescaledYannular}, \ref{prop:driftlimrescaledYinner}, and \ref{prop:stabilitydriftrescaledy}, the following result is immediate.

\begin{corollary}\label{cor:driftrescaledy}
Let $A_1>A_0$. Then there exist $\gamma\in(0,1)$ and $\kappa_0>0$ such that
\begin{align}
    \EE\bigl[g_\mathsf{y}(\widetilde f_\omega^\kappa(x,z))\bigr] \leq \gamma\,g_\mathsf{y}(x,z)
\end{align}
for all $(x,z)\in K_{A_1}^\mathsf{y}$ and all $\kappa\in(0,\kappa_0)$.
\end{corollary}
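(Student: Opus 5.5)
The plan follows the proofs of Corollary~\ref{cor:driftrescaledx} and Corollary~\ref{cor:exact_core_fixed_Fz}, working directly at the level of the rescaled variables. For $(x,z)\in K_{A_1}^\mathsf{y}$ I split
\begin{align}
    \EE\bigl[g_\mathsf{y}(\widetilde f_\omega^\kappa(x,z))\bigr] = \EE\bigl[g_\mathsf{y}(\widetilde f_\omega^\kappa(x,z))-g_\mathsf{y}(\widetilde f_\omega(x,z))\bigr] + \EE\bigl[g_\mathsf{y}(\widetilde f_\omega(x,z))\bigr].
\end{align}
The second term involves only the limiting rescaled dynamics of Lemma~\ref{lemma:limiting_rescaled_dynamics_y}. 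The first term measures the discrepancy between the exact and limiting processes.

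\textbf{Limiting term.} First fix $A_0$ and $N_\infty$ from Proposition~\ref{prop:driftlimrescaledYannular}, and take $N\geq\max\{N_\infty,N_0(A_0,\alpha)\}$, where $N_0$ comes from Proposition~\ref{prop:driftlimrescaledYinner}. There are two cases.
\begin{itemize}
    \item If $(x,z)\in K_{A_0}^\mathsf{y}$, Proposition~\ref{prop:driftlimrescaledYinner} gives $\EE[g_\mathsf{y}(\widetilde f_\omega)]\leq\gamma_{\rm in} g_\mathsf{y}(x,z)$.
    \item If $(x,z)\in K_{A_1}^\mathsf{y}\setminus K_{A_0}^\mathsf{y}$, then $h_\mathsf{y}\geq A_0$, and Proposition~\ref{prop:driftlimrescaledYannular} gives the factor $\tfrac12$. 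This proposition holds on all of $\{h_\mathsf{y}\geq A_0\}$, so the value of $A_1>A_0$ plays no role here.
\end{itemize}
Set $\gamma_\ast:=\max\{\gamma_{\rm in},\tfrac12\}<1$ and $\bar\gamma:=\tfrac{1+\gamma_\ast}{2}$.

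\textbf{Perturbative term.} Apply Proposition~\ref{prop:stabilitydriftrescaledy} with $A=A_1$ and contraction parameter $\gamma_\ast$. This produces $\kappa_0=\kappa_0(\alpha,\gamma_\ast,A_1,N)$ such that the first term is at most $\tfrac{1-\gamma_\ast}{2}g_\mathsf{y}(x,z)$ for all $\kappa<\kappa_0$. Adding the two bounds gives contraction with $\bar\gamma$.

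\textbf{Main obstacle.} The only non-routine input is Proposition~\ref{prop:stabilitydriftrescaledy}, which rests on Lemma~\ref{lemma:EXPdiffrescaledlimitingy}. That lemma is the analogue of Lemma~\ref{lemma:EXPdiffrescaledlimiting} for the process conjugated by $f_N^3$. To check it, I would take the stopping time $\tau_\kappa$ at level $\kappa^{-1/4}$, applied to the conjugated rescaled trajectory. The conjugation by the fixed rotations $f_{\pm N}^3$ moves the start and end points by an $N$-dependent Lipschitz map. Near $(0,1,0)$, this map is linear in the rescaled chart up to $O(\kappa^{1/2}|\cdot|^2)$ corrections. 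These corrections are absorbed in the same BDG and Gr\"onwall estimates.

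With $g_\mathsf{y}$ $\alpha$-Lipschitz and $g_\mathsf{y}\geq A_1^{-\alpha}$ on $K_{A_1}^\mathsf{y}$, the discrepancy is then $O(C_{A_1,N}A_1^\alpha\kappa^{1/4})g_\mathsf{y}$, which is small for $\kappa$ small.
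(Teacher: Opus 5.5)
Your proposal is correct and matches the paper's argument. The paper declares Corollary~\ref{cor:driftrescaledy} immediate from Propositions~\ref{prop:driftlimrescaledYannular}, \ref{prop:driftlimrescaledYinner} and \ref{prop:stabilitydriftrescaledy}, exactly as in the proof of Corollary~\ref{cor:driftrescaledx}: the limiting term is contracted by $\max\{\gamma_{\rm in},\tfrac12\}$, and the discrepancy is absorbed via the stability proposition, giving $\bar\gamma=\tfrac{1+\max\{\gamma_{\rm in},1/2\}}{2}$. Your sketch of Lemma~\ref{lemma:EXPdiffrescaledlimitingy} (whose proof the paper omits) goes beyond what the corollary itself requires.
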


Combining Corollary~\ref{cor:driftrescaledy} with Lemma~\ref{lemma:boundary_layer_reduction_y} yields the drift in the physical stochastic core.

\begin{proposition}\label{prop:driftcorey}
Let $A_1>A_0$. Then there exist $\gamma\in(0,1)$ and $\kappa_0>0$ such that
\begin{align}
    \EE_{\omega,\bB}\bigl[V_{\mathsf{y},\kappa}(f_\omega^\kappa(p))\bigr] \leq \gamma\,V_{\mathsf{y},\kappa}(p)
\end{align}
for all $p\in \mathsf{Q}_\mathsf{y}(s_0)$ satisfying $r_\mathsf{y}(f_N^3(p))\leq A_1\sqrt{\kappa}$ and for all $\kappa\in(0,\kappa_0)$.
\end{proposition}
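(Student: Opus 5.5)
The plan is to reduce Proposition~\ref{prop:driftcorey} to the rescaled statement of Corollary~\ref{cor:driftrescaledy} through the equivalence in Lemma~\ref{lemma:boundary_layer_reduction_y}, following the $F_\sfx$ and $F_\sfz$ cases (compare Corollary~\ref{cor:driftrescaledx}).

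\emph{Step 1: setting up the rescaling.} Fix $p\in\mathsf{Q}_\sfy(s_0)$ with $r_\sfy(f_N^3(p))\leq A_1\sqrt{\kappa}$. Write $\overline p=f_N^3(p)=\Psi_\sfy(\sqrt\kappa x,\sqrt\kappa z)$, so that $(x,z)\in K^\sfy_{A_1}$. By \eqref{eq:exact_scaling_identity_y} we have $V_{\sfy,\kappa}(p)=\kappa^{-\alpha/2}g_\sfy(x,z)$. By the definition of $\widetilde f^\kappa_\omega$ as conjugation by $f^3_{\pm N}$ followed by rescaling, we also have
\[
V_{\sfy,\kappa}(f^\kappa_\omega(p))=\kappa^{-\alpha/2}g_\sfy\bigl(\widetilde f^\kappa_\omega(x,z)\bigr).
\]
This identity needs care on the event where $f_N^3(f^\kappa_\omega(p))$ leaves the chart $\Psi_\sfy$. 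There the rescaled coordinates still have modulus at least $s_0/\sqrt\kappa$, so $g_\sfy\leq (s_0/\sqrt\kappa)^{-\alpha}$ and $V_{\sfy,\kappa}\leq s_0^{-\alpha}$ hold consistently. I would handle this by the same stopping-time argument as in Lemma~\ref{lemma:EXPdiffrescaledlimiting}: the exit probability is $O(\kappa^{1/2})$, and multiplied by the bound on $V_{\sfy,\kappa}$ this contributes $O(\kappa^{1/2-\alpha/2})\ll g_\sfy(x,z)\kappa^{-\alpha/2}$. The contribution is absorbed once $\kappa_0$ is small.

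\emph{Step 2: drift for the rescaled process.} Corollary~\ref{cor:driftrescaledy} is the step to combine carefully. Write
\[
\EE[g_\sfy(\widetilde f^\kappa_\omega)]=\EE[g_\sfy(\widetilde f_\omega)]+\EE[g_\sfy(\widetilde f^\kappa_\omega)-g_\sfy(\widetilde f_\omega)].
\]
For the first term, Proposition~\ref{prop:driftlimrescaledYinner} applies on $K^\sfy_{A_0}$ and Proposition~\ref{prop:driftlimrescaledYannular} on $K^\sfy_{A_1}\setminus K^\sfy_{A_0}$; together they give the contraction $\gamma_*=\max\{\gamma,\tfrac12\}$, provided $N\geq\max\{N_0,N_\infty\}$. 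For the second term, Proposition~\ref{prop:stabilitydriftrescaledy}, with $A=A_1$ and target $\bar\gamma=(1+\gamma_*)/2$, gives an error of at most $\frac{1-\bar\gamma}{2}g_\sfy$. The total is then at most $\bar\gamma'\,g_\sfy$ with $\bar\gamma'<1$. That proposition in turn uses Lemma~\ref{lemma:EXPdiffrescaledlimitingy} together with the fact that $g_\sfy$ is Lipschitz and $g_\sfy\geq A_1^{-\alpha}$.

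\emph{Step 3: transfer back.} Multiplying by $\kappa^{-\alpha/2}$ and applying Lemma~\ref{lemma:boundary_layer_reduction_y} gives the claim, with $\kappa_0$ the minimum of the thresholds from Steps 1 and 2.

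The main obstacle is justifying Lemma~\ref{lemma:EXPdiffrescaledlimitingy}, which is only asserted. In the conjugated chart, $f^3_{\pm N}$ amplifies distances by factors of order $N$, and the limiting dynamics of Lemma~\ref{lemma:limiting_rescaled_dynamics_y} involve $N$-dependent linear maps. I would check that the stopped Burkholder--Davis--Gundy/Gr\"onwall comparison still produces constants $C_{A,N}$ depending only on $N$. This holds because $f^3_{\pm N}$ are fixed smooth diffeomorphisms that are $O(1)$-close to linear maps on $\sqrt\kappa$-scales, with quadratic remainders of size $O(\sqrt\kappa\,|\widetilde\phi|^2)=O(\kappa^{0})$ up to the stopping time. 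To make these remainders $O(\kappa^{1/4})$, I would tighten the stopping threshold to $\kappa^{-1/8}$ if needed.
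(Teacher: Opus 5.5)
Your proposal is correct and is essentially the paper's proof: the paper obtains the statement by combining Corollary~\ref{cor:driftrescaledy} (assembled from Propositions~\ref{prop:driftlimrescaledYinner}, \ref{prop:driftlimrescaledYannular} and \ref{prop:stabilitydriftrescaledy} exactly as in your Step~2) with the equivalence of Lemma~\ref{lemma:boundary_layer_reduction_y}. The chart-exit discussion in your Step~1 is unnecessary, because $V_{\sfy,\kappa}(f^\kappa_\omega(p))=\kappa^{-\alpha/2}g_\sfy(\widetilde f^\kappa_\omega(x,z))$ holds exactly for every realisation: both sides depend only on $r_\sfy$, that is, on the globally defined $x$- and $z$-coordinates of $f_N^3(f^\kappa_\omega(p))$, so no stopping-time argument is needed at that point.
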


\subsubsection{Inviscid-dominated regime}
We first treat the regime in which the stochastic dynamics $f_\omega^\kappa$ can be seen as a small perturbation of the inviscid dynamics $f_\omega$. 

\begin{proposition}\label{prop:driftannulary}
Fix $\alpha = \frac18$. Then there exist $N>1$, $A>1$, $s_0>0$, $\kappa_0>0$ and $\gamma\in(0,1)$ such that
\begin{align}\label{eq:noisy_annulus_drift_y}
    \EE_{\omega,\bB}\left[V_{\mathsf{y},\kappa}(f_\omega^\kappa(p))\right] \leq \gamma\,V_{\mathsf{y},\kappa}(p)
\end{align}
for all $\kappa\in(0,\kappa_0)$ and all $p\in \Sph$ such that $p\in \mathsf{Q}_\mathsf{y}(s_0)$ and $r_\mathsf{y}(f_N^3(p))\geq A\sqrt{\kappa}$.
\end{proposition}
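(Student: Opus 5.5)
The plan follows the template of Proposition~\ref{prop:driftanuularx} and Proposition~\ref{prop:driftannularz}: perturb the deterministic drift of Lemma~\ref{lemma:localylyap} on a good event, and control the bad event by the scale separation $r\geq A\sqrt{\kappa}$. Set $\overline p:=f_N^3(p)$ and $r:=r_\mathsf{y}(\overline p)\geq A\sqrt{\kappa}$, so that $V_{\mathsf{y},\kappa}(p)=r^{-\alpha}$. Write $\overline\phi_2:=f_N^3(\phi_2)$ and $\overline\phi_2^\kappa:=f_N^3(\phi_2^\kappa)$. Then $V_{\mathsf{y},\kappa}(f^\kappa_\omega(p))=\max\{r_\mathsf{y}(\overline\phi_2^\kappa),\sqrt\kappa\}^{-\alpha}$.

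For $\eta>0$ to be chosen, I introduce the good event
\begin{align}
    G:=\bigl\{|\overline\phi_2^\kappa-\overline\phi_2|\leq \eta r\bigr\}.
\end{align}
Since $r_\mathsf{y}$ is $1$-Lipschitz, on $G$ we have $r_\mathsf{y}(\overline\phi_2^\kappa)\geq r_\mathsf{y}(\overline\phi_2)-\eta r$. Only the endpoint needs control, because the quantity $r_\mathsf{y}$ at the midpoint is never used directly.

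The key step is to turn the proof of Lemma~\ref{lemma:localylyap} into lower bounds for $r_2:=r_\mathsf{y}(\overline\phi_2)$ in terms of $r$ on an explicit partition of $\omega$-events, rather than just an averaged bound. The proof there has a two-layer structure. First, $\omega_1$ acts through $q_{\omega_1}=\mathsf R_\sfz f^3_{N-\omega_1}\mathsf R_\sfz\overline p$, which is a shear as seen from the $\mathsf y$-pole; it gives at most a loss $\widetilde V_\mathsf y(q_{\omega_1})\lesssim 4^{\alpha+1}r^{-\alpha}$ on average. Second, $\omega_2$ together with the post-rotation $f_N^3$ gives a rotation-type motion, which yields the $E_{1,1},E_{1,2},E_2,E_3$ splitting of Lemma~\ref{lemma:localzlyap}. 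The aim is a good event $E$ with $r_2\geq \frac{M}{2}\,r_\mathsf y(q_{\omega_1})$ and $\PP(E^c\mid\omega_1)\lesssim N^{-1/2}$. On $E^c$ I want the worst-case bound $r_2\geq c_N\,r_\mathsf y(q_{\omega_1})$ with $c_N\sim N^{-1}$, using $r_\mathsf y(q_{\omega_1})\gtrsim r/N^2$ from the shear-case bounds of Lemma~\ref{lemma:localxlyap}.

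Choose $\eta\leq \frac{1}{2}c_N\min_{\omega_1}\frac{r_\mathsf y(q_{\omega_1})}{r}$, which is a power of $N^{-1}$. Then the $\eta r$ loss halves the constants in both regimes. Taking $\alpha=\frac18$, $M$ large and then $N$ large, the conditional contributions on $G$ become
\begin{align}
    \EE\bigl[V_{\mathsf y,\kappa}(f_\omega^\kappa(p))\mathbf 1_G\bigr]\leq \Bigl(C4^{\alpha}M^{-\alpha}+C_N' N^{-1/2}\Bigr)V_{\mathsf y,\kappa}(p)\leq\gamma V_{\mathsf y,\kappa}(p).
\end{align}
Here one must check that $N$-powers from $c_N^{-\alpha}$ lose to $N^{-1/2}$; this holds because $\alpha=\frac18$ is small, as in Lemma~\ref{lemma:localxlyap}.

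On $G^c$, Lemma~\ref{lemma:noisy_two_step_L2_y} and Markov's inequality give $\PP(G^c)\leq C_N\sqrt\kappa/(\eta r)$. Combined with $V_{\mathsf y,\kappa}\leq\kappa^{-\alpha/2}$, this gives a contribution of at most $\frac{C_N}{\eta A^{1-\alpha}}V_{\mathsf y,\kappa}(p)$, exactly as in \eqref{eq:bad_event_contribution}. Fixing $N$ (hence $\eta$) first, I then take $A$ large so this is below $\frac{1-\gamma}{2}V_{\mathsf{y},\kappa}(p)$. Finally, $s_0$ and $\kappa_0$ are chosen so that all orbits stay in the caps where the deterministic estimates are valid.

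The main obstacle is the second step. Lemma~\ref{lemma:localylyap} was proved only in expectation, composing two averaged bounds, whereas the perturbation argument needs pointwise lower bounds on $r_2$ on events of controlled probability. The $\omega_1$-layer is also only a bounded loss ($4^{\alpha+1}$), not a contraction. My first attempt is to combine events in product form, $E_{\omega_1}$-shear events times $\omega_2$-rotation events, and track the $r$-ratio multiplicatively. If that breaks down, I would instead take $G$ to also control $|f_N^3(\phi_1^\kappa)-f_N^3(\phi_1)|$, as in the $F_\sfx$ case, so the midpoint ratio is perturbed too.
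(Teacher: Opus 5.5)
Your argument is correct, and its skeleton matches the paper's proof. Both use a good event controlling only the endpoint after the post-rotation $f_N^3$. The paper's event is $\{|r_\mathsf y(f_N^3(p_2^\kappa))-r_\mathsf y(f_N^3(p_2))|\le\eta\, r_\mathsf y(\overline p)\}$, which your event implies because $r_\mathsf y$ is $1$-Lipschitz. Both then split the inviscid two-step map into events, take $\eta$ a negative power of $N$, and bound the bad event via Lemma~\ref{lemma:noisy_two_step_L2_y}, $V_{\mathsf y,\kappa}\le\kappa^{-\alpha/2}$ and $r\ge A\sqrt\kappa$. The paper uses Chebyshev and gets $C_N\eta^{-2}A^{-(2-\alpha)}$; your first-moment bound $C_N\eta^{-1}A^{-(1-\alpha)}$ works equally well.

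The real difference is the $\omega_1$-layer. The paper forms product events $E_{\omega_1}\cap E_{\omega_2}$. It transplants the whole $E_{1,1},\dots,E_3$ splitting of Lemma~\ref{lemma:localzlyap} to the $\omega_1$ step and asserts $\PP_{\omega_1}(E_{\omega_1}^c)\lesssim_M N^{-1/2}$, with a gain of order $M$ on $E_{\omega_1}$. You instead condition on $\omega_1$ and take the contraction only from the rotation $f^2_{\omega_2}$ followed by the amplitude-$N$ shear $f_N^3$. You then pay the bounded averaged loss $\EE_{\omega_1}[r_\mathsf y(q_{\omega_1})^{-\alpha}]\lesssim r^{-\alpha}$.

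Your version is the sturdier one. The $\omega_1$ step acts on the given point $\mathsf R_\sfz\overline p$ as a pure shear with no preceding rotation, so small-probability angular events are not available. The paper itself says this in the proof of Lemma~\ref{lemma:localylyap}. For instance, if $\overline p$ lies on $\{z=0\}$ the shear does nothing and $E_{\omega_1}$ is empty, whereas your averaged bound holds uniformly. The price is that the conditional $\omega_2$-gain must beat a fixed constant. It does once $M$ is large; with shear amplitude $N$ the actual gain is of order $N^{1/2}$.

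Consequently, do not switch to the product-form ``first attempt'' with small-probability $\omega_1$-events; keep the $\omega_1$-layer averaged. Your fallback of also controlling the midpoint is unnecessary, since $\phi_1$ enters only through the deterministic point $q_{\omega_1}$. Finally, the pointwise bound you need to choose $\eta$ follows more simply from the bi-Lipschitz property of $f^3_{\omega_1-N}$, which fixes the $\mathsf y$-pole (Lemma~\ref{lemma:phikappaLip}). This gives $r_\mathsf y(q_{\omega_1})\gtrsim r/N$ directly, without the shear cases of Lemma~\ref{lemma:localxlyap}.
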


\begin{proof}

Let $p_0\in \Sph$, $\overline{p} = f_N^3(p_0)$ and $q_{\omega_1}=f_{\omega_1}^3(p_0)$. Hence, $p_0= f_{-N}^3(\overline{p})$, $q_{\omega_1}= f_{\omega_1-N}^3(\overline{p})$, $ {V}_\mathsf{y}(p_0) = \widetilde{V}_\mathsf{y}(\overline{p})$, and $ {V}_\mathsf{y}(f_\omega(p_0)) = \widetilde{V}_\mathsf{y}(f_N^3\circ f_{\omega_2}^2(q_{\omega_1}))$. Let $p_2^\kappa=f_\omega^\kappa(p_0)$ and $p_2=f_\omega(p_0)$ and define the set
\begin{align}
    G := \lbrace |r_{\sfy}(f_N^3(p_2^\kappa))-r_{\sfy}(f_N^3(p_2))| \leq \eta r_\sfy(\overline{p})\rbrace,
\end{align}
for some $\eta>0$ to be chosen later on. Arguing as in the proof of Lemma \ref{lemma:localzlyap}, $f_N^3(p_2) = f_N^3\circ f_{\omega_2}^2(q_{\omega_1})$, and  we see that
\begin{enumerate}[label=--]
    \item On $E_{1,1}^{\omega_2}$ we have $r_\sfy(f_N^3(p_2))\geq \sqrt{\frac{1+M^2N}{1+N}}r_\sfy(q_{\omega_1})$. Note that $E_{1,2}$ is empty since we are choosing $\omega_3=N$.
    \item On $E_{2}^{\omega_2}$ we have $r_\sfy(f_N^3(p_2)) \geq \frac{r_\sfy(q_{\omega_1})}{\sqrt{1+4N^2}}$ and $\PP_{\omega_2}(E_2^{\omega_2})\leq \frac{C_2}{\sqrt{N}}$.
    \item On $E_3^{\omega_2}$ we have $r_\sfy(f_N^3(p_2)) \geq \frac{r_\sfy(q_{\omega_1})}{4}$ and $\PP_{\omega_2}(E_3^{\omega_2})\leq \frac{C_3}{\sqrt{N}}$.
\end{enumerate}
On the other hand, $r_\sfy(q_{\omega_1})= r_\sfy (f_{\widetilde \omega_1}^3 \circ \mathsf{R}_\sfz(\overline{p}))$ for $\widetilde\omega_1=N-\omega_1\in[0,2N]$ and thus, again as in Lemma \ref{lemma:localzlyap}, we have
\begin{enumerate}[label=--]
    \item On $E_{1,1}^{\omega_1}$ there holds $r_\sfy(q_{\omega_1})\geq \sqrt{\frac{1+M^2N}{1+N}}r_\sfy(\mathsf{R}_\sfz(\overline{p}))$.
    \item On $E_{1,2}^{\omega_1}$ there holds $r_\sfy(q_{\omega_1})\geq \frac{r_\sfy(\mathsf{R}_\sfz(\overline{p}))}{\sqrt{1+N}}$ and $\PP_{\omega_1}(E_{1,2}^{\omega_1}) = 2\frac{M+1}{1-\delta_0}N^{-\frac12}$.
    \item On $E_{2,1}^{\omega_1}$ there holds $r_\sfy(q_{\omega_1})\geq \frac{r_\sfy(\mathsf{R}_\sfz(\overline{p}))}{\sqrt{1+4N^2}}$ and $\PP_{\omega_1}(E_{2,1}^{\omega_1})\leq N^{-1}$.
    \item On $E_{2,2}^{\omega_1}$ there holds $r_\sfy(q_{\omega_1})\geq \frac{r_\sfy(\mathsf{R}_\sfz(\overline{p}))}{4}$.
    \item On $E_3^{\omega_1}$ we have $r_\sfy(q_{\omega_1})\geq \frac{r_\sfy(\mathsf{R}_\sfz(\overline{p}))}{4}$ and $\PP(E_3^{\omega_1})\leq \frac{C_3}{N}$.
\end{enumerate}
Set $E_{\omega_2}=E_{1,1}^{\omega_2}$, $E_{\omega_2}^c=E_{2}^{\omega_2}\cup E_{3}^{\omega_2}$,  $E_{\omega_1}=E_{1,1}^{\omega_1}$, $E_{\omega_1}^c=E_{1,2}^{\omega_1}\cup E_{2,1}^{\omega_1} \cup E_{2,2}^{\omega_1} \cup E_{3}^{\omega_1}$, and note further that $r_\sfy\circ \mathsf{R}_\sfz = r_\sfy$. Then, 
\begin{align}
    r_\sfy(f_N^3(p_2)) &\geq \frac{1+M^2N}{1+N} \mathbf{1}_{E_{\omega_1}\cap E_{\omega_2}}r_\sfy(\overline{p}) + \sqrt{\frac{1+M^2N}{(1+N)(1+4N^2)}}(\mathbf{1}_{E_{\omega_1}^c\cap E_{\omega_2}} + \mathbf{1}_{E_{\omega_1}\cap E_{\omega_2}^c} )r_\sfy(\overline{p}) \\
    &\quad+ \frac{1}{1+4N^2}\mathbf{1}_{E_{\omega_1}^c\cap E_{\omega_2}^c}r_\sfy(\overline{p}).
\end{align}
Moreover, we have that $\PP_{\omega_1}(E_{\omega_1}^c) \lesssim_M N^{-\frac12}$ and $\PP_{\omega_2}(E_{\omega_2}^c) \lesssim N^{-\frac12}$. Additionally, $\frac{1+M^2N}{1+N}\geq \frac{M^2}{4}$ so that, for 
\begin{equation}
\eta= \frac12 \min \left\lbrace \frac{1}{\sqrt{1+4N^2}}\frac{M}{2}, \frac{1}{1+4N^2} \right\rbrace,
\end{equation}
we have
\begin{align}
    r_\sfy(f_N^3(p_2))- \eta r_\sfy(\overline{p}) &\geq \frac{M^2}{4} \mathbf{1}_{E_{\omega_1}\cap E_{\omega_2}}r_\sfy(\overline{p}) + \frac{M}{4}\frac{1}{\sqrt{(1+4N^2)}}(\mathbf{1}_{E_{\omega_1}^c\cap E_{\omega_2}} + \mathbf{1}_{E_{\omega_1}\cap E_{\omega_2}^c} )r_\sfy(\overline{p}) \\
    &\quad+ \frac{1}{2+8N^2}\mathbf{1}_{E_{\omega_1}^c\cap E_{\omega_2}^c}r_\sfy(\overline{p}).
\end{align}
Together with 
\begin{equation}
    r_{\sfy}(f_N^3(p_2^\kappa)) \geq r_\sfy(f_N^3(p_2)) - |r_{\sfy}(f_N^3(p_2^\kappa))-r_{\sfy}(f_N^3(p_2))|\geq r_\sfy(f_N^3(p_2))- \eta r_\sfy(\overline{p}),
\end{equation}
on $G$ we obtain 
\begin{align}
    \EE[V_{\sfy,\kappa}(p_2^\kappa) | G] \leq \left( 2\left(\frac{4}{M^2}\right)^{\alpha} + 2C_M\frac{4^\alpha\left(1+4N^2\right)^{\frac{\alpha}{2}}}{M^\alpha\sqrt{N}} + 2C_M\frac{(2+8N^2)^{\alpha}}{N} \right) V_{\sfy,\kappa}(p_0).
\end{align}
Setting $\alpha=\frac18$ and $M=4\left(\frac{8}{3}\right)^8$ we have $2\left(\frac{4}{M^2}\right)^{\frac18}=\frac34$ and thus we can find $N$ large enough so that
\begin{align}
   \EE [V_{\sfy,\kappa}(p_2^\kappa) | G] \leq \gamma V_{\sfy,\kappa}(p_0)
\end{align}
for some $\gamma\in (\frac34,1)$. Finally, since 
\begin{equation}
    \PP(G^c)\leq \frac{C_N}{\eta^2 r_\sfy(\overline{p})^2}\kappa
\end{equation}
and $V_{\sfy,\kappa}\leq \kappa^{-\alpha/2}$, we observe that
\begin{align}\label{eq:bad_event_y_contribution}
    \EE_{\omega,\bB} \left[V_{\sfy,\kappa}(p_2^\kappa) | G^c \right]\PP(G^c)  &\leq \frac{C_N}{\eta^2A^{2-\alpha}}V_{\sfy,\kappa}(p_0) \leq \frac{1-\gamma}{2}V_{\sfy,\kappa}(p_0)
\end{align}
for $A$ large enough, the proposition is proved.
\end{proof}

\subsubsection{Local and global Lyapunov estimates}

We can now combine the stochastic and inviscid regimes, exactly as in Proposition~\ref{prop:driftkappaxlocal} to obtain the following local drift estimate.

\begin{proposition}\label{prop:driftkappaylocal}
There exist $N>1$, $s_0>0$, $\gamma\in(0,1)$, and $\kappa_0>0$ such that
\begin{align}
    \EE_{\omega,\bB}\bigl[V_{\mathsf{y},\kappa}(f_\omega^\kappa(p))\bigr] \leq \gamma\,V_{\mathsf{y},\kappa}(p)
\end{align}
for all $p\in \mathsf{Q}_\mathsf{y}(s_0)$ and all $\kappa\in(0,\kappa_0)$.
\end{proposition}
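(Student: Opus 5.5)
The argument mirrors Proposition~\ref{prop:driftkappaxlocal} for $F_\sfx$ and Proposition~\ref{prop:driftkappazlocal} for $F_\sfz$. The regime split is done with the conjugated radial variable $r_\sfy(f_N^3(p))$ instead of $r_\sfy(p)$.

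\textbf{Step 1: fix the parameters.} Fix $\alpha=\frac18$. Take $A_0$ and $N_\infty$ from Proposition~\ref{prop:driftlimrescaledYannular}. Then choose
\[
N\geq \max\{N_\infty,\ N_0(A_0,\alpha),\ N_{\mathrm{out}}\},
\]
where $N_0$ comes from Proposition~\ref{prop:driftlimrescaledYinner} and $N_{\mathrm{out}}$ is the threshold in Proposition~\ref{prop:driftannulary}. Once $N$ is fixed, Proposition~\ref{prop:driftannulary} supplies $A=A_1(N)$, which we may enlarge so that $A_1>A_0$, together with $s_0$, $\kappa_{\mathrm{out}}$ and $\gamma_{\mathrm{out}}$. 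Proposition~\ref{prop:driftcorey}, applied with this $A_1$, then supplies $\kappa_{\mathrm{in}}$ and $\gamma_{\mathrm{in}}$.

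\textbf{Step 2: set the constants.} Define
\[
\gamma:=\max\{\gamma_{\mathrm{in}},\gamma_{\mathrm{out}}\}, \qquad \kappa_0:=\min\{\kappa_{\mathrm{in}},\kappa_{\mathrm{out}},\, s_0^2/A_1^2\}.
\]
The last entry of $\kappa_0$ guarantees that the stochastic core is a genuine subset of the neighbourhood where the rescaled chart $\Psi_\sfy$ is valid.

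\textbf{Step 3: case split.} Fix $p\in\mathsf{Q}_\sfy(s_0)$ and $\kappa\in(0,\kappa_0)$.
\begin{itemize}
\item If $r_\sfy(f_N^3(p))\leq A_1\sqrt\kappa$, Proposition~\ref{prop:driftcorey} gives contraction by $\gamma_{\mathrm{in}}$. Internally this combines Lemma~\ref{lemma:boundary_layer_reduction_y}, Corollary~\ref{cor:driftrescaledy} and the comparison estimate of Lemma~\ref{lemma:EXPdiffrescaledlimitingy}.
\item If $r_\sfy(f_N^3(p))\geq A_1\sqrt\kappa$, Proposition~\ref{prop:driftannulary} gives contraction by $\gamma_{\mathrm{out}}$.
\end{itemize}
Taking the maximum gives the claim with the constant $\gamma$ of Step~2.

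\textbf{Main obstacle: consistency of the two regimes.} One point needs checking. Proposition~\ref{prop:driftcorey} is stated for $p\in\mathsf{Q}_\sfy(s_0)$, but its hypothesis concerns $f_N^3(p)$, and $f_N^3$ does not preserve $\mathsf{Q}_\sfy(s_0)$. The rotation $f_N^3$ about the $z$-axis fixes $(0,1,0)$ only to leading order; near that point it acts as a shear $(x,z)\mapsto(x+Nz,z)$.

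We must therefore confirm that both $p$ and $\overline p=f_N^3(p)$ lie in a slightly enlarged chart around $(0,1,0)$. Since $f_N^3$ is $C_N$-Lipschitz and fixes $(0,1,0)$, this holds after shrinking $s_0$ depending on $N$, which is permitted because $N$ is fixed first.

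We also need the constants $A_1$, $s_0$ and $\kappa_0$ from the two propositions to be compatible. This works because of the order of choices: $N$ first, then $A_1$ and $s_0$, then the $\kappa$ thresholds. That matches the order used for $F_\sfx$.
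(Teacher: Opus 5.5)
Your proposal is correct and is essentially the paper's argument. The paper proves this statement by combining Proposition~\ref{prop:driftcorey} and Proposition~\ref{prop:driftannulary} with the same order of parameter choices as in Proposition~\ref{prop:driftkappaxlocal}: first $N$, then $A_1>A_0$ and $s_0$, then $\kappa_0$. Your extra chart-compatibility check is harmless, though $f_N^3$ fixes $(0,1,0)$ exactly rather than only to leading order, because $(0,1,0)$ lies on the equator $z=0$ where $u^3$ vanishes.
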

The proof of the Lyapunov-Foster drift condition concludes the section's argument.
\begin{proof}[Proof of Proposition~\ref{prop:driftkappay}]
Combining Proposition~\ref{prop:driftkappaylocal} with Lemma~\ref{lemma:driftboundawayy}, according to whether $p\in \mathsf{Q}_\mathsf{y}(s_0)$ or not, yields
\begin{align}
\EE_{\omega,\bB}\bigl[V_{\mathsf{y},\kappa}(f_\omega^\kappa(p))\bigr] \leq \gamma\,V_{\mathsf{y},\kappa}(p) + \beta_\mathsf{y}\mathbf 1_{\mathsf{Q}_\mathsf{y}(s_0)^c}(p),
\end{align}
for all $p\in \mathsf{X}$ and all sufficiently small $\kappa$, for some $\beta_\mathsf{y}\geq1$.
\end{proof}

\section{Drift function for the stochastic two-point process}\label{sec:driftstochTPP}
With the local drift $W$ for the two-point process and $V_{\sfx,\kappa}$, $V_{\sfy,\kappa}$ and $V_{\sfz,\kappa}$ for the one-point process near the $F_\sfx$, $F_\sfy$ and $F_\sfz$ fixed points respectively, we are now in position to write a drift function $\mathrm{V}^{(2)}$ that satisfies the Lyapunov-drift condition with uniform-in-$\kappa$ constants $\gamma>0$ and $\beta\geq 1$. 

We first define a drift function $\mathrm{V}_\kappa$ for the stochastic one-point process $f_\omega^\kappa$ as follows.
Recall $\chi:[0,\infty)\rightarrow[0,1]$ is a smooth non-negative bump function such that $\chi(r) =1$ for $r\in [0,s_0]$ and $\chi(r)=0$ for $r\in [2 s_0,+\infty)$.
Recall further that for $p=(x,y,z)\in \Sph$, we have $r_\sfx(p) = \max \lbrace |y|, |z| \rbrace $, $r_\mathsf{y}(p) = \sqrt{x^2 + z^2}$ and $r_\sfz(p) =\sqrt{x^2 + y^2}$ and also that the sets $\mathsf{Q}_\sigma(s) = \lbrace   p\in \Sph : 0 < r_\sigma(p) < s \rbrace$, with $\sigma\in \lbrace   \sfx, \mathsf{y}, \sfz \rbrace  =\Sigma$, denote the spherical caps centred at the six fixed points.
For $s>0$ small enough, the $\mathsf{Q}_\sigma(s)$ are mutually disjoint. We then define
\begin{equation}\label{eq:defdrifOPPVkappa}
\begin{split}
\mathrm{V}_\kappa(p) &:= \chi(r_\sfx)(1-\chi(r_\mathsf{y}))(1-\chi(r_\sfz)) V_{\sfx,\kappa}(p) + \chi(r_\mathsf{y})(1-\chi(r_\sfz)) {V}_{\mathsf{y}, \kappa}(p) + \chi(r_\sfz)V_{\sfz,\kappa}(p)  \\
&\quad+ \hat c (1-\chi(r_\sfx)) (1-\chi(r_\mathsf{y}))  (1-\chi(r_\sfz)) 
\end{split}
\end{equation}
where we choose $\hat c >0$ so that $\mathrm{V}_\kappa \geq 1$.
We further recall that there exists $0<r_0<s_0$ such that, for all $\sigma\in \Sigma$, it holds that $f_\omega(p)\in \mathsf{Q}_\sigma(s_0/2)$ for all $p\in \mathsf{Q}_\sigma(r_0)$.
\begin{lemma}
    Let $K(r_0):= \mathsf{X}\cap_{\sigma\in \lbrace   \sfx, \mathsf{y}, \sfz \rbrace  } \mathsf{Q}_\sigma(r_0)^c$. There exists $\beta>0$, $\gamma'\in (0,1)$ and $\kappa_0>0$ such that 
    \begin{align}
        \EE \left[\mathrm{V}_\kappa(f_\omega^\kappa(p)) \right] \leq \gamma' \mathrm{V}_\kappa(p) + \beta \mathbf{1}_{K(r_0)}
    \end{align}
   for all $\kappa\in (0,\kappa_0)$ and all $p\in \mathsf{X}$.
\end{lemma}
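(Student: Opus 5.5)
We mirror the proof of Proposition~\ref{prop:lyapdriftopp}, replacing the inviscid local drifts by the stochastic ones from Propositions~\ref{prop:driftkappax}, \ref{prop:driftkappaz} and \ref{prop:driftkappay}, and replacing the deterministic invariance of the caps by a high-probability version.

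\textbf{Splitting by region.} Fix $p\in\mathsf{X}$ and split according to whether $p\in\mathsf{Q}_\sigma(r_0)$ for some $\sigma\in\Sigma$, or $p\in K(r_0)$.

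\textbf{Points in a cap $\mathsf{Q}_\sigma(r_0)$.} Here $\mathrm{V}_\kappa(p)=V_{\sigma,\kappa}(p)$, because the other cutoffs vanish on $\mathsf{Q}_\sigma(2s_0)$ when $s_0$ is small. Consider the event
\[
G:=\{f_\omega^\kappa(p)\in\mathsf{Q}_\sigma(s_0)\}.
\]
Since $f_\omega(p)\in\mathsf{Q}_\sigma(s_0/2)$, the complement satisfies $G^c\subset\{|f^\kappa_\omega(p)-f_\omega(p)|\geq s_0/2\}$. Chebyshev's inequality and Lemma~\ref{lemma:noisy_two_step_L2} then give $\PP(G^c)\leq C_N s_0^{-2}\kappa$.

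On $G$ we have $\mathrm{V}_\kappa(f_\omega^\kappa(p))=V_{\sigma,\kappa}(f^\kappa_\omega(p))$. Hence
\[
\EE\bigl[\mathrm{V}_\kappa(f^\kappa_\omega(p))\mathbf 1_G\bigr]\leq \EE\bigl[V_{\sigma,\kappa}(f^\kappa_\omega(p))\bigr]\leq\gamma V_{\sigma,\kappa}(p)
\]
by Propositions~\ref{prop:driftkappaxlocal}, \ref{prop:driftkappazlocal} and \ref{prop:driftkappaylocal}.

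On $G^c$ we use the crude bound $\mathrm{V}_\kappa\leq C\kappa^{-\alpha/2}$, which holds since every $V_{\sigma',\kappa}\leq\kappa^{-\alpha/2}$. The contribution of $G^c$ is therefore at most $C_N s_0^{-2}\kappa^{1-\alpha/2}$. Because $\mathrm{V}_\kappa\geq1$, this is at most $\frac{1-\gamma}{2}\mathrm{V}_\kappa(p)$ once $\kappa_0$ is small. This yields the contraction with $\gamma'=\frac{1+\gamma}{2}$.

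\textbf{Points in $K(r_0)$.} The bump factors bound $\mathrm{V}_\kappa\leq \hat c+\sum_\sigma V_{\sigma,\kappa}$. For each $\sigma$, either $p\in\mathsf{Q}_\sigma(s_0)^c$, and Lemmas~\ref{lemma:driftboundawayx}, \ref{lemma:driftboundawayz}, \ref{lemma:driftboundawayy} bound $\EE V_{\sigma,\kappa}(f^\kappa_\omega(p))$ uniformly in $\kappa$. Or $p\in\mathsf{Q}_\sigma(s_0)\setminus\mathsf{Q}_\sigma(r_0)$, and the local drift gives $\EE V_{\sigma,\kappa}(f_\omega^\kappa(p))\leq\gamma V_{\sigma,\kappa}(p)\leq r_0^{-\alpha}$. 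In both cases the expectation is bounded by a constant $\beta$ independent of $\kappa$.

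\textbf{Main obstacle.} The cap case is the delicate one. Propositions~\ref{prop:driftkappax}, \ref{prop:driftkappaz} and \ref{prop:driftkappay} control $V_{\sigma,\kappa}$ itself, but the glued function $\mathrm{V}_\kappa$ could pick up another $V_{\sigma',\kappa}$ if the Brownian noise throws $f^\kappa_\omega(p)$ into a different cap. The Chebyshev escape estimate above handles this. It is what forces the restriction to small $\kappa_0$, and it requires the cutoff to be taken at $r_0$ rather than $s_0$.

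For the $\mathsf y$ caps, $V_{\mathsf y,\kappa}$ is defined after composing with $f^3_N$. In that case we would also check that $r_\mathsf{y}$ and $r_\mathsf{y}\circ f_N^3$ are comparable up to $N$-dependent constants, so that the cutoffs $\chi(r_\mathsf y)$ still select the right regime.
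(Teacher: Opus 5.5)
Your proposal is correct and follows essentially the same route as the paper. You split according to whether $p$ lies in a cap $\mathsf{Q}_\sigma(r_0)$ or in $K(r_0)$, apply the local stochastic drifts on the high-probability event that $f_\omega^\kappa(p)$ stays in $\mathsf{Q}_\sigma(s_0)$, and absorb the escape event using $\mathrm{V}_\kappa\lesssim\kappa^{-\alpha/2}$; the only difference is that you bound the escape probability by Chebyshev ($O(\kappa)$) where the paper uses Markov ($O(\sqrt{\kappa})$). Your handling of $p\in K(r_0)\cap\mathsf{Q}_\sigma(s_0)$ via the local drift, and your check that $r_\mathsf{y}$ and $r_\mathsf{y}\circ f_N^3$ are comparable, fill in details the paper passes over, since it cites only the bounds valid away from the caps.
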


\begin{proof}
    Let $p\in K(r_0)^c$. Then, there exists $\sigma\in \Sigma$ such that $p\in\mathsf{Q}_\sigma(r_0)$ and thus $f_\omega(p)\in \mathsf{Q}_\sigma(s_0/2)$. On 
    \begin{equation}
    E=\lbrace | r_\sigma(f_\omega^\kappa(p)) - r_\sigma(f_\omega(p))| \leq \frac{s_0}{4} \rbrace
    \end{equation}
    there holds $r_\sigma(f_\omega^\kappa(p))\mathbf{1}_E \leq \frac{3s_0}{4}$ and thus $f_\omega^\kappa(p)\in \mathsf{Q}_\sigma(s_0)$. As a result,
    \begin{align}
        \EE[ \mathrm{V}_{\kappa}(f_\omega^\kappa(p)) \mathbf{1}_E] = \EE [V_{\sigma,\kappa}(f_\omega^\kappa(p)) \mathbf{1}_E ] \leq \gamma V_{\sigma,\kappa}(p) = \gamma \mathrm{V}_\kappa(p)
    \end{align}
    in view of Propositions \ref{prop:driftkappax}, \ref{prop:driftkappaz} and \ref{prop:driftkappay}. On the other hand, we observe that
    \begin{align}
        \PP(E^c) \leq \frac{4}{s_0} \EE \left[| r_\sigma(f_\omega^\kappa(p)) - r_\sigma(f_\omega(p))|\right]\leq \frac{4C_{\sigma,N}}{s_0}\sqrt{\kappa}
    \end{align}
    thanks to Lemma \ref{lemma:noisy_two_step_L2} and the Lipschitz continuity of $r_\sigma$, for all $\sigma\in \Sigma$. Together with the trivial bound $V_{\sigma,\kappa}(q)\leq (\sqrt{\kappa})^{-\alpha}$ we obtain
    \begin{align}
        \EE \left[ \mathrm{V}_{\kappa}(f_\omega^\kappa(p)) | E^c \right] {\PP(E^c)}\leq (3(\sqrt{\kappa})^{-\alpha} + \hat c ) \frac{4C_{\sigma,N}}{s_0}\sqrt{\kappa} \leq 4C_{\sigma,N}(3(\sqrt{\kappa})^{-\alpha} + \hat c )\frac{r_0^\alpha}{s_0}\sqrt{\kappa}V_{\sigma,\kappa}(p) \leq \frac{1-\gamma}{2}V_{\sigma,\kappa}(p)
    \end{align}
    for all $\kappa\in(0,\kappa_0)$ for $\kappa_0$ sufficiently small. Here we have used that $r_0^{-\alpha} \leq V_{\sigma,\kappa}(p)$ for all $r_0>\sqrt{\kappa}$ and all $p\in\mathsf{Q}_\sigma(r_0)$. As a result,
    \begin{align}
        \EE \left[ \mathrm{V}_{\kappa}(f_\omega^\kappa(p)) \right] \leq \gamma' \mathrm{V}_\kappa(p)
    \end{align}
    for $\gamma'=\frac{1+\gamma}{2}$, for all $p\in \mathsf{Q}_\sigma(r_0)$, for all $\sigma\in \Sigma$. 

    Next, for $p\in K(r_0)$, we have that $p\in \mathsf{Q}_\sigma(r_0)^c$ for all $\sigma\in \Sigma$ so that 
    \begin{align}
       \EE \left[ \mathrm{V}_\kappa(f_\omega^\kappa(p)) \right] \leq \hat c + \sum_{\sigma\in \Sigma} \EE \left[ V_{\sigma,\kappa}(f_\omega^\kappa(p)) \right] \leq \beta
    \end{align}
    for some $\beta>0$ independent of $\kappa\in(0,1)$ due to Lemma \ref{lemma:driftboundawayx}, \ref{lemma:driftboundawayz} and \ref{lemma:driftboundawayy}. With this, the lemma is proved.
\end{proof}

Define next 
\begin{align}\label{eq:tildermW}
    \widetilde {\mathrm{W}}_\kappa(p,q) = \max \lbrace \mathrm{W}_m(p,q), \nu\mathrm{V}_\kappa(p), \nu \mathrm{V}_\kappa(q) \rbrace,
\end{align}
with
\begin{align}
    \mathrm{W}_m(p,q) &= \chi (\tilde r_\mathsf{D}(p,q))(1-\chi(\tilde r_\sfx(p,q))) (1-\chi(\tilde r_\mathsf{y}(p,q)))(1-\chi(\tilde r_\sfz(p,q)))W(p,q) \\
    &\quad + \chi (\tilde r_\sfx(p,q)) (1-\chi(\tilde r_\mathsf{y}(p,q)))(1-\chi(\tilde r_\sfz(p,q)))W_{m,\sfx}(p, q)   \\
    &\quad + \chi (\tilde r_\mathsf{y}(p,q))(1-\chi(\tilde r_\sfz(p,q))) W_{m,\sfy}(p, q) \\
    &\quad + \chi(\tilde r_\sfz(p,q)) W_{m,\sfz}(p, q),
\end{align}
where $W_{m,\sigma}(p, q) =  \max \lbrace{ W_\sigma(p,q), W(p,q) \rbrace}$ for $\sigma\in \Sigma$. We next show that $\widetilde {\mathrm{W}}_\kappa$ satisfies a Lyapunov-drift condition.

\begin{proposition}\label{prop:tildeWkappadrift}
    There exists $\gamma\in(0,1)$, $\beta>0$, $s_0>0$, $r_0 \leq s_0/2$, $\kappa_0>0$, $\nu>0$ and $\ep>0$ such that for the compact set 
    \begin{align}
        \widetilde K = \lbrace   (p,q)\in \mathsf X^2 : p, q \in K(\ep) \text{ and } \tilde r_\sigma(p,q) \geq r_0, \text{ for all }\sigma\in \Sigma_\sfD \rbrace  
    \end{align}
there holds
    \begin{align}
        \EE \left[\widetilde {\mathrm{W}}_\kappa(f_\omega^\kappa(p),f_\omega^\kappa(q))\right] \leq \gamma \widetilde {\mathrm{W}}_\kappa(p,q) + \beta \mathbf{1}_{\widetilde{K}}(p,q),
    \end{align}
    for all $(p,q)\in \mathsf{X}^{(2)}$ and all $\kappa\in (0,\kappa_0)$.
\end{proposition}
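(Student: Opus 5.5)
The plan is to mirror the proof of Proposition~\ref{prop:globaldriftTPP} case by case, replacing each deterministic ingredient by its stochastic counterpart with $\kappa$-uniform constants. There are three such replacements.
\begin{itemize}
\item[(a)] Near the invariant set $\cS$, the deterministic contraction of $W$ from Proposition~\ref{prop:blackboxdriftTPP} is combined with Proposition~\ref{prop:stochdriftlocalTPP}. Choosing $\ep_0=(1-\gamma)/4$, this gives $\EE[W(f^\kappa_\omega(p),f^\kappa_\omega(q))]\leq \frac{1+\gamma}{2}W(p,q)$ whenever $d_\Sph(p,q)<s_*$ and $\kappa<\kappa_*$. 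The same holds for $W_\sigma=W(\cdot,\mathcal R_\sigma\cdot)$, since $\mathcal R_\sigma$ commutes with $f^\kappa_\omega$ in law; the Brownian term $\Pi_\x\d\bB$ is rotation-equivariant.
\item[(b)] Near the fixed points, $\mathrm V$ is replaced by $\mathrm V_\kappa$ using the preceding lemma.
\item[(c)] Every statement of the form ``$f_\omega(p)$ stays in a given cap or neighbourhood of $\cS$ for all $\omega$'' is replaced by the corresponding statement for $f_\omega^\kappa$. This holds on a good event $E$ with $\PP(E^c)\lesssim_N\sqrt\kappa/r_0$, by Lemma~\ref{lemma:noisy_two_step_L2} and Lemma~\ref{lemma:EXPsupomegaLipTPP}.
\end{itemize}
The max-type modification $W_{m,\sigma}=\max\{W_\sigma,W\}$ is there so that on overlaps of cutoffs, where the stochastic process may switch between neighbourhoods of $\Delta$ and $\mathrm R_\sigma$, one still has a pointwise comparison. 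I would check that $\mathrm W_m$ agrees with one of $W$, $W_{m,\sigma}$ on the relevant region. The maximum of two functions each contracting in expectation is bounded by the sum, and each of the two terms is controlled on its own.

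The cases are split exactly as before. \emph{Case 1} covers $p,q$ both near fixed points. In Case 1.1 ($\sigma_p\neq\sigma_q$), $\mathrm W_m$ vanishes on $E$ at both times, and the bound reduces to the one-point drift of $\mathrm V_\kappa$ applied to $\max\{\mathrm V_\kappa(p),\mathrm V_\kappa(q)\}\leq \mathrm V_\kappa(p)+\mathrm V_\kappa(q)$. To get a genuine factor $\gamma$ rather than $2\gamma'$, I would instead condition on which coordinate attains the maximum and use that $\mathrm V_\kappa$ is comparable at the two points up to the trivially bounded one. If that fails, I would iterate to $\ell$ steps, which $(H_\kappa)$ allows. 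Cases 1.2.1 and 1.2.2 use (a) on $E$. \emph{Case 2} takes $p$ near $F$ and $q$ in $K(\eta)$. The choice $\nu=\frac14\gamma\eta^{\alpha}\min\psi/(r_0^\xi C_N)$ is kept, with the bound $\EE[\mathrm V_\kappa(f^\kappa_\omega q)]\leq C_N\eta^{-\alpha}$ coming from Lemmas~\ref{lemma:driftboundawayx}, \ref{lemma:driftboundawayz}, \ref{lemma:driftboundawayy}, which are uniform in $\kappa$. In Case 2.2, $\rho_0$ must now be a lower bound for $\tilde r_\sigma(f^\kappa_\omega p,f^\kappa_\omega q)$ only on the good event. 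For the bad event I would use Lemma~\ref{lemma:EXPphikappainv}: this gives $\EE[d(f^\kappa_\omega p,\mathcal R_\sigma f^\kappa_\omega q)^{-\xi}]\lesssim r_0^{-\xi}$, so $\EE\mathrm W_m\lesssim r_0^{-\xi}\max\psi$ uniformly, and $\ep$ is chosen accordingly. \emph{Case 3} goes as before. On $\widetilde K$, the same negative-moment bound together with the uniform $\mathrm V_\kappa$ bounds give $\beta$ independent of $\kappa$.

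The main obstacle I expect is the bad event in the cases relying on (a) and (c). On $E^c$, the process may jump from the region where $\mathrm W_m=W_\sigma$ to one where the cutoffs mix. In addition, $\mathrm V_\kappa$ can be as large as $\kappa^{-\alpha/2}$. One must show that $\EE[\widetilde{\mathrm W}_\kappa(\cdot)\mathbf 1_{E^c}]\leq \frac{1-\gamma}{4}\widetilde{\mathrm W}_\kappa(p,q)$. For the $\mathrm V_\kappa$ part, I would use $\kappa^{-\alpha/2}\sqrt\kappa\,r_0^{-1}\to0$, exactly as in the preceding lemma. For the $\mathrm W_m$ part, I would use Hölder together with Lemma~\ref{lemma:EXPphikappainv}, as in \eqref{eq:Exp_on_Ec_StochTPP}. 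This yields $\EE[W\mathbf 1_{E^c}]\lesssim W(p,q)\PP(E^c)^{1-\xi}$, which is small once $\kappa_0$ is small.
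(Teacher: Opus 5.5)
Your case structure matches the paper's: good events on which the analysis of Proposition~\ref{prop:globaldriftTPP} transfers, H\"older with Lemma~\ref{lemma:EXPphikappainv} on bad events, and $\sqrt\kappa\,\kappa^{-\alpha/2}\to0$ for the $\mathrm V_\kappa$ part. However, step (a) fails for $\sigma\in\Sigma$.

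\textbf{Step (a) for $W_\sigma$.} That $\mathcal R_\sigma$ commutes with $f^\kappa_\omega$ in law is a one-point statement. But $W_\sigma$ is evaluated along the two-point motion, in which both points are driven by the \emph{same} $\bB$.
\begin{itemize}
\item Pathwise one only has $\mathcal R_\sigma f^\kappa_\omega(q;\bB)=f^\kappa_\omega(\mathcal R_\sigma q;\mathcal R_\sigma\bB)$. So $(f^\kappa_\omega(p),\mathcal R_\sigma f^\kappa_\omega(q))$ is \emph{not} distributed as the two-point motion started at $(p,\mathcal R_\sigma q)$.
\item Concretely, $Y_t=\phi^\kappa_t(p)-\mathcal R_\sigma\phi^\kappa_t(q)$ has diffusion coefficient $\sqrt{2\kappa}\,(\Pi_{\phi^\kappa_t(p)}-\Pi_{\mathcal R_\sigma\phi^\kappa_t(q)}\mathcal R_\sigma)$. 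At $Y_t=0$ this equals $\sqrt{2\kappa}\,\Pi_{\x}(I-\mathcal R_\sigma)\neq0$, since $I-\mathcal R_\sigma$ is twice the projection onto the plane orthogonal to the rotation axis.
\item Hence $\mathrm R_\sigma$ is not invariant for $\kappa>0$. The bound $|G|\lesssim|Y|$ (separation-proportional noise), on which Lemmas~\ref{lemma:EXPvartheta}, \ref{lemma:EXPphikappainv} and Proposition~\ref{prop:stochdriftlocalTPP} rest, is false for this twisted separation.
\end{itemize}
In fact the analogue of Proposition~\ref{prop:stochdriftlocalTPP} is false for $W_\sigma$. If $\tilde r_\sigma(p,q)\ll\sqrt\kappa$, then $W_\sigma(f_\omega p,f_\omega q)$ stays of order $\tilde r_\sigma(p,q)^{-\xi}$, while $W_\sigma(f^\kappa_\omega p,f^\kappa_\omega q)$ is typically of order $\kappa^{-\xi/2}$. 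Their difference is therefore comparable to $W_\sigma(p,q)$, not to $\ep_0W_\sigma(p,q)$.

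The same objection applies to your use of Lemma~\ref{lemma:EXPphikappainv} for $d(f^\kappa_\omega p,\mathcal R_\sigma f^\kappa_\omega q)^{-\xi}$ on bad events, in Case 2.2, and on $\widetilde K$. What is missing is a separate treatment near each $\mathrm R_\sigma$, $\sigma\in\Sigma$:
\begin{itemize}
\item a negative-moment bound for the twisted pair, based on non-degeneracy of the relative noise rather than on $|G|\lesssim|Y|$;
\item a split at $\tilde r_\sigma\sim\sqrt\kappa$ into a noise-dominated and a perturbative regime, with intermediate scales handled as was done for $\mathrm V_\kappa$ at the fixed points.
\end{itemize}
The paper's proof does not supply this either. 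It applies Proposition~\ref{prop:stochdriftlocalTPP} and Lemma~\ref{lemma:EXPphikappainv} to $W_{m,\sigma}$ in the same way, without comment.

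\textbf{Maxima.} Your worry in Case 1.1 is justified. The paper writes $\EE\max\{\cdot\}\le\gamma\max\{\cdot\}$ there, and in Proposition~\ref{prop:globaldriftTPP}, without proof. The issue is not confined to Case 1.1. Whenever you bound the expectation of $W_{m,\sigma}=\max\{W_\sigma,W\}$, or of $\widetilde{\mathrm W}_\kappa$, by summing the expectations of its pieces, you lose a factor equal to the number of pieces. Your two remedies do not close this:
\begin{itemize}
\item Conditioning on which coordinate attains the maximum does not help when $\mathrm V_\kappa(p)$ and $\mathrm V_\kappa(q)$ are both large and comparable, which Case 1.1 allows.
\item Iterating to $\ell$ steps gives the bound needed in $(H_\kappa)$, but not the one-step inequality claimed here.
\end{itemize}
For the statement as written you need one of two things. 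Either dominate all but one piece pointwise on a good event (as is done for the coordinate that stays in $K(\eta)$). Or modify $\widetilde{\mathrm W}_\kappa$ to use sums instead of maxima, for which the case analysis goes through with one-step contraction.
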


\begin{proof}
 Fix $0< s_0 < s_*$ sufficiently small, let $r_0<\frac{s_0}{2}$ small enough such that $f_\omega (p)\in \mathsf{Q}_\sigma(s_0/2)$ whenever $p\in \mathsf{Q}_\sigma(r_0)$ and $\tilde r_\sigma(f_\omega(p),f_\omega(q)) < s_0$ whenever $\tilde r_\sigma(p,q) < r_0$, for all $\sigma\in \Sigma_\sfD$. Moreover, let $\eta\in (0,r_0)$ and $\ep\in (0,\eta)$. We argue as in Proposition \ref{prop:globaldriftTPP}. First, for $(p,q)\not \in \widetilde K$ we consider three cases.

\diampar{Case 1} Assume first that $p,q\in K^c(\eta)$, so that $p\in \mathsf{Q}_{\sigma_p}(\eta)$ and $q\in \mathsf{Q}_{\sigma_q}(\eta)$.

\begin{indentblock}[1]\diampar{Case 1.1} Assume also that $\sigma_p \neq \sigma_q$. Hence, for $\eta$ small enough we have $\widetilde{r}_\sigma(p,q) \geq 1$ for all $\sigma\in \Sigma_\mathsf{D}$. Recall also that for $\eta$ smaller, if necessary and in terms of $N$, we also have $\widetilde{r}_\sigma(f_\omega(p),f_\omega(q)) \geq \frac12$ for all $\sigma\in \Sigma_\mathsf{D}$. Consider next
\begin{align}
    E = \bigcap_{\sigma\in \Sigma_\mathsf{D}} \left\lbrace |\widetilde {r}_{\sigma}(f_\omega^\kappa(p), f_\omega^\kappa(q)) - \widetilde r_{\sigma}( f_\omega(p), f_\omega(q))| \leq \frac{1}{4} \right\rbrace.
\end{align}
Hence, we have that $\widetilde {r}_{\sigma}(f_\omega^\kappa(p), f_\omega^\kappa(q)) \geq \frac14 \geq 2s_0$ for all $\sigma\in \Sigma_\mathsf{D}$ and thus $\mathrm{W}_m(f_\omega^\kappa(p), f_\omega^\kappa(q)) = \mathrm{W}_m(p,q) = 0$. As a result, we have
\begin{align}
    \EE \left[\widetilde{\mathrm{W}}_\kappa(f_\omega^\kappa(p), f_\omega^\kappa(q)) \mathbf{1}_E\right]\leq \gamma \widetilde{\mathrm{W}}_\kappa(p,q)
\end{align}
arguing as in the proof of Proposition \ref{prop:globaldriftTPP}. On the other hand, for $E^c$ we use Lemma \ref{lemma:EXPphikappainv} to reach 
\begin{align}
    \EE[\mathrm{W}_m(f_\omega^\kappa(p), f_\omega^\kappa(q))\mathbf{1}_{E^c}] \leq C\PP(E^c)^{1-\xi} \leq \kappa^{\frac{1-\xi}{2}} C_N^{-1} C\nu^{-1} \eta^{\alpha} \widetilde{\mathrm{W}}_\kappa(p,q)
\end{align}
since $\widetilde{r}_\sigma (p,q)\geq 1$ and $\mathrm{V}_\kappa(p)\geq C_N \eta^{-\alpha}$, for some $C_N>0$, and noting that
\begin{align}
    \PP(E^c) &\leq 4 \sum_{\sigma\in \Sigma_{\mathsf{D}}} \EE \left[ |\widetilde {r}_{\sigma}(f_\omega^\kappa(p), f_\omega^\kappa(q)) - \widetilde r_{\sigma}( f_\omega(p), f_\omega(q))|\right] \\
    &\leq 4 \sum_{\sigma\in \Sigma_{\mathsf{D}}} \EE \left[ \widetilde {r}_{\sigma}(f_\omega^\kappa(p), f_\omega(p)) + \widetilde r_{\sigma}( f_\omega^\kappa(q), f_\omega(q))\right] \\
    &\leq C\sqrt{\kappa}
\end{align}
due to Lemma \ref{lemma:noisy_two_step_L2}. The same bound holds for $\EE[\widetilde{\mathrm{W}}_\kappa(f_\omega^\kappa(p), f_\omega^\kappa(q))\mathbf{1}_{E^c}]$ and we conclude that
\begin{align}
    \EE[\mathrm{W}_m(f_\omega^\kappa(p), f_\omega^\kappa(q))]  \leq \EE[\mathrm{W}_m(f_\omega^\kappa(p), f_\omega^\kappa(q))\mathbf{1}_E] + \EE[\mathrm{W}_m(f_\omega^\kappa(p), f_\omega^\kappa(q)) \mathbf{1}_{E^c}] \leq \frac{1+\gamma}{2} \widetilde{\mathrm{W}}_\kappa(p,q)
\end{align}
for $\kappa_0\leq \frac{1-\gamma}{2}\frac{C_N\nu}{C \eta^{\alpha}}$ small enough.
\end{indentblock}
\begin{indentblock}[1]\diampar{Case 1.2} Suppose now that $\sigma_p = \sigma_q$. As before, for $\eta<r_0$ small enough we have either $\widetilde{r}_\mathsf{D}(p,q)\geq 1$ or $\widetilde{r}_\mathsf{D}\leq r_0$.
\end{indentblock}
\begin{indentblock}[2]\diampar{Case 1.2.1} If $\widetilde r_{\mathsf{D}}(p,q)\leq r_0$, then $\widetilde r_{\widetilde\sigma}(p,q)\geq 1$ for $\widetilde\sigma\neq\sigma, \mathsf{D}$ and for $\eta\leq r_0/2$ we have $\widetilde{r}_\sigma(p,q) \leq r_0 + 2\eta \leq s_0$ for $r_0\leq s_0/2$. Hence, we now consider 
\begin{align}
    E = \left\lbrace |\widetilde {r}_{\sigma}(f_\omega^\kappa(p), f_\omega^\kappa(q)) - \widetilde r_{\sigma}( f_\omega(p), f_\omega(q))| \leq \frac{\widetilde {r}_{\sigma} (f_\omega(p), f_\omega(q))}{2} \right\rbrace.
\end{align}
On $E$, there holds 
\begin{equation}
\widetilde{r}_{\sigma}(f_\omega^\kappa(p), f_\omega^\kappa(q))\leq\frac{3\widetilde{r}_{\sigma}(f_\omega(p), f_\omega(q))}{2}\leq r_0,
\end{equation}
for $\eta>0$ small enough and all $p,q\in \mathsf{Q}_\sigma(\eta)$. In particular, $\widetilde{r}_{\sigma'}(f_\omega^\kappa(p), f_\omega^\kappa(q))\geq \frac12$ for $\sigma'\neq \sigma, \mathsf{D}$ and thus
\begin{align}
    \mathrm{W}_m(f_\omega^\kappa(p), f_\omega^\kappa(q))= W_{m,\sigma}(f_\omega^\kappa(p), f_\omega^\kappa(q))
\end{align}
hence we obtain 
\begin{equation}
    \EE[ {\mathrm{W}}_m (f_\omega^\kappa(p), f_\omega^\kappa(q)) | E ]\leq \gamma W_m(p,q)
\end{equation}
due to Proposition \ref{prop:stochdriftlocalTPP} for some $\gamma\in(0,1)$, for all $\kappa\in (0,\kappa_0)$. On the other hand, we note from Markov inequality and Lemma \ref{lemma:EXPvartheta} that $\PP(E^c)\leq C \left(\sqrt{\kappa} + \widetilde{r}_\sigma(p,q) \right)$. Recall from Lemma \ref{lemma:EXPphikappainv} that 
\begin{align}
    \EE[W_{\sigma'}(f_\omega^\kappa(p), f_\omega^\kappa(q))] \leq \frac{C}{\widetilde{r}_{\sigma'}(p,q)^{\xi}} 
\end{align}
for all  $\sigma'\in \Sigma_\mathsf{D}$, for some $\xi\in (0,1)$. Moreover, since $\widetilde{r}_{\sigma'}(p,q)\geq 1$, we have that $\EE[W_{\sigma'}(f_\omega^\kappa(p), f_\omega^\kappa(q))] \leq C$ for $\sigma'\neq\sigma ,\mathsf{D}$. Then, 
\begin{align}
    \EE[\mathrm{W}_m(f_\omega^\kappa(p), f_\omega^\kappa(q))|E^c] &\leq \sum_{\sigma'\neq \sigma,\mathsf{D}}\EE[W_{\sigma'}(f_\omega^\kappa(p), f_\omega^\kappa(q))|E^c] + \sum_{\sigma'= \sigma,\mathsf{D}}\EE[W_{\sigma'}(f_\omega^\kappa(p), f_\omega^\kappa(q))|E^c] \\
    &\leq \left(C + 2C \mathrm{W}_m(p,q)\right) \PP(E^c)^{1-\xi} \\
    &\leq \widetilde{C} \mathrm{W}_m(p,q)\PP(E^c)^{1-\xi}
\end{align}
for some universal $\widetilde{C}>0$. Since $\widetilde{r}_\mathsf{D}(p,q)\leq 2r_0$ we conclude that 
\begin{align}
    \EE[\mathrm{W}_m(f_\omega^\kappa(p), f_\omega^\kappa(q))]\leq \EE[\mathrm{W}_m(f_\omega^\kappa(p), f_\omega^\kappa(q))\mathbf{1}_E] + \EE[\mathrm{W}_m(f_\omega^\kappa(p), f_\omega^\kappa(q))\mathbf{1}_{E^c}] \leq \frac{1+\gamma}{2}\mathrm{W}_m(p,q)
\end{align}
after choosing $r_0$ small enough and then $\kappa_0$ small enough. As a result, for $\gamma'=\frac{1+\gamma}{2}$, we obtain 
\begin{align}
    \EE[\widetilde{\mathrm{W}}_m(f_\omega^\kappa(p), f_\omega^\kappa(q))] \leq \gamma' \widetilde{\mathrm{W}}_m(p,q).
\end{align}
\end{indentblock}
\begin{indentblock}[2]\diampar{Case 1.2.2} We now have $\widetilde{r}_\mathsf{D}(p,q)\geq 1$ and $p,q\in \mathsf{Q}_{\sigma}(\eta)$. Hence, due to the symmetries of the sphere and of the rotations $\mathsf{R}_\sigma$, there exists $\sigma'\in \Sigma\setminus\lbrace \sigma \rbrace$ such that $\widetilde{r}_{\sigma'}(p,q)\leq r_0/2$ and thus $\mathrm{W}_m(p,q) = W_{m,\sigma'}(p,q)$. In particular, for $\eta>0$ small enough, we have $\widetilde{r}_{\sigma'}(f_{\omega}(p),f_\omega(q))\leq r_0$ and 
\begin{equation}
\mathrm{W}_m(f_\omega(p),f_\omega(q)) = W_{m,\sigma'}(f_\omega(p),f_\omega(q))
\end{equation}
for all $\omega\in [-N,N]^2$. Consider now
\begin{align}
    E = \left\lbrace |\widetilde {r}_{\sigma'}(f_\omega^\kappa(p), f_\omega^\kappa(q)) - \widetilde r_{\sigma'}( f_\omega(p), f_\omega(q))| \leq \frac{\widetilde {r}_{\sigma'} (f_\omega(p), f_\omega(q))}{2} \right\rbrace
\end{align}
and note that on $E$ there holds $\widetilde {r}_{\sigma'}(f_\omega^\kappa(p), f_\omega^\kappa(q)) \leq 2r_0\leq s_0$. As a result, 
\begin{equation}
\mathrm{W}_m(f_\omega^\kappa(p), f_\omega^\kappa(q)) =W_{m,\sigma'}(f_\omega^\kappa(p), f_\omega^\kappa(q))
\end{equation}
so that  
\begin{equation}
\EE [\mathrm{W}_m(f_\omega^\kappa(p), f_\omega^\kappa(q))\mathbf{1}_E] \leq \gamma \mathrm{W}_m(p,q).
\end{equation}
On the other hand, for $E^c$ we argue as in Case 1.2.1 to obtain that 
\begin{equation}
\EE \left[\mathrm{W}_m(f_\omega^\kappa(p), f_\omega^\kappa(q))\mathbf{1}_{E^c} \right]\leq \widetilde{C}W_m(p,q)\PP(E^c)^{1-\xi}
\end{equation}
and  
\begin{equation}
\PP(E^c) \leq C\left(\sqrt{\kappa} + \widetilde{r}_{\sigma'}(p,q)\right)\leq C(\sqrt{\kappa} + r_0 )
\end{equation}
so that for $r_0>0$ and $\kappa_0>0$ small enough, we conclude first that 
\begin{equation}
\EE \left[{\mathrm{W}}_m(f_\omega^\kappa(p), f_\omega^\kappa(q)) \right] \leq \frac{1+\gamma}{2}{\mathrm{W}}_m(p,q)
\end{equation}
and then 
\begin{equation}
\EE \left[\widetilde{\mathrm{W}}_\kappa(f_\omega^\kappa(p), f_\omega^\kappa(q)) \right] \leq \frac{1+\gamma}{2}\widetilde{\mathrm{W}}_\kappa(p,q).
\end{equation}
\end{indentblock}
\diampar{Case 2} Consider next $p\in K^c(\eta)$ and $q\in K(\eta)$, so that $p\in \mathsf{Q}_{\sigma_p}(\eta)$ and $q\in \mathsf{Q}_\sigma(\eta)^c$, for all $\sigma \in \Sigma$. We distinguish two more cases.

\begin{indentblock}[1]
\diampar{Case 2.1} Assume there exists $\tilde\sigma \in \Sigma_\sfD$ such that $\tilde r_{\tilde\sigma}(p,q) < r_0$. Then, for all $\omega\in[-N,N]^2$ it holds $\tilde r_{\tilde\sigma}(f_\omega(p),f_\omega(q)) \leq s_0/2$, and there exists $\sigma'\in \Sigma_\sfD$ such that $\mathrm{W}_m(f_\omega(p),f_\omega(q)) = W_{\sigma',m}(f_\omega(p), f_\omega(q))$ for all $\omega\in[-N,N]^2$. Consider thus 
\begin{align}
    E_1 &:= \bigcap_{\sigma\in \Sigma} \lbrace |f_\omega^\kappa(q) - f_\omega(q)| \leq \delta_0 \rbrace, \\
    E_2 &:= \left\lbrace |\widetilde {r}_{\sigma'}(f_\omega^\kappa(p), f_\omega^\kappa(q)) - \widetilde r_{\sigma'}( f_\omega(p), f_\omega(q))| \leq \frac{\widetilde {r}_{\sigma'} (f_\omega(p), f_\omega(q))}{2} \right\rbrace
\end{align}
and $E:= E_1\cap E_2$. Here $\delta_0$ is such that $|r_\sigma(p_1) - r_\sigma(p_2)|\leq \frac{r_\sigma(f_\omega(q))}{2}$ for all $p_1,p_2\in \Sph$ such that $|p_1-p_2|\leq \delta_0$ due to the uniform continuity of $r_\sigma$, for all $\sigma\in \Sigma$. Note that 
\begin{equation}
\min_{q\in K(\eta),\, \sigma\in \Sigma,\, \omega\in [-N,N]^2} r_\sigma(f_\omega(q)) >0.
\end{equation}
Then, on $E$, for $p_1=f_\omega^\kappa(q)$ and $p_2=f_\omega(q)$ we have 
\begin{equation}
r_\sigma(f_\omega^\kappa(q)) \geq \frac{r_{\sigma}(f_\omega(q))}{2}\geq C_N^{-1}\eta
\end{equation}
due to Lemma \ref{lemma:EXPphikappaphi}. Hence,
    \begin{align}
         \max _{\sigma\in \Sigma} \lbrace   \mathrm{V}_\kappa(f_\omega(q)) : q\in \mathsf{Q}^c_\sigma(\eta), \omega\in[-N,N]^2 \rbrace  \leq C_N \eta^{-\alpha}
    \end{align}
for $\eta$ small enough. Therefore, fixing $\alpha=\frac18$,
\begin{equation}
\EE [\nu\mathrm{V}_\kappa(f_\omega^\kappa(q)) \mathbf{1}_E]\leq \nu C_N \eta^{-\frac18} \leq \gamma \mathrm{W}_m(p,q)
\end{equation} 
once we set $\nu=\frac14 \frac{\gamma\eta^\frac18  \min \psi}{r_0^\xi C_N}$. 

Next, to estimate the drift associated to the two-point chain, on $E$ we also have $\widetilde{r}_{\sigma'}(f_\omega^\kappa(p), f_\omega^\kappa(q)) \leq 3s_0/4 \leq s_0$, so that 
\begin{equation}
\EE[\mathrm{W}_m(f_\omega^\kappa(p), f_\omega^\kappa(q))\mathbf{1}_E] = \EE[W_{\sigma',m}(f_\omega^\kappa(p), f_\omega^\kappa(q))\mathbf{1}_E]
\end{equation}
and
\begin{align}
    \EE[W_{\sigma'}(f_\omega^\kappa(p), f_\omega^\kappa(q))\mathbf{1}_E] \leq \gamma W_{\sigma'}(p,q).
\end{align}
Moreover, note that 
\begin{equation}
\EE [\widetilde{r}_\mathsf{D}(f_\omega^\kappa(p), f_\omega^\kappa(q))^{-\xi}] \leq C_{\xi}\widetilde{r}_\mathsf{D}(p,q)^{-\xi}
\end{equation}
due to Lemma \ref{lemma:EXPphikappainv}. If $\widetilde{r}_\mathsf{D}(p,q)\geq 4 r_0\left(\frac{C_\xi\max\psi}{\min\psi}\right)^\frac{1}{\xi}$, we have that
\begin{equation}
\mathrm{W}_m(p,q)\leq \widetilde{r}_\mathsf{D}(p,q)^{-\xi} \max \psi \leq \widetilde{r}_{\sigma'}(p,q)^{-\xi}\min\psi \leq \mathrm{W}_{\sigma'}(p,q),
\end{equation}
so that $\mathrm{W}_m(p,q)= W_{\sigma'}(p,q)$. Hence, 
\begin{align}
    \EE[W(f_\omega^\kappa(p),f_\omega^\kappa(q))\mathbf{1}_E] \leq C_\xi \widetilde{r}_\mathsf{D}(p,q)^{-\xi} \max\psi \leq 4^{-\xi} W_{\sigma'}(p,q) \leq \gamma' \mathrm{W}_m(p,q),
\end{align}
for $\gamma'= \max\lbrace \gamma, 4^{-\xi} \rbrace \in (0,1)$. On the other hand, if $\widetilde{r}_\mathsf{D}(p,q)\leq 4 r_0\left(\frac{C_\xi\max\psi}{\min\psi}\right)^\frac{1}{\xi}$, we choose $r_0$ small enough so that $\widetilde{r}_\mathsf{D}(p,q)\leq s_*$ from Proposition \ref{prop:stochdriftlocalTPP}. Hence,
\begin{align}
    \EE[W(f_\omega^\kappa(p),f_\omega^\kappa(q))\mathbf{1}_E] \leq \gamma W(p,q)
\end{align}
and we conclude that 
\begin{equation}
\EE[\widetilde{\mathrm{W}}_\kappa(f_\omega^\kappa(p), f_\omega^\kappa(q))\mathbf{1}_E] \leq \gamma' \widetilde{\mathrm{W}}_\kappa(p,q).
\end{equation}

For the complementary event we write
\begin{align}
    E^c = E_1^c \cup E_2^c = (E_1^c\cap E_2) \cup (E_1^c \cap E_2^c) \cup  (E_1\cap E_2^c) 
\end{align}
Note that from Markov inequality and Lemmas \ref{lemma:noisy_two_step_L2} and \ref{lemma:EXPvartheta} that
\begin{align}
    \PP(E_1^c) \lesssim \delta_0^{-1}\sqrt{\kappa}, \quad \PP(E_2^c) \lesssim (\sqrt{\kappa} + r_0).
\end{align}
We argue for each subset.
\begin{enumerate}[label=--]
    \item For $E_1^c\cap E_2$ we have $\PP(E_1^c\cap E_2) \leq \PP(E_1^c) \lesssim \sqrt{\kappa}$. We now have
    \begin{align}
        \EE [\mathrm{V}_\kappa(f_\omega^\kappa(q))|E_1^c\cap E_2] \leq (\sqrt{\kappa})^{-\alpha} \leq \left( \frac{\eta}{\sqrt{\kappa}}\right)^\alpha \mathrm{V}_\kappa(q)
    \end{align}
    due to the pointwise bound $\mathrm{V}_\kappa(f_\omega^\kappa(q)) \leq (\sqrt{\kappa})^{-\alpha}$. Therefore,
    \begin{equation}
    \EE[\widetilde{\mathrm{W}}_\kappa(f_\omega^\kappa(p), f_\omega^\kappa(q))\mathbf{1}_{E_1^c\cap E_2}] \leq  \sqrt{\kappa} \left( \frac{\eta}{\sqrt{\kappa}}\right)^\alpha\widetilde{\mathrm{W}}_\kappa(p,q).
    \end{equation}
    \item For $E_1^c\cap E_2^c$ we have $\PP(E_1^c\cap E_2^c) \lesssim \sqrt{\kappa}$ again. We now observe that 
    \begin{equation}
    \EE[\mathrm{W}_m(f_\omega^\kappa(p),f_\omega^\kappa(q))\mathbf{1}_{E_1^c\cap E_2^c}] \leq C \mathrm{W}_m(p,q)\PP(E_1^c\cap E_2^c)^{1-\xi}
    \end{equation}
    due to Lemma \ref{lemma:EXPphikappainv} and we still have $\mathrm{V}_\kappa(f_\omega^\kappa(q))  \leq \left( \frac{\eta}{\sqrt{\kappa}}\right)^\alpha \mathrm{V}_\kappa(q)$.
    Hence, 
    \begin{equation}
    \EE[\widetilde{\mathrm{W}}_\kappa(f_\omega^\kappa(p), f_\omega^\kappa(q))\mathbf{1}_{E_1^c\cap E_2}] \leq \left( \left( \frac{\eta}{\sqrt{\kappa}}\right)^\alpha\PP(E_1^c\cap E_2^c) + \PP(E_1^c\cap E_2^c)^{1-\xi} \right) \widetilde{\mathrm{W}}_\kappa(p,q)
    \end{equation}
    as well for $\kappa_0$ small enough.
    \item For $E_1\cap E_2^c$ we have $\PP(E_1\cap E_2^c) \lesssim \sqrt{\kappa} + r_0$.
    We still have 
    \begin{equation}
    \EE [\nu\mathrm{V}(f_\omega^\kappa(q))]\leq \nu C_N \eta^{-\frac18} \leq \gamma \widetilde{\mathrm{W}}_\kappa(p,q) 
    \end{equation}
    and also 
    \begin{equation}
    \EE[\mathrm{W}_m(f_\omega^\kappa(p),f_\omega^\kappa(q))\mathbf{1}_{E_1\cap E_2^c}] \leq C \mathrm{W}_m(p,q)\PP(E_1\cap E_2^c)^{1-\xi}
    \end{equation}
    due to Lemma \ref{lemma:EXPphikappainv}. As a result,
    \begin{align}
        \EE[\widetilde{\mathrm{W}}_\kappa(f_\omega^\kappa(p), f_\omega^\kappa(q))\mathbf{1}_{E_1\cap E_2^c}] \leq (\gamma + C\PP(E_1\cap E_2^c)^{1-\xi}) \widetilde{\mathrm{W}}_\kappa(p,q).
    \end{align}
\end{enumerate}
With this, we conclude that 
\begin{align}
    \EE[\widetilde{\mathrm{W}}_\kappa(f_\omega^\kappa(p), f_\omega^\kappa(q))] &= \EE[\widetilde{\mathrm{W}}_\kappa(f_\omega^\kappa(p), f_\omega^\kappa(q))\mathbf{1}_E] + \EE[\widetilde{\mathrm{W}}_\kappa(f_\omega^\kappa(p), f_\omega^\kappa(q))\mathbf{1}_{E_1^c\cap E_2}] \\
    &\quad + \EE[\widetilde{\mathrm{W}}_\kappa(f_\omega^\kappa(p), f_\omega^\kappa(q))\mathbf{1}_{E_1^c\cap E_2^c}]  + \EE[\widetilde{\mathrm{W}}_\kappa(f_\omega^\kappa(p), f_\omega^\kappa(q))\mathbf{1}_{E_1\cap E_2^c}] \\
    &\leq \frac{1+\gamma'}{2}\widetilde{\mathrm{W}}_\kappa(p,q)
\end{align}
after choosing $r_0$ and $\kappa_0$ small enough, once we note that $\kappa^{\frac12(1-\alpha)}$ can be made arbitrarily small because $\alpha=\frac18 <1$.
\end{indentblock}

\begin{indentblock}[1]
\diampar{Case 2.2 } On the other hand, if $\tilde r_{\tilde\sigma}(p,q) \geq r_0$ for all $\tilde\sigma\in \Sigma_\sfD$ and $(p,q)\not\in \widetilde{K}$ we must have $p\in \mathsf{Q}_{\sigma_p}(\ep)$, for some $\sigma_p\in \Sigma$ as $q\in K(\eta) \subset K(\ep)$. We begin by noting that from Lemma \ref{lemma:EXPphikappainv}, we have $\EE [\mathrm{W}_m(f_\omega^\kappa(p), f_\omega^\kappa(q))] \leq C r_0^{-\xi}$. Consider again
\begin{align}
    E &:= \bigcap_{\sigma\in \Sigma} \lbrace |f_\omega^\kappa(q) - f_\omega(q)| \leq \delta_0 \rbrace,
\end{align}
so that on $E$ there holds $r_\sigma(f_\omega^\kappa(q)) \geq \frac{r_{\sigma}(f_\omega(q))}{2}\geq C_N^{-1}\eta$ due to Lemma \ref{lemma:EXPphikappaphi}. 
Hence, 
\begin{equation}
\EE [\nu\mathrm{V}(f_\omega(q))\mathbf{1}_E]\leq \nu C_N \eta^{-\frac18} \leq C r_0^{-\xi}
\end{equation}
due to the choice of $\nu=\frac{\gamma\eta^\alpha\min\psi}{4C_Nr_0^\xi}$. Since $\ep^{-\alpha}\leq \mathrm{V}_\kappa(p)$ for $\kappa_0<\ep^2$ small enough, we have that
\begin{align}
    Cr_0^{-\xi} \leq \nu\gamma \mathrm{V}_\kappa(p)
\end{align}
provided that $Cr_0^{-\xi} \leq \nu \gamma \ep^{-\alpha}$, that is, $\ep \leq \left( \frac{\nu\gamma r_0^\xi}{C} \right)^{\frac{1}{\alpha}}$. Hence, for $\ep>0$ small enough and $\kappa_0 < \ep^2$ smaller, we find that
\begin{align}
    \EE[\widetilde{\mathrm{W}}_\kappa(f_\omega^\kappa(p), f_\omega^\kappa(q))\mathbf{1}_E] \leq \nu\gamma \mathrm{V}_\kappa(p) \leq \gamma \widetilde{\mathrm{W}}_\kappa(p,q).
\end{align}
On the contrary, on $E^c$ we have $\PP(E^c) \lesssim \sqrt{\kappa}$ and 
\begin{equation}
\EE [\mathrm{V}_\kappa(f_\omega^\kappa(q))|E^c]\PP(E^c) \leq \sqrt{\kappa}(\sqrt{\kappa})^{-\alpha} \leq \sqrt{\kappa}\left( \frac{\ep}{\sqrt{\kappa}}\right)^\alpha \mathrm{V}_\kappa(q),
\end{equation}
while we still have 
\begin{equation}
\EE [\mathrm{W}_m(f_\omega^\kappa(p), f_\omega^\kappa(q))\mathbf{1}_{E^c}] \leq C r_0^{-\xi} \PP(E^c)^{1-\xi}\leq \nu\gamma\mathrm{V}_\kappa(p)
\end{equation}
for the same choice of $\ep$. As a result, 
\begin{equation}
\EE[\widetilde{\mathrm{W}}_\kappa(f_\omega^\kappa(p), f_\omega^\kappa(q))\mathbf{1}_{E^c}] \leq \left({\kappa}^{\frac{1-\alpha}{2}} + \gamma \kappa^{\frac{1-\xi}{2}}\right)\widetilde{\mathrm{W}}_\kappa(p,q).
\end{equation}
With this,
\begin{align}
    \EE[\widetilde{\mathrm{W}}_\kappa(f_\omega^\kappa(p), f_\omega^\kappa(q))] &\leq \EE[\widetilde{\mathrm{W}}_\kappa(f_\omega^\kappa(p), f_\omega^\kappa(q))\mathbf{1}_E] + \EE[\widetilde{\mathrm{W}}_\kappa(f_\omega^\kappa(p), f_\omega^\kappa(q))\mathbf{1}_{E^c}] \\
    &\leq \frac{1+\gamma}{2}\widetilde{\mathrm{W}}_\kappa(p,q)
\end{align}
for $\kappa_0$ small enough since $\alpha=\frac18<1$.
\end{indentblock}
\diampar{Case 3} Assume finally that $p,q\in K(\eta) \subset K(\ep)$. Hence, $p,q\in \mathsf{Q}_\sigma^c(\eta)$ for all $\sigma\in \Sigma$ and we must then have $\tilde r_\sigma (p,q) < r_0$, for some $\sigma\in \Sigma_\mathsf{D}$. Therefore, $\tilde r_\sigma( f_\omega(p), f_\omega(q)) < s_0/2$ for all $\omega\in [-N,N]^2$ and there exists $\sigma'\in \Sigma_\mathsf{D}$ such that $\mathrm{W}_m(f_\omega(p),f_\omega(q)) = W_{\sigma',m}(f_\omega(p),f_\omega(q))$, for all $\omega\in [-N,N]^2$. In particular, $\mathrm{W}_m(p,q) = W_{\sigma',m}(p,q)$.  We consider here 
\begin{align}    
    E_1 &:= \bigcap_{\sigma\in \Sigma} \lbrace |f_\omega^\kappa(p) - f_\omega(p)| \leq \delta_0 \rbrace, \\
    E_2 &:= \bigcap_{\sigma\in \Sigma} \lbrace |f_\omega^\kappa(q) - f_\omega(q)| \leq \delta_0 \rbrace, \\
    E_3 &:= \left\lbrace |\widetilde {r}_{\sigma'}(f_\omega^\kappa(p), f_\omega^\kappa(q)) - \widetilde r_{\sigma'}( f_\omega(p), f_\omega(q))| \leq \frac{\widetilde {r}_{\sigma'} (f_\omega(p), f_\omega(q))}{2} \right\rbrace
\end{align}
where $\delta_0$ is given as before. Then, on $E=E_1\cap E_2\cap E_3$ we have $\mathrm{W}_m(f_\omega^\kappa(p),f_\omega^\kappa(q)) = W_{\sigma',m}(f_\omega^\kappa(p),f_\omega^\kappa(q))$, which yields 
\begin{equation}
\mathbb{E}\left[ {\mathrm{W}_m}(f_\omega^\kappa(p),f_\omega^\kappa(q)) \mathbf{1}_E \right] \leq \gamma \mathrm{W}_m(p,q).
\end{equation}
Additionally,  we recall that 
\begin{equation}
\max \lbrace \nu\mathrm{V}_\kappa(f_\omega^\kappa(p)), \nu \mathrm{V}_\kappa(f_\omega^\kappa(q)) \rbrace \mathbf{1}_E \leq \nu C_N\eta^{-\alpha}\leq \gamma W(p,q).
\end{equation}
As a result, there holds 
\begin{equation}
\EE[\widetilde{\mathrm{W}}_\kappa(f_\omega^\kappa(p), f_\omega^\kappa(q))\mathbf{1}_E] \leq \gamma \widetilde{\mathrm{W}}_\kappa(p,q).
\end{equation}
For the complement $E^c$, we now write $E^c = E_1^c \cup E_2^c \cup E_3^c$ where 
\begin{align}
    E_1^c = (E_1^c \cap E_2\cap E_3) \cup (E_1^c \cap E_2^c\cap E_3) \cup (E_1^c \cap E_2\cap E_3^c) \cup (E_1^c \cap E_2^c\cap E_3^c) 
\end{align}
as well as 
\begin{align}
    E_2^c = (E_1 \cap E_2^c\cap E_3) \cup (E_1^c \cap E_2^c\cap E_3) \cup (E_1 \cap E_2^c\cap E_3^c) \cup (E_1^c \cap E_2^c\cap E_3^c) 
\end{align}
and
\begin{align}
    E_3^c = (E_1 \cap E_2\cap E_3^c) \cup (E_1^c \cap E_2\cap E_3^c) \cup (E_1 \cap E_2^c\cap E_3^c) \cup (E_1^c \cap E_2^c\cap E_3^c) 
\end{align}
We argue for each of these sets $\widetilde{E}$ as in Case 2.1 above, noting that either 
\begin{equation}
\EE[\widetilde{\mathrm{W}}_\kappa(f_\omega^\kappa(p),f_\omega^\kappa(q))|\widetilde{E}] \leq \kappa^{-\frac{\alpha}{2}}\widetilde{\mathrm{W}}_\kappa(p,q)
\end{equation}
with $\PP(\widetilde{E}) \lesssim \sqrt{\kappa}$ or 
\begin{equation}
\EE[\widetilde{\mathrm{W}}_\kappa(f_\omega^\kappa(p),f_\omega^\kappa(q))\mathbf{1}_{\widetilde{E}}] \leq C \widetilde{\mathrm{W}}_\kappa(p,q)\PP(\widetilde{E})^{1-\xi}
\end{equation}
with $\PP(\widetilde{E}) \lesssim \sqrt{\kappa} + r_0$. Since $\alpha=\frac18< 1$, choosing $r_0$ and $\kappa_0$ small enough we conclude that 
\begin{equation}
\EE \left[\widetilde{\mathrm{W}}_\kappa(f_\omega^\kappa(p), f_\omega^\kappa(q)) \right] \leq \frac{1+\gamma}{2}\widetilde{\mathrm{W}}_\kappa(p,q)
\end{equation}
as well.

\noindpar{End of proof} Finally, for $(p,q)\in \widetilde{K}$, we have that $\EE [\mathrm{W}_m(f_\omega^\kappa(p),f_\omega^\kappa(q))] \leq C r_0^{-\xi}$ due to Lemma \ref{lemma:EXPphikappainv}. Moreover, we have that 
\begin{equation}
\EE[\mathrm{V}_\kappa(f_\omega^\kappa(p))] \leq \gamma' \mathrm{V}_\kappa(p) + \beta \leq \widetilde\beta
\end{equation}
for some $\widetilde\beta>0$ for $\kappa_0$ small enough and similarly for $\EE[\mathrm{V}_\kappa(f_\omega^\kappa(q))]$. With this, we conclude that 
\begin{equation}
\EE[\widetilde{\mathrm{W}}_\kappa(f_\omega^\kappa(p), f_\omega^\kappa(q))] \leq \beta',
\end{equation} for some universal $\beta'>0$ for all $(p,q)\in \widetilde{K}$.
\end{proof}

\section{\texorpdfstring{Uniform-in-$\kappa$ exponential mixing and enhanced dissipation}{Uniform-in-κ exponential mixing and enhanced dissipation}}\label{sec:unifkappa}
In this section we culminate the analysis on the stochastic flow map $f_\omega^\kappa$ and we show that the stochastic Markov chain $P_\kappa^{(2)}$ satisfies the hypothesis $(H_\kappa)$ introduced in Section~\ref{sec:asbtractRDS}, namely there exists $\ell\in \N$, $\gamma\in(0,1)$, $\beta>0$, $R> \frac{2\beta}{1-\gamma}$, $\upsilon>0$ and a probability measure $\nu_0$ such that for all $\kappa>0$ sufficiently small there holds 
\begin{align}\label{eq:Hkappadrift_loc}
    P^{(2), \ell}_\kappa \mathrm{V}^{(2)}_\kappa \leq \gamma \mathrm{V}^{(2)}_\kappa + \beta,
\end{align}
and
\begin{align}\label{eq:Hkappasmallset_loc}
    \inf_{(p,q)\in \lbrace \mathrm{V}^{(2)}_\kappa\leq R \rbrace} P^{(2),\ell}_\kappa((p,q), \cdot) \geq \upsilon\nu_0(\cdot),
\end{align}
for some $\mathrm{V}^{(2)}_\kappa:\mathsf{X}^{(2)} \rightarrow [1,\infty)$.

Note that \eqref{eq:Hkappadrift_loc} is satisfied for the drift function $\mathrm{V}_\kappa^{(2)}:=\widetilde{\mathrm{W}}_\kappa$ thanks to Proposition \ref{prop:tildeWkappadrift}.
Hence, this section is devoted to prove the small set condition \eqref{eq:Hkappasmallset_loc}. 
To do so, we follow closely the approach used in \cites{CIS2024}, expanding and adapting the discussion to our sphere setting, where necessary.
More precisely, we start with Lemma~\ref{lemma:quantpreimagefromlocalsubmersion} (a quantitative version of \cite{CIS2024}*{Lemma~5.2}) which is then used in Lemma~\ref{lemma:opensmallsetstochasticTPP} to establish the existence of an open small set for the stochastic two-point process (see \cite{CIS2024}*{Lemma~5.1} for reference).

\begin{lemma}
\label{lemma:quantpreimagefromlocalsubmersion}
Let $f:B_r(0)\subset\RR^N\to\RR^d$ be a $C^1$ map such that $f(0)=0$ and $\rank (Df(0))=d$. Then there exist constants $\delta, s, C>0$ such that, for every Lipschitz map $g:\RR^N\to\RR^d$ satisfying $\norm{f-g}_{L^\infty(B_r(0))}\leq \delta$, one has
\begin{enumerate}
    \item $B_s(0)\subset g(B_r(0))$, and,
    \item  for every Borel set $A\subset\RR^d$,
\begin{equation}\label{eq:quantpreimagefromlocalsubmersion}
    |g^{-1}(A)\cap B_r(0)| \geq C\,\Lip(g)^{-d}|A\cap B_s(0)|.
\end{equation}
\end{enumerate}
\end{lemma}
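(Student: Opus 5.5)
The plan is to reduce to the case of a linear surjection by a change of coordinates, and then use a degree (Brouwer) argument for surjectivity together with a coarea/area-formula argument for the measure bound.

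\emph{Reduction to a linear model.} Since $\rank Df(0)=d$, split $\RR^N=\ker Df(0)\oplus H$, where $H$ is a $d$-dimensional complement on which $L:=Df(0)|_H$ is invertible. Write points as $(h,k)$ with $h\in H$ and $k\in\ker Df(0)$. After shrinking $r$ if necessary, the inverse function theorem applied to the map $(h,k)\mapsto(f(h,k),k)$ shows that, for every fixed $k$ with $|k|\le r/2$, the restriction $h\mapsto f(h,k)$ is a $C^1$ diffeomorphism from a ball $B^H_{\rho}$ onto a set containing a fixed ball $B_{2s}(f(0,k))$. By continuity, $|f(0,k)|$ is small when $|k|\le\rho'$. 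So we obtain a uniform family of $d$-dimensional slices, each mapped by $f$ onto a neighbourhood containing $B_{s'}(0)$, where $s'>0$ does not depend on $k$.

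\emph{Surjectivity for $g$.} Fix a slice $k$ and a point $y\in B_s(0)$. On $\partial B^H_\rho$ we have $|f(\cdot,k)-y|\ge c_0$ for some $c_0>0$. Take $\delta<c_0/2$. The homotopy $t f+(1-t)g$ then never hits $y$ on the boundary, so the Brouwer degrees satisfy $\deg(g(\cdot,k),B^H_\rho,y)=\deg(f(\cdot,k),B^H_\rho,y)=\pm1$. This degree argument needs only continuity of $g$, and it gives $y\in g(\{k\}\times B^H_\rho)$, which proves (1).

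\emph{Measure bound, the main obstacle.} Degree theory gives existence of preimages, but it gives no quantitative volume for $g^{-1}(A)$. Here I would use the area formula on each slice. Set $g_k:=g(\cdot,k)$, which is Lipschitz with $\Lip(g_k)\le\Lip(g)$. Then
\[
\int_{B^H_\rho}\mathbf 1_A(g_k(h))\,|\det Dg_k(h)|\,dh=\int_{A}\#\big(g_k^{-1}(y)\cap B^H_\rho\big)\,dy\ \ge\ |A\cap B_s(0)|,
\]
where the last inequality uses (1) slice by slice. Since $|\det Dg_k|\le C\Lip(g)^d$ almost everywhere, this yields
\[
|g_k^{-1}(A)\cap B^H_\rho|\ \ge\ C^{-1}\Lip(g)^{-d}|A\cap B_s(0)|.
\]
The delicate point is that $g$ is merely Lipschitz, not $C^1$. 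The area formula for Lipschitz maps (Rademacher) handles this, and the multiplicity lower bound holds for every $y\in B_s(0)$, so no regularity of $A$ is needed beyond Borel measurability. Finally, integrate over $k$ in a ball of $\ker Df(0)$ of radius $\rho'$ and apply Fubini in the $(h,k)$ coordinates; this costs only a constant from the linear change of variables. The result is \eqref{eq:quantpreimagefromlocalsubmersion}, with $C$ depending only on $f$, $r$, $\rho$ and $\rho'$.
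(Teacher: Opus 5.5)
Your proposal is correct and follows essentially the paper's route. For (1) you run a degree argument on a uniform family of $d$-dimensional slices, and for (2) you apply the area formula on each slice with $|\det Dg_k|\lesssim\Lip(g)^d$ and then use Fubini in the transverse variable. The only difference is cosmetic: you take flat slices $\{k\}\times H$ from the splitting $\ker Df(0)\oplus H$ with a quantitative inverse function theorem on each slice, whereas the paper first straightens $f$ into the projection $(u,v)\mapsto u$ via the constant-rank theorem and therefore has to carry the extra change-of-variables constants coming from $\varphi$ and $\psi$, which your version avoids.
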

\begin{proof}
The inclusion $B_s(0)\subset g(B_r(0))$ is precisely Lemma~5.2 of \cites{CIS2024}, after choosing $\delta,s>0$ sufficiently small. We prove only the quantitative estimate.

By the constant-rank theorem, there exists open sets $\mathcal{U},\mathcal{V}\subset \R^d$ and diffeomorphisms 
\begin{align}
    &\psi:B^d_\rho\times B^{N-d}_R\subset \R^N \rightarrow \psi(B_\rho^d\times B_R^{N-d}) \subset B_r(0)\subset \R^N,\\
    &\varphi:\mathcal{U}\subset \R^d\rightarrow \mathcal{V}\subset \R^d,
\end{align}
for $\rho,R>0$ small enough, such that for all $(u,v)\in B^d_\rho\times B_R^{N-d}$ there holds $(\varphi\circ f\circ\psi)(u,v)=u$. Here we denote by $B^d_\rho\subset \R^d$ and $B^{N-d}_R\subset \R^{N-d}$ the balls of radius $\rho$ and $R$ centred at the origin of $\R^d$ and $\R^{N-d}$, respectively.

For $\delta>0$ sufficiently small, $g\circ\psi$ remains in $\mathcal{U}$, and the slice maps 
\begin{equation}
    h_v:B_\rho^d\to\RR^d, \qquad h_v(u):=(\varphi\circ g\circ\psi)(u,v),
\end{equation}
satisfy the covering property 
\begin{equation}\label{eq:slicecoveringcompactproof}
    B_\sigma^d\subset h_v(B_\rho^d),
\end{equation}
for every $v\in B_R^{N-d}$ and for some $0<\sigma<\rho$. This is the degree argument contained in the proof of \cite{CIS2024}*{Lemma 5.2}.
We choose $s>0$ so small that
\begin{equation}
    A_s:=A\cap B_s(0) \quad\Longrightarrow\quad \mathcal{W}:=\varphi(A_s)\subset B_\sigma^d.
\end{equation}
Since the diffeomorphisms $\varphi$ and $\psi$ are fixed on pre-compact sets, there exists $C_1>0$ such that for every $v\in B_R^{N-d}$ it holds $\Lip(h_v)\leq C_1\Lip(g)$.
Moreover, after decreasing $s$ if necessary, the change of variables under $\varphi$ gives
\begin{equation}\label{eq:alphalowercompactproof}
    |\mathcal{W}| =|\varphi(A_s)|\geq C_2 |A_s|
\end{equation}
for some $C_2>0$.
For $v\in B_R^{N-d}$, define $E_v:=\{u\in B_\rho^d:h_v(u)\in \mathcal{W}\}$. 
By \eqref{eq:slicecoveringcompactproof}, the multiplicity function $N(h_v,E_v,y):= \mathcal{H}^0(E_v \cap h_v^{-1}\lbrace y \rbrace)$ satisfies $N(h_v,E_v,y)\geq 1$ for every $y\in \mathcal{W}$, where $\mathcal{H}^0$ denotes the zero-dimensional Hausdorff measure. The area formula on the $d$-dimensional slices \cite{evans2025measure}*{Lemma 5.2} gives
\begin{equation}
    |\mathcal{W}| = \int_{\R^d} \mathbf{1}_{\mathcal{W}}\d y \leq C_d \int_{\R^d} \mathbf{1}_{\mathcal{W}}\d \mathcal H^d(y) \leq C_d \int_{\R^d}N(h_v,E_v, y) \d \mathcal{H}^d(y) = C_d\int_{E_v}|J h_v(u)| \d u,
\end{equation}
where $\mathcal{H}^d$ denotes the $d$-dimensional Hausdorff measure, $C_d>0$ and $J h_v$ is the Jacobian determinant of $h_v$.
Since $h_v$ is Lipschitz, for a.e. $u\in B_\rho^d$
\begin{equation}
    |J h_v(u)|\leq \Lip(h_v)^d\leq C_1^d\,\Lip(g)^d.
\end{equation}
Therefore, for every $v\in B_R^{N-d}$, there holds
\begin{equation}\label{eq:Evboundcompactproof}
    |E_v|\geq C_1^{-d} C_d^{-1}\,\Lip(g)^{-d}|\mathcal{W}|,
\end{equation}
and by Fubini,
\begin{align}\label{eq:fubinicompactproof}
    \left|\left\{(u,v)\in B_\rho^d\times B_R^{N-d}:(\varphi\circ g\circ\psi)(u,v)\in \mathcal{W}\right\}\right|&=\int_{B_R^{N-d}} |E_v|\,\dd v\ge|B_R^{N-d}|\,C_1^{-d}\,\Lip(g)^{-d}|\mathcal{W}|.
\end{align}
Moreover,
\begin{align}
    \psi \left( \left\{(u,v)\in B_\rho^d\times B_R^{N-d}:(\varphi\circ g\circ\psi)(u,v)\in \mathcal{W}\right\} \right) = g^{-1}(A_s)\cap \psi(B^d_\rho \times B^{N-d}_R) \subseteq g^{-1}(A)\cap B_r(0)
\end{align}
since $A_s\subseteq A$ and $\psi(B^d_\rho \times B^{N-d}_R) \subset B_r(0)$,  and thus
\begin{align}
    \left|\left\{(u,v)\in B_\rho^d\times B_R^{N-d}:(\varphi\circ g\circ\psi)(u,v)\in \mathcal{W}\right\}\right| \leq C_\psi^{-1}  \left| g^{-1}(A)\cap B_r(0) \right|.
\end{align}
for some $C_\psi:=\inf_{B_R^d\times B_\rho^{N-d}}|\det D\psi|>0$ since $\psi$ is a diffeomorphism on $B^d_\rho\times B^{N-d}_R$ to its image. Combining this with \eqref{eq:alphalowercompactproof} yields, for some $C>0$,
\begin{equation}
    |g^{-1}(A)\cap B_r(0)| \geq C\Lip(g)^{-d}|A\cap B_s(0)|
\end{equation}
and the proof is finished.
\end{proof}

\begin{lemma}\label{lemma:opensmallsetstochasticTPP}
There exists non-empty open sets $U_1, U_2\subset \mathsf{X}^{(2)}$ and a constant $\psi_0>0$ such that, for some $n\in \N$,
    \begin{align}
        \inf_{(\x,\y)\in U_1} P_\kappa^{(2),n}((x,y), \cdot) \geq \beta_0\mathrm{Leb}|_{U_2}(\cdot)
    \end{align}
    for all $\kappa\in(0,\kappa_0)$, for some $\kappa_0$ small enough.
\end{lemma}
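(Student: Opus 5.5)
We will mimic the proof of \cite{CIS2024}*{Lemma~5.1}, using as deterministic input the submersion established in Proposition~\ref{prop:smallsetTPP}.

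\textbf{Deterministic input.} By Proposition~\ref{prop:LARCTPP} and Proposition~\ref{prop:word-construction} there exist $(\x_0,\y_0)\in\mathsf{X}^{(2)}$, $n\in\N$ and $\underline{\omega}_*\in(-N,N)^{2n}$ such that the endpoint map
\begin{equation}
F(\underline{\omega}):=f^{(2)}_{\underline{\omega}}(\x_0,\y_0)
\end{equation}
is a submersion at $\underline{\omega}_*$. We may take $\underline{\omega}_*$ interior, since Remark~\ref{rem:negative-small-times} allows the word to be chosen near the origin. By continuity of $D_{\underline{\omega}}f^{(2)}_{\underline{\omega}}(\x,\y)$ in $(\x,\y)$, there is a neighbourhood $U_1$ of $(\x_0,\y_0)$ such that, in a fixed chart around $F(\underline{\omega}_*)$ on the $4$-manifold $\mathsf{X}^{(2)}$, every map $f_{(\x,\y)}:=f^{(2)}_{\underline{\omega}_*+\cdot}(\x,\y)-f^{(2)}_{\underline{\omega}_*}(\x_0,\y_0)$ on $B_r(0)\subset\R^{2n}$ is $C^1$-close to $f_{(\x_0,\y_0)}$.

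\textbf{Perturbation step.} For a fixed Brownian path $\bB$, set
\begin{equation}
g_{(\x,\y)}^{\bB}(\underline{\omega}):=f^{(2),\kappa}_{\underline{\omega}_*+\underline{\omega}}(\x,\y)-f^{(2)}_{\underline{\omega}_*}(\x_0,\y_0),
\end{equation}
which is Lipschitz in $\underline{\omega}$. We then apply Lemma~\ref{lemma:quantpreimagefromlocalsubmersion} with $f=f_{(\x_0,\y_0)}$ and $g=g^{\bB}_{(\x,\y)}$. Its hypothesis $\|f-g\|_{L^\infty(B_r)}\le\delta$ follows from three estimates: the $C^0$ closeness of the deterministic maps when $U_1$ is small, the bound from Lemma~\ref{lemma:EXPsupomegaLipTPP}, namely $\EE_\bB\sup_{\underline{\omega}}|f^\kappa_{\underline{\omega}}-f_{\underline{\omega}}|\le C\sqrt\kappa$, and Markov's inequality. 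Together these show that the good event
\begin{equation}
G:=\Bigl\{\|f-g\|_{L^\infty}\le\delta,\ \Lip(g)\le L\Bigr\}
\end{equation}
has $\PP_\bB(G)\ge\tfrac12$ uniformly in $(\x,\y)\in U_1$ and $\kappa<\kappa_0$. The Lipschitz bound $\Lip(g)\le L$ is controlled in expectation by Lemma~\ref{lemma:EXPunifomegaTPP}, and we choose $L$ large and then $\kappa_0$ small.

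\textbf{Conclusion.} On $G$, Lemma~\ref{lemma:quantpreimagefromlocalsubmersion} gives
\begin{equation}
|\{\underline{\omega}\in B_r: g(\underline{\omega})\in A\}|\ge CL^{-4}|A\cap B_s|.
\end{equation}
Since the law of $\underline{\omega}$ has density at least $(2N)^{-2n}$ on $\underline{\omega}_*+B_r\subset[-N,N]^{2n}$, and $\underline{\omega}$ is independent of $\bB$, we obtain
\begin{equation}
P^{(2),n}_\kappa((\x,\y),A)\ge \EE_\bB\bigl[\mathbf 1_G\,\PP_{\underline{\omega}}(g\in A)\bigr]\ge \tfrac12(2N)^{-2n}CL^{-4}|A\cap B_s|.
\end{equation}
Pulling back through the chart yields the claim with $U_2$ the chart image of $B_s$ (shrunk so that $U_2\subset\mathsf{X}^{(2)}$) and a constant $\beta_0>0$ comparable to the chart Jacobian.

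\textbf{Main obstacle.} The delicate point is the uniformity in $(\x,\y)$ and in $\kappa$ of the good event. The Lipschitz constant must be controlled with a supremum over $\underline{\omega}$, which is exactly what Lemma~\ref{lemma:EXPunifomegaTPP} provides. We must also check that the constants $\delta,s,C$ of Lemma~\ref{lemma:quantpreimagefromlocalsubmersion} may be taken uniform over the family $f_{(\x,\y)}$, $(\x,\y)\in U_1$; if this fails, we will instead absorb the $(\x,\y)$-dependence into $g$ and keep $f$ fixed.
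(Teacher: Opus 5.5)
Your proposal is correct and follows essentially the same route as the paper: fix the deterministic submersion at $(\x_0,\y_0)$, use Lemmas~\ref{lemma:EXPsupomegaLipTPP} and \ref{lemma:EXPunifomegaTPP} together with Markov's inequality to get a good Brownian event whose probability is bounded below uniformly over $U_1$ and $\kappa$, and then apply Lemma~\ref{lemma:quantpreimagefromlocalsubmersion} with $f$ fixed, combined with the uniform density lower bound on the amplitudes. The differences are only cosmetic. You bound $|f^\kappa_{\underline\omega}(\x)-f_{\underline\omega}(\x_0)|$ by passing through the deterministic map at $\x$, whereas the paper passes through the stochastic map at $\x_0$. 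You also make the chart on the $4$-manifold explicit, which the paper leaves implicit.
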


\begin{proof}
    We argue as in \cite{CIS2024}*{Lemma 5.1}.  From the proof of Proposition \ref{prop:smallsetTPP} there exists $(\x_0,\y_0)\in \mathsf{X}^{(2)}$, $n\geq 1$ and $\underline{\omega}_n$ such that the map $\underline\omega\mapsto f_{\underline\omega}^{(2)}(\x_0,\y_0)$ is a submersion at $\underline{\omega}=\underline{\omega}_n$, with $\varrho_n(\underline{\omega})\geq c>0$ for all $\underline{\omega}\in B_\ep(\underline{\omega}_n)$ for some $\ep>0$. Let $\eta>0$ and $(\x,\y)\in B_\eta((\x_0,\y_0))$. Let $\delta>0$ and consider the set 
    \begin{align}
    E &= \left\lbrace \sup_{\underline{\omega}\in [-N,N]^{2n}} |f_{\underline{\omega}}^\kappa(\x) - f_{\underline{\omega}}^\kappa(\x_0)| + |f_{\underline{\omega}}^\kappa(\x_0) - f_{\underline{\omega}}(\x_0)| + |f_{\underline{\omega}}^\kappa(\y) - f_{\underline{\omega}}^\kappa(\y_0)| + |f_{\underline{\omega}}^\kappa(\y_0) - f_{\underline{\omega}}(\y_0)| \leq \frac{\delta}{100} \right\rbrace \\
    & \quad \bigcap \left\lbrace \sup_{ \underset{\omega\neq\varpi}{\underline{\omega},\underline{\varpi}\in[-N,N]^{2n}}} \frac{|f_{\underline{\omega}}^\kappa(\x,\y)-f_{\underline{\varpi}}^\kappa(\x,\y)|}{|\underline{\omega} - \underline{\varpi}|_\infty} \leq M \right\rbrace
\end{align}
and note that $\PP(E^c) \lesssim \sqrt{\kappa} + \eta + M^{-2}$ due to Markov inequality and Lemma~\ref{lemma:EXPsupomegaLipTPP} and Lemma~\ref{lemma:EXPunifomegaTPP} for all $(\x,\y)\in B_\eta((\x_0,\y_0))$. Hence, $\PP(E)\geq \frac34$ for all $(\x,\y)\in U_1:=B_\eta((\x_0,\y_0))$ for some $M>1$ large enough, some $\eta>0$ small enough and $\kappa_0$ small enough. Moreover, on $E$ we have that $|f_\omega^{(2),\kappa}(\x,\y) - f_\omega^{(2)}(\x_0,\y_0)|\leq \frac{\delta}{25}$ for all $(\x,\y)\in U_1$ and all $\underline{\omega}\in[-N,N]^{2n}$ and thus apply \cite{CIS2024}*{Lemma 5.2} to the map $g:\underline{\omega} \mapsto f_{\underline{\omega}}^{(2),\kappa}(\x,\y)$ to conclude that $g(B_\ep(\underline{\omega}_n))\supseteq B_s(f_{\underline{\omega}_n}(\x_0,\y_0))=:U_2$, for some $s>0$. Hence, for all measurable $U\subset \mathsf X\times \mathsf X\setminus\cS$,
\begin{align}
    P^{(2),\kappa}_n((\x,\y),U) &\geq \EE_\bB\left[ \int_{g^{-1}(U)\cap B_\ep(\underline{\omega}_n)} \varrho_n(\underline{\omega}) \d \underline{\omega} \big| E \right] \PP_\bB(E) \\
    &\geq\frac{3c}{4}\EE_\bB\left[ |g^{-1}(U)\cap B_\ep(\underline{\omega}_n)| \big| E \right] \\
    &\geq c' |U\cap B_s(f_{\underline{\omega}_n}^{(2)}(\x_0,\y_0))|
\end{align}
for some $c'>0$. Here we have used that $\varrho_n(\underline{\omega})\geq c$ for all $\underline{\omega}\in B_\ep(\underline{\omega}_n)$ and Lemma \ref{lemma:quantpreimagefromlocalsubmersion}, since $f_\omega^{(2)}$ is a local submersion and  $g$ has a uniform Lipschitz bound on the set $E$.
\end{proof}

We are now in position to verify the Lyapunov-Foster drift condition \eqref{eq:Hkappadrift_loc} and prove small set condition \eqref{eq:Hkappasmallset_loc} in $(H_\kappa)$.
To do so, we follow the two step proof of Lemma 3.1 in \cites{CIS2024}, which we split into three auxiliary lemmas: 
\begin{enumerate}[label=--]
    \item Lemma~\ref{lemma:inviscidTPPcompactreachability} proves that every compact set can be uniformly reached via the inviscid two-point process;
    \item Lemma~\ref{lemma:compactsmallsetstochasticTPP} then translate the above inviscid property into a uniform in $\kappa>0$ small set condition for all compact sets for the stochastic two-point process, combining Lemma~\ref{lemma:inviscidTPPcompactreachability} with Lemma~\ref{lemma:opensmallsetstochasticTPP};
    \item Lemma~\ref{lemma:Wtildekappasublevelcompact} shows that level sets of $\widetilde{\mathrm{W}}_\kappa$ are contained in compact sets.
\end{enumerate}
Once these result are established, Proposition~\ref{prop:kappasmallset} proves that $(H_\kappa)$ holds.

\begin{lemma}
\label{lemma:inviscidTPPcompactreachability}
For every compact set $K\Subset \mathsf{X}^{(2)}$ and every non-empty open set $\mathcal O\subset \mathsf{X}^{(2)}$, there exist $q_K\in\NN$ and $c_K>0$ such that
\begin{equation}\label{eq:inviscidcompactreachcor}
    \inf_{\x^{(2)}\in K} P_0^{(2),q_K}(\x^{(2)},\mathcal O) \geq c_K .
\end{equation}
\end{lemma}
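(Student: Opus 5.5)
The plan is a standard compactness argument built on the topological irreducibility of Proposition~\ref{prop:irredTPP}, upgraded to a uniform positive lower bound via continuity in both the base point and the noise parameters.

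\emph{Step 1: pointwise positivity with an open neighbourhood of parameters.} Fix $\x^{(2)}\in K$. By Proposition~\ref{prop:irredTPP} (whose proof constructs explicit amplitudes), there exist $n=n(\x^{(2)})$ and $\underline{\omega}_*^n$ with $f^{(2)}_{\underline{\omega}_*^n}(\x^{(2)})\in\mathcal O$. We may take $\underline{\omega}_*^n$ in the interior of $[-N,N]^{2n}$. The amplitudes in that proof are of order $2\pi\sqrt2$, $3\pi\sqrt2/2$, etc. Those exceeding $N$ are handled by splitting a large rotation into several steps with zero amplitude on the other axis, using that $f^\a_{\omega}\circ f^\a_{\omega'}=f^\a_{\omega+\omega'}$; this lengthens the word but keeps it admissible. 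Since $(\underline{\omega}^n,\x^{(2)})\mapsto f^{(2)}_{\underline{\omega}^n}(\x^{(2)})$ is continuous (indeed smooth) and $\mathcal O$ is open, there are radii $\epsilon,\delta>0$ such that $f^{(2)}_{\underline{\omega}^n}(\y^{(2)})\in\mathcal O$ for all $|\underline{\omega}^n-\underline{\omega}^n_*|<\epsilon$ and all $\y^{(2)}\in B_\delta(\x^{(2)})$. Because the density of $\PP_0^n$ is the constant $(2N)^{-2n}$ on the box, this gives
\[
P_0^{(2),n}(\y^{(2)},\mathcal O)\geq c_{\x^{(2)}}:=(2N)^{-2n}\,|B_\epsilon\cap[-N,N]^{2n}|>0
\]
for every $\y^{(2)}\in B_\delta(\x^{(2)})$.

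\emph{Step 2: common time.} The times $n(\x^{(2)})$ differ from point to point. To synchronise them, I would use that $\omega=(0,0)$ gives the identity map and lies in the interior of the support. Prepending $m$ steps with amplitudes in a small ball around $0$ keeps the composite inside $\mathcal O$, after shrinking $\epsilon$ and $\delta$ by continuity, and costs only a further positive factor $(2N)^{-2m}|B_{\epsilon'}|^m$. Padding with near-identity steps \emph{before} the controlling word has the same effect by uniform continuity on a compact neighbourhood. Thus for any $q\geq n(\x^{(2)})$ we obtain a lower bound $P_0^{(2),q}(\y^{(2)},\mathcal O)\geq c'_{\x^{(2)},q}>0$ on $B_{\delta'}(\x^{(2)})$.

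\emph{Step 3: compactness.} Cover $K$ by finitely many balls $B_{\delta'_i}(\x^{(2)}_i)$, set $q_K:=\max_i n(\x^{(2)}_i)$, and let $c_K:=\min_i c'_{\x^{(2)}_i,q_K}$. This yields \eqref{eq:inviscidcompactreachcor}.

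The main obstacle is Step 2, the uniformisation of the number of iterates. The key point is that the near-identity padding must not push trajectories out of $\mathsf{X}^{(2)}$ nor break the inclusion in $\mathcal O$. This is guaranteed because the padded maps converge uniformly to the identity on compact sets as the amplitudes tend to $0$, and $\mathcal O$ is open.
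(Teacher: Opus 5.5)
Your argument is correct, but it takes a different route from the paper. The paper handles this lemma abstractly. It notes that $P_0^{(2)}$ is Feller, aperiodic (Lemma~\ref{lemma:blackboxaperiodic}), topologically irreducible (Proposition~\ref{prop:irredTPP}) and admits an open small set (Proposition~\ref{prop:smallsetTPP}). It then invokes Meyn--Tweedie theory, as in \cite{CIS2024}: compact sets of such a T-chain are petite, and petite sets of an aperiodic irreducible chain are small. You argue directly from the structure of the random dynamical system, using four ingredients: joint continuity of $(\underline{\omega},\x^{(2)})\mapsto f^{(2)}_{\underline{\omega}}(\x^{(2)})$; the constant density of the amplitudes on the box; the group property of the elementary rotations, which keeps the controls from Proposition~\ref{prop:irredTPP} strictly inside $[-N,N]$; and the fact that $\omega=(0,0)$ is an interior point of the support at which $f_\omega$ is the identity. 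The last ingredient lets you equalise the number of steps by near-identity padding before extracting a finite subcover. Your route gives an elementary, self-contained proof that never uses the submersion/open-small-set input, and it yields the bound for every $q\geq q_K$ (with $q$-dependent constants). The paper's route is shorter and more general: it applies to any Feller chain with those four properties, without needing an identity parameter in the interior of the support or any manipulation of explicit words. Its cost is importing the petite-set machinery and relying on the small-set property.
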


\begin{proof}
The continuity of the inviscid two-point RDS gives the Feller property of $P_0^{(2)}$, Lemma~\ref{lemma:blackboxaperiodic} gives aperiodicity, Lemma~\ref{prop:smallsetTPP} gives the open-small-set structure, and Proposition~\ref{prop:irredTPP} gives topological irreducibility on $\mathsf{X}^{(2)}$. Combined with Theorem~6.2.5~(ii) and Theorem~5.5.7 in~\cites{meyn2012markov} we obtain \eqref{eq:inviscidcompactreachcor} (see~\cites{CIS2024}).
\end{proof}

\begin{lemma}
\label{lemma:compactsmallsetstochasticTPP}
Let $K\Subset \mathsf{X}^{(2)}$ be compact. There exists an integer $\ell_K\in\NN$, a constant $\upsilon_K>0$, a probability measure $\nu_0$ on $\mathsf{X}^{(2)}$, and $\kappa_K>0$ such that, for every $\kappa\in(0,\kappa_K)$,
\begin{equation}\label{eq:compactsmallsetstochasticTPP}
    \inf_{\x^{(2)}\in K} P_\kappa^{(2),\ell_K}(\x^{(2)},\cdot) \geq \upsilon_K\nu_0(\cdot).
\end{equation}
\end{lemma}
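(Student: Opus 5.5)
The plan is to concatenate an inviscid transport step with the $\kappa$-uniform local minorization of Lemma~\ref{lemma:opensmallsetstochasticTPP}. Let $U_1,U_2$, $n$ and the constant $\beta_0>0$ be as in Lemma~\ref{lemma:opensmallsetstochasticTPP}, and set $\nu_0:=\mathrm{Leb}|_{U_2}/|U_2|$. Shrinking if needed, I take an open set $\mathcal O$ with $\overline{\mathcal O}\subset U_1$ and $\mathrm{dist}(\mathcal O,U_1^c)=:2\delta>0$. Applying Lemma~\ref{lemma:inviscidTPPcompactreachability} to $K$ and $\mathcal O$ gives $q_K$ and $c_K$ with $\inf_{K}P_0^{(2),q_K}(\cdot,\mathcal O)\geq c_K$. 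Then I set $\ell_K:=q_K+n$. By Chapman--Kolmogorov,
\[
P_\kappa^{(2),\ell_K}(\x^{(2)},\cdot)\geq P_\kappa^{(2),q_K}(\x^{(2)},U_1)\,\beta_0\,\mathrm{Leb}|_{U_2}(\cdot),
\]
so it suffices to show $\inf_{K}P_\kappa^{(2),q_K}(\cdot,U_1)\geq c_K/2$ for all $\kappa<\kappa_K$. This gives $\upsilon_K=\beta_0|U_2|c_K/2$.

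The main step is the stability estimate $P_\kappa^{(2),q_K}(\x^{(2)},U_1)\geq P_0^{(2),q_K}(\x^{(2)},\mathcal O)-\PP\bigl(|f^{(2),\kappa}_{\underline\omega}(\x^{(2)})-f^{(2)}_{\underline\omega}(\x^{(2)})|>\delta\bigr)$. It holds because on the complementary event, $f^{(2)}_{\underline\omega}(\x^{(2)})\in\mathcal O$ forces $f^{(2),\kappa}_{\underline\omega}(\x^{(2)})\in U_1$. Lemma~\ref{lemma:EXPsupomegaLipTPP} bounds $\EE_\bB\sup_{\underline\omega}|f^\kappa_{\underline\omega}(\x)-f_{\underline\omega}(\x)|\leq C_{N,q_K}\sqrt\kappa$ uniformly in $\x\in\Sph$, applied to each coordinate. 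Markov's inequality then bounds the bad probability by $2C_{N,q_K}\sqrt\kappa/\delta$, uniformly over $K$ and over the amplitudes. Choosing $\kappa_K$ with $2C_{N,q_K}\sqrt{\kappa_K}/\delta\leq c_K/2$ and $\kappa_K\leq\kappa_0$ from Lemma~\ref{lemma:opensmallsetstochasticTPP} closes the argument.

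The main obstacle is a subtlety: the stochastic chain may a priori leave $\mathsf X^{(2)}$, hitting the fixed points or the symmetry set $\cS$, before time $q_K$. This is harmless for the lower bound, because we only need to land in $U_1\subset\mathsf X^{(2)}$ at the final time, and $\cS$ together with $F$ is Lebesgue-null, so the kernels are well defined on $\mathsf X^{(2)}$. I would check that $P_\kappa^{(2)}$ is interpreted as a kernel on $\mathsf X\times\mathsf X$ restricted to $\mathsf X^{(2)}$, so that Chapman--Kolmogorov applies with the intermediate integral taken only over $U_1$. The second point to verify is that Lemma~\ref{lemma:inviscidTPPcompactreachability} supplies a single time $q_K$ uniform over $K$, which that lemma states.
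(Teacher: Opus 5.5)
Your argument is correct, but you transfer the inviscid bound $\inf_K P_0^{(2),q_K}(\cdot,\mathcal O)\geq c_K$ to the viscous chain differently from the paper.

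The paper works pointwise. For each fixed $\x^{(2)}\in K$ it invokes weak convergence $P_\kappa^{(2),q_K}(\x^{(2)},\cdot)\rightharpoonup P_0^{(2),q_K}(\x^{(2)},\cdot)$ and the portmanteau bound on the open ball $B_{r/2}(\x_0^{(2)})$, which gives a threshold $\kappa_0(\x^{(2)})$. It then uses Lipschitz dependence on the initial point (the first estimate of Lemma~\ref{lemma:EXPsupomegaLipTPP}) to extend the bound to a neighbourhood of $\x^{(2)}$, now for the larger ball $B_r(\x_0^{(2)})$. Finally it takes a finite subcover of $K$ to make $\kappa_K$ uniform.

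You instead use the second estimate of Lemma~\ref{lemma:EXPsupomegaLipTPP}, the $\sqrt\kappa$-closeness of $f^{\kappa}_{\underline\omega}$ and $f_{\underline\omega}$, together with the fixed margin $\delta$ between $\mathcal O$ and $U_1^c$. This estimate is uniform in the starting point and in the amplitudes, since the Gr\"onwall constants in its proof depend only on $N$ and the number of steps. That gives the bound on all of $K$ at once, with an explicit $\kappa_K\sim(c_K\delta)^2$.

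Your route removes two steps from the paper's proof. It drops the weak-convergence claim, which the paper asserts without proof and which would itself follow from the estimate you use. It also drops the covering argument, so compactness of $K$ enters only through Lemma~\ref{lemma:inviscidTPPcompactreachability}. You do not even need the supremum over $\underline\omega$: the fixed-$\underline\omega$ bound, integrated against the amplitude law via Fubini, is enough.

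The paper's route needs only qualitative pointwise convergence plus Lipschitz dependence on initial data. That could matter in a setting where no uniform closeness estimate is available. Here one is available, so your version is shorter and quantitative.
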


\begin{proof}
By Lemma~\ref{lemma:opensmallsetstochasticTPP}, there exist non-empty open sets $U_1,U_2\Subset \mathsf{X}^{(2)}$, an integer $n_\ast\in\NN$, a constant $\beta_\ast>0$, and $\kappa_\ast>0$ such that
\begin{equation}\label{eq:opensmallsetusedcompactlemma}
    \inf_{\x^{(2)}\in U_1} P_\kappa^{(2),n_\ast}(\x^{(2)},\cdot) \geq \beta_\ast\,\Leb(\cdot\cap U_2)
\end{equation}
for every $\kappa\in(0,\kappa_\ast)$. Choose $\x_0^{(2)}\in U_1$ and $r>0$ such that $B_r(\x_0^{(2)})\Subset U_1$. Applying Lemma~\ref{lemma:inviscidTPPcompactreachability} with $\mathcal O:=B_{r/2}(\x_0^{(2)})$, we obtain $q_K\in\NN$ and $c_K>0$ such that
\begin{equation}\label{eq:deterministiccompactreachU1}
    \inf_{\x^{(2)}\in K} P_0^{(2),q_K} \bigl(\x^{(2)},B_{r/2}(\x_0^{(2)})\bigr) \geq c_K .
\end{equation}
We now extend this deterministic lower bound to the stochastic chain.

For each $\x^{(2)}\in K$ the measure $P_\kappa^{(2),q_K}(\x^{(2)},\cdot)$ converges weakly-* to the measure $P_0^{(2),q_K}(\x^{(2)},\cdot)$ as $\kappa\rightarrow 0$ so that there exists $\kappa_0(\x^{(2)})>0$ such that
\begin{align}\label{eq:pointwiseweaktransfer}
\inf_{\kappa<\kappa_0(\x^{(2)})}P_\kappa^{(2),q_K}\bigl(\x^{(2)},B_{r/2}(\x_0^{(2)})\bigr) \geq \frac12 P_0^{(2),q_K}\bigl(\x^{(2)},B_{r/2}(\x_0^{(2)})\bigr) \geq \frac{c_K}{2}.
\end{align}

This pointwise lower bound is stable under small perturbations of the initial condition. Fix $\x^{(2)}\in K$ and define, for $\y^{(2)}$ close to $\x^{(2)}$,
\begin{equation}
    E:=\left\{
        \sup_{\underline{\omega}_{q_K}\in[-N,N]^{2q_K}} d_{\mathsf{X}^{(2)}}\left(f_{\underline{\omega}_{q_K}}^{(2),\kappa}(\x^{(2)}),f_{\underline{\omega}_{q_K}}^{(2),\kappa}(\y^{(2)}) \right) \leq \frac r2
        \right\}.
\end{equation}
By the uniform-in-$\underline{\omega}$ stability estimate in Lemma~\ref{lemma:EXPsupomegaLipTPP}, there exists $\widetilde C=\widetilde C(q_K,N)>0$ such that
\begin{equation}
    \EE_\bB \left[ \sup_{\underline{\omega}_{q_K}\in[-N,N]^{2q_K}} d_{\mathsf{X}^{(2)}}\left(f_{\underline{\omega}_{q_K}}^{(2),\kappa}(\x^{(2)}),f_{\underline{\omega}_{q_K}}^{(2),\kappa}(\y^{(2)})
        \right) \right] \leq \widetilde C\,d_{\mathsf{X}^{(2)}}(\x^{(2)},\y^{(2)}).
\end{equation}
Choose $r_{\x^{(2)}}:= \frac{c_Kr}{8\widetilde C} $, decreasing it if necessary so that $B_{r_{\x^{(2)}}}(\x^{(2)})\Subset X^{(2)}$. Then, for every $\y^{(2)}\in B_{r_{\x^{(2)}}}(\x^{(2)})$, Markov's inequality gives
\begin{equation}
    \PP(E^c) \leq \frac2r \EE_\bB\left[ \sup_{\underline{\omega}_{q_K}\in[-N,N]^{2q_K}} d_{\mathsf{X}^{(2)}}\left( f_{\underline{\omega}_{q_K}}^{(2),\kappa}(\x^{(2)}), f_{\underline{\omega}_{q_K}}^{(2),\kappa}(\y^{(2)}) \right) \right] \leq \frac{c_K}{4}.
\end{equation}
On $E$ one has
\begin{equation}
    \left\{ f_{\underline{\omega}_{q_K}}^{(2),\kappa}(\x^{(2)}) \in B_{r/2}(\x_0^{(2)}) \right\} \subset \left\{ f_{\underline{\omega}_{q_K}}^{(2),\kappa}(\y^{(2)}) \in B_r(\x_0^{(2)})\right\}.
\end{equation}
Combining this inclusion with \eqref{eq:pointwiseweaktransfer}, we obtain
\begin{align}
    P_\kappa^{(2),q_K} \bigl(\y^{(2)},B_r(\x_0^{(2)})\bigr)&\geq P_\kappa^{(2),q_K} \bigl(\x^{(2)},B_{r/2}(\x_0^{(2)})\bigr) - \PP(E^c) \\ 
    &\geq \frac{c_K}{2}-\frac{c_K}{4} = \frac{c_K}{4}.
\end{align}
for $\kappa\in(0,\kappa_0(\x^{(2)}))$. The balls $B_{r_{\x^{(2)}}}(\x^{(2)})$, with $\x^{(2)}\in K$, form an open cover of $K$.
By compactness, choose a finite subcover $K\subset \bigcup_{j=1}^J B_{r_{\x_j^{(2)}}}(\x_j^{(2)})$, and let $\kappa_K':=\min_{1\leq j\leq J}\kappa_{\x_j^{(2)}}$.
Then, for every $\kappa\in(0,\kappa_K')$,
\begin{equation}\label{eq:compactreachU1compactlemma}
    \inf_{\y^{(2)}\in K} P_\kappa^{(2),q_K} \bigl(\y^{(2)},B_r(\x_0^{(2)})\bigr) \geq \frac{c_K}{4}.
\end{equation}
Since $B_r(\x_0^{(2)})\Subset U_1$, this implies
\begin{equation}\label{eq:compactreachU1final}
    \inf_{\y^{(2)}\in K} P_\kappa^{(2),q_K}(\y^{(2)},U_1) \geq a_K, \qquad a_K:=\frac{c_K}{4}.
\end{equation}

Combining \eqref{eq:compactreachU1final} with \eqref{eq:opensmallsetusedcompactlemma}, the Chapman--Kolmogorov argument (see also \cite{meyn2012markov}*{Theorem~3.4.2}) from the proof of Lemma~3.1 in~\cites{CIS2024} gives, for every Borel set $U\subset \mathsf X^{(2)}$ and every $\x^{(2)}\in K$,
\begin{align}
    P_\kappa^{(2),q_K+n_\ast}(\x^{(2)},U)
    &\geq \int_{U_1} P_\kappa^{(2),n_\ast}(\y^{(2)},U) P_\kappa^{(2),q_K}(\x^{(2)},\dd\y^{(2)}) \\
    &\geq \beta_\ast\Leb(U\cap U_2) P_\kappa^{(2),q_K}(\x^{(2)},U_1) \\
    &\geq \beta_\ast a_K\Leb(U\cap U_2).
\end{align}
Since $U_2$ is non-empty and open, $\Leb(U_2)>0$. Define next
\begin{equation}
    \nu_0(U):= \frac{\Leb(U\cap U_2)}{\Leb(U_2)}, \qquad \upsilon_K:=\beta_\ast a_K\Leb(U_2)>0, \qquad \ell_K:=q_K+n_\ast .
\end{equation}
After decreasing $\kappa_K'>0$ so that $\kappa_K'\leq\kappa_\ast$, we obtain \eqref{eq:compactsmallsetstochasticTPP} with $\kappa_K:=\kappa_K'$.
\end{proof}
Together, the two lemmas above constitute the first step in the proof of Lemma~3.1 in~\cites{CIS2024} while the second step is carried out in the following lemma. 

\begin{lemma}
\label{lemma:Wtildekappasublevelcompact}
For every $R>0$ there exist $\kappa_R>0$ and a compact set $\widetilde K_R\Subset \mathsf{X}^{(2)}$ such that
\begin{equation}\label{eq:Wtildekappasublevelcompact}
    \{\widetilde{\mathrm W}_\kappa\leq R\} \subset \widetilde K_R
\end{equation}
for every $\kappa\in(0,\kappa_R)$.
\end{lemma}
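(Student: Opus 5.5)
The plan is to show that a bound $\widetilde{\mathrm W}_\kappa(p,q)\le R$ forces $(p,q)$ to stay quantitatively away from every "boundary" piece of $\mathsf X^{(2)}$, namely the fixed point set $F$ in each coordinate and the invariant symmetry sets $\mathrm R_\sigma$, $\sigma\in\Sigma_\mathsf D$, with distances bounded below uniformly in $\kappa$. Once that is established, we take
\begin{equation}
\widetilde K_R:=\{(p,q)\in\Sph\times\Sph:\ r_\sigma(p)\ge c_R,\ r_\sigma(q)\ge c_R,\ \tilde r_{\sigma'}(p,q)\ge c_R\ \text{for all }\sigma\in\Sigma,\ \sigma'\in\Sigma_\mathsf D\}.
\end{equation}
This is a closed subset of the compact space $\Sph\times\Sph$ that avoids $F\times\Sph\cup\Sph\times F\cup\cS$, so it is a compact subset of $\mathsf X^{(2)}$.

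\emph{One-point part.} Since $\widetilde{\mathrm W}_\kappa\ge\nu\mathrm V_\kappa(p)$, a bound $\widetilde{\mathrm W}_\kappa\le R$ gives $\mathrm V_\kappa(p)\le R/\nu$. Suppose $p\in\mathsf Q_\sigma(s_0)$. Then $\chi(r_\sigma(p))=1$ and the other cutoffs vanish, so $\mathrm V_\kappa(p)=V_{\sigma,\kappa}(p)=\max\{r_\sigma,\sqrt\kappa\}^{-\alpha}$. For $\sigma=\mathsf y$ the radius is $r_\mathsf y(f^3_N(p))$, which is comparable to $r_\mathsf y(p)$ because $f_N^3$ is a fixed diffeomorphism fixing $F_\mathsf y$. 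We then choose $\kappa_R$ with $\kappa_R^{-\alpha/2}>R/\nu$. For $\kappa<\kappa_R$ the maximum must be attained by $r_\sigma(p)$, which yields $r_\sigma(p)\ge (R/\nu)^{-1/\alpha}$, up to the $f_N^3$ Lipschitz constant. The same bound holds for $q$. The point to be careful about is that the truncation at $\sqrt\kappa$ is what caps $\mathrm V_\kappa$. This is exactly why $\kappa_R$ must depend on $R$: we need the cap $\kappa^{-\alpha/2}$ to exceed $R/\nu$.

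\emph{Two-point part.} Next we use $\widetilde{\mathrm W}_\kappa\ge \mathrm W_m(p,q)$. The function $\mathrm W_m$ is $\kappa$-independent, so this step reduces to the inviscid construction. Suppose $\tilde r_{\sigma'}(p,q)<s_0$ for some $\sigma'\in\Sigma_\mathsf D$. For $s_0$ small only one $\tilde r$ can be that small, so the corresponding cutoff equals $1$ and the others vanish. Hence $\mathrm W_m(p,q)\ge W_{\sigma'}(p,q)\ge d_\Sph(p,\mathcal R_{\sigma'}q)^{-\xi}\min\psi$, where $W_{m,\sigma'}\ge W_{\sigma'}$, and for $\sigma'=\mathsf D$ we use $W$ itself. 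Therefore $\mathrm W_m\le R$ forces $\tilde r_{\sigma'}(p,q)\ge (\min\psi/R)^{1/\xi}$. Taking $c_R$ to be the minimum of $s_0$, this quantity, and the one-point lower bound completes the proof.

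\emph{Main obstacle.} The delicate point is the bookkeeping of the cutoff products in $\mathrm V_\kappa$ and $\mathrm W_m$ in the transition annuli $s_0\le r<2s_0$, where several cutoffs may be fractional. We intend to sidestep this by only needing lower bounds in the strictly inner regions $r<s_0$, where the cutoff structure is exact. Outside those regions the distances already exceed $s_0$, and $s_0$ is itself an admissible value of $c_R$. We also need $\psi$ to be bounded below on the relevant set. For this we invoke the positivity and continuity of $\psi_\xi$ from Proposition~\ref{prop:blackboxdriftTPP} on a neighbourhood of the diagonal, using the fact that points involved lie in the compact region already secured by the one-point bound.
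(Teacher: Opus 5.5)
Your overall scheme is the same as the paper's, and the one-point part is correct, including the $f^3_N$ comparison near $F_\sfy$. That scheme is: the $\sqrt{\kappa}$-cap in $\mathrm V_\kappa$ forces $r_\sigma(p),r_\sigma(q)\gtrsim(\nu/R)^{1/\alpha}$ once $\kappa<\kappa_R$, and then one uses a lower bound on $\mathrm W_m$ near $\cS$.

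The gap is in the two-point step. You claim that for $s_0$ small only one $\tilde r_{\sigma'}$ can be below $s_0$, ``so the corresponding cutoff equals $1$ and the others vanish''. This is false. The sets $\mathrm R_\sigma$ are disjoint in $\mathsf X\times\mathsf X$, but their closures meet over $F\times F$. Since $\mathcal R_\sfx\mathcal R_\sfy=\mathcal R_\sfz$, one has $d_\Sph(\mathcal R_\sfx q,\mathcal R_\sfy q)=d_\Sph(q,\mathcal R_\sfz q)=2\delta$ whenever $q$ is at geodesic distance $\delta\le\pi/2$ from $(0,0,1)$. Your one-point bound keeps $q$ only at distance of order $(\nu/R)^{1/\alpha}$ from $F$, which is far below $s_0$ for large $R$. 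So it does not restore separation at the fixed cutoff scale $s_0$, and the ``strictly inner region'' you rely on is exactly where the cutoff structure is \emph{not} exact.

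This is not a bookkeeping issue; the conclusion itself fails for $\widetilde{\mathrm W}_\kappa$ as defined. Fix $q$ with $d_\Sph(q,(0,0,1))=\delta$, $0<2\delta<s_0$, and let $p\to\mathcal R_\sfx q$. Then $\tilde r_\sfx\to 0$ and $\tilde r_\sfy\to 2\delta$, while $\tilde r_\sfz$ and $\tilde r_\sfD$ stay at least $\pi-2\delta>2s_0$. Hence $\chi(\tilde r_\sfx)=\chi(\tilde r_\sfy)=1$ and $\chi(\tilde r_\sfz)=\chi(\tilde r_\sfD)=0$. The nested weights then give $\mathrm W_m(p,q)=W_{m,\sfy}(p,q)=\max\{W_\sfy,W\}(p,q)\lesssim\delta^{-\xi}$, because the $W_{m,\sfx}$ branch carries the factor $1-\chi(\tilde r_\sfy)=0$. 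Also $\mathrm V_\kappa(p),\mathrm V_\kappa(q)\lesssim\delta^{-\alpha}$ uniformly in $\kappa$.

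So $\widetilde{\mathrm W}_\kappa$ stays bounded along a sequence in $\mathsf X^{(2)}$ converging to $(\mathcal R_\sfx q,q)\in\mathrm R_\sfx$. For $R$ above this bound, no compact $\widetilde K_R\Subset\mathsf X^{(2)}$ can contain $\{\widetilde{\mathrm W}_\kappa\le R\}$. The same happens at opposite points of $F_\sfx$ and of $F_\sfy$.

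The paper's proof invokes ``the two-point singular part'' for exactly the inequality $\mathrm W_m\gtrsim\tilde r_{\sigma'}^{-\xi}$ that fails here, so the defect lies in the definition of $\mathrm W_m$. The $\max\{W_\sigma,W\}$ device in $W_{m,\sigma}$ only repairs overlaps of $\mathrm R_\sfD$ with some $\mathrm R_\sigma$ (same-pole configurations). It does not repair overlaps of two non-trivial symmetry sets at opposite poles.

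Your argument would go through verbatim for a function in which each branch survives wherever its own $\tilde r_\sigma$ is small, for instance $\max_{\sigma\in\Sigma_\sfD}\chi(\tilde r_\sigma)W_\sigma$ with $W_\sfD:=W$. The drift estimates of Propositions~\ref{prop:globaldriftTPP} and \ref{prop:tildeWkappadrift} would then have to be re-proved in these overlap regions. For the current $\mathrm W_m$ they appear to fail there as well: one step of the dynamics typically pushes $q$ out of the $s_0$-cap, which switches on the large $W_{m,\sfx}$ branch.
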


\begin{proof}
Choose parameters $ 0<\ep'\leq r_0'\leq \frac{s_0}{2} $ so small that
\begin{equation}\label{eq:compactlevelchoices}
    C(r_0')^{-\xi}>2R, \qquad C(\ep')^{-1/8}>2R.
\end{equation}
Then choose $\kappa_R>0$ so small that $ \sqrt{\kappa}\leq \ep' $ for every $\kappa\in(0,\kappa_R)$.  Define
\begin{equation}
    \widetilde K_R:= \left\{ (p,q)\in \mathsf X^{(2)}: p,q\in K(\ep'), \quad \tilde r_\sigma(p,q)\geq r_0' \text{ for every }\sigma\in\Sigma_{\sfD} \right\}.
\end{equation}
By construction, $\widetilde K_R\Subset \mathsf X^{(2)}$. We prove that $(\widetilde K_R)^c\subset\{\widetilde{\mathrm W}_\kappa>R\}$. Let $(p,q)\notin\widetilde K_R$ so that at least one of the following alternatives holds.
\begin{enumerate}[label=--]
    \item There exists $\sigma\in\Sigma_{\sfD}$ such that $\tilde r_\sigma(p,q)<r_0'$. By the two-point singular part of $\widetilde{\mathrm W}_\kappa$, we get
    \begin{equation}
        \widetilde{\mathrm W}_\kappa(p,q) \geq C(r_0')^{-\xi} > 2R.
    \end{equation}
    \item There exists $\sigma\in\Sigma$ such that $r_\sigma(p)<\ep'$. Since $\sqrt{\kappa}\leq\ep'$, the one-point part of $\widetilde{\mathrm W}_\kappa$ gives
    \begin{equation}
        \widetilde{\mathrm W}_\kappa(p,q) \geq C\max\{\sqrt{\kappa},r_\sigma(p)\}^{-1/8} \geq C(\ep')^{-1/8} > 2R.
    \end{equation}
    \item There exists $\sigma\in\Sigma$ such that $r_\sigma(q)<\ep'$. The same argument gives
    \begin{equation}
        \widetilde{\mathrm W}_\kappa(p,q) \geq C\max\{\sqrt{\kappa},r_\sigma(q)\}^{-1/8} \geq C(\ep')^{-1/8} > 2R.
    \end{equation}
\end{enumerate}
In all three cases, $\widetilde{\mathrm W}_\kappa(p,q)>2R$.  Hence $ (\widetilde K_R)^c \subset \{\widetilde{\mathrm W}_\kappa>R\}$, which is equivalent to $ \{\widetilde{\mathrm W}_\kappa\leq R\} \subset \widetilde K_R $. The proof is complete.
\end{proof}

Checking the $(H_\kappa)$ condition now follows by combining the various ingredients established so far.

\begin{proposition}\label{prop:kappasmallset}
There exists $\ell\in\NN$, $\gamma\in(0,1)$, $\beta>0$, $R>\frac{2\beta}{1-\gamma}$, $\upsilon>0$, and a probability measure $\nu_0$ such that, for all $\kappa>0$ sufficiently small,
\begin{equation}\label{eq:Hkappadriftprop}
    P^{(2),\ell}_\kappa \widetilde{\mathrm W}_\kappa \leq \gamma \widetilde{\mathrm W}_\kappa+\beta,
\end{equation}
and
\begin{equation}\label{eq:Hkappasmallsetprop}
    \inf_{(\x,\y)\in\{\widetilde{\mathrm W}_\kappa\leq R\}} P^{(2),\ell}_\kappa((\x,\y),\cdot) \geq \upsilon\nu_0(\cdot).
\end{equation}
\end{proposition}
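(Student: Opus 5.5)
The plan is to assemble the three preceding lemmas with the one-step drift of Proposition~\ref{prop:tildeWkappadrift}. The only subtlety is that the drift and the small-set condition must hold with the \emph{same} iterate $\ell$. The small-set time $\ell_K$ depends on the compact set $K$, which in turn depends on $R$, which depends on $\beta$ and $\gamma$. So I will first fix the one-step constants, then iterate the drift to a general number of steps with controlled constants, and only afterwards choose $R$, $K$ and $\ell$.

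\textbf{Step 1: iterated drift with uniform constants.} Proposition~\ref{prop:tildeWkappadrift} gives $\gamma_1\in(0,1)$, $\beta_1>0$ and $\kappa_1>0$ with $P^{(2)}_\kappa\widetilde{\mathrm W}_\kappa\le\gamma_1\widetilde{\mathrm W}_\kappa+\beta_1$ for all $\kappa<\kappa_1$. Iterating via Chapman--Kolmogorov and positivity of the kernel gives, for every $m\ge1$,
\begin{equation}
P^{(2),m}_\kappa\widetilde{\mathrm W}_\kappa\le\gamma_1^m\widetilde{\mathrm W}_\kappa+\frac{\beta_1}{1-\gamma_1}.
\end{equation}
So for every $m$ the drift holds with $\gamma=\gamma_1^m\le\gamma_1$ and $\beta=\beta_*:=\beta_1/(1-\gamma_1)$, which is independent of $m$. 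This independence is what breaks the circular dependence between $R$ and $\ell$.

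\textbf{Step 2: choice of $R$ and the compact set.} Fix $R>2\beta_*/(1-\gamma_1)$; then $R>2\beta_*/(1-\gamma_1^m)$ for every $m$. Lemma~\ref{lemma:Wtildekappasublevelcompact} yields $\kappa_R>0$ and a compact $\widetilde K_R\Subset\mathsf X^{(2)}$ with $\{\widetilde{\mathrm W}_\kappa\le R\}\subset\widetilde K_R$ for $\kappa<\kappa_R$. Lemma~\ref{lemma:compactsmallsetstochasticTPP} applied to $K=\widetilde K_R$ gives $\ell:=\ell_K$, $\upsilon:=\upsilon_K$, $\nu_0$ and $\kappa_K$ such that $\inf_{\widetilde K_R}P^{(2),\ell}_\kappa(\cdot,\cdot)\ge\upsilon\nu_0$. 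Restricting the infimum to the smaller set $\{\widetilde{\mathrm W}_\kappa\le R\}$ gives \eqref{eq:Hkappasmallsetprop}.

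\textbf{Step 3: conclusion.} Take $m=\ell$ in Step 1. This gives \eqref{eq:Hkappadriftprop} with $\gamma=\gamma_1^\ell$ and $\beta=\beta_*$, and $R>2\beta/(1-\gamma)$ holds by construction. Both conditions then hold for all $\kappa<\min\{\kappa_1,\kappa_R,\kappa_K\}$.

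The main obstacle is the ordering of constants. If the drift in the $\ell$-step form had a $\beta$ that grew with $\ell$, the choice of $R$ could not be made before $\ell$. The geometric-series bound in Step 1 removes this. A second point to check is that $\nu_0$ is a probability measure independent of $\kappa$; this is supplied by Lemma~\ref{lemma:compactsmallsetstochasticTPP}, where $\nu_0$ is normalized Lebesgue measure on $U_2$.
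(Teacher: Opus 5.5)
Your proposal is correct and follows essentially the paper's proof. Both iterate the one-step drift of Proposition~\ref{prop:tildeWkappadrift}, fix $R$ before $\ell$, and then invoke Lemma~\ref{lemma:Wtildekappasublevelcompact} and Lemma~\ref{lemma:compactsmallsetstochasticTPP}. The only cosmetic difference is in the drift constant: the paper keeps the exact geometric sum $\beta=b_0\frac{1-\gamma_0^\ell}{1-\gamma_0}$, so that $\frac{2\beta}{1-\gamma}=\frac{2b_0}{1-\gamma_0}$ exactly, whereas you bound it by the $\ell$-independent $\beta_1/(1-\gamma_1)$ and compensate with a larger $R$; either way the ordering of constants is resolved.
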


\begin{proof}
Let $\gamma_0\in(0,1)$ and $b_0>0$ be the constants given by Proposition~\ref{prop:tildeWkappadrift}, so that, after decreasing $\kappa_0>0$ if necessary,
\begin{equation}\label{eq:onestepdriftWtildekappaprop}
    P_\kappa^{(2)}\widetilde{\mathrm W}_\kappa \leq \gamma_0\widetilde{\mathrm W}_\kappa+b_0
\end{equation}
for every $\kappa\in(0,\kappa_0)$.  Iterating \eqref{eq:onestepdriftWtildekappaprop}, as in the proof of Lemma~3.1 in~\cites{CIS2024}, gives for every $m\in\NN$
\begin{equation}\label{eq:iterateddriftWtildekappaprop}
    P_\kappa^{(2),m}\widetilde{\mathrm W}_\kappa \leq \gamma_0^m\widetilde{\mathrm W}_\kappa + b_0\frac{1-\gamma_0^m}{1-\gamma_0}.
\end{equation}
Choose
\begin{equation}\label{eq:Rchoicekappasmallsetprop}
    R>\frac{2b_0}{1-\gamma_0}.
\end{equation}
By Lemma~\ref{lemma:Wtildekappasublevelcompact}, there exist $\kappa_R>0$ and a compact set $\widetilde K_R\Subset X^{(2)}$ such that $\{\widetilde{\mathrm W}_\kappa\leq R\} \subset \widetilde K_R $ for every $\kappa\in(0,\kappa_R)$.  Applying Lemma~\ref{lemma:compactsmallsetstochasticTPP} to $K=\widetilde K_R$, we obtain $\ell\in\NN$, $\upsilon>0$, a probability measure $\nu_0$, and $\kappa_R'>0$ such that
\begin{equation}
    \inf_{\zeta^{(2)}\in\widetilde K_R} P_\kappa^{(2),\ell}(\zeta^{(2)},\cdot) \geq \upsilon\nu_0(\cdot)
\end{equation}
for every $\kappa\in(0,\kappa_R')$.  Hence \eqref{eq:Hkappasmallsetprop} follows for every $\kappa\in(0,\min\{\kappa_R,\kappa_R', \kappa_0\})$. Finally, define
\begin{equation}
    \gamma:=\gamma_0^\ell, \qquad \beta:=b_0\frac{1-\gamma_0^\ell}{1-\gamma_0}.
\end{equation}
Then \eqref{eq:iterateddriftWtildekappaprop} with $m=\ell$ gives \eqref{eq:Hkappadriftprop}.  Moreover, $\frac{2\beta}{1-\gamma} = \frac{2b_0}{1-\gamma_0}$, and therefore \eqref{eq:Rchoicekappasmallsetprop} gives $R>\frac{2\beta}{1-\gamma}$.
The proof is complete.
\end{proof}

We finish the section by presenting the main ideas behind the proof of Corollary \ref{cor:optimalenhanceddiss}.

\begin{proof}[Proof of Corollary \ref{cor:optimalenhanceddiss}]
    Following the proof of Proposition 2.2 in \cites{navarro2025exponential}, in view of Theorem \ref{thm:mainexpdecaycorr} and Corollary \ref{cor:unifexpmixing}, there exists a deterministic $\lambda>0$ and a random constant $\mathrm{C}_{\underline{\omega},\kappa}'\geq 1$ independent of $\bB$ such that $\EE_{\omega}[|\mathrm{C}_{\underline{\omega},\kappa}'|^2] \leq \mathrm{C}'_2$, for some $\mathrm{C}'_2\geq 1$ uniformly in $\kappa$, for which
    \begin{align}
        \Vert \rho(t,\cdot) \Vert_{L^2} \leq \frac{\mathrm{C}_{\underline{\omega},\kappa}'}{\sqrt{\kappa}}e^{-\lambda t} \Vert \rho_0 \Vert_{L^2}.
    \end{align}
    Note that the trivial monotonicity estimate $\Vert \rho(t,\cdot) \Vert_{L^2} \leq \Vert \rho_0 \Vert_{L^2}$ is sharper for small times $t\lesssim \log \frac{1}{\kappa}$. For the critical time
    \begin{align}
        t_* = \frac{1}{\lambda}\left( 1 + \log\left( \frac{\mathrm{C}_{\underline{\omega},\kappa}'}{\sqrt{\kappa}} \right) \right)
    \end{align}
    there holds $\Vert \rho(t_*,\cdot) \Vert_{L^2} \leq \frac{1}{e}\Vert \rho_0 \Vert_{L^2}$. Using the monotonicity of $\Vert \rho(t,\cdot) \Vert_{L^2}$ and iterating the above estimate we obtain
    \begin{align}
        \Vert \rho(t,\cdot) \Vert_{L^2} \leq e^{-n}\Vert \rho_0\Vert_{L^2} \leq e^{-\mu(\kappa)t}\Vert \rho_0\Vert_{L^2},
    \end{align}
    for all $n\in \N$ and all $t\in [nt_*, (n+1)t_*)$, where we define the random rate $\mu(\kappa)$ as
    \begin{align}
        \mu(\kappa):= \lambda  \left( 1 + \log\left( \frac{\mathrm{C}_{\underline{\omega},\kappa}'}{\sqrt{\kappa}} \right) \right)^{-1}.
    \end{align}
    Since $\kappa\in(0,1)$ and $\mathrm{C}_{\underline{\omega},\kappa}'\geq 1$ we readily obtain $\mu(\kappa)\leq 2\lambda$ almost surely. Moreover, for any fixed $M>1$,
    \begin{align}
        \PP\left( \mu(\kappa) \log(\kappa^{-1}) \leq \frac{2\lambda}{1+M} \right) = \PP \left( |\mathrm{C}_{\underline{\omega},\kappa}'|^2 \geq e^{-2}\kappa^{-M}\right) \leq e^2\kappa^M \mathrm{C}'\leq e^2 \kappa_0^M \mathrm{C}'.
    \end{align}
    This concludes the proof.
\end{proof}

\addtocontents{toc}{\protect\setcounter{tocdepth}{-1}}

\section*{Acknowledgments}
The authors would like to thank V\'ictor Navarro-Fern\'andez and Bernhard Kepka for inspiring and fruitful discussions. The research of ADZ is supported by the SNSF through the grant PCEFP2 203059. The research of MN is partially supported by the European Research Council (ERC) under the European Union’s Horizon 2020 research and innovation programme through the grant agreement 862342.

\medskip
The authors acknowledge the use of AI tools in the preparation of this manuscript.
Some figures were generated with Claude Opus 4.7, while others contain snapshots extracted from a simulation generated using again Claude Opus 4.7. This is available, together with its code, on ADZ's personal webpage under the name \href{https://sites.google.com/view/augustodelzotto/math?authuser=0#h.n3qp573y9509}{Rossby--Haurwitz flows simulation}.
ChatGPT 5.5 Plus was also used as an auxiliary tool for language proofreading before finalization.

\addtocontents{toc}{\protect\setcounter{tocdepth}{1}}

\appendix

\section{Commutators of vector fields}\label{app:diffgeom}
In this section we show that the commutators of the lifted vector fields $\widetilde{u_\a}$ and $\widetilde{u_\b}$ of Proposition \ref{prop:LARCtoplyap} can be computed in terms of $u^\a$ and $u^\b$. 

\subsection{Commutators on smooth manifolds}
We begin by characterising the commutator of vector fields on a smooth manifold $\mathrm{M}$ in terms of the flow maps they locally generate. 

\begin{lemma}[\cite{LeeManifolds}*{Corollary~9.14}]\label{prop:diffeoinvflow}
Let $\mathrm{M}$, $\mathrm{N}$ be smooth manifolds and let $F:\mathrm{M}\rightarrow \mathrm{N}$ be a diffeomorphism. If $Y\in T\mathrm{M}$ is a vector field on $\mathrm{M}$ and $\Phi_s^Y$ is the flow of $Y$, then the flow of the push-forward vector field $F_*Y$ is given by $\Phi_s^{F_*Y} = F \circ\Phi_s^Y \circ F^{-1}$.
\end{lemma}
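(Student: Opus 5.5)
The plan is to verify directly that $\Psi_s := F\circ\Phi_s^Y\circ F^{-1}$ satisfies the defining properties of the flow of $F_*Y$, and then invoke uniqueness of integral curves. Recall that $(F_*Y)(q) = DF_{F^{-1}(q)}\,Y(F^{-1}(q))$ for $q\in\mathrm{N}$. I would first handle the domains. If $\mathcal D_Y\subset\R\times\mathrm{M}$ denotes the flow domain of $Y$, set $\mathcal D := \{(s,q): (s,F^{-1}(q))\in\mathcal D_Y\}$. This set is open and contains $\{0\}\times\mathrm N$ because $F$ is a homeomorphism, and $\Psi$ is smooth on it.

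Next I would check the initial condition and the ODE. Clearly $\Psi_0(q) = F(F^{-1}(q)) = q$. For fixed $q$, write $p=F^{-1}(q)$ and $\gamma(s)=\Phi_s^Y(p)$. Then $\Psi_s(q) = F(\gamma(s))$, and the chain rule gives
\begin{equation}
\frac{\d}{\d s}\Psi_s(q) = DF_{\gamma(s)}\,\gamma'(s) = DF_{\gamma(s)}\,Y(\gamma(s)) = (F_*Y)\bigl(F(\gamma(s))\bigr) = (F_*Y)(\Psi_s(q)).
\end{equation}
So $s\mapsto\Psi_s(q)$ is an integral curve of $F_*Y$ starting at $q$.

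The final step, and the only one requiring any care, is maximality. I must show that this curve is the maximal integral curve, so that $\mathcal D$ coincides with the flow domain of $F_*Y$. Applying the same argument with $F^{-1}$ and $F_*Y$ in place of $F$ and $Y$, and using $(F^{-1})_*F_*Y = Y$, any integral curve of $F_*Y$ through $q$ maps under $F^{-1}$ to an integral curve of $Y$ through $p$. Hence its interval of existence is contained in that of $\gamma$. Uniqueness of integral curves then yields $\Phi_s^{F_*Y} = F\circ\Phi_s^Y\circ F^{-1}$ on the common domain. For the local-in-time use in the paper, the first two steps together with uniqueness for small $|s|$ already suffice.
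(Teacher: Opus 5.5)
Your proposal is correct and is essentially the standard argument behind the cited result: the chain-rule computation gives naturality of integral curves, and applying the same argument to $F^{-1}$ gives maximality. The paper does not prove the lemma itself; it simply invokes Lee's Corollary~9.14, which is established in exactly this way.
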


\begin{definition}
    Let $\mathrm{M}$ be a smooth manifold, $X\in \mathfrak{X}(\mathrm{M})$ be a smooth vector field on $\mathrm{M}$ and $\Phi_t^X$ be the flow associated to $X$. Let $Y\in \mathfrak{X}(\mathrm{M})$. Then, the Lie derivative of $Y$ with respect to $X$ is
    \begin{align}
        \mathcal{L}_X Y = \frac{\d }{\d t}\Big|_{t=0} ( d \Phi_{-t}^X)|_{\Phi_t^X} (Y(\Phi_t^X))
    \end{align}
\end{definition}

\begin{lemma}\label{lemma:commutator}
Let $\mathrm{M}$ be a smooth manifold, $X\in \mathfrak{X}(\mathrm{M})$ be a smooth vector field on $\mathrm{M}$ and $\Phi_t^X$ be the flow associated to $X$. Let $Y\in \mathfrak{X}(\mathrm{M})$. Then, 
\begin{align}
     \mathcal{L}_X Y = \frac{\d }{\d t}\Big|_{t=0} (\Phi_{-t}^X)_*Y.
\end{align}
\end{lemma}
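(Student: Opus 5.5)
The identity follows from the definition of the Lie derivative and of the push-forward; it is essentially a matter of unwinding notation at a single point. Fix $p\in\mathrm M$. For a diffeomorphism $F$ the push-forward vector field is $(F_*Y)(q)=dF|_{F^{-1}(q)}\bigl(Y(F^{-1}(q))\bigr)$. I apply this with $F=\Phi_{-t}^X$, defined on a neighbourhood of $p$ for $|t|$ small; on a general manifold the flow is only local, so I restrict to a relatively compact neighbourhood of $p$ on which $\Phi_t^X$ exists for $|t|<t_0$. Since $(\Phi_{-t}^X)^{-1}=\Phi_t^X$ by the group property of autonomous flows, I obtain
\begin{equation}
    \bigl((\Phi_{-t}^X)_*Y\bigr)(p)=d\Phi_{-t}^X\big|_{\Phi_t^X(p)}\bigl(Y(\Phi_t^X(p))\bigr),
\end{equation}
which is exactly the curve in $T_p\mathrm M$ whose $t$-derivative at $t=0$ defines $\mathcal L_XY(p)$.

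Two points need checking before the derivatives can be identified. First, for each fixed $t$ the right-hand side lies in $T_p\mathrm M$, because $d\Phi^X_{-t}$ maps $T_{\Phi_t^X(p)}\mathrm M$ to $T_p\mathrm M$. Hence $t\mapsto((\Phi_{-t}^X)_*Y)(p)$ is a curve in the fixed vector space $T_p\mathrm M$, and its derivative is meant pointwise in $p$. Second, this curve is smooth in $t$. That follows because the map $(t,q)\mapsto\Phi_t^X(q)$ is smooth on its open flow domain, so its spatial differential also depends smoothly on $(t,q)$, and composing with the smooth field $Y$ keeps smoothness. Taking $\frac{\d}{\d t}\big|_{t=0}$ of the two equal expressions then gives the claim at $p$, and since $p$ was arbitrary the identity holds as vector fields.

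The step most likely to cause trouble is keeping the sign conventions consistent: that $F^{-1}(p)=\Phi_t^X(p)$ when $F=\Phi^X_{-t}$, and that the pushforward uses the differential at the preimage point. I will therefore write that composition out explicitly and use Lemma~\ref{prop:diffeoinvflow} only as a consistency check on the convention (pushing forward by $\Phi_{-t}^X$ conjugates flows by $\Phi_{-t}^X$). No further analysis is required.
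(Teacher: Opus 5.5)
Your proposal is correct and follows the paper's argument. Both reduce the claim to the pointwise identity $\bigl((\Phi_{-t}^X)_*Y\bigr)(p)=d\Phi_{-t}^X|_{\Phi_t^X(p)}\bigl(Y(\Phi_t^X(p))\bigr)$ and then differentiate at $t=0$; the paper obtains this identity from $\frac{\d}{\d s}\big|_{s=0}\Phi_{-t}^X\circ\Phi_s^Y\circ\Phi_t^X$ via the chain rule, while you take the differential formula as the definition, and your remarks on smoothness in $t$ and on the local flow domain are refinements the paper leaves implicit.
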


\begin{proof}
Let $\Phi_s^Y$ denote the flow of $Y$, with $\Phi_0^Y = I_\mathrm{M}$. For all $t\in \R$, we recall that 
    \begin{align}
        (\Phi_{-t}^X)_*Y &:= \frac{\d}{\d s}\Big|_{s=0} \left( \Phi_{-t}^X \circ \Phi_s^Y \circ \Phi_{t}^X \right) =(d\Phi_{-t}^X)|_{\Phi_t^X} (Y(\Phi_t^X)).
    \end{align}
Hence, the Lemma follows from taking $\frac{\d}{\d t}\Big|_{t=0}$ above.
\end{proof}

\subsection{The commutator lift and the lifted commutator}
Let $\mathrm{M}$ be a $d$-dimensional manifold. We say that $\mathrm{B}$ is a fiber bundle over $\mathrm{M}$ with model fiber $\mathrm{F}$, if $\mathrm{B}$ is a topological space and there exists a continuous surjective map ${\pi}:\mathrm{B}\rightarrow\mathrm{M}$ such that for each $\mathrm{x}\in \mathrm{M}$ there exists a neighbourhood $\mathrm{U}$ of $\mathrm{x}$ in $\mathrm{M}$ and a homeomorphism $\Psi:\pi^{-1}(\mathrm{U}) \rightarrow \mathrm{U}\times\mathrm{F}$ such that the chain
\begin{align}
    \mathrm{U} \overset{\pi}{\longleftarrow} \pi^{-1}(\mathrm{U}) \overset{\Psi}{\longrightarrow}\mathrm{U}\times \mathrm{F} \overset{\pi_1}{\longrightarrow} \mathrm{U}
\end{align}
commutes. Let $\Phi_t$ be a smooth local diffeomorphism on $\mathrm{U}$ such that $\Phi_0=Id$. Then, for all $\mathrm{U}_0\subset\subset\mathrm{U}$ there exists $t_0>0$ such that $\Phi_t(\mathrm{U}_0)\subset\subset\mathrm{U}$, for all $|t|<t_0$. The lift of $\Phi_t$ to $\mathrm{B}$ is locally given by $\widehat{\Phi_t} = \pi_1^{-1} \circ\Phi_t \circ \pi_1$, where $\pi_1$ is the projection onto $\mathrm{U}$ of the local trivialisation $\mathrm{U}\times \mathrm F$ of $\mathrm{B}$. Then, given any two smooth local diffeomorphisms $\Phi_t^1$ and $\Phi_t^2$ on $\mathrm{U}$ with $\Phi_0^1=\Phi_0^2=Id$, we have that $\Phi_s^2\Phi_t^1:\mathrm{U}_0\rightarrow \mathrm{U}$ and 
\begin{align}
    \widehat{\Phi_s^2 \circ \Phi_t^1} = \widehat{\Phi_s^2}\circ \widehat{\Phi_t^1}
\end{align}
for all $|t|\leq t_0$ and $|s|\leq s_0$ for some $t_0,s_0>0$. 
Let $X\in T(\mathrm{M})$ have flow $\Phi_t^X$ and let $\widetilde{\Phi_t^X}$ denote its local lift to $\mathrm{B}$. We then define the lifted vector field $\widetilde X$ on $T\mathrm{B}$ locally by
\begin{equation}
\widetilde X := \left.\frac{d}{dt}\right|_{t=0}\widehat{\Phi_t^X}.
\end{equation}
The next result shows that the commutator of lifted vector fields coincides with the lift of their commutator.

\begin{lemma}\label{lemma:tildecommutator}
    There holds $[\widetilde{X},\widetilde{Y}] = \widetilde{[X,Y]}$, for all $X,Y\in T\mathrm{M}$.
\end{lemma}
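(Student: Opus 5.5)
The proof is a naturality argument. It combines the flow characterisation of the Lie derivative in Lemma~\ref{lemma:commutator} with the fact that lifting is a homomorphism for composition, namely $\widehat{\Phi_s^2\circ\Phi_t^1}=\widehat{\Phi_s^2}\circ\widehat{\Phi_t^1}$ for small $|s|,|t|$.

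First I identify the flow of $\widetilde X$. By the composition property, $t\mapsto \widehat{\Phi_t^X}$ is a local one-parameter group on $\pi^{-1}(\mathrm U_0)$:
\begin{equation}
\widehat{\Phi^X_{t+s}}=\widehat{\Phi^X_t}\circ\widehat{\Phi^X_s}.
\end{equation}
Differentiating in $s$ at $s=0$ shows that $\widehat{\Phi_t^X}$ solves the flow equation of $\widetilde X$ with $\widehat{\Phi_0^X}=Id$. By uniqueness of integral curves, $\Phi_t^{\widetilde X}=\widehat{\Phi_t^X}$ for $|t|<t_0$. The same holds for $Y$.

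Next I lift the conjugated flows. By Lemma~\ref{prop:diffeoinvflow}, the flow of $(\Phi^X_{-t})_*Y$ is $\Phi_{-t}^X\circ\Phi_s^Y\circ\Phi_t^X$. Since the inverse of a lift is the lift of the inverse ($\widehat{\Phi^X_{-t}}\circ\widehat{\Phi^X_t}=\widehat{Id}=Id$), the composition property gives
\begin{equation}
\widehat{\Phi_{-t}^X\circ\Phi_s^Y\circ\Phi_t^X}=\Phi_{-t}^{\widetilde X}\circ\Phi_s^{\widetilde Y}\circ\Phi_t^{\widetilde X}.
\end{equation}
Differentiating in $s$ at $s=0$, the right side is $(\Phi^{\widetilde X}_{-t})_*\widetilde Y$. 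The left side is by definition the lift $\widetilde{(\Phi^X_{-t})_*Y}$ of the vector field $(\Phi^X_{-t})_*Y$. Hence
\begin{equation}
(\Phi^{\widetilde X}_{-t})_*\widetilde Y=\widetilde{(\Phi^X_{-t})_*Y}\qquad\text{for all small } |t|.
\end{equation}

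Finally I differentiate in $t$ at $t=0$. Lemma~\ref{lemma:commutator} and $\mathcal L_XY=[X,Y]$ turn the left side into $[\widetilde X,\widetilde Y]$. For the right side I need $\frac{\d}{\d t}\widetilde{Z_t}=\widetilde{\frac{\d}{\d t}Z_t}$ for the family $Z_t=(\Phi^X_{-t})_*Y$.

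This interchange of $\frac{\d}{\d t}$ with the lift is the step I expect to be the main obstacle. The lift is defined only through flows, not obviously as a linear operation on vector fields, so I would first check that $Z\mapsto\widetilde Z$ is linear and local. In the trivialisation, $\widehat{\Phi}$ is determined by the first jet of $\Phi$: in our applications $\widehat{\Phi}(\x,g)=(\Phi(\x),E_0^TD\Phi E_0\,g)$. Consequently $\widetilde Z=(Z,\Omega_Z)$ with $\Omega_Z$ depending linearly on $Z$ and $DZ$, exactly as in \eqref{eq:deftildeu}. With this jet formula, the $t$-derivative commutes with the lift, and differentiating yields $[\widetilde X,\widetilde Y]=\widetilde{[X,Y]}$ on each trivialising chart. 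Since both sides are globally defined vector fields, the identity holds on $\mathrm B$.
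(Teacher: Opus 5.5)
Your proposal is correct and follows the same route as the paper: you lift the conjugated flows to get the naturality identity $(\widehat{\Phi^X_{-t}})_*\widetilde Y=\widetilde{(\Phi^X_{-t})_*Y}$, and then differentiate in $t$ using Lemma~\ref{lemma:commutator}. Beyond the paper, you make explicit that $\widehat{\Phi_t^X}$ is the flow of $\widetilde X$, and you justify the interchange of $\frac{\d}{\d t}$ with the lift via the first-jet formula $\widetilde Z=(Z,\Omega_Z)$, a step the paper performs without comment and which suffices for the frame lift where the lemma is used. The only slip is cosmetic: to obtain the flow equation $\partial_t\widehat{\Phi^X_t}=\widetilde X\circ\widehat{\Phi^X_t}$, differentiate $\widehat{\Phi^X_s}\circ\widehat{\Phi^X_t}$ (not $\widehat{\Phi^X_t}\circ\widehat{\Phi^X_s}$) in $s$, which is allowed since $\Phi^X_{t+s}=\Phi^X_s\circ\Phi^X_t$.
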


\begin{proof}
Let $X,Y\in T\mathrm{M}$, let $\Phi_t^X$ and $\Phi_s^Y$ denote their flows and $\widehat{\Phi_t^X}$, $\widehat{\Phi_s^Y}$ their lifted flows. We begin by noting that 
\begin{equation}\label{eq:commutativity_lift}
\widehat{\left(\Phi_t^X\circ \Phi_s^Y\circ \Phi_{-t}^X\right)}=\widehat{\Phi_t^X}\circ \widehat{\Phi_s^Y}\circ \widehat{\Phi_{-t}^X}
\end{equation}
for all $t,s\in\R$ small enough. Moreover, since $\Phi_t^X$ is a local diffeomorphism and $\Phi_s^Y$ is the flow of the vector field $Y\in T\mathrm{M}$, we have from Proposition \ref{prop:diffeoinvflow} that
\begin{align}
    (\widehat{\Phi^X})_*\widetilde{Y} = \frac{\d}{\d s}\Big|_{s=0} \widehat{\Phi^X_t}\circ \widehat{\Phi^Y_s} \circ \widehat{\Phi^X_{-t}}.
\end{align}
On the other hand, let $Z_t:=(\Phi_t^X)_*Y\in T\mathrm{M}$ be the push-forward of $Y$ by the local diffeomorphism $\Phi_t^X$ generated by $X$ for $|t|\leq t_0$ for some $t_0>0$ and let $\Phi_s^{Z_t}$ denote its flow, that is,
\begin{align}
    \Phi_s^{Z_t} = \Phi_t^X\circ \Phi_s^Y\circ \Phi_{-t}^X.
\end{align}
Then, we have that 
\begin{align}
    \widetilde{(\Phi_t^X)_*Y} = \widetilde{Z_t} = \frac{\d }{\d s}\Big|_{s=0} \widehat{\Phi_s^{Z_t}} =\frac{\d }{\d s}\Big|_{s=0} \left(\widehat{\Phi_t^X\circ \Phi_s^Y\circ \Phi_{-t}^X}\right),
\end{align}
and, from \eqref{eq:commutativity_lift}, we conclude that $ (\widehat{\Phi_t^X})_*\widetilde Y \;=\; \widetilde{(\Phi_t^X)_*Y}$. Next, from Lemma \ref{lemma:commutator} we have that
\begin{align}
    [X,Y] = \mathcal{L}_X Y = \frac{\d }{\d t}\Big|_{t=0} (\Phi_{-t}^X)_*Y = \frac{\d}{\d s}\Big|_{s=0} \frac{\d}{\d t}\Big|_{t=0} \Phi_s^{Z_t}
\end{align}
and thus
\begin{align}
    \widetilde{[X,Y]} = \frac{\d}{\d r}\Big|_{r=0} \widehat{\frac{\d}{\d t}\Big|_{t=0}\Phi_r^{Z_t}} = \frac{\d}{\d r}\Big|_{r=0} \frac{\d}{\d t}\Big|_{t=0}\widehat{\Phi_r^{Z_t}} = \frac{\d}{\d t}\Big|_{t=0}\frac{\d}{\d r}\Big|_{r=0}\widehat{\Phi_r^{Z_t}} = \frac{\d}{\d t}\Big|_{t=0} \widetilde{(\Phi_{-t}^X)_*Y}.
\end{align}
Likewise, since
\begin{align}
    [\widetilde{X},\widetilde{Y}] = \mathcal{L}_{\widetilde{X}}\widetilde{Y} = \frac{\d}{\d t}\Big|_{t=0} (\widehat{\Phi_{-t}^X})_*\widetilde{Y}
\end{align}
we conclude that $[\widetilde{X},\widetilde{Y}] = \widetilde{[X,Y]}$.   
\end{proof}

\subsection{Commutators on a reference frame}
In this section we expand on the computations used in Proposition \ref{prop:LARCPJP}. We set
\begin{align}
\x_0=\left(\frac1{\sqrt2},\frac1{\sqrt2},0\right),\qquad  \bfe_1=(0,0,1),\qquad  \bfe_2=\left(\frac1{\sqrt2},-\frac1{\sqrt2},0\right),
\end{align}
and we consider the orthonormal and unit-area basis $E_0=(\bfe_1,\bfe_2)$ of $T_{\x_0}\Sph$. We observe that $\lbrace   \bfe_1, \bfe_2 \rbrace  $ is an oriented orthonormal basis of $T_{\x_0}\Sph$. We note that $\widetilde{\phi_t^{u_\a}}$ and $\widetilde{\phi_t^{u_\b}}$ defined in \eqref{eq:deftildephi} are the local lifted flows of $\phi_t^{u^\a}$ and $\phi_t^{u^\b}$ on the local trivialisation $\cU\times SL(2,\R)$ of the fiber bundle $SL(\Sph)$.

We recall that for  $\a=(0,a_2,a_3)\in \Sph$ and $\b = (0,0,1)$ we have
\begin{align}
    u^\a(\x)=(a_2 y + a_3z)\left((a_2z-a_3y), a_3x, -a_2x\right), \quad  u^\b(\x)=z(-y,x,0),
\end{align}
for all $\x=(x,y,z)\in \Sph$. Moreover, for any $w\in T\Sph$ we have\footnote{We also abuse notation by denoting a smooth extension on $\R^3$ of $w$ by $w$ itself.}
\begin{align}
    \Omega_{w}(\x_0,E_0)=E_0^T(D_\x w)(\x_0)E_0\in\mathfrak{sl}(2,\R).
\end{align}
Let $w_1(\x) = u^\a(\x)$ and $w_2(\x) = u^\b(\x)$. Then, $ w_1(\x_0) =- \frac{a_2^2}{2}\bfe_1 -a_2a_3\frac{\sqrt{2}}{2}\bfe_2 $ and
\begin{align}
    (D_\x w_1)(\x_0) = \begin{pmatrix}
        -\frac{a_2a_3}{2\sqrt{2}} & - \frac{3a_2a_3}{2\sqrt{2}} & \frac{a_2^2 -2a_3^2}{2\sqrt{2}} \\
        \frac{a_2a_3}{2\sqrt{2}} &  \frac{3a_2a_3}{2\sqrt{2}} & -\frac{a_2^2 -2a_3^2}{2\sqrt{2}} \\
        -\frac{a_2^2}{\sqrt{2}} & -\frac{a_2^2}{\sqrt{2}} & -\frac{a_2a_3}{\sqrt{2}}
    \end{pmatrix}, \quad \Omega_{w_1}(\x_0) = \begin{pmatrix}
        -\frac{a_2a_3}{\sqrt{2}} & 0 \\
        \frac{a_2^2}{2} - a_3^2 & \frac{a_2a_3}{\sqrt{2}}
    \end{pmatrix}
\end{align}
and $ w_2(\x_0) = 0$, with
\begin{align}
    (D_\x w_2)(\x_0) = \begin{pmatrix}
        0 & 0 & -\frac{1}{\sqrt{2}} \\
        0 & 0 & \frac{1}{\sqrt{2}} \\
        0 & 0 & 0
    \end{pmatrix}, \quad \Omega_{w_2}(\x_0) = \begin{pmatrix}
        0 & 0 \\ -1 & 0
    \end{pmatrix}.
\end{align}
For $w_3(\x) = [w_1,w_2](\x)$ we have
\begin{align}
    w_3(\x_0) = \frac{a_2^2}{2}\bfe_2, \quad (D_\x w_3)(\x_0) = \begin{pmatrix}
        \frac{3a_2^2}{4} & \frac{3a_2^2}{4} & a_2a_3 \\
        -\frac{3a_2^2}{4} & -\frac{3a_2^2}{4} & -a_2a_3 \\
        0 & 0 & 0
    \end{pmatrix}, \quad \Omega_{w_3}(\x_0) = \begin{pmatrix}
        0 & 0 \\ \sqrt{2}a_2 a_3 & 0
    \end{pmatrix}.
\end{align}
Next, for $w_4(\x) = [w_1,w_3](\x)$ there holds $w_4(\x_0) = -\frac{3\sqrt{2}}{4}a_2^3 a_3 \bfe_2$ and 
\begin{align}
    (D_\x w_4)(\x_0) = a_2^2\begin{pmatrix}
        -\frac{11a_2a_3}{4\sqrt{2}} & -\frac{13 a_2 a_3}{4\sqrt{2}} & a_2^2\sqrt{2} - \frac{3a_3^2}{\sqrt{2}} \\
        \frac{11a_2a_3}{4\sqrt{2}} & \frac{13 a_2 a_3}{4\sqrt{2}} & -a_2^2\sqrt{2} + \frac{3a_3^2}{\sqrt{2}} \\
        -a_2^2\sqrt{2} & a_2^2\sqrt{2} & - \frac{a_2 a_3}{2\sqrt{2}}
    \end{pmatrix}, \quad \Omega_{w_4}(\x_0) = a_2^2 \begin{pmatrix}
        -\frac{a_2 a_3}{2\sqrt{2}} & -2a_2^2  \\ 2a_2^2 - 3a_3^2  &  \frac{a_2 a_3}{2\sqrt{2}}
    \end{pmatrix}.
\end{align}
Finally, for $w_5(\x)=[w_2,w_4](\x)$ we have $w_5(\x_0) = 0$ and 
\begin{align}
    (D_\x w_5)(\x_0) = a_2^2\begin{pmatrix}
        -a_2^2 & a_2^2 & - \frac{a_2 a_3}{2} \\
        a_2^2 & -a_2^2 &  \frac{a_2 a_3}{2} \\
        0 & 0 & 2 a_2^2
    \end{pmatrix}, \quad \Omega_{w_5}(\x_0) = \begin{pmatrix}
        2a_2^4 & 0 \\
        - \frac{a_2 ^3 a_3}{\sqrt{2}} & -2a_2^4
    \end{pmatrix}.
\end{align}

\section{Leading order expansion for commutator loops}\label{app:taylor-flows}
This appendix develops the small-time asymptotic expansion of commutator loop maps built from the local flows of a finite family of smooth vector fields $\cV=\lbrace V_1,\dots,V_m\rbrace  \subset\mathfrak X(\RR^d)$ treated in Section \ref{sec:LARC}.
The main goal is to justify the Lie-algebraic heuristic that concatenations of iterated flow maps, defined in \eqref{eq:loops-induction}, generate the corresponding iterated Lie brackets at leading order: for every multi-index $I=(i_1,\dots,i_n)$,
\begin{equation}\label{eq:appendix-main-expansion-preview}
\Gamma_t^I(x) = x+t^{|I|}V_I(x)+t^{|I|+1}\cE_I(t,x), \qquad V_I=[V_{i_1},[V_{i_2},\dots,[V_{i_{n-1}},V_{i_n}]\dots]],
\end{equation}
with a locally uniformly bounded remainder $\cE_I$.

From \cite{LeeManifolds}*{Ch.~9}, given a smooth vector field $V\in\mathfrak{X}(\RR^d)$ the Fundamental Theorem on flows ensures the existence of a unique smooth maximal flow $\Phi\colon \mathrm{D}\to \RR^d$ on an open flow domain $\mathrm{D}\subset\RR\times \RR^d$, with 
\begin{equation}
    \partial_t\Phi_t(x)=V(\Phi_t(x)),\quad \Phi_0(x)=x.
\end{equation}
In particular, in $\RR^d$ the chain rule yields the usual ODE identity
\begin{equation}
\partial_{tt}\Phi_t(x)=DV(\Phi_t(x))[V(\Phi_t(x))],
\end{equation}
hence at $t=0$ one has $\partial_t\Phi_0(x)=V(x)$ and $\partial_{tt}\Phi_0(x)=DV(x)[V(x)]$.
Fixing a compact $K\Subset\RR^d$, openness of $\mathrm{D}$ gives a uniform $t_0>0$ with $[-t_0,t_0]\times K\subset \mathrm{D}$, so that applying the one-dimensional Taylor theorem in time to $t\mapsto \Phi_t(x)$ (componentwise) produces the expansion
\begin{equation}\label{eq:flow-Taylor}
\Phi_t(x)=x+t V(x)+\frac{t^2}{2} DV(x)[V(x)] + t^3 \cE(t,x),
\end{equation}
with $\Vert \cE(t,x) \Vert_{C^1([-t_0,t_0]\times K)}\lesssim 1$. 
As a direct consequence we obtain on $[-t_0,t_0]\times K$ also 
\begin{equation}\label{eq:dflow-Taylor-input}
    D\Phi_t(x)=\Id + t DV(x)+t^2 \cS(t,x),
\end{equation}
where $\Vert \cS(t,x) \Vert_{L^\infty([-t_0,t_0]\times K)}\lesssim 1$.
Building on this standard setup, the following lemma establishes the leading order expansion for vector fields defined via iterated Lie brackets.

\begin{lemma}[Taylor expansion of commutator loops]\label{Alem:commutator-loop-taylor}
Let $\cV=\lbrace V_1,\dots,V_m\rbrace  \subset \mathfrak X(\RR^d)$ be smooth vector fields with (local) flows $\Phi_t^i$. For a multi-index $I=(i_1,\dots,i_n)\in\lbrace 1,\dots,m\rbrace  ^n$ define the iterated commutator vector field
\begin{equation}
V_I := [V_{i_1},[V_{i_2},\dots,[V_{i_{n-1}},V_{i_n}]\dots]],
\end{equation}
and define the associated commutator loop maps $\Gamma_t^{I}$ inductively by
\begin{equation}
\Gamma_t^{(i)}:=\Phi_t^i,\qquad  \Gamma_t^{(j,I)}:=\Gamma_t^{(j)}\circ\Gamma_t^I\circ(\Gamma_t^{(j)})^{-1}\circ(\Gamma_t^I)^{-1}.
\end{equation}
Then for every compact $K\Subset\RR^d$ there exists $t_0>0$ and maps $\cE_I:[-t_0,t_0]\times K\to\RR^d$ (continuous in $(t,x)$ and smooth in $x$) such that for all $x\in K$ and $|t|\leq t_0$ (for which all compositions are defined),
\begin{equation}\label{Aeq:Gamma-expansion}
\Gamma_t^I(x)=x+t^n V_I(x)+t^{n+1}\cE_I(t,x), \qquad n=|I|.
\end{equation}
\end{lemma}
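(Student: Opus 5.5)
The proof goes by induction on $n=|I|$, and the key step is a single "group commutator lemma" for near-identity maps. For $n=1$ the claim is exactly \eqref{eq:flow-Taylor} truncated at first order: $\Gamma_t^{(i)}(x)=x+tV_i(x)+t^2\cE_{(i)}(t,x)$, with $\cE_{(i)}$ bounded in $C^1$ on $[-t_0,t_0]\times K$.

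The commutator lemma reads as follows. Let $A_t(x)=x+t^a X(x)+t^{a+1}\mathcal A(t,x)$ and $B_t(x)=x+t^b Y(x)+t^{b+1}\mathcal B(t,x)$, where $X,Y$ are smooth and $\mathcal A,\mathcal B$ are bounded in $C^1$ (with $C^2$ control in $x$ where needed) on a slightly larger compact $K'\supset K$. Then
\begin{equation}
A_t\circ B_t\circ A_t^{-1}\circ B_t^{-1}(x)=x+t^{a+b}[X,Y](x)+t^{a+b+1}\mathcal C(t,x),
\end{equation}
with $\mathcal C$ bounded.

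First, the inverses. Using the implicit function theorem, or the fixed point $y=x-t^aX(y)-\dots$, we get $A_t^{-1}(x)=x-t^aX(x)+t^{a+1}\widetilde{\mathcal A}(t,x)$, and the same for $B_t^{-1}$. This holds uniformly on $K$ once $t_0$ is small, so that all images stay in $K'$.

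Next, we write $B_t\circ A_t^{-1}\circ B_t^{-1}=\mathrm{Id}+t^b D_t$ and compose it with $A_t$. Taylor expanding to second order in the spatial increment, the terms of order $t^a$ and $t^b$ cancel pairwise between each map and its inverse. The first surviving mixed term is $t^{a+b}\bigl(DY\,X-DX\,Y\bigr)$, up to the sign convention. We must check that this is $\pm[X,Y]$ consistent with the paper's $V_I=[V_{i_1},V_{I'}]$ and the ordering in the loop definition; if the sign is off, the time reversal $t\mapsto -t$ in \eqref{eq:loops-induction} or the ordering convention absorbs it.

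The pure terms $t^{2a}$ and $t^{2b}$ need care. The corrections $\mathcal A$ and $\widetilde{\mathcal A}$ cancel only at leading order, leaving $O(t^{2a})$ residues. When $a\ge b+1$ (or vice versa) these are of order $t^{a+b+1}$ or higher. The worrying case is $a=b$, where $t^{2a}$ would compete with $t^{a+b}$. The fix is to expand $A_t\circ A_t^{-1}=\mathrm{Id}$ exactly, which makes all pure-$X$ contributions cancel identically in the commutator. The cleanest way to see this is to write the commutator as $A_t\circ(B_tA_t^{-1}B_t^{-1})$ and note that $B_tA_t^{-1}B_t^{-1}=A_t^{-1}+t^{a+b}(\cdots)$ by the mean value theorem applied to the conjugation. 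Then $A_t\circ(A_t^{-1}+t^{a+b}R)=\mathrm{Id}+DA_t\,t^{a+b}R+O(t^{2(a+b)})$, and $DA_t=\mathrm{Id}+O(t^a)$ by \eqref{eq:dflow-Taylor-input}.

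The main obstacle, and the real content, is identifying $R$ to leading order as $[X,Y]$ with a bounded remainder. For this I would compute $B_t(z)-B_t(w)$ with $z=A_t^{-1}(B_t^{-1}x)$ and $w$ its unperturbed counterpart, using $DB_t=\mathrm{Id}+t^bDY+O(t^{b+1})$.

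Finally, the induction step applies the lemma with $A_t=\Gamma_t^{(j)}$ ($a=1$, $X=V_j$) and $B_t=\Gamma_t^I$ ($b=n$, $Y=V_I$). This gives exponent $n+1$ and leading vector field $[V_j,V_I]=V_{(j,I)}$. Regularity of the remainders, continuous in $(t,x)$ and smooth in $x$, propagates through the compositions. At each stage we shrink $t_0$ and enlarge the compact, using openness of the flow domain as in the setup before \eqref{eq:flow-Taylor}; only finitely many steps occur, since $n$ is fixed.
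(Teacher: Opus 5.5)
Your argument is sound in outline but takes a different route from the paper.

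\textbf{The paper's route.} The paper does $n=2$ by composing four second-order flow expansions by hand. For $n\ge 2$ it conjugates $\Gamma_t^I$ by the genuine flow $\Phi_t^j$. After an inverse lemma for $(\Gamma_t^I)^{-1}$, it expands $\Phi_t^j\circ\Gamma_t^I\circ\Phi_{-t}^j$ in space and drops the quadratic term $O(t^{2n})=O(t^{n+2})$; this is why $n=2$ must be done separately. It then reads off the bracket from the push-forward expansion $D\Phi_t^j(\Phi_{-t}^j(y))\,W(\Phi_{-t}^j(y))=W(y)+t\,(DV_j\,W-DW\,V_j)(y)+O(t^2)$.

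\textbf{Your route.} Your single symmetric lemma for near-identity families of orders $a,b$ treats $n=1\to2$ and $n\to n+1$ uniformly. It never uses that $\Gamma_t^{(j)}$ is a flow, and it would also handle brackets of two loops (e.g.\ $a=b=2$). The price is the bookkeeping you flag: the pure $t^{2a},t^{2b}$ terms and the inverse remainders.

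That crux is only sketched in your write-up. Also, the intermediate claim $B_tA_t^{-1}B_t^{-1}=\mathrm{Id}+t^bD_t$ is off, since this conjugate of $A_t^{-1}$ is $\mathrm{Id}-t^aX+\dots$. But the step closes in one line. Write $A_t=\mathrm{Id}+\alpha_t$, $B_t=\mathrm{Id}+\beta_t$ and $w:=(B_tA_t)^{-1}(x)$, so that $w=x+O(t^{\min(a,b)})$. Then
\[
A_tB_tA_t^{-1}B_t^{-1}(x)-x=A_tB_t(w)-B_tA_t(w)=\bigl[\alpha_t(w+\beta_t(w))-\alpha_t(w)\bigr]-\bigl[\beta_t(w+\alpha_t(w))-\beta_t(w)\bigr].
\]
Consequently the pure terms cancel identically and the leading term is $t^{a+b}(DX\,Y-DY\,X)$. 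Beyond smoothness of $X,Y$, only $C^1$ bounds on $\mathcal A,\mathcal B$ are needed, and no expansion of the inverses is required at all.

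\textbf{Two minor corrections.}

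The sign cannot be ``absorbed'' by time reversal, because the loop ordering is fixed by the statement. The output $t^{a+b}(DX\,Y-DY\,X)$ equals $t^{a+b}[X,Y]$ only with the convention $[X,Y]=DX\,Y-DY\,X$, which is the one the paper's appendix computations effectively use. With the convention $[X,Y]=DY\,X-DX\,Y$ of Section 3, one gets $(-1)^{n-1}t^nV_I$ instead. This is irrelevant for the rank arguments where the lemma is applied.

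``Regularity propagates'' is best justified as follows. The map $(t,x)\mapsto\Gamma_t^I(x)$ is smooth. So once $\Gamma_t^I(x)-x-t^nV_I(x)=O(t^{n+1})$ holds pointwise, Taylor's formula in $t$ with integral remainder gives $\cE_I$ smooth in $(t,x)$. That provides the $C^1$ control on the remainder that your next induction step requires.
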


As preliminaries, we state and prove two auxiliary lemmas which play an important role in the proof of Lemma \ref{Alem:commutator-loop-taylor}.
\begin{lemma}\label{lem:inverse}
Let $K\Subset U\subset \RR^d$, let $n\ge 1$, and suppose that
\begin{equation}
F_t(x)=x+t^nA(x)+t^{n+1}R(t,x), \qquad x\in U,
\end{equation}
where $A\in C^1(U;\RR^d)$ and
\begin{equation}
R\in C^1([-t_0,t_0]\times U;\RR^d), \qquad \sup_{|t|\le t_0}\norm{R(t,\cdot)}_{C^1(U)}<\infty.
\end{equation}
Then, after possibly shrinking $t_0>0$, the map $F_t$ is a $C^1$-diffeomorphism from a neighbourhood of $K$ onto its image. Moreover, for every $y\in F_t(K)\subset U$, its inverse satisfies
\begin{equation}\label{eq:inverse_ord_n}
F_t^{-1}(y) = y-t^nA(y)+t^{n+1}\widetilde R(t,y),
\end{equation}
where $\widetilde R$ is uniformly bounded for $|t|\le t_0$ and $y\in F_t(K)$.
\end{lemma}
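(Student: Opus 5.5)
The plan is to treat $F_t$ as a small $C^1$ perturbation of the identity, get invertibility near $K$ from a contraction or inverse function argument, and then read off the inverse expansion from the fixed-point identity $F_t(F_t^{-1}(y))=y$.

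\textbf{Step 1: invertibility.} Fix an open set $U'$ with $K\Subset U'\Subset U$, and set $M:=\norm{A}_{C^1(U)}+\sup_{|t|\le t_0}\norm{R(t,\cdot)}_{C^1(U)}$. Write $G_t(x):=t^nA(x)+t^{n+1}R(t,x)$, so that $F_t=\mathrm{Id}+G_t$. On $U$ we have $\norm{DG_t}\le |t|^n M(1+|t_0|)$. After shrinking $t_0$ so that this quantity is at most $\frac12$, two facts follow.
\begin{itemize}
\item $DF_t$ is invertible everywhere on $U$.
\item On any convex ball $B\subset U$ we have $|F_t(x)-F_t(x')|\ge \frac12|x-x'|$, so $F_t$ is injective there.
\end{itemize}
To get global injectivity on a neighbourhood of $K$, I would cover $K$ by finitely many balls of radius $r$, with $r$ below the Lebesgue number, whose doubles lie in $U$. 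Take two points $x\neq x'$ in this neighbourhood.
\begin{itemize}
\item If $|x-x'|<r$, they lie in a common convex ball and the bi-Lipschitz bound gives $F_t(x)\neq F_t(x')$.
\item If $|x-x'|\ge r$, then $|F_t(x)-F_t(x')|\ge r-2\sup|G_t|\ge r-2|t|^nM'>0$ for $t_0$ small.
\end{itemize}
The inverse function theorem then makes $F_t$ a $C^1$-diffeomorphism onto its open image. This step is the main point of care: local invertibility alone is not enough, so the uniform smallness has to handle both short and long separations as above.

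\textbf{Step 2: expansion of the inverse.} For $y\in F_t(K)$ let $x:=F_t^{-1}(y)\in K$. The identity $y=x+t^nA(x)+t^{n+1}R(t,x)$ gives $|x-y|\le C|t|^n$. Rearranging,
\[
x=y-t^nA(x)-t^{n+1}R(t,x)=y-t^nA(y)-t^n\bigl(A(x)-A(y)\bigr)-t^{n+1}R(t,x).
\]
Since $A$ is Lipschitz on a convex neighbourhood containing the segment from $x$ to $y$ (shrink $t_0$ if needed so the segment stays in $U'$),
\[
|A(x)-A(y)|\le \norm{A}_{C^1}|x-y|\le C|t|^n.
\]
Hence the correction term is bounded by $C|t|^{2n}\le C|t|^{n+1}$, because $n\ge1$ and $|t|\le t_0\le 1$. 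Define
\[
\widetilde R(t,y):=-t^{-(n+1)}\Bigl(t^n\bigl(A(x)-A(y)\bigr)+t^{n+1}R(t,x)\Bigr)
\]
for $t\neq0$, and set $\widetilde R(0,y):=-R(0,y)$, since $x=y$ at $t=0$. These bounds show $|\widetilde R(t,y)|\le C\bigl(|t|^{n-1}+1\bigr)M$, which is uniform in $|t|\le t_0$ and $y\in F_t(K)$. This gives \eqref{eq:inverse_ord_n}.

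If $C^1$ control of $\widetilde R$ in $y$ is needed later, it should follow by differentiating $F_t\circ F_t^{-1}=\mathrm{Id}$. That gives $DF_t^{-1}=(\Id+DG_t)^{-1}$, and expanding this Neumann series yields a derivative $\Id-t^nDA+O(t^{n+1})$. The statement only asks for boundedness, though.
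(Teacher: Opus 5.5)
Your proposal is correct and matches the paper's proof. The paper also treats invertibility as a standard argument; your short/long-separation injectivity argument fills that in, provided $M$ is taken over $\overline{U'}$ rather than $U$, since a merely $C^1$ map $A$ need not have finite $C^1$ norm on the open set $U$. Your Step 2 is exactly the paper's rearrangement $x-y+t^nA(y)=-t^n\bigl(A(x)-A(y)\bigr)-t^{n+1}R(t,x)$, combined with $|x-y|\le C|t|^n$ and $2n\ge n+1$.

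One caution, on your closing aside only. Differentiating $F_t\circ F_t^{-1}=\mathrm{Id}$ gives $D_y\widetilde R=-t^{-1}\bigl(DA(x)-DA(y)\bigr)-D_xR(t,x)+O(|t|^{n-1})$. This is bounded when $DA$ is Lipschitz (e.g.\ $A\in C^2$), but not in general for $A\in C^1$, so that extra $C^1$ claim about $\widetilde R$ needs more regularity than the lemma assumes.
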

\begin{proof}
Upon shrinking $U$ and picking $t_0$ sufficiently small, the $C^1$-diffeomorphism claim follows via standard arguments from 
\begin{equation}
D_xF_t(x) = I+t^n DA(x)+t^{n+1}D_xR(t,x).
\end{equation}
It remains to prove the expansion for the inverse. Let $y\in F_t(K)$, and set $ x:=F_t^{-1}(y)\in K$, so that
\begin{equation}
y=x+t^nA(x)+t^{n+1}R(t,x).
\end{equation}
In particular, there holds $|x-y|\leq C |t|^n$ for some universal $C>0$. We rewrite the above as
\begin{equation}
    x-y+t^nA(y)=-t^n\bigl(A(x)-A(y)\bigr)-t^{n+1}R(t,x), 
\end{equation}
and, using the $C^1$-regularity of $A$, the estimate $|x-y|\le C|t|^n$, $2n\geq n+1$, and the boundedness of $R$ we obtain 
\begin{equation}
x-y+t^nA(y)=O(t^{n+1})
\end{equation}
uniformly for $y\in F_t(K)$.
Equivalently, there exists a uniformly bounded function $\widetilde R(t,y)$ such that
\begin{equation}
F_t^{-1}(y)=y-t^nA(y)+t^{n+1}\widetilde R(t,y),
\end{equation}
which proves \eqref{eq:inverse_ord_n}.
\end{proof}

\begin{lemma}\label{lem:conjugation}
Fix $j$ and let $\Phi_t^j$ be the flow of $V_j$.
For any smooth vector field $W$ there exists a bounded remainder $\cR_{j,W}(t,\cdot)$ on $K$ such that for all $y\in K$,
\begin{equation}\label{eq:pushforward-expansion}
D\Phi_t^j(\Phi_{-t}^j(y)) W(\Phi_{-t}^j(y)) = W(y) + t [V_j,W](y) + t^2 \cR_{j,W}(t,y).
\end{equation}
\end{lemma}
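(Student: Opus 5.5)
The plan is to write the left-hand side of \eqref{eq:pushforward-expansion} as a smooth function of $t$ and Taylor expand it at $t=0$ to second order. Define, for $y\in K$ and $|t|\le t_0$ (with $t_0$ chosen as in \eqref{eq:flow-Taylor} so that $[-t_0,t_0]\times K'\subset\mathrm D$ for a slightly larger compact $K'\Supset K$),
\begin{equation}
G(t,y):=D\Phi_t^j(\Phi_{-t}^j(y))\,W(\Phi_{-t}^j(y)) = \bigl((\Phi_t^j)_*W\bigr)(y).
\end{equation}
Since the flow is smooth on the open domain $\mathrm D$, $G$ is smooth in $(t,y)$ on $[-t_0,t_0]\times K$. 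The one-dimensional Taylor theorem with integral remainder then gives $G(t,y)=G(0,y)+t\,\partial_tG(0,y)+t^2\cR_{j,W}(t,y)$ with $\cR_{j,W}(t,y)=\int_0^1(1-s)\partial_{tt}G(st,y)\,\dd s$, which is bounded on $[-t_0,t_0]\times K$ by compactness. So it remains to identify $G(0,y)=W(y)$ (obvious, since $\Phi_0^j=\mathrm{Id}$) and $\partial_tG(0,y)$.

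For the first derivative I would insert the expansions already available. By \eqref{eq:flow-Taylor}, $\Phi_{-t}^j(y)=y-tV_j(y)+O(t^2)$, hence by smoothness of $W$, $W(\Phi_{-t}^j(y))=W(y)-t\,DW(y)[V_j(y)]+O(t^2)$. By \eqref{eq:dflow-Taylor-input} evaluated at $\Phi_{-t}^j(y)$ and the smoothness of $DV_j$, $D\Phi_t^j(\Phi_{-t}^j(y))=\Id+t\,DV_j(y)+O(t^2)$. Multiplying,
\begin{equation}
G(t,y)=W(y)+t\bigl(DV_j(y)[W(y)]-DW(y)[V_j(y)]\bigr)+O(t^2),
\end{equation}
and the bracket in parentheses is $[V_j,W](y)$ with the Euclidean convention $[X,Y]=DY\,X-DX\,Y$ used throughout the paper up to sign; I would check the sign against Lemma~\ref{lemma:commutator}, noting that $(\Phi_t)_*W$ corresponds to $-\mathcal L_{V_j}W$ to first order, so the sign convention for $[\cdot,\cdot]$ must be the one making \eqref{eq:pushforward-expansion} consistent with \eqref{eq:iterated_lie_brackets} and Lemma~\ref{lemma:commutator}. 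This sign bookkeeping is the only delicate point; everything else is routine.

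To avoid relying on the $O(t^2)$ products being uniform, I would rather keep the clean route: compute $\partial_tG(0,y)$ exactly by differentiating the composition (using $\partial_t D\Phi_t^j|_{t=0}=DV_j$ and $\partial_t\Phi^j_{-t}|_{t=0}=-V_j$), which gives the same bracket, and obtain the remainder bound from the Taylor formula with $\sup_{[-t_0,t_0]\times K}|\partial_{tt}G|<\infty$. The main obstacle is thus just matching the commutator sign convention; uniformity of $\cR_{j,W}$ is automatic from smoothness on a compact set.
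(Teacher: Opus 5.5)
Your proposal is correct and is essentially the paper's proof, which likewise evaluates \eqref{eq:flow-Taylor} and \eqref{eq:dflow-Taylor-input} at $\Phi_{-t}^j(y)$, multiplies the two first-order expansions, and reads off the coefficient $DV_j(y)[W(y)]-DW(y)[V_j(y)]$; your Taylor-in-$t$ formulation with integral remainder only makes the uniform bound on $\cR_{j,W}$ explicit. On the sign you flagged: that coefficient equals $[V_j,W](y)$ only under the convention $[X,Y]=DX\,Y-DY\,X$, which the appendix uses implicitly (compare its $n=2$ base case) and which is opposite to Lemma~\ref{lemma:commutator} and the main-text bracket computations, but this is harmless because only the linear independence and rank of the brackets are ever used.
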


\begin{proof}[Proof of Lemma \ref{lem:conjugation}]
Using \eqref{eq:dflow-Taylor-input} at $x=\Phi_{-t}^j(y)$ and Taylor expanding $W(\Phi_{-t}^j(y))$ in space around $y$ (noting $\Phi_{-t}^j(y)=y-tV_j(y)+O(t^2)$ from \eqref{eq:flow-Taylor}), we get
\begin{equation}
W(\Phi_{-t}^j(y)) = W(y) - t DW(y)[V_j(y)] + t^2 R_1(t,y),
\end{equation}
and
\begin{equation}
D\Phi_t^j(\Phi_{-t}^j(y)) = \Id + t DV_j(y) + t^2 R_2(t,y),
\end{equation}
with bounded $R_1,R_2$ on $[-t_0,t_0]\times K$. 
Multiplying the two expansions gives
\begin{equation}
D\Phi_t^j(\Phi_{-t}^j(y))W(\Phi_{-t}^j(y)) = W(y) + t\big(DV_j(y)[W(y)]-DW(y)[V_j(y)]\big) + t^2\cR_{j,W}(t,y),
\end{equation}
and the coefficient of $t$ is exactly $[V_j,W](y)$.
\end{proof}

\begin{proof}[Proof of Lemma \ref{Alem:commutator-loop-taylor}]
Fix a compact set $K\Subset\RR^d$. We will work for $|t|\leq t_0$ so small that all flows appearing in the induction are defined on the relevant compact neighbourhoods; since each $\Phi_t^i$ depends smoothly on $(t,x)$, such a uniform $t_0>0$ exists after possibly shrinking.
The proof is carried out via induction on $n=|I|$ in three steps: the base case $n=1$, the first commutator case $n=2$, and the induction step from $n\ge2$ to $n+1$.

\emph{Base case $n=1$.}
If $I=(i)$ then $\Gamma_t^I=\Phi_t^i$, and \eqref{Aeq:Gamma-expansion} is exactly \eqref{eq:flow-Taylor} with $V_I=V_i$ and $\cE_I(t,x):=\frac12 DV_i(x)[V_i(x)] + t \cE_i(t,x)$.
Thus \eqref{Aeq:Gamma-expansion} holds for $n=1$.

\emph{First commutator case $n=2$.}
Let $I=(j,i)$, so that $V_I=[V_j,V_i]$ and
\begin{equation}
    \Gamma_t^I = \Gamma_t^{(j)}\circ\Gamma_t^{(i)} \circ(\Gamma_t^{(j)})^{-1}\circ(\Gamma_t^{(i)})^{-1} = \Phi_t^j\circ\Phi_t^i\circ\Phi_{-t}^j\circ\Phi_{-t}^i.
\end{equation}
Fix $x\in K$ and set $y:=\Phi_{-t}^i(x)$ so that, by \eqref{eq:flow-Taylor}, $y=x-tV_i(x)+O(t^2)$.
Next,
\begin{equation}
    \Phi_{-t}^j(y) = y-tV_j(y)+\frac{t^2}{2}DV_j(y)[V_j(y)]+O(t^3).
\end{equation}
Using $y=x-tV_i(x)+O(t^2)$ and Taylor expanding $V_j(y)$ around $x$, we get
\begin{align}
    \Phi_{-t}^j\circ\Phi_{-t}^i(x) &= x-tV_i(x)-tV_j(x) +t^2DV_j(x)[V_i(x)]+\frac{t^2}{2}DV_i(x)[V_i(x)] +\frac{t^2}{2}DV_j(x)[V_j(x)] +O(t^3).
\end{align}
Applying $\Phi_t^i$ to this expression and using \eqref{eq:flow-Taylor} again gives
\begin{align}
    \Phi_t^i\circ\Phi_{-t}^j\circ\Phi_{-t}^i(x) &= x-tV_j(x) +t^2\left( DV_j(x)[V_i(x)] -DV_i(x)[V_j(x)] +\frac12 DV_j(x)[V_j(x)] \right) +O(t^3).
\end{align}
Finally, applying $\Phi_t^j$ gives
\begin{align}
    \Gamma_t^I(x) &= x+t^2\left(DV_j(x)[V_i(x)]-DV_i(x)[V_j(x)]\right)+O(t^3)\\ &= x+t^2[V_j,V_i](x)+O(t^3).
\end{align}
Therefore, since $V_I=[V_j,V_i]$,
\begin{equation}
    \Gamma_t^I(x) = x+t^2V_I(x)+t^3\cE_I(t,x),
\end{equation}
for a bounded remainder $\cE_I$ on $[-t_0,t_0]\times K$.
Thus \eqref{Aeq:Gamma-expansion} holds for all multi-indices of length $2$.

\emph{Inductive step.}
Assume \eqref{Aeq:Gamma-expansion} holds for some $I$ of length $n\ge2$:
\begin{equation}
\Gamma_t^I(x)=x+t^n V_I(x)+t^{n+1}\cE_I(t,x)\qquad (x\in K).
\end{equation}
Fix $j\in\lbrace 1,\dots,m\rbrace  $ and set $J:=(j,I)$ so that $V_J=[V_j,V_I]$ and
\begin{equation}
\Gamma_t^J=\Gamma_t^{(j)}\circ\Gamma_t^I\circ(\Gamma_t^{(j)})^{-1}\circ(\Gamma_t^I)^{-1}.
\end{equation}
Write $y:=(\Gamma_t^I)^{-1}(x)$. By Lemma \ref{lem:inverse},
\begin{equation}\label{eq:y-inverse}
y=x-t^nV_I(x)+t^{n+1}\widetilde{\cE}_I(t,x)
\end{equation}
for some bounded $\widetilde{\cE}_I$, and consider the conjugated map
\begin{equation}
C_t:=\Gamma_t^{(j)}\circ\Gamma_t^I\circ(\Gamma_t^{(j)})^{-1}.
\end{equation}
Using the inductive hypothesis for $\Gamma_t^I(x)$ at $(\Gamma_t^{(j)})^{-1}(y)$ gives
\begin{equation}
\Gamma_t^I((\Gamma_t^{(j)})^{-1}(y)) = (\Gamma_t^{(j)})^{-1}(y) + t^n V_I((\Gamma_t^{(j)})^{-1}(y)) + t^{n+1}\cE_I(t,(\Gamma_t^{(j)})^{-1}(y)).
\end{equation}
Set $\delta:=t^n V_I((\Gamma_t^{(j)})^{-1}(y)) + t^{n+1}\cE_I(t,(\Gamma_t^{(j)})^{-1}(y))$, so $\delta=O(t^n)$.
Now expand $\Gamma_t^{(j)}$ in the space variable around $(\Gamma_t^{(j)})^{-1}(y)$:
\begin{equation}
\Gamma_t^{(j)}((\Gamma_t^{(j)})^{-1}(y)+\delta) = \Gamma_t^{(j)}((\Gamma_t^{(j)})^{-1}(y)) + D\Gamma_t^{(j)}((\Gamma_t^{(j)})^{-1}(y))[\delta] + O(|\delta|^2).
\end{equation}
Since $\Gamma_t^{(j)}((\Gamma_t^{(j)})^{-1}(y))=y$ and $|\delta|^2=O(t^{2n})$, we get
\begin{equation}
C_t(y)=y + D\Gamma_t^{(j)}((\Gamma_t^{(j)})^{-1}(y))[\delta] + t^{2n}\cR(t,y).
\end{equation}
Since $n\ge2$, $t^{2n}=O(t^{n+2})$ and therefore this quadratic term is automatically beyond the order we need.
Thus,
\begin{align}
C_t(y)&= y + t^n  D\Gamma_t^{(j)}((\Gamma_t^{(j)})^{-1}(y)) V_I((\Gamma_t^{(j)})^{-1}(y)) + t^{n+1}  D\Gamma_t^{(j)}((\Gamma_t^{(j)})^{-1}(y)) \cE_I(t,(\Gamma_t^{(j)})^{-1}(y)) + t^{n+2}\cR_1(t,y).
\end{align}
Apply Lemma \ref{lem:conjugation} with $W=V_I$ to the first term and use \eqref{eq:dflow-Taylor-input} to handle the second term (it contributes only $t^{n+1}\cE_I(t,y)$ up to $O(t^{n+2})$). We obtain
\begin{equation}\label{eq:Ct-expansion}
C_t(y) = y + t^n V_I(y) + t^{n+1}\big([V_j,V_I](y)+\cE_I(t,y)\big) + t^{n+2}\cR_2(t,y).
\end{equation}
Substituting $\Gamma_t^I(y)=y+t^nV_I(y)+t^{n+1}\cE_I(t,y)$ from the inductive hypothesis into \eqref{eq:Ct-expansion} yields
\begin{equation}
C_t(y) = \Gamma_t^I(y)+t^{n+1}[V_j,V_I](y) + t^{n+2}\cR_2(t,y).
\end{equation}
Finally,
\begin{equation}
\Gamma_t^J(x)=C_t\big((\Gamma_t^I)^{-1}(x)\big)=x + t^{n+1}[V_j,V_I]\big((\Gamma_t^I)^{-1}(x)\big) + t^{n+2}\cR_3(t,x).
\end{equation}
Since $(\Gamma_t^I)^{-1}(x)=x+O(t^n)$ by \eqref{eq:y-inverse} and $[V_j,V_I]$ is smooth, we have
\begin{align}
    t^{n+1}[V_j,V_I]\big((\Gamma_t^I)^{-1}(x)\big)=t^{n+1}[V_j,V_I](x) + O(t^{2n+1})=t^{n+1}[V_j,V_I](x) +O(t^{n+2})
\end{align}
for $n\ge2$. Thus,
\begin{equation}
\Gamma_t^J(x)=x+t^{n+1}[V_j,V_I](x)+t^{n+2}\cE_J(t,x),
\end{equation}
for a bounded remainder $\cE_J$ on $[-t_0,t_0]\times K$.
This is exactly \eqref{Aeq:Gamma-expansion} for the multi-index $J$ of length $n+1$. The induction is complete.
\end{proof}

\bibliographystyle{abbrv}
\bibliography{biblio_general}

\end{document}